\apptocmd{\sloppy}{\hbadness 10000\relax}{}{}
\newcommand{\Circle}{%
  \tikz[baseline=-0.75ex] \node[circle, draw, inner sep=1pt, scale=3.0] {};%
}
\newcommand{\Star}{%
  \tikz[baseline=-0.65ex] \node[star, star points=5, star point ratio=3, draw, inner sep=1pt, scale=1.1] {};%
}
\newcommand{\Triangle}{%
  \tikz[baseline=-0.65ex] \node[regular polygon, regular polygon sides=3, inner sep=1pt, draw, scale=1.8] {};%
}
\newcommand{\Ubox}{%
  \tikz[baseline=-0.7ex] \node[regular polygon, regular polygon sides=4, inner sep=1pt, draw, scale=2.5, rotate=45] {};%
}
\newcommand{\Utriangle}{%
  \tikz[baseline=-0.73ex] \node[regular polygon, regular polygon sides=3, inner sep=1pt, draw, scale=1.8, rotate=180] {};%
}
\newcommand{\Pentagon}{%
  \tikz[baseline=-0.65ex] \node[regular polygon, regular polygon sides=5, inner sep=1pt, draw, scale=2.5] {};%
}
\newcommand{\Ltriangle}{%
  \tikz[baseline=-0.65ex] \node[regular polygon, regular polygon sides=3, inner sep=1pt, draw, scale=1.8, rotate=90] {};%
}
\newcommand{\Rtriangle}{%
  \tikz[baseline=-0.65ex] \node[regular polygon, regular polygon sides=3, inner sep=1pt, draw, scale=1.8, rotate=-90] {};%
}
\newcommand{\Hexagon}{%
  \tikz[baseline=-0.65ex] \node[regular polygon, regular polygon sides=6, inner sep=1pt, draw, scale=2.5] {};%
}
\newcommand{\Heptagon}{%
  \tikz[baseline=-0.65ex] \node[regular polygon, regular polygon sides=7, inner sep=1pt, draw, scale=2.5] {};%
}
\newcommand{\Boxblue}{%
  \tikz[baseline=-0.75ex] \node[rectangle, draw=black, fill=blue, inner sep=1pt, scale=4, line width=1pt] {};%
}
\newcommand{\Boxred}{%
  \tikz[baseline=-0.75ex] \node[rectangle, draw=black, fill=red, inner sep=1pt, scale=4, line width=1pt] {};%
}
\newcommand{\Boxgreen}{%
  \tikz[baseline=-0.75ex] \node[rectangle, draw=black, fill=green!60!black, inner sep=1pt, scale=4, line width=1pt] {};%
}
\newcommand{\Boxyellow}{%
  \tikz[baseline=-0.75ex] \node[rectangle, draw=black, fill=yellow, inner sep=1pt, scale=4, line width=1pt] {};%
}
\newcommand{\Boxpurple}{%
  \tikz[baseline=-0.75ex] \node[rectangle, draw=black, fill=purple!80!black, inner sep=1pt, scale=4, line width=1pt] {};%
}
\newcommand{\Boxbrown}{%
  \tikz[baseline=-0.75ex] \node[rectangle, draw=black, fill=brown!80!black, inner sep=1pt, scale=4, line width=1pt] {};%
}
\newcommand{\Boxorange}{%
  \tikz[baseline=-0.75ex] \node[rectangle, draw=black, fill=orange, inner sep=1pt, scale=4, line width=1pt] {};%
}
\newcommand{\Boxpink}{%
  \tikz[baseline=-0.75ex] \node[rectangle, draw=black, fill=pink, inner sep=1pt, scale=4, line width=1pt] {};%
}
\newcommand{\Boxcyan}{%
  \tikz[baseline=-0.75ex] \node[rectangle, draw=black, fill=cyan, inner sep=1pt, scale=4, line width=1pt] {};%
}
\newcommand{\Boxolive}{%
  \tikz[baseline=-0.75ex] \node[rectangle, draw=black, fill=olive, inner sep=1pt, scale=4, line width=1pt] {};%
}
\newcommand{\UnBoxblue}{%
  \tikz[baseline=-0.75ex] \node[rectangle, fill=blue, inner sep=1pt, scale=4] {};%
}
\newcommand{\UnBoxred}{%
  \tikz[baseline=-0.75ex] \node[rectangle, fill=red, inner sep=1pt, scale=4] {};%
}
\newcommand{\UnBoxgreen}{%
  \tikz[baseline=-0.75ex] \node[rectangle, fill=green!60!black, inner sep=1pt, scale=4] {};%
}
\newcommand{\UnBoxyellow}{%
  \tikz[baseline=-0.75ex] \node[rectangle, fill=yellow, inner sep=1pt, scale=4] {};%
}
\newcommand{\UnBoxpurple}{%
  \tikz[baseline=-0.75ex] \node[rectangle, fill=purple!80!black, inner sep=1pt, scale=4] {};%
}
\newcommand{\UnBoxbrown}{%
  \tikz[baseline=-0.75ex] \node[rectangle, fill=brown!80!black, inner sep=1pt, scale=4] {};%
}
\newcommand{\UnBoxorange}{%
  \tikz[baseline=-0.75ex] \node[rectangle, fill=orange, inner sep=1pt, scale=4] {};%
}
\newcommand{\UnBoxpink}{%
  \tikz[baseline=-0.75ex] \node[rectangle, fill=pink, inner sep=1pt, scale=4] {};%
}
\newcommand{\UnBoxcyan}{%
  \tikz[baseline=-0.75ex] \node[rectangle, fill=cyan, inner sep=1pt, scale=4] {};%
}
\newcommand{\UnBoxolive}{%
  \tikz[baseline=-0.75ex] \node[rectangle, fill=olive, inner sep=1pt, scale=4] {};%
}
\newcommand{\UnsBoxblue}{%
  \tikz[baseline=-0.75ex] \node[rectangle, fill=blue, inner sep=1pt, scale=4, opacity=0.5] {};%
}
\newcommand{\UnsBoxred}{%
  \tikz[baseline=-0.75ex] \node[rectangle, fill=red, inner sep=1pt, scale=4, opacity=0.5] {};%
}
\newcommand{\UnsBoxgreen}{%
  \tikz[baseline=-0.75ex] \node[rectangle, fill=green!60!black, inner sep=1pt, scale=4, opacity=0.5] {};%
}
\newcommand{\UnsBoxyellow}{%
  \tikz[baseline=-0.75ex] \node[rectangle, fill=yellow, inner sep=1pt, scale=4, opacity=0.5] {};%
}
\newcommand{\UnsBoxpurple}{%
  \tikz[baseline=-0.75ex] \node[rectangle, fill=purple!80!black, inner sep=1pt, scale=4, opacity=0.5] {};%
}
\newcommand{\UnsBoxbrown}{%
  \tikz[baseline=-0.75ex] \node[rectangle, fill=brown!80!black, inner sep=1pt, scale=4, opacity=0.5] {};%
}
\newcommand{\UnsBoxorange}{%
  \tikz[baseline=-0.75ex] \node[rectangle, fill=orange, inner sep=1pt, scale=4, opacity=0.5] {};%
}
\newcommand{\UnsBoxpink}{%
  \tikz[baseline=-0.75ex] \node[rectangle, fill=pink, inner sep=1pt, scale=4, opacity=0.5] {};%
}
\newcommand{\UnsBoxcyan}{%
  \tikz[baseline=-0.75ex] \node[rectangle, fill=cyan, inner sep=1pt, scale=4, opacity=0.5] {};%
}
\newcommand{\UnsBoxolive}{%
  \tikz[baseline=-0.75ex] \node[rectangle, fill=olive, inner sep=1pt, scale=4, opacity=0.5] {};%
}
\newcommand{\LineClassOne}{%
  \tikz[baseline=-0.75ex] \node[rectangle, fill={rgb,255:red,126; green,0; blue,0}, inner sep=1pt, scale=0.5, minimum width=10ex] {};%
}
\newcommand{\LineClassTwo}{%
  \tikz[baseline=-0.75ex] \node[rectangle, fill={rgb,255:red,0; green,54; blue,0}, inner sep=1pt, scale=0.5, minimum width=10ex] {};%
}
\newcommand{\LineClassThree}{%
  \tikz[baseline=-0.75ex] \node[rectangle, fill={rgb,255:red,0; green,0; blue,126}, inner sep=1pt, scale=0.5, minimum width=10ex] {};%
}
\newcommand{\ALineClassblue}{%
  \tikz[baseline=-0.75ex] \node[rectangle, fill=blue, inner sep=1pt, scale=0.5, minimum width=10ex] {};%
}
\newcommand{\ALineClassred}{%
  \tikz[baseline=-0.75ex] \node[rectangle, fill=red, inner sep=1pt, scale=0.5, minimum width=10ex] {};%
}
\newcommand{\ALineClassgreen}{%
  \tikz[baseline=-0.75ex] \node[rectangle, fill=green!70!black, inner sep=1pt, scale=0.5, minimum width=10ex] {};%
}
\newcommand{\ALineClassyellow}{%
  \tikz[baseline=-0.75ex] \node[rectangle, fill=orange, inner sep=1pt, scale=0.5, minimum width=10ex] {};%
}
\newcommand{\ALineClasspurple}{%
  \tikz[baseline=-0.75ex] \node[rectangle, fill=purple!80!red, inner sep=1pt, scale=0.5, minimum width=10ex] {};%
}
\newcommand{\ALineClassbrown}{%
  \tikz[baseline=-0.75ex] \node[rectangle, fill=brown!80!black, inner sep=1pt, scale=0.5, minimum width=10ex] {};%
}
\theoremstyle{thmstyleone}%
\newtheorem{theorem}{Theorem}%
\newtheorem{proposition}{Proposition}%
\newtheorem{lemma}{Lemma}%
\theoremstyle{thmstyletwo}%
\newtheorem{example}{Example}%
\theoremstyle{thmstylethree}%
\def\N			{\mathbb N}
\def\R			{\mathbb R}
\def\Sphere		{\mathbb{S}}
\def\SO			{\mathrm{SO}}
\def\Ball		{\mathbb{B}_2}
\def\I		    {\mathbb{I}}
\def\XX			{\mathbb X}
\def\PP			{\mathbb P}
\def\M	        {\mathcal M}
\def\Lebesgue	{\mathrm L}
\def\Radon		{\mathcal R}
\def\NRCDT		{\mathcal N}
\def\maxNRCDT	{\NRCDT_{\mathrm{m}}}
\def\varNRCDT	{\NRCDT_{\mathrm{tv}}}
\def\GL			{\mathrm{GL}}
\def\O			{\mathrm{O}}
\def\d			{\mathop{}\!\mathrm{d}}
\def\I			{\mathrm i}
\def\c			{\mathrm c}
\def\bfq		{\mathbf{q}}
\def\bfx		{\mathbf{x}}
\def\bfy		{\mathbf{y}}
\def\bfzero		{\mathbf{0}}
\def\bfA		{\mathbf{A}}
\def\bfQ		{\mathbf{Q}}
\def\bfR		{\mathbf{R}}
\def\bfI		{\mathbf{I}}
\def\bftheta	{{\boldsymbol{\theta}}}
\def\B			{\mathcal B}
\def\P			{\mathcal P}
\def\U			{\mathcal U}
\def\V			{\mathcal V}
\def\Glue		{\mathcal I}
\def\F			{\mathbb F}
\def\G			{\mathbb G}
\def\RI			{\mathrm{RI}}
\def\VI			{\mathrm{VI}}
\DeclareMathOperator{\diam}{diam}
\DeclareMathOperator{\supp}{supp}
\DeclareMathOperator{\mean}{mean}
\DeclareMathOperator{\std}{std}
\DeclareMathOperator*{\argmin}{arg\,min}
\DeclareMathOperator{\trace}{trace}
\newcommand{\subsubset}{\subset\joinrel\subset}
\DeclareFontFamily{U}{mathx}{\hyphenchar\font45}
\DeclareFontShape{U}{mathx}{m}{n}{<-> mathx10}{}
\DeclareSymbolFont{mathx}{U}{mathx}{m}{n}
\DeclareMathAccent{\widebar}{0}{mathx}{"73}
\def\mNRCDT		{\textsubscript{$\mathrm{m}$}NR-CDT}
\def\tvNRCDT	{\textsubscript{$\mathrm{tv}$}NR-CDT}
\newcommand{\hNRCDT}[1]{\textsubscript{$h_{\mathrm{#1}}$}NR-CDT}
\newcommand{\genNRCDT}[1]{\NRCDT_{h_{\mathrm{#1}}}}
\def\miNRCDT	{\textsubscript{$h_{\mathrm{d}}$}NR-CDT}
\def\maNRCDT	{\textsubscript{$h_{\mathrm{a}}$}NR-CDT}
\def\iaNRCDT	{\textsubscript{$h_{\mathrm{b}}$}NR-CDT}
\def\miaNRCDT{\textsubscript{$h_{\mathrm{c}}$}NR-CDT}
\begin{document}

\title[Generalized NR-CDTs]{Generalizations of the Normalized Radon Cumulative Distribution Transform for Limited Data Recognition}

\author*[1]{\fnm{Matthias} \sur{Beckmann}}\email{research@mbeckmann.de}

\author*[2]{\fnm{Robert} \sur{Beinert}}\email{beinert@math.tu-berlin.de}

\author*[2]{\fnm{Jonas} \sur{Bresch}}\email{bresch@math.tu-berlin.de}

\affil[1]{\orgdiv{Fachbereich Mathematik}, \orgname{Universit\"at Hamburg}, \orgaddress{\street{Bundesstra{\ss}e 55}, \city{Hamburg}, \postcode{20146}, \state{Hamburg}, \country{Germany}}}

\affil[2]{\orgdiv{Institut f\"ur Mathematik}, \orgname{Technische Universit\"at Berlin}, \orgaddress{\street{Stra{\ss}e des 17.\ Juni 136}, \city{Berlin}, \postcode{10623}, \state{Berlin}, \country{Germany}}}

\abstract{
The Radon cumulative distribution transform (R-CDT) exploits 
one-dimensional Wasserstein transport
and the Radon transform
to represent prominent features in images.
It is closely related to the sliced Wasserstein distance
and facilitates classification tasks,
especially in the small data regime, 
like the recognition of watermarks in filigranology.
Here, a typical issue is 
that the given data may be subject to affine transformations
caused by the measuring process.
To make the R-CDT
invariant under arbitrary affine transformations,
a two-step normalization of the R-CDT has been proposed
in our earlier works.
The aim of this paper is twofold.
First,
we propose a family of generalized normalizations
to enhance flexibility for applications.
Second,
we study multi-dimensional and non-Euclidean settings
by making use of generalized Radon transforms.
We prove that our novel feature representations are
invariant under certain transformations
and allow for linear separation in feature space.
Our theoretical results are supported by numerical experiments
based on 2d images, 3d shapes and 3d rotation matrices,
showing near perfect classification accuracies
and clustering results.
}

\keywords{Radon-CDT,
    feature representation,
    image and shape classification,
    pattern recognition,
    small data regime}

\maketitle

\section{Introduction}

Automated pattern recognition and classification
play a central role in numerous applications and disciplines,
be it in medical imaging, biometrics, or document analysis.
Nowadays,
in the big data regime,
end-to-end deep neural networks provide the latest state of the art. 
In the small data regime,
however,
hand-crafted feature extractors and similarity measures still stand their ground.

In recent years,
optimal transport-based techniques 
like the Wasserstein distance and variations of this 
have become popular tools
for image comparison.
At their core,
the Wasserstein distances define metrics 
between probability measures 
on a common Polish space 
and find an optimal coupling \cite{Kantorovich2006,Villani2003,Santambrogio2015,Bogachev2012}.
Although being based on a linear program,
the numerical calculation is challenging,
initiating the development of entropic optimal transport \cite{Cuturi2013,Bresch2025},
linear optimal transport \cite{Wang2013,Park2018,Moosmueller2023},
so-called sliced Wasserstein distance \cite{Bonneel2015,Shifat-E-Rabbi2023,Kolouri2019,Piening2025,Shi2025},
and the subspace robust Wasserstein distance \cite{Paty2019}
to lower the computational burden and to increase stability.
The definition of the Wasserstein distance can be extended
to ensure invariance under orthogonal transformations
of the input measures,
leading to the so-called
Procrustes--Wasserstein distance \cite{Grave2019,Adamo2025}
and the rotation-invariant sliced Wasserstein distance \cite{Lai2017,Shi2025}.

The optimal transport technique
behind the Wasserstein distance 
can be further generalized to the so-called Gromov--Wasserstein \cite{Memoli2011}
and embedded Wasserstein distance \cite{Beier2025, Salmona2024},
which both allow for the comparison of measures on different Polish spaces.
These distances play a major role in shape and graph analysis
since they are invariant under isometric transformations.
In practice,
they are calculated by solving costly quadratic programmes,
limiting their applicability.
As remedy, 
regularized \cite{Sejourne2021}, 
linear \cite{Beier2022}, 
and sliced \cite{Beinert2023,Vayer2019,Piening2025a} versions have been proposed.

Beyond similarity measures between images and shapes,
optimal transport-based methods may be used 
to design feature extractors,
being the focus of this paper.
Ideally, 
a feature extractor transforms different classes
to linearly separable subsets.
This may,
for instance,
be achieved by 
the so-called Radon cumulative distribution transform (R-CDT)
introduced in~\cite{Kolouri2016}.
The R-CDT is based on one-dimensional optimal transport,
which is generalized to two-dimensional data
by applying the Radon transform,
known from tomography~\cite{Ramm1996,Natterer2001}.
This approach shows great potential in many applications
\cite{Kolouri2017,DiazMartin2024,Shifat-E-Rabbi2021}
and is closely related to the sliced Wasserstein distance.
A similar approach for data on the sphere is studied in \cite{Quellmalz2023,Quellmalz2024},
for multi-dimensional optimal transport maps in \cite{Moosmueller2023},
and for optimal Gromov--Wasserstein transport maps in \cite{Beier2022}.

A central inspiration for this paper is the 
application of pattern recognition techniques in filigranology%
---the study of analogue watermarks.
These play a central role in dating historical manuscripts
as well as identifying scribes and papermills.
For automatic classification,
the main issue is the enormous number of classes
with only few members per class.
An end-to-end processing pipeline for thermograms of watermarks 
including an R-CDT-based classification is proposed
in \cite{Hauser2024}, where the authors report classification invariance
with respect to translation and dilation of the watermark.
In order to include
other affine transformations caused,
e.g., 
by unstandardized recording methods,
in our previous works \cite{Beckmann2024a,Beckmann2025},
normalized R-CDT versions are proposed
based on a two-step normalization scheme.

\paragraph*{Contribution}

This paper extends 
the max-normalized R-CDT (\mNRCDT)
introduced at the SSVM'25 conference \cite{Beckmann2024a}
by employing a more flexible normalization scheme
and by generalizing the underlying Radon transform.
Similar to \cite{Beckmann2024a},
the unaltered goal is 
to design easy-to-implement feature extractors
that are invariant under common transformations
in the specific field of application.
For instance,
the watermark recognition task in filigranology requires feature extractors 
that are invariant under affine image transformations
like rotation, shifting, shearing, scaling.
Since this paper is an extension of \cite{Beckmann2024a}
and thus substantially based on this,
the first two contributions remain:
\begin{itemize}
    \item the proposal of a new two-step normalization procedure for the R-CDT
        yielding the \mNRCDT\ feature extractor,
        which is invariant under arbitrary affine transformations,
    \item a rigorous study of the separability properties of the \mNRCDT,
        especially,
        affine classes become linear separable in \mNRCDT\ space,
\end{itemize}
The robustness of the original \mNRCDT\ 
under non-affine deformations
is studied in \cite{Beckmann2025},
which is not the focus of this paper.
In difference to \cite{Beckmann2024a,Beckmann2025},
this extended version contains the following additional contributions:
\begin{itemize}
    \item a flexible final normalization for the NR-CDT
        yielding the new \hNRCDT{}
        and 
        enabling the usage of further angular informations
        for instance based on the total variation,
    \item the replacement of the underlying 2d Radon transform
        by the generalized Radon transform
        extending the \hNRCDT{} to the multidimensional setting,
        the circular Radon transform 
        or the Radon transform on the 3d rotation group SO(3),
    \item a rigorous discussion about 
        the linear separability properties,
        which provably carry over to all new extended NR-CDT variants,
    \item an extension of the image classification experiments in \cite{Beckmann2024a},
        by considering image clustering,
        3d shape recognition,
        and 
        classification of SO(3) point clouds.
\end{itemize}

\paragraph*{Outline}

The first part of this paper revisit the SSVM proceeding \cite{Beckmann2024a}
where the \mNRCDT\ is originally proposed.
To this end,
we first generalize the classical Radon transform to measures
in §~\ref{sec:radon} and, 
thereon, 
restate the definition of the \mNRCDT\ in §~\ref{sec:ot-dist}.
The extension of the final normalization step is, especially, presented in §~\ref{sec:hNRCDT}
yielding the new \hNRCDT{}.
The linear separability of affinely transformed measure classes
in \mNRCDT\ and \hNRCDT{} space is proven 
in Theorem~\ref{thm:sep-max-nrcdt} and \ref{thm:sep-h-nrcdt}.
The novel generalization of the \hNRCDT{} to arbitrary domains 
beyond the two-dimensional setting is introduced in §~\ref{sec:gen-NRCDT}.
The focus here lies on the extension to $\R^d$
related to the multidimensional and circular Radon transform
as well as to the 3d rotation group.
The separability properties for these cases are reported
in Theorem~\ref{thm:sep-multi-h-nrcdt}--\ref{thm:sep-so3-h-nrcdt}.
Our theoretical findings are supported 
by proof-of-concept experiments 
in §~\ref{sec:num-ex}
showing significant improvements of the classification accuracy 
by the proposed normalizations,
especially,
in the small data regime.

\section{Radon Transform}
\label{sec:radon}

The main idea behind the classical Radon transform \cite{Natterer2001} is
to integrate a given bivariate function along all parallel lines
pointing in a certain direction.
This integral transform can also be interpreted
as projection of the given function
onto the line with orthogonal orientation.
In the following,
we briefly review the classical Radon transform for functions
and generalize the concept to measures.
Finally,
we study the effect of affine transformations 
on the Radon transform,
which is crucial
to solve the classification task at hand.

\subsection{Radon Transform of Functions}

Depending on $\bftheta \in \Sphere_1 \coloneqq \{\bfx \in \R^2 \mid \lVert \bfx \rVert = 1\}$,
we introduce the \textit{slicing operator} $S_\bftheta\colon \R^2 \to \R$ by
\begin{equation*}
    S_\bftheta(\bfx) \coloneqq \langle \bfx, \bftheta \rangle,
    \quad
    \bfx \in \R^2.
\end{equation*}
Its preimages $S_\bftheta^{-1}(t)$, 
$t \in \R$,
are the lines $\ell_{t,\bftheta}$ 
in direction 
$\bftheta^\perp \coloneqq (\theta_2, - \theta_1)^\top \in \Sphere_1$ 
with distance $t$ to the origin.
More precisely,
we have 
\begin{equation*}
    \ell_{t,\bftheta} 
    \coloneqq 
    S_\bftheta^{-1}(t) 
    = 
    \{ t \bftheta + \tau \bftheta^\perp 
        \mid 
        \tau \in \R\}
    \subset \R^2.
\end{equation*}
Using the bijection $\varphi_\bftheta \colon \R^2 \to \R^2$
defined as
$\varphi_\bftheta(t, \tau) 
\coloneqq 
t \, \bftheta + \tau \, \bftheta^\perp$,
whose inverse is given by 
$\varphi_\bftheta(\bfx)^{-1} 
= (\langle \bfx, \bftheta \rangle, \langle \bfx, \bftheta^\perp \rangle)$,
we parameterize $\ell_{t, \bftheta}$ via
$\tau \mapsto \varphi_\bftheta(t, \tau)$.

For $f \in \Lebesgue^1(\R^2)$, 
we define its \textit{Radon transform} $\Radon [f] \colon \R \times \Sphere_1 \to \R$ as the line integral 
\begin{equation*}
    \Radon [f] (t, \bftheta) 
    \coloneqq 
    \int_{\ell_{t,\bftheta}} 
    f(s) 
    \d s,
    \quad 
    (t,\bftheta) \in \R \times \Sphere_1,
\end{equation*}
where $\d s$ denotes the arc length element of $\ell_{t,\bftheta}$.
This defines the \textit{Radon operator} $\Radon \colon \Lebesgue^1(\R^2) \to \Lebesgue^1(\R \times \Sphere_1)$.
For fixed $\bftheta \in \Sphere_1$, 
we set $\Radon_\bftheta \coloneqq \Radon(\cdot, \bftheta)$, 
which is referred to as the \textit{restricted Radon operator} 
$\Radon_\bftheta \colon \Lebesgue^1(\R^2) \to \Lebesgue^1(\R)$.
The action of the Radon operator is illustrated in Figure~\ref{fig:radon}.

\begin{figure}[t]
    \centering
    \scalebox{0.7}{
    \begin{tikzpicture}
        \draw[->] (-5,0) -- (5,0) node[right] {$\R$};
        \draw[->] (0,-2.5) -- (0,2.5) node[above] {$\R$};
        \draw[thin, dashed] (-3*1.2,-2*1.2) -- (3*1.3,2*1.3);  
        \draw[gray] (3,1) -- (3.0507*0.832, 3.0507*0.5547);
        \draw[gray] (3,1) -- (3 + 3*0.5547,1 - 3*0.832);
        \draw[gray] (3.0507*0.832, 3.0507*0.5547) -- (3.0507*0.832 - 1*0.5547, 3.0507*0.5547 + 1*0.832);
        \node at (3.0507*0.832 - 1*0.5547, 3.0507*0.5547 + 1*0.832) [right] {\color{gray}$\ell_{t_1,\bftheta}$};
        \draw[gray] (-4,2) -- (-2.2186*0.832, -2.2186*0.5547);
        \draw[gray] (-2.2186*0.832, -2.2186*0.5547) -- (-2.2186*0.832 + 1*0.5547, -2.2186*0.5547 - 1*0.832);
        \node at (-2.2186*0.832 + 1*0.5547, -2.2186*0.5547 - 1*0.832) [right] {\color{gray}$\ell_{t_2,\bftheta}$};
        \draw[gray] (-0.5547*3,0.832*3) -- (0.5547*3,-0.832*3);
        \node at (0.5547*3,-0.832*3) [above right] {\color{gray}$\ell_{0,\bftheta}$};
        \draw[very thick,->] (0,0) -- (0.5547,-0.832) node[below] {$\bftheta^\perp$};
        \draw[very thick,->] (0,0) -- (0.832,0.5547) node[above] {$\bftheta$};
        \draw[thin, dotted] (0,0) circle (1);
        \node at (3.0507*0.832, 3.0507*0.5547)[circle,fill,inner sep=0.5pt]{};
        \node at (-2.2186*0.832, -2.2186*0.5547)[circle,fill,inner sep=0.5pt]{};
        \draw[decorate,decoration={brace,amplitude=7.5pt,mirror,raise=0pt}] (0,0) -- (3.0507*0.832, 3.0507*0.5547) node[midway,below right,yshift=-0.15cm] {$|t_1|$};
        \draw[decorate,decoration={brace,amplitude=7.5pt,mirror,raise=0pt}] (0,0) -- (-2.2186*0.832, -2.2186*0.5547) node[midway,above left,yshift=1pt,xshift=-2pt] {$|t_2|$};
        \node at (4,0) [anchor=north] {};
        \node at (0,3) [anchor=east] {};
    \end{tikzpicture}
    }
    \caption{Illustration of the bivariate Radon transform
    with distance $t \in \R$
    and normal direction $\bftheta \in \Sphere_1$.
    The given function is integrated along the lines $\ell_{t, \bftheta}$.
    }
    \label{fig:radon}
\end{figure}
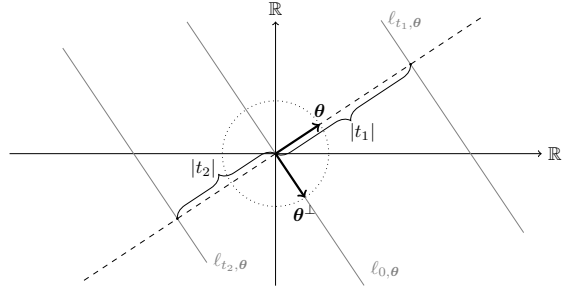

The Radon transform is also well-defined 
for all $f \in \Lebesgue^p(\R^2)$ with $p \geq 1$ 
and $\supp(f) \subseteq \Ball \coloneqq \{ \bfx \in \R^2 \mid \lVert \bfx \rVert \le 1\}$, 
in which case $\Radon [f] \in \Lebesgue^p(\R \times \Sphere_1)$
with $\supp(\Radon [f]) \subseteq \I \times \Sphere_1$,
where $\I \coloneqq [-1,1]$.
According to~\cite{Natterer2001}, 
the adjoint operator
$\Radon^* \colon \Lebesgue^\infty(\R \times \Sphere_1) \to \Lebesgue^\infty(\R^2)$ 
of the Radon transform 
$\Radon \colon \Lebesgue^1(\R^2) \to \Lebesgue^1(\R \times \Sphere_1)$ 
is given by the \emph{back projection}
\begin{equation*}
    \Radon^* [g](\bfx) \coloneqq \frac{1}{2\pi} \int_{\Sphere_1} g(S_\bftheta(\bfx), \bftheta) \d \sigma_{\Sphere_1}(\bftheta),
    \quad \bfx \in \R^2,
\end{equation*}
where $\sigma_{\Sphere_1}$ denotes the surface measure on $\Sphere_1$.

\subsection{Radon Transform of Measures}

The concept of the Radon transform is now translated to signed, 
regular, finite measures $\mu \in \M(\R^2)$.
For a fixed direction $\bftheta \in \Sphere_1$, 
we generalize the \textit{restricted Radon transform} $\Radon_\bftheta$ to measures by setting
\begin{equation*}
    \Radon_\bftheta \colon \M(\R^2) \to \M(\R), 
    \quad
    \mu \mapsto (S_\bftheta)_\# \mu = \mu \circ S_\bftheta^{-1},
\end{equation*}
which corresponds to the integration along $\ell_{t,\bftheta}$.
Note that $\Radon_\bftheta[\mu](\R) = \mu(\R^2)$
for all  $\bftheta \in \Sphere_1$ and,
thus, 
the mass of $\mu$ is preserved by $\Radon_\bftheta$.
In measure theory, $\Radon_\bftheta$ can be considered as a disintegration family.
Heuristically, 
we may generalize the Radon transform 
by integrating $\Radon_\bftheta$ along $\bftheta \in \Sphere_1$.
Therefore, we define the \textit{Radon transform} $\Radon \colon \M(\R^2) \to \M(\R \times \Sphere_1)$ via
\begin{equation}
    \label{eq:radon-meas}
    \Radon [\mu] \coloneqq \Glue_\#[\mu \times u_{\Sphere_1}]
\end{equation}
with $\Glue(\bfx, \bftheta) \coloneqq (S_\bftheta(\bfx), \bftheta)$
for $(\bfx, \bftheta) \in \R^2 \times \Sphere_1$.
Here $\mu \times u_{\Sphere_1}$ denotes the product measure
between the given $\mu$ 
and the uniform measure 
$u_{\Sphere_1} \coloneqq \sigma_{\Sphere_1}/2\pi$
on $\Sphere_1$.

\begin{proposition}
    Let $\mu \in \M(\R^2)$.
    Then, 
    $\Radon [\mu]$ can be disintegrated 
    into the family $\Radon_\bftheta [\mu]$ 
    with respect to $u_{\Sphere_1}$,
    i.e., 
    for all continuous $g \in C_0(\R \times \Sphere_1)$ vanishing at infinity, 
    we have
    \begin{equation*}
        \langle \Radon [\mu], g\rangle 
        = 
        \int_{\Sphere_1} \langle \Radon_{\bftheta} [\mu], g(\cdot, \bftheta)\rangle \d u_{\Sphere_1}(\bftheta).
    \end{equation*}
\end{proposition}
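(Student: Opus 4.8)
The plan is to unwind both sides through the change-of-variables formula for pushforward measures and to bridge them with Fubini's theorem for the product measure $\mu \times u_{\Sphere_1}$. First I would fix $g \in C_0(\R \times \Sphere_1)$ and rewrite the left-hand side: by the definition of $\Radon[\mu]$ in \eqref{eq:radon-meas} and the change-of-variables formula,
\begin{equation*}
    \langle \Radon[\mu], g\rangle
    = \int_{\R^2 \times \Sphere_1} (g \circ \Glue)(\bfx, \bftheta)\d(\mu \times u_{\Sphere_1})(\bfx, \bftheta)
    = \int_{\R^2 \times \Sphere_1} g(S_\bftheta(\bfx), \bftheta)\d(\mu \times u_{\Sphere_1})(\bfx, \bftheta).
\end{equation*}
Since $(\bfx, \bftheta) \mapsto S_\bftheta(\bfx) = \langle \bfx, \bftheta\rangle$ is jointly continuous and $g$ is continuous, the integrand $g \circ \Glue$ is continuous, hence Borel measurable, and bounded; together with $|\mu|(\R^2) < \infty$ and $u_{\Sphere_1}(\Sphere_1) = 1$, this makes it $(\mu \times u_{\Sphere_1})$-integrable.

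Next I would apply Fubini's theorem. Because $\mu$ is a finite signed measure, I split it through its Jordan decomposition $\mu = \mu_+ - \mu_-$ (equivalently, dominate by the finite positive measure $|\mu|$), apply the classical Fubini theorem to each product $\mu_\pm \times u_{\Sphere_1}$, and recombine by linearity. This gives
\begin{equation*}
    \langle \Radon[\mu], g\rangle
    = \int_{\Sphere_1} \left( \int_{\R^2} g(S_\bftheta(\bfx), \bftheta)\d\mu(\bfx) \right) \d u_{\Sphere_1}(\bftheta),
\end{equation*}
where the inner integral is a Borel-measurable (in fact bounded) function of $\bftheta$ by the same theorem, so the outer integral is well defined.

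Finally I would identify the inner integral with $\langle \Radon_\bftheta[\mu], g(\cdot, \bftheta)\rangle$. For fixed $\bftheta \in \Sphere_1$, compactness of $\Sphere_1$ gives $g(\cdot, \bftheta) \in C_0(\R)$, and $\Radon_\bftheta[\mu] = (S_\bftheta)_\#\mu$ is a finite signed measure on $\R$, since $|\Radon_\bftheta[\mu]| \le (S_\bftheta)_\#|\mu|$ has total mass $|\mu|(\R^2)$. Applying the change-of-variables formula for the pushforward $(S_\bftheta)_\#$ yields
\begin{equation*}
    \int_{\R^2} g(S_\bftheta(\bfx), \bftheta)\d\mu(\bfx)
    = \int_{\R^2} \big(g(\cdot, \bftheta) \circ S_\bftheta\big)(\bfx)\d\mu(\bfx)
    = \int_{\R} g(t, \bftheta)\d\big((S_\bftheta)_\#\mu\big)(t)
    = \langle \Radon_\bftheta[\mu], g(\cdot, \bftheta)\rangle,
\end{equation*}
and substituting this back into the previous display finishes the proof. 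The only genuinely delicate points are the joint measurability of $(\bfx, \bftheta) \mapsto g(S_\bftheta(\bfx), \bftheta)$, which is immediate from continuity, and the passage to signed measures in Fubini's theorem, handled routinely via the Jordan decomposition; the remaining manipulations are bookkeeping with the pushforward formula.
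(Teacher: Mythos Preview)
Your proof is correct and follows essentially the same route as the paper: unwind the pushforward defining $\Radon[\mu]$, apply Fubini to the product $\mu \times u_{\Sphere_1}$, and then recognise the inner integral as $\langle \Radon_\bftheta[\mu], g(\cdot,\bftheta)\rangle$ via the change-of-variables formula for $(S_\bftheta)_\#\mu$. You supply more justification than the paper does (joint measurability, integrability, and the Jordan decomposition for signed $\mu$), but the underlying argument is identical.
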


\begin{proof}
    By definition in \eqref{eq:radon-meas}, 
    we obtain 
    \begin{align*}
        \langle \Radon [\mu], g\rangle
        & = 
        \int_{\R \times \Sphere_1} g(t, \bftheta) \d \Glue_\#[\mu \times u_{\Sphere_1}] (t, \bftheta)
        \\
        & = 
        \int_{\Sphere_1} \int_{\R^2} g(S_\bftheta(\bfx), \bftheta) \d \mu(\bfx) \d u_{\Sphere_1}(\bftheta) 
        \\
        & = 
        \int_{\Sphere_1} \int_{\R^2} g(t, \bftheta) \d [(S_\bftheta)_{\#}\mu](t) \d u_{\Sphere_1}(\bftheta)
        \\
        & = 
        \int_{\Sphere_1} \langle \Radon_{\bftheta} [\mu], g(\cdot, \bftheta)\rangle \d u_{\Sphere_1}(\bftheta)
    \end{align*}
    using Fubini's theorem.
\end{proof}

One can find the measure-valued Radon transform 
$\Radon \colon \M(\R^2) \to \M(\R \times \Sphere_1)$ 
as the adjoint of the function-valued adjoint 
$\Radon^* \colon \Lebesgue^\infty (\R \times \Sphere_1) \to \Lebesgue^\infty(\R^2)$,
similar to the case of distributions with compact support,
cf.~\cite{Ramm1996}.

\begin{proposition} \label{prop:adjRT_measure}
    The Radon transform of $\mu \in \M(\R^2)$ satisfies
    \begin{equation*}
        \langle \Radon [\mu], g\rangle = \langle \mu, \Radon^* [g]\rangle
        \quad \forall \,
        g \in \Lebesgue^\infty(\R \times \Sphere_1).
    \end{equation*}
\end{proposition}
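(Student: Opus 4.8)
The plan is to unfold the definition of $\Radon[\mu]$ as a push-forward, rewrite the pairing $\langle \Radon[\mu], g\rangle$ as a single integral over the product space $\R^2 \times \Sphere_1$, and then invoke Fubini's theorem to carry out the $\bftheta$-integration first; by the very definition of the back projection $\Radon^*$, the inner integral is then $\Radon^*[g](\bfx)$, and integrating against $\mu$ concludes. This is essentially the computation from the proof of the previous proposition, now run one layer deeper and with a bounded rather than a continuous test function.

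In detail, I would fix a bounded Borel-measurable representative of $g$ (this is how I read the $L^\infty$ class here) and decompose $\mu = \mu_+ - \mu_-$ into its Jordan parts, so that it suffices to argue for a nonnegative finite measure. Using \eqref{eq:radon-meas}, the change-of-variables formula for push-forwards, and $\Glue(\bfx,\bftheta) = (S_\bftheta(\bfx),\bftheta)$,
\begin{equation*}
    \langle \Radon[\mu], g\rangle
    = \int_{\R \times \Sphere_1} g \,\d \Glue_\#[\mu \times u_{\Sphere_1}]
    = \int_{\R^2 \times \Sphere_1} g\bigl(S_\bftheta(\bfx),\bftheta\bigr) \,\d [\mu \times u_{\Sphere_1}](\bfx,\bftheta).
\end{equation*}
Carrying out the $u_{\Sphere_1}$-integration first and recalling $u_{\Sphere_1} = \sigma_{\Sphere_1}/2\pi$ gives
\begin{equation*}
    \langle \Radon[\mu], g\rangle
    = \int_{\R^2} \Bigl( \tfrac{1}{2\pi}\int_{\Sphere_1} g\bigl(S_\bftheta(\bfx),\bftheta\bigr) \,\d \sigma_{\Sphere_1}(\bftheta) \Bigr) \d \mu(\bfx)
    = \int_{\R^2} \Radon^*[g](\bfx) \,\d \mu(\bfx)
    = \langle \mu, \Radon^*[g]\rangle,
\end{equation*}
which is the asserted identity.

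The step that genuinely needs care is the application of Fubini: one must check that $(\bfx,\bftheta)\mapsto g(S_\bftheta(\bfx),\bftheta) = g(\Glue(\bfx,\bftheta))$ is Borel on $\R^2\times\Sphere_1$ and integrable against $|\mu|\times u_{\Sphere_1}$. Both follow at once — measurability from continuity of $\Glue$ together with Borel measurability of (the chosen representative of) $g$, and integrability from boundedness of $g$ and finiteness of $\mu$ and $u_{\Sphere_1}$ — but it is worth recording them explicitly, and it is the only place where anything could go wrong. A minor related point is that the left-hand pairing is, strictly speaking, sensitive to the representative of $g$ when $\Radon[\mu]$ fails to be absolutely continuous with respect to $\d t \otimes \d\sigma_{\Sphere_1}$ (as already for $\mu=\delta_{\bfzero}$), which is why I would fix a bounded Borel representative of $g$ once and for all; with that convention the identity above holds as stated and is exactly what is needed later.
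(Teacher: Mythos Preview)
Your proof is correct and follows essentially the same approach as the paper's own argument: unfold the push-forward definition of $\Radon[\mu]$, apply the change-of-variables formula, and use Fubini to recognize the inner $\bftheta$-integral as $\Radon^*[g]$. You are in fact more careful than the paper, which simply writes down the two-line computation without explicitly justifying Fubini or addressing the Jordan decomposition and the $L^\infty$-representative issue you raise.
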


\begin{proof}
    For all $\mu \in \M(\R^2)$ and $g \in \Lebesgue^\infty(\R \times \Sphere_1)$, applying Fubini's theorem gives
    \begin{align*}
        \langle \Radon [\mu], g\rangle
        & = \int_{\R \times \Sphere_1} g(t, \bftheta) \d \Glue_\#[\mu \times u_{\Sphere_1}] (t, \bftheta)
        \\
        & =
        \int_{\R^2} \int_{\Sphere_1} g(S_\bftheta(\bfx), \bftheta) \d u_{\Sphere_1}(\bftheta) \d \mu(\bfx).
    \end{align*}
\end{proof}

Note that, for $f \in \Lebesgue^1(\R^2)$ and the Lebesgue measure $\lambda_{\R^2}$ on $\R^2$,
the Radon transform satisfies 
\begin{equation*}
\Radon[f \, \lambda_{\R^2}]  =  \Radon [f] \, \sigma_{\R \times \Sphere_1},
\end{equation*}
where $\sigma_{\R \times \Sphere_1}$ denotes the surface measure on $\R \times \Sphere_1$.
In particular, the Radon transform of an absolutely continuous measure is again absolutely continuous.

\subsection{Radon Transform of Affine Transformations}
\label{sec:Radon_affine_transform}

We now consider the Radon transform of an affinely transformed finite measure $\mu \in \M(\R^2)$.
To this end, let $\bfA \in \GL(2)$ and $\bfy \in \R^2$,
i.e., $\bfA$ is contained in the general linear group $\GL$ of regular matrices.
We define $\mu_{\bfA,\bfy} \in \M(\R^2)$ via
\begin{equation}
    \label{eq:aff-meas}
    \mu_{\bfA,\bfy} 
    \coloneqq 
    (\bfA \cdot + \bfy)_{\#} \mu = \mu \circ (\bfA^{-1}(\cdot-\bfy)).
\end{equation}

\begin{proposition} \label{prop:RT_transformation}
    For any $\bftheta \in \Sphere_1$, the restricted Radon transform satisfies
    \begin{align*}
        \Radon_{\bftheta} [\mu_{\bfA,\bfy}] 
        & = 
        (\lVert \bfA^\top \bftheta\rVert \cdot + \langle \bfy, \bftheta\rangle)_{\#} 
        \Radon_{\frac{\bfA^\top \bftheta}{\lVert \bfA^\top \bftheta \rVert}}[\mu]
        \\
        & = 
        \Radon_{\frac{\bfA^\top \bftheta}{\lVert \bfA^\top \bftheta \rVert}}[\mu] 
        \circ
        \Bigl(\tfrac{\cdot - \langle \bfy, \bftheta\rangle}{\lVert \bfA^\top \bftheta \rVert}\Bigr).
    \end{align*}
\end{proposition}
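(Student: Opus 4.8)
The plan is to unwind both sides as pushforwards of $\mu$ and compare the resulting maps $\R^2\to\R$. Recall that by definition $\Radon_\bftheta[\nu] = (S_\bftheta)_\#\nu$ for any $\nu\in\M(\R^2)$, where $S_\bftheta(\bfx)=\langle\bfx,\bftheta\rangle$, and that $\mu_{\bfA,\bfy}=(\bfA\cdot+\bfy)_\#\mu$. First I would apply functoriality of pushforward, $(g\circ h)_\# = g_\#\circ h_\#$, to write
\begin{equation*}
    \Radon_\bftheta[\mu_{\bfA,\bfy}] = (S_\bftheta)_\#\,(\bfA\cdot+\bfy)_\#\,\mu = \bigl(S_\bftheta\circ(\bfA\cdot+\bfy)\bigr)_\#\,\mu.
\end{equation*}

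Next I would simplify the composite scalar map. For $\bfx\in\R^2$,
\begin{equation*}
    S_\bftheta(\bfA\bfx+\bfy) = \langle \bfA\bfx+\bfy,\bftheta\rangle = \langle \bfx,\bfA^\top\bftheta\rangle + \langle\bfy,\bftheta\rangle,
\end{equation*}
and writing $\bfA^\top\bftheta = \lVert\bfA^\top\bftheta\rVert\,\bfpsi$ with $\bfpsi\coloneqq \bfA^\top\bftheta/\lVert\bfA^\top\bftheta\rVert\in\Sphere_1$ (this is where $\bfA\in\GL(2)$ enters, guaranteeing $\bfA^\top\bftheta\neq\bfzero$, so the normalization is well-defined), the above equals $\lVert\bfA^\top\bftheta\rVert\,\langle\bfx,\bfpsi\rangle + \langle\bfy,\bftheta\rangle = \lVert\bfA^\top\bftheta\rVert\,S_\bfpsi(\bfx) + \langle\bfy,\bftheta\rangle$. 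Hence, denoting by $L\colon\R\to\R$ the affine map $L(s)=\lVert\bfA^\top\bftheta\rVert\,s + \langle\bfy,\bftheta\rangle$, we have $S_\bftheta\circ(\bfA\cdot+\bfy) = L\circ S_\bfpsi$, so again by functoriality
\begin{equation*}
    \Radon_\bftheta[\mu_{\bfA,\bfy}] = (L\circ S_\bfpsi)_\#\,\mu = L_\#\,(S_\bfpsi)_\#\,\mu = L_\#\,\Radon_\bfpsi[\mu],
\end{equation*}
which is precisely the first claimed identity. The second identity is just the rewriting $L_\#\,\nu = \nu\circ L^{-1}$ with $L^{-1}(t) = (t-\langle\bfy,\bftheta\rangle)/\lVert\bfA^\top\bftheta\rVert$, valid since $\lVert\bfA^\top\bftheta\rVert>0$.

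There is no real obstacle here; the statement is essentially a bookkeeping exercise in pushforward algebra. The only point requiring a word of care is that $\bfA^\top\bftheta\neq\bfzero$ so that the normalized direction $\bfpsi$ and the inverse $L^{-1}$ make sense — this is exactly why the hypothesis $\bfA\in\GL(2)$ is imposed. If one wanted to be fully explicit one could instead verify the identity by testing against an arbitrary bounded continuous $g\in C_0(\R)$ and computing $\int g\,\d\Radon_\bftheta[\mu_{\bfA,\bfy}] = \int g(\langle\bfA\bfx+\bfy,\bftheta\rangle)\,\d\mu(\bfx)$ and matching it with $\int g(L(s))\,\d\Radon_\bfpsi[\mu](s) = \int g(L(\langle\bfx,\bfpsi\rangle))\,\d\mu(\bfx)$, but the functorial argument already makes this transparent.
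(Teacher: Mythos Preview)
Your proof is correct and follows essentially the same approach as the paper: both unwind the definitions, use functoriality of pushforward to compose $S_\bftheta$ with the affine map, transpose $\bfA$ into the inner product, normalize $\bfA^\top\bftheta$, and recognize the result as an affine pushforward of $\Radon_\bfpsi[\mu]$. Your version is slightly more explicit about the role of $\bfA\in\GL(2)$ and the derivation of the second identity via $L^{-1}$, but the argument is the same.
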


\begin{proof}
    Direct calculations yield
    \begin{align*}
        \Radon_{\bftheta} [\mu_{\bfA,\bfy}]
        & = 
        (S_\bftheta)_{\#}[(\bfA \cdot + \bfy)_{\#} \mu]
        \\
        & = 
        (\langle \bfA \cdot + \bfy, \bftheta\rangle)_{\#} \mu \\
        & =
        (\langle \cdot, \bfA^\top \bftheta\rangle + \langle \bfy, \bftheta\rangle)_{\#} \mu 
        \\
        &=
        (\lVert \bfA^\top \bftheta\rVert 
        \bigl\langle \cdot, \tfrac{\bfA^\top \bftheta}{\lVert \bfA^\top \bftheta\rVert}\bigr\rangle 
        +
        \langle \bfy, \bftheta\rangle)_{\#} \mu
        \\
        & =
        (\lVert \bfA^\top \bftheta\rVert \cdot + \langle \bfy, \bftheta\rangle)_{\#} 
        \Radon_{\frac{\bfA^\top \bftheta}{\lVert \bfA^\top \bftheta \rVert}}[\mu],
    \end{align*}
    and the proof is complete.
\end{proof}

\begin{table*}[t]
    \resizebox{\linewidth}{!}{
        \begin{tabular}{l@{\enspace}l@{\enspace}l@{\enspace}l}
            \toprule
            transformation 
            & 
            $\bfA$
            &
            $\bfy$
            &
            $\Radon_{\bftheta(\vartheta)} [\mu_{\bfA, \bfy}]$,
            $\vartheta \in (-\tfrac{\pi}{2}, \tfrac{\pi}{2})$
            \\
            \midrule
            translation 
            & 
            $\bfI$ 
            &
            $\R^2$ 
            &
            $\Radon_{\bftheta(\vartheta)} [\mu] \circ (\cdot - \langle \bfy, \bftheta(\vartheta) \rangle)$ 
            \\[1ex]
            rotation
            & 
            $\bigl(\begin{smallmatrix} \cos(\varphi) & -\sin(\varphi) \\ \sin(\varphi) & \hphantom{-}\cos(\varphi) \end{smallmatrix}\bigr)$
            &
            $\bfzero$ 
            & 
            $\Radon_{\bftheta(\vartheta - \varphi)} [\mu]$ 
            \\[1ex]
            reflection 
            &
            $\bigl(\begin{smallmatrix} \cos(\varphi) & \hphantom{-}\sin(\varphi) \\ \sin(\varphi) & -\cos(\varphi) \end{smallmatrix}\bigr)$
            &
            $\bfzero$  
            &
            $\Radon_{\bftheta(\varphi - \vartheta)} [\mu]$ 
            \\[1ex]
            anisotropic scaling 
            & 
            $\bigl(\begin{smallmatrix} a & 0 \\ 0 & b \end{smallmatrix}\bigr)$
            &
            $\bfzero$  
            &
            $\Radon_{\bftheta(\arctan(\frac{b}{a} \tan(\vartheta)))}[\mu] \circ ([a^2 \cos^2(\vartheta) + b^2 \sin^2(\vartheta)]^{-1/2} \cdot)$ 
            \\[1ex]
            vertical shear
            & 
            $\bigl(\begin{smallmatrix} 1 & 0 \\ c & 1 \end{smallmatrix}\bigr)$
            &
            $\bfzero$ 
            & 
            $\Radon_{\bftheta(\arctan(c + \tan(\vartheta)))}[\mu] \circ ([1 + c^2 \cos^2(\vartheta) + c \sin(2\vartheta)]^{-1/2} \cdot)$
            \\
            \bottomrule
        \end{tabular}
    }
    \caption{Summary of common transformations for $\mu \in \M(\R^2)$ 
        with $a,b > 0$ and $c, \varphi \in \R$.
        The unit circle is parameterized by
        $\bftheta(\vartheta) \coloneqq (\cos(\vartheta), \sin(\vartheta))^\top$.
        The Radon transform for the left half of $\Sphere_1$ follows by symmetry.}
    \label{tab:radon-aff-trans}
\end{table*}

The effect of common affine transformations on the Radon transform
is given in Table~\ref{tab:radon-aff-trans}.
In order to describe the deformation with respect to $\bftheta$,
we over-parameterize the unit circle $\Sphere_1$ via
$\bftheta(\vartheta) \coloneqq (\cos(\vartheta), \sin(\vartheta))^\top$, $\vartheta \in \R$.
As by Proposition~\ref{prop:RT_transformation},
an affine transformation essentially causes a translation and dilation
of the transformed measure 
together with a non-affine remapping in $\bftheta$.

\section{Optimal Transport-Based Transforms}
\label{sec:ot-dist}

The aim of the following is to introduce an image distance
that is unaware of affine transformations.
Methodologically,
we rely on the \emph{Radon cumulative distribution transform} (R-CDT)
introduced in \cite{Kolouri2016},
which allows to utilize the fast-to-compute, one-dimensional Wasserstein distance
in the context of image processing 
due to a Radon-based slicing technique.
As the R-CDT is not invariant under affine transformation by itself,
we propose a two-step normalization scheme,
which is essentially grounded on our observations 
regarding the Radon transform under affine transformations
in §~\ref{sec:Radon_affine_transform}.
Finally,
we study the linear separability
of affinely transformed image classes
by our novel normalized R-CDT.

\subsection{R-CDT for Measures}

The R-CDT traces back to Kolouri et al.\ \cite{Kolouri2016}
and transforms smooth, bivariate density functions.
In contrast to \cite{Kolouri2016},
we introduce the concept
for arbitrary probability measures,
similar to~\cite{DiazMartin2024}.
In a first step,
we consider probability measures $\P(\R) \subset \M(\R)$ 
defined on the real line.
For $\mu \in \P(\R)$,
the \emph{cumulative distribution function}
$F_\mu \colon \R \to [0,1]$
is given by $F_\mu(t) \coloneqq \mu((-\infty,t])$, $t \in \R$.
Its generalized inverse,
known as \emph{quantile function},
reads as
\begin{equation*}
    F_\mu^{[-1]}(t) \coloneqq \inf \{s \in \R \mid F_\mu(s) > t\},
    \quad t \in \R.
\end{equation*}
Based on a reference measure $\rho \in \P(\R)$
that does not give mass to atoms,
e.g.,
the uniform distribution
$u_{[0,1]}$ on $[0,1]$,
we define
the {\em cumulative distribution transform}
$\widehat{\mu} \colon \R \to \R$, in short CDT, via
\begin{equation}
    \label{eq:cdt}
    \widehat{\mu} \coloneqq F_\mu^{[-1]} \circ F_\rho.
\end{equation}
Throughout the paper we use a fixed $\rho$,
but all our results are independent from it's choice.
For $\rho = u_{[0,1]}$, the CDT reduces to \smash{$\widehat{\mu} = F_\mu^{[-1]}$}.
In applications, however, a different choice of $\rho$ might be advantageous.

For any convex cost function $c \colon \R \to [0,\infty)$,
the CDT
(with respect to $\rho$) 
solves the Monge--Kantorovich transportation problem
\cite{Villani2003},
i.e.,
\begin{equation}
    \label{eq:monge}
    \widehat{\mu} = \argmin_{T_\# \rho = \mu} \int_\R c(s-T(s))  \d \rho(s),
\end{equation}
where the minimum is taken over all measurable functions $T \colon \R \to \R$.
In other words,
$\widehat{\mu} \colon \R \to \R$ is an optimal Monge map
transporting $\rho$ to $\mu$
while minimizing the cost.
If $\mu \in \P_2(\R)$, i.e., $\mu$ has finite 2nd moment,
then $\widehat{\mu}$ is square integrable
with respect to $\rho$,
i.e.,
$\widehat{\mu} \in \Lebesgue^2_\rho(\R)$.
Moreover,
for $\mu, \nu \in \P_2(\R)$,
the norm distance
\begin{equation*}
    \lVert \widehat{\mu} - \widehat{\nu} \rVert_\rho
    \coloneqq
    \Bigl(
        \int_\R
        \lvert \widehat{\mu}(t) - \widehat{\nu}(t) \rvert^2
        \d \rho(t)
    \Bigr)^{\frac{1}{2}}
\end{equation*}
equals the well-established Wasserstein-2 distance 
\cite{Villani2003}.

To deal with a probability measure $\mu \in \P(\R^2)$
defined on the plane,
we first determine the Radon transform $\Radon[\mu] \in \M(\R \times \Sphere_1)$ 
with its disintegration family $\{\Radon_\bftheta[\mu] \in \P(\R) \mid \bftheta \in \Sphere_1 \}$.
Then,
for each fixed $\bftheta \in \Sphere_1$, 
we consider the CDT $\widehat{\Radon}_\bftheta [\mu]$
(with respect to the same reference measure $\rho \in \P(\R)$
for all $\bftheta \in \Sphere_1$)
of the Radon projection $\Radon_\bftheta [\mu]$, 
yielding the \emph{R-CDT}\, 
$\widehat{\Radon}[\mu] \colon \R \times \Sphere_1 \to \R$ of $\mu$ via
\begin{equation*}
    \widehat{\Radon} [\mu](t,\bftheta) 
    \coloneqq \widehat{\Radon}_\bftheta [\mu](t),
    \quad  (t,\bftheta) \in \R \times \Sphere_1.
\end{equation*}
If $\mu \in \P_2(\R^2)$,
then the Radon projection $\Radon_\bftheta [\mu] \in \P_2(\R)$
has finite 2nd moment as well.
Consequently,
$\widehat{\Radon}[\mu] \in \Lebesgue_{\rho \times u_{\Sphere_1}}^2(\R \times \Sphere_1)$.
For $\mu, \nu \in \P_2(\R^2)$,
the norm distance
\begin{align*}
    &\lVert 
        \widehat{\Radon}[\mu] - \widehat{\Radon}[\nu]
    \rVert_{\rho \times u_{\Sphere_1}}
    \coloneqq
    \\
    &\Bigl(
        \int_{\Sphere_1} \int_\R
        \lvert 
            \widehat{\Radon}[\mu](t, \bftheta)
            -
            \widehat{\Radon}[\nu](t, \bftheta)
        \rvert^2
        \d \rho(t) \d u_{\Sphere_1}(\bftheta)
    \Bigr)^{\frac{1}{2}}
\end{align*}
is also called sliced Wasserstein-2 distance in the literature \cite{Bonneel2015}.

Another generalization of the CDT to multi-dimensional domains
is the so-called linear optimal transport (LOT) \cite{Moosmueller2023}.
This approach is based on the fact that
the CDT computes an optimal Monge map,
i.e.,
a solution to \eqref{eq:monge}.
Similarly,
LOT maps a given measure to an optimal
multi-dimensional Monge map $T \colon \R^d \to \R^d$.
The key feature of both transforms%
---R-CDT and LOT---%
is that they allow for linear separation
of certain measure classes.
The theoretical advantage of LOT is that 
this mapping can be interpreted as lifting
to the Wasserstein tangent space,
which is not the case for R-CDT.
On the downside,
the computation of LOT is numerical more challenging
and requires approximations.
In contrast to this,
the R-CDT can be computed analytically.

\subsection{Normalized R-CDT}

The R-CDT is by itself not invariant under affine transformations,
which emerge in various applications.
More precisely,
the R-CDT inherits the behavior of the Radon transform
observed in §~\ref{sec:Radon_affine_transform}.
Notice that
the translation and dilation of $\Radon_\bftheta[\mu]$
causes a horizontal shift (addition of a constant)
and a scaling (multiplication with a constant)
of $\widehat{\Radon}_\bftheta [\mu]$,
respectively.
In the first normalization step,
we revert these effects by
ensuring zero mean and unit standard deviation
of the R-CDT projection.
More precisely,
we define the {\em normalized R-CDT} (NR-CDT)
$\NRCDT [\mu] \colon \R \times \Sphere_1 \to \R$ 
of $\mu \in \P_2(\R^2)$ via
\begin{equation*}
    \NRCDT [\mu](t,\bftheta) 
    \coloneqq
    \NRCDT_\bftheta[\mu](t),
    \quad (t,\bftheta) \in \R \times \Sphere_1,
\end{equation*}
with
\begin{equation*}
    \NRCDT_\bftheta[\mu](t)
    \coloneqq
    \frac
    {\widehat{\Radon}_\bftheta[\mu](t) 
    - 
    \mean(\widehat{\Radon}_\bftheta[\mu])}
    {\std(\widehat{\Radon}_\bftheta[\mu])},
\end{equation*}
where, for $g \in \Lebesgue^2_\rho(\R)$,
\begin{equation*}
    \mean(g) 
    \coloneqq 
    \int_\R g(s) \d \rho(s)
\end{equation*}
and
\begin{equation*}
    \std(g) 
    \coloneqq 
    \Bigr(
        \int_\R 
        \lvert g(s) - \mean(g) \rvert^2 
        \d \rho(s)
    \Bigr)^{\frac{1}{2}}.
\end{equation*}

To ensure
that the NR-CDT is well defined,
we have to guarantee
that the standard deviation of the R-CDT projection does not vanish.
For this,
we restrict ourselves to measures
whose supports are not contained in a straight line.
More precisely,
we consider the class
\begin{align*}
    \P_\c^*(\R^2) 
    \coloneqq
    \{\mu \in \P(\R^2)
    \mid
    & \supp(\mu) \subsubset \R^2
    \land \\
    & \dim(\supp(\mu)) > 1\}
\end{align*}
satisfying
\begin{equation*}
    \P_\c^*(\R^2) 
    \subset
    \P_2(\R^2).
\end{equation*}
Here,
$\subsubset$ denotes a compact subset, 
and $\dim$ the dimension 
of the affine hull.
For these,
the standard deviation of the restricted Radon transform
is bounded away from zero and
cannot vanish.
If $\dim(\supp(\mu)) = 1$,
$\mu$ is supported on a line and
$\NRCDT_\bftheta[\mu]$ is well-defined
for all $\bftheta \in \Sphere^1$
except the two being
orthogonal to the line.
If $\mu$ is a single point measure,
$\NRCDT_\bftheta[\mu]$ is not well-defined
for any $\bftheta \in \Sphere^1$.

\begin{proposition} 
    \label{prop:sigma_bounded}
    Let $\mu \in \P_\c^*(\R^2)$.
    Then, there exists a constant $c > 0$ such that
    \begin{equation*}
        \std(\widehat{\Radon}_\bftheta [\mu]) \geq c
        \quad \forall \, \bftheta \in \Sphere_1.
    \end{equation*}
\end{proposition}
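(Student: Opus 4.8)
The plan is to identify $\std(\widehat{\Radon}_\bftheta[\mu])^{2}$ with the variance of the one‑dimensional projected measure $\Radon_\bftheta[\mu] = (S_\bftheta)_{\#}\mu$, to rewrite this variance via the covariance matrix of $\mu$, and to extract the bound from the smallest eigenvalue of that matrix, which is strictly positive precisely because $\supp(\mu)$ is two‑dimensional.

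First I would recall that, as already noted in the excerpt, the CDT $\widehat{\nu}=F_\nu^{[-1]}\circ F_\rho$ of a measure $\nu\in\P_2(\R)$ is the optimal transport map pushing the atomless reference $\rho$ forward to $\nu$; in particular $\widehat{\nu}_{\#}\rho=\nu$, so that $\int_\R h(\widehat{\nu}(s))\,\d\rho(s)=\int_\R h(t)\,\d\nu(t)$ for every $\rho$‑integrable $h\circ\widehat\nu$. Taking $h(t)=t$ gives $\mean(\widehat{\nu})=\int_\R t\,\d\nu(t)$, and then $h(t)=(t-\mean(\widehat\nu))^{2}$ gives $\std(\widehat{\nu})^{2}=\int_\R (t-\mean(\widehat\nu))^{2}\,\d\nu(t)$; that is, the CDT preserves mean and variance. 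Specializing to $\nu=\Radon_\bftheta[\mu]=(S_\bftheta)_{\#}\mu$ and changing variables once more along $\bfx\mapsto\langle\bfx,\bftheta\rangle$ yields
\begin{equation*}
    \std(\widehat{\Radon}_\bftheta[\mu])^{2}
    = \int_{\R^2}\bigl(\langle\bfx-\mathbf{m},\bftheta\rangle\bigr)^{2}\,\d\mu(\bfx)
    = \bftheta^\top\Sigma_\mu\,\bftheta ,
\end{equation*}
where $\mathbf{m}\coloneqq\int_{\R^2}\bfx\,\d\mu(\bfx)$ and $\Sigma_\mu\coloneqq\int_{\R^2}(\bfx-\mathbf{m})(\bfx-\mathbf{m})^\top\,\d\mu(\bfx)$; both are finite since $\P_\c^*(\R^2)\subset\P_2(\R^2)$.

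It then remains to show that the symmetric positive‑semidefinite matrix $\Sigma_\mu$ is in fact positive definite, for in that case $c\coloneqq\lambda_{\min}(\Sigma_\mu)^{1/2}>0$ does the job, because $\bftheta^\top\Sigma_\mu\bftheta\ge\lambda_{\min}(\Sigma_\mu)$ for every $\bftheta\in\Sphere_1$. This is where the assumption $\dim(\supp(\mu))>1$ is used: if $\Sigma_\mu$ were singular, say $\bftheta_0^\top\Sigma_\mu\bftheta_0=0$ for some unit vector $\bftheta_0$, then $\langle\bfx-\mathbf{m},\bftheta_0\rangle=0$ for $\mu$‑a.e.\ $\bfx$, hence on all of $\supp(\mu)$ by continuity, which would force $\supp(\mu)$ into the line $\{\bfx:\langle\bfx-\mathbf{m},\bftheta_0\rangle=0\}$ and contradict $\dim(\aff(\supp(\mu)))>1$. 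Therefore $\Sigma_\mu\succ0$ and the claim follows with the explicit constant above.

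I expect the only point requiring care to be the justification of the change‑of‑variables identities, i.e.\ that the CDT transports $\rho$ to $\mu$ (resp.\ $\rho$ to $\Radon_\bftheta[\mu]$) and thus preserves the first two moments; this rests on the standard one‑dimensional optimal transport theory together with $\rho$ being atomless, and is otherwise routine, as is the positive‑definiteness argument. I note that compactness of $\supp(\mu)$ is needed here only to ensure finite second moments; a softer alternative that avoids the covariance matrix would be to observe that $\bftheta\mapsto\std(\widehat{\Radon}_\bftheta[\mu])$ is continuous on the compact set $\Sphere_1$ and pointwise strictly positive—since $\Radon_\bftheta[\mu]$ is never a Dirac measure when $\supp(\mu)$ is two‑dimensional—and hence attains a positive minimum.
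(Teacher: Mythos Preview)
Your proof is correct and takes a genuinely different route from the paper. The paper first establishes a lemma that $\bftheta \mapsto \std(\widehat{\Radon}_\bftheta[\mu])$ is continuous on $\Sphere_1$, then argues by contradiction: if the infimum were zero it would be attained by compactness, forcing $\supp(\mu)$ into a line. You instead identify $\std(\widehat{\Radon}_\bftheta[\mu])^2$ with the quadratic form $\bftheta^\top \Sigma_\mu \bftheta$ and read the bound off the smallest eigenvalue of the covariance matrix, whose positive definiteness you derive from the same geometric obstruction. Your argument is more explicit---it produces the concrete constant $c = \lambda_{\min}(\Sigma_\mu)^{1/2}$ rather than a bare existence statement---and bypasses the separate continuity lemma entirely; the paper's approach, on the other hand, generalizes more readily to the later settings in §\,4 where the slicing operator is nonlinear and no covariance representation is available. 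Amusingly, the ``softer alternative'' you sketch in your final sentence is precisely the paper's proof.
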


For the proof,
we first show the following continuity.

\begin{lemma}
    For fixed $\mu \in \P_\c^*(\R^2)$,
    the functions
    $\bftheta \in \Sphere_1 \mapsto \mean(\widehat{\Radon}_\bftheta [\mu]) \in \R$
    and
    $\bftheta \in \Sphere_1 \mapsto \std(\widehat{\Radon}_\bftheta [\mu]) \in \R_{\geq 0}$
    are continuous.
\end{lemma}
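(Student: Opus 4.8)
The plan is to realize the map $\bftheta \mapsto \bigl(\mean(\widehat{\Radon}_\bftheta[\mu]),\, \std(\widehat{\Radon}_\bftheta[\mu])\bigr)$ as a composition of Lipschitz maps through the Wasserstein space $(\P_2(\R), W_2)$ and the Hilbert space $L^2_\rho(\R)$, so that continuity — indeed Lipschitz continuity — follows link by link.

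First I would show that $\bftheta \mapsto \Radon_\bftheta[\mu] = (S_\bftheta)_\#\mu$ is Lipschitz from $\Sphere_1$ into $(\P_2(\R), W_2)$. For $\bftheta, \bfpsi \in \Sphere_1$, the push-forward of $\mu$ under $\bfx \mapsto (S_\bftheta(\bfx), S_\bfpsi(\bfx))$ is a coupling of $\Radon_\bftheta[\mu]$ and $\Radon_\bfpsi[\mu]$, and bounding $W_2^2$ by the cost of this plan gives
\begin{equation*}
    W_2^2\bigl(\Radon_\bftheta[\mu], \Radon_\bfpsi[\mu]\bigr)
    \le \int_{\R^2} \lvert \langle \bfx, \bftheta - \bfpsi \rangle \rvert^2 \d\mu(\bfx)
    \le \lVert \bftheta - \bfpsi \rVert^2 \int_{\R^2} \lVert \bfx \rVert^2 \d\mu(\bfx),
\end{equation*}
and the last integral is finite because $\P_\c^*(\R^2) \subset \P_2(\R^2)$; in particular $\Radon_\bftheta[\mu] \in \P_2(\R)$ for every $\bftheta$.

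Next I would invoke the isometry recorded in §~\ref{sec:ot-dist}: the CDT $\nu \mapsto \widehat\nu = F_\nu^{[-1]} \circ F_\rho$ satisfies $\lVert \widehat\mu - \widehat\nu \rVert_\rho = W_2(\mu,\nu)$ for $\mu, \nu \in \P_2(\R)$, and the same atomless reference $\rho$ is used for all $\bftheta$ by construction; composing with the previous step makes $\bftheta \mapsto \widehat{\Radon}_\bftheta[\mu] \in L^2_\rho(\R)$ Lipschitz, hence continuous. Finally, on $L^2_\rho(\R)$ with $\rho$ a probability measure, $\mean$ is $1$-Lipschitz by Cauchy--Schwarz, namely $\lvert\mean(g)-\mean(h)\rvert \le \int_\R\lvert g-h\rvert\d\rho \le \lVert g-h\rVert_\rho$, the affine map $g \mapsto g - \mean(g)$ is bounded, and $\std(g) = \lVert g - \mean(g)\rVert_\rho$ is its composition with the norm, hence Lipschitz. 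Chaining the three maps yields the claim.

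I do not expect a real obstacle; the only subtlety is to keep the §~\ref{sec:ot-dist} isometry tied to one fixed atomless $\rho$ independent of $\bftheta$, which holds by construction. If one prefers to bypass that isometry, an equivalent route is: weak convergence of the uniformly compactly supported measures $\Radon_{\bftheta_n}[\mu]$ (immediate from $\int_{\R^2} f(\langle\bfx,\bftheta_n\rangle)\d\mu(\bfx) \to \int_{\R^2} f(\langle\bfx,\bftheta\rangle)\d\mu(\bfx)$ for $f \in C_b(\R)$ by dominated convergence) forces a.e.\ convergence on $(0,1)$ of the associated quantile functions, and then dominated convergence — all quantiles being bounded by $\max_{\bfx\in\supp(\mu)}\lVert\bfx\rVert$ — together with $(F_\rho)_\#\rho = u_{[0,1]}$ gives the desired $L^2_\rho$-convergence of $\widehat{\Radon}_{\bftheta_n}[\mu]$.
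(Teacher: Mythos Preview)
Your argument is correct and in fact yields more than the paper asks: you obtain Lipschitz continuity, not merely continuity. The route, however, is genuinely different from the paper's. The paper never passes through Wasserstein space; instead it uses the push-forward identity $(\widehat{\Radon}_\bftheta[\mu])_\# \rho = \Radon_\bftheta[\mu]$ to rewrite the mean and variance \emph{directly} as integrals over $\R^2$ with respect to $\mu$, namely $\mean(\widehat{\Radon}_\bftheta[\mu]) = \int_{\R^2} \langle \bfx, \bftheta \rangle \d\mu(\bfx)$ and analogously for the variance, after which continuity in $\bftheta$ is immediate by dominated convergence with the uniform bound $\lvert\langle \bfx, \bftheta\rangle\rvert \le \lVert \bfx \rVert$. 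What the paper's approach buys is self-containment: it needs only the change-of-variables formula and avoids invoking the $W_2$--$L^2_\rho$ isometry. What your approach buys is structure and a sharper conclusion: the Lipschitz constant is explicit (governed by the second moment of $\mu$), and the same factorisation would carry over verbatim to the generalized slicing operators of §\ref{sec:gen-NRCDT} whenever $\bftheta \mapsto S_{\phi,\bftheta}(\bfx)$ is Lipschitz uniformly on compacta.
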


\begin{proof}
    We rewrite the mean as
    \begin{align*}
        \mean(\widehat{\Radon}_\bftheta[\mu])
        & =
        \int_\R \widehat{\Radon}_\bftheta [\mu] (t) \d \rho(t) \\
        & =
        \int_\R t  \d \Radon_\bftheta [\mu] (t) \\
        & =
        \int_{\R^2} \langle \bfx, \bftheta \rangle \d \mu(\bfx).
    \end{align*}
    Since the integrand is continuous in $\bftheta$
    and uniformly bounded by 
    $\lvert \langle \cdot, \bftheta \rangle \rvert \le \lVert \cdot \rVert$,
    the dominated convergence yields the assertion.
    Analogously,
    we have
    \begin{align*}
        \std(\widehat{\Radon}_\bftheta [\mu])
        & =
        \Bigl(
        \int_\R 
        \lvert
            \widehat{\Radon}_\bftheta [\mu] (t)
            -
            \mean(\widehat{\Radon}_\bftheta [\mu])
        \rvert^2
        \d \rho(t)
        \Bigr)^{\frac{1}{2}}
        \\
        & =
        \Bigl(
        \int_{\R^2} 
        \lvert
            \langle \bfx, \bftheta \rangle
            -
            \mean(\widehat{\Radon}_\bftheta [\mu])
        \rvert^2
        \d \mu(\bfx)
        \Bigr)^{\frac{1}{2}}.
    \end{align*}
    The integrand is again continuous in $\bftheta$
    and uniformly bounded by
    \begin{align*}
        \lvert
            \langle \cdot, \bftheta \rangle
            -
            \mean(\widehat{\Radon}_\bftheta [\mu])
        \rvert^2
        \le 
        & 2 \lVert \cdot \rVert^2 
        + \\
        & 2 \max_{\bftheta \in \Sphere_1} \;
        (\mean(\widehat{\Radon}_\bftheta [\mu]))^2;
    \end{align*}
    thus,
    the standard deviation is continuous by dominated convergence.
\end{proof}

\begin{proof}[Proof of Proposition~\ref{prop:sigma_bounded}]
    Assume the contrary,
    i.e.,
    $c = 0$.
    Then,
    due to the continuity of $\bftheta \mapsto \std(\widehat{\Radon}_\bftheta [\mu])$,
    there exists a minimizing and convergent sequence in $\Sphere_1$
    whose limit $\bftheta$ is attained and satisfies
    $\std(\widehat{\Radon}_\bftheta [\mu]) = 0$,
    i.e.,
    \begin{equation*}
        \int_{\R^2} \lvert \langle \bfx, \bftheta \rangle - \mean(\widehat{\Radon}_\bftheta [\mu]) \rvert^2 \d \mu(\bfx) = 0.
    \end{equation*}
    Hence, 
    the support of $\mu$ is contained in the line 
    $\{\bfx \in \R^2 \mid \langle \bfx, \bftheta \rangle = \mean(\widehat{\Radon}_\bftheta [\mu])\}$
    in contradiction to $\mu \in \P_\c^*(\R^2)$.
\end{proof}

The NR-CDT is nearly invariant under affine transformations
up to bijective remappings of the directions,
i.e.,
up to a resorting of the family 
$\{\NRCDT_\bftheta[\mu] \mid \bftheta \in \Sphere_1\}$.

\begin{proposition}
    Let $\mu \in \P_\c^*(\R^2)$,
    $\bfA \in \GL(2)$,
    $\bfy \in \R^2$,
    and $\mu_{\bfA, \bfy}$ as in \eqref{eq:aff-meas}.
    Then, for any $\bftheta \in \Sphere_1$,
    the NR-CDT satisfies
    \begin{equation*}
        \NRCDT_\bftheta[\mu_{\bfA, \bfy}] 
        =
        \NRCDT_{\frac{\bfA^\top \bftheta}{\lVert\bfA^\top \bftheta\rVert}} [\mu].
    \end{equation*}
\end{proposition}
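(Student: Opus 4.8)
The plan is to reduce the claim to the scalar equivariance of the CDT under increasing affine pushforwards, and then to observe that the centering and rescaling built into the NR-CDT absorb the resulting constants.

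First I would check that the right-hand side makes sense: since $\bfA \in \GL(2)$, we have $\bfA^\top\bftheta \neq \bfzero$, so $\bftheta' \coloneqq \bfA^\top\bftheta/\lVert\bfA^\top\bftheta\rVert \in \Sphere_1$ is well defined, and an invertible affine map sends compact sets to compact sets and preserves the dimension of the affine hull of the support, hence $\mu_{\bfA,\bfy} \in \P_\c^*(\R^2)$ and $\NRCDT_\bftheta[\mu_{\bfA,\bfy}]$ is well defined by Proposition~\ref{prop:sigma_bounded}.

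Next comes the core step. Proposition~\ref{prop:RT_transformation} gives $\Radon_\bftheta[\mu_{\bfA,\bfy}] = (a\cdot + b)_\# \Radon_{\bftheta'}[\mu]$ with $a \coloneqq \lVert\bfA^\top\bftheta\rVert > 0$ and $b \coloneqq \langle\bfy,\bftheta\rangle$. I would then record the elementary fact that for any $\sigma \in \P(\R)$ and increasing affine map $\phi(s) = as + b$, $a > 0$, one has $F_{\phi_\#\sigma}(t) = F_\sigma\bigl((t-b)/a\bigr)$ and consequently $F_{\phi_\#\sigma}^{[-1]} = a\,F_\sigma^{[-1]} + b$; composing with $F_\rho$ yields $\widehat{\phi_\#\sigma} = a\,\widehat{\sigma} + b$. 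Applied with $\sigma = \Radon_{\bftheta'}[\mu]$, this gives $\widehat{\Radon}_\bftheta[\mu_{\bfA,\bfy}] = a\,\widehat{\Radon}_{\bftheta'}[\mu] + b$.

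Finally I would substitute this into the definition of $\NRCDT_\bftheta$, using $\mean(a g + b) = a\,\mean(g) + b$ and $\std(a g + b) = a\,\std(g)$ (valid since $a > 0$): the additive constant $b$ cancels in the numerator and the positive factor $a$ cancels between numerator and denominator, leaving precisely $\NRCDT_{\bftheta'}[\mu]$. I do not expect a genuine obstacle; the only point needing slight care is the quantile identity for affine pushforwards at the level of the generalized inverse rather than only where densities exist, but this follows routinely from monotonicity of $F_\sigma$ and of $\phi$.
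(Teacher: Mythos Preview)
Your proposal is correct and follows essentially the same route as the paper: both use Proposition~\ref{prop:RT_transformation} to obtain $\widehat{\Radon}_\bftheta[\mu_{\bfA,\bfy}] = \lVert\bfA^\top\bftheta\rVert\,\widehat{\Radon}_{\bftheta'}[\mu] + \langle\bfy,\bftheta\rangle$ and then observe that the centering and rescaling in $\NRCDT_\bftheta$ cancel the affine constants. You are slightly more explicit than the paper in spelling out the quantile identity $\widehat{\phi_\#\sigma} = a\,\widehat\sigma + b$ and in checking that $\mu_{\bfA,\bfy}\in\P_\c^*(\R^2)$, but the argument is the same.
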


\begin{proof}
    Transferring Proposition~\ref{prop:RT_transformation} to the CDT space,
    we have
    \begin{equation*}
        \widehat{\Radon}_\bftheta [\mu_{\bfA, \bfy}](t)
        =
        \lVert \bfA^\top \bftheta \rVert \,
        \widehat{\Radon}_{\phi_\bfA(\bftheta)} [\mu](t)
        + 
        \langle \bfy, \bftheta \rangle
    \end{equation*}
    with the bijection $\phi_\bfA (\bftheta) \coloneqq (\bfA^\top \bftheta) / \lVert \bfA^\top \bftheta \rVert$,
    $\bftheta \in \Sphere_1$;
    so that
    \begin{equation*}
        \mean(\widehat{\Radon}_\bftheta [\mu_{\bfA,\bfy}]) 
        =
        \lVert \bfA^\top \bftheta \rVert  
        \mean (\widehat{\Radon}_{\phi_\bfA(\bftheta)} [\mu] ) 
        +
        \langle \bfy, \bftheta \rangle
    \end{equation*}
    and
    \begin{equation*}
        \std(\widehat{\Radon}_\bftheta [\mu_{\bfA, \bfy}]) 
        =
        \lVert \bfA^\top \bftheta \rVert 
        \std(\widehat{\Radon}_{\phi_\bfA(\bftheta)} [\mu]).
    \end{equation*}
    Consequently,
    \begin{align*}
        \NRCDT_\bftheta [\mu_{\bfA, \bfy}](t) 
        & =
        \frac
        {
            \widehat{\Radon}_{\phi_\bfA(\bftheta)} [\mu](t)
            - 
            \mean(\widehat{\Radon}_{\phi_\bfA(\bftheta)} [\mu])
        }{
            \std(\widehat{\Radon}_{\phi_\bfA(\bftheta)} [\mu])
        }
        \\
        & =
        \NRCDT_{\phi_\bfA(\bftheta)} [\mu](t).
    \end{align*}
\end{proof}

\subsection{Max-Normalized R-CDT}
\label{sec:mnrcdt}

In the final normalization step,
we treat the resorting of 
$\{\NRCDT_\bftheta[\mu] \mid \bftheta \in \Sphere_1\}$.
Since the underlying mapping is unknown in general,
we propose to take the supremum over all directions.
More precisely,
for $\mu \in \P_\c^*(\R^2)$, 
we define its \emph{max-normalized R-CDT} (\mNRCDT) $\maxNRCDT [\mu] \colon \R \to \R$ 
via
\begin{equation*}
    \maxNRCDT [\mu](t) 
    \coloneqq  
    \sup_{\bftheta \in \Sphere_1} \NRCDT_\bftheta [\mu](t),
    \quad  t \in \R.
\end{equation*}
We show that
$\maxNRCDT$ maps a given measure
to a bounded function
so that
the \mNRCDT{} space $\maxNRCDT[\P_\c^*(\R^2)]$
is contained in $\Lebesgue_\rho^\infty(\R)$ for the underlying reference measure $\rho \in \P(\R)$.

\begin{proposition}
	\label{prop:max-bounded}
    Let $\mu \in \P_\c^*(\R^2)$.
    Then, $\maxNRCDT [\mu] \in \Lebesgue^\infty_\rho(\R)$.
\end{proposition}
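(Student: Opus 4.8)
The plan is to bound the numerator and the denominator of $\NRCDT_\bftheta[\mu]$ separately and \emph{uniformly} in $\bftheta$, so that the bound survives taking the supremum over $\bftheta \in \Sphere_1$.

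First I would exploit the compact support: since $\mu \in \P_\c^*(\R^2)$, there is $R > 0$ with $\supp(\mu) \subseteq \{\bfx \in \R^2 \mid \lVert \bfx \rVert \le R\}$. For every $\bftheta \in \Sphere_1$ this forces $\supp(\Radon_\bftheta[\mu]) = \supp((S_\bftheta)_\#\mu) \subseteq [-R,R]$, because $\lvert S_\bftheta(\bfx) \rvert = \lvert \langle \bfx, \bftheta \rangle \rvert \le \lVert \bfx \rVert$. Hence the quantile function $F_{\Radon_\bftheta[\mu]}^{[-1]}$ takes values in $[-R,R]$ on $(0,1)$, and therefore $\widehat{\Radon}_\bftheta[\mu](t) = F_{\Radon_\bftheta[\mu]}^{[-1]}(F_\rho(t)) \in [-R,R]$ for every $t$ with $F_\rho(t) \in (0,1)$. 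The key point --- and the step I expect to be the main obstacle --- is making the exceptional set uniform in $\bftheta$: the set $N \coloneqq \{t \in \R \mid F_\rho(t) \in \{0,1\}\}$ depends only on $\rho$, not on $\bftheta$, and since $\rho$ gives no mass to atoms it is $\rho$-null. Consequently, for $\rho$-a.e.\ $t$ we have $\lvert \widehat{\Radon}_\bftheta[\mu](t) \rvert \le R$ for \emph{all} $\bftheta$ simultaneously, and integrating this (or using the identity $\mean(\widehat{\Radon}_\bftheta[\mu]) = \int_{\R^2}\langle\bfx,\bftheta\rangle\,\d\mu(\bfx)$ established in the lemma above) also gives $\lvert \mean(\widehat{\Radon}_\bftheta[\mu]) \rvert \le R$.

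Next I would invoke Proposition~\ref{prop:sigma_bounded} to obtain a constant $c > 0$ with $\std(\widehat{\Radon}_\bftheta[\mu]) \ge c$ for every $\bftheta \in \Sphere_1$. Combining the two bounds yields
\[
    \lvert \NRCDT_\bftheta[\mu](t) \rvert
    = \frac{\lvert \widehat{\Radon}_\bftheta[\mu](t) - \mean(\widehat{\Radon}_\bftheta[\mu]) \rvert}{\std(\widehat{\Radon}_\bftheta[\mu])}
    \le \frac{2R}{c}
\]
for $\rho$-a.e.\ $t$ and all $\bftheta \in \Sphere_1$, so that $\lvert \maxNRCDT[\mu](t) \rvert \le 2R/c$ for $\rho$-a.e.\ $t$. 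Finally, for measurability I would note that each $\NRCDT_\bftheta[\mu]$ is non-decreasing in $t$, being a positive affine rescaling of the monotone map $F_{\Radon_\bftheta[\mu]}^{[-1]}\circ F_\rho$; hence the pointwise supremum $\maxNRCDT[\mu]$ is again non-decreasing and thus Borel measurable. Together with the uniform bound, this gives $\maxNRCDT[\mu] \in L^\infty_\rho(\R)$.
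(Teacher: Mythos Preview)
Your argument is correct and follows the same strategy as the paper: bound the numerator of $\NRCDT_\bftheta[\mu]$ uniformly in $\bftheta$ via the compact support of $\mu$ (the paper uses $\diam(\mu)$ where you use $2R$) and bound the denominator away from zero via Proposition~\ref{prop:sigma_bounded}. Your treatment of the $\rho$-null exceptional set and of measurability of the supremum (via monotonicity) is in fact more careful than the paper's proof, which simply asserts the pointwise bound without addressing either issue.
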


\begin{proof}
    The restricted Radon operator cannot enlarge the size of the support
    $\diam(\mu) \coloneqq \sup_{\bfx, \bfy \in \supp(\mu)} \, \lVert \bfx - \bfy \rVert$,
    i.e.,
    $\diam(\Radon_\bftheta [\mu]) \le \diam(\mu)$.
    Moreover, the range of $\widehat{\Radon}_\bftheta [\mu]$ coincides
    with the support of $\Radon_\bftheta [\mu]$.
    Using that the mean lies in the convex hull of the support,
    we thus have
    \begin{equation*}
        \lvert 
        \widehat{\Radon}_\bftheta [\mu] (t) 
        - 
        \mean(\widehat{\Radon}_\bftheta [\mu])
        \rvert
        \le 
        \diam(\mu)
        \quad
        \forall \, \bftheta \in \Sphere_1.
    \end{equation*}
    Since $\mu \in \P_\c^*(\R^2)$,
    according to Proposition~\ref{prop:sigma_bounded}
    we have
    $c \coloneqq \min_{\bftheta \in \Sphere_1} \std(\widehat{\Radon}_\bftheta [\mu]) > 0$.
    Thus,
    the \mNRCDT{} is bounded by
    $\lvert \maxNRCDT [\mu](t) \rvert \le \diam(\mu) / c$
    for all $t \in \R$.
\end{proof}

With the \mNRCDT{},
we accomplish our objective
to define a transport-based transform
that is invariant under affine transformations.

\begin{proposition}
    \label{prop:inv-max-nrcdt}
    Let $\mu \in \P_\c^*(\R^2)$,
    $\bfA \in \GL(2)$,
    $\bfy \in \R^2$,
    and $\mu_{\bfA, \bfy}$ as in~\eqref{eq:aff-meas}.
    Then,
    the \mNRCDT{} satisfies
    $\maxNRCDT[\mu_{\bfA, \bfy}] = \maxNRCDT[\mu]$.
\end{proposition}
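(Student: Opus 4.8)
The plan is to reduce the statement to the preceding proposition, which already records the effect of an affine transformation on each restricted NR-CDT, and to observe that this effect amounts merely to a bijective re-indexing of the directions. First I would check well-definedness: since $\bfA \in \GL(2)$, the affine map $\bfx \mapsto \bfA\bfx + \bfy$ is a homeomorphism of $\R^2$ that maps compact sets to compact sets and preserves the dimension of affine hulls; hence $\mu_{\bfA,\bfy}$ again belongs to $\P_\c^*(\R^2)$, and by Proposition~\ref{prop:max-bounded} the expression $\maxNRCDT[\mu_{\bfA,\bfy}]$ is a well-defined element of $L^\infty_\rho(\R)$. Next, I would recall from the preceding proposition that, for every $\bftheta \in \Sphere_1$,
\begin{equation*}
    \NRCDT_\bftheta[\mu_{\bfA,\bfy}] = \NRCDT_{\phi_\bfA(\bftheta)}[\mu],
    \qquad \phi_\bfA(\bftheta) \coloneqq \frac{\bfA^\top \bftheta}{\lVert \bfA^\top \bftheta \rVert},
\end{equation*}
and that $\phi_\bfA \colon \Sphere_1 \to \Sphere_1$ is a bijection: the linear map $\bftheta \mapsto \bfA^\top \bftheta$ is invertible on $\R^2$, so composing it with the radial projection onto $\Sphere_1$ yields a bijection of the sphere whose inverse is $\bfpsi \mapsto (\bfA^\top)^{-1}\bfpsi / \lVert (\bfA^\top)^{-1}\bfpsi \rVert$.

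Given this, the proof is a one-line computation of the pointwise supremum. For each fixed $t \in \R$,
\begin{align*}
    \maxNRCDT[\mu_{\bfA,\bfy}](t)
    &= \sup_{\bftheta \in \Sphere_1} \NRCDT_\bftheta[\mu_{\bfA,\bfy}](t) \\
    &= \sup_{\bftheta \in \Sphere_1} \NRCDT_{\phi_\bfA(\bftheta)}[\mu](t) \\
    &= \sup_{\bfpsi \in \Sphere_1} \NRCDT_{\bfpsi}[\mu](t) = \maxNRCDT[\mu](t),
\end{align*}
where the third equality holds because $\phi_\bfA$ maps $\Sphere_1$ onto all of $\Sphere_1$, so the two suprema run over the same set of functions of $t$. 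Since this identity holds for every $t$, we obtain $\maxNRCDT[\mu_{\bfA,\bfy}] = \maxNRCDT[\mu]$ as elements of $L^\infty_\rho(\R)$.

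The only point that is not a pure substitution is the bijectivity of $\phi_\bfA$ on $\Sphere_1$, and even that is elementary; in particular, because the supremum is taken pointwise in $t$, no measurability or integrability issues intervene. Thus I expect no real obstacle here — the work has effectively been done in the preceding proposition, and the final normalization step was precisely designed to absorb the remaining direction re-indexing.
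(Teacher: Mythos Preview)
Your argument is correct and follows essentially the same route as the paper: invoke the preceding proposition to get $\NRCDT_\bftheta[\mu_{\bfA,\bfy}] = \NRCDT_{\phi_\bfA(\bftheta)}[\mu]$, note that $\phi_\bfA$ is a bijection on $\Sphere_1$, and conclude that the pointwise supremum over $\bftheta$ is unchanged. The additional remarks on well-definedness of $\mu_{\bfA,\bfy}\in\P_\c^*(\R^2)$ and the explicit inverse of $\phi_\bfA$ are helpful details the paper leaves implicit.
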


\begin{proof}
    Since the mapping 
    $\phi_\bfA(\bftheta) \coloneqq (\bfA^\top \bftheta) / \lVert \bfA^\top \bftheta \rVert$
    is a bijection on $\Sphere_1$,
    we obtain
    \begin{align*}
        \maxNRCDT[\mu_{\bfA,\bfy}](t)
        & =
        \sup_{\bftheta \in \Sphere_1} \NRCDT_\bftheta [\mu_{\bfA, \bfy}](t)
        \\
        & =
        \sup_{\bftheta \in \Sphere_1} \NRCDT_{\phi_\bfA(\bftheta)} [\mu](t)
        \\
        & =
        \maxNRCDT [\mu](t).
    \end{align*}        
\end{proof}

The invariance 
under affine transformations 
immediately yields the linear separability
of affine measure classes,
which originate from a single template.

\begin{theorem}
    \label{thm:sep-max-nrcdt}
    For template measures $\mu_0, \nu_0 \in \P_\c^*(\R^2)$ with
    \begin{equation*}
        \maxNRCDT [\mu_0] \neq \maxNRCDT [\nu_0]
    \end{equation*}
    consider the classes
    \begin{subequations}
    \label{eq:aff-class}
    \begin{align}
        \F 
        &=
        \bigl\{(\bfA \cdot + \bfy)_\# \mu_0 \mid \bfA \in \GL(2), \, \bfy \in \R^2\bigr\},
        \\
        \G 
        &= 
        \bigl\{(\bfA \cdot + \bfy)_\# \nu_0 \mid \bfA \in \GL(2), \, \bfy \in \R^2\bigr\}.
    \end{align}
    \end{subequations}
    Then, $\F \subset \P_\c^*(\R^2)$ and $\G \subset \P_\c^*(\R^2)$ are linearly separable in \mNRCDT{}~space.
\end{theorem}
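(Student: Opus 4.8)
The plan is to reduce the statement to the elementary fact that two distinct points of a Hilbert space are separated by a hyperplane, with all the real content already packaged in the affine invariance of Proposition~\ref{prop:inv-max-nrcdt}.

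First I would check that the classes actually lie in the admissible set, so that the \mNRCDT{} is even defined on them. If $\mu_0 \in \P_\c^*(\R^2)$, $\bfA \in \GL(2)$, $\bfy \in \R^2$, then $(\bfA \cdot + \bfy)_\# \mu_0$ again has compact support (the continuous image of a compact set), and its affine hull is the image of $\aff(\supp(\mu_0))$ under the invertible affine map $\bfx \mapsto \bfA\bfx + \bfy$, hence still of dimension larger than $1$. Thus $\F, \G \subset \P_\c^*(\R^2)$, and by Proposition~\ref{prop:max-bounded} the \mNRCDT{} maps $\F$ and $\G$ into $L^\infty_\rho(\R) \subset L^2_\rho(\R)$, where the inclusion uses that $\rho$ is a probability measure.

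Next, by Proposition~\ref{prop:inv-max-nrcdt} we have $\maxNRCDT[\mu] = \maxNRCDT[\mu_0]$ for every $\mu \in \F$ and $\maxNRCDT[\nu] = \maxNRCDT[\nu_0]$ for every $\nu \in \G$; in other words, the feature map collapses each class to a single point, and these two points are distinct in $L^2_\rho(\R)$ by hypothesis. Putting $h \coloneqq \maxNRCDT[\mu_0] - \maxNRCDT[\nu_0] \in L^2_\rho(\R) \setminus \{0\}$ and defining the affine functional
\begin{equation*}
    L(g) \coloneqq \int_\R g(t)\, h(t) \d\rho(t) \;-\; \tfrac12 \int_\R \bigl(\maxNRCDT[\mu_0](t) + \maxNRCDT[\nu_0](t)\bigr) h(t) \d\rho(t),
\end{equation*}
one checks $L(\maxNRCDT[\mu_0]) = \tfrac12\lVert h\rVert_\rho^2 > 0$ and $L(\maxNRCDT[\nu_0]) = -\tfrac12\lVert h\rVert_\rho^2 < 0$, so the hyperplane $\{g \in L^2_\rho(\R) \mid L(g) = 0\}$ strictly separates $\maxNRCDT[\F]$ from $\maxNRCDT[\G]$.

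There is essentially no obstacle in this argument: the whole weight rests on Proposition~\ref{prop:inv-max-nrcdt}, which turns each affine orbit into a single feature vector, after which separation of two distinct vectors is immediate. The only points needing a line of care are the membership $\F, \G \subset \P_\c^*(\R^2)$ (so that the transform is defined on the classes in the first place) and the harmless remark that $L^\infty_\rho(\R) \subset L^2_\rho(\R)$ because $\rho$ has unit mass, which lets us exhibit the separating functional as an $L^2_\rho$-inner product rather than having to invoke Hahn--Banach on $L^\infty_\rho(\R)$.
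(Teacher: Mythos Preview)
Your proof is correct and follows essentially the same approach as the paper: both arguments invoke Proposition~\ref{prop:inv-max-nrcdt} to collapse each affine class to a single point in feature space, after which linear separability of two distinct points is immediate. Your version is simply more explicit---you verify the membership $\F,\G \subset \P_\c^*(\R^2)$ and write down the separating hyperplane concretely in $L^2_\rho(\R)$, whereas the paper leaves these details implicit and phrases the separation in $L^\infty_\rho(\R)$.
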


\begin{proof}
    Due to the affine construction of $\F$ and $\G$,
    Proposition~\ref{prop:inv-max-nrcdt} yields
    $\maxNRCDT [\F] = \bigl\{\maxNRCDT [\mu_0]\bigr\}$
    and $\maxNRCDT [\G] = \bigl\{\maxNRCDT [\nu_0]\bigr\}$.
    Hence, 
    the assumption
    $\maxNRCDT [\mu_0] \neq \maxNRCDT [\nu_0]$
    implies the linear separability of 
    $\maxNRCDT [\F]$ and $\maxNRCDT [\G]$
    in $\Lebesgue^\infty_\rho(\R)$.
\end{proof}

The proof of Theorem~\ref{thm:sep-max-nrcdt} reveals that 
the classes $\F$ and $\G$ collapse to single points in \mNRCDT{}~space, 
directly allowing for linear separability.
This theorem and the following generalizations are stated in this form
to highlight the analogy to
R-CDT \cite{Kolouri2016} and LOT \cite{Moosmueller2023, Park2018},
where linear separability is observed as well
as one of the key achievements.
The transformation of affine classes to singletons, however,
is in contrast to the results in the literature,
which can only tackle a subset of affine transforms,
and where the considered classes are mapped to convex sets.

\subsection{\texorpdfstring{$h$}{h}-Normalized R-CDT}
\label{sec:hNRCDT}

We now describe a more general
final normalization step 
than proposed in §~\ref{sec:mnrcdt}
for treating the resorting of 
$\{\NRCDT_\bftheta[\mu] \mid \bftheta \in \Sphere_1\}$.
To this end,
let the \emph{aggregation function}
$h \colon \Lebesgue^\infty(\Sphere_1) \to \R$ be boundedness-preserving,
i.e.,
bounded sets are mapped to bounded sets,
such that
$h(g \circ \phi_\bfA) = h(g)$
for all $g \in \Lebesgue^\infty(\Sphere_1)$
and $\phi_\bfA(\bftheta) = (\bfA^\top \bftheta) / \lVert \bfA^\top \bftheta \rVert$ with $\bfA \in \GL(2)$.
With this,
for $\mu \in \P_\c^*(\R^2)$,
we define its \emph{$h$-normalized R-CDT} (\hNRCDT{}) $\genNRCDT{} [\mu] \colon \R \to \R$ 
via
\begin{equation*}
    \genNRCDT{} [\mu](t) 
    \coloneqq  
    h(\NRCDT[\mu](t,\cdot)),
    \quad  t \in \R.
\end{equation*}
We show that
$\genNRCDT{}$ maps a given measure
to a bounded function
so that
the \hNRCDT{} space $\genNRCDT{} [\P_\c^*(\R^2)]$
is contained in $\Lebesgue_\rho^\infty(\R)$ for the underlying reference measure $\rho \in \P(\R)$.

\begin{proposition}
    $\genNRCDT{} [\mu] \in \Lebesgue^\infty_\rho(\R)$
    for all
    $\mu \in \P_\c^*(\R^2)$.
\end{proposition}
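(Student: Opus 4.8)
The plan is to reproduce the argument behind Proposition~\ref{prop:max-bounded}: once we know that the family of slices $\{\NRCDT[\mu](t,\cdot)\mid t\in\R\}$ forms a \emph{bounded} subset of $L^\infty(\Sphere_1)$, the conclusion follows immediately from the boundedness-preserving property of $h$. So first I would establish a bound on $\NRCDT_\bftheta[\mu](t)$ that is uniform in \emph{both} $t$ and $\bftheta$, and then invoke the defining property of $h$.

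For the uniform bound I would argue exactly as in the proof of Proposition~\ref{prop:max-bounded}. Since $\mu\in\P_\c^*(\R^2)$, Proposition~\ref{prop:sigma_bounded} gives $c\coloneqq\min_{\bftheta\in\Sphere_1}\std(\widehat{\Radon}_\bftheta[\mu])>0$. Because the restricted Radon operator does not enlarge the support, the range of $\widehat{\Radon}_\bftheta[\mu]$ coincides with $\supp(\Radon_\bftheta[\mu])$, and the mean lies in the convex hull of the support, one gets $\lvert\widehat{\Radon}_\bftheta[\mu](t)-\mean(\widehat{\Radon}_\bftheta[\mu])\rvert\le\diam(\mu)$ for all $(t,\bftheta)\in\R\times\Sphere_1$. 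Dividing by the standard deviation yields $\lvert\NRCDT_\bftheta[\mu](t)\rvert\le\diam(\mu)/c$ uniformly.

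With this in hand, I would fix $t\in\R$ and observe that the slice $\NRCDT[\mu](t,\cdot)\colon\Sphere_1\to\R$ is measurable and, by the estimate above, essentially bounded by $\diam(\mu)/c$; hence it belongs to $L^\infty(\Sphere_1)$ and lies in the domain of $h$, so that $\genNRCDT{}[\mu](t)=h(\NRCDT[\mu](t,\cdot))$ is well defined. Moreover, the whole family $\{\NRCDT[\mu](t,\cdot)\mid t\in\R\}$ is contained in the closed ball of radius $\diam(\mu)/c$ in $L^\infty(\Sphere_1)$, i.e.\ it is a bounded set. Since $h$ maps bounded sets to bounded sets, $\{\genNRCDT{}[\mu](t)\mid t\in\R\}$ is a bounded subset of $\R$, say $\lvert\genNRCDT{}[\mu](t)\rvert\le M$ for all $t$; consequently $\genNRCDT{}[\mu]\in L^\infty_\rho(\R)$.

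The only genuinely delicate point — and the one I would be most careful about — is justifying that the slice $\NRCDT[\mu](t,\cdot)$ really is a measurable element of $L^\infty(\Sphere_1)$ for fixed $t$, so that feeding it to $h$ is legitimate. The preceding lemma already gives continuity of $\bftheta\mapsto\mean(\widehat{\Radon}_\bftheta[\mu])$ and $\bftheta\mapsto\std(\widehat{\Radon}_\bftheta[\mu])$, and since the latter is bounded away from zero it remains only to record the (measurable, in fact continuous wherever the standard deviation is positive) dependence of the pointwise value $\widehat{\Radon}_\bftheta[\mu](t)$ on $\bftheta$. Everything else is a direct transcription of the $\maxNRCDT$ estimate, with the supremum over $\Sphere_1$ replaced by the generic boundedness-preserving functional $h$.
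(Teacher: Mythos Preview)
Your proposal is correct and matches the paper's own proof almost verbatim: both reproduce the $\diam(\mu)/c$ bound from Proposition~\ref{prop:max-bounded} to show that the slices $\NRCDT[\mu](t,\cdot)$ form a bounded subset of $L^\infty(\Sphere_1)$, and then invoke the boundedness-preserving property of $h$ to conclude. Your extra paragraph on the measurability of $\bftheta\mapsto\NRCDT_\bftheta[\mu](t)$ is a reasonable cautionary remark that the paper simply glosses over; it does not represent a different approach.
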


\begin{proof}
    As in the proof of Proposition~\ref{prop:max-bounded},
    we have
    \begin{equation*}
        \lvert 
        \widehat{\Radon}_\bftheta [\mu] (t) 
        - 
        \mean(\widehat{\Radon}_\bftheta [\mu])
        \rvert
        \le 
        \diam(\mu)
        \quad
        \forall \, \bftheta \in \Sphere_1
    \end{equation*}
    and
    $c \coloneqq \min_{\bftheta \in \Sphere_1} \std(\widehat{\Radon}_\bftheta [\mu]) > 0$.
    Thus,
    the NR-CDT is uniformly bounded by
    $\lvert \NRCDT[\mu](t,\bftheta) \rvert \le \diam(\mu) / c$
    for all $t \in \R$ and $\bftheta \in \Sphere_1$
    so that
    $\NRCDT[\mu](t,\cdot) \in \Lebesgue^\infty(\Sphere_1)$.
    Since $h \colon \Lebesgue^\infty(\Sphere_1) \to \R$ is bounded,
    there exists a constant $C > 0$
    such that
    $\lvert \genNRCDT{} [\mu](t) \rvert \le C$
    for all $t \in \R$.
\end{proof}

With the \hNRCDT{},
we have defined a whole family of
transport-based transforms
that are invariant under affine transformations.

\begin{proposition}
    \label{prop:inv-h-nrcdt}
    Let $\mu \in \P_\c^*(\R^2)$,
    and $\mu_{\bfA, \bfy}$ as in~\eqref{eq:aff-meas}
    with
    $\bfA \in \GL(2)$,
    $\bfy \in \R^2$.
    Then,
    the \hNRCDT{} satisfies
    $\genNRCDT{} [\mu_{\bfA, \bfy}] = \genNRCDT{} [\mu]$.
\end{proposition}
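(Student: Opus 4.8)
The plan is to reduce the statement to the already-established behaviour of the NR-CDT under affine transformations combined with the invariance hypothesis on $h$. The key input is the preceding proposition, which states that for every $\bftheta \in \Sphere_1$ one has $\NRCDT_\bftheta[\mu_{\bfA,\bfy}] = \NRCDT_{\phi_\bfA(\bftheta)}[\mu]$ with $\phi_\bfA(\bftheta) = (\bfA^\top \bftheta)/\lVert \bfA^\top \bftheta \rVert$. Reading this pointwise in $t$, it says exactly that, for each fixed $t \in \R$, the two functions $\NRCDT[\mu_{\bfA,\bfy}](t,\cdot)$ and $\NRCDT[\mu](t,\cdot) \circ \phi_\bfA$ coincide as elements of $L^\infty(\Sphere_1)$.

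Concretely, I would first fix $t \in \R$ and recall from the boundedness proposition just above that $\NRCDT[\mu](t,\cdot) \in L^\infty(\Sphere_1)$, so that $h$ may be legitimately evaluated on it; the same applies to $\mu_{\bfA,\bfy}$, which still lies in $\P_\c^*(\R^2)$ because $(\bfA\cdot+\bfy)$ is an affine homeomorphism and hence preserves compactness of the support and the dimension of its affine hull. Next I would invoke the resorting identity above to write $\NRCDT[\mu_{\bfA,\bfy}](t,\cdot) = \NRCDT[\mu](t,\cdot)\circ\phi_\bfA$ in $L^\infty(\Sphere_1)$. Then, applying $h$ to both sides and using the defining property $h(g\circ\phi_\bfA) = h(g)$ with $g = \NRCDT[\mu](t,\cdot)$, I obtain
\begin{equation*}
    \genNRCDT{}[\mu_{\bfA,\bfy}](t)
    = h\bigl(\NRCDT[\mu_{\bfA,\bfy}](t,\cdot)\bigr)
    = h\bigl(\NRCDT[\mu](t,\cdot)\circ\phi_\bfA\bigr)
    = h\bigl(\NRCDT[\mu](t,\cdot)\bigr)
    = \genNRCDT{}[\mu](t).
\end{equation*}
Since $t \in \R$ was arbitrary, this proves $\genNRCDT{}[\mu_{\bfA,\bfy}] = \genNRCDT{}[\mu]$.

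There is essentially no hard step here: the affine behaviour of the Radon transform (Proposition~\ref{prop:RT_transformation}) and of the NR-CDT has already done all the real work, and the extra flexibility of the general normalization $h$ is harmless precisely because $h$ was \emph{defined} to be unaffected by the reparameterizations $\phi_\bfA$. The only points that require a word of care are (i) checking that $h$ is being applied to an element of its domain $L^\infty(\Sphere_1)$, which is exactly the content of the boundedness proposition, and (ii) noting that $\phi_\bfA$ is a bijection of $\Sphere_1$ so that the composition $g \circ \phi_\bfA$ again lies in $L^\infty(\Sphere_1)$; both are immediate. One may also remark that, specializing $h = \sup_{\bftheta\in\Sphere_1}(\cdot)$, this recovers Proposition~\ref{prop:inv-max-nrcdt} as a particular case.
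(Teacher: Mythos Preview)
Your proof is correct and follows essentially the same approach as the paper: fix $t$, use the earlier NR-CDT identity $\NRCDT_\bftheta[\mu_{\bfA,\bfy}] = \NRCDT_{\phi_\bfA(\bftheta)}[\mu]$ to rewrite $\NRCDT[\mu_{\bfA,\bfy}](t,\cdot) = \NRCDT[\mu](t,\cdot)\circ\phi_\bfA$, and then apply the invariance hypothesis on $h$. The additional remarks you include about well-definedness and the recovery of Proposition~\ref{prop:inv-max-nrcdt} are correct but not present in the paper's terser argument.
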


\begin{proof}
    By assumption on $h$,
    we directly obtain
    \begin{align*}
        \genNRCDT{} [\mu_{\bfA,\bfy}](t)
        & =
        h(\NRCDT[\mu_{\bfA, \bfy}](t,\cdot))
        \\
        & =
        h(\NRCDT[\mu](t,\cdot) \circ \phi_\bfA)
        \\
        & =
        h(\NRCDT[\mu](t,\cdot))
        =
        \genNRCDT{} [\mu](t)
    \end{align*}
    for all $t \in \R$.
\end{proof}

The invariance 
under affine transformations 
immediately yields linear separability
of affine measure classes
originating from single templates.

\begin{theorem}
    \label{thm:sep-h-nrcdt}
    For template measures $\mu_0, \nu_0 \in \P_\c^*(\R^2)$ with
    \begin{equation*}
        \genNRCDT{} [\mu_0] \neq \genNRCDT{} [\nu_0]
    \end{equation*}
    consider the classes
    \begin{subequations}
    \label{eq:aff-class-hNRCDT}
    \begin{align}
        \F 
        &=
        \bigl\{(\bfA \cdot + \bfy)_\# \mu_0 \mid \bfA \in \GL(2), \, \bfy \in \R^2\bigr\},
        \\
        \G 
        &= 
        \bigl\{(\bfA \cdot + \bfy)_\# \nu_0 \mid \bfA \in \GL(2), \, \bfy \in \R^2\bigr\}.
    \end{align}
    \end{subequations}
    Then, $\F \subset \P_\c^*(\R^2)$ and $\G \subset \P_\c^*(\R^2)$ are linearly separable in \hNRCDT{}~space.
\end{theorem}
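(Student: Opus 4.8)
The plan is to imitate, essentially verbatim, the proof of Theorem~\ref{thm:sep-max-nrcdt}, replacing the use of Proposition~\ref{prop:inv-max-nrcdt} by Proposition~\ref{prop:inv-h-nrcdt}. First I would record that $\F$ and $\G$ are genuinely contained in $\P_\c^*(\R^2)$: for any $\bfA \in \GL(2)$ and $\bfy \in \R^2$, the pushforward $(\bfA \cdot + \bfy)_\# \mu_0$ has support $\bfA \, \supp(\mu_0) + \bfy$, which is compact as the affine image of a compact set, and whose affine hull has the same dimension as $\aff(\supp(\mu_0))$ because $\bfA$ is invertible; hence the defining conditions of $\P_\c^*(\R^2)$ are preserved, and in particular $\genNRCDT{}$ is well defined on all of $\F \cup \G$.

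Next I would apply Proposition~\ref{prop:inv-h-nrcdt}: for every $\mu = (\bfA \cdot + \bfy)_\# \mu_0 \in \F$ we have $\genNRCDT{}[\mu] = \genNRCDT{}[\mu_0]$, and likewise $\genNRCDT{}[\nu] = \genNRCDT{}[\nu_0]$ for every $\nu \in \G$. Therefore the two classes collapse to single points,
\begin{equation*}
    \genNRCDT{}[\F] = \bigl\{\genNRCDT{}[\mu_0]\bigr\}
    \quad\text{and}\quad
    \genNRCDT{}[\G] = \bigl\{\genNRCDT{}[\nu_0]\bigr\},
\end{equation*}
both lying in $L^\infty_\rho(\R)$ by the preceding proposition.

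Finally, since by hypothesis $\genNRCDT{}[\mu_0] \neq \genNRCDT{}[\nu_0]$ in $L^\infty_\rho(\R)$, these two distinct points are separated by an affine hyperplane — concretely, pick a continuous linear functional $\Lambda$ on $L^\infty_\rho(\R)$ with $\Lambda(\genNRCDT{}[\mu_0]) \neq \Lambda(\genNRCDT{}[\nu_0])$ (which exists by the Hahn--Banach theorem) and separate by the level set of $\Lambda$ through the midpoint of the two values. This shows $\genNRCDT{}[\F]$ and $\genNRCDT{}[\G]$ are linearly separable in \hNRCDT{}~space, as claimed. I do not expect any real obstacle here: the entire content sits in Proposition~\ref{prop:inv-h-nrcdt} and the boundedness proposition, both already established; the only point requiring a word of care is invoking Hahn--Banach to produce the separating functional, exactly as in the \mNRCDT{} case.
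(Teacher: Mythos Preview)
Your proposal is correct and follows essentially the same approach as the paper: invoke Proposition~\ref{prop:inv-h-nrcdt} to collapse $\genNRCDT{}[\F]$ and $\genNRCDT{}[\G]$ to the singletons $\{\genNRCDT{}[\mu_0]\}$ and $\{\genNRCDT{}[\nu_0]\}$, then conclude linear separability from the hypothesis that these two points differ. You add two extra details the paper leaves implicit---the verification that affine pushforwards preserve membership in $\P_\c^*(\R^2)$ and the explicit Hahn--Banach construction of the separating functional---but these are refinements of the same argument rather than a different route.
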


\begin{proof}
    By construction of $\F$ and $\G$,
    Proposition~\ref{prop:inv-h-nrcdt} yields
    $\genNRCDT{} [\F] = \bigl\{\genNRCDT{} [\mu_0]\bigr\}$
    and $\genNRCDT{} [\G] = \bigl\{\genNRCDT{} [\nu_0]\bigr\}$.
    Hence, 
    the assumption
    $\genNRCDT{} [\mu_0] \neq \genNRCDT{} [\nu_0]$
    implies the linear separability of 
    $\genNRCDT{} [\F]$ and $\genNRCDT{} [\G]$
    in $\Lebesgue^\infty_\rho(\R)$.
\end{proof}

If $\eta \colon \R \to \R$
is a boundedness-preserving function
and $H \colon \B(\R) \to \R$ is bounded,
where $\B(\R)$ denotes the set of bounded subsets of $\R$,
we set
\begin{equation*}
h(g) = H(\{\eta(g(\bftheta)) \mid \bftheta \in \Sphere_1\}),
\quad g \in \Lebesgue^\infty(\Sphere_1),
\end{equation*}
so that
$h(g \circ \phi) = h(g)$
is automatically satisfied
for all $g \in \Lebesgue^\infty(\Sphere_1)$
and all bijections $\phi \colon \Sphere_1 \to \Sphere_1$.

Note that
$\eta = \mathrm{Id}$ and $H = \sup$
recovers the \mNRCDT{}
from Section~\ref{sec:mnrcdt}.
The first obvious variation
would be choosing
$\eta = \mathrm{Id}$ and $H = \inf$.
The induced operator, however,
does not contain
additional information
as compared to the \mNRCDT{}.
This is shown in
the next proposition,
where, for simplicity,
we assume that
the reference measure $\rho$
is given by the
uniform measure $u_{[0,1]}$.

\begin{proposition}
    Let $\rho = u_{[0,1]}$
    and $\mu \in \P_\c^*(\R^2)$.
    Then,
    \begin{equation*}
        \inf_{\bftheta \in \Sphere_1} \NRCDT_\bftheta [\mu](t)
        =
        -\maxNRCDT[\mu](1-t)
        \quad
        \forall \, t \in \R.
    \end{equation*}
\end{proposition}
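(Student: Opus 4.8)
The statement to prove is that, under the uniform reference measure $\rho = u_{[0,1]}$, the infimum of the NR-CDT over directions equals a reflection of the max-NR-CDT. The natural mechanism is the simple symmetry relating a direction $\bftheta$ and its antipode $-\bftheta$: reversing the direction reverses the one-dimensional projection, and so the quantile function gets reflected. I would first make this precise on the level of the Radon projection: $S_{-\bftheta} = -S_\bftheta$, hence $\Radon_{-\bftheta}[\mu] = (-1)_\# \Radon_\bftheta[\mu]$, i.e.\ the projected measure is point-reflected at the origin.

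\textbf{Step 1: transfer the antipodal symmetry to the CDT.} For a measure $\lambda \in \P(\R)$ with reflection $\lambda^- \coloneqq (-1)_\#\lambda$, one checks $F_{\lambda^-}(t) = 1 - F_\lambda((-t)^-)$ (with the usual left-limit subtlety at atoms), and consequently, for the reference $\rho = u_{[0,1]}$ with $F_\rho(t) = t$ on $[0,1]$, the quantile identity gives $\widehat{\lambda^-}(t) = -\,\widehat{\lambda}(1-t)$ for a.e.\ $t \in [0,1]$. Applying this with $\lambda = \Radon_\bftheta[\mu]$ yields
\begin{equation*}
    \widehat{\Radon}_{-\bftheta}[\mu](t) = -\,\widehat{\Radon}_{\bftheta}[\mu](1-t)
    \quad \text{for a.e. } t.
\end{equation*}

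\textbf{Step 2: check that mean and std are compatible with the reflection.} From the previous display, $\mean(\widehat{\Radon}_{-\bftheta}[\mu]) = -\mean(\widehat{\Radon}_{\bftheta}[\mu])$ (substitute $s \mapsto 1-s$, using that $\rho = u_{[0,1]}$ is symmetric about $\tfrac12$), while the standard deviation is unchanged, $\std(\widehat{\Radon}_{-\bftheta}[\mu]) = \std(\widehat{\Radon}_{\bftheta}[\mu])$, since it only depends on the centered function up to sign and reflection. Dividing, one gets the clean pointwise identity
\begin{equation*}
    \NRCDT_{-\bftheta}[\mu](t) = -\,\NRCDT_{\bftheta}[\mu](1-t).
\end{equation*}

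\textbf{Step 3: conclude by taking the infimum.} Since $\bftheta \mapsto -\bftheta$ is a bijection of $\Sphere_1$, the Step 2 identity gives
\begin{equation*}
    \inf_{\bftheta \in \Sphere_1} \NRCDT_\bftheta[\mu](t)
    = \inf_{\bftheta \in \Sphere_1} \bigl(-\,\NRCDT_\bftheta[\mu](1-t)\bigr)
    = -\sup_{\bftheta \in \Sphere_1} \NRCDT_\bftheta[\mu](1-t)
    = -\maxNRCDT[\mu](1-t),
\end{equation*}
which is the claim. \textbf{Main obstacle.} The only genuinely delicate point is Step 1: the quantile/CDF reflection identity $\widehat{\lambda^-}(t) = -\widehat{\lambda}(1-t)$ must be handled with care because $F_\mu$ is right-continuous while reflection naturally interacts with left limits, so atoms of $\Radon_\bftheta[\mu]$ could cause the identity to fail on a set of $t$; however this set is countable, hence $\rho$-null since $\rho = u_{[0,1]}$, so the a.e.\ statement — and therefore the final "for all $t$" claim after taking the sup/inf over the compactly-supported, continuous-in-$\bftheta$ family — goes through. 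I would therefore phrase Step 1 as an a.e.\ identity and then note that both sides of the final equation are, by Proposition~\ref{prop:max-bounded} and the continuity lemma, well-defined everywhere, so the exceptional null set is harmless.
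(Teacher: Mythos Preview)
Your proof is correct and follows essentially the same route as the paper: both arguments exploit the antipodal symmetry $\Radon_{-\bftheta}[\mu]=(-1)_\#\Radon_\bftheta[\mu]$ to obtain the reflection identity $\widehat{\Radon}_{-\bftheta}[\mu](t)=-\widehat{\Radon}_{\bftheta}[\mu](1-t)$, then pass through the mean/std normalization and conclude via the bijection $\bftheta\mapsto-\bftheta$ together with $\inf(-\cdot)=-\sup(\cdot)$. Your treatment is in fact slightly more explicit than the paper's about the atom/left-limit subtlety in Step~1, which the paper's proof passes over silently.
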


\begin{proof}
    As $\rho = u_{[0,1]}$
    by assumption,
    for all $t \in \R$,
    we have
    \begin{equation*}
        F_{u_{[0,1]}}(1-t)
        =
        1 - F_{u_{[0,1]}}(t).
    \end{equation*}
    Moreover,
    for $\tau \in [0,1]$,
    \begin{equation*}
        F_{\Radon_\bftheta[\mu]}^{[-1]}(1-\tau)
        =
        - F_{\Radon_{-\bftheta}[\mu]}^{[-1]}(\tau),
    \end{equation*}
    since the Radon transform
    satisfies the eveness property
    \begin{equation*}
        \Radon_\bftheta[\mu]((s,\infty))
        =
        \Radon_{-\bftheta}[\mu]((-\infty,-s))
        \quad
        \forall \, s \in \R.
    \end{equation*}
    This implies that
    \begin{equation*}
        \widehat{\Radon}_\bftheta [\mu](1-t)
        =
        -\widehat{\Radon}_{-\bftheta} [\mu](t)
    \end{equation*}
    and,
    consequently,
    \begin{align*}
        & \inf_{\bftheta \in \Sphere_1} \NRCDT_\bftheta [\mu](t)
        =
        - \sup_{\bftheta \in \Sphere_1} \frac{-\widehat{\Radon}_\bftheta [\mu](t) - \mean(-\widehat{\Radon}_\bftheta [\mu])}{\std(-\widehat{\Radon}_\bftheta [\mu])}
        \\
        & \quad =
        - \sup_{\bftheta \in \Sphere_1} \frac{\widehat{\Radon}_{-\bftheta} [\mu](1-t) - \mean(\widehat{\Radon}_{-\bftheta} [\mu])}{\std(\widehat{\Radon}_{-\bftheta} [\mu])}
        \\
        & \quad =
        -\maxNRCDT[\mu](1-t),
    \end{align*}
    as stated.
\end{proof}

The following example
lists further variations
of the $h$-normalization
based on $\eta$ and $H$.    

\begin{example} \label{ex:h_normalization}
    Suitable choices of $\eta$ and $H$ include
    \begin{enumerate}[label=\alph*)]
        \item
            $\eta = |\cdot|$ and $H = \sup$,
            i.e.,
            \begin{equation*}
                \genNRCDT{a}[\mu](t) 
                =
                \sup_{\bftheta \in \Sphere_1} |\NRCDT_\bftheta [\mu](t)|,
            \end{equation*}
        \item
            $\eta = |\cdot|$ and $H = \inf$,
            i.e.,
            \begin{equation*}
                \genNRCDT{b}[\mu](t) 
                =
                \inf_{\bftheta \in \Sphere_1} |\NRCDT_\bftheta [\mu](t)|,
            \end{equation*}
        \item
            $\eta = |\cdot|$ and $H = \sup - \inf$,
            i.e.,
            \begin{equation*}
                \genNRCDT{c}[\mu](t) 
                =
                \sup_{\bftheta \in \Sphere_1} |\NRCDT_\bftheta [\mu](t)| - 
                \inf_{\bftheta \in \Sphere_1} |\NRCDT_\bftheta [\mu](t)|.
            \end{equation*}
        \item
            $\eta = \mathrm{Id}$ and $H = \sup - \inf$,
            i.e.,
            \begin{equation*}
                \genNRCDT{d}[\mu](t) 
                =
                \sup_{\bftheta \in \Sphere_1} \NRCDT_\bftheta [\mu](t) - 
                \inf_{\bftheta \in \Sphere_1} \NRCDT_\bftheta [\mu](t),
            \end{equation*}
    \end{enumerate}
\end{example}

Note that $\genNRCDT{a}[\mu]$
and $\genNRCDT{b}[\mu]$
capture only
a single angular
piece of information,
the maximum or minimum, respectively,
whereas $\genNRCDT{c}[\mu]$
and $\genNRCDT{d}[\mu]$
encode the angular range.
To include even further knowledge,
we now construct an example
for a normalization
not only operating on the image
$\{\NRCDT_\bftheta [\mu](t) \mid \bftheta \in \Sphere_1\}$.
To this end,
we introduce the {\em TV-normalized NR-CDT} (\tvNRCDT)
$\varNRCDT[\mu] \colon \R \to \R$ via
\begin{equation*}
    \varNRCDT[\mu](t) 
    \coloneqq
    \sup_{P \in \PP}
    \sum_{i=1}^{n_P}
    |\NRCDT_{\bftheta_{i+1}}[\mu](t)
    -
    \NRCDT_{\bftheta_i}[\mu](t)|,
\end{equation*}
where the supremum
runs over the set of
partitions
$\PP = \{P = (\bftheta_1,\ldots,\bftheta_{n_P+1})
\mid P \text{ is partition of } \Sphere_1\}$,
which means that there exist
$0 \leq \vartheta_1 < \vartheta_{n_P} < 2\pi$
so that $\bftheta_i = (\cos(\vartheta_i),\sin(\vartheta_i))^\top$
and $\bftheta_{n_P+1} = \bftheta_1$.
To ensure well-definedness,
we restrict ourselves to
the class
\begin{equation}
    \label{eq:P_tv}
    \P_{\mathrm{tv}}^*(\R^2) 
    \coloneqq
    \{\mu \in \P_\c^*(\R^2)
    \mid
    \varNRCDT[\mu](t) < \infty ~
    \forall \, t \in \R\},
\end{equation}    
which is a suitable setting
for all our numerical experiments below.
This normalization variant corresponds to
\begin{equation*}
    h(g)
    =
    \sup_{P \in \PP}
    \sum_{i=1}^{n_P}
    |g(\bftheta_{i+1})
    -
    g(\bftheta_i)|,
\end{equation*}
which satisfies $h(g \circ \phi_\bfA) = h(g)$
for all $\phi_\bfA(\bftheta) = (\bfA^\top \bftheta) / \lVert \bfA^\top \bftheta \rVert$ with $\bfA \in \GL(2)$
since $\phi_A(\PP) = \PP$.
Therefore,
Theorem~\ref{thm:sep-h-nrcdt}
also holds for \tvNRCDT{},
when assuming
$\mu_0, \nu_0 \in \P_{\mathrm{tv}}^*(\R^2)$.

\section{Generalized NR-CDT}
\label{sec:gen-NRCDT}

The model behind \hNRCDT{} is clearly tailored 
to 2d pattern recognition tasks
under affine transformations.
Theoretically,
this idea may be transferred to 
the multi-dimensional 
and non-Euclidean setting.
For the underlying R-CDT
and absolutely continuous probabilities,
this extension is studied in \cite{Kolouri2019},
where the Radon transform is replaced 
by the so-called generalized Radon transform.

To extend this approach beyond functions,
and to allow more flexibility,
we consider arbitrary (probability) measures 
on a Polish space $\XX$,
i.e.,
a separable completely metrizable topological space.
On the basis of a \emph{direction set} $\Theta$
and a \emph{defining function}
$\phi \colon \XX \times \Theta \to \R$
such that 
$\phi(\cdot, \bftheta) \colon \XX \to \R$
is measurable for all $\bftheta \in \Theta$,
we define the \emph{generalized slicing operator}
$S_{\phi,\bftheta} \colon \XX \to \R$ by
\begin{equation}
    \label{eq:gen-slicing-operator}
    S_{\phi, \bftheta}(\bfx)
    \coloneqq \phi(\bfx, \bftheta),
    \quad
    \bfx \in \XX, ~
    \bftheta \in \Theta,
\end{equation}
and the \emph{generalized restricted Radon transform} via
\begin{equation}
    \label{eq:restrict-gen-radon-meas}
    \Radon_{\phi,\bftheta} 
    \colon
    \M(\XX) \to \M(\R),
    \quad
    \mu \mapsto (S_{\phi, \bftheta})_\# \mu.
\end{equation}

If $\Theta$ is a Polish space too,
then the restricted Radon transforms may be glued together
to form a measure on $\R \times \Theta$.
More precisely,
for a \emph{gluing measure} $\gamma \in \P(\Theta)$,
we define the \emph{generalized Radon transform}
$\Radon_{\phi,\gamma} \colon \M(\XX) \to \M(\R \times \Theta)$
as
\begin{equation*}
    \Radon_{\phi,\gamma} [\mu] 
    \coloneqq 
    (\Glue_\phi)_\#[\mu \times \gamma]
\end{equation*}
with
$\Glue_\phi(\bfx, \bftheta) \coloneqq (S_{\phi, \bftheta}(\bfx), \bftheta)$
for $(\bfx, \bftheta) \in \XX \times \Theta$.
Similarly to before,
the generalized (restricted) Radon transform maps
probability measures to probability measures.
Furthermore,
the disintegration of the Radon transform carries over.

\begin{proposition}
    Let $\mu \in \M(\XX)$.
    Then, 
    $\Radon_{\phi,\gamma} [\mu]$ can be disintegrated 
    into the family $\Radon_{\phi,\bftheta} [\mu]$ 
    with respect to $\gamma$,
    i.e., 
    for all continuous $g \in C_0(\R \times \Theta)$ vanishing at infinity, 
    we have
    \begin{equation*}
        \langle \Radon_{\phi,\gamma} [\mu], g\rangle 
        = 
        \int_{\Theta} \langle \Radon_{\phi,\bftheta} [\mu], g(\cdot, \bftheta)\rangle \d \gamma(\bftheta).
    \end{equation*}
\end{proposition}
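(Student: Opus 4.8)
The plan is to mirror the proof of the analogous disintegration statement for the classical Radon transform given earlier in the excerpt: unfold the pushforward definition of $\Radon_{\phi,\gamma}$, split the resulting product integral by Fubini's theorem, and then re-fold the inner integral into $\Radon_{\phi,\bftheta}$. Concretely, I would fix $g \in C_0(\R \times \Theta)$ and start from $\Radon_{\phi,\gamma}[\mu] = (\Glue_\phi)_\#[\mu \times \gamma]$, using the change-of-variables formula for pushforward measures together with $\Glue_\phi(\bfx,\bftheta) = (S_{\phi,\bftheta}(\bfx),\bftheta)$ to write
\[
    \langle \Radon_{\phi,\gamma}[\mu], g\rangle
    = \int_{\XX \times \Theta} g(\Glue_\phi(\bfx,\bftheta)) \d(\mu \times \gamma)(\bfx,\bftheta)
    = \int_{\XX \times \Theta} g(S_{\phi,\bftheta}(\bfx),\bftheta) \d(\mu \times \gamma)(\bfx,\bftheta).
\]

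Next I would apply Fubini's theorem; this is legitimate because $g$ is bounded (being continuous and vanishing at infinity) and $\mu \times \gamma$ is a finite measure, so the integrand is $\mu \times \gamma$-integrable. This gives
\[
    \langle \Radon_{\phi,\gamma}[\mu], g\rangle
    = \int_\Theta \int_\XX g(S_{\phi,\bftheta}(\bfx),\bftheta) \d\mu(\bfx) \d\gamma(\bftheta).
\]
Then, for each fixed $\bftheta \in \Theta$, I rewrite the inner integral through the pushforward definition $\Radon_{\phi,\bftheta}[\mu] = (S_{\phi,\bftheta})_\#\mu$ in \eqref{eq:restrict-gen-radon-meas}, obtaining $\int_\XX g(S_{\phi,\bftheta}(\bfx),\bftheta)\d\mu(\bfx) = \int_\R g(t,\bftheta)\d\Radon_{\phi,\bftheta}[\mu](t) = \langle \Radon_{\phi,\bftheta}[\mu], g(\cdot,\bftheta)\rangle$. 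Substituting back yields exactly the asserted identity.

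The only genuine point of care — and where the standing Polish-space hypotheses on $\XX$ and $\Theta$ are used — is measurability: one needs $\Glue_\phi \colon \XX \times \Theta \to \R \times \Theta$ to be jointly measurable so that $(\Glue_\phi)_\#[\mu \times \gamma]$ is well defined, and one needs $\bftheta \mapsto \langle \Radon_{\phi,\bftheta}[\mu], g(\cdot,\bftheta)\rangle$ to be measurable so that the outer integral on the right-hand side makes sense. The former is already implicit in the definition of $\Radon_{\phi,\gamma}$ (otherwise the left-hand side would not be defined), and the latter is then supplied automatically by Fubini's theorem. Apart from these bookkeeping checks the argument is a routine computation, so I do not anticipate any substantial obstacle.
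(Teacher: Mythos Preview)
Your proposal is correct and follows essentially the same approach as the paper: unfold the pushforward defining $\Radon_{\phi,\gamma}[\mu]$, apply Fubini's theorem to the product integral, and re-fold the inner integral via the pushforward definition of $\Radon_{\phi,\bftheta}[\mu]$. You add a bit more justification for Fubini and measurability than the paper does, but the argument is otherwise identical.
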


\begin{proof}
    Using Fubini's theorem, 
    we directly obtain 
    \begin{align*}
        \langle \Radon_{\phi,\gamma} [\mu], g\rangle
        & = 
        \int_{\R \times \Theta} g(t, \bftheta) \d \Glue_\#[\mu \times \gamma] (t, \bftheta)
        \\
        & = 
        \int_{\Theta} \int_{\XX} g(S_{\phi,\bftheta}(\bfx), \bftheta) \d \mu(\bfx) \d \gamma(\bftheta) 
        \\
        & = 
        \int_{\Theta} \int_{\XX} g(t, \bftheta) \d [(S_{\phi,\bftheta})_{\#}\mu](t) \d \gamma(\bftheta)
        \\
        & = 
        \int_{\Theta} \langle \Radon_{\phi,\bftheta} [\mu], g(\cdot, \bftheta)\rangle \d \gamma(\bftheta).
        \tag*{\qedhere}
    \end{align*}
\end{proof}

Notice that
the original generalized Radon transform for smooth functions
is based on a so-called double fibering \cite{Gelfand1969}.
If $\phi$ satisfies certain regularity assumptions,
this generalized Radon transform becomes invertible
\cite{Beylkin1984,Quinto1980,Gelfand1969,Homan2017}.
Our generalized Radon transform for measures
is closely related to
the \emph{back projection}
\begin{equation*}
    \Radon_{\phi,\gamma}^*[g](\bfx) 
    \coloneqq 
    \int_{\Theta} 
    g(S_{\phi,\bftheta}(\bfx), \theta) 
    \d \gamma(\bftheta),
    \quad \bfx \in \XX.
\end{equation*}

\begin{proposition} 
    The generalized Radon transform of $\mu \in \M(\XX)$ satisfies
    \begin{equation*}
        \langle \Radon_{\phi,\gamma} [\mu], g\rangle 
        = 
        \langle \mu, \Radon_{\phi,\gamma}^* [g]\rangle
        \quad \forall \,
        g \in \Lebesgue_{\lambda_\R\times\gamma}^\infty(\R \times \Theta).
    \end{equation*}
\end{proposition}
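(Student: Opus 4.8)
The plan is to follow exactly the strategy of the proof of Proposition~\ref{prop:adjRT_measure}, now with $\Sphere_1$ replaced by the Polish space $\Theta$ and $u_{\Sphere_1}$ by the gluing measure $\gamma$: unfold the pushforward definition of $\Radon_{\phi,\gamma}[\mu]$, apply the transformation rule for pushforward measures, and then use Fubini's theorem on the finite product measure $\mu\times\gamma$ to peel off the inner $\Theta$-integral, which is precisely the back projection.

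Concretely, after fixing a bounded Borel representative of $g$, I would first write
\begin{equation*}
    \langle \Radon_{\phi,\gamma}[\mu], g\rangle
    =
    \int_{\R\times\Theta} g(t,\bftheta)\,\d(\Glue_\phi)_\#[\mu\times\gamma](t,\bftheta)
    =
    \int_{\XX\times\Theta} g(\Glue_\phi(\bfx,\bftheta))\,\d(\mu\times\gamma)(\bfx,\bftheta),
\end{equation*}
using $\Radon_{\phi,\gamma}[\mu] = (\Glue_\phi)_\#[\mu\times\gamma]$ and the change-of-variables formula for pushforwards. Since $\Glue_\phi(\bfx,\bftheta) = (S_{\phi,\bftheta}(\bfx),\bftheta)$, the integrand equals $g(S_{\phi,\bftheta}(\bfx),\bftheta)$, which is bounded by $\lVert g\rVert_\infty$ and, being the composition of the (jointly measurable) map $\Glue_\phi$ with the Borel function $g$, is jointly measurable on $\XX\times\Theta$; as $\mu$ and $\gamma$ are finite, it is $(\mu\times\gamma)$-integrable. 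Fubini's theorem then gives
\begin{equation*}
    \langle \Radon_{\phi,\gamma}[\mu], g\rangle
    =
    \int_\XX \Bigl(\int_\Theta g(S_{\phi,\bftheta}(\bfx),\bftheta)\,\d\gamma(\bftheta)\Bigr)\d\mu(\bfx)
    =
    \int_\XX \Radon_{\phi,\gamma}^*[g](\bfx)\,\d\mu(\bfx)
    =
    \langle \mu, \Radon_{\phi,\gamma}^*[g]\rangle,
\end{equation*}
where the middle equality merely recognizes the inner integral as the definition of $\Radon_{\phi,\gamma}^*[g]$.

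The only point that needs care --- and the step I expect to be the main obstacle --- is measurability: one must know that $\Glue_\phi\colon\XX\times\Theta\to\R\times\Theta$ is measurable with respect to the relevant product $\sigma$-algebras, both so that Fubini applies and so that $\Radon_{\phi,\gamma}^*[g]$ is a genuine bounded measurable function on $\XX$, making the right-hand pairing meaningful. This is, however, already implicit in the setup, since the same joint measurability is exactly what is required for $(\Glue_\phi)_\#[\mu\times\gamma]$ to be a well-defined Borel measure on $\R\times\Theta$ in the first place; it follows from the standing assumption that each $\phi(\cdot,\bftheta)$ is measurable together with mild joint measurability of $\phi$. Apart from this, the argument is pure bookkeeping, identical in structure to Proposition~\ref{prop:adjRT_measure}.
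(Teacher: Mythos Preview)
Your proof is correct and follows essentially the same route as the paper: unfold the pushforward, apply change of variables and Fubini, and recognize the inner integral as the back projection. The paper's version is terser and omits the measurability and integrability justifications you spell out, but the argument is identical in substance.
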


\begin{proof}
    For all $\mu \in \M(\XX)$
    and $g \in \Lebesgue^\infty(\R \times \Theta)$,
    applying Fubini's theorem gives
    \begin{align*}
        \langle \Radon_{\phi, \gamma} [\mu], g\rangle
        &=
        \int_{\R \times \Theta} 
        g(t, \bftheta)
        \d (\Glue_\phi)_\#[\mu \times \gamma] (t, \bftheta)
        \\
        &=
        \int_{\XX} \int_{\Theta} 
        g(S_{\phi,\bftheta}(\bfx), \bftheta) 
        \d \gamma(\bftheta) \d \mu(\bfx).
        \tag*{\qedhere}
    \end{align*}
\end{proof}

To introduce a generalized \hNRCDT{},
we have to ensure that
each normalization step is well-defined.
For this,
we restrict ourselves to
\emph{uniformly bounded} defining functions $\phi$
meaning that, 
for every $L>0$,
there exits $M_L >0$ satisfying
\begin{equation*}
    \lvert \phi( \bfx, \bftheta) - \phi(\bfy, \bftheta) \rvert \le M_L
    \enspace
    \forall \, d_\XX( \bfx , \bfy) \le L, ~
    \forall \, \bftheta \in \Theta,
\end{equation*}
where $d_\XX$ denotes a topology-compatible metric on $\XX$.
Furthermore,
we only consider measures from
\begin{align*}
    \P_{\phi, \mathrm c}^+(\XX)
    &\coloneqq
    \{ \mu \in \P(\XX)
    \mid
    \supp(\mu) \subsubset \XX 
    \, \wedge
    \\
    &\qquad
    \exists \, c > 0:
    \std(\Radon_{\phi,\bftheta}[\mu]) \ge c
    ~ \forall \, \theta \in \Theta \}.
\end{align*}
For uniformly bounded $\phi$
and $\mu \in \P_\c^+(\XX)$,
the support of $\Radon_{\phi, \bftheta}[\mu]$ is bounded;
so their means and standard derivations exist.

The {\em generalized NR-CDT}
$\NRCDT_{\phi} [\mu] \colon \R \times \Theta \to \R$ 
of $\mu \in \P_{\phi, \mathrm c}^+(\XX)$ 
is defined as
\begin{equation*}
    \NRCDT_\phi [\mu](t,\bftheta) 
    \coloneqq
    \NRCDT_{\phi,\bftheta} [\mu](t)
\end{equation*}
with
\begin{equation*}
    \NRCDT_{\phi,\bftheta} [\mu](t)
    \coloneqq
    \frac
    {\widehat{\Radon}_{\phi,\bftheta} [\mu](t) 
    - 
    \mean(\widehat{\Radon}_{\phi,\bftheta} [\mu])}
    {\std(\widehat{\Radon}_{\phi,\bftheta} [\mu])}
\end{equation*}
By construction,
$\NRCDT_{\phi}[\mu]$ is bounded on $\supp (\rho) \times \Theta$.
In the style of §~\ref{sec:hNRCDT},
let $h \colon \Lebesgue^\infty(\Theta) \to \R$ be bounded.
Unless otherwise stated,
we assume that
$h(g \circ \psi) = h(g)$
for all $g \in \Lebesgue^\infty(\Theta)$
and any bijection $\psi \colon \Theta \to \Theta$,
which is a stricter setting as in §~\ref{sec:hNRCDT}
to allow for a general discussion.
For $\mu \in \P_{\phi,\mathrm c}^+(\XX)$,
the \emph{generalized \hNRCDT{}} $\NRCDT_{h,\phi} [\mu] \colon \R \to \R$ 
may be defined via
\begin{equation*}
    \NRCDT_{h,\phi} [\mu](t) 
    \coloneqq  
    h(\NRCDT_\phi[\mu](t,\cdot)),
    \quad  t \in \R.
\end{equation*}
In the following,
we briefly discuss the situation for specific $\XX$ and $\phi$,
especially with respect to the possible invariant transformations.

\subsection{Multi-dimensional NR-CDT}
\label{sec:multi-NRCDT}

Choosing $\XX \coloneqq \R^d$ 
and $\Theta \coloneqq \Sphere_{d-1} \coloneqq \{ \bfx \in \R^d \mid \lvert \bfx \rvert = 1\}$
together with Euclidean inner product
\begin{equation*}
    \phi(\bfx, \bftheta)
    \coloneqq
    \langle\bfx, \bftheta \rangle,
    \quad
    \bfx \in \R^d,
    \bftheta \in \Sphere_{d-1},
\end{equation*}
we obtain the straightforward generalization
of our NR-CDT variants 
to the multi-dimensional setting. 
A brief inspection of the two-dimensional setting shows that
all results line-by-line generalize to
the multi-dimensional setting. 
In particular,
for measures in the class
\begin{align*}
    \P_\c^*(\R^d)
    \coloneqq
    \{ \mu \in \P(\R^d) \mid \, 
    &
    \supp(\mu) \subsubset \R^d \land
    \\
    &
    \dim(\supp (\mu)) > d - 1 \},
\end{align*}
whose supports are not concentrated on hyperplanes,
the standard deviation \smash{$\std(\widehat\Radon_{\bftheta}[\mu])$}
is uniformly bounded away from zero,
i.e.,
$\P_\c^*(\R^d) = \P_{\phi,\c}^+(\R^d)$.
The multi-dimensional \hNRCDT{} again promotes 
the linear separability of affine classes.

\begin{theorem}
    \label{thm:sep-multi-h-nrcdt}
    For template measures $\mu_0, \nu_0 \in \P_\c^*(\R^d)$ with
    \begin{equation*}
        \genNRCDT{} [\mu_0] \neq \genNRCDT{} [\nu_0]
    \end{equation*}
    consider the classes
    \begin{align*}
        \F 
        &=
        \bigl\{(\bfA \cdot + \bfy)_\# \mu_0 \mid \bfA \in \GL(d), \, \bfy \in \R^d\bigr\},
        \\
        \G 
        &= 
        \bigl\{(\bfA \cdot + \bfy)_\# \nu_0 \mid \bfA \in \GL(d), \, \bfy \in \R^d\bigr\}.
    \end{align*}
    Then, $\F \subset \P_\c^*(\R^d)$ and $\G \subset \P_\c^*(\R^d)$ are linearly separable in \hNRCDT{}~space.
\end{theorem}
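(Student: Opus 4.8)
The plan is to mirror, line by line, the argument already carried out in the two-dimensional case for Theorem~\ref{thm:sep-h-nrcdt}, since the excerpt explicitly asserts that all results generalize verbatim once one replaces $\Sphere_1$ by $\Sphere_{d-1}$, $\GL(2)$ by $\GL(d)$, and the defining function by the Euclidean inner product $\phi(\bfx,\bftheta)=\langle\bfx,\bftheta\rangle$ on $\R^d\times\Sphere_{d-1}$. First I would observe that the class $\P_\c^*(\R^d)$ coincides with $\P_{\phi,\c}^+(\R^d)$: the computation in the preparatory lemma shows $\mean(\widehat\Radon_\bftheta[\mu])=\int_{\R^d}\langle\bfx,\bftheta\rangle\d\mu(\bfx)$ and $\std(\widehat\Radon_\bftheta[\mu])^2=\int_{\R^d}|\langle\bfx,\bftheta\rangle-\mean(\widehat\Radon_\bftheta[\mu])|^2\d\mu(\bfx)$, both continuous in $\bftheta$ by dominated convergence; compactness of $\Sphere_{d-1}$ together with the condition $\dim(\supp(\mu))>d-1$ (i.e.\ the support is not contained in any affine hyperplane) forces $\std(\widehat\Radon_\bftheta[\mu])$ to be bounded away from zero, exactly as in Proposition~\ref{prop:sigma_bounded}. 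Hence the generalized NR-CDT and \hNRCDT{} are well defined on $\P_\c^*(\R^d)$ and land in $L^\infty_\rho(\R)$.

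Next I would record the analogue of Proposition~\ref{prop:RT_transformation}: for $\bfA\in\GL(d)$, $\bfy\in\R^d$, and $\mu_{\bfA,\bfy}=(\bfA\cdot+\bfy)_\#\mu$, the restricted Radon transform satisfies $\Radon_\bftheta[\mu_{\bfA,\bfy}]=(\lVert\bfA^\top\bftheta\rVert\,\cdot+\langle\bfy,\bftheta\rangle)_\#\Radon_{\phi_\bfA(\bftheta)}[\mu]$ with $\phi_\bfA(\bftheta)=\bfA^\top\bftheta/\lVert\bfA^\top\bftheta\rVert$; the derivation is the same sequence of pushforward identities, using $\langle\bfA\bfx,\bftheta\rangle=\langle\bfx,\bftheta\bfA\rangle$ wait, $=\langle\bfx,\bfA^\top\bftheta\rangle$. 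Transferring to the CDT yields $\widehat\Radon_\bftheta[\mu_{\bfA,\bfy}](t)=\lVert\bfA^\top\bftheta\rVert\,\widehat\Radon_{\phi_\bfA(\bftheta)}[\mu](t)+\langle\bfy,\bftheta\rangle$, so the affine shift and positive scaling are absorbed by the mean-subtraction and standard-deviation division, giving $\NRCDT_\bftheta[\mu_{\bfA,\bfy}]=\NRCDT_{\phi_\bfA(\bftheta)}[\mu]$. Since $\bfA\in\GL(d)$ the map $\phi_\bfA$ is a bijection of $\Sphere_{d-1}$ onto itself, so the assumed invariance $h(g\circ\psi)=h(g)$ under arbitrary bijections $\psi$ of $\Theta=\Sphere_{d-1}$ gives $\genNRCDT{}[\mu_{\bfA,\bfy}](t)=h(\NRCDT_\phi[\mu](t,\cdot)\circ\phi_\bfA)=h(\NRCDT_\phi[\mu](t,\cdot))=\genNRCDT{}[\mu](t)$, the $d$-dimensional analogue of Proposition~\ref{prop:inv-h-nrcdt}.

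With affine invariance in hand, the separation statement is immediate: since every element of $\F$ is an affine image of the single template $\mu_0$, invariance forces $\genNRCDT{}[\F]=\{\genNRCDT{}[\mu_0]\}$, and likewise $\genNRCDT{}[\G]=\{\genNRCDT{}[\nu_0]\}$; these are two distinct points in $L^\infty_\rho(\R)$ by hypothesis, and two distinct points of a normed (or even just a real) vector space are trivially linearly separated by an affine hyperplane (take the functional given by the difference, normalized, and a threshold strictly between the two values). It also remains to note $\F,\G\subset\P_\c^*(\R^d)$, which holds because affine maps in $\GL(d)$ preserve both compactness of the support and the property that the affine hull is not contained in a hyperplane (an invertible affine map sends an affine hyperplane to an affine hyperplane and vice versa). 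I do not anticipate a genuine obstacle here: the only point requiring care is the verification $\P_\c^*(\R^d)=\P_{\phi,\c}^+(\R^d)$ and the bijectivity of $\phi_\bfA$ on $\Sphere_{d-1}$, which are both handled exactly as in two dimensions since the argument never used $d=2$ beyond the identities $\langle\bfx,\bfA^\top\bftheta\rangle=\langle\bfA\bfx,\bftheta\rangle$ and compactness of the sphere. The "hard part", such as it is, is purely bookkeeping: making sure the generalized definitions in §~\ref{sec:gen-NRCDT} specialize correctly, which is precisely what the preceding paragraph checks.
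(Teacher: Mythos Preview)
Your proposal is correct and follows precisely the approach indicated by the paper, which does not give a separate proof of this theorem but simply asserts that ``all results line-by-line generalize to the multi-dimensional setting'' and that $\P_\c^*(\R^d) = \P_{\phi,\c}^+(\R^d)$. You have faithfully spelled out those line-by-line generalizations (the $d$-dimensional analogues of Propositions~\ref{prop:sigma_bounded}, \ref{prop:RT_transformation}, and \ref{prop:inv-h-nrcdt}, followed by the collapse-to-singletons argument of Theorem~\ref{thm:sep-h-nrcdt}), so there is nothing to add beyond cleaning up the stray self-correction in your write-up.
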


\subsection{Circular NR-CDT}

Another example
for a generalized Radon transform
on the multi-dimensional Euclidean space is
the circular Radon transform \cite{Kuchment2006}.
Domain and direction set of this transform are given by 
$\XX \coloneqq \R^d$ and $\Theta \coloneqq \R^d$.
Furthermore,
the slicing operator reads
\begin{equation}
    \label{eq:slice-circ}
    \phi(\bfx, \bftheta) 
    \coloneqq
    \lVert \bfx - \bftheta \rVert,
    \quad
    \bfx, \bftheta \in \R^d.
\end{equation}
Note that, due to the reversed triangle inequality,
this function satisfies the above
uniform boundedness assumption.
Figuratively,
the circular Radon transform integrates
along circles with center $\bftheta \in \Theta$.
The resulting \hNRCDT{} is especially useful
for classification tasks under
isotropic affine transformations,
i.e.,
for probability classes build by
\begin{equation*}
    \mu_{s\bfQ, \bfy} 
    \coloneqq
    (s \bfQ \, \cdot + \bfy)_\# \mu
\end{equation*}
with $s > 0$,
$\bfQ \in \O(d)$ in the orthogonal group,
and $\bfy \in \R^d$.

\begin{proposition}
    \label{prop:iso-aff}
    For any $\bftheta \in \R^d$,
    the restricted circular Radon transform satisfies
    \begin{equation*}
        \Radon_{\phi,\bftheta}[\mu_{s \bfQ, \bfy}]
        = 
        (s \, \cdot)_\#
        \Radon_{\phi, \bfQ^\top(\bftheta - \bfy)} [\mu].
    \end{equation*}
\end{proposition}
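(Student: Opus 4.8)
The plan is to reduce everything to the scalar map induced on the radius variable, by unfolding the two pushforwards. Writing $T(\bfx) \coloneqq s\bfQ\bfx + \bfy$, functoriality of the pushforward gives
\begin{equation*}
    \Radon_{\phi,\bftheta}[\mu_{s\bfQ,\bfy}]
    = (S_{\phi,\bftheta})_\# T_\# \mu
    = (S_{\phi,\bftheta}\circ T)_\# \mu,
\end{equation*}
so the entire statement is encoded in $\bfx \mapsto S_{\phi,\bftheta}(T(\bfx)) = \lVert s\bfQ\bfx + \bfy - \bftheta\rVert$. The first key step is to exploit that $\bfQ\in\O(d)$ is an isometry: inserting $\bfQ^\top$ and using $\lVert\bfQ^\top\bfz\rVert=\lVert\bfz\rVert$ yields
\begin{equation*}
    \lVert s\bfQ\bfx + \bfy - \bftheta\rVert
    = \lVert s\bfx - \bfQ^\top(\bftheta - \bfy)\rVert
    = S_{\phi,\bfQ^\top(\bftheta-\bfy)}(s\bfx),
\end{equation*}
which already isolates the centre $\bfQ^\top(\bftheta-\bfy)$ appearing in the claim.

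The second step is to extract the dilation $s$ and rewrite the right-hand side as a pushforward on $\R$, and this is exactly the point I expect to be the main obstacle: the bookkeeping of the scalar $s$ splits into two \emph{inequivalent} packagings of $\lVert s\bfx - \bfQ^\top(\bftheta-\bfy)\rVert$. Keeping the centre fixed at $\bfQ^\top(\bftheta-\bfy)$ and letting $s$ act on $\mu$ in the domain $\R^d$ gives $\Radon_{\phi,\bftheta}[\mu_{s\bfQ,\bfy}] = \Radon_{\phi,\bfQ^\top(\bftheta-\bfy)}\bigl[(s\,\cdot)_\#\mu\bigr]$; whereas pulling $s$ out of the norm as $\lVert s\bfx - \bfQ^\top(\bftheta-\bfy)\rVert = s\,\lVert\bfx - \tfrac1s\bfQ^\top(\bftheta-\bfy)\rVert$ moves the factor onto the radius but simultaneously rescales the centre, giving
\begin{equation*}
    \Radon_{\phi,\bftheta}[\mu_{s\bfQ,\bfy}]
    = (s\,\cdot)_\#\,\Radon_{\phi,\frac1s\bfQ^\top(\bftheta-\bfy)}[\mu].
\end{equation*}

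Consequently, the identity as worded—applying the radius rescaling $(s\,\cdot)_\#$ to $\Radon_{\phi,\bfQ^\top(\bftheta-\bfy)}[\mu]$ with the \emph{unscaled} centre—combines these two incompatible packagings, and the honest computation instead produces the centre $\tfrac1s\bfQ^\top(\bftheta-\bfy)$. I would therefore carry the $\tfrac1s$ factor throughout and state the conclusion in that corrected form; the match is then routine, e.g.\ by testing against $g\in C_0(\R)$ or tracking a single atom $\mu=\delta_{\bfx}$, for which the left-hand value $s\lVert\bfx - \tfrac1s\bfQ^\top(\bftheta-\bfy)\rVert = \lVert s\bfQ\bfx+\bfy-\bftheta\rVert$ is immediate. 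The missing $\tfrac1s$ in the centre is the single substantive point; everything else is a direct pushforward computation.
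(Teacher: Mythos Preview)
Your analysis is correct, and in fact you have caught a genuine error in the paper. The paper's own proof proceeds exactly through the chain
\begin{align*}
    \Radon_{\phi,\bftheta}[\mu_{s\bfQ,\bfy}]
    &= \lVert s\bfQ\,\cdot + \bfy - \bftheta\rVert_\#\mu
    \\
    &= \lVert s\,\cdot - \bfQ^\top(\bftheta-\bfy)\rVert_\#\mu
    \\
    &= (s\,\cdot)_\#\,\Radon_{\phi,\bfQ^\top(\bftheta-\bfy)}[\mu],
\end{align*}
and the final equality is precisely the faulty step you isolate: it would require $\lVert s\bfx - \bfz\rVert = s\lVert\bfx - \bfz\rVert$, which fails for $s\neq 1$ and $\bfz\neq\bfzero$. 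Your corrected identity
\[
    \Radon_{\phi,\bftheta}[\mu_{s\bfQ,\bfy}]
    = (s\,\cdot)_\#\,\Radon_{\phi,\frac{1}{s}\bfQ^\top(\bftheta-\bfy)}[\mu]
\]
is the right one, and your Dirac-mass check confirms it.

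It is worth noting that the downstream use of this proposition in the paper---the invariance $\NRCDT_{\phi,h}[\mu_{s\bfQ,\bfy}] = \NRCDT_{\phi,h}[\mu]$ and the ensuing separability theorem---survives the correction unchanged: the argument only needs that $\bftheta\mapsto\tfrac{1}{s}\bfQ^\top(\bftheta-\bfy)$ is a bijection of $\Theta=\R^d$, which it is, so the supremum (or any $h$ invariant under such reparametrizations) is unaffected. But the proposition as stated is indeed false, and your diagnosis of where and why is exactly right.
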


\begin{proof}
    The circular Radon transform is induced by
    the slicing operator in~\eqref{eq:slice-circ},
    which here yields
    \begin{align*}
        \Radon_{\phi,\bftheta}[\mu_{s\bfQ,\bfy}]
        &=
        (S_{\phi,\bftheta})_\#
        [(s\bfQ \, \cdot + \bfy)_\# \mu]
        \\
        &=
        \lVert s \bfQ \, \cdot + \bfy - \bftheta \rVert_\# \mu
        \\
        &=
        \lVert s \, \cdot - \bfQ^\top (\bftheta - \bfy) \rVert_\# \mu
        \\
        &=
        (s \, \cdot )_\# \Radon_{\phi, \bfQ^\top(\bftheta - \bfy)} [\mu].
        \tag*{\qedhere}
    \end{align*}
\end{proof}

Due to the normalizations behind \hNRCDT{},
Proposition~\ref{prop:iso-aff} immediately implies
\begin{equation}
    \label{eq:iso-aff-inv}
    \NRCDT_{\phi,h}[\mu_{s\bfQ, \bfy}]
    =
    \NRCDT_{\phi,h}[h]
\end{equation}
for all $\mu \in \P_{\phi, \mathrm c}^+(\R^d)$,
$s > 0$,
$\bfQ \in \O(d)$,
and $\bfy \in \R^d$.
Therefore,
our circular \hNRCDT{} is invariant
under isotropic affine transformations
and promotes separability 
of the corresponding classes.

\begin{theorem}
    \label{thm:sep-circ-h-nrcdt}
    For template measures $\mu_0, \nu_0 \in \P_{\phi, \mathrm c}^+(\R^d)$
    whose circular \hNRCDT{}s satisfy
    \begin{equation*}
        \NRCDT_{\phi,h}[\mu_0] \neq \NRCDT_{\phi,h}[\nu_0],
    \end{equation*}
    consider the classes
    \begin{align*}
        \F 
        &=
        \bigl\{(s\bfQ \cdot + \bfy)_\# \mu_0 \mid s>0, \, \bfQ \in \O(d), \, \bfy \in \R^d\bigr\},
        \\
        \G 
        &= 
        \bigl\{(s\bfQ \cdot + \bfy)_\# \nu_0 \mid s>0, \, \bfQ \in \O(d), \, \bfy \in \R^d\bigr\}.
    \end{align*}
    Then, $\F \subset \P_{\phi,\mathrm c}^+(\R^d)$ and $\G \subset \P_{\phi,\mathrm c}^+(\R^d)$ are linearly separable 
    in circular \hNRCDT{}~space.
\end{theorem}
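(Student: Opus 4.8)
The plan is to mirror the proof of Theorem~\ref{thm:sep-max-nrcdt}: show that the circular \hNRCDT{} collapses each isotropic-affine orbit to a single point in $L^\infty_\rho(\R)$, and then use the distinctness hypothesis to separate the two resulting points by a hyperplane.

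First I would verify that $\F$ and $\G$ are actually contained in $\P_{\phi,\mathrm c}^+(\R^d)$, so that the circular \hNRCDT{} is defined on them. Fix $\mu_0 \in \P_{\phi,\mathrm c}^+(\R^d)$, $s>0$, $\bfQ \in \O(d)$, $\bfy \in \R^d$. The support of $\mu_{s\bfQ,\bfy}$ is the image of $\supp(\mu_0)$ under $\bfx \mapsto s\bfQ\bfx+\bfy$, hence still compact. By Proposition~\ref{prop:iso-aff}, $\Radon_{\phi,\bftheta}[\mu_{s\bfQ,\bfy}] = (s\,\cdot)_\#\Radon_{\phi,\bfQ^\top(\bftheta-\bfy)}[\mu_0]$, so $\std(\widehat{\Radon}_{\phi,\bftheta}[\mu_{s\bfQ,\bfy}]) = s\,\std(\widehat{\Radon}_{\phi,\bfQ^\top(\bftheta-\bfy)}[\mu_0])$. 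Since $\bftheta \mapsto \bfQ^\top(\bftheta-\bfy)$ is a bijection of $\R^d$ onto itself, a uniform lower bound $c>0$ for $\mu_0$ yields the uniform lower bound $sc>0$ for $\mu_{s\bfQ,\bfy}$; thus $\mu_{s\bfQ,\bfy} \in \P_{\phi,\mathrm c}^+(\R^d)$, and in particular $\F,\G \subset \P_{\phi,\mathrm c}^+(\R^d)$.

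Next, by the invariance \eqref{eq:iso-aff-inv} — which follows from Proposition~\ref{prop:iso-aff} together with the mean/standard-deviation normalization built into the NR-CDT and the bijectivity-invariance imposed on $h$ — every element of $\F$ has the same circular \hNRCDT{} as $\mu_0$, i.e.\ $\NRCDT_{\phi,h}[\F] = \{\NRCDT_{\phi,h}[\mu_0]\}$, and likewise $\NRCDT_{\phi,h}[\G] = \{\NRCDT_{\phi,h}[\nu_0]\}$. So both classes collapse to single points in $L^\infty_\rho(\R)$. Since by hypothesis $\NRCDT_{\phi,h}[\mu_0] \neq \NRCDT_{\phi,h}[\nu_0]$, these two points are distinct, and two distinct points of a normed space are trivially linearly separable (take a bounded functional not annihilating their difference and the hyperplane through their midpoint), which gives the claim.

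I expect the only mildly delicate point to be the membership verification $\F,\G \subset \P_{\phi,\mathrm c}^+(\R^d)$; once the scaling behavior of $\std$ from Proposition~\ref{prop:iso-aff} is in place, everything else is a verbatim transcription of the argument behind Theorems~\ref{thm:sep-max-nrcdt} and \ref{thm:sep-h-nrcdt}.
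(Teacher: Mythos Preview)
Your proposal is correct and follows essentially the same approach as the paper: use the invariance \eqref{eq:iso-aff-inv} to collapse $\F$ and $\G$ to the singletons $\{\NRCDT_{\phi,h}[\mu_0]\}$ and $\{\NRCDT_{\phi,h}[\nu_0]\}$, then invoke the distinctness hypothesis to separate them in $L^\infty_\rho(\R)$. Your explicit verification that $\F,\G \subset \P_{\phi,\mathrm c}^+(\R^d)$ via the scaling of $\std$ under Proposition~\ref{prop:iso-aff} is a detail the paper asserts but does not spell out, so your write-up is slightly more complete.
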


\begin{proof}
    Because of \eqref{eq:iso-aff-inv},
    we have
    $\NRCDT_{\phi,h} [\F] = \bigl\{\NRCDT_{\phi,h} [\mu_0]\bigr\}$
    and $\NRCDT_{\phi,h} [\G] = \bigl\{\NRCDT_{\phi,h} [\nu_0]\bigr\}$.
    Then $\NRCDT_{\phi,h} [\mu_0] \neq \NRCDT_{\phi,h} [\nu_0]$
    implies their linear separability 
    in $\Lebesgue^\infty_\rho(\R)$.
\end{proof}

\subsection{NR-CDT on the Rotation Group}
\label{sec:SO3-NRCDT}

The generalized \hNRCDT{} is not restricted
to the Euclidean setting.
As an instance,
we consider the Radon transform 
on the 3d rotation group
introduced in \cite{Quellmalz2024},
which corresponds to 
$\XX,\Theta \coloneqq \mathrm{SO}(3)$
and the slicing operator
\begin{equation*}
    \phi(\bfx, \bftheta)
    \coloneqq
    \arccos \biggl(
    \frac{\trace(\bftheta^\top \bfx) -1}{2}
    \biggr).
\end{equation*}
In the following,
we consider the rotation of measures
given by
$\mu_\bfR \coloneqq (\bfR \, \cdot)_\# \mu$
for $\mu \in \P_{\phi, \mathrm c}^+(\SO(3))$
and $\bfR \in \SO(3)$.

\begin{proposition}
    \label{prop:rot}
    For any $\bftheta \in \SO(3)$,
    the restricted Radon transform on $\SO(3)$ satisfies
    \begin{equation*}
        \Radon_{\phi,\bftheta}[\mu_{\bfR}]
        = 
        \Radon_{\phi, \bfR^\top \bftheta} [\mu].
    \end{equation*}
\end{proposition}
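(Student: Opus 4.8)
The plan is to mimic the proofs of Proposition~\ref{prop:RT_transformation} and Proposition~\ref{prop:iso-aff}: unfold the definition of the restricted generalized Radon transform from~\eqref{eq:restrict-gen-radon-meas}, push the rotation inside the push-forward, and exploit the invariance of the defining function $\phi$ under left multiplication by a rotation. Concretely, the key algebraic fact is the trace identity
\begin{equation*}
    \trace(\bftheta^\top \bfR \bfx)
    =
    \trace\bigl((\bfR^\top \bftheta)^\top \bfx\bigr),
\end{equation*}
which follows from $(\bfR^\top \bftheta)^\top = \bftheta^\top \bfR$; this immediately gives $\phi(\bfR \bfx, \bftheta) = \phi(\bfx, \bfR^\top \bftheta)$, i.e. $S_{\phi, \bftheta} \circ (\bfR \,\cdot\,) = S_{\phi, \bfR^\top \bftheta}$ as maps $\SO(3) \to \R$. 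Note that $\bfR^\top \bftheta \in \SO(3)$, so the right-hand side of the claim is well-defined.

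The steps, in order: first write $\Radon_{\phi,\bftheta}[\mu_{\bfR}] = (S_{\phi,\bftheta})_\#\bigl[(\bfR \,\cdot\,)_\# \mu\bigr] = \bigl(S_{\phi,\bftheta} \circ (\bfR \,\cdot\,)\bigr)_\# \mu$ using functoriality of push-forward. Second, evaluate $S_{\phi,\bftheta}(\bfR\bfx) = \arccos\!\bigl(\tfrac{\trace(\bftheta^\top\bfR\bfx)-1}{2}\bigr)$ and apply the trace identity to rewrite this as $\arccos\!\bigl(\tfrac{\trace((\bfR^\top\bftheta)^\top\bfx)-1}{2}\bigr) = S_{\phi,\bfR^\top\bftheta}(\bfx)$. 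Third, substitute back to obtain $\Radon_{\phi,\bftheta}[\mu_{\bfR}] = (S_{\phi,\bfR^\top\bftheta})_\# \mu = \Radon_{\phi,\bfR^\top\bftheta}[\mu]$, which is the assertion.

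I do not expect a genuine obstacle here: the statement is a direct computation and the only thing to be careful about is the transpose bookkeeping inside the trace and noting that $\arccos$ is applied to the same argument on both sides (so no further assumption on $\phi$ being injective or the like is needed). In contrast to the circular case of Proposition~\ref{prop:iso-aff}, there is not even a dilation factor to track, because left multiplication by $\bfR \in \SO(3)$ is an isometry of $\SO(3)$ for the bi-invariant metric underlying $\phi$; equivalently, $\phi$ is left-invariant in the sense $\phi(\bfR\bfx, \bfR\bftheta) = \phi(\bfx,\bftheta)$, and the claim is just the "adjoint" reformulation of this. The proof is therefore a two-line push-forward calculation.

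\begin{proof}
    By~\eqref{eq:restrict-gen-radon-meas} and functoriality of the push-forward,
    \begin{equation*}
        \Radon_{\phi,\bftheta}[\mu_{\bfR}]
        =
        (S_{\phi,\bftheta})_\#\bigl[(\bfR \,\cdot\,)_\# \mu\bigr]
        =
        \bigl(S_{\phi,\bftheta} \circ (\bfR \,\cdot\,)\bigr)_\# \mu.
    \end{equation*}
    For $\bfx \in \SO(3)$, using $(\bfR^\top \bftheta)^\top = \bftheta^\top \bfR$ we get
    \begin{equation*}
        S_{\phi,\bftheta}(\bfR\bfx)
        =
        \arccos\!\biggl(\frac{\trace(\bftheta^\top \bfR\bfx) - 1}{2}\biggr)
        =
        \arccos\!\biggl(\frac{\trace\bigl((\bfR^\top\bftheta)^\top \bfx\bigr) - 1}{2}\biggr)
        =
        S_{\phi,\bfR^\top\bftheta}(\bfx),
    \end{equation*}
    and since $\bfR^\top\bftheta \in \SO(3)$, substituting back yields
    \begin{equation*}
        \Radon_{\phi,\bftheta}[\mu_{\bfR}]
        =
        (S_{\phi,\bfR^\top\bftheta})_\# \mu
        =
        \Radon_{\phi,\bfR^\top\bftheta}[\mu].
        \qedhere
    \end{equation*}
\end{proof}
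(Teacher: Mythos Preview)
Your proof is correct and follows essentially the same approach as the paper: unfold the push-forward definition of $\Radon_{\phi,\bftheta}$, compose with the rotation, and use the trace identity $\trace(\bftheta^\top\bfR\bfx) = \trace((\bfR^\top\bftheta)^\top\bfx)$ to recognize the result as $\Radon_{\phi,\bfR^\top\bftheta}[\mu]$. If anything, you are slightly more explicit than the paper in spelling out the functoriality of the push-forward and the transpose bookkeeping.
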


\begin{proof}
    Plugging in the definition of the generalized restricted Radon transform,
    we obtain
    \begin{align*}
        \Radon_{\phi,\bftheta}[\mu_{\bfR}]
        &=
        (S_{\phi,\bftheta})_\#
        [(\bfR \, \cdot )_\# \mu]
        \\
        &=
        \Bigl( \arccos \Bigl( \tfrac{\trace(\bftheta^\top \bfR \, \cdot) - 1}{2} \Bigr) \Bigr)_\# \mu
        \\
        &=
        \Radon_{\phi, \bfR^\top \bftheta}[\mu].
        \tag*{\qedhere}
    \end{align*}
\end{proof}

Since $h$ has to be invariant under the rotation of the direction set $\Theta = \SO(3)$,
the \hNRCDT{} inherit this property,
promoting the separability under rotations.

\begin{theorem}
    \label{thm:sep-so3-h-nrcdt}
    For $\mu_0, \nu_0 \in \P_{\phi, \mathrm c}^+(\SO(3))$
    with
    \begin{equation*}
        \genNRCDT{} [\mu_0] \neq \genNRCDT{} [\nu_0],
    \end{equation*}
    consider the classes
    \begin{align*}
        \F 
        &=
        \bigl\{(\bfR \cdot )_\# \mu_0 \mid \bfR \in \SO(3) \bigr\},
        \\
        \G 
        &= 
        \bigl\{(\bfR \cdot )_\# \nu_0 \mid \bfR \in \SO(3)\bigr\}.
    \end{align*}
    Then, $\F \subset \P_{\phi,\mathrm c}^+(\R^d)$ and $\G \subset \P_{\phi,\mathrm c}^+(\R^d)$ are linearly separable 
    in \hNRCDT{}~space.
\end{theorem}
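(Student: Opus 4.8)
The plan is to mirror the proof of Theorem~\ref{thm:sep-circ-h-nrcdt} almost verbatim, with Proposition~\ref{prop:rot} playing the role that Proposition~\ref{prop:iso-aff} played there. There are two steps: (i) show that $\genNRCDT{}$ is invariant under the rotation action $\mu \mapsto \mu_{\bfR}$, so that each of $\F$ and $\G$ collapses to a single point in $L^\infty_\rho(\R)$; and (ii) invoke the hypothesis $\genNRCDT{}[\mu_0] \neq \genNRCDT{}[\nu_0]$ to separate these two points by an affine hyperplane.

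For step (i), first note that $\psi_{\bfR} \colon \bftheta \mapsto \bfR^\top \bftheta$ is a bijection of $\SO(3)$ onto itself, with inverse $\bftheta \mapsto \bfR \bftheta$. Proposition~\ref{prop:rot} then reads $\Radon_{\phi,\bftheta}[\mu_{\bfR}] = \Radon_{\phi, \psi_{\bfR}(\bftheta)}[\mu]$ for every $\bftheta \in \SO(3)$. Since these are literally the same measure, applying the CDT with respect to the common reference $\rho$ gives $\widehat{\Radon}_{\phi,\bftheta}[\mu_{\bfR}] = \widehat{\Radon}_{\phi, \psi_{\bfR}(\bftheta)}[\mu]$; hence $\mean$ and $\std$ are unchanged by the reshuffling, and the first normalization yields $\NRCDT_{\phi,\bftheta}[\mu_{\bfR}] = \NRCDT_{\phi, \psi_{\bfR}(\bftheta)}[\mu]$, i.e. $\NRCDT_\phi[\mu_{\bfR}](t,\cdot) = \NRCDT_\phi[\mu](t,\cdot) \circ \psi_{\bfR}$ for all $t \in \R$. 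Because in the $\SO(3)$ setting $h$ is assumed invariant under \emph{every} bijection of $\Theta = \SO(3)$, in particular under $\psi_{\bfR}$, applying $h$ gives $\genNRCDT{}[\mu_{\bfR}](t) = \genNRCDT{}[\mu](t)$ for all $t$, hence $\genNRCDT{}[\mu_{\bfR}] = \genNRCDT{}[\mu]$. One also checks that $\mu_{\bfR} \in \P_{\phi,\mathrm c}^+(\SO(3))$: the support $\supp(\mu_{\bfR}) = \bfR \supp(\mu)$ is compact, and $\std(\Radon_{\phi,\bftheta}[\mu_{\bfR}]) = \std(\Radon_{\phi, \psi_{\bfR}(\bftheta)}[\mu]) \ge c$ by the same reshuffling, so $\genNRCDT{}$ is indeed defined on all of $\F \cup \G$.

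For step (ii), the invariance from step (i) gives $\genNRCDT{}[\F] = \{\genNRCDT{}[\mu_0]\}$ and $\genNRCDT{}[\G] = \{\genNRCDT{}[\nu_0]\}$, two distinct points of $L^\infty_\rho(\R)$, which can be separated by a continuous affine functional (Hahn--Banach, or concretely integration against a suitable element of $L^1_\rho(\R)$ distinguishing them); any intermediate level set is the desired separating hyperplane. The only point genuinely requiring care is recognising that the relevant hypothesis on $h$ here is the stronger ``invariant under all bijections of $\Theta$'' assumption introduced in §~\ref{sec:gen-NRCDT}, rather than the weaker form of §~\ref{sec:hNRCDT}; this is exactly what legitimises the reshuffling by $\psi_{\bfR}$, which is not of the special form $\phi_{\bfA}$. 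Beyond that, the argument is a line-by-line transcription of the earlier separability proofs and involves no new estimates.
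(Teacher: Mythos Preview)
Your proof is correct and follows exactly the same approach as the paper, which simply states that the argument is ``literally the same as of Theorem~\ref{thm:sep-circ-h-nrcdt}'' after noting (just before the theorem) that the invariance of $h$ under the reparametrization $\bftheta \mapsto \bfR^\top \bftheta$ yields $\genNRCDT{}[\mu_{\bfR}] = \genNRCDT{}[\mu]$. You have spelled out more detail than the paper---in particular the verification that $\mu_{\bfR} \in \P_{\phi,\mathrm c}^+(\SO(3))$ and the explicit appeal to Hahn--Banach---but the structure (invariance $\Rightarrow$ collapse to singletons $\Rightarrow$ separate two distinct points) is identical.
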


The proof is literally the same as of Theorem~\ref{thm:sep-circ-h-nrcdt}.
Notice that the Radon transform
on $\SO(3)$
of a rotated measure
causes only a reparametrization of the direction set;
therefore,
the first normalization step of the \hNRCDT{} is not necessary
in the strict sense. 

\subsection{Further NR-CDT Variants}

In addition to the above examples,
the \hNRCDT{} can be adapted
to further generalized Radon transforms.
E.g.~for studying measures on spheres,
the vertically sliced transform \cite{Quellmalz2023}
and parallelly sliced transform \cite{Quellmalz2024}
may be of interest,
where our normalization procedure 
yields invariant feature extractor
under rotations.
For measures on hyperbolic spaces,
the hyperbolic Radon transform \cite{Casadio2021} may be used. 
Finally,
for measures on arbitrary manifolds,
the eigenfunctions of the Laplace--Beltrami operator 
may be employed to define generalized Radon transforms
regarding our framework
in the style of the intrinsic sliced Wasserstein distance \cite{Rustamov2023}.
Actual studies 
about the usefulness of the generalized \hNRCDT{}
to obtain invariant feature extractors
under certain transformations
are left for future research.

\section{Numerical Experiments}
\label{sec:num-ex}

We now present numerical experiments
to support our linear separability results
in Theorems~\ref{thm:sep-max-nrcdt}--\ref{thm:sep-so3-h-nrcdt}.
Keeping the application in computational filigranology in mind,
we start with the two-dimensional setting and
focus on image classification and clustering
to show the effectiveness of our new feature representations.
Thereon, we provide proof-of-concept experiments
in the multi-dimensional setting, where we focus on
classification in $\R^3$ and $\SO(3)$.
We compare our approach with
the original R-CDT~\cite{Kolouri2016} and
the Euclidean representation as baseline.
Since R-CDT already outperforms 
other state-of-the-art approaches
in the small data regime \cite{Shifat-E-Rabbi2021},
we omit comparisons with neural networks.
All methods are implemented in Julia
and the code is publicly available at
\url{https://github.com/DrBeckmann/NR-CDT}.
Our experiments are performed on a off-the-shelf
MacBookPro 2020 with 1.4~GHz quad‑core Intel Core i5 CPU
and 8~GB RAM.

Before reporting our numerical results,
we wish to comment on some details
regarding the numerical implementation.
In the two-dimensional setting,
each grayscale image is modeled as a non-negative
piecewise constant function $f \in \Lebesgue^1(\R^2)$ with support in
$[-1/\sqrt{2},1/\sqrt{2}]^2 \subset \Ball$.
Moreover,
we normalize $f$
such that $\int_{\Ball} f(\bfx) \d \bfx = 1$
to be interpretable as a probability density function.
This allows us to compute
its Radon transform in closed form.
For numerical stability,
we replace the line integrals in~\eqref{eq:radon-meas}
by integrals over disjoint stripes of width $\frac{2}{R}$,
where $R$ is the number of radial samples.
Thereon,
the CDTs are computed based on
linear interpolation on an equispaced grid 
of interpolation points in $(0,1)$.
In the multi-dimensional setting,
we model the objects as empirical measures on $\XX$.
Then,
for a discretized direction set $\Theta$,
the restricted Radon transforms in~\eqref{eq:restrict-gen-radon-meas}
are themselves empirical measures that
can be explicitly computed by evaluating~\eqref{eq:gen-slicing-operator}.
Consequently, the CDTs are piecewise constant functions
and can be calculated exactly.

\begin{algorithm}[t]
    \caption{Generalized NR-CDT}
    \label{alg:gen-nrcdt}
    \begin{algorithmic}[1]
    \Require 
        input probability measure $\mu \in \P_{\phi, \mathrm c}^+(\XX)$,
        \\
        \Comment{image / empirical measure}
        \\
        Radon directions $\bftheta_1, \dots, \bftheta_n \in \Theta$,
        \\
        number of sampling points $m \in \N$,
        \\
        aggregation map $h \colon \R^m \to \R$
    \Ensure $\mathbf a = \NRCDT_{\phi,h}[\mu]$
    \For{$k = 1, \dots, n$}
        \State $\nu \gets \Radon_{\phi, \bftheta_k}[\mu]$
        \\
        \Comment{sampled density / empirical measure}
        \\[-7pt]
        \State $\bfq \gets 
            \bigl[ F^{[-1]}_\nu \bigl(\tfrac{1}{m+1} \bigr), 
                \dots,
                F^{[-1]}_\nu\bigl(\tfrac{m}{m+1}\bigr) \bigr]$
        \\[2pt]
        \Comment{linear interpolation / explicit inversion}
        \\[-7pt]
        \State $\eta \gets \tfrac{1}{m} \sum_{\ell = 1}^m  \bfq_\ell$
        \\[-5pt]
        \State $\sigma \gets \sqrt{\tfrac{1}{m} \sum_{\ell = 1}^m (\bfq_\ell - \eta)^2 }$
        \\[-7pt]
        \State $\mathbf N_{\bullet, k} \gets (\bfq \ominus \eta) \oslash \sigma$
        \\
        \Comment{pointwise subtraction \& division}
    \EndFor
    \For{$\ell = 1, \dots, m $}
        \State $\mathbf a_\ell = h(\mathbf N_{\ell,\bullet})$
    \EndFor
\end{algorithmic}
\end{algorithm}

The pseudocode
for the generalized NR-CDT
is given in Algorithm~\ref{alg:gen-nrcdt},
where the notation $\bullet,k$ is used 
to indicate the $k$th column
and $\ell, \bullet$
to indicate the $\ell$th row
of a matrix.
The computational complexity is dominated by
the employed (generalized) Radon transform.
For a two-dimensional image with in total $N$ pixels,
a na\"{\i}ve implementation
of the restricted Radon transform $\Radon_\bftheta$
essentially requires $\mathcal O(M N)$ operations,
where $M$ denotes the number of Radon radii.
As opposed to this,
for a $d$-dimensional point cloud with $N$ elements,
the generalized restricted Radon transform $\Radon_{\phi,\bftheta}$
can be implemented with $\mathcal O(d N)$ operations.
Since $\Radon_{\phi,\bftheta}$ yields a one-dimensional
point cloud with $N$ elements,
we choose the corresponding nodes
as the $M = N$ Radon radii.

For the uniform reference measure $\rho = u_{[0,1]}$,
the computation of the CDT basically consists of
calculating the cumulative sums of the $M$ Radon samples,
which can be combined with the interpolation step.
Hence, for $m$ CDT sampling points,
we can calculate the CDT with $\mathcal O(M + m)$ operations.
Computation of mean and standard deviation
as well as
the normalization step 
require $\mathcal O(m)$ operations.
Finally, if $n$ is the number of Radon directions,
all our considered aggregation functions
have a complexity of $\mathcal O(n)$.
Hence, the total computational complexity
of the generalized NR-CDT
is of order $\mathcal O(n(MN + m))$
for two-dimensional images
and of order $\mathcal O(n(dN + m))$
for $d$-dimensional point clouds.
Especially for images, 
where $N$ is usually large,
the complexity can be lowered by
exploiting the Fourier slice theorem
and the non-equispaced fast Fourier transform.

\subsection{Image Classification}
\label{sec:image_classification}

\begin{figure*}[t]
    \centering%
    \footnotesize%
    \begin{tabular}{c @{\hspace{3pt}} c @{\hspace{3pt}} c @{\hspace{3pt}} c @{\hspace{3pt}} c @{\hspace{3pt}} c @{\hspace{3pt}} c @{\hspace{3pt}} c @{\hspace{3pt}} c}
        class~1 
        & class~2
        & class~3
        & class~4
        & class~5
        & class~6
        & class~7
        & class~8
        & class~9 \\%
        \includegraphics[width=0.1\linewidth, clip=true, trim=132pt 27pt 113pt 12pt]{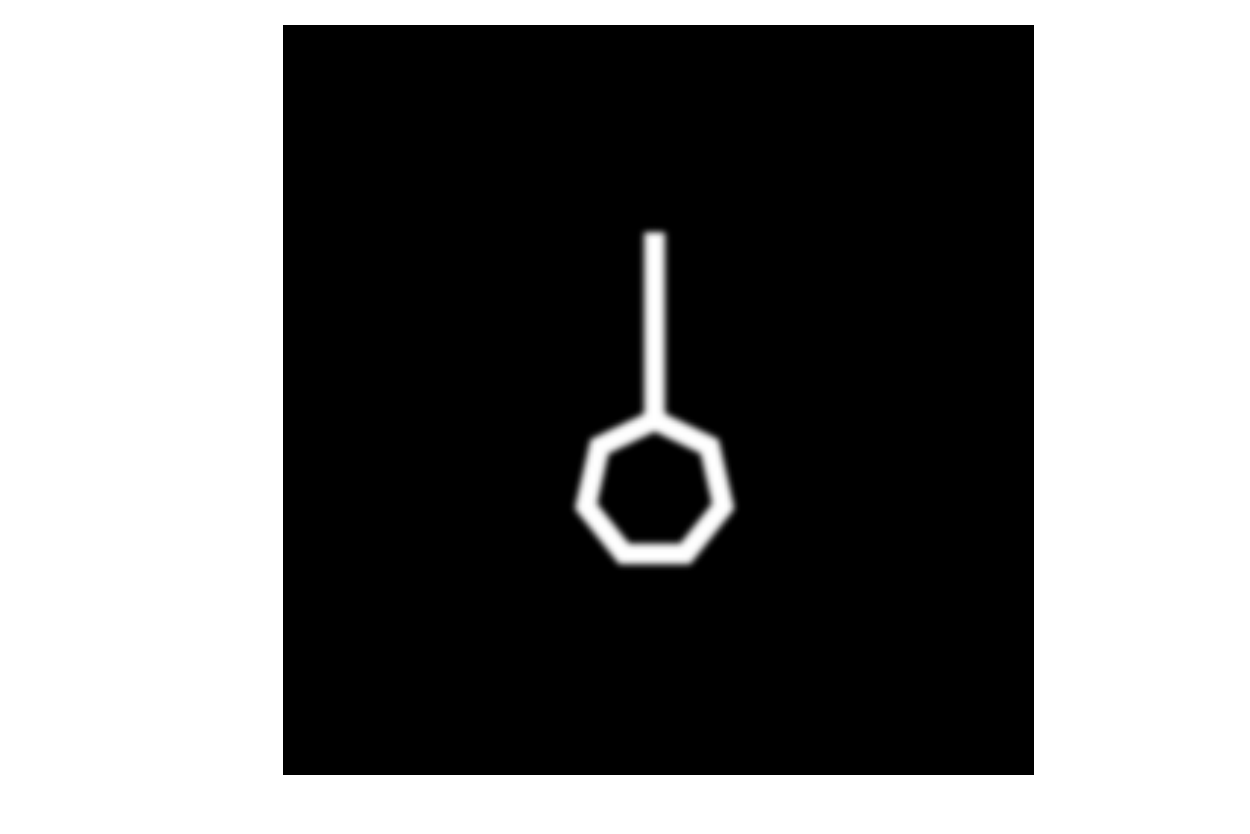}%
        & \includegraphics[width=0.1\linewidth, clip=true, trim=132pt 27pt 113pt 12pt]{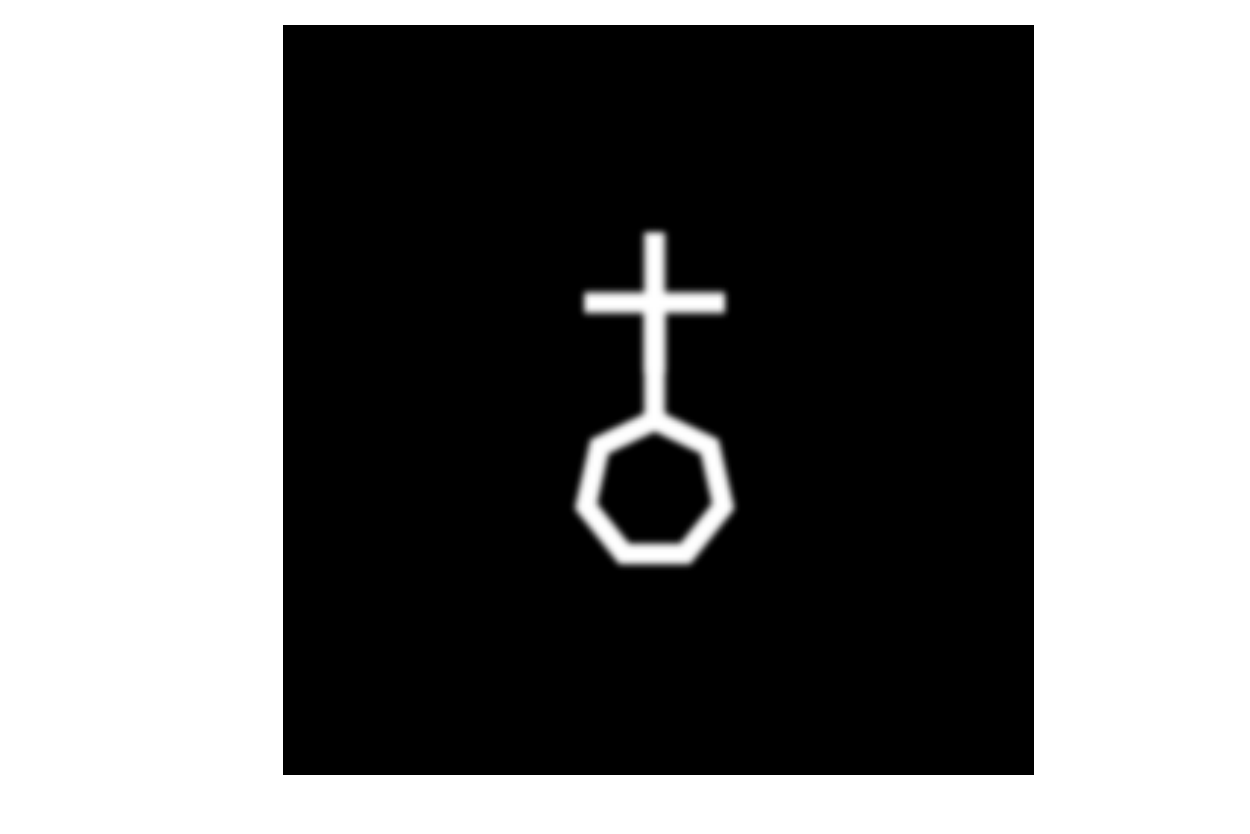}%
        & \includegraphics[width=0.1\linewidth, clip=true, trim=132pt 27pt 113pt 12pt]{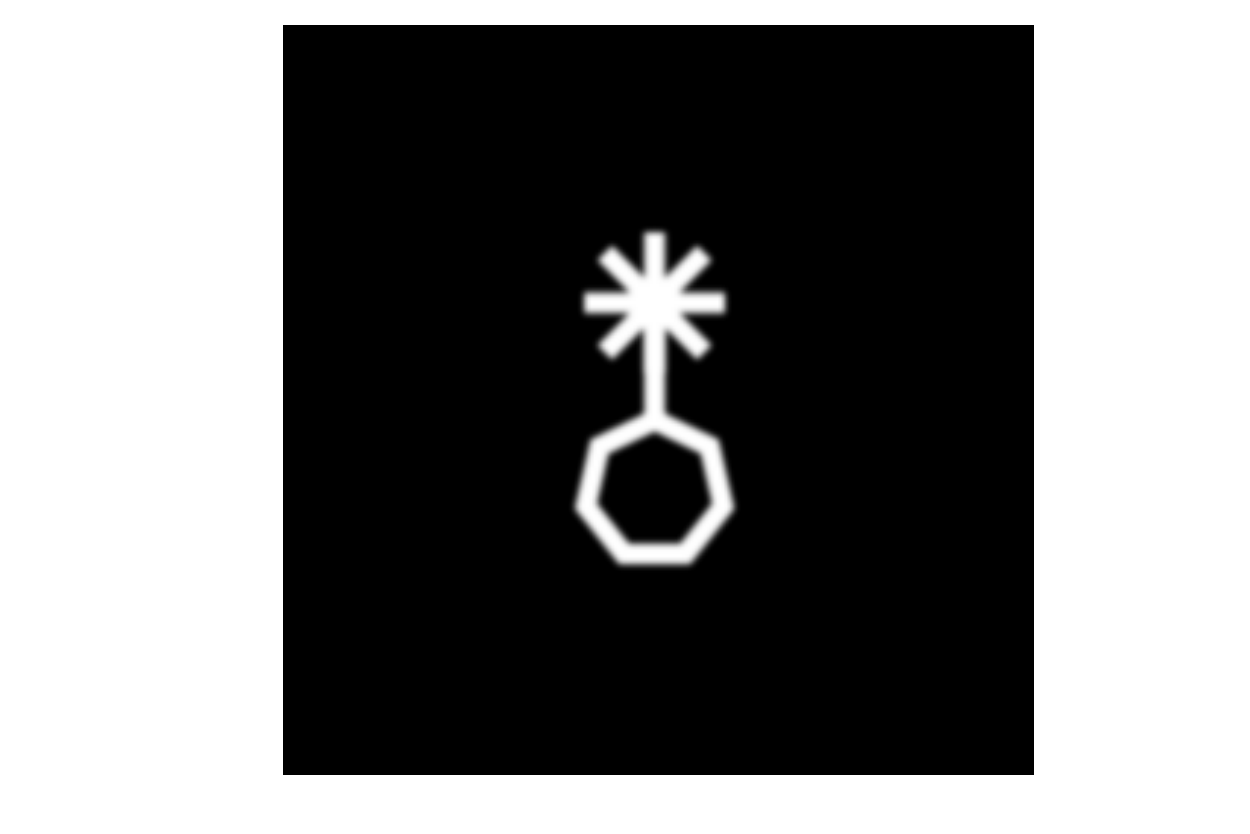}%
        & \includegraphics[width=0.1\linewidth, clip=true, trim=132pt 27pt 113pt 12pt]{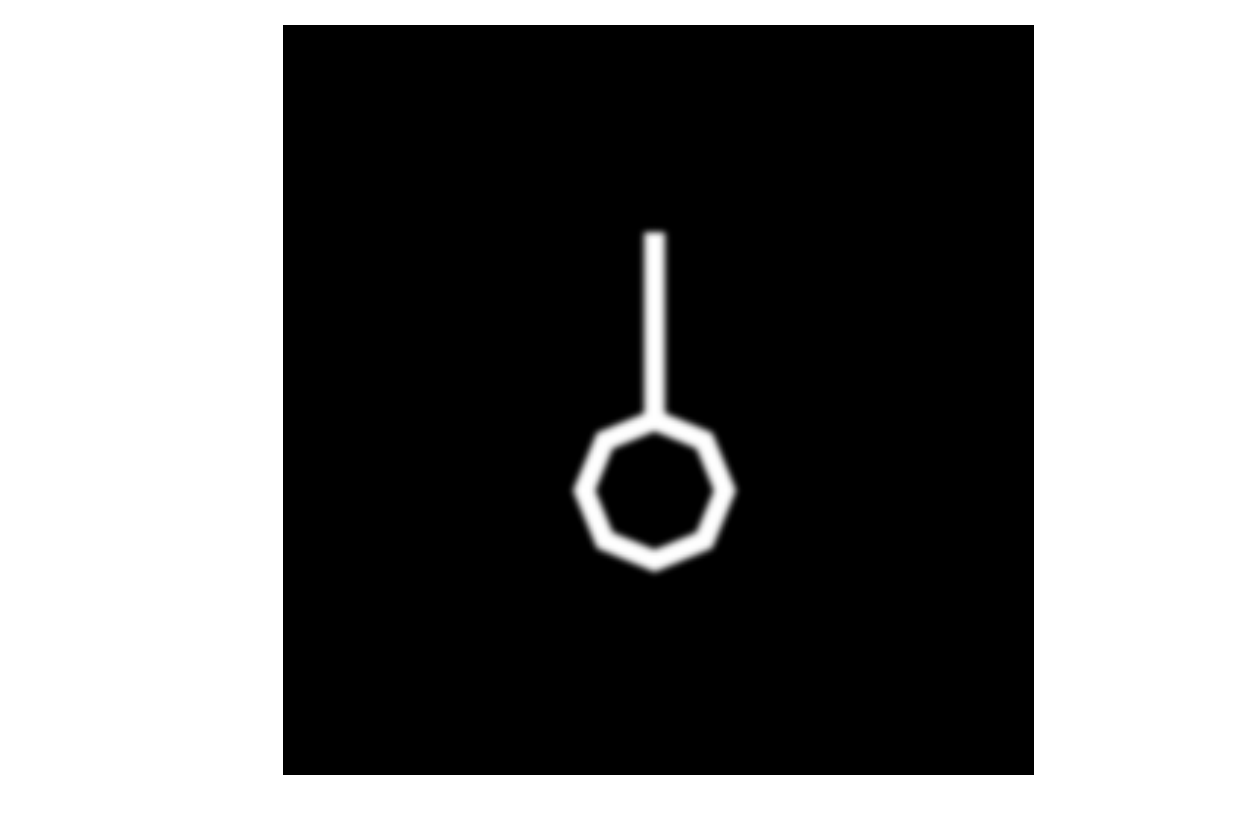}%
        & \includegraphics[width=0.1\linewidth, clip=true, trim=132pt 27pt 113pt 12pt]{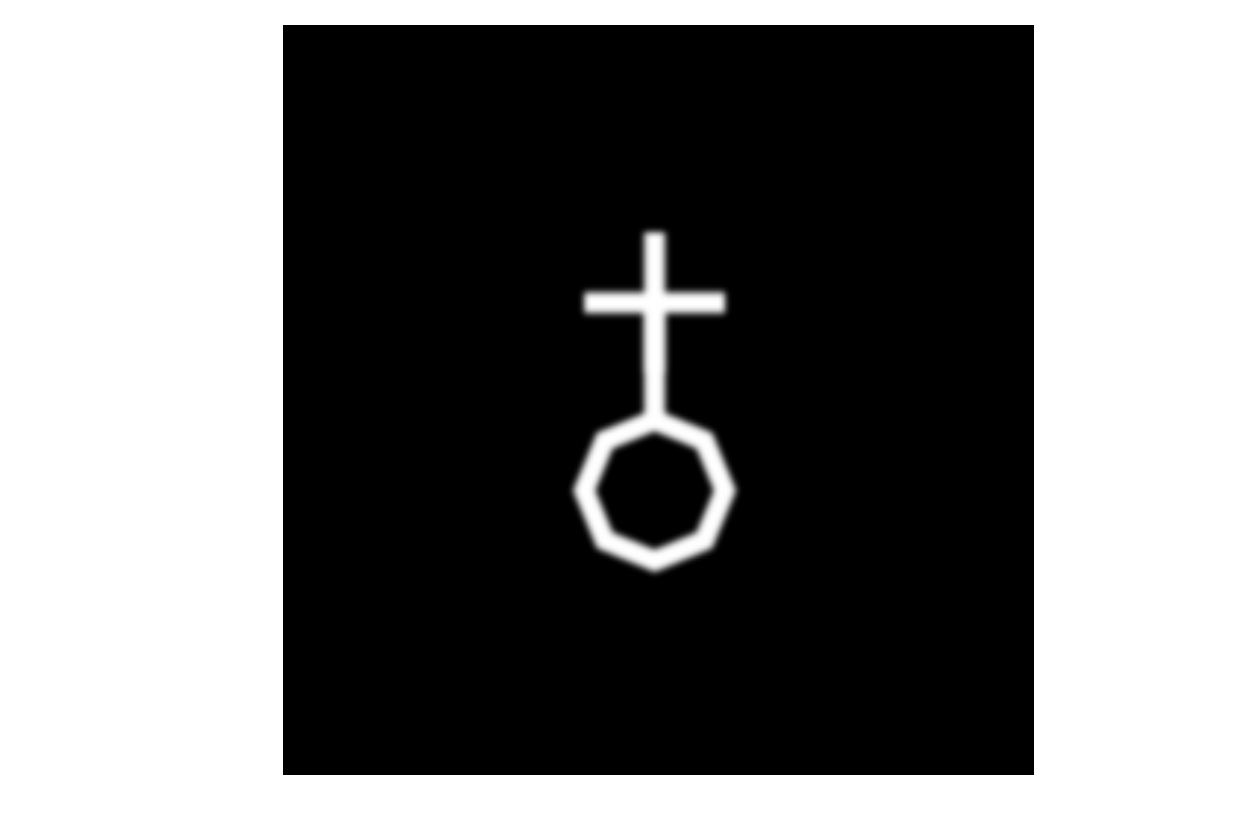}%
        & \includegraphics[width=0.1\linewidth, clip=true, trim=132pt 27pt 113pt 12pt]{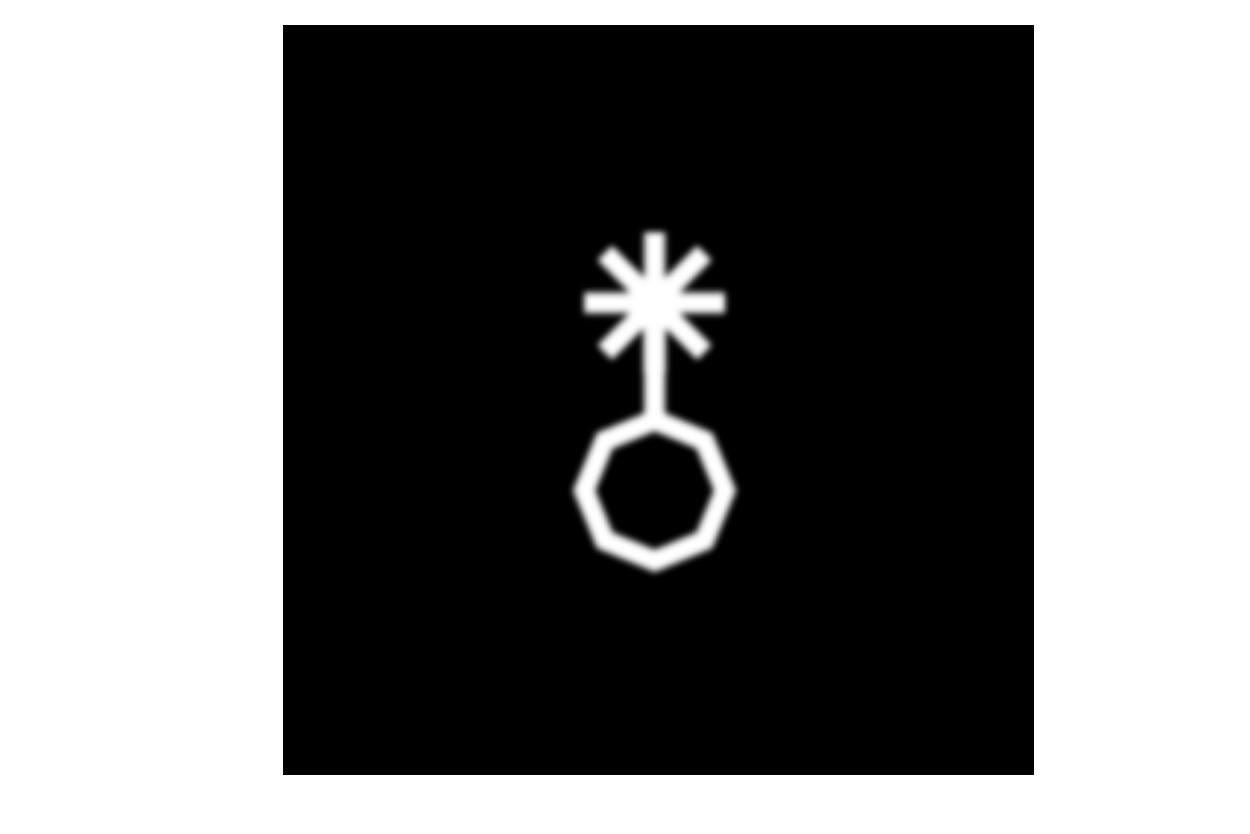}%
        & \includegraphics[width=0.1\linewidth, clip=true, trim=132pt 27pt 113pt 12pt]{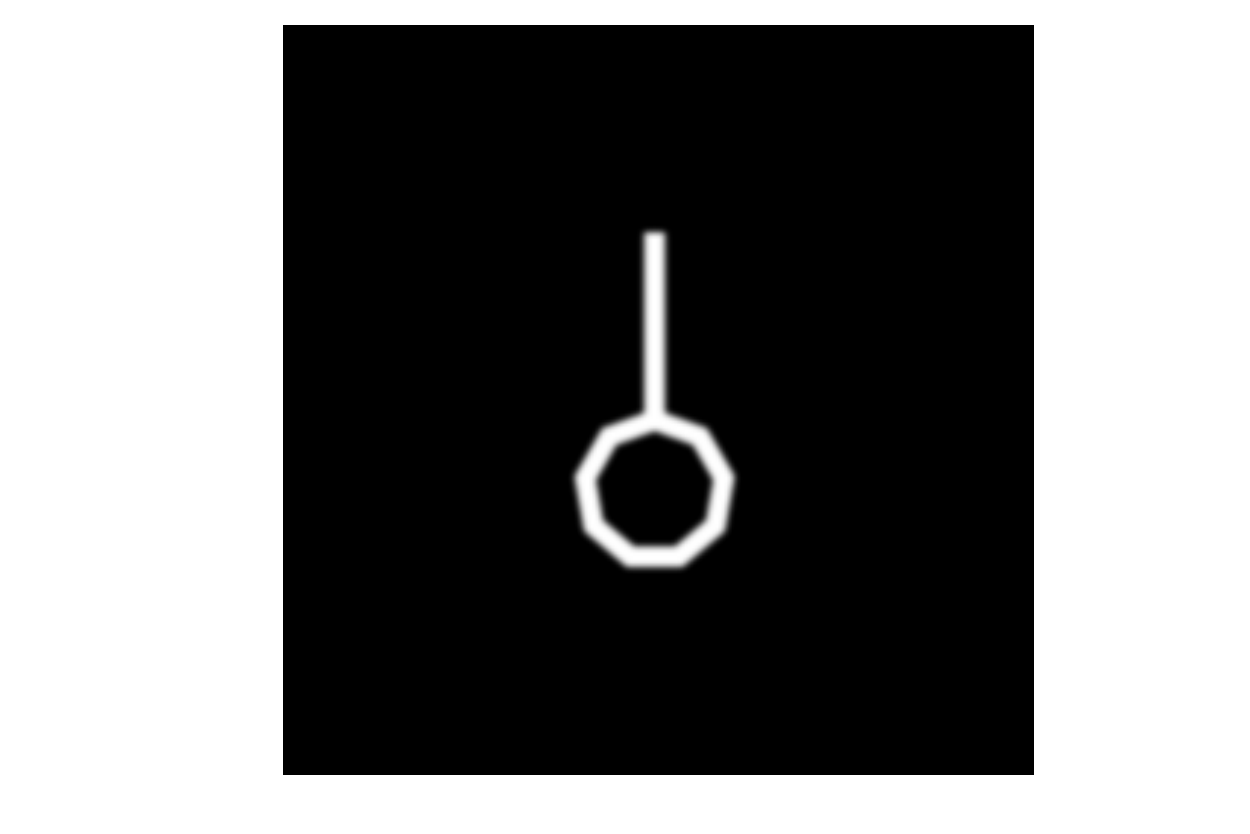}%
        & \includegraphics[width=0.1\linewidth, clip=true, trim=132pt 27pt 113pt 12pt]{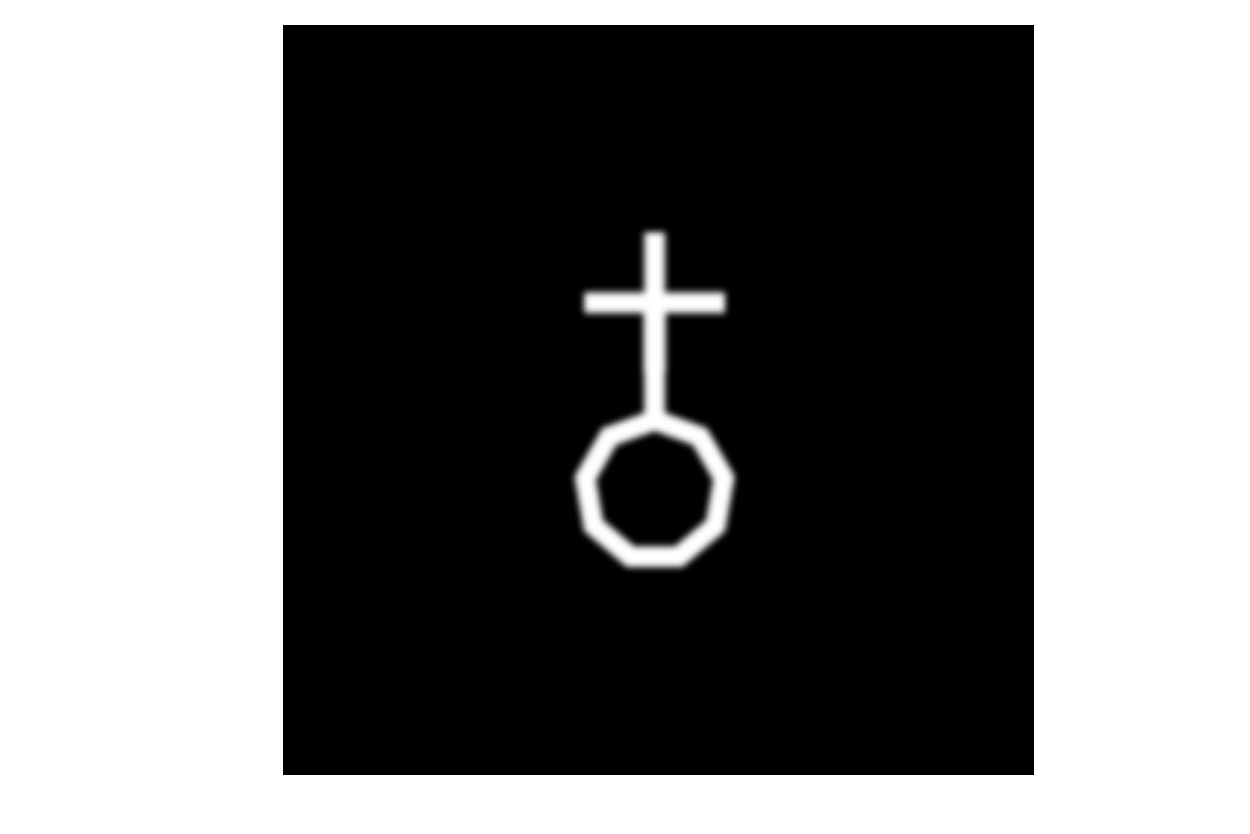}%
        & \includegraphics[width=0.1\linewidth, clip=true, trim=132pt 27pt 113pt 12pt]{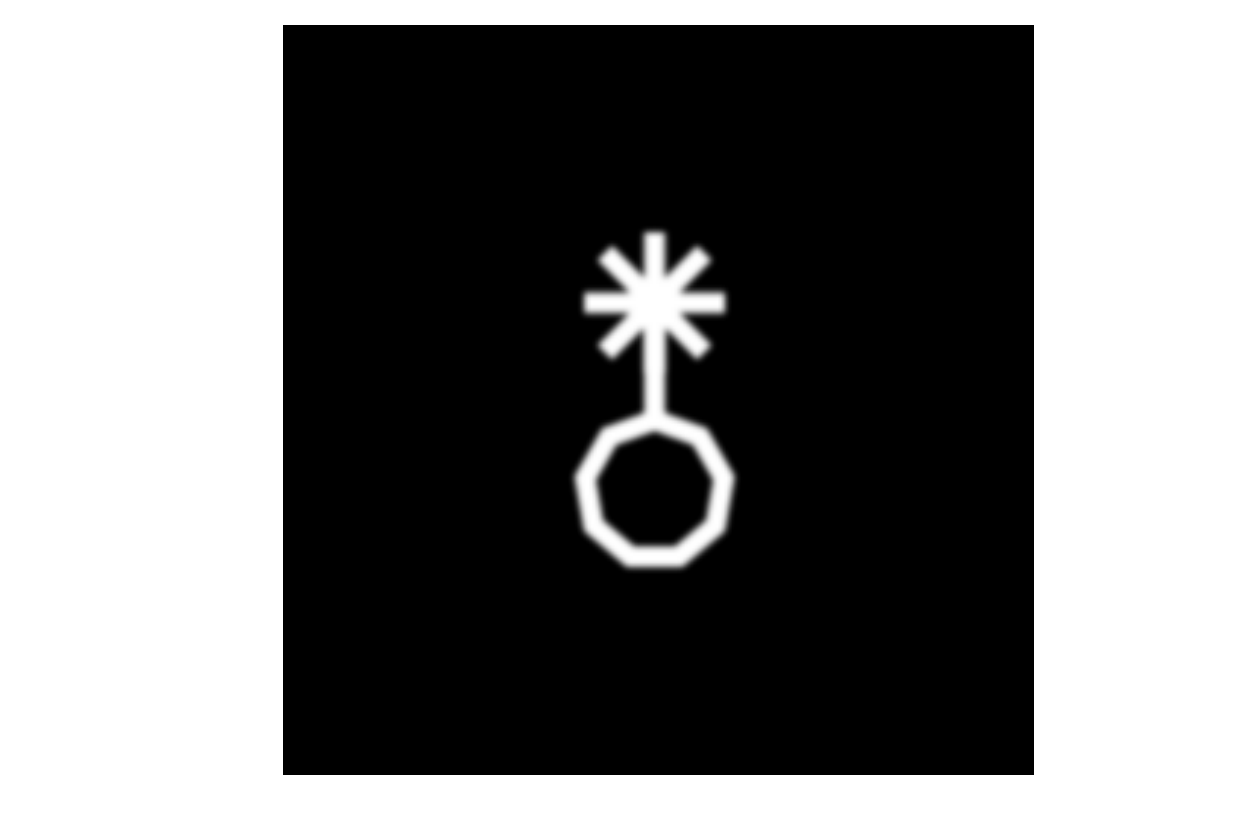}%
        \\%
        \includegraphics[width=0.05\linewidth, clip=true, trim=132pt 27pt 113pt 12pt]{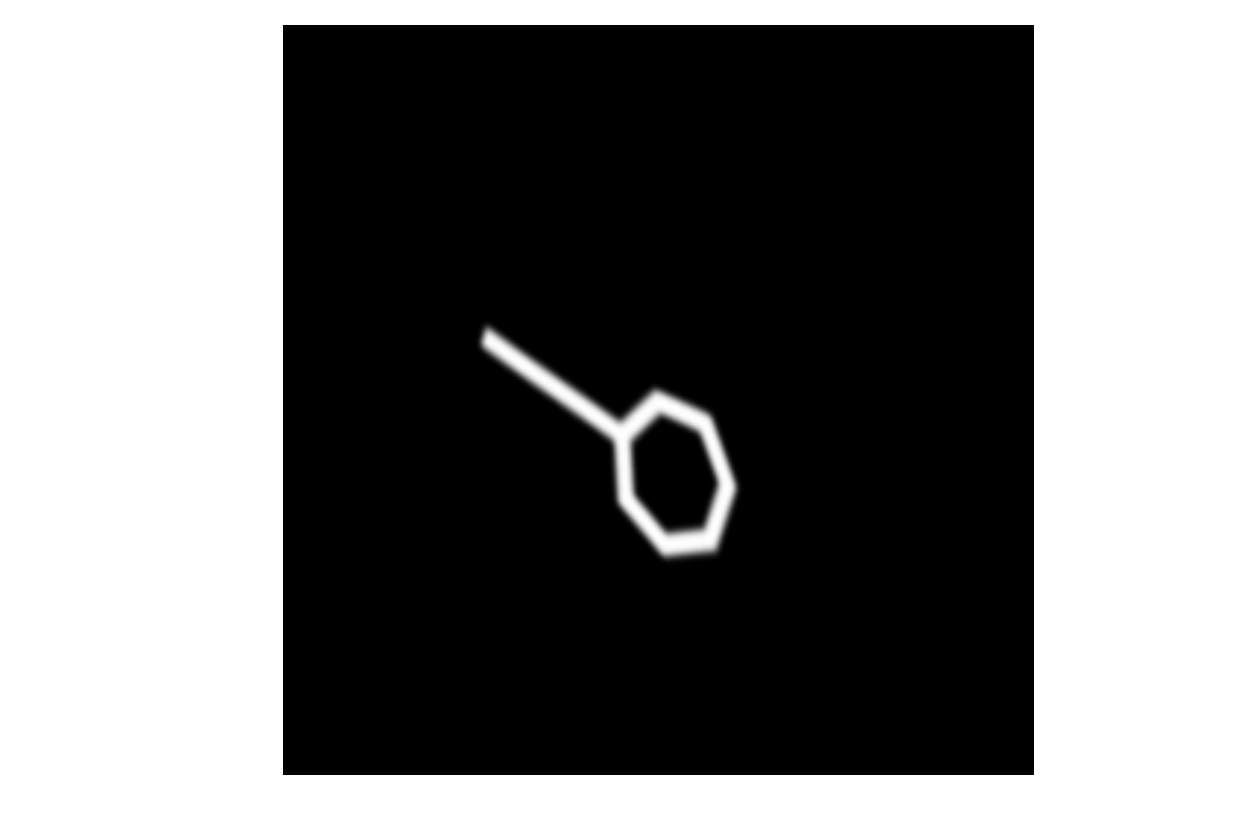}%
        \hspace{1pt}%
        \includegraphics[width=0.05\linewidth, clip=true, trim=132pt 27pt 113pt 12pt]{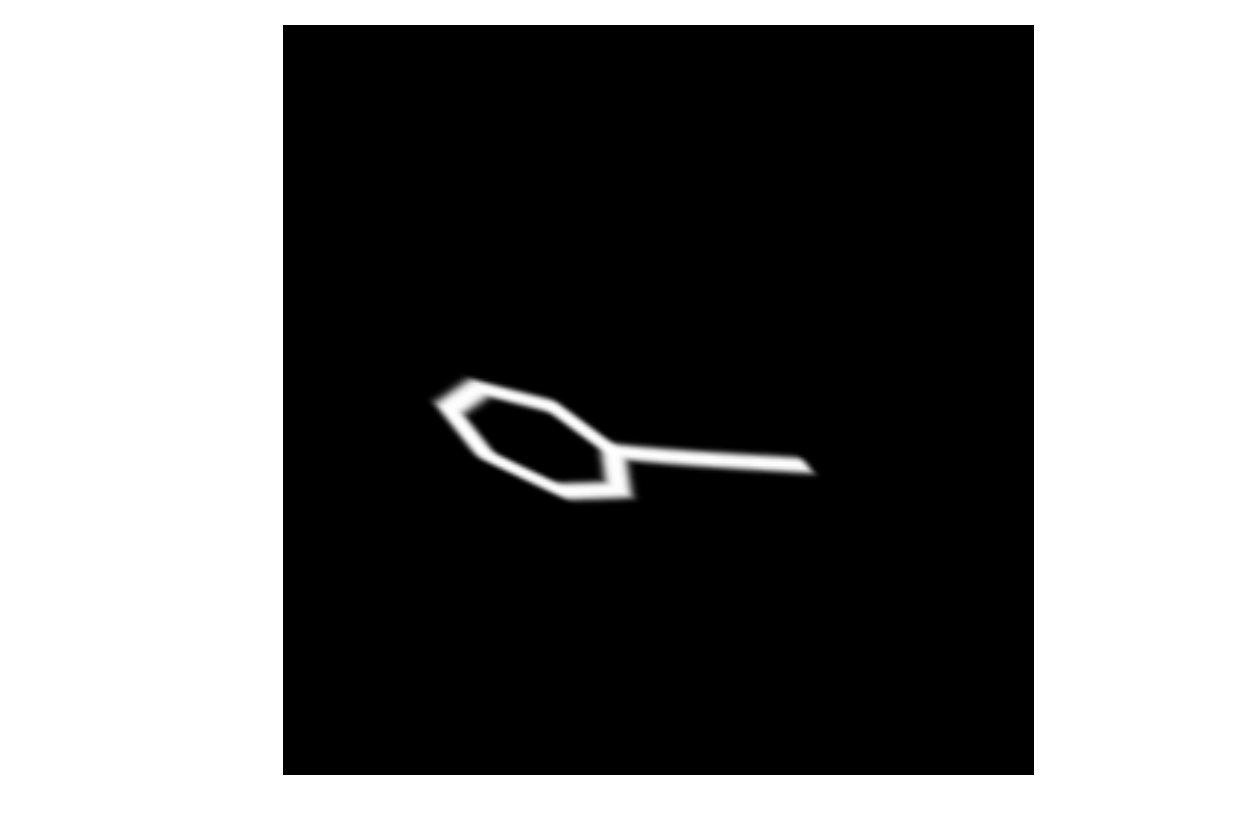}%
        & \includegraphics[width=0.05\linewidth, clip=true, trim=132pt 27pt 113pt 12pt]{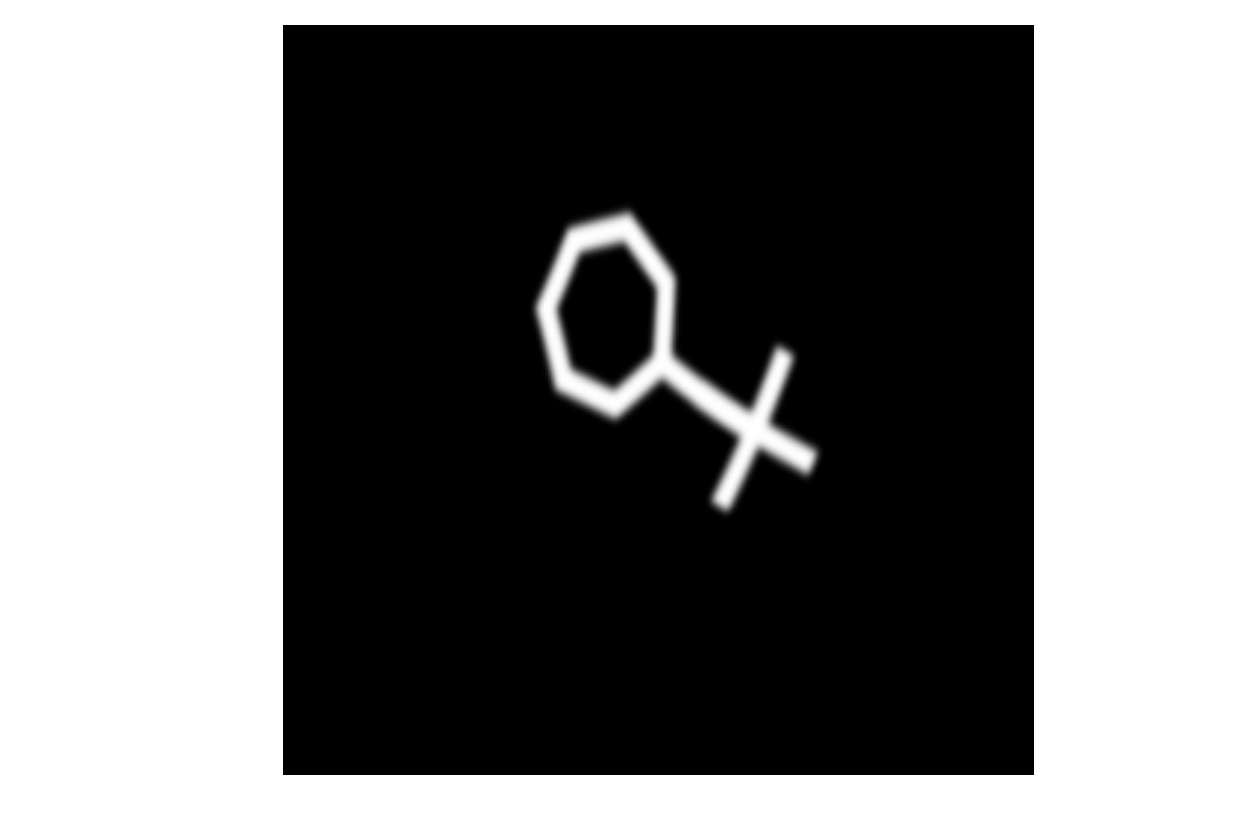}%
        \hspace{1pt}%
        \includegraphics[width=0.05\linewidth, clip=true, trim=132pt 27pt 113pt 12pt]{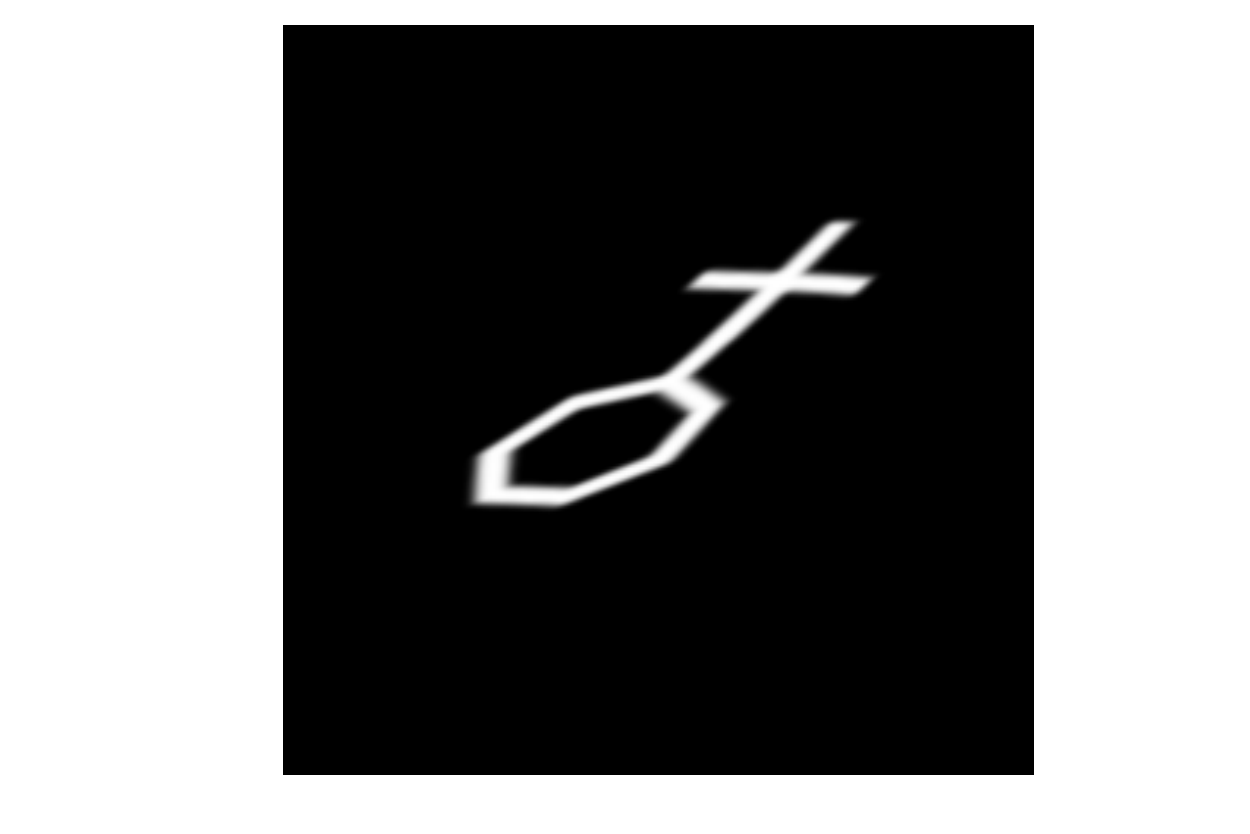}%
        & \includegraphics[width=0.05\linewidth, clip=true, trim=132pt 27pt 113pt 12pt]{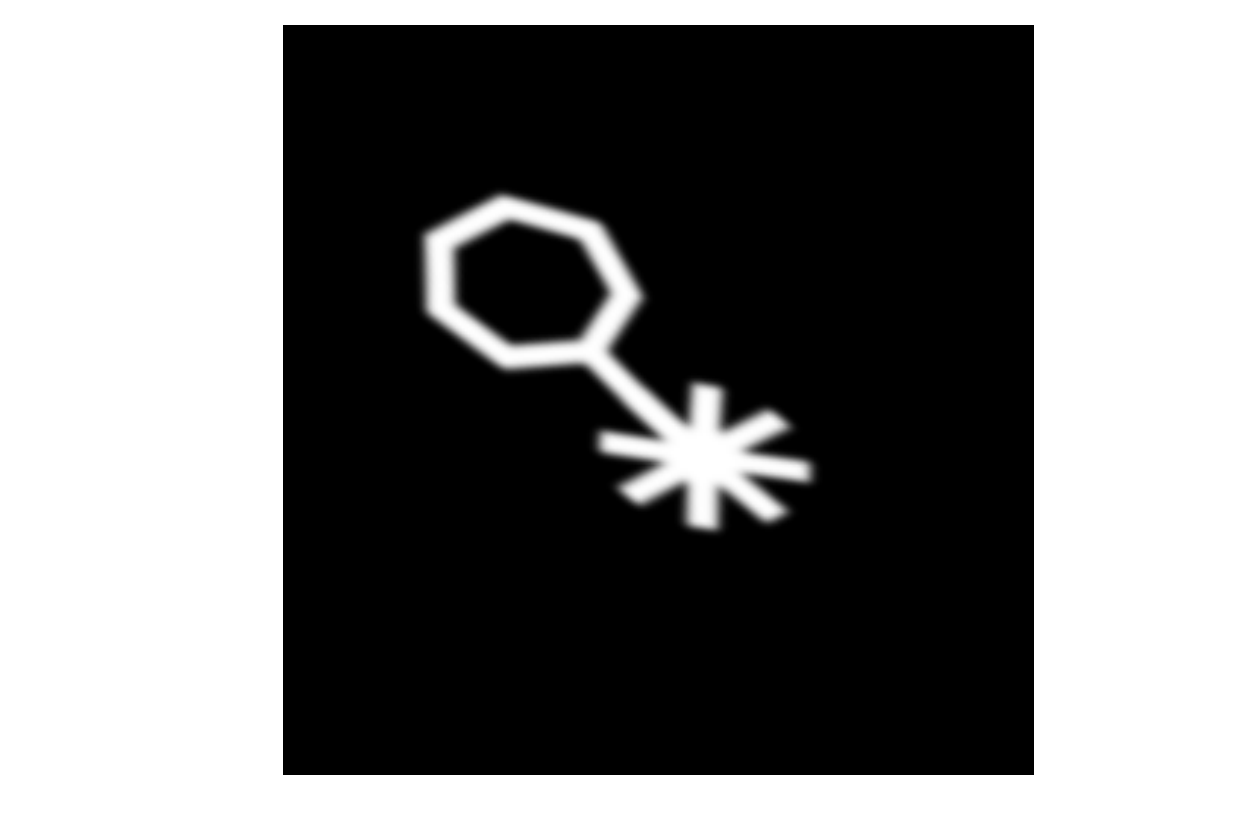}%
        \hspace{1pt}%
        \includegraphics[width=0.05\linewidth, clip=true, trim=132pt 27pt 113pt 12pt]{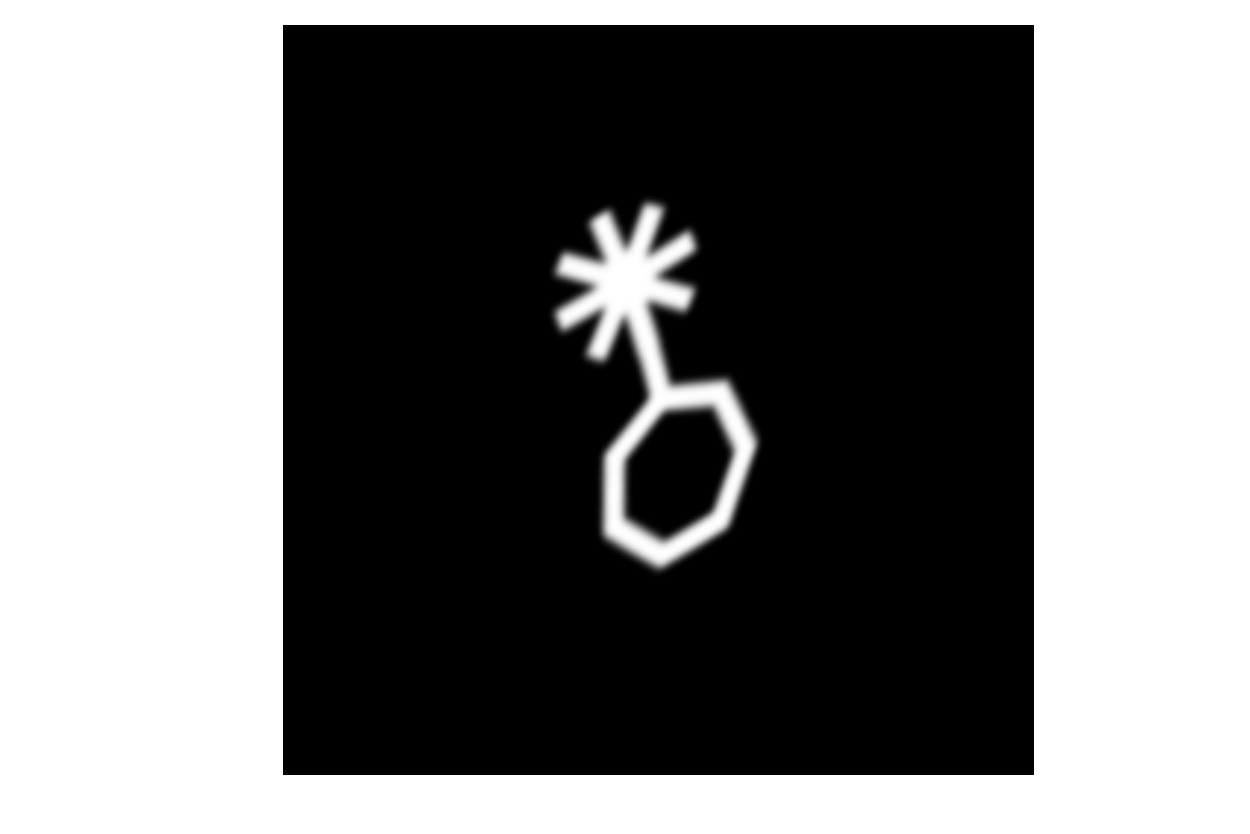}%
        & \includegraphics[width=0.05\linewidth, clip=true, trim=132pt 27pt 113pt 12pt]{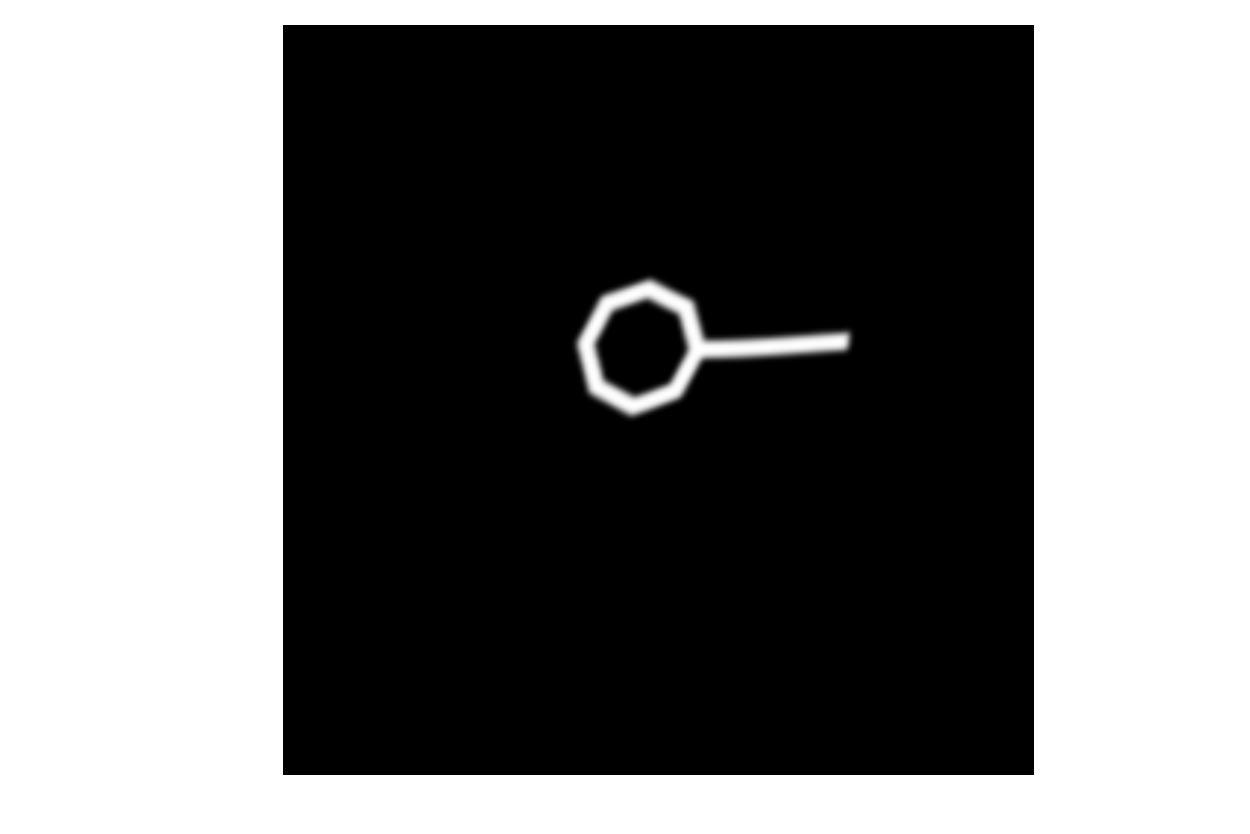}%
        \hspace{1pt}%
        \includegraphics[width=0.05\linewidth, clip=true, trim=132pt 27pt 113pt 12pt]{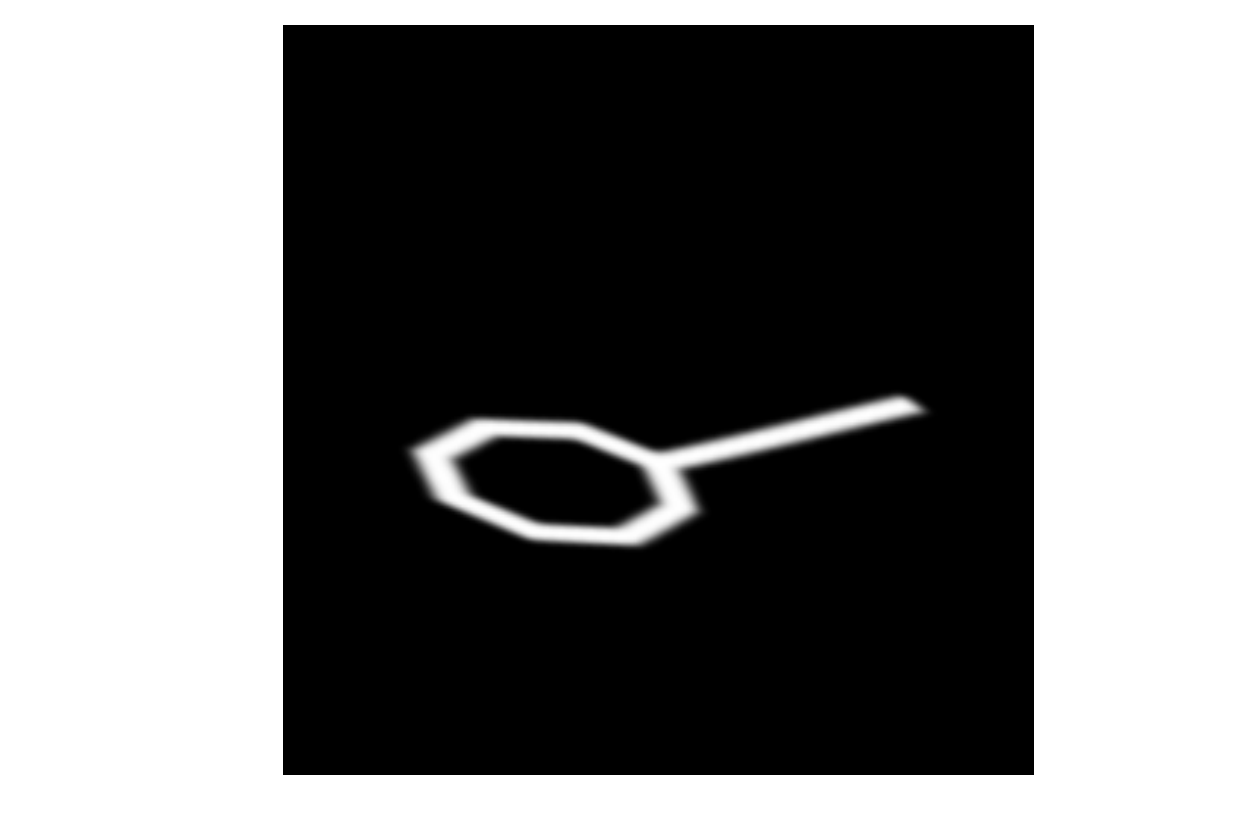}%
        & \includegraphics[width=0.05\linewidth, clip=true, trim=132pt 27pt 113pt 12pt]{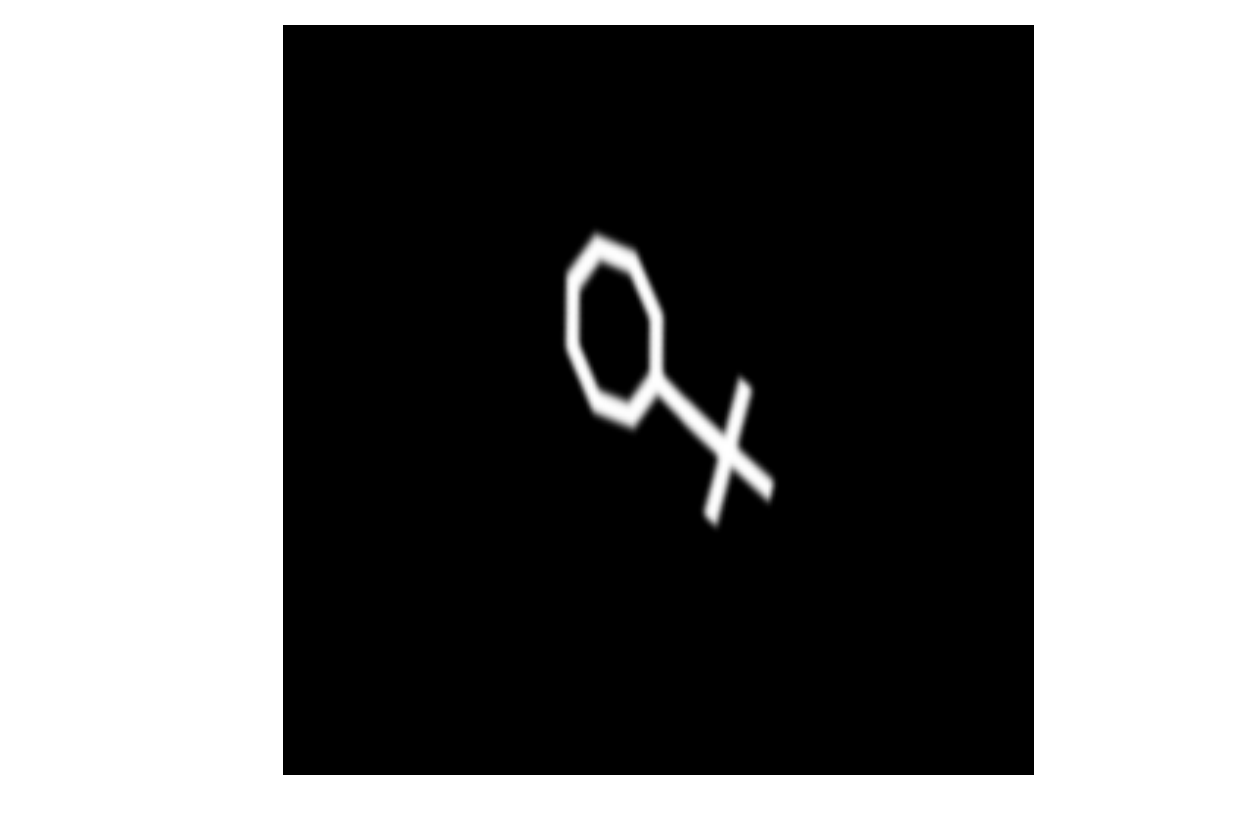}%
        \hspace{1pt}%
        \includegraphics[width=0.05\linewidth, clip=true, trim=132pt 27pt 113pt 12pt]{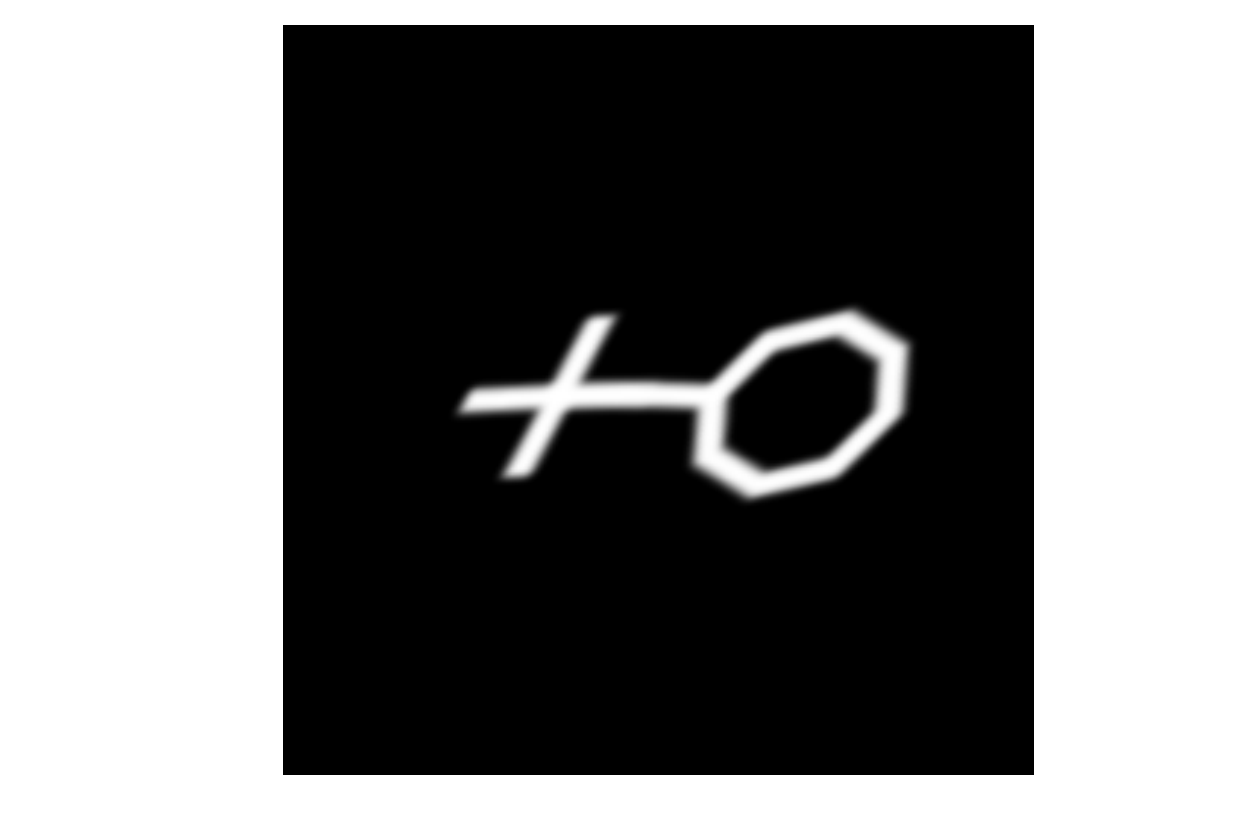}%
        & \includegraphics[width=0.05\linewidth, clip=true, trim=132pt 27pt 113pt 12pt]{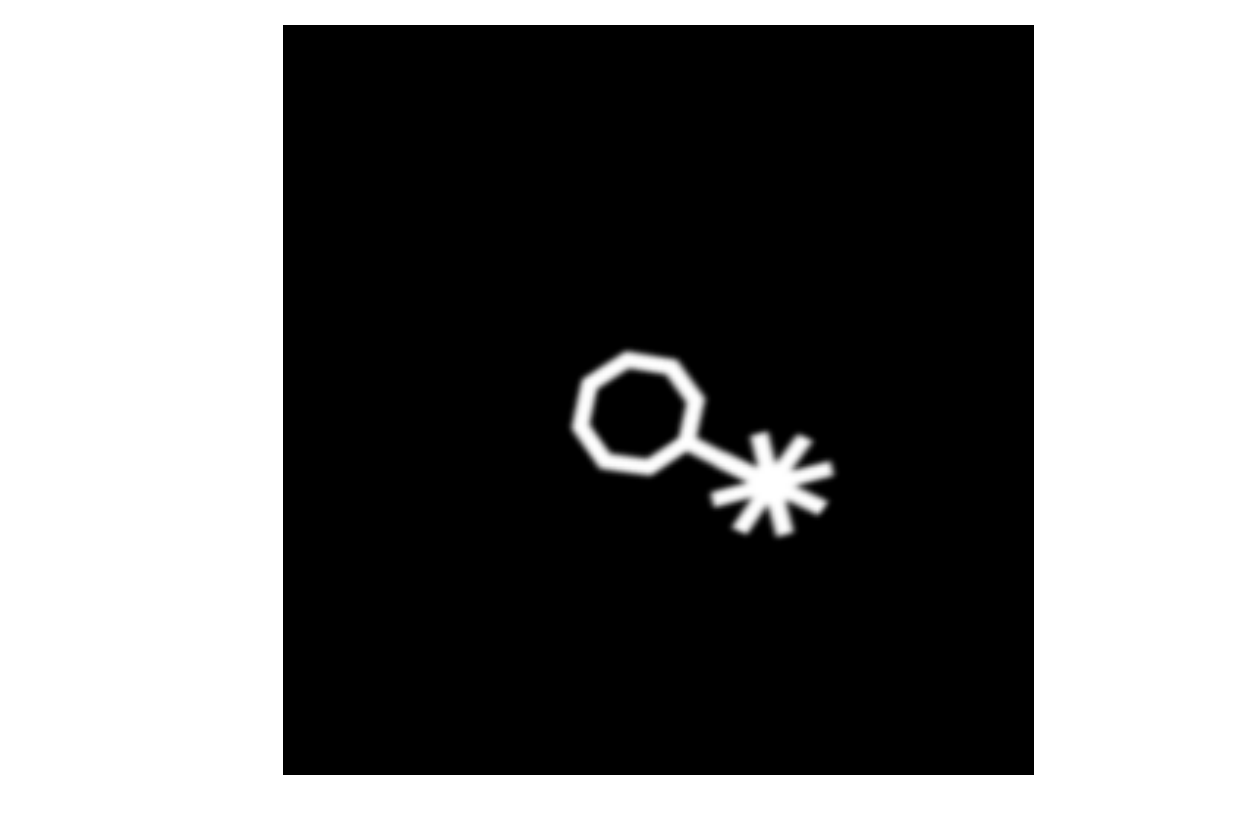}%
        \hspace{1pt}%
        \includegraphics[width=0.05\linewidth, clip=true, trim=132pt 27pt 113pt 12pt]{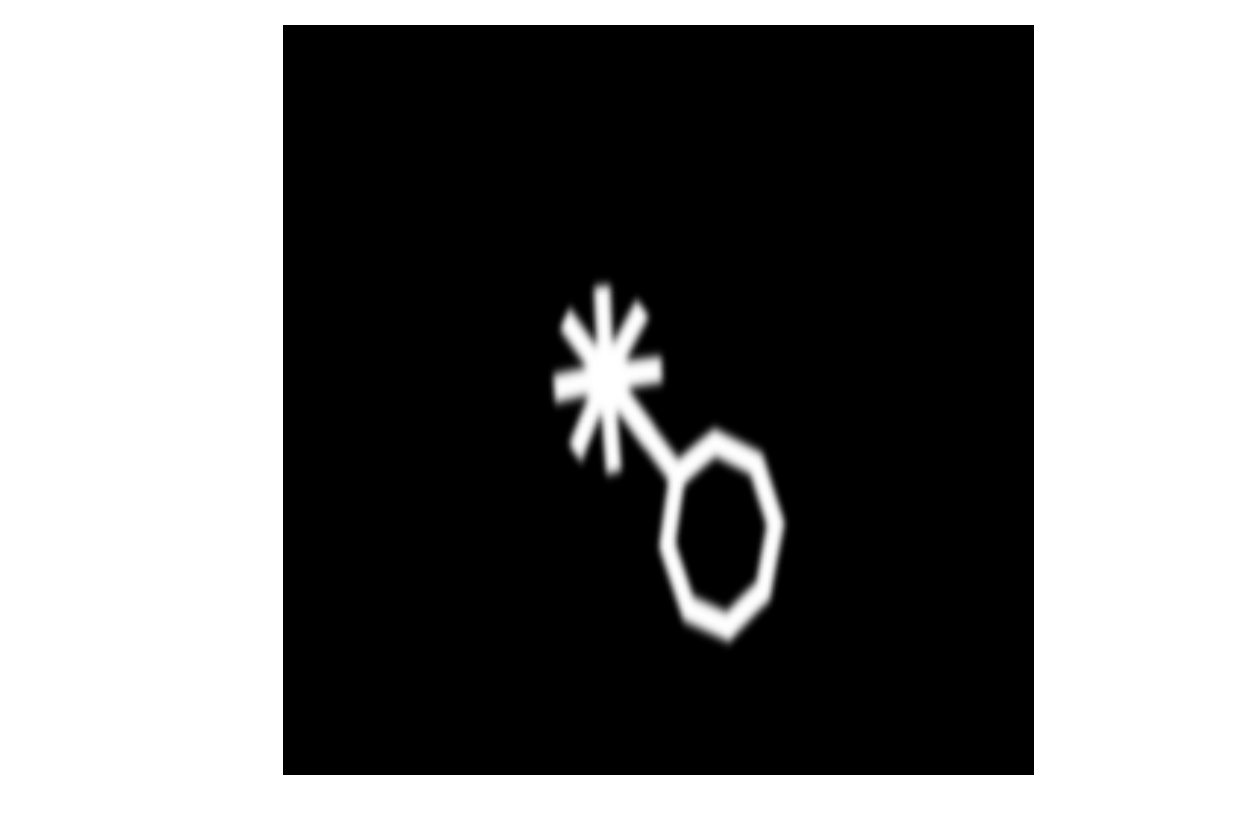}%
        & \includegraphics[width=0.05\linewidth, clip=true, trim=132pt 27pt 113pt 12pt]{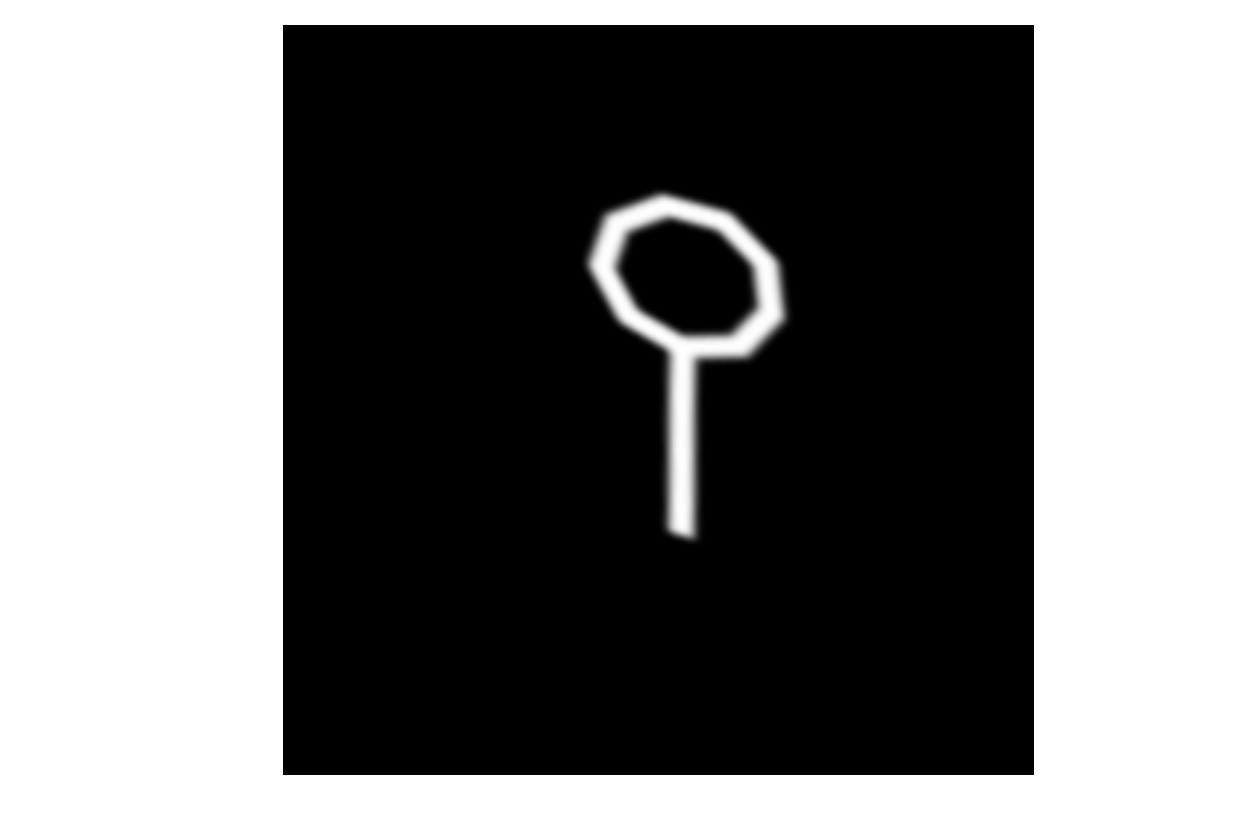}%
        \hspace{1pt}%
        \includegraphics[width=0.05\linewidth, clip=true, trim=132pt 27pt 113pt 12pt]{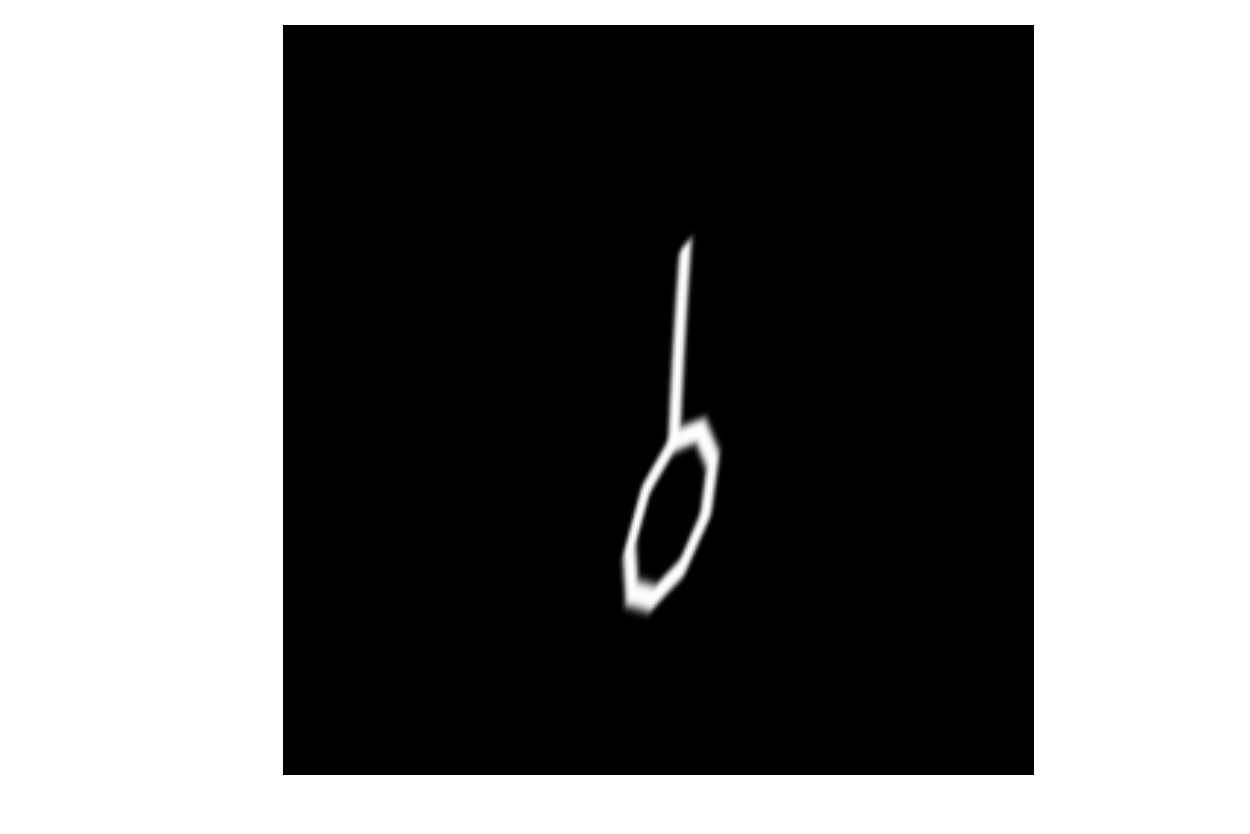}%
        & \includegraphics[width=0.05\linewidth, clip=true, trim=132pt 27pt 113pt 12pt]{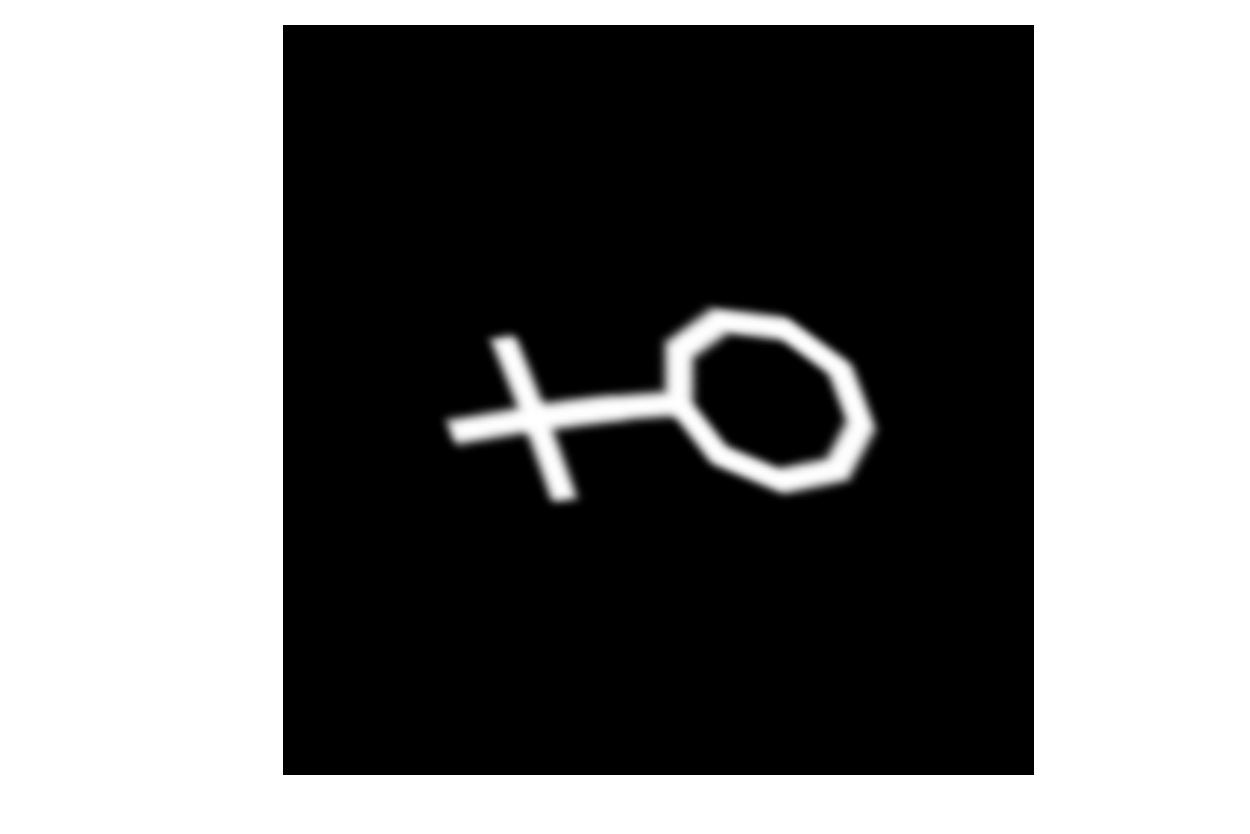}%
        \hspace{1pt}%
        \includegraphics[width=0.05\linewidth, clip=true, trim=132pt 27pt 113pt 12pt]{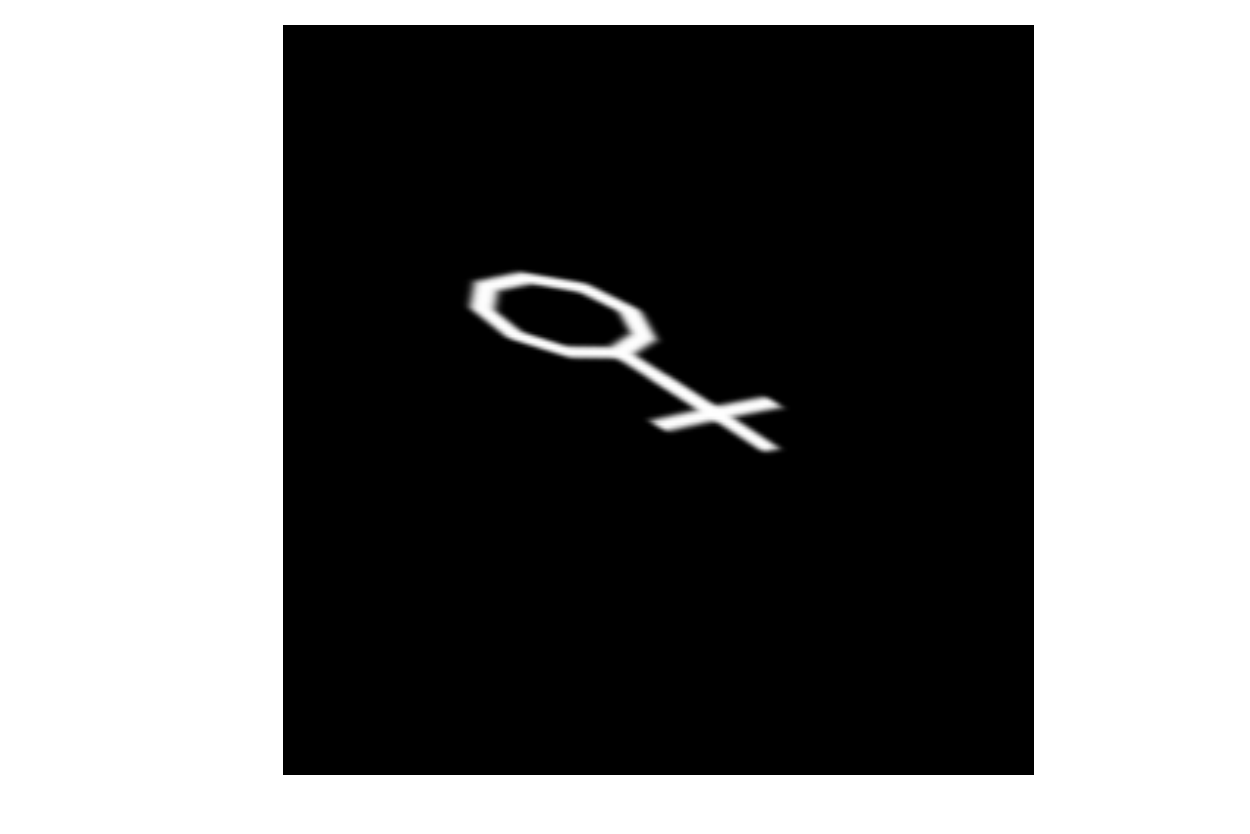}%
        & \includegraphics[width=0.05\linewidth, clip=true, trim=132pt 27pt 113pt 12pt]{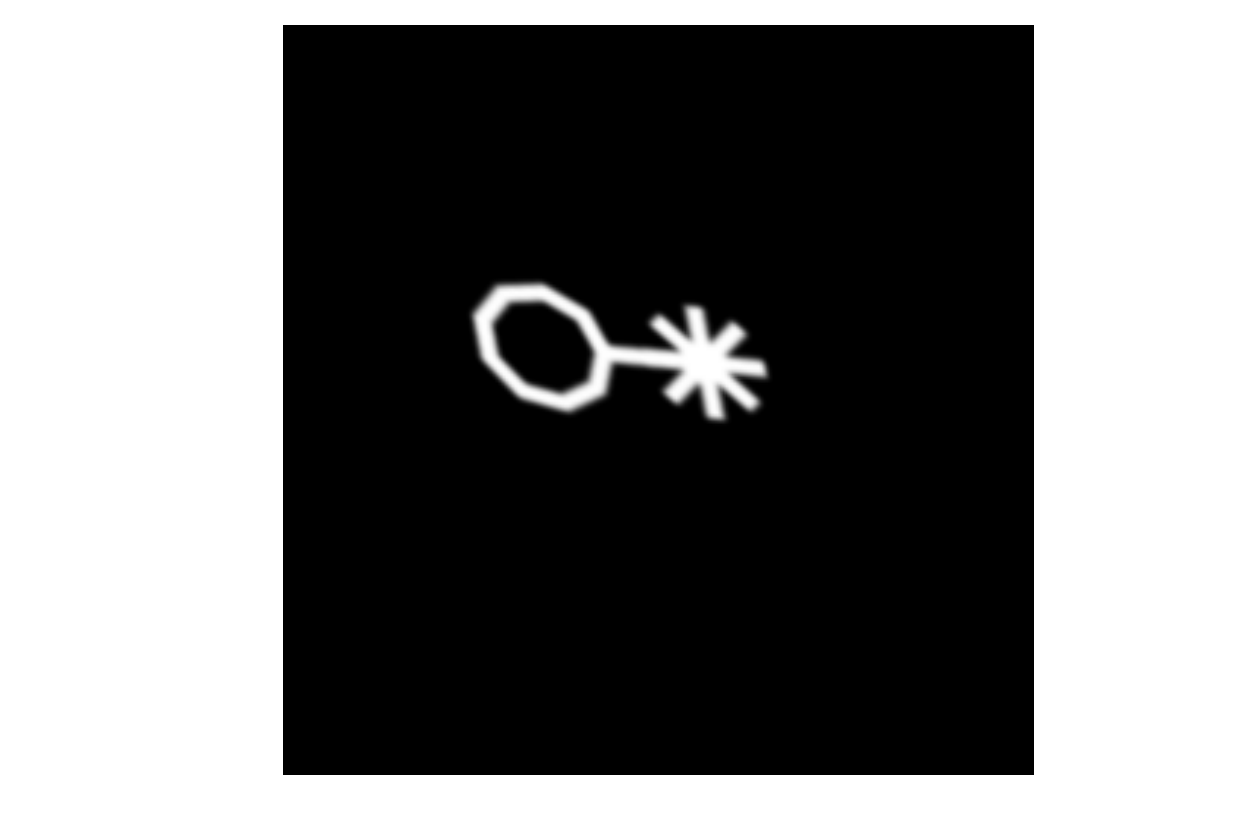}%
        \hspace{1pt}%
        \includegraphics[width=0.05\linewidth, clip=true, trim=132pt 27pt 113pt 12pt]{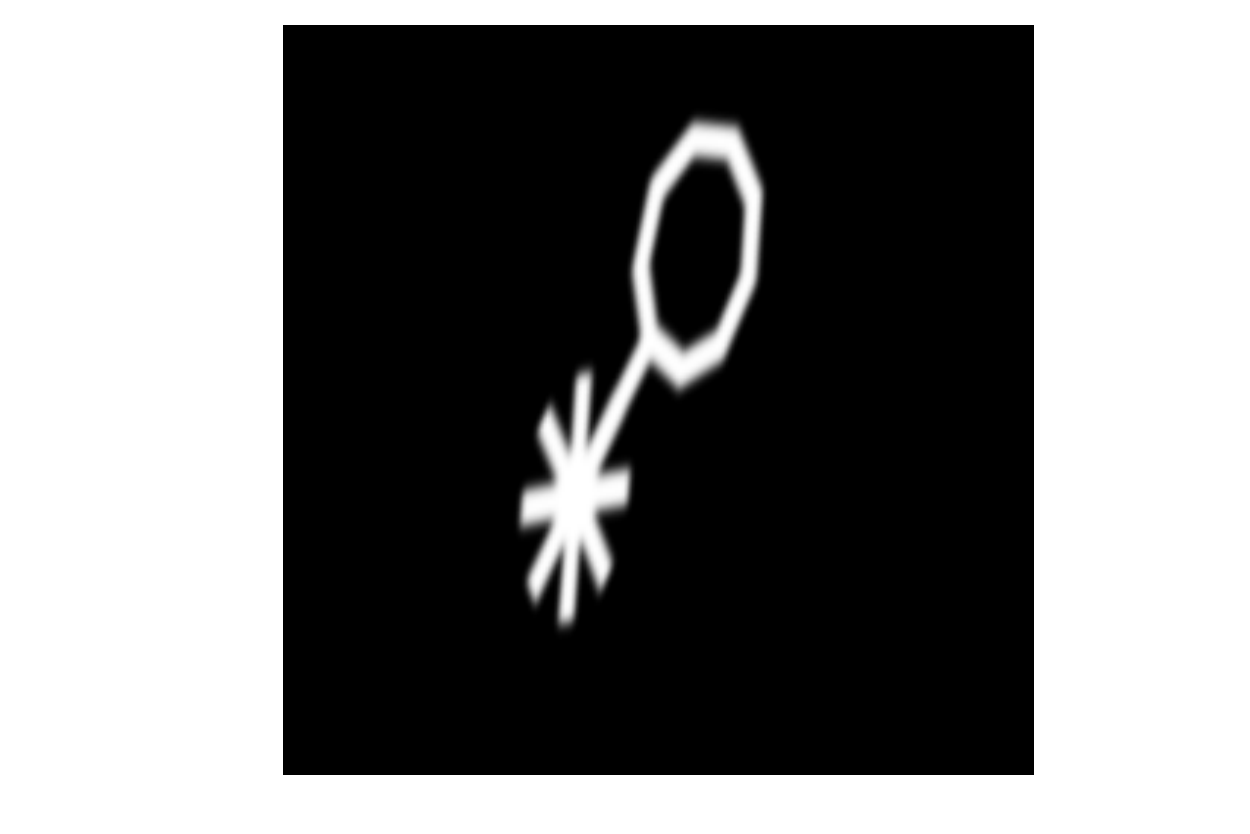}%
    \end{tabular}
    \caption{%
    Synthetic templates of the polygon dataset (top)
    and random affinely transformed samples 
    including slight non-affine perturbations 
    of the templates (bottom).}
    \label{fig:polygon_dataset_2d}
\end{figure*}

\begin{figure}[t]
    \centering%
    \footnotesize%
    \begin{tabular}{c @{\hspace{5pt}} c @{\hspace{5pt}} c}
        class~1 
        & class~2 
        & class~3
        \\%
        \includegraphics[width=0.3\linewidth, clip=true, trim=112pt 12pt 113pt 12pt]{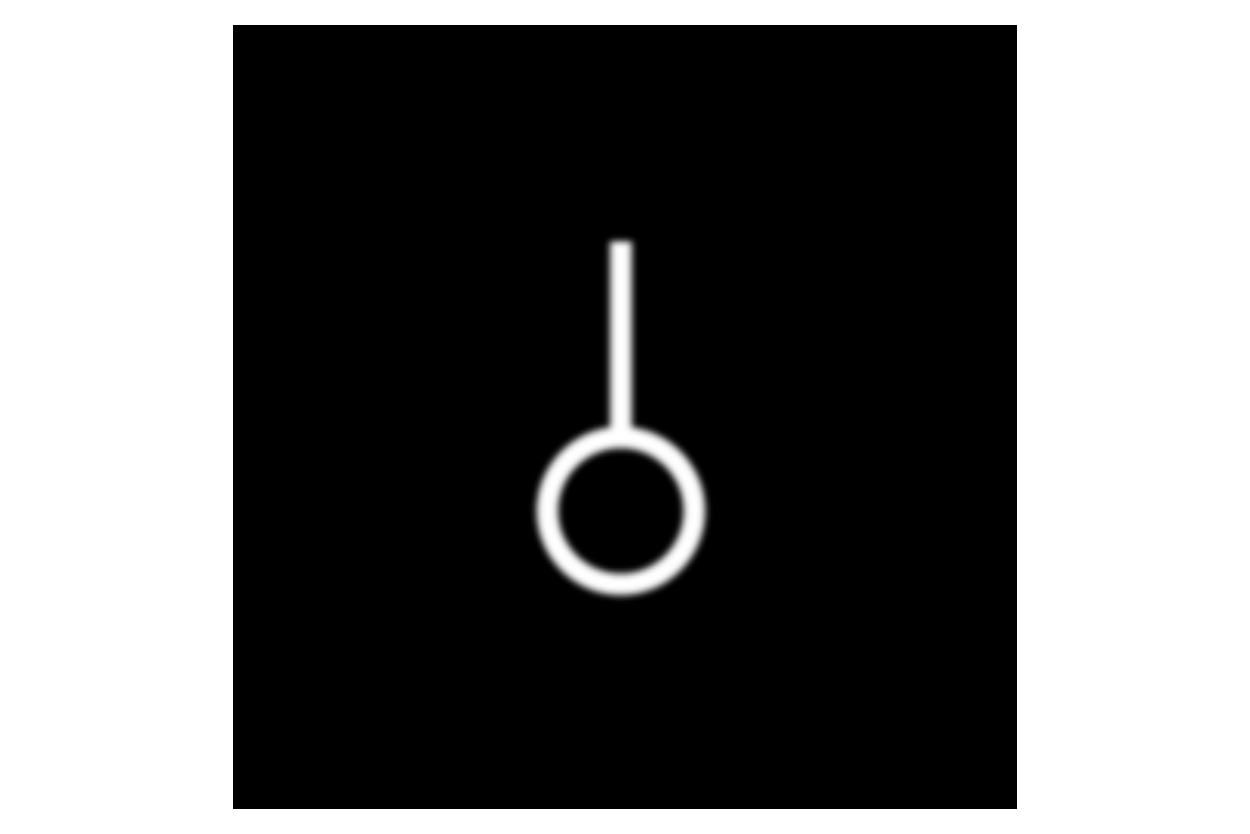}%
        & \includegraphics[width=0.3\linewidth, clip=true, trim=112pt 12pt 113pt 12pt]{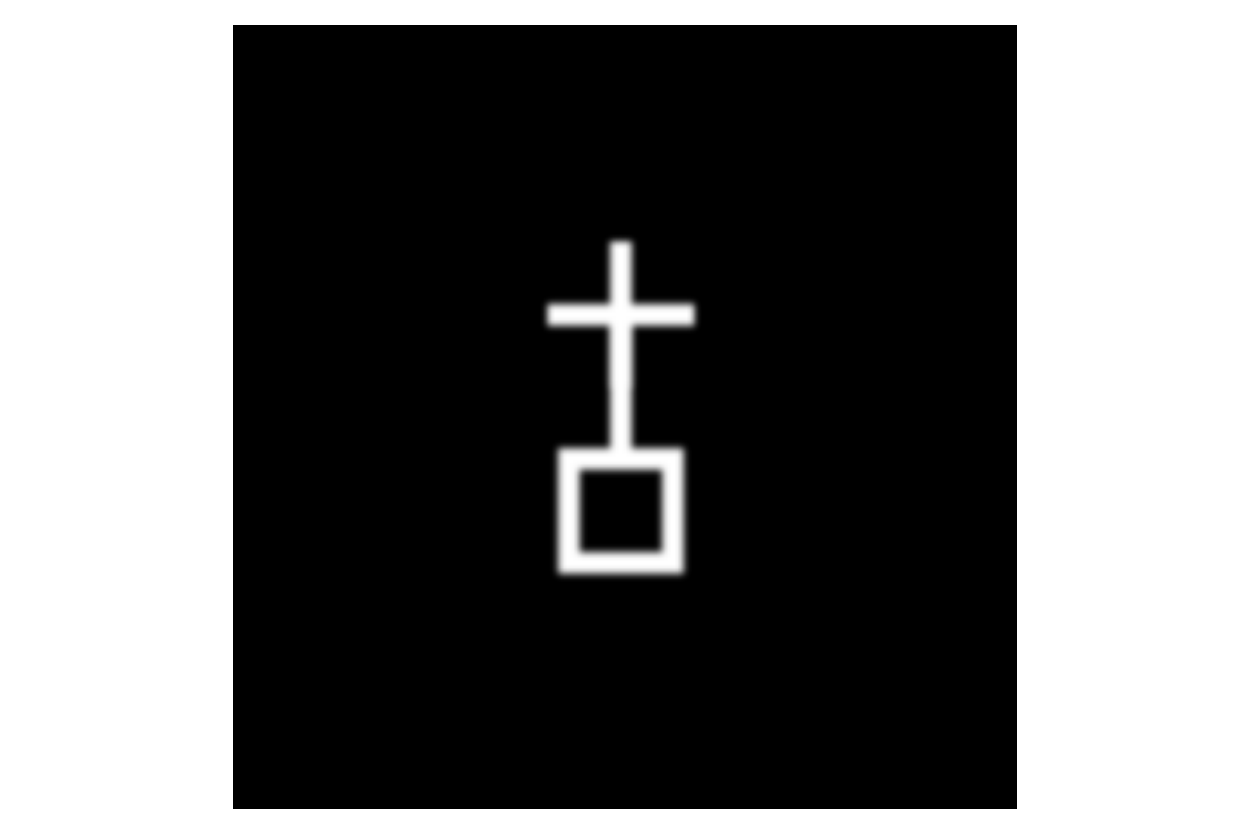}%
        & \includegraphics[width=0.3\linewidth, clip=true, trim=112pt 12pt 113pt 12pt]{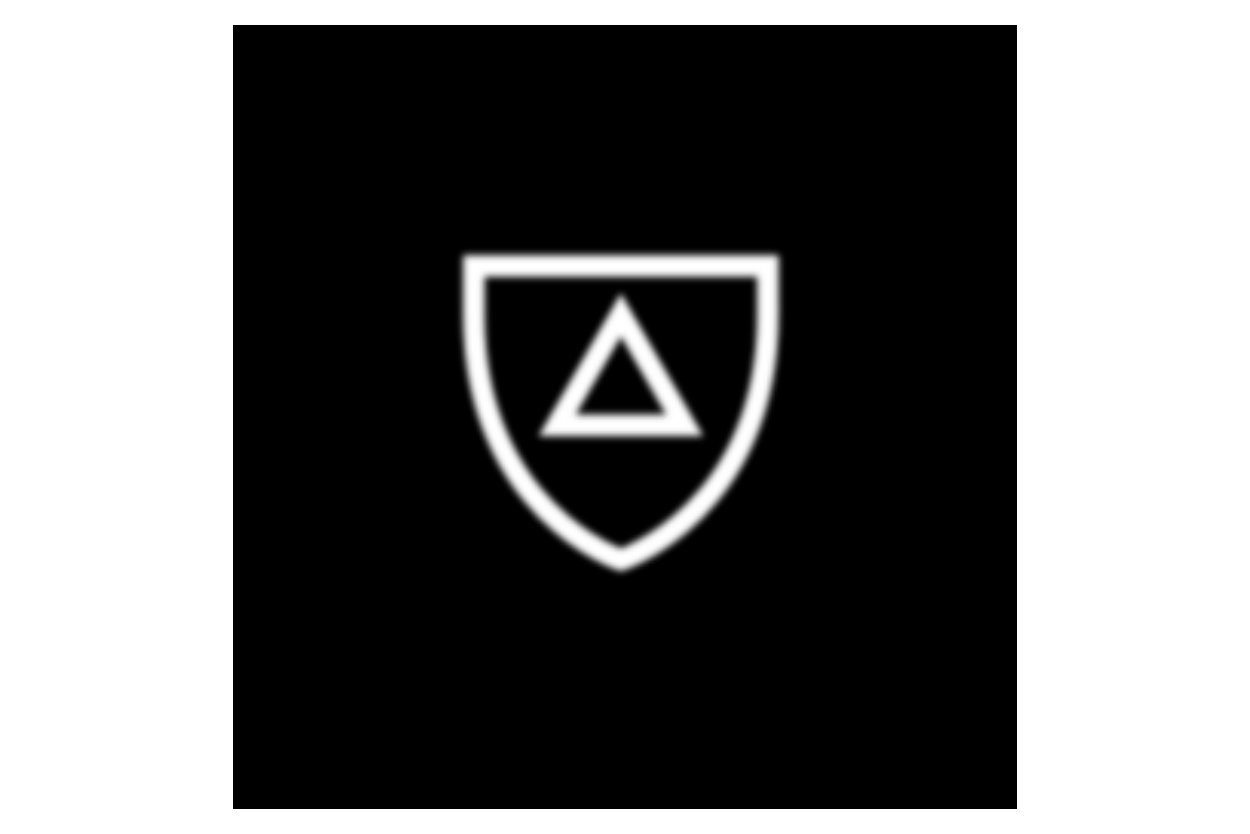}%
        \\%
        \includegraphics[width=0.143\linewidth, clip=true, trim=112pt 12pt 113pt 12pt]{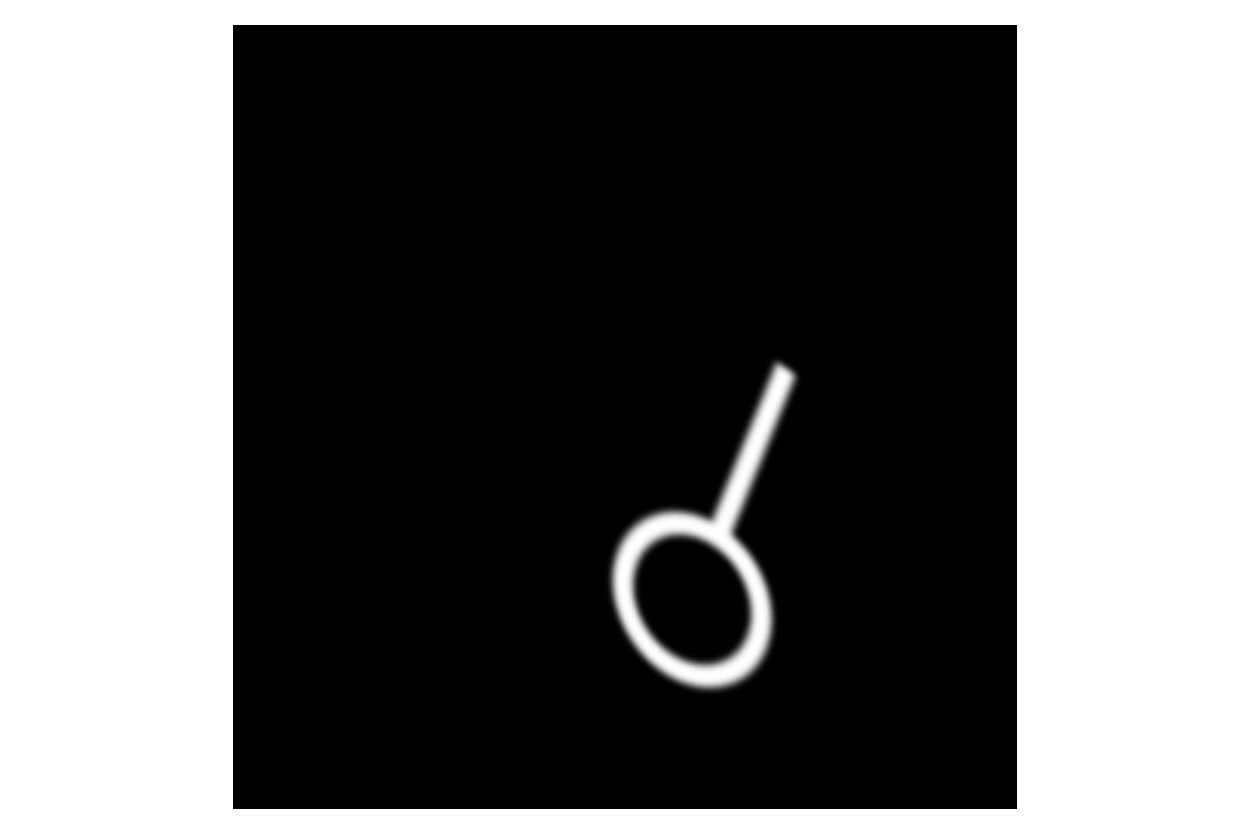}%
        \hspace{2pt}%
        \includegraphics[width=0.143\linewidth, clip=true, trim=112pt 12pt 113pt 12pt]{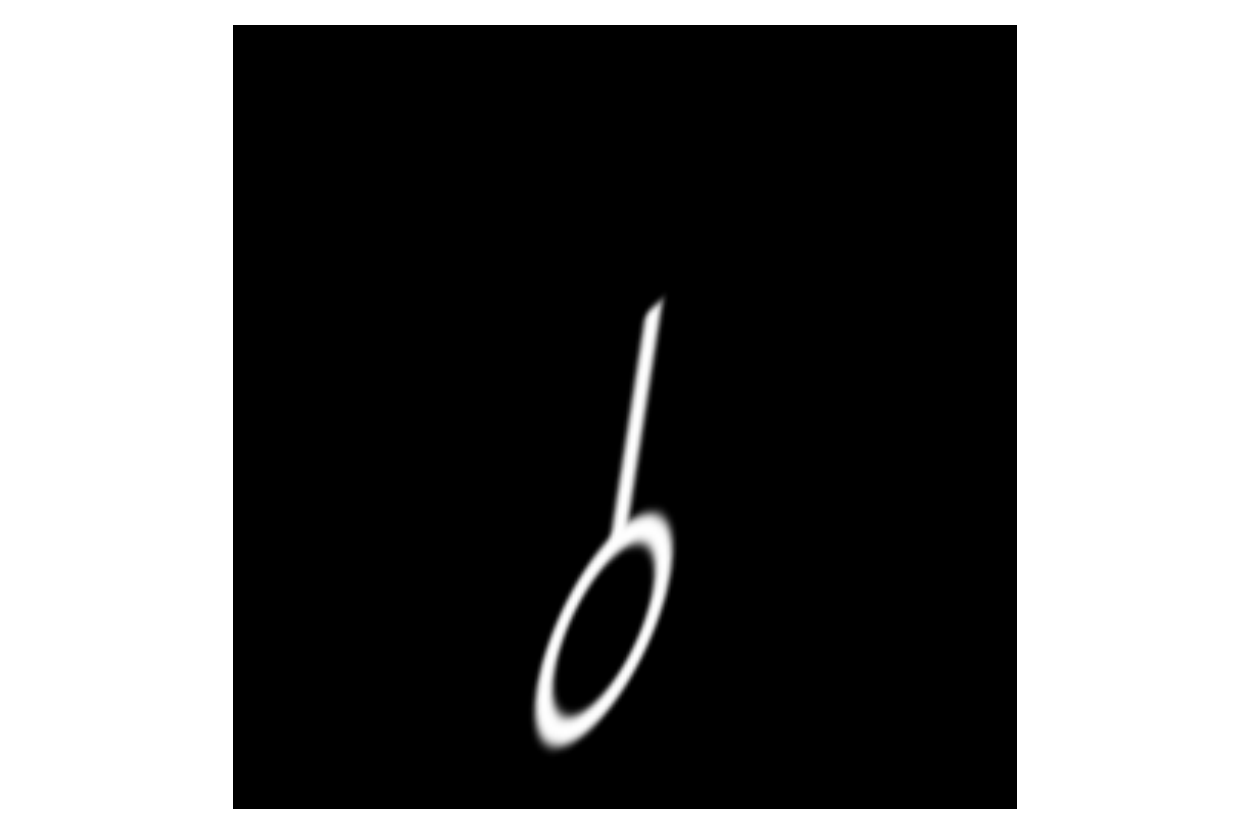}%
        & \includegraphics[width=0.143\linewidth, clip=true, trim=112pt 12pt 113pt 12pt]{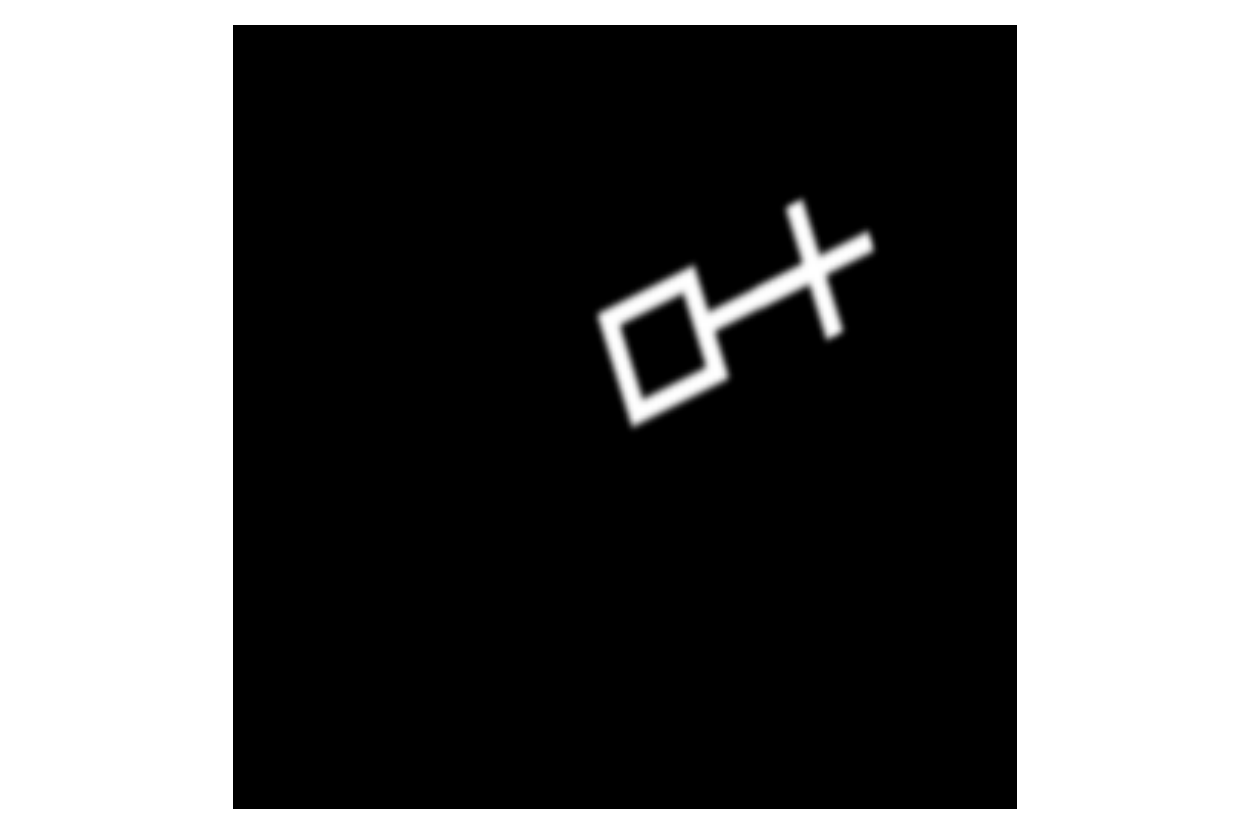}%
        \hspace{2pt}%
        \includegraphics[width=0.143\linewidth, clip=true, trim=112pt 12pt 113pt 12pt]{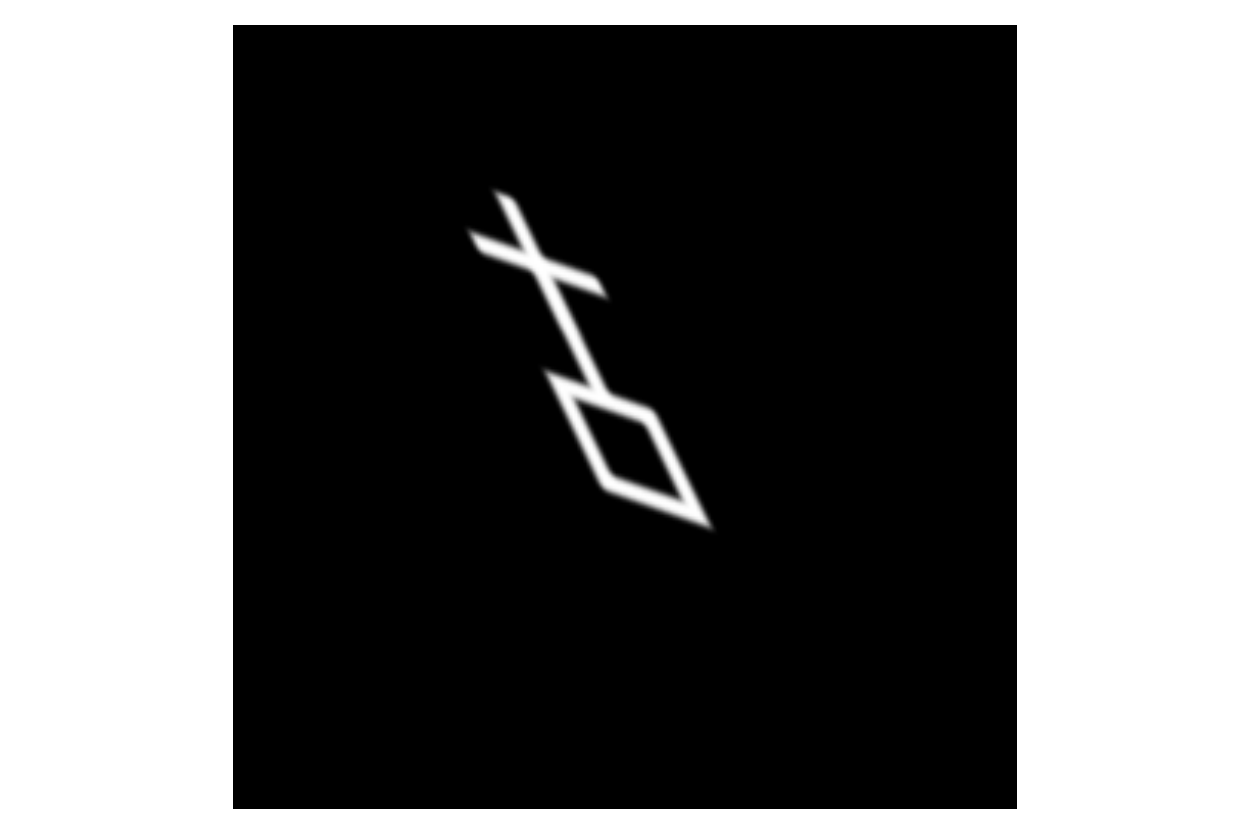}%
        & \includegraphics[width=0.143\linewidth, clip=true, trim=112pt 12pt 113pt 12pt]{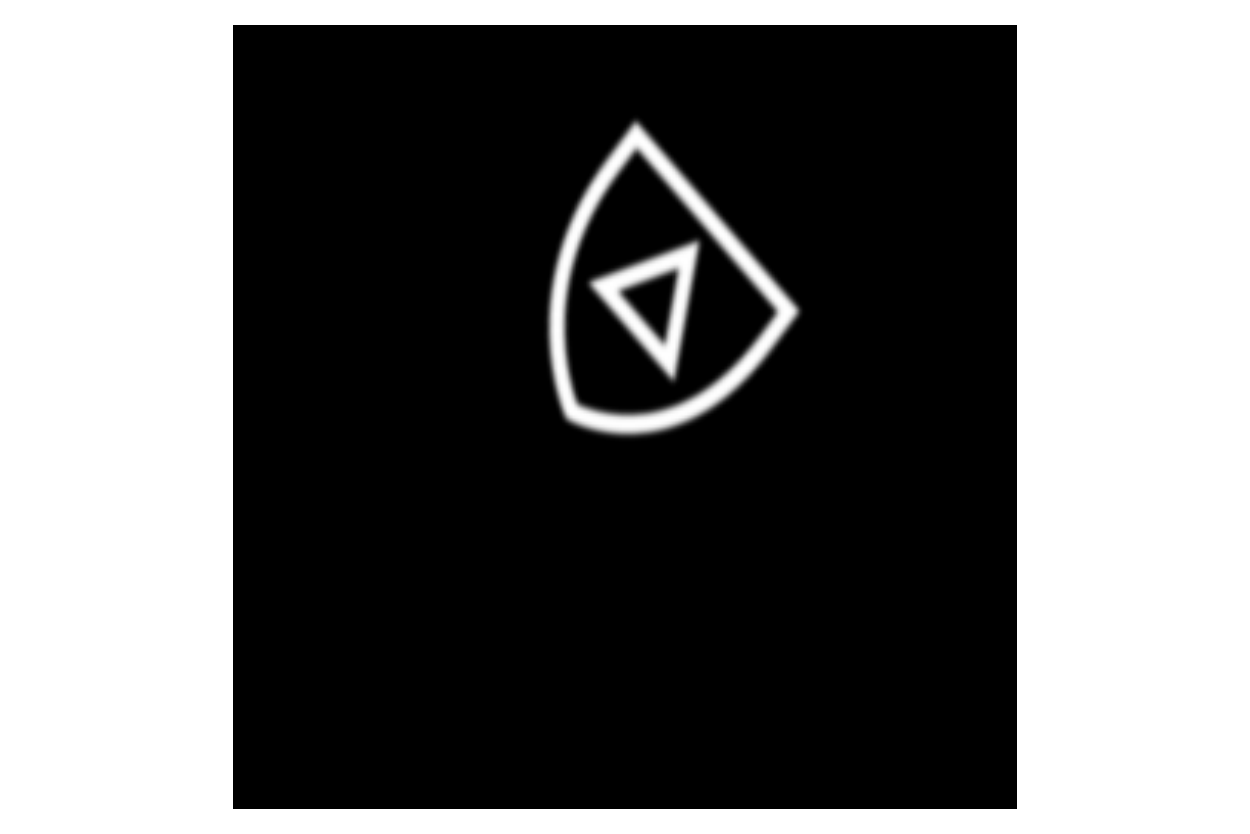}%
        \hspace{2pt}%
        \includegraphics[width=0.143\linewidth, clip=true, trim=112pt 12pt 113pt 12pt]{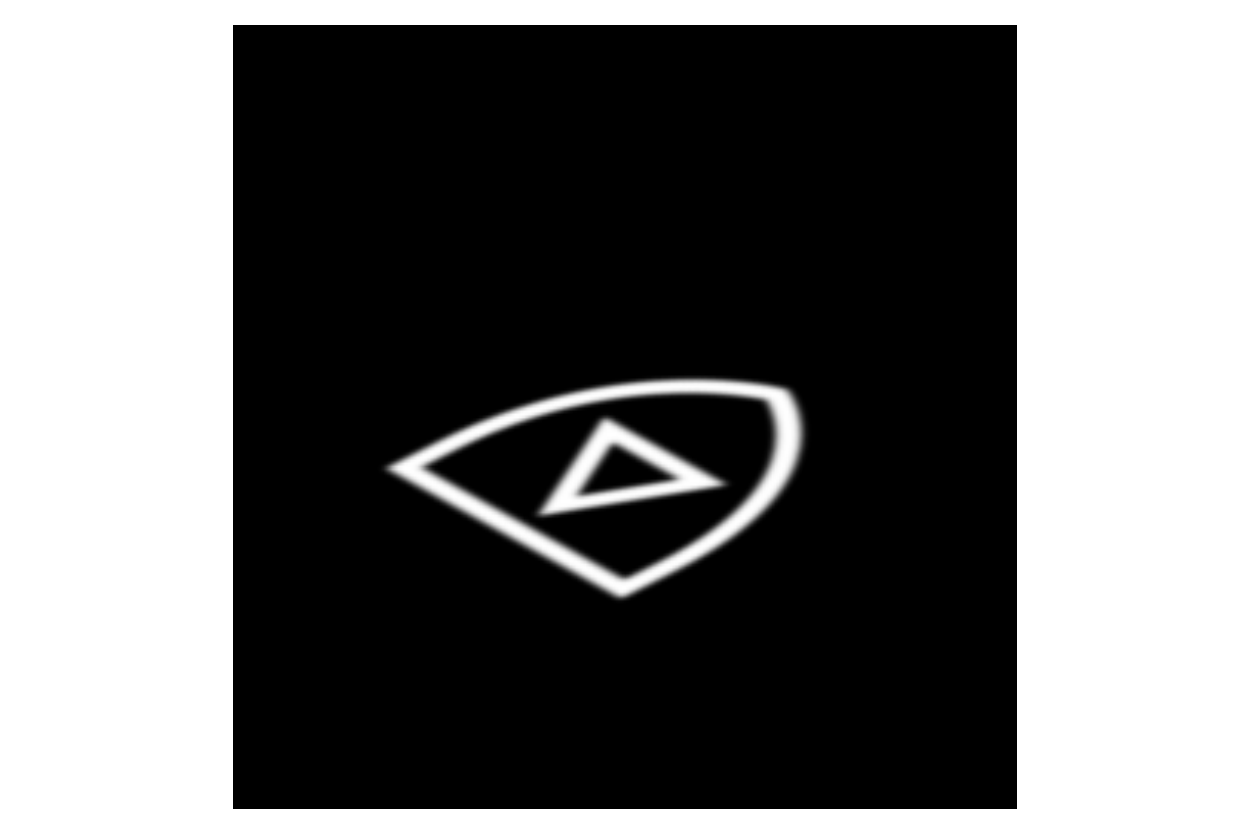}%
    \end{tabular}
    \caption{%
    Synthetic templates of the academic dataset (top)
    and random affinely transformed samples (bottom).}
    \label{fig:templates_syn}
\end{figure}

\begin{figure}[t]
    \centering%
    \footnotesize%
    \begin{tabular}{c@{\hspace{3pt}}c@{\hspace{5pt}}c@{\hspace{5pt}}c@{\hspace{5pt}}c}
        \raisebox{12pt}{\rotatebox{90}{class~1}}
        & \includegraphics[width=0.21\linewidth, clip=true, trim=136pt 29pt 106pt 12pt]{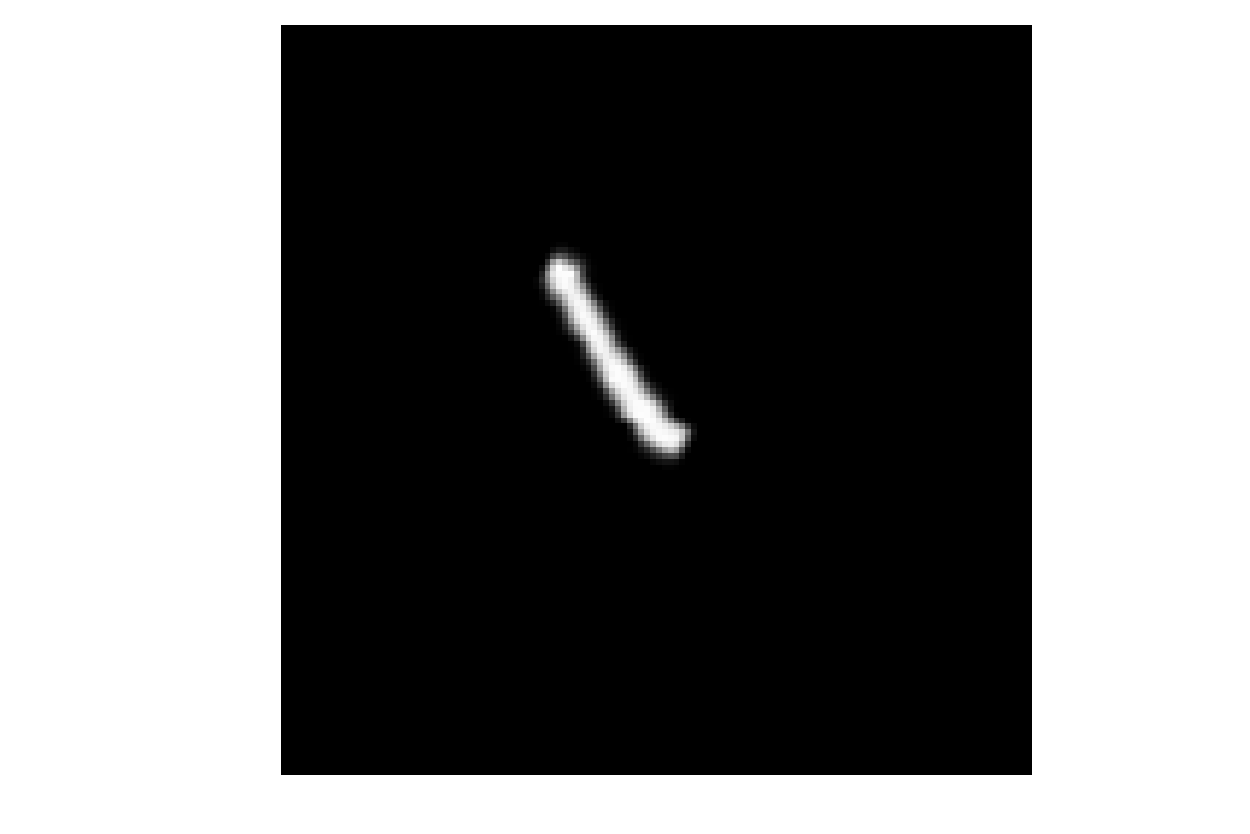}
        & \includegraphics[width=0.21\linewidth, clip=true, trim=136pt 29pt 106pt 12pt]{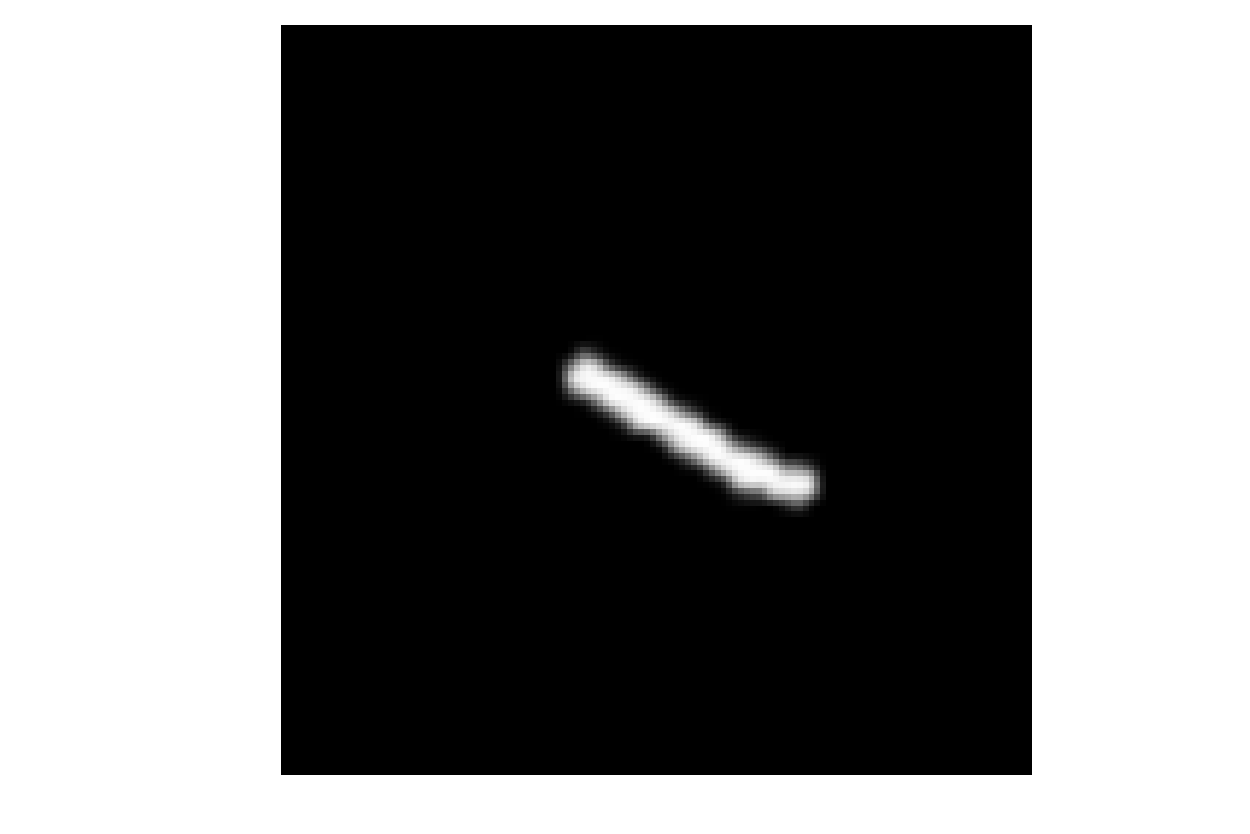}
        & \includegraphics[width=0.21\linewidth, clip=true, trim=136pt 29pt 106pt 12pt]{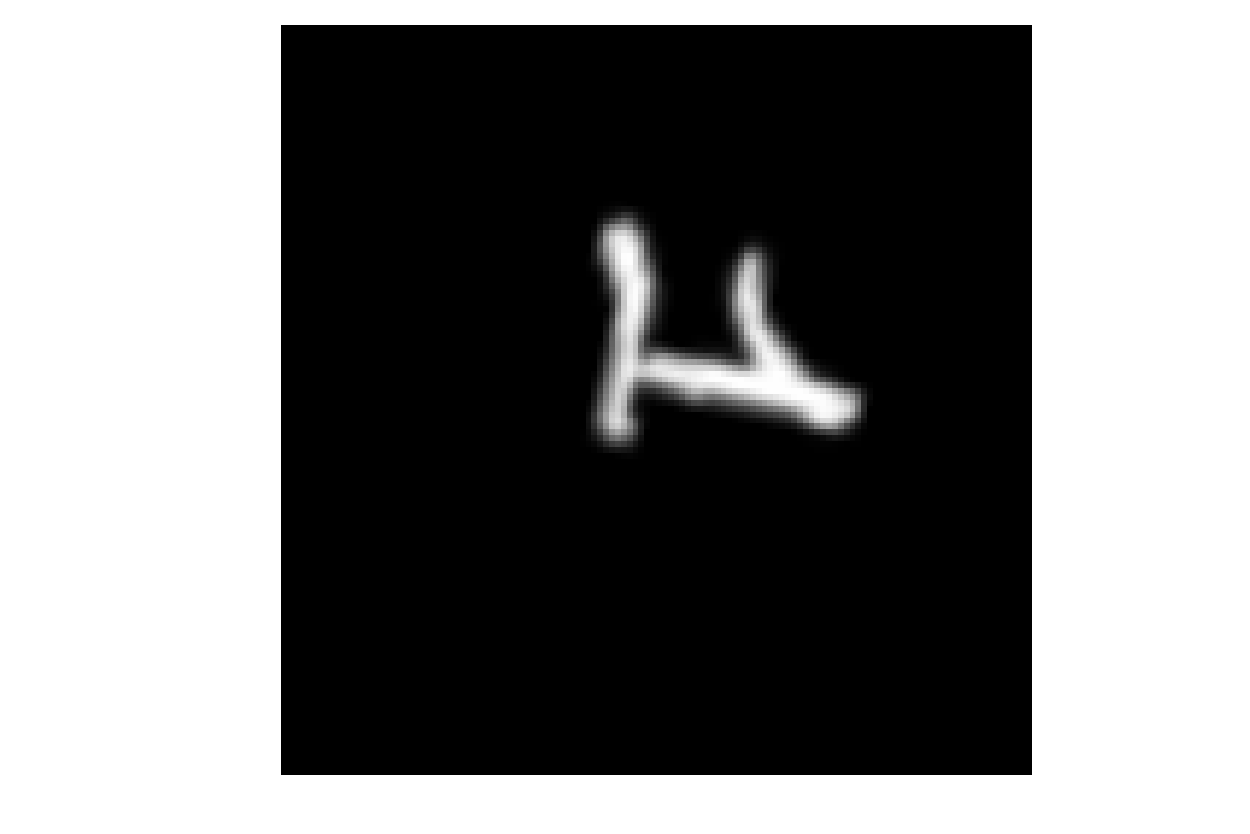}
        & \includegraphics[width=0.21\linewidth, clip=true, trim=136pt 29pt 106pt 12pt]{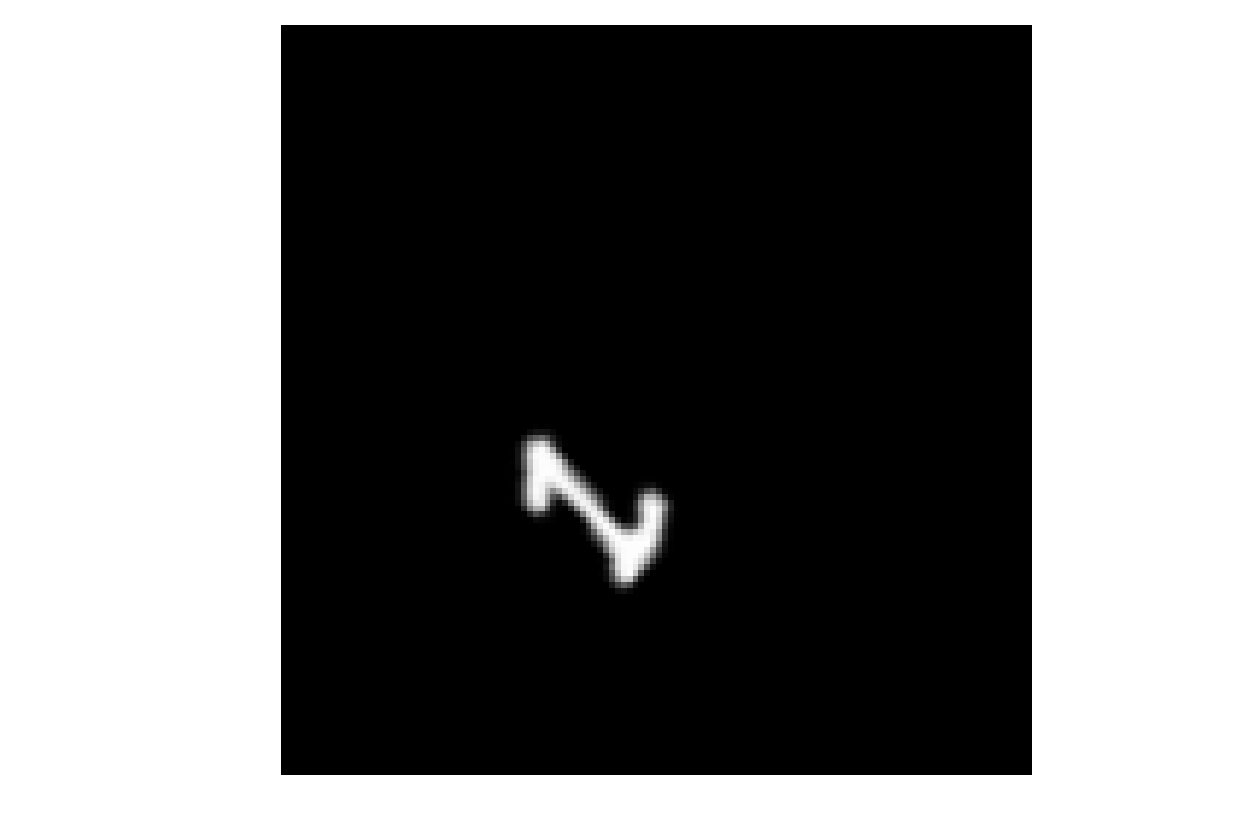}
        \\
        \raisebox{12pt}{\rotatebox{90}{class~5}}
        & \includegraphics[width=0.21\linewidth, clip=true, trim=136pt 29pt 106pt 12pt]{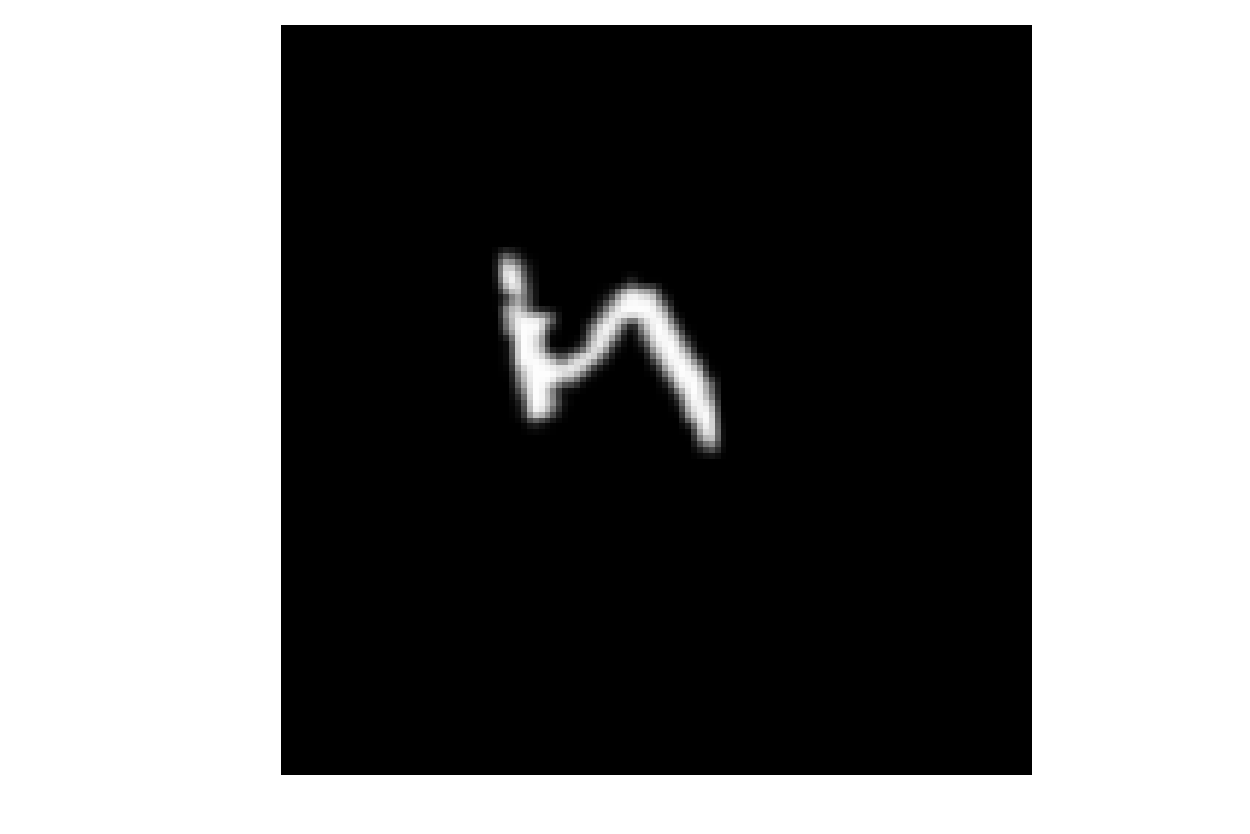}
        & \includegraphics[width=0.21\linewidth, clip=true, trim=136pt 29pt 106pt 12pt]{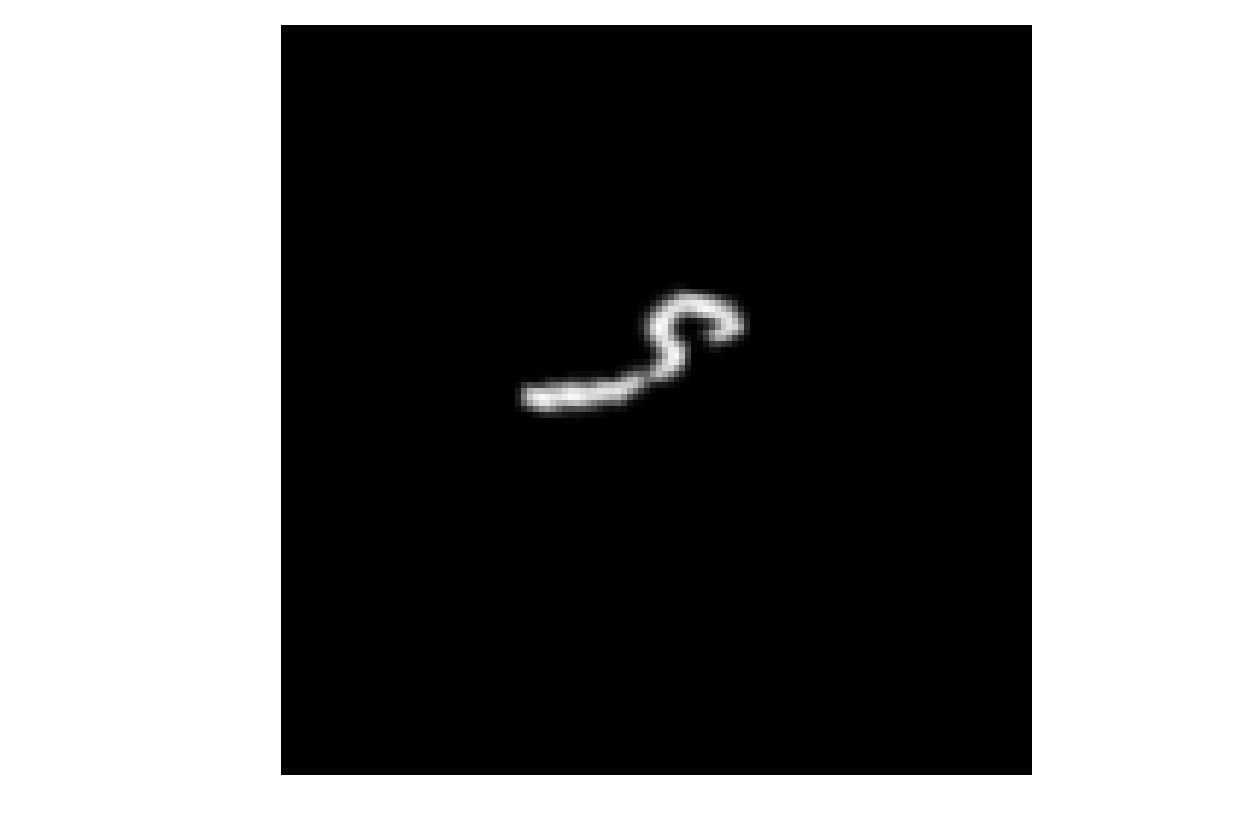}
        & \includegraphics[width=0.21\linewidth, clip=true, trim=136pt 29pt 106pt 12pt]{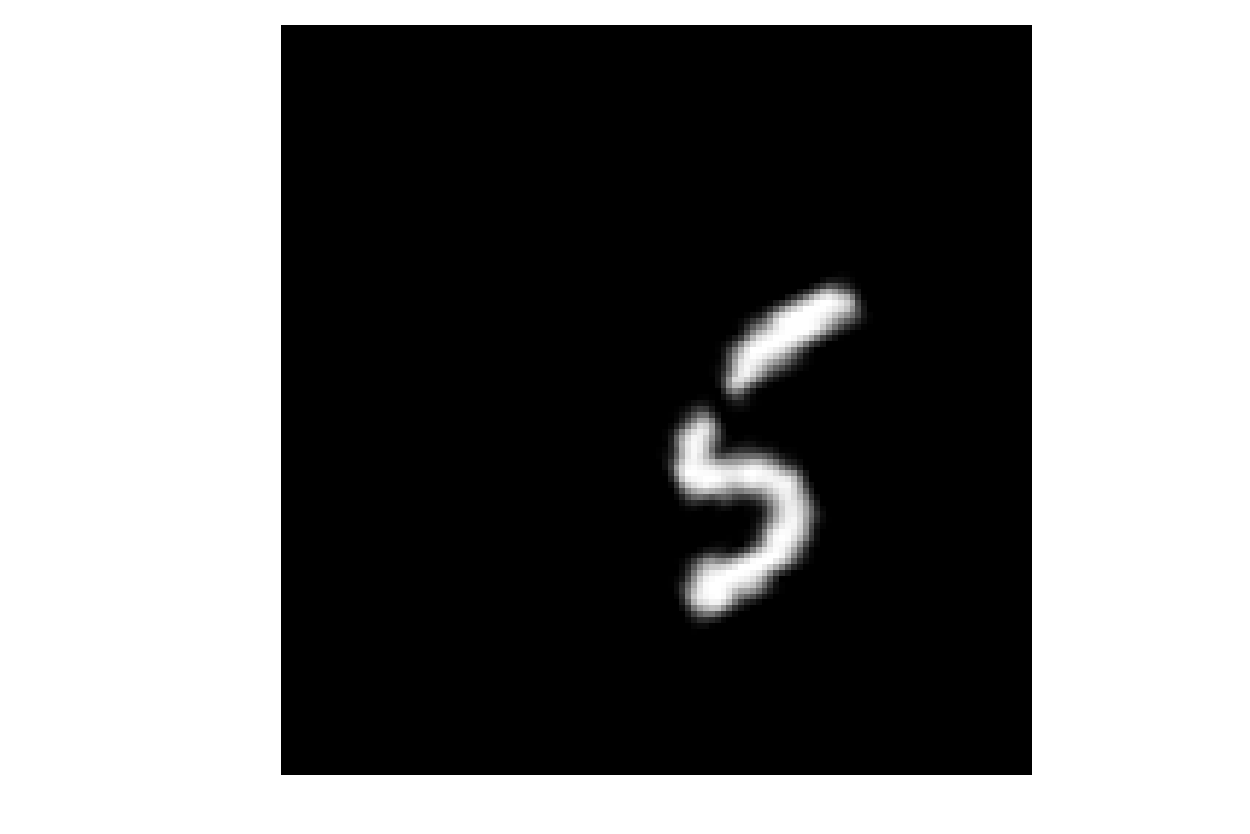}
        & \includegraphics[width=0.21\linewidth, clip=true, trim=136pt 29pt 106pt 12pt]{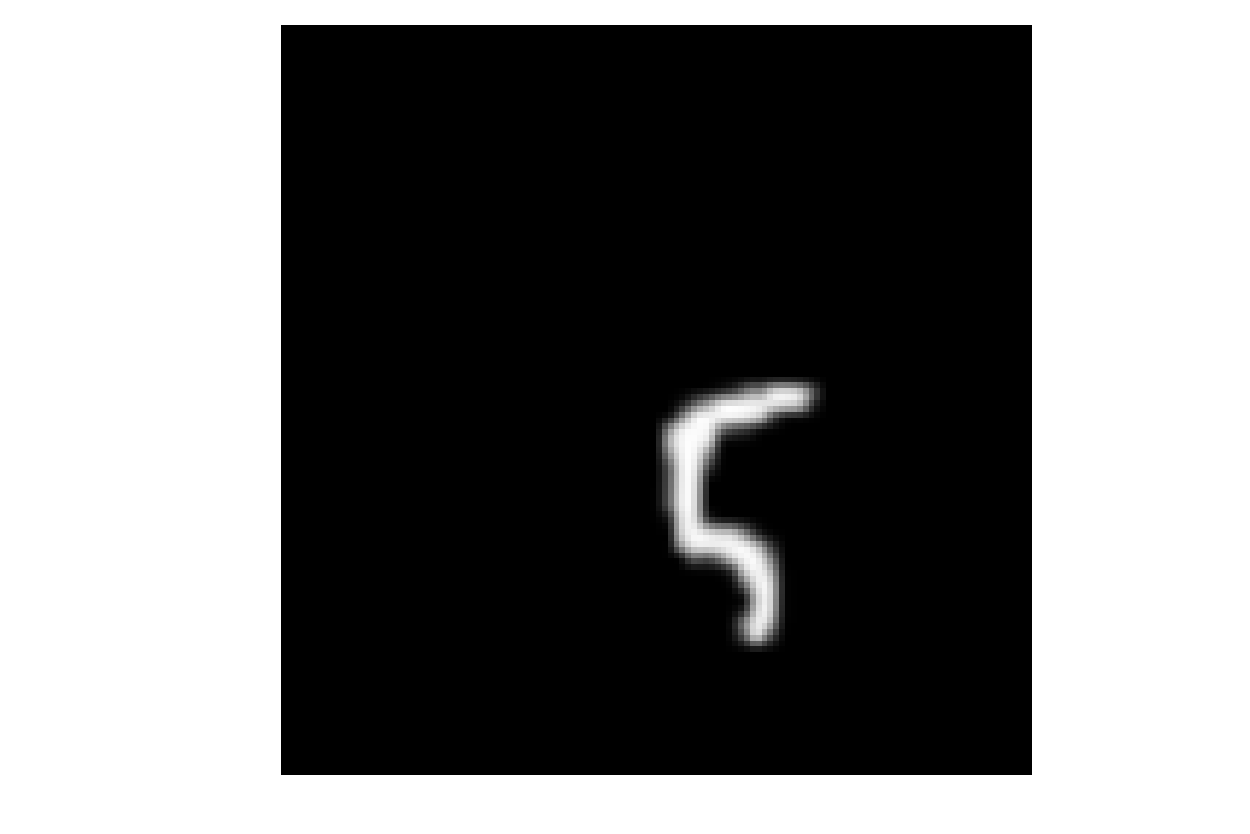}
        \\
        \raisebox{12pt}{\rotatebox{90}{class~7}}
        & \includegraphics[width=0.21\linewidth, clip=true, trim=136pt 29pt 106pt 12pt]{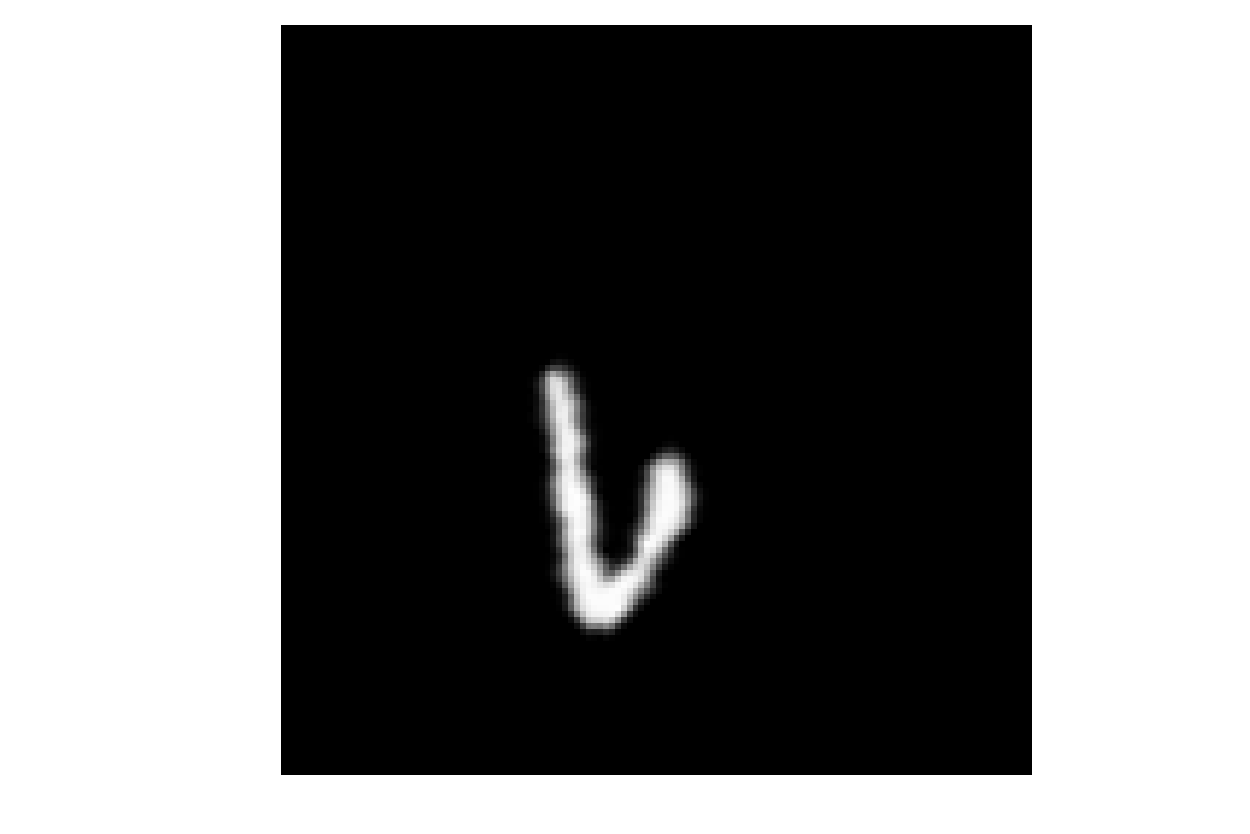}
        & \includegraphics[width=0.21\linewidth, clip=true, trim=136pt 29pt 106pt 12pt]{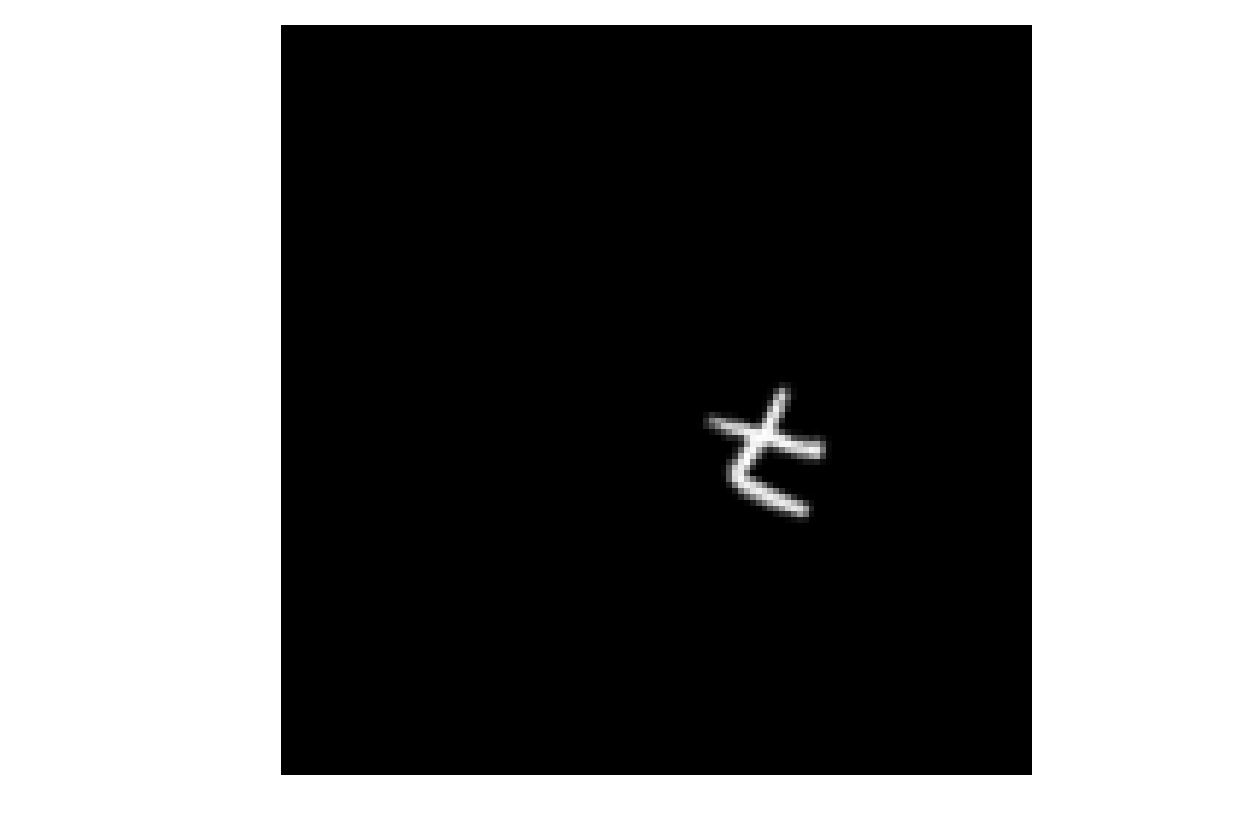}
        & \includegraphics[width=0.21\linewidth, clip=true, trim=136pt 29pt 106pt 12pt]{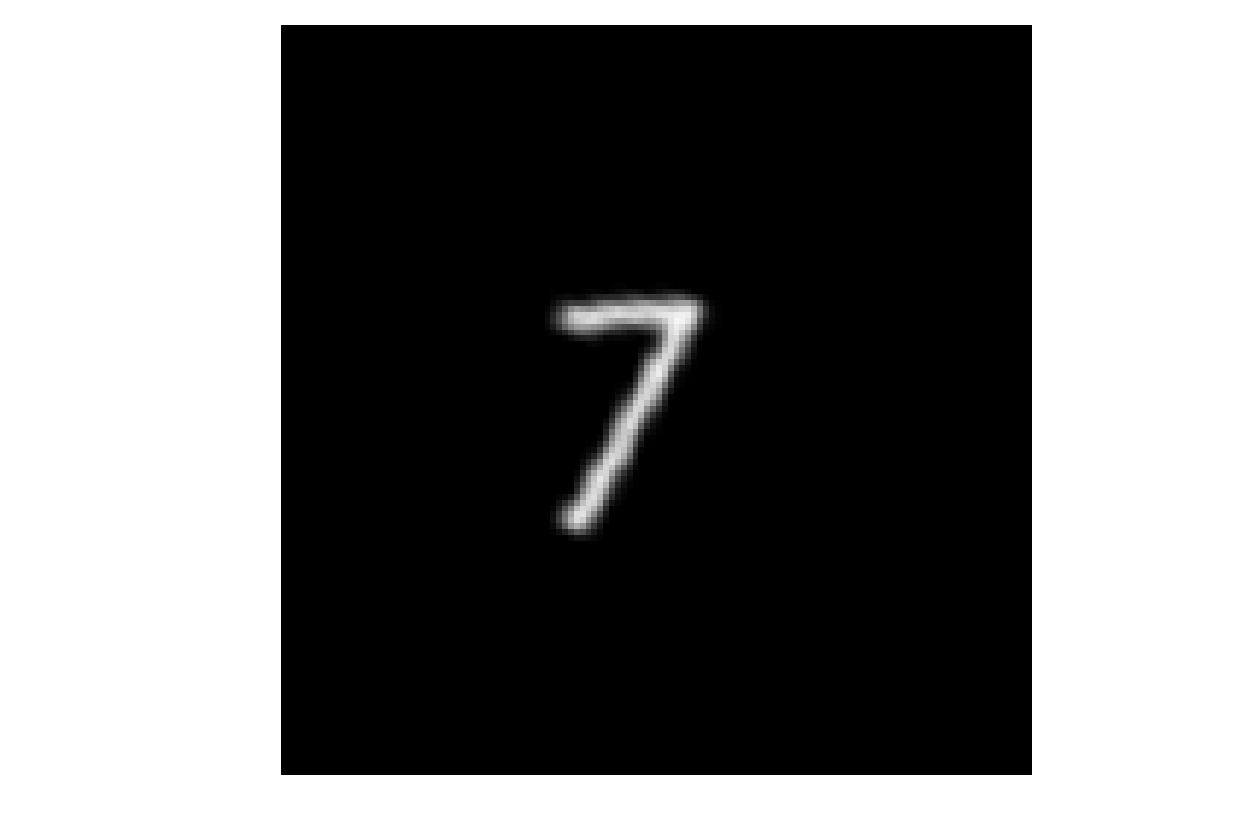}
        & \includegraphics[width=0.21\linewidth, clip=true, trim=136pt 29pt 106pt 12pt]{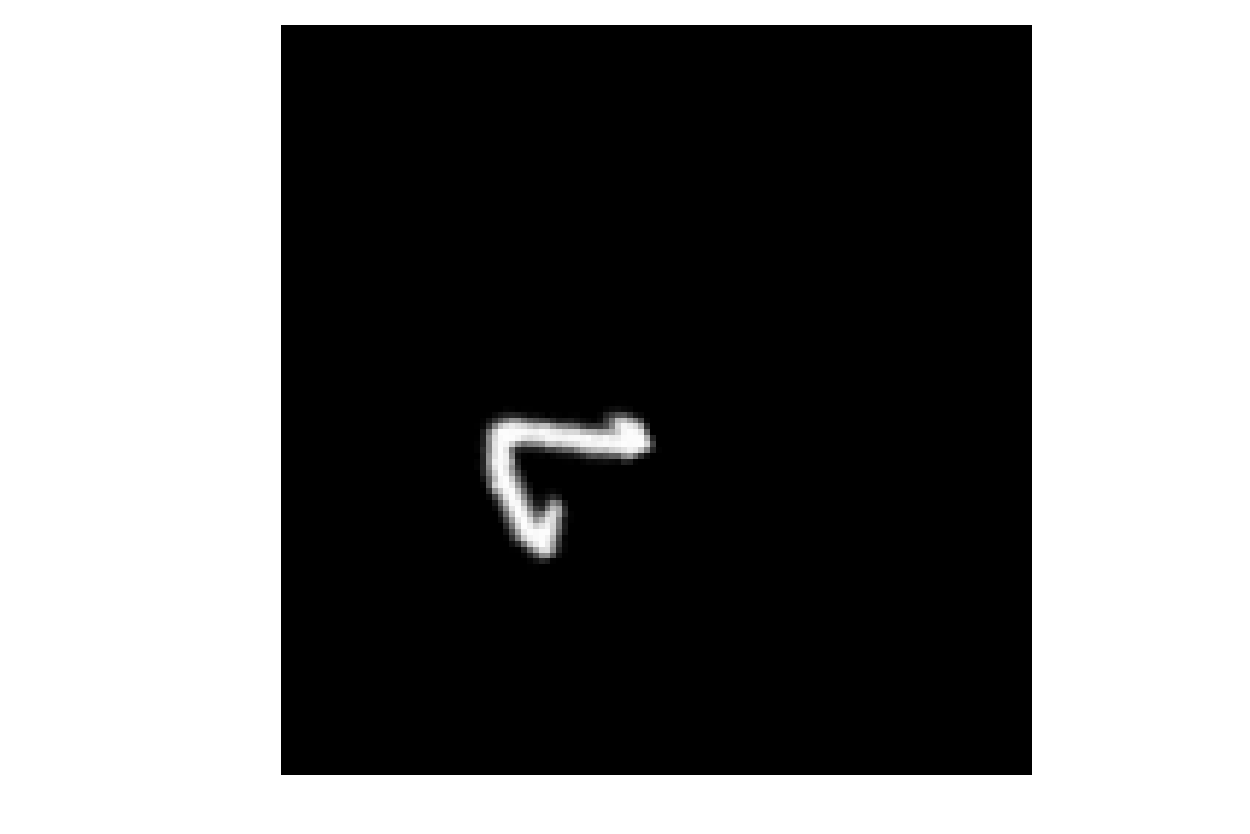}
    \end{tabular}
    \caption{%
    Samples of the LinMNIST dataset 
    (ones, fives and sevens) 
    based on affinely transformed MNIST digits.}
    \label{fig:templates_mnist}
\end{figure}

Our two-dimensional numerical experiments
make use of the following four datasets:
\begin{enumerate}[label=\alph*)]
    \item \textbf{Academic dataset.}
    Based on (up to) three synthetic template symbols,
    cf.~Figure~\ref{fig:templates_syn} (top row),
    represented by $256\times256$ pixels,
    we construct our classes by
    affinely transforming the templates with 
    random pixel shifts in $[-40,40]$, 
    rotation angles in $[0^\circ,360^\circ)$, 
    anisotropic scaling in $[0.75,1.0]$, 
    and shearing in $[-30^\circ,30^\circ]$,
    cf.~Figure~\ref{fig:templates_syn} (bottom row).
    Note that this dataset was first introduced
    in \cite{Beckmann2024a}
    for proof-of-concept experiments 
    showing linear separability in \mNRCDT{} space.
    
    \item \textbf{Polygon dataset.}
    Nine synthetic template symbols, 
    cf.~Figure~\ref{fig:polygon_dataset_2d} (top row),
    each of size $256\times256$ pixels,
    are slightly non-affinely deformed
    and, thereon, affinely transformed
    with random pixel shifts in $[-20,20]$, 
    rotation angles in $[0^\circ,360^\circ)$, 
    anisotropic scaling in $[0.5,1.25]$, 
    and shearing in $[-30^\circ,30^\circ]$,
    cf.~Figure~\ref{fig:polygon_dataset_2d} (bottom row).
    For the non-affine deformations,
    we assign to the $(j,k)$th pixel
    the bi-quadratically interpolated gray values
    at the perturbed location
    \begin{equation*}
        \bigl(j + a_1 \sin\bigl(\tfrac{2\pi f_1}{256} \, k\bigr), k + a_2 \cos\bigl(\tfrac{2\pi f_2}{256} \, j\bigr)\bigr)
    \end{equation*}
    with random frequencies $f_1, f_2$ in $[1.5,2.5]$
    and amplitudes $a_1, a_2$ in $[0.5, 2.0]$.
    
    \item \textbf{LinMNIST dataset.}
    This dataset was first introduced in~\cite{Beckmann2024} and
    consists of affinely transformed MNIST digits~\cite{Deng2012}.
    Here, we restrict ourselves to the three classes $\{1,5,7\}$,
    rescale the data to $128\times128$ pixels
    and use random pixel shifts in $[-20,20]$,
    rotation angles in $[0^\circ, 360^\circ)$,
    and anisotropic scaling in $[0.5,1.25]$,
    see Figure~\ref{fig:templates_mnist}
    for a random selection of samples
    of the generated dataset.

    \begin{figure*}[t]
        \centering%
        \resizebox{\linewidth}{!}{%
        \begin{tabular}{c @{\hspace{3pt}} c @{\hspace{3pt}} c @{\hspace{3pt}} c @{\hspace{3pt}} c @{\hspace{3pt}} c @{\hspace{3pt}} c @{\hspace{3pt}} c @{\hspace{3pt}} c @{\hspace{3pt}} c @{\hspace{3pt}} c}
            agnus dei 
            & bishop
            & crown
            & lilies (1st)
            & pinecone
            & keys
            & lilies (2nd)
            & moon
            & poppy 
            & orb
            & shield \\%
            \boxed{\includegraphics[width=0.1\linewidth, clip=true, trim=132pt 27pt 113pt 12pt]{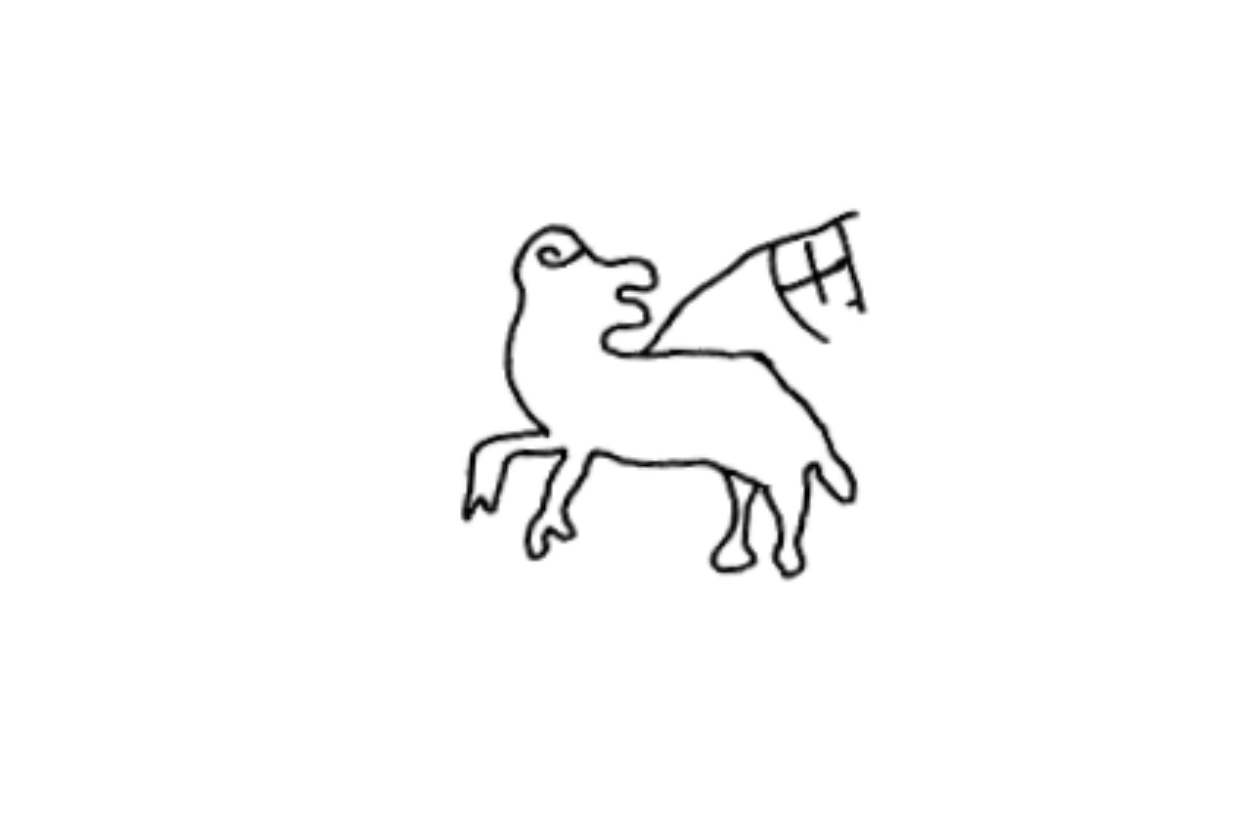}}%
            & \boxed{\includegraphics[width=0.1\linewidth, clip=true, trim=132pt 27pt 113pt 12pt]{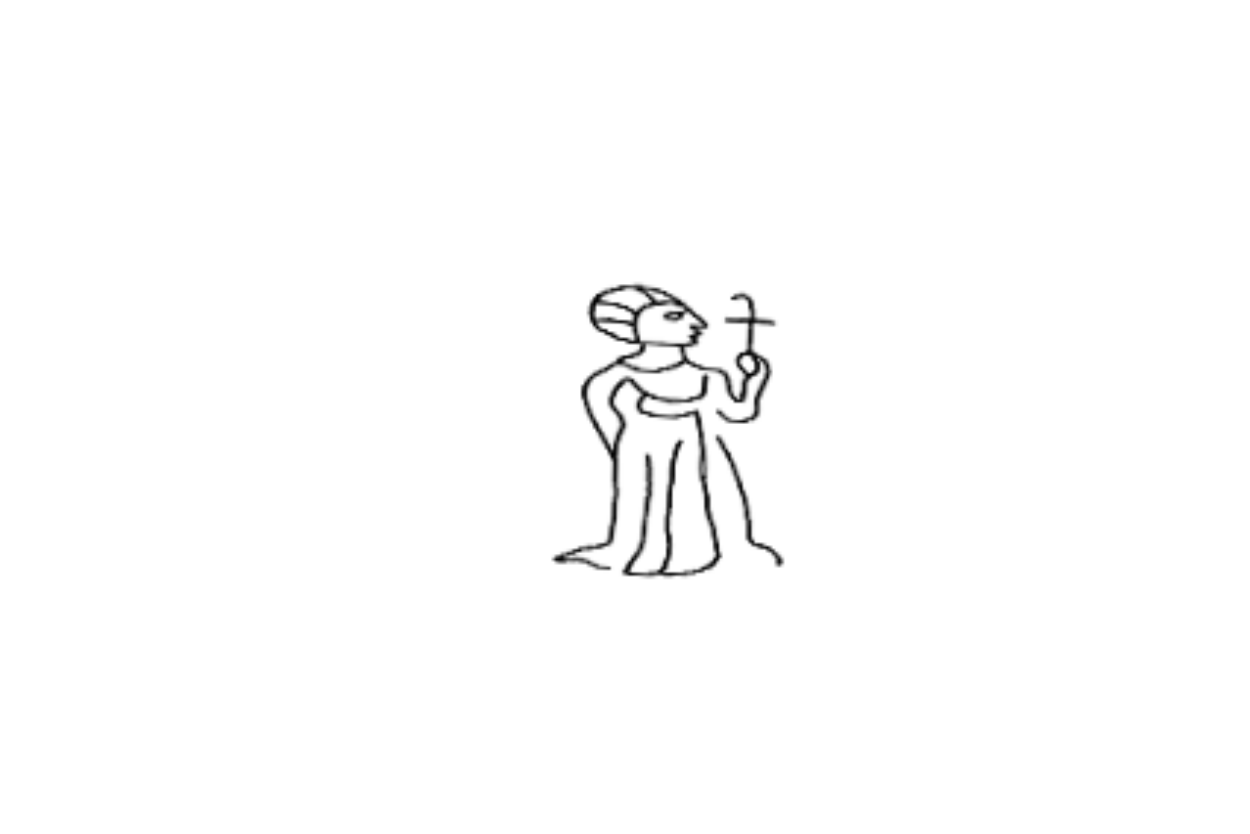}}%
            & \boxed{\includegraphics[width=0.1\linewidth, clip=true, trim=132pt 27pt 113pt 12pt]{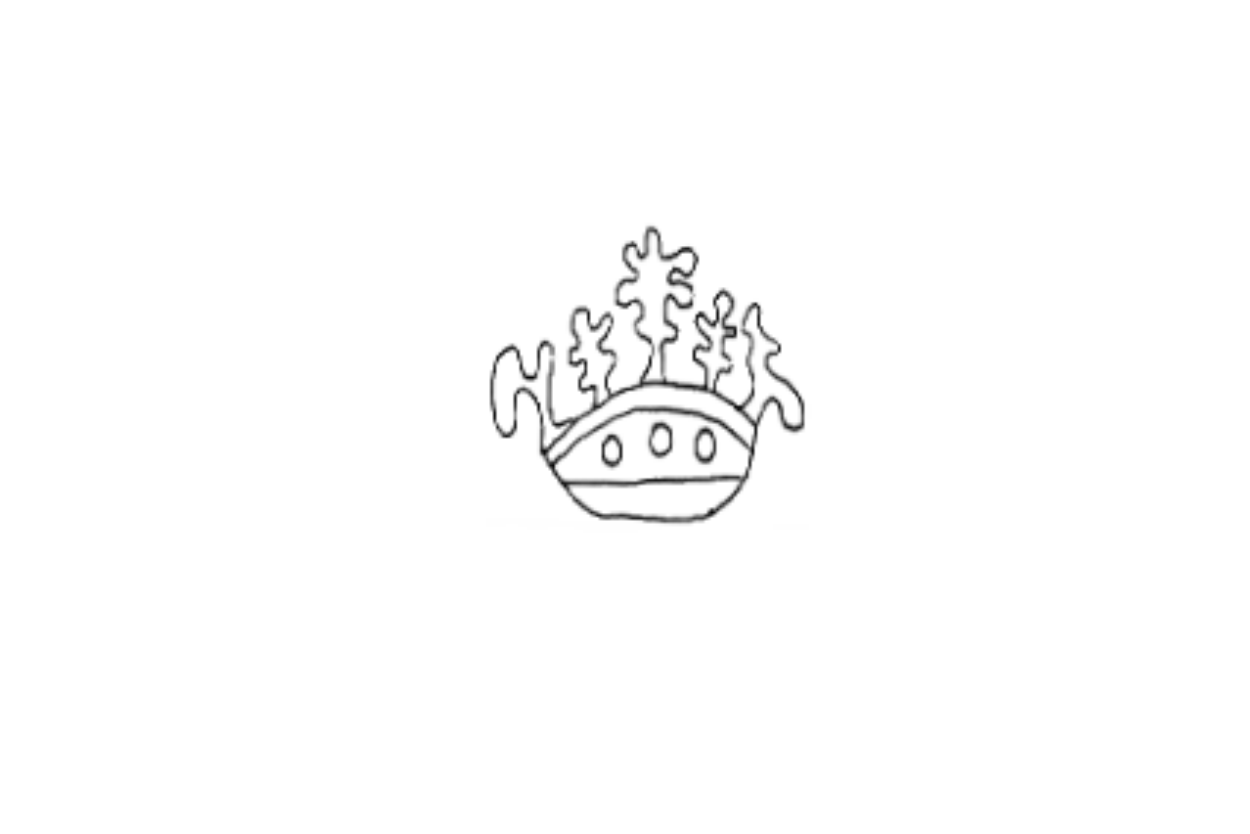}}%
            & \boxed{\includegraphics[width=0.1\linewidth, clip=true, trim=132pt 27pt 113pt 12pt]{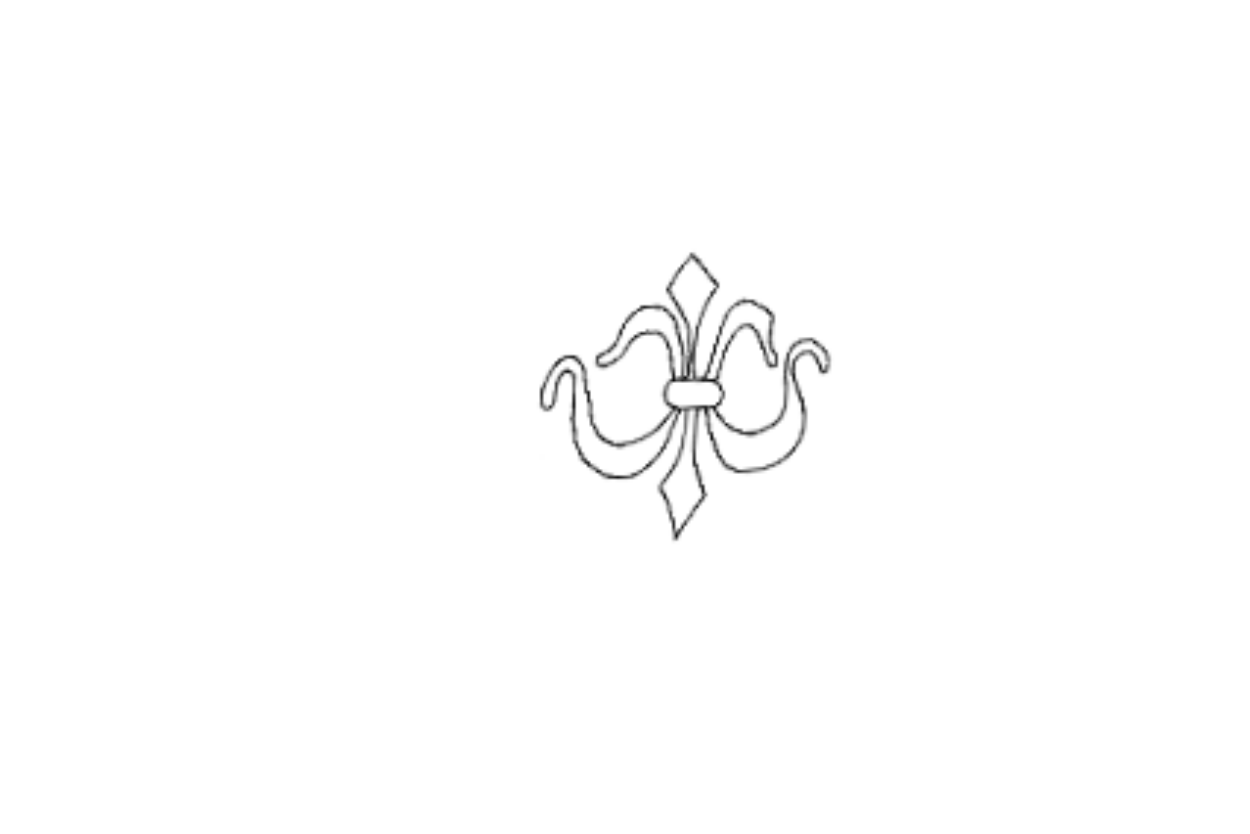}}%
            & \boxed{\includegraphics[width=0.1\linewidth, clip=true, trim=132pt 27pt 113pt 12pt]{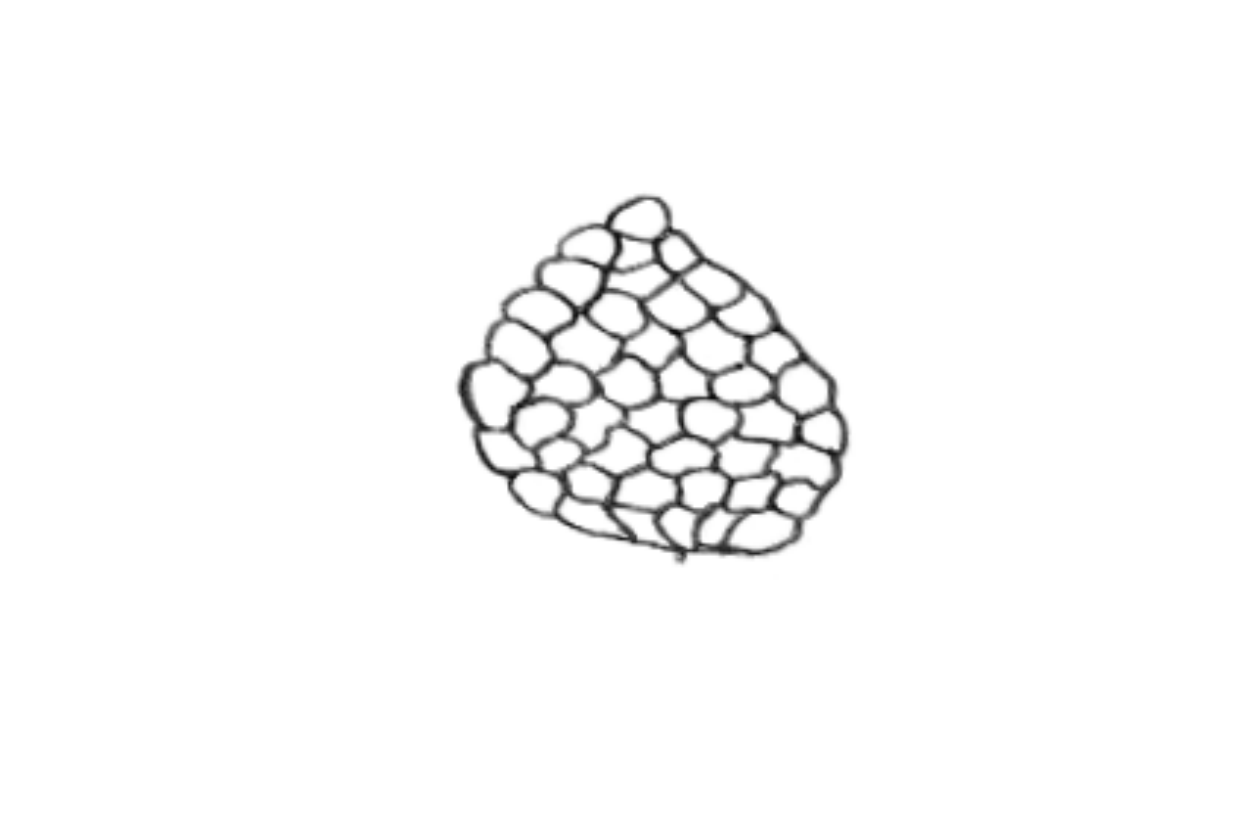}}%
            & \boxed{\includegraphics[width=0.1\linewidth, clip=true, trim=132pt 27pt 113pt 12pt]{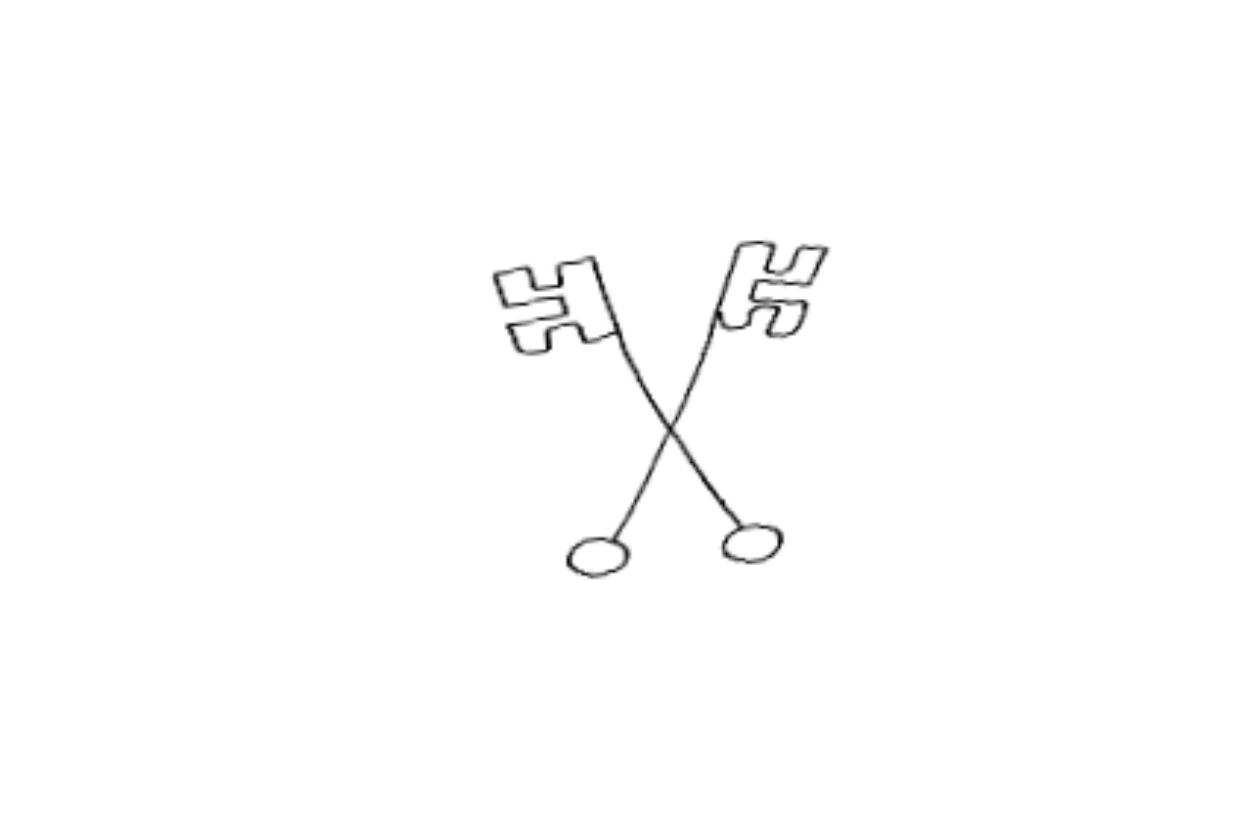}}%
            & \boxed{\includegraphics[width=0.1\linewidth, clip=true, trim=132pt 27pt 113pt 12pt]{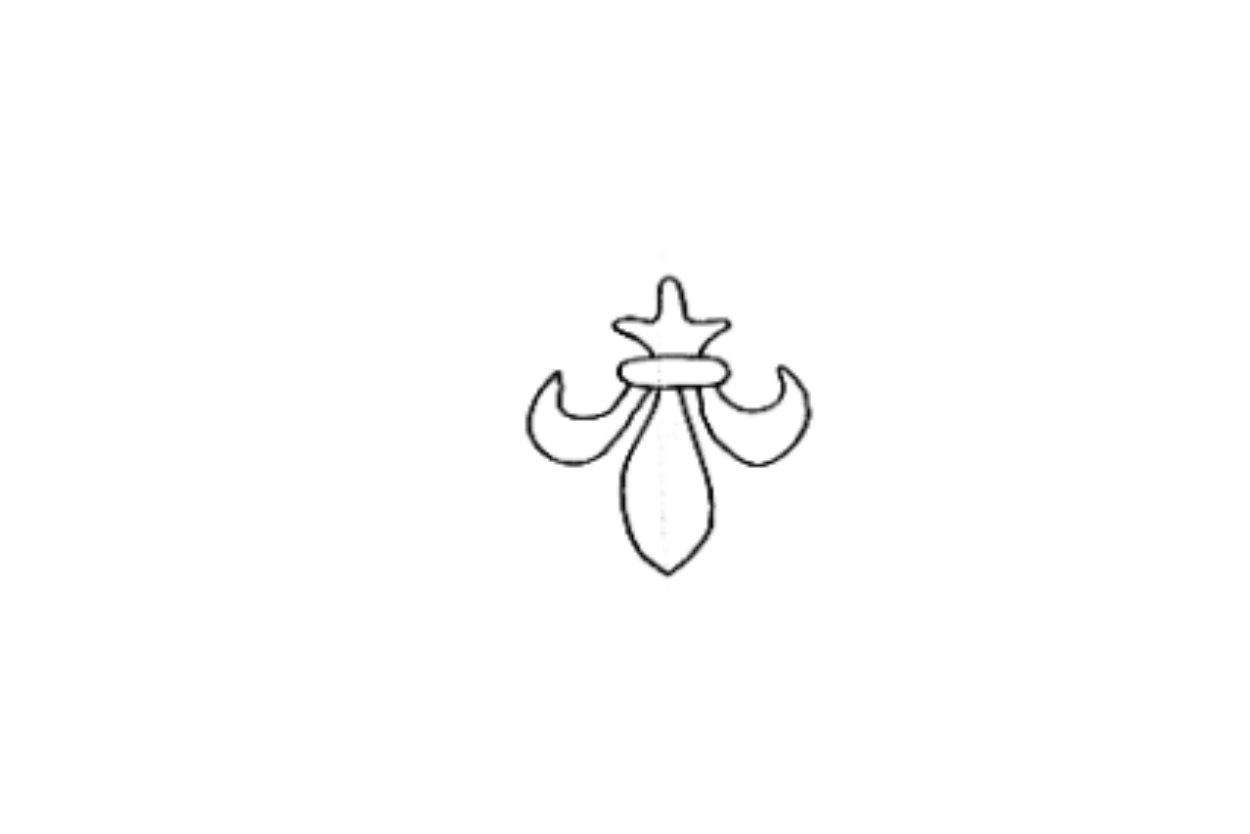}}%
            & \boxed{\includegraphics[width=0.1\linewidth, clip=true, trim=132pt 27pt 113pt 12pt]{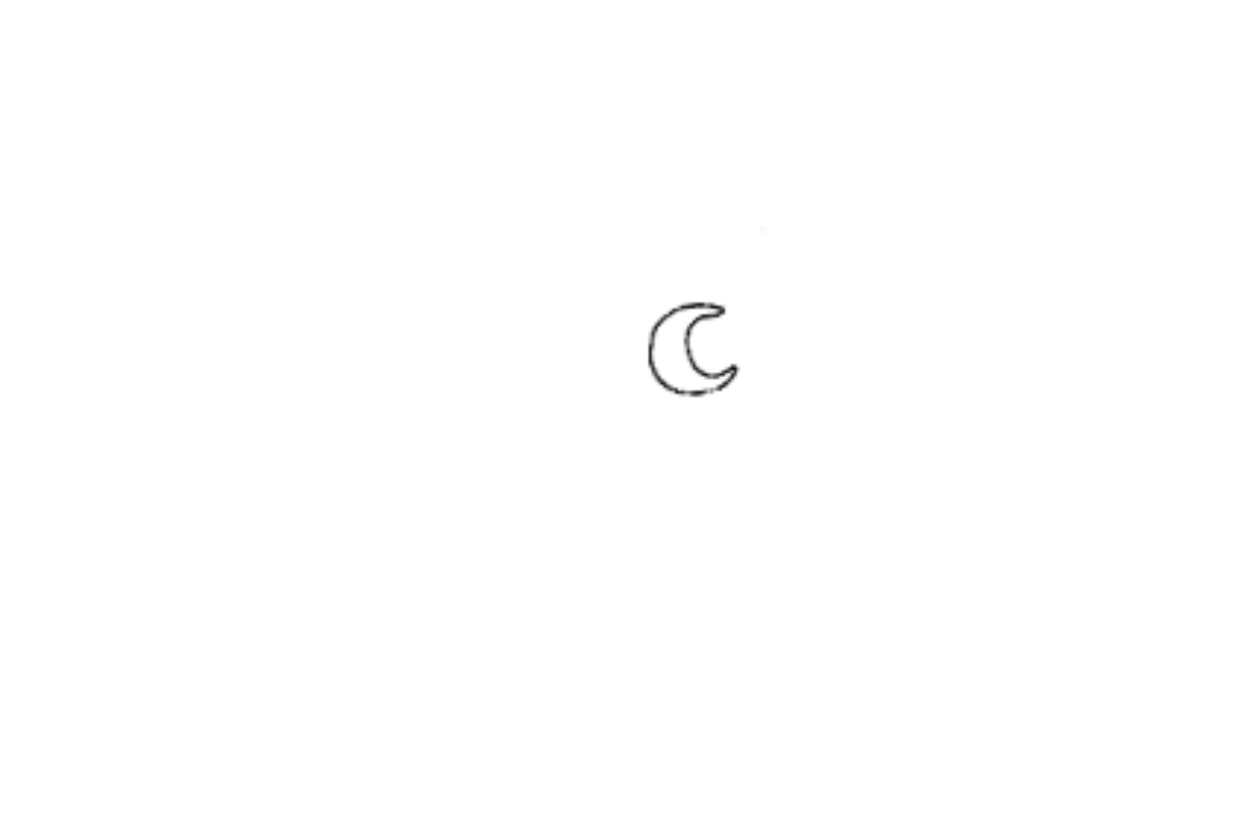}}%
            & \boxed{\includegraphics[width=0.1\linewidth, clip=true, trim=132pt 27pt 113pt 12pt]{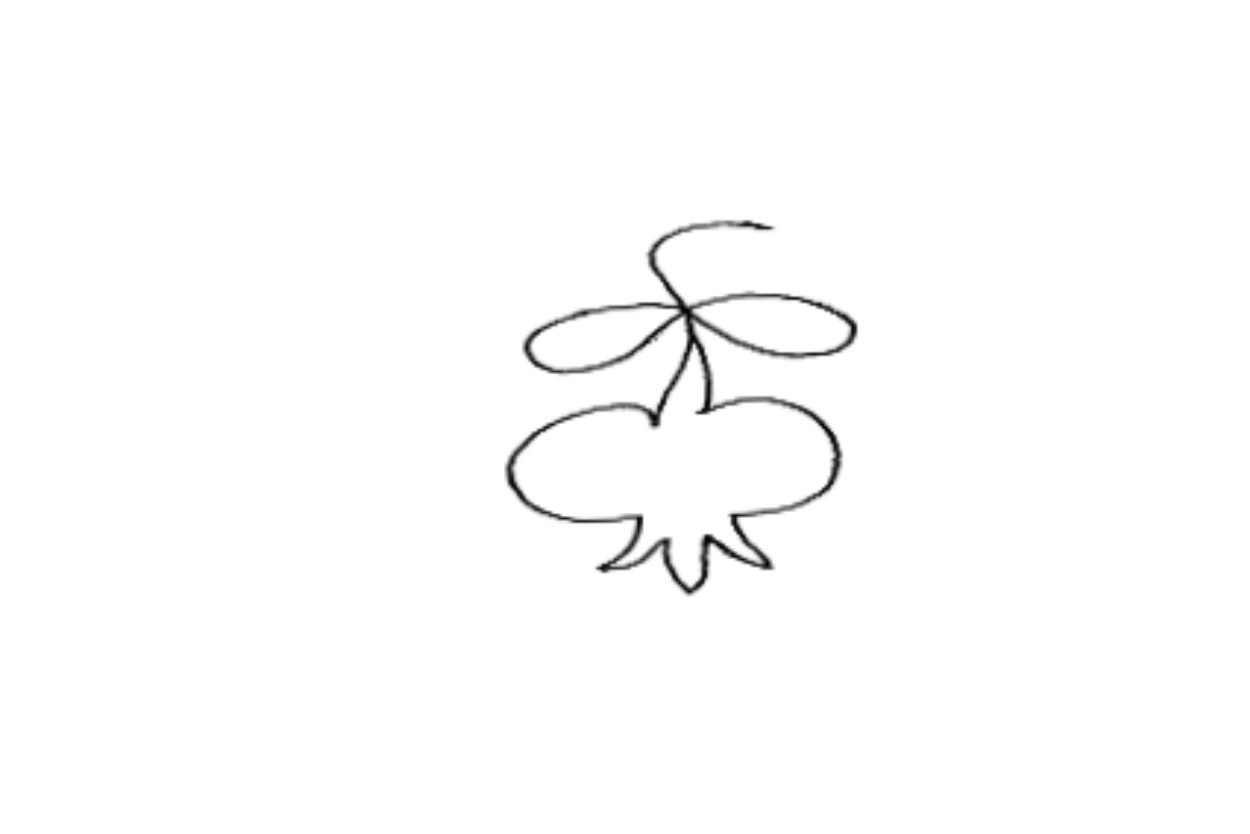}}%
            & \boxed{\includegraphics[width=0.1\linewidth, clip=true, trim=132pt 27pt 113pt 12pt]{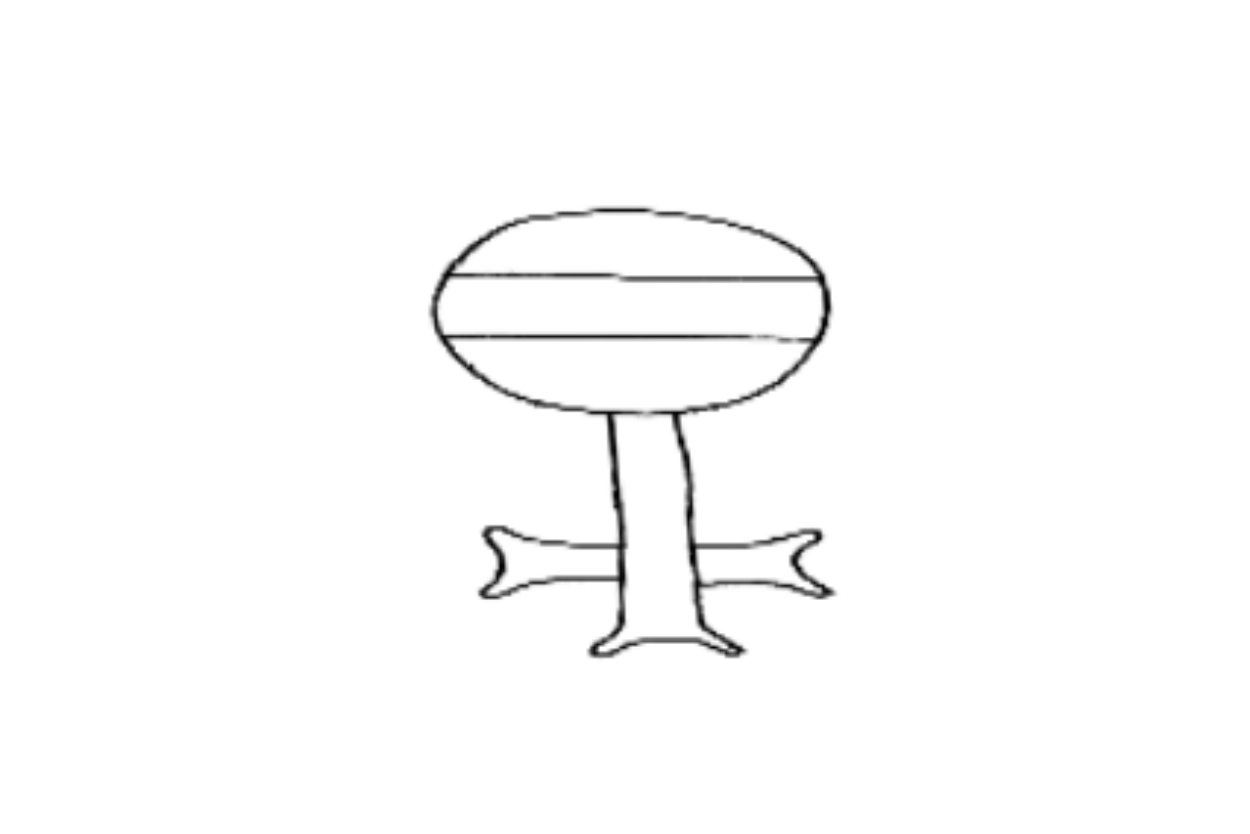}}%
            & \boxed{\includegraphics[width=0.1\linewidth, clip=true, trim=132pt 27pt 113pt 12pt]{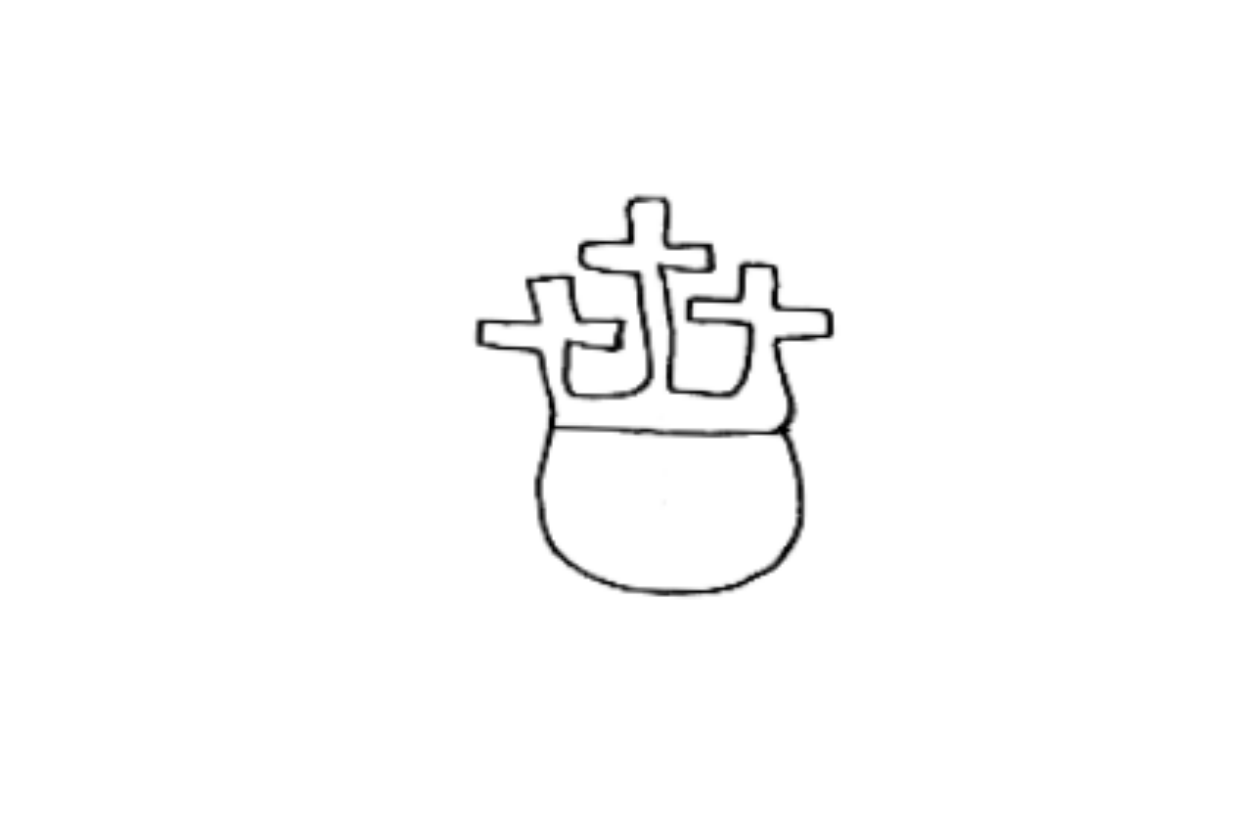}}%
            \\[1pt]%
            \boxed{\includegraphics[width=0.041\linewidth, clip=true, trim=132pt 27pt 113pt 12pt]{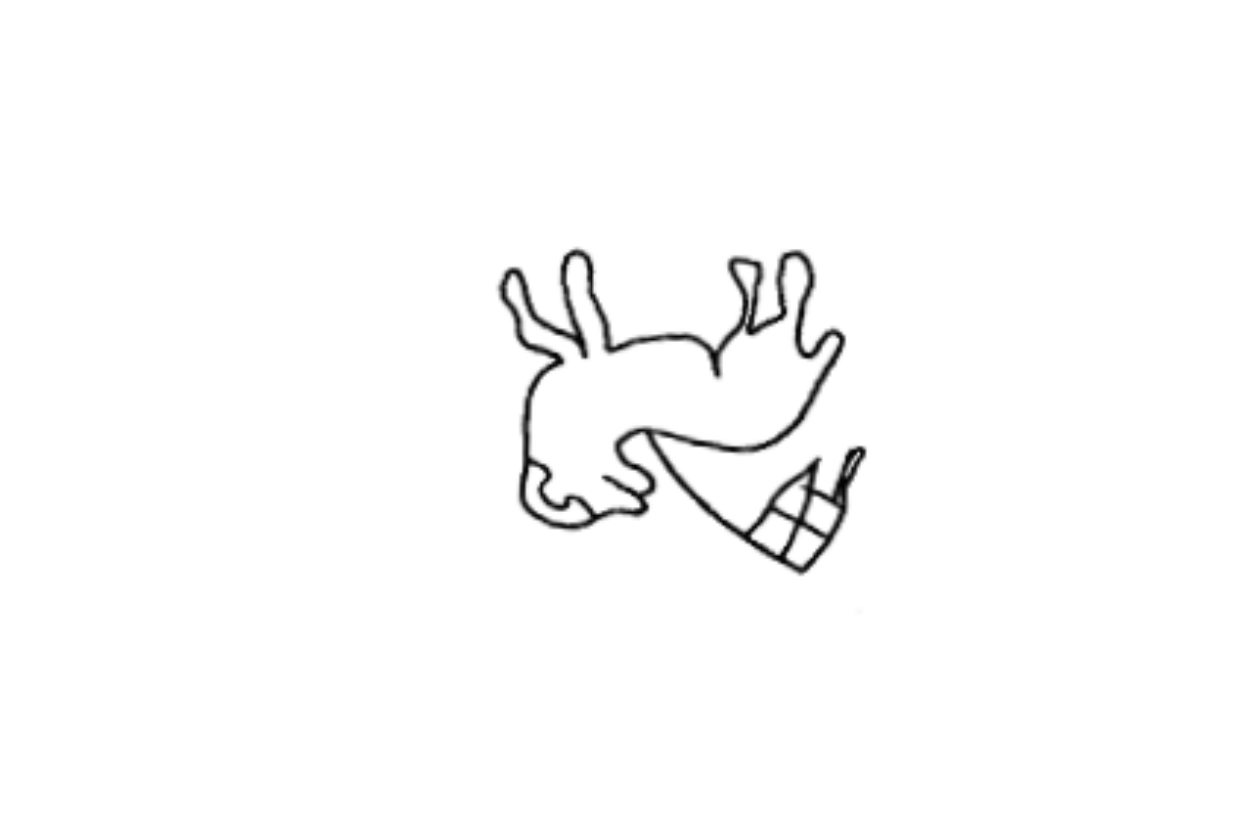}}%
            \hspace{1pt}%
            \boxed{\includegraphics[width=0.041\linewidth, clip=true, trim=132pt 27pt 113pt 12pt]{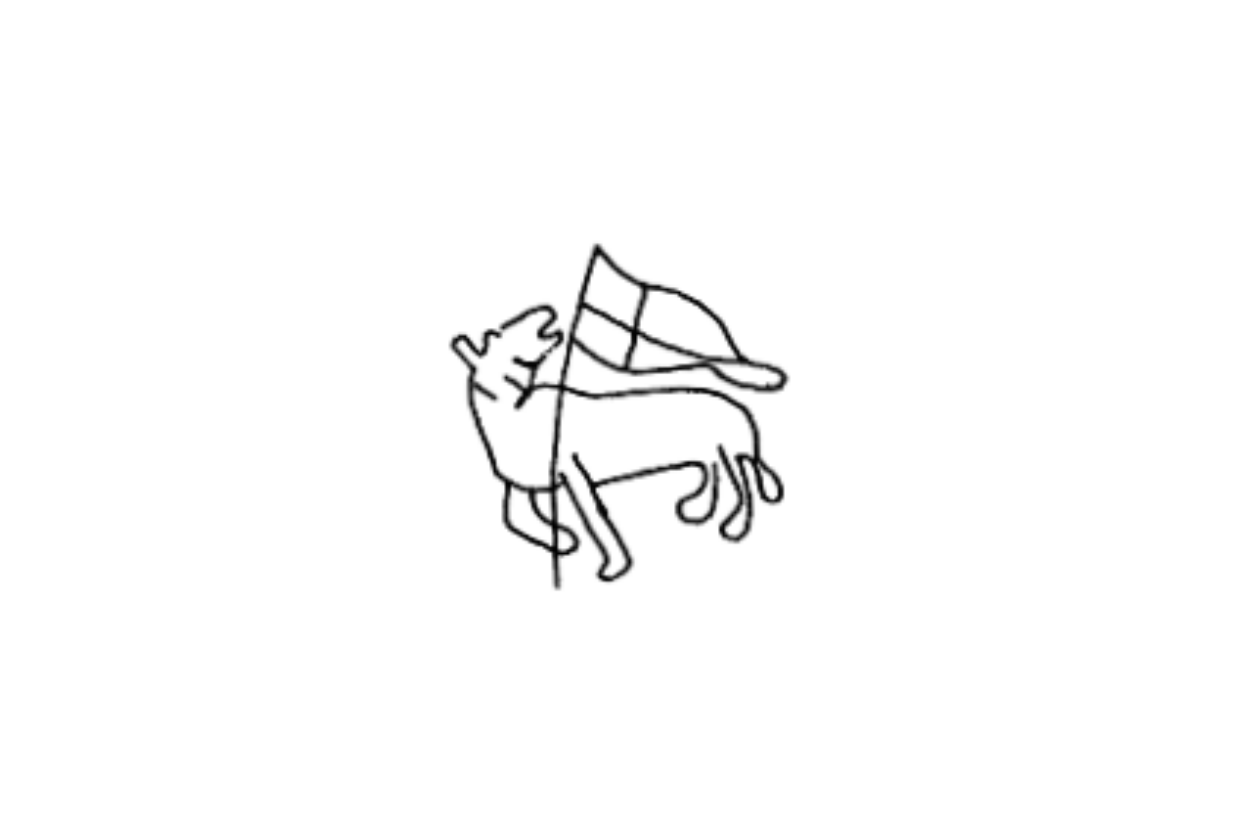}}%
            & \boxed{\includegraphics[width=0.041\linewidth, clip=true, trim=132pt 27pt 113pt 12pt]{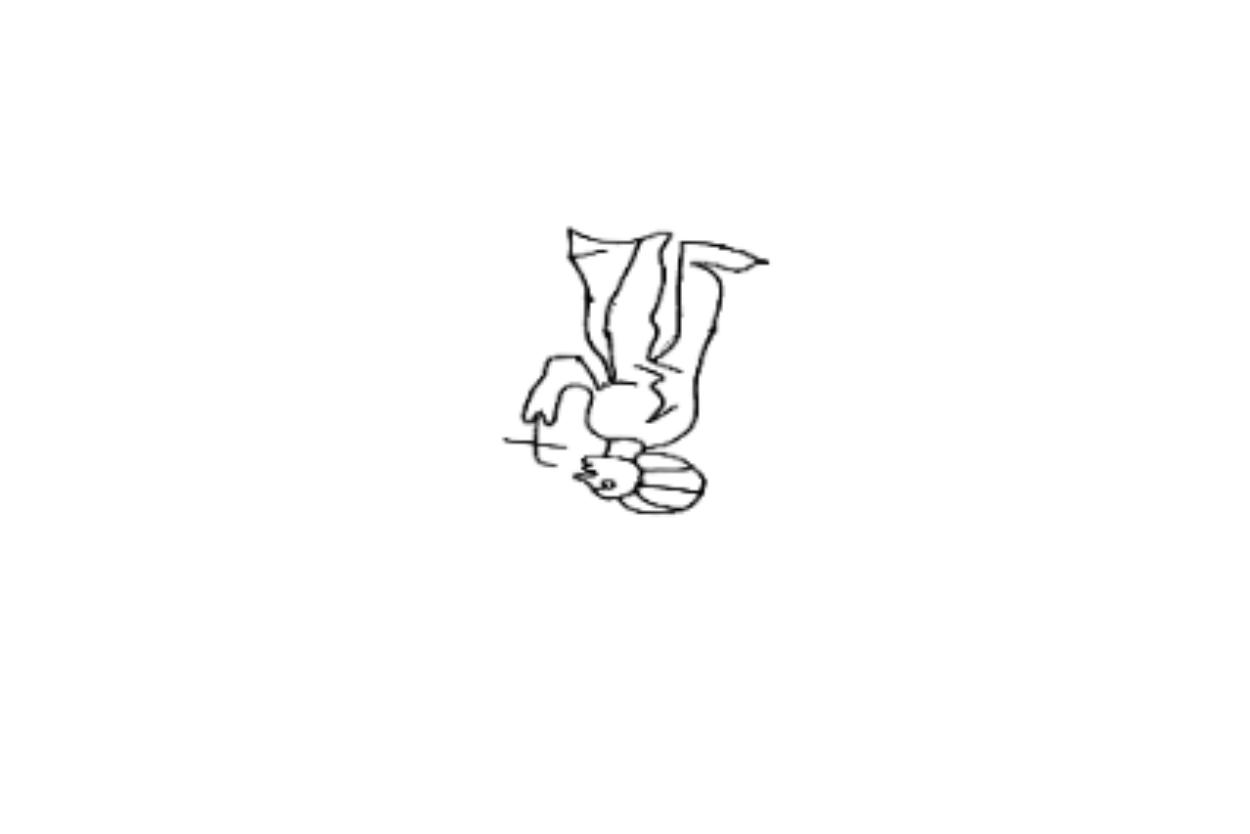}}%
            \hspace{1pt}%
            \boxed{\includegraphics[width=0.041\linewidth, clip=true, trim=132pt 27pt 113pt 12pt]{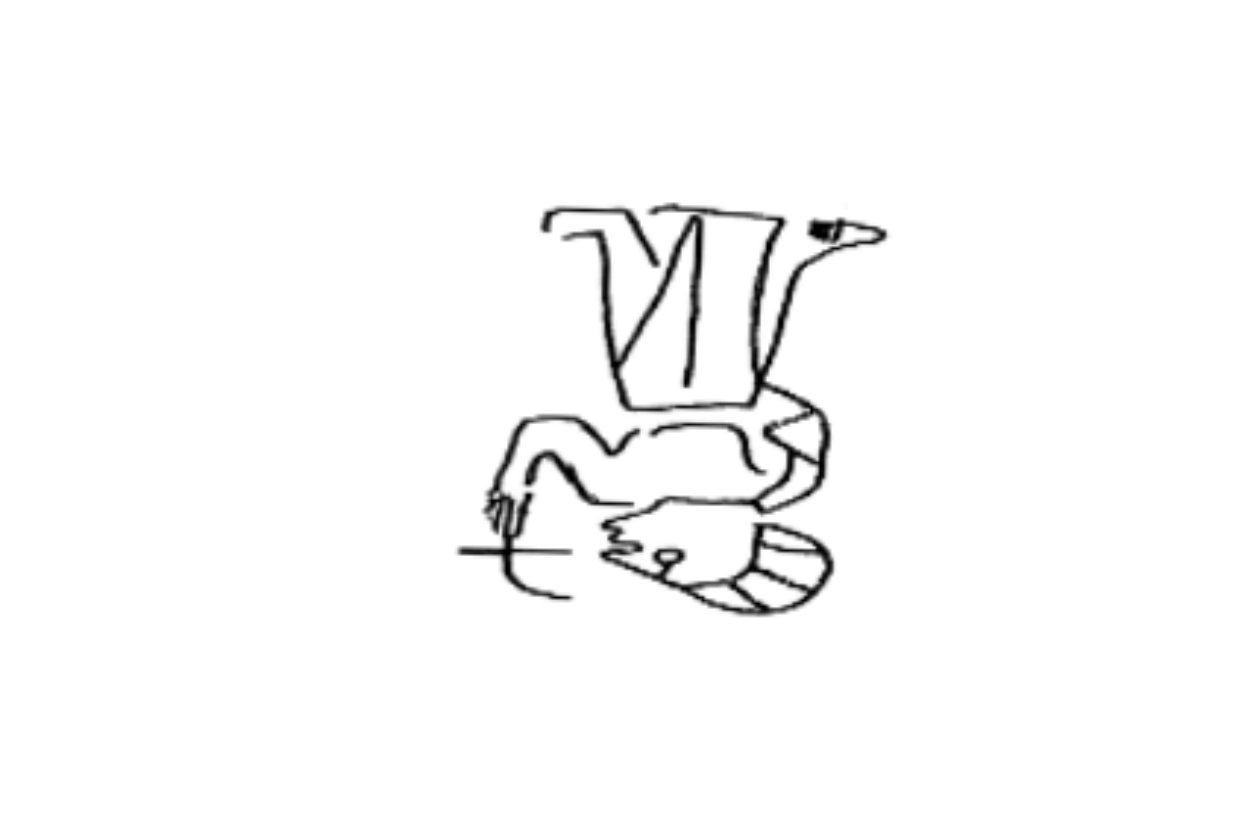}}%
            & \boxed{\includegraphics[width=0.041\linewidth, clip=true, trim=132pt 27pt 113pt 12pt]{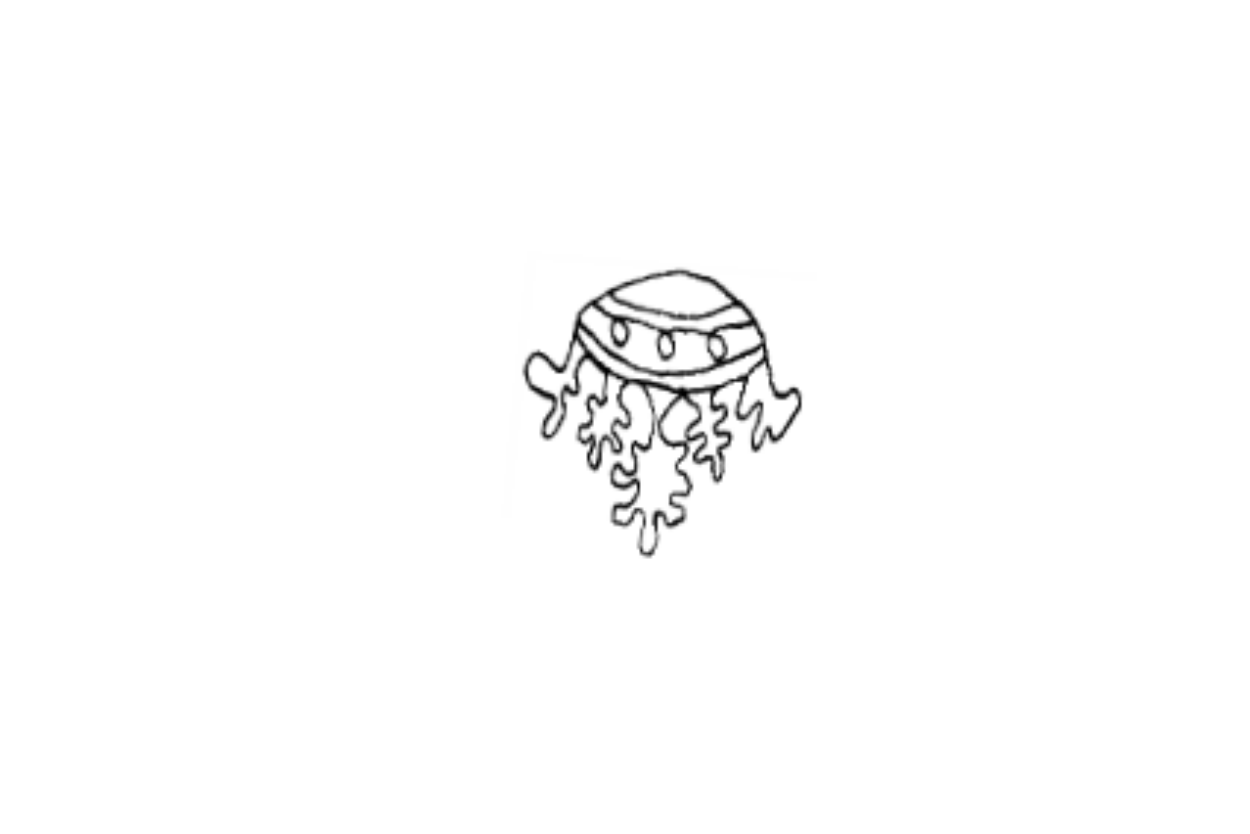}}%
            \hspace{1pt}%
            \boxed{\includegraphics[width=0.041\linewidth, clip=true, trim=132pt 27pt 113pt 12pt]{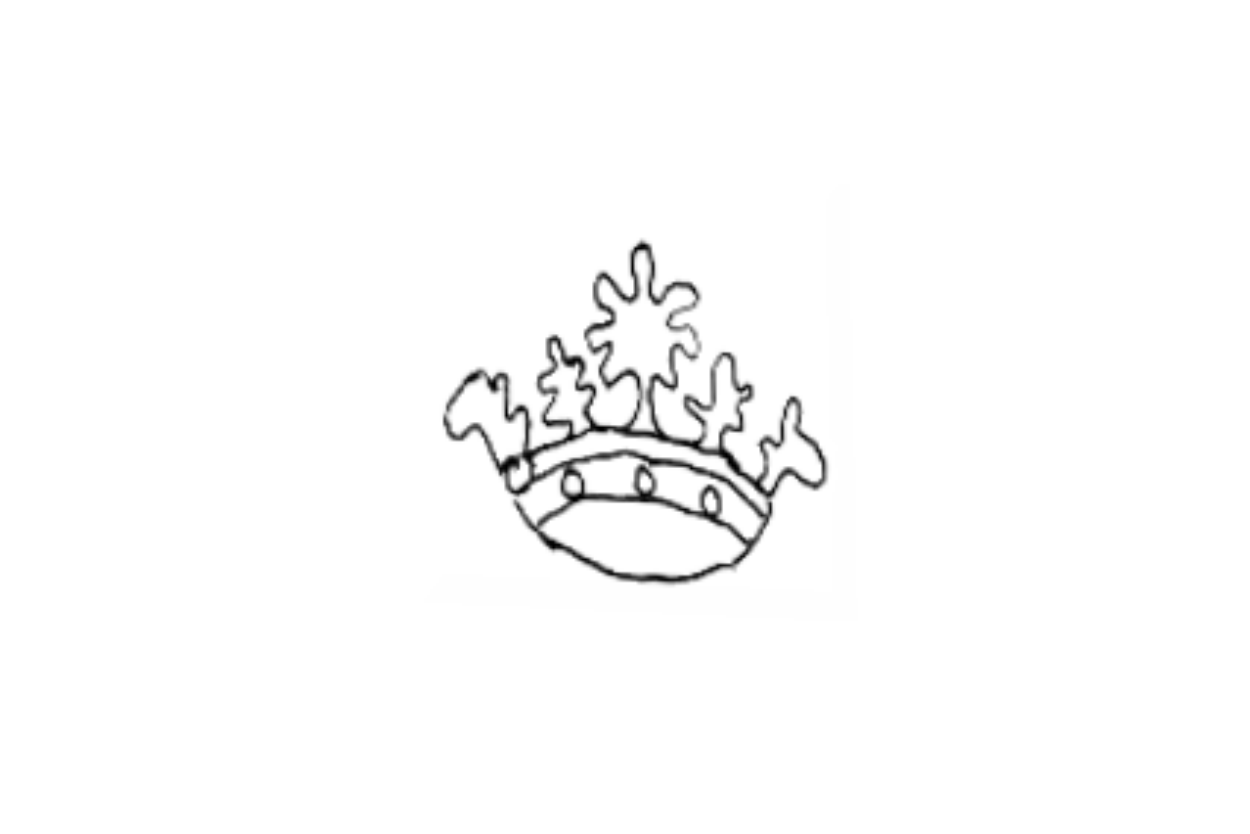}}%
            & \boxed{\includegraphics[width=0.041\linewidth, clip=true, trim=132pt 27pt 113pt 12pt]{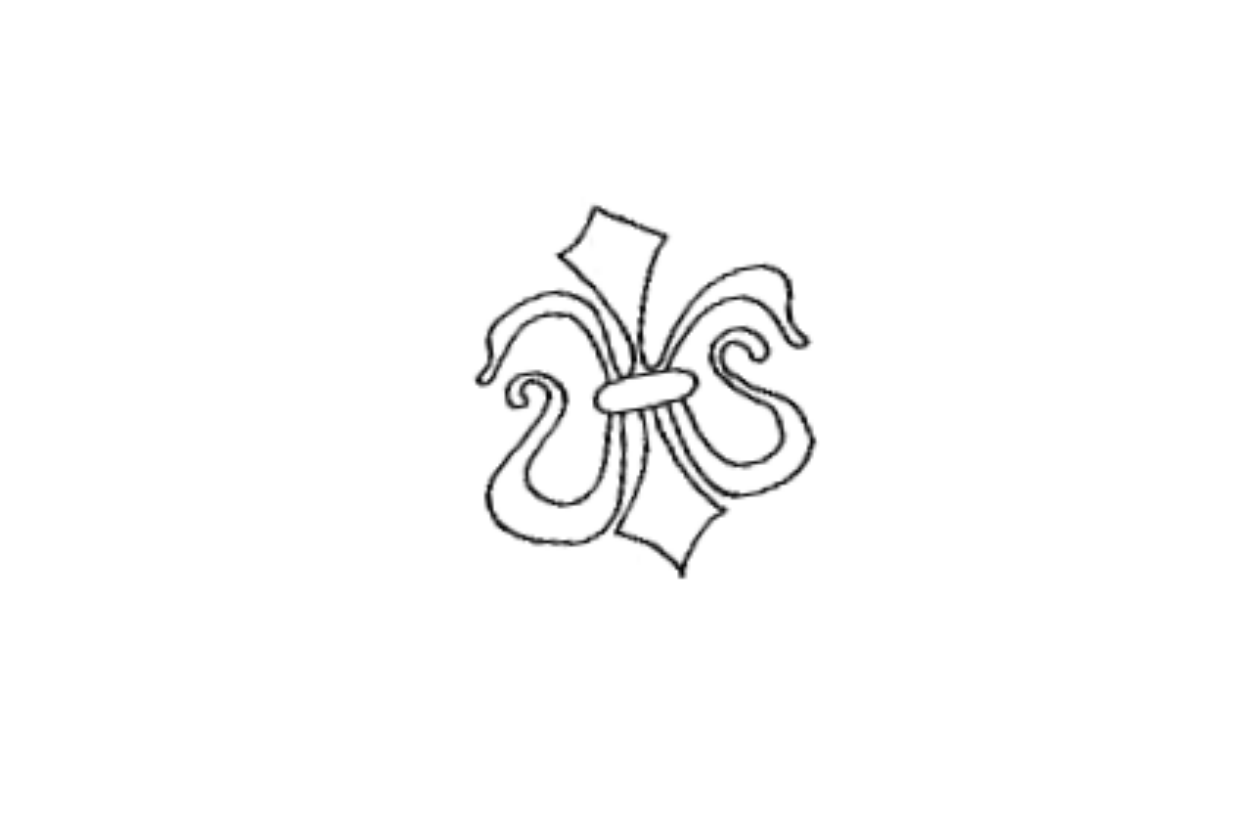}}%
            \hspace{1pt}%
            \boxed{\includegraphics[width=0.041\linewidth, clip=true, trim=132pt 27pt 113pt 12pt]{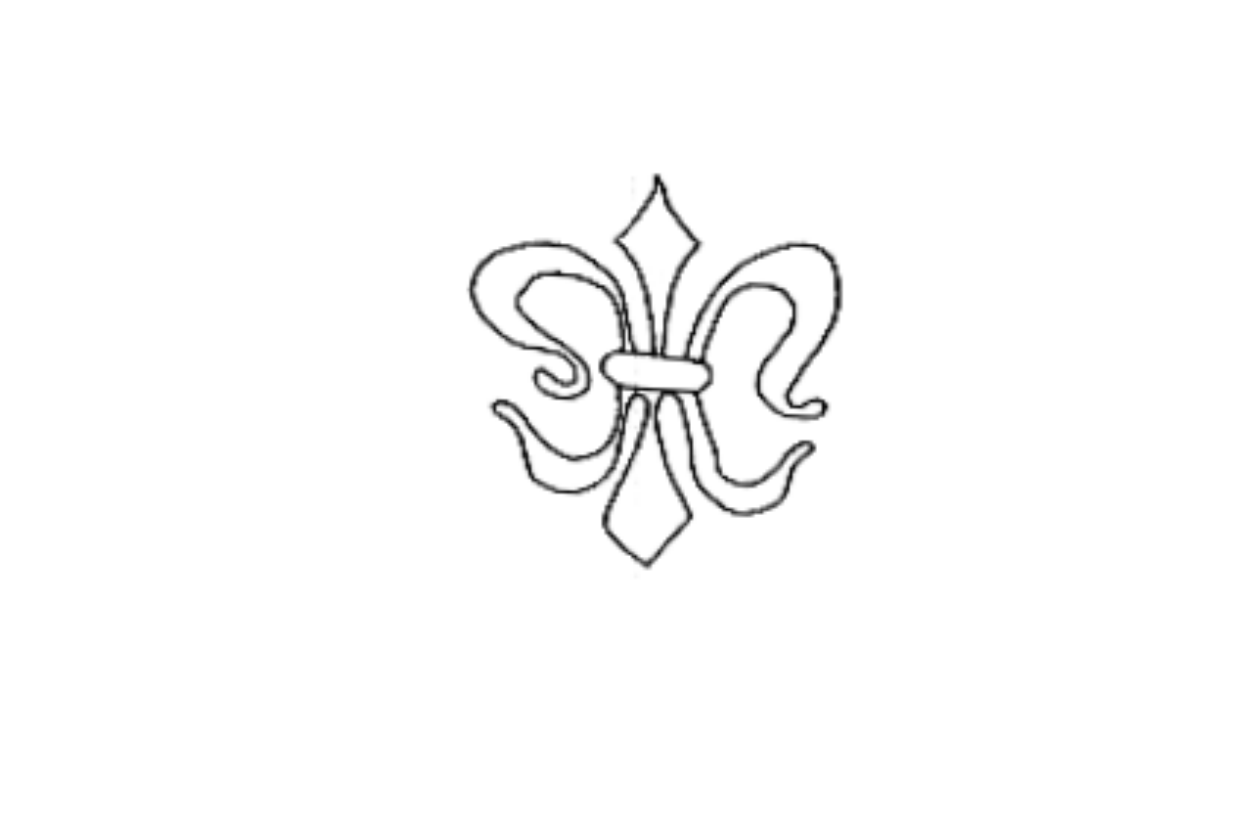}}%
            & \boxed{\includegraphics[width=0.041\linewidth, clip=true, trim=132pt 27pt 113pt 12pt]{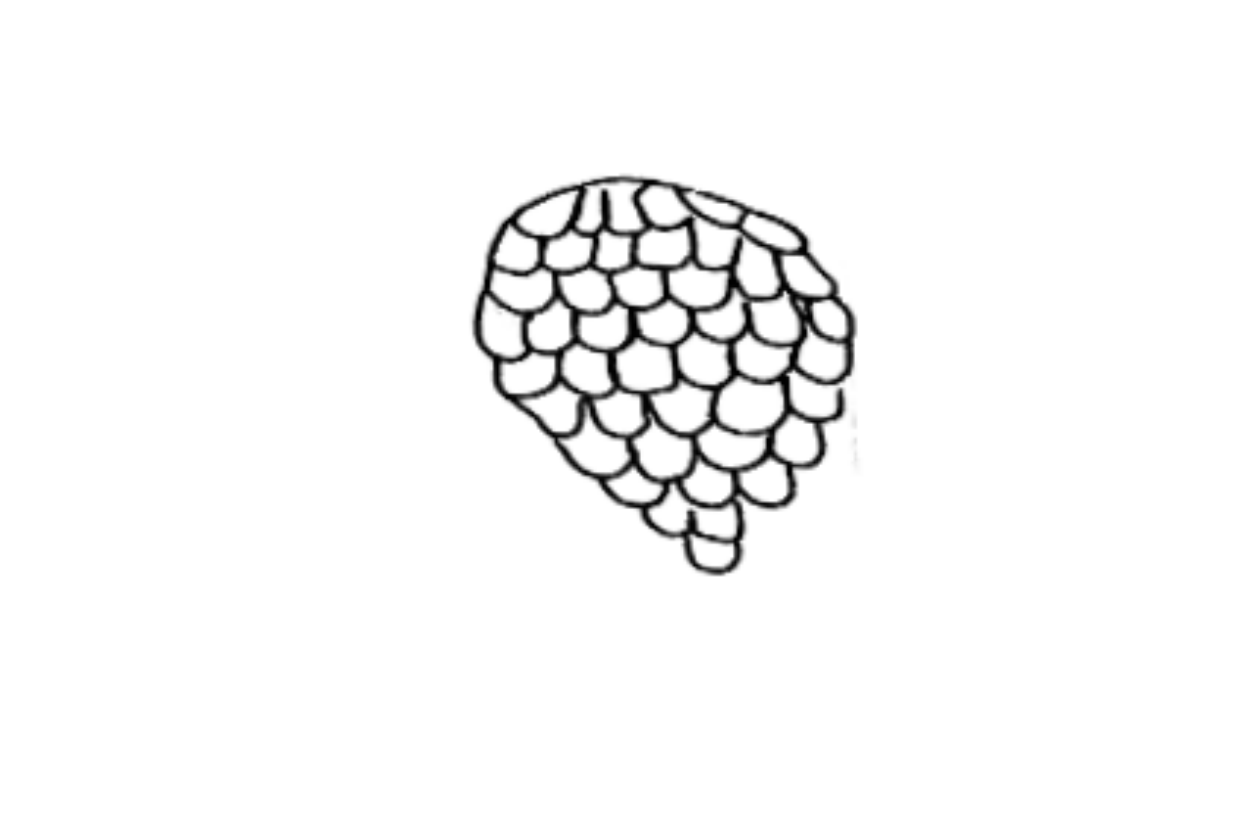}}%
            \hspace{1pt}%
            \boxed{\includegraphics[width=0.041\linewidth, clip=true, trim=132pt 27pt 113pt 12pt]{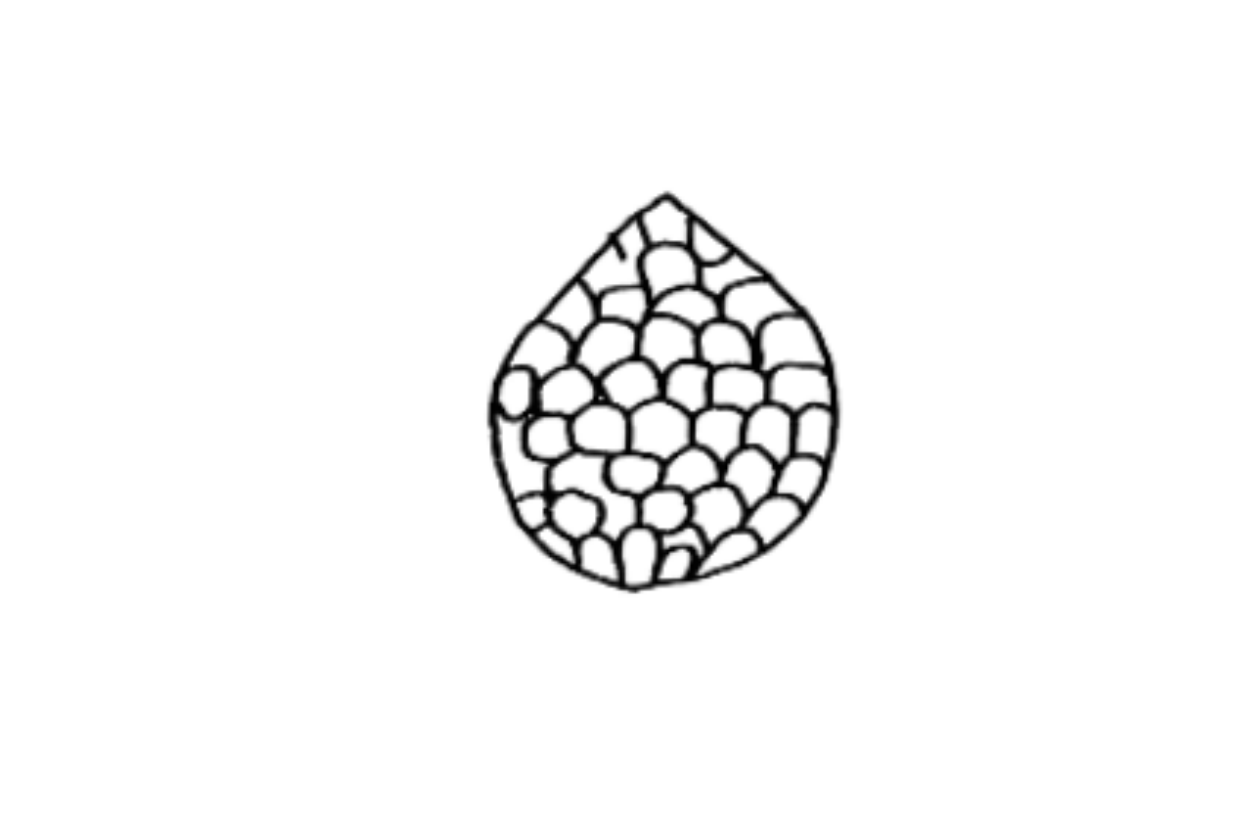}}%
            & \boxed{\includegraphics[width=0.041\linewidth, clip=true, trim=132pt 27pt 113pt 12pt]{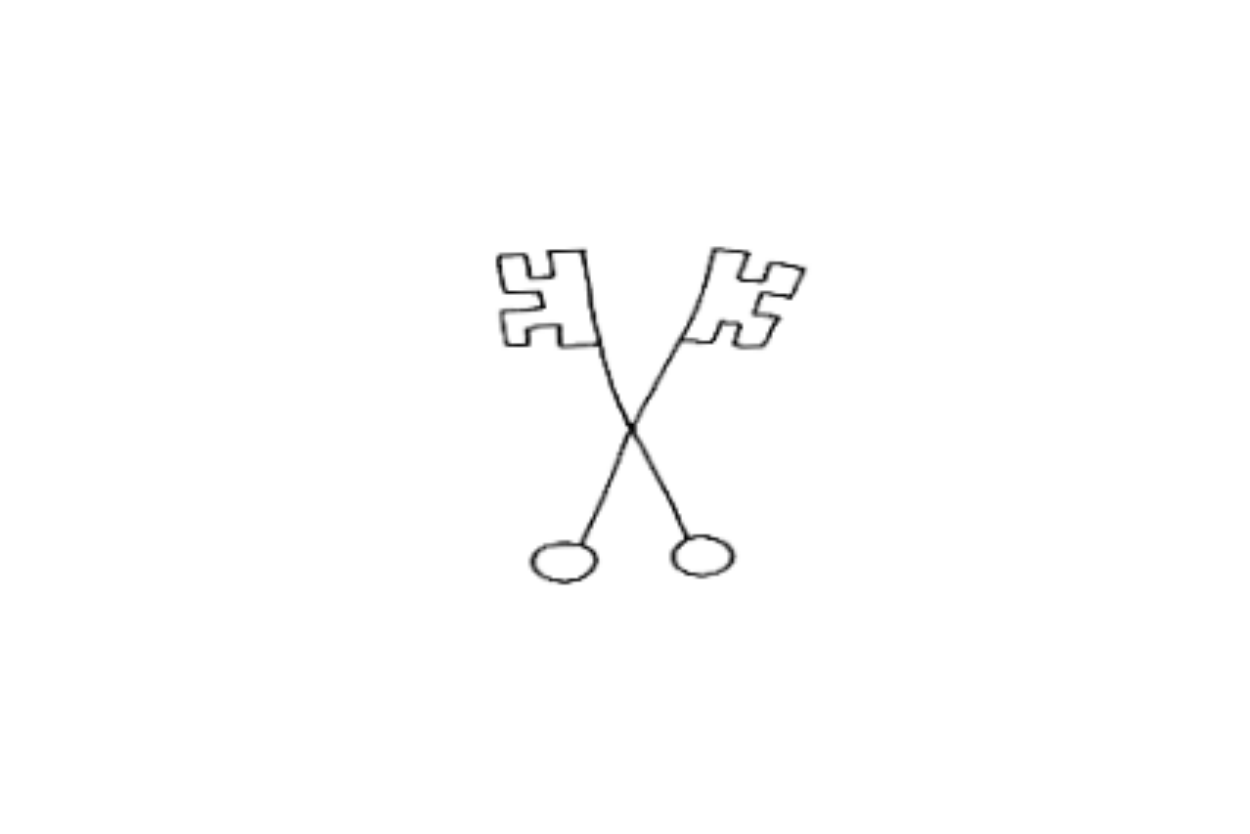}}%
            \hspace{1pt}%
            \boxed{\includegraphics[width=0.041\linewidth, clip=true, trim=132pt 27pt 113pt 12pt]{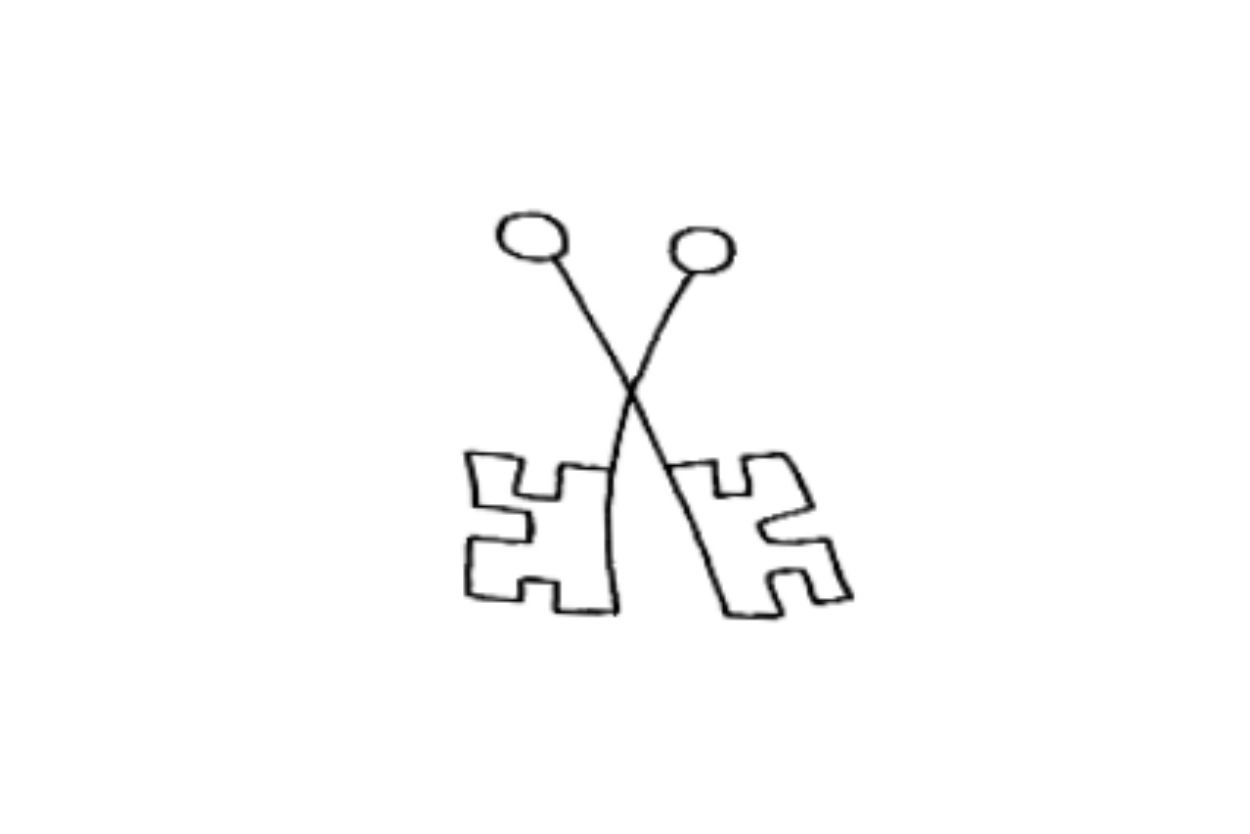}}%
            & \boxed{\includegraphics[width=0.041\linewidth, clip=true, trim=132pt 27pt 113pt 12pt]{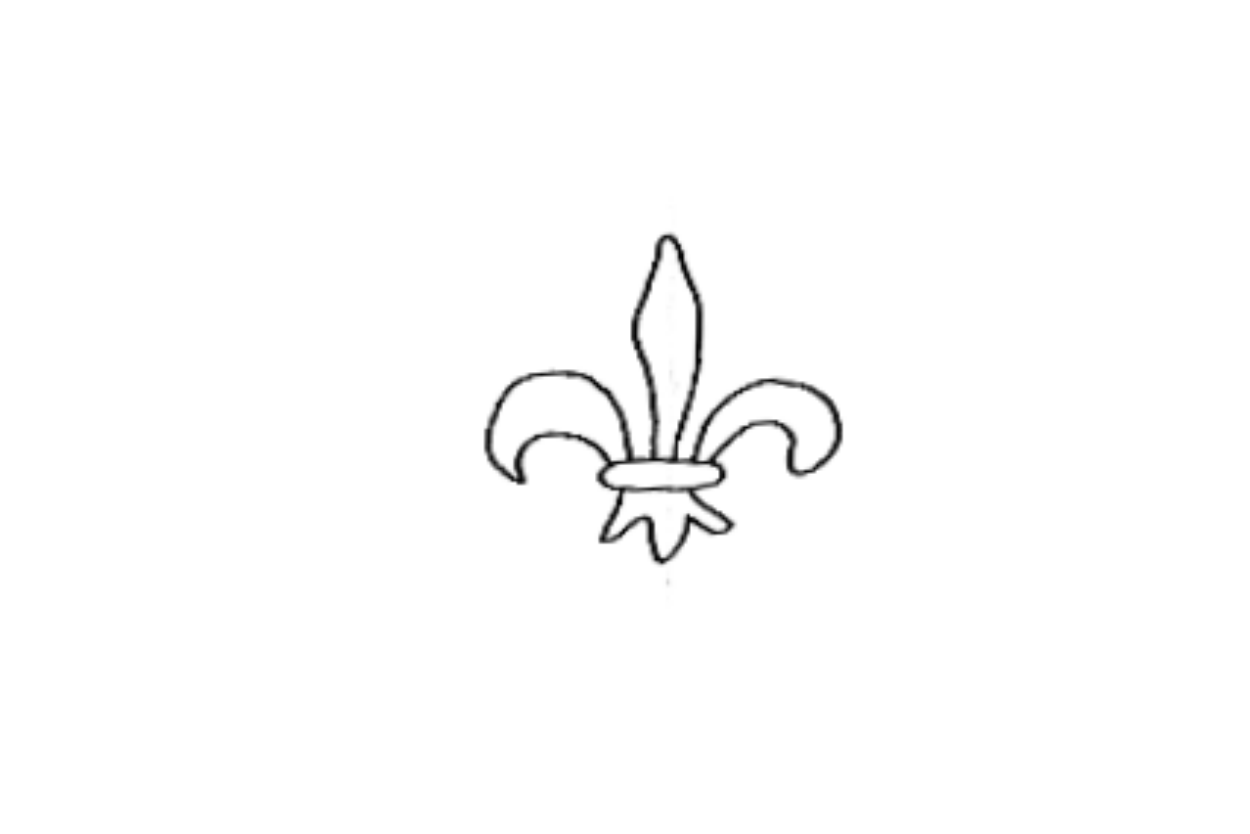}}%
            \hspace{1pt}%
            \boxed{\includegraphics[width=0.041\linewidth, clip=true, trim=132pt 27pt 113pt 12pt]{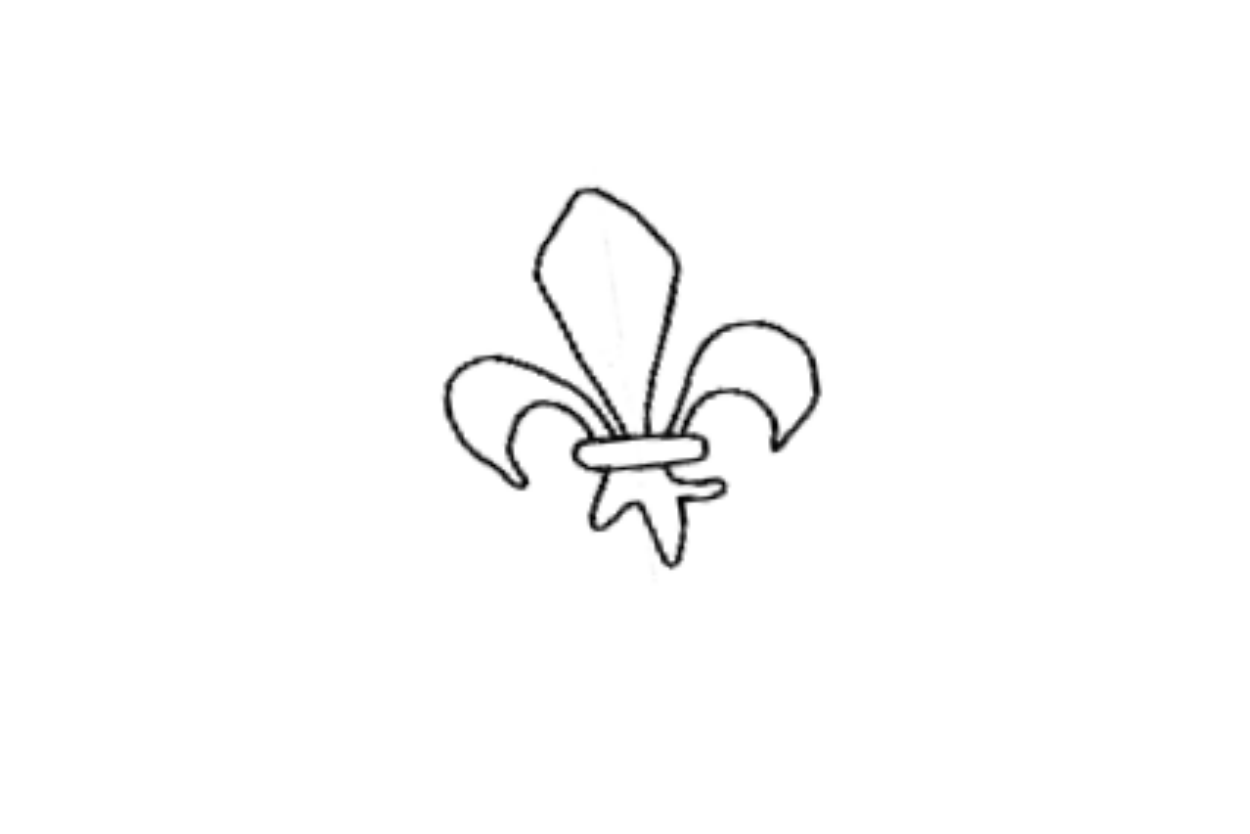}}%
            & \boxed{\includegraphics[width=0.041\linewidth, clip=true, trim=132pt 27pt 113pt 12pt]{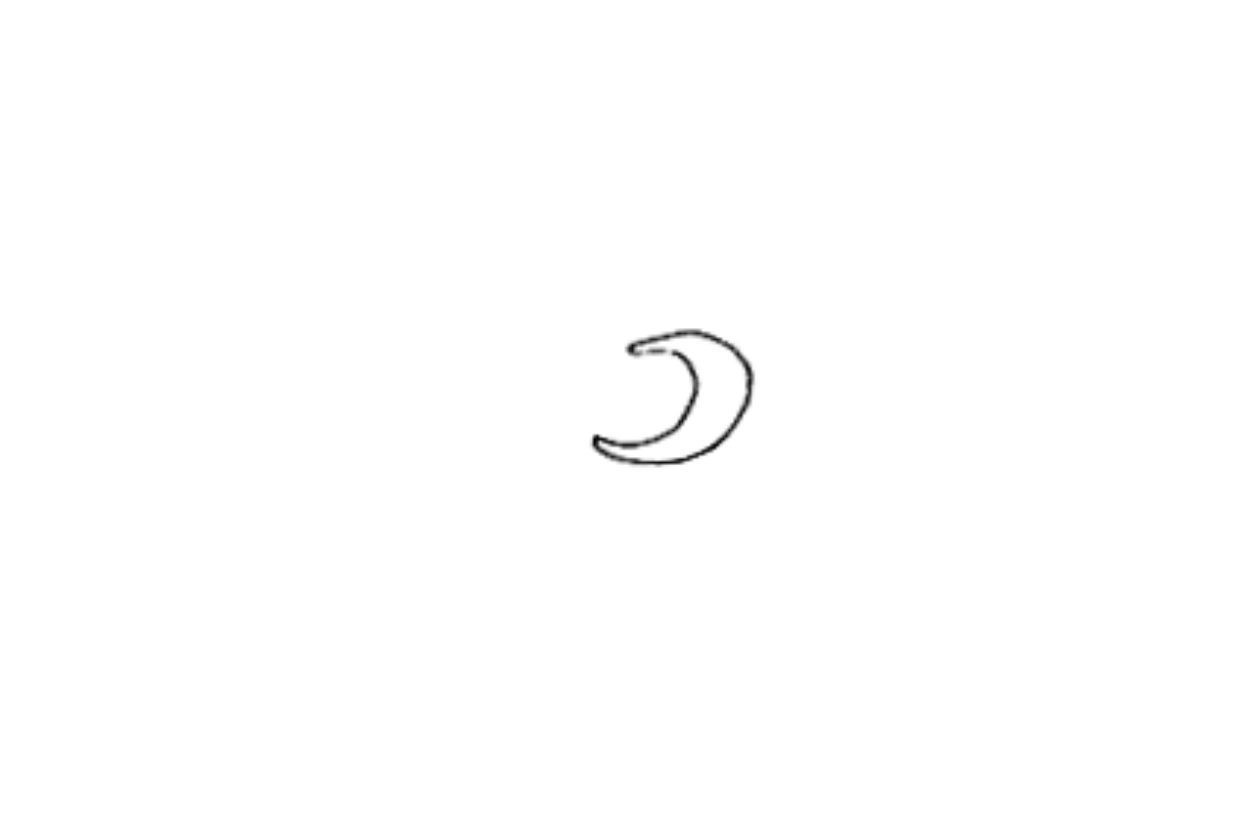}}%
            \hspace{1pt}%
            \boxed{\includegraphics[width=0.041\linewidth, clip=true, trim=132pt 27pt 113pt 12pt]{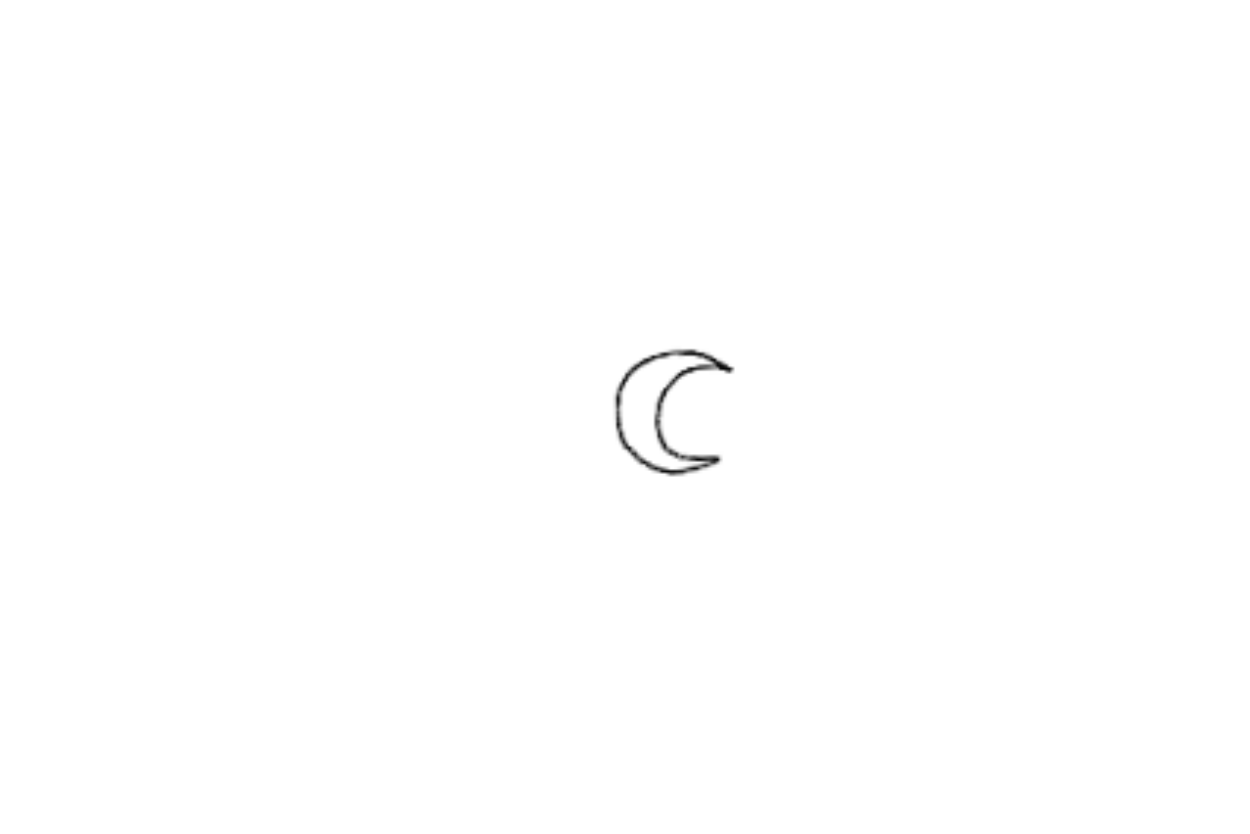}}%
            & \boxed{\includegraphics[width=0.041\linewidth, clip=true, trim=132pt 27pt 113pt 12pt]{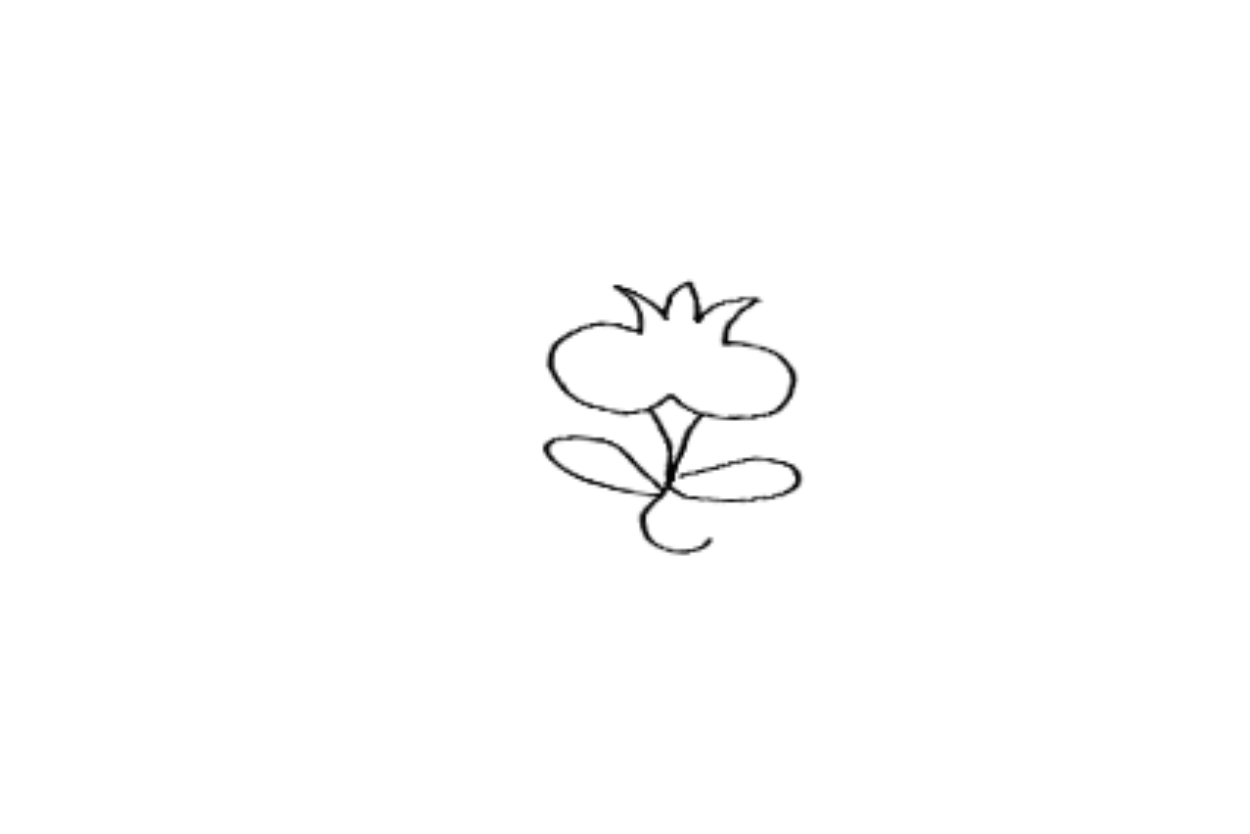}}%
            \hspace{1pt}%
            \boxed{\includegraphics[width=0.041\linewidth, clip=true, trim=132pt 27pt 113pt 12pt]{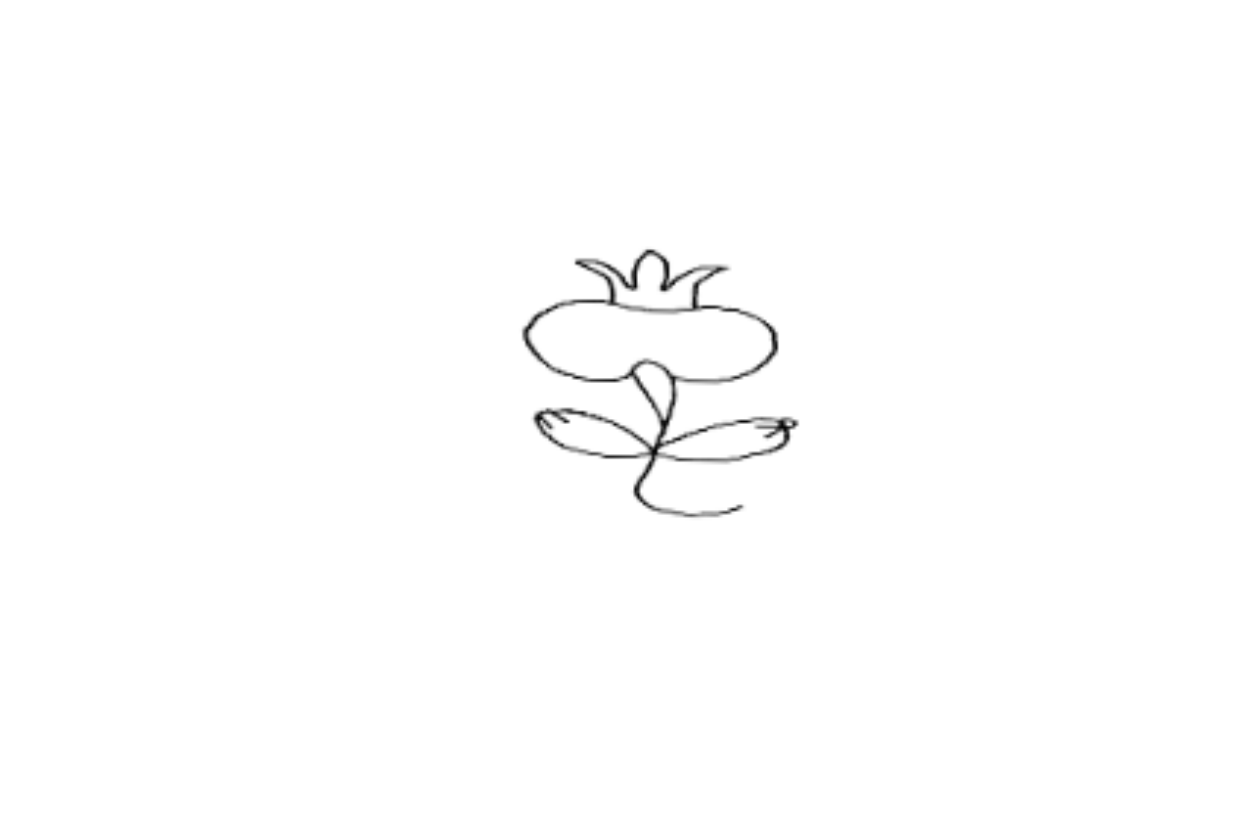}}%
            & \boxed{\includegraphics[width=0.041\linewidth, clip=true, trim=132pt 27pt 113pt 12pt]{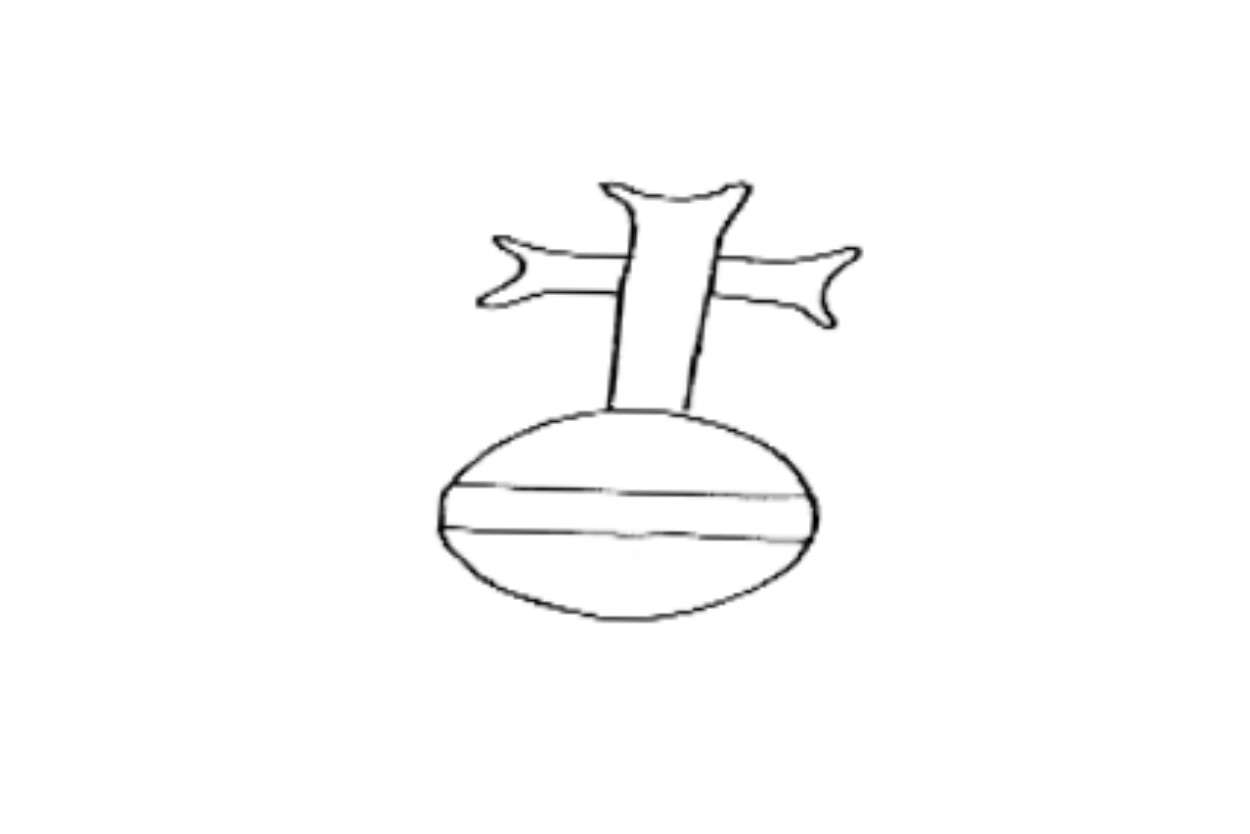}}%
            \hspace{1pt}%
            \boxed{\includegraphics[width=0.041\linewidth, clip=true, trim=132pt 27pt 113pt 12pt]{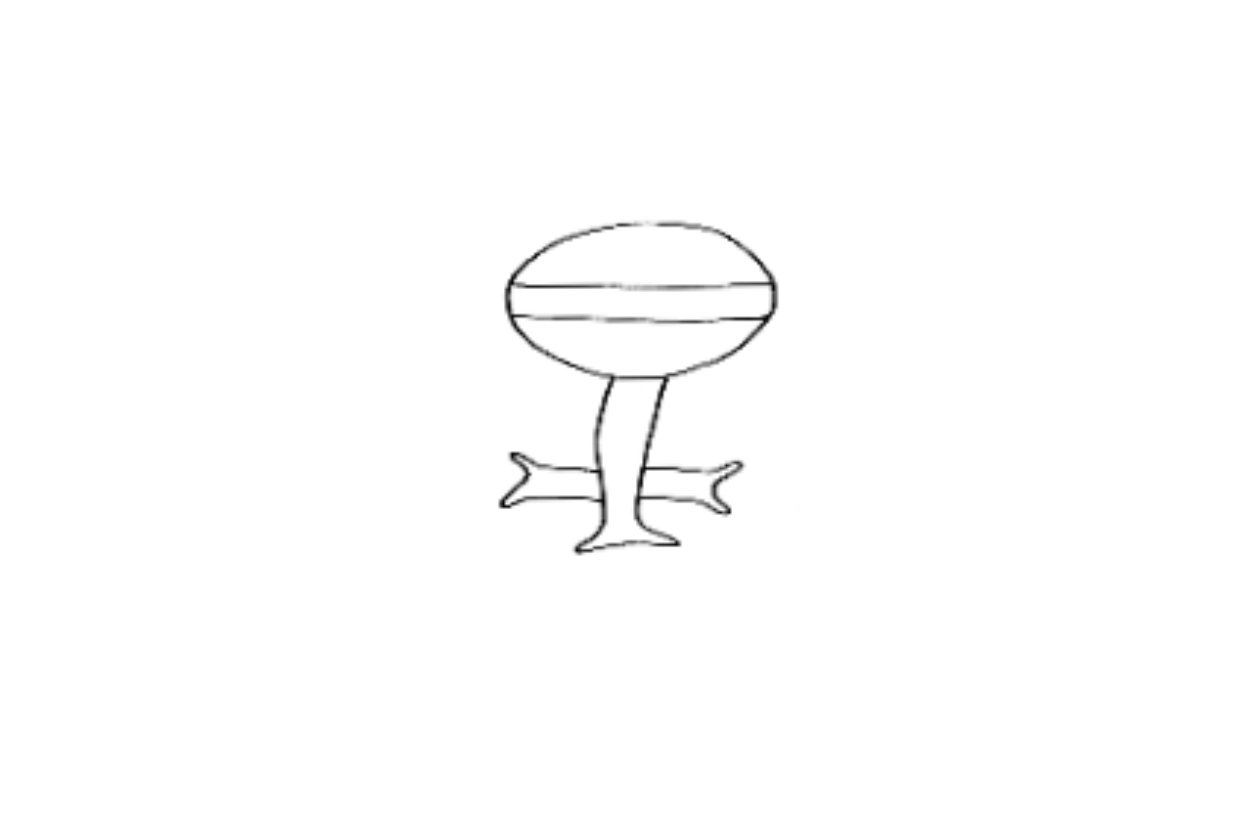}}%
            & \boxed{\includegraphics[width=0.041\linewidth, clip=true, trim=132pt 27pt 113pt 12pt]{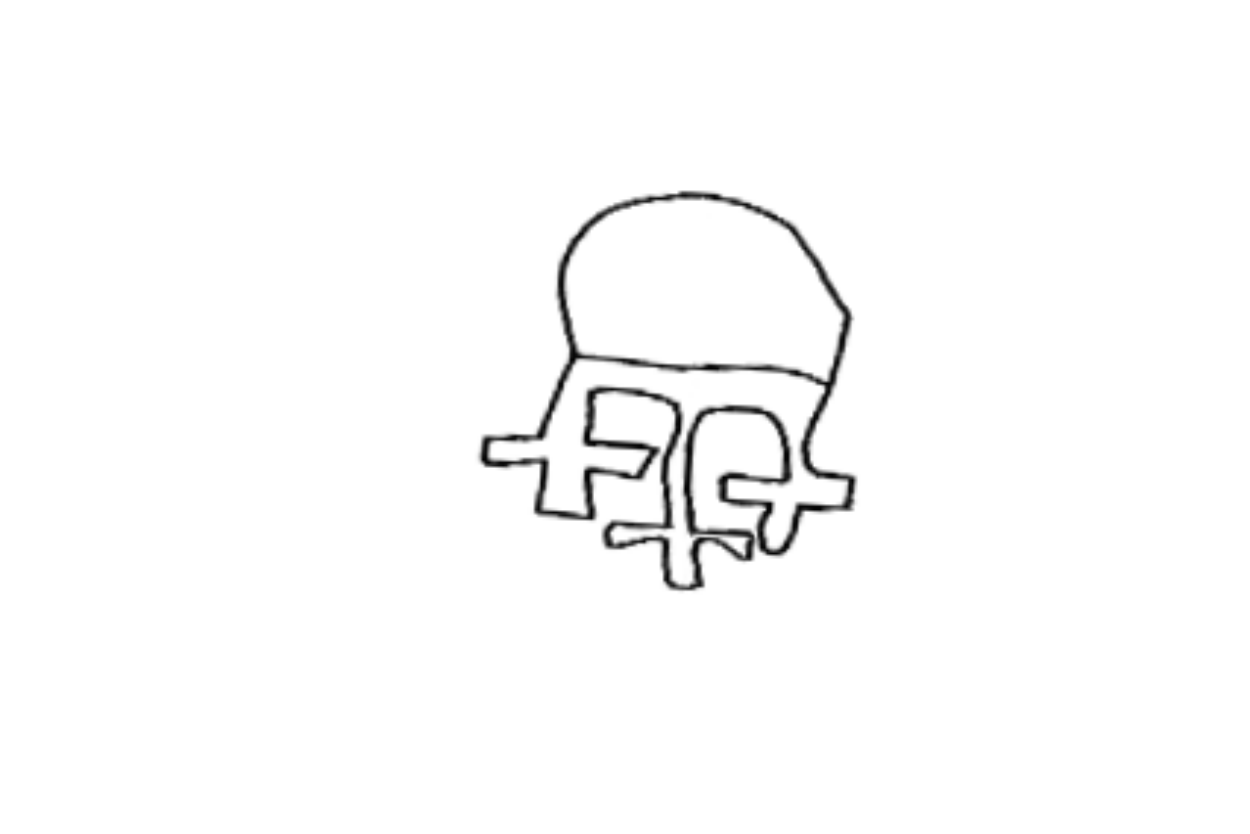}}%
            \hspace{1pt}%
            \boxed{\includegraphics[width=0.041\linewidth, clip=true, trim=132pt 27pt 113pt 12pt]{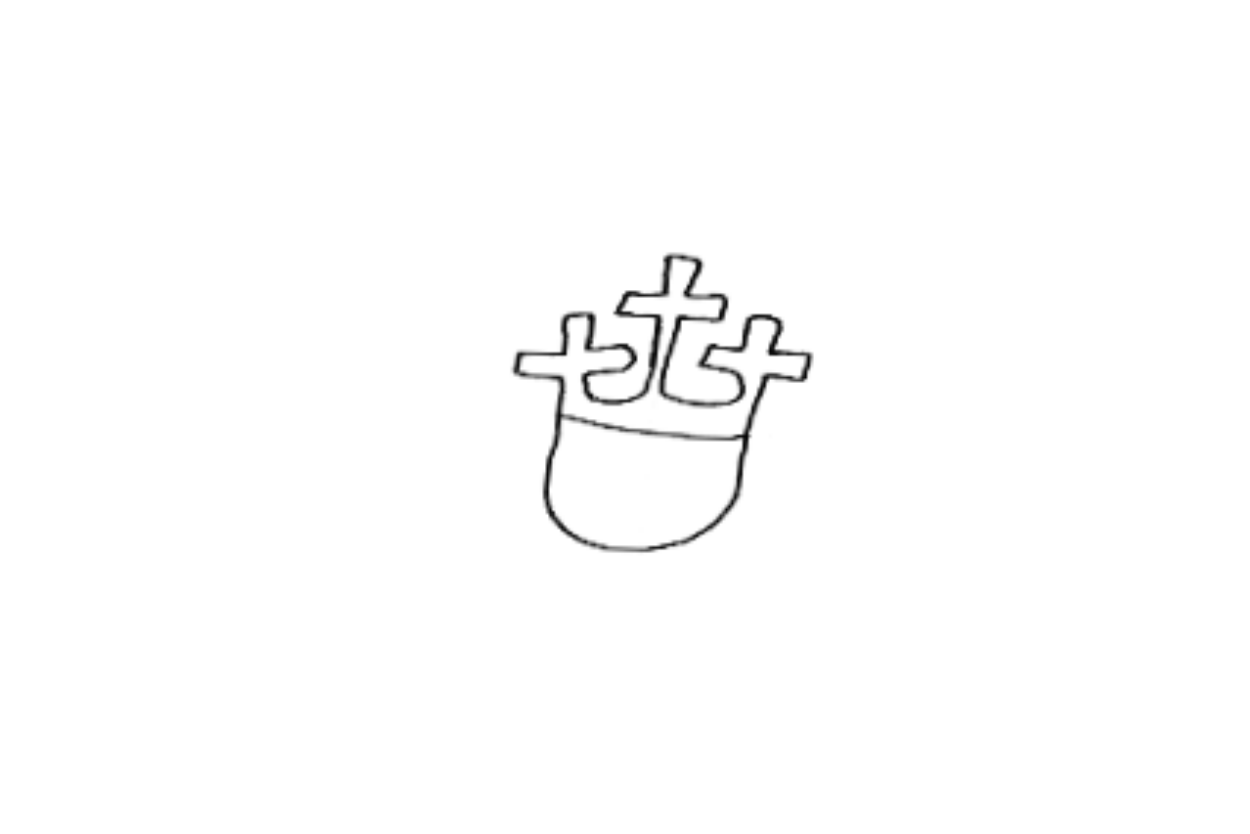}}%
        \end{tabular}}%
        \caption{%
        Selection of three independent samples per class from the historic watermark dataset.
        Here,
        black color corresponds to
        positive mass
        as opposed to Fig.~\ref{fig:polygon_dataset_2d}--\ref{fig:templates_mnist}.}
        \label{fig:watermark}
    \end{figure*}

    \item \textbf{Historic watermark dataset}.
    Inspired by our motivating application in filigranology,
    we extend the historic watermark experiment from \cite[§~5.5]{Beckmann2025}.
    More precisely, 
    we select eleven different classes 
    from the watermark information system (WZIS)%
    \footnote{\url{https://www.wasserzeichen-online.de}},
    each consisting of 4--13 samples.
    The resulting dataset comprises 75 images
    with $256 \times 256$ pixels,
    which are manually pre-processed 
    by removing artificial vertical lines 
    coming from the imaging technique.
    For illustration, three samples of each class
    are visualized in Figure~\ref{fig:watermark}. 
\end{enumerate}

\subsubsection{Nearest Template Classification}
\label{sssec:NT}

\begin{figure*}
    \centering%
    \footnotesize%
    \begin{tabular}{c c c}
    \multicolumn{3}{c}{%
    \LineClassOne
    \; class~1 \;
    \LineClassTwo
    \; class~2 \;
    \LineClassThree
    \; class~3} \\[1ex]
    \mNRCDT & \maNRCDT & \iaNRCDT \\
    \includegraphics[width=0.3\linewidth, clip=true, trim=12pt 10pt 12pt 10pt]{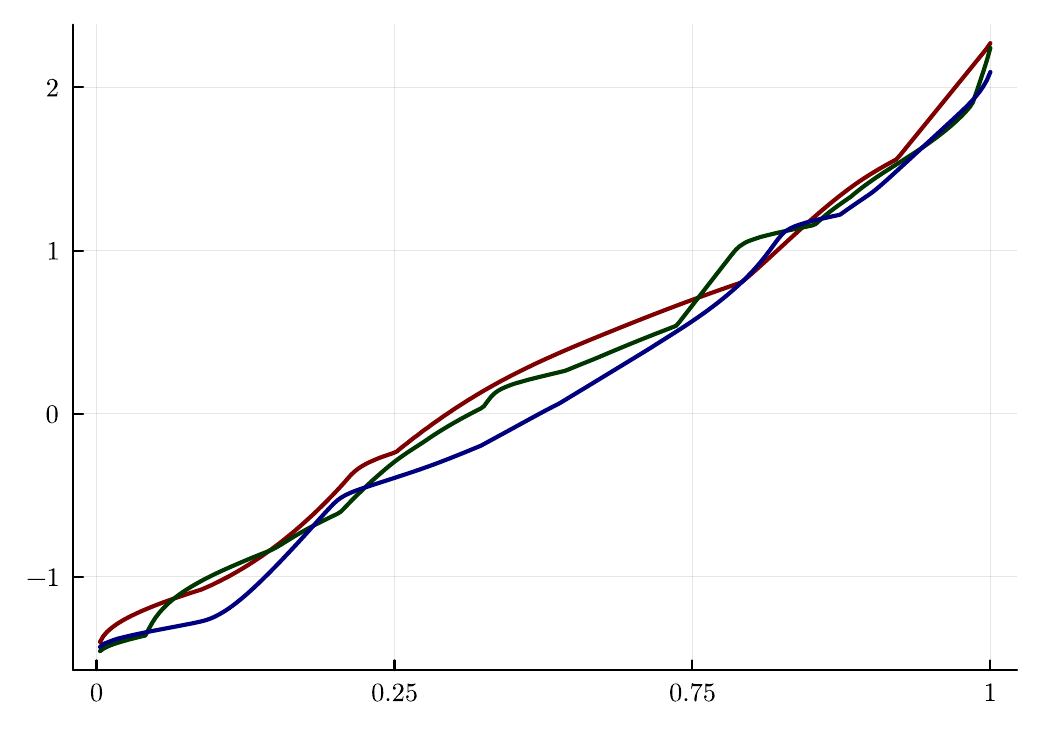}
    &\includegraphics[width=0.3\linewidth, clip=true, trim=12pt 10pt 12pt 10pt]{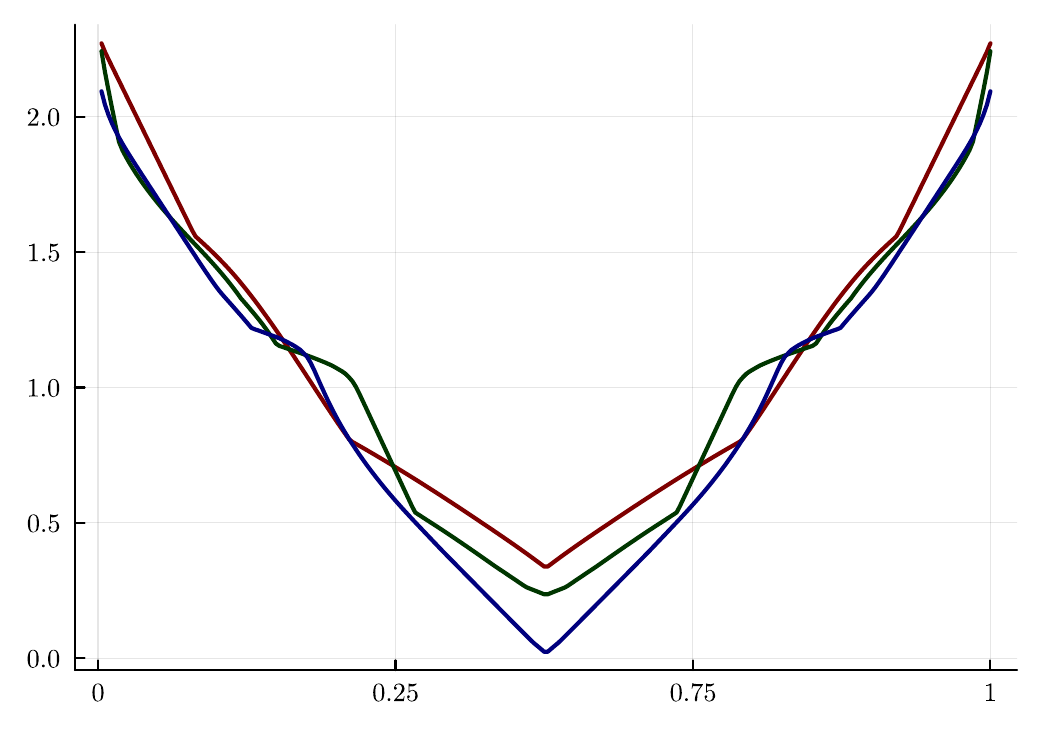}
    &\includegraphics[width=0.3\linewidth, clip=true, trim=12pt 10pt 12pt 10pt]{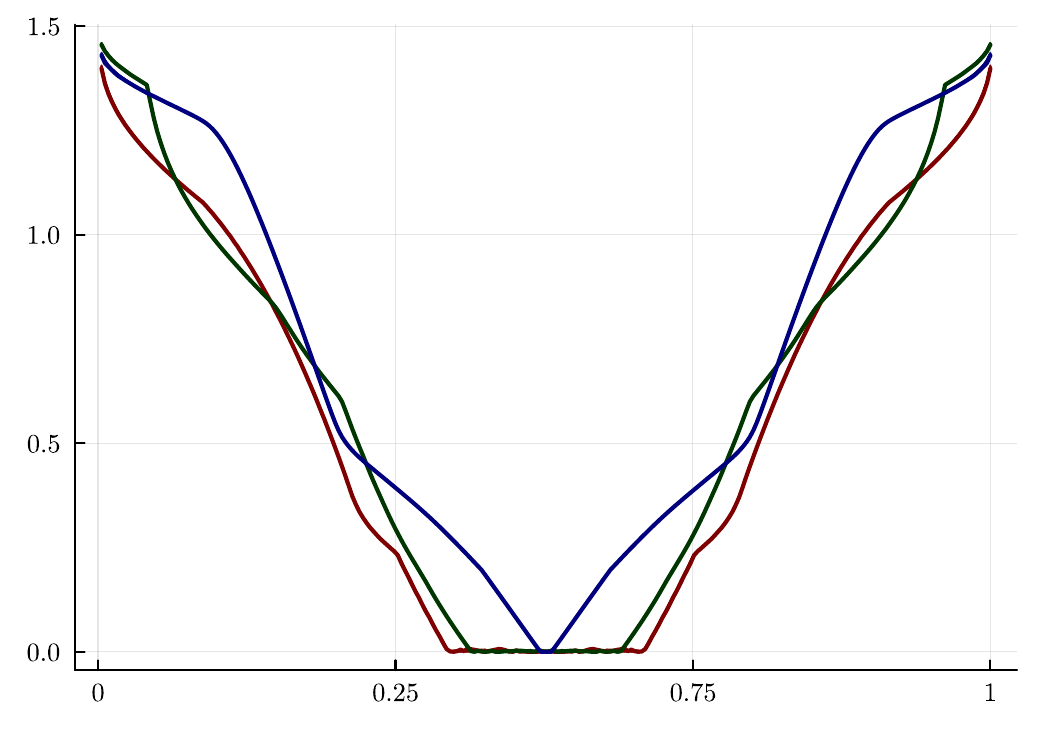} \\
    \miaNRCDT & \miNRCDT & \tvNRCDT \\
    \includegraphics[width=0.3\linewidth, clip=true, trim=12pt 10pt 12pt 10pt]{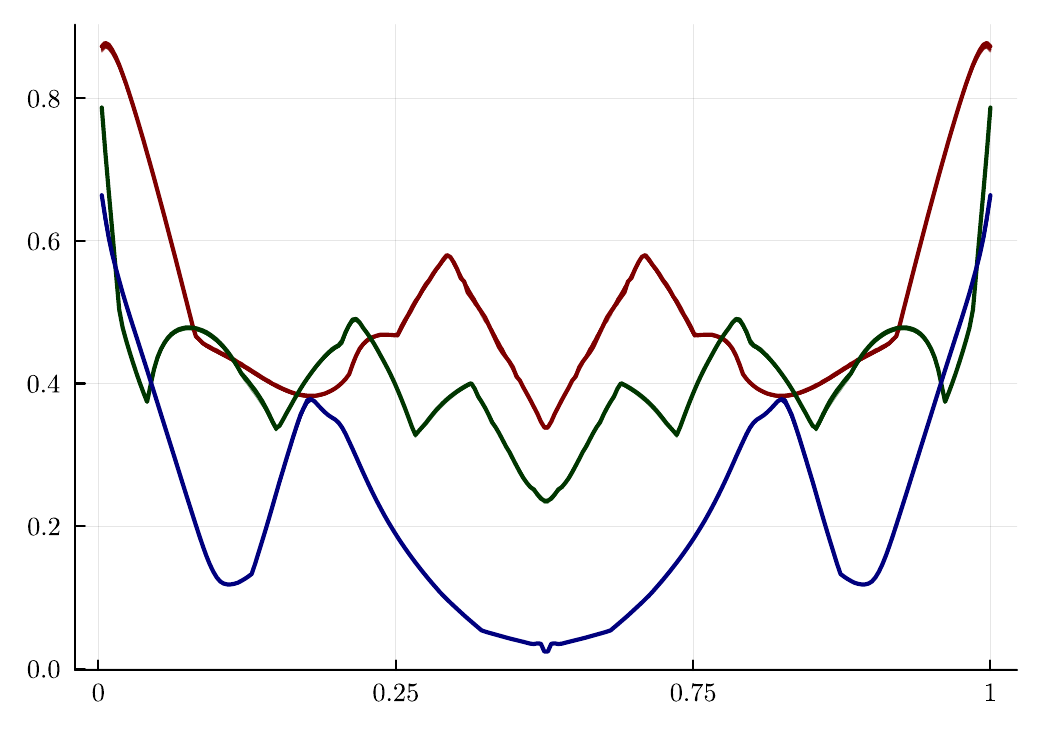}
    &\includegraphics[width=0.3\linewidth, clip=true, trim=12pt 10pt 12pt 10pt]{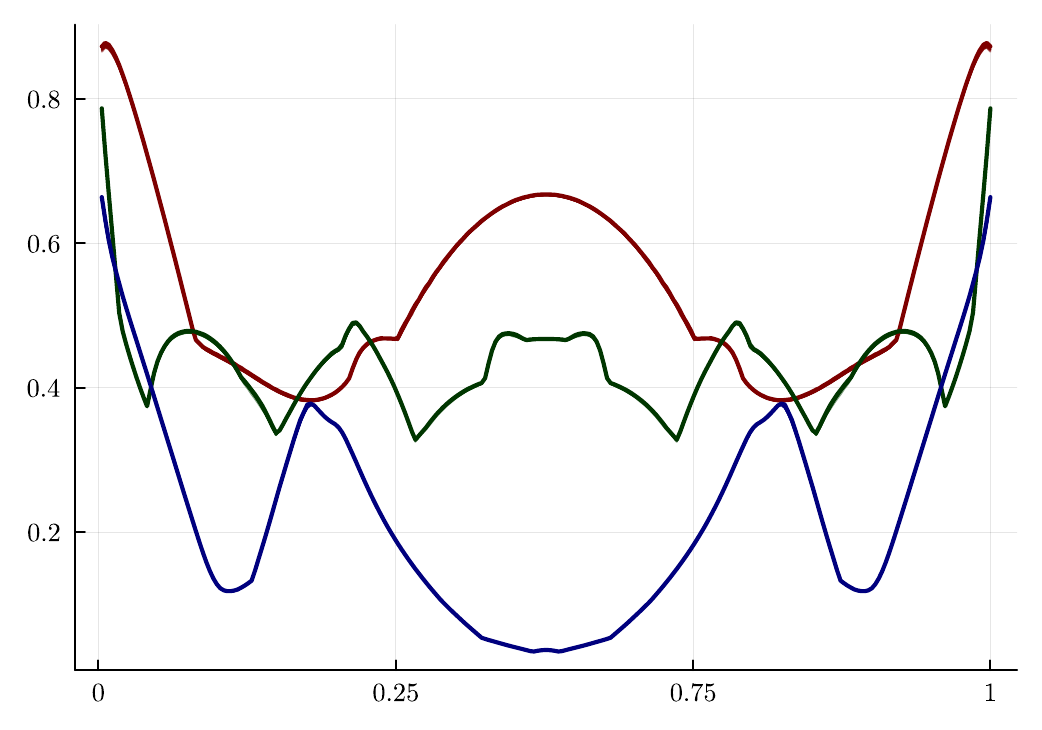}
    &\includegraphics[width=0.3\linewidth, clip=true, trim=12pt 10pt 12pt 10pt]{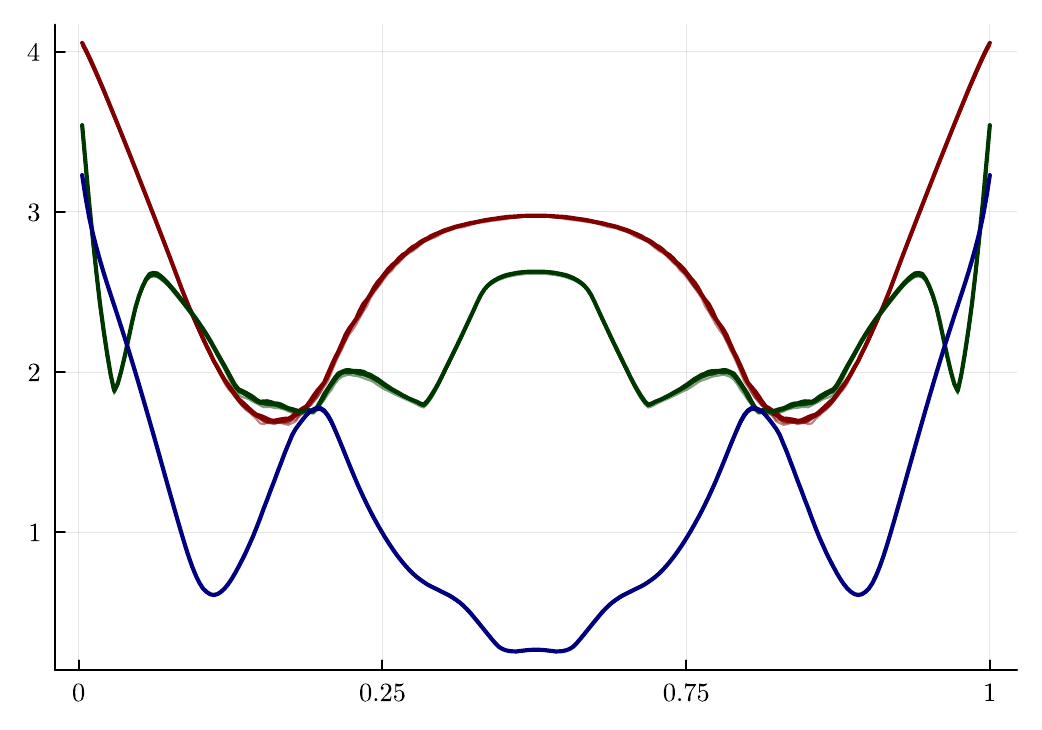}
    \end{tabular}
    \caption{%
    Visualizations of various \hNRCDT{}s for the academic dataset with $10$ samples per class
    and $256$ Radon angles.}
    \label{fig:hNRCDT_academic}
\end{figure*}

In the following first experiments,
we aim to validate the theoretical results from
Theorems~\ref{thm:sep-max-nrcdt} and~\ref{thm:sep-h-nrcdt}.
To this end,
we focus on the first two datasets,
which allow us to fully control the occurring affine and non-affine deformations.
Looking at the proofs,
we observe that the \hNRCDT{}s map each entire affine class
to a single point in the corresponding feature space.
As the datasets originate from known templates,
the easiest way for classification is the nearest template (NT) method,
which assigns the label of the closest template in the considered
feature spaces.

In order to observe the predicted behaviour numerically,
the underlying Radon transform and CDT
have to be discretized fine enough.
Therefore, we choose
different numbers of equispaced angles
and $850$ equispaced radii
for the Radon transform as well as
$256$ equispaced interpolation points
for the CDT.
In all our examples,
the grid for the Radon radii is at least three times finer
than the pixel size.
Consequently,
the considered images cannot be concentrated on a single Radon line,
i.e., they are contained in $\P_\c^*(\R^2)$.

We start with reporting our NT results
for the academic dataset with $10$ samples per class,
where we compare the Chebychev norm $\|\cdot\|_\infty$
and Euclidean norm $\|\cdot\|_2$ in \hNRCDT{} space
for determining the nearest template.
We compare our proposed feature extractors 
with the Euclidean baseline,
a moment-based (MB) feature extractor \cite{Flusser1993},
and linearized optimal transport (LOT) \cite{Moosmueller2023,Wang2013}.

\begin{figure*}
    \centering%
    \footnotesize%
    \begin{tabular}{c c c}
        template~1 & template~2 & template~3 \\
        \includegraphics[width=0.3\linewidth, clip=true, trim=12pt 10pt 0pt 0pt]{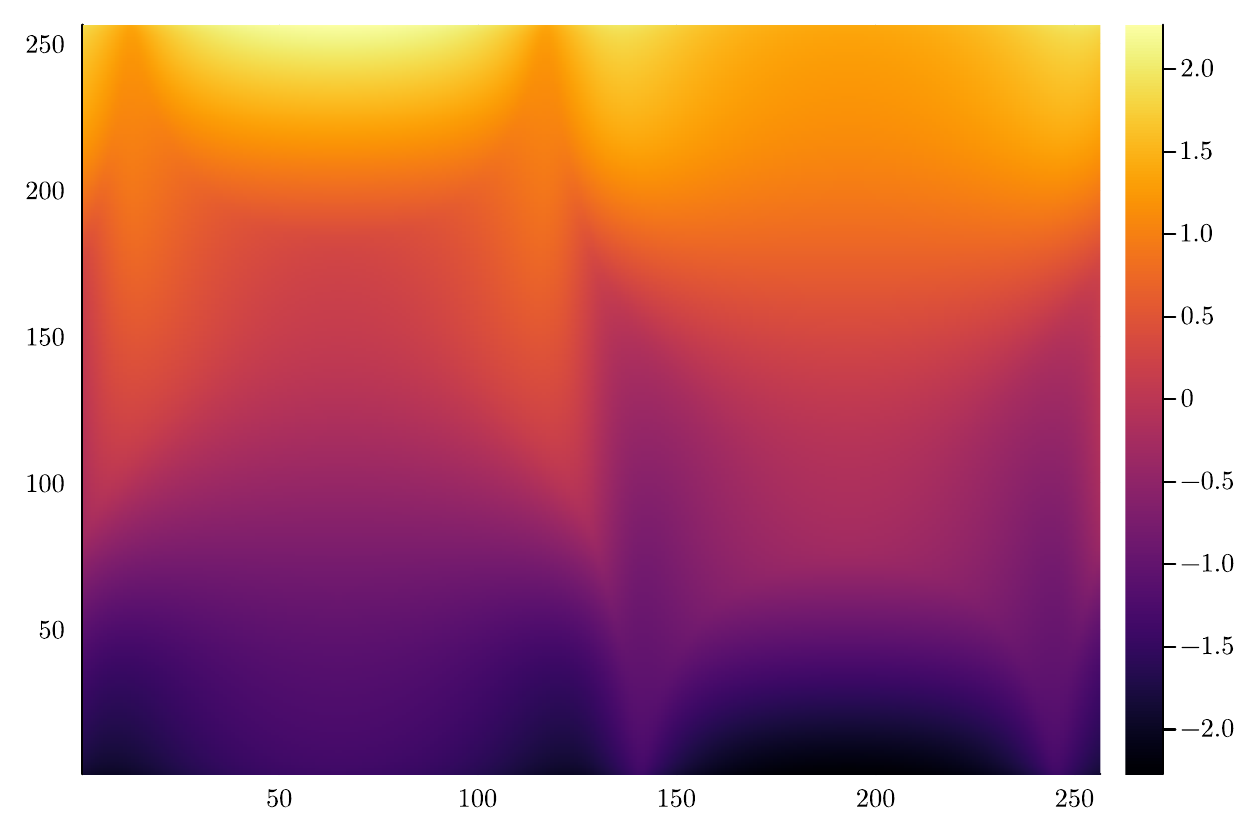}
        &\includegraphics[width=0.3\linewidth, clip=true, trim=12pt 10pt 0pt 0pt]{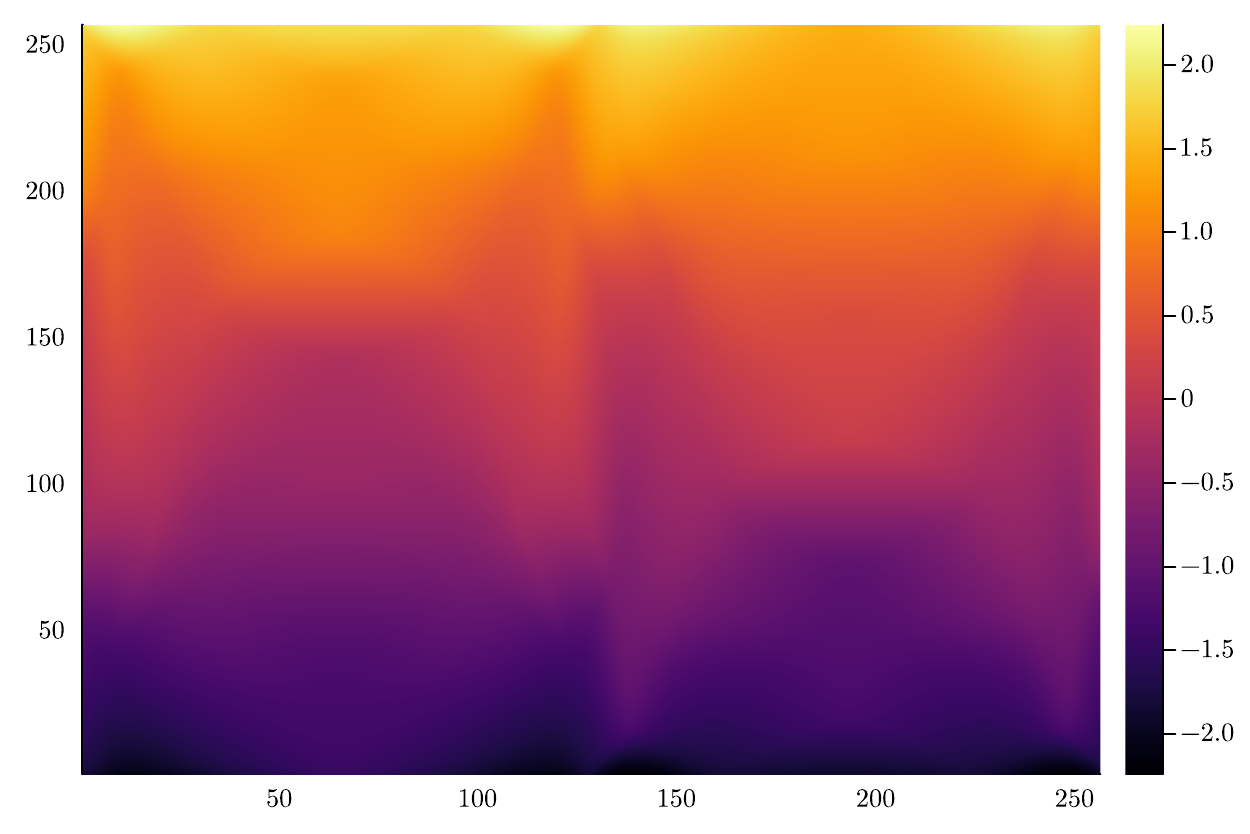}
        &\includegraphics[width=0.3\linewidth, clip=true, trim=12pt 10pt 0pt 0pt]{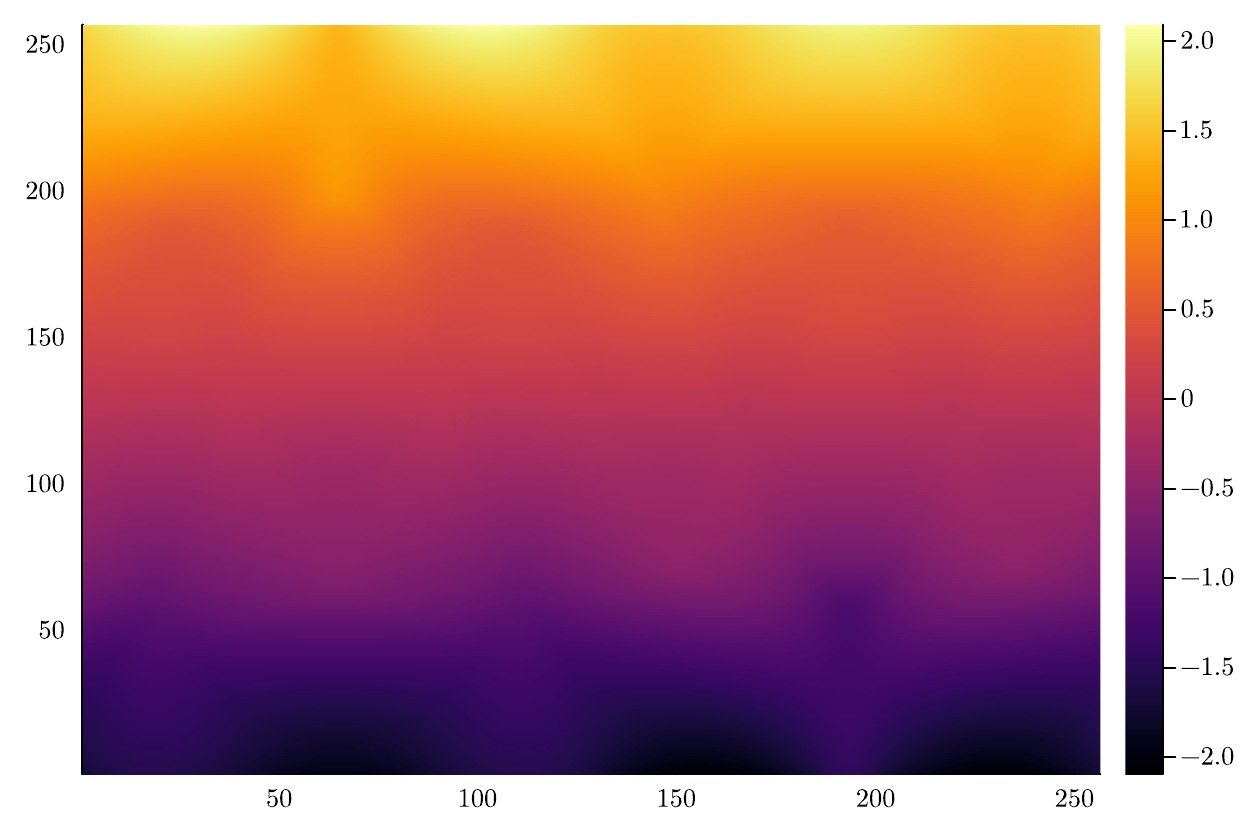}
    \end{tabular}
    \caption{%
    Visualization of the NR-CDTs
    for the three template measures
    of the academic dataset
    with $256$ Radon angles.}
    \label{fig:academic-NRCDT}
\end{figure*}

\begin{table*}
    \centering%
    \resizebox{\linewidth}{!}{%
    \begin{tabular}{l @{\quad} l @{\quad} l @{\quad} l @{\quad} l @{\;} l @{\quad} l @{\;} l @{\quad} l @{\;} l @{\quad} l @{\;} l @{\quad} l @{\;} l @{\quad} l @{\;} l @{\quad} l @{\;} l}
        \toprule
        \multirow{2}{*}{angles} 
        & \multicolumn{1}{@{}l}{Eucl.} 
        & \multicolumn{1}{@{}l}{MB}
        & \multicolumn{1}{@{}l}{LOT} 
        & \multicolumn{2}{@{}l}{R-CDT} 
        & \multicolumn{2}{@{}l}{\mNRCDT} 
        & \multicolumn{2}{@{}l}{\maNRCDT}
        & \multicolumn{2}{@{}l}{\iaNRCDT}
        & \multicolumn{2}{@{}l}{\miaNRCDT}
        & \multicolumn{2}{@{}l}{\miNRCDT}
        & \multicolumn{2}{@{}l}{\tvNRCDT}\\[0.5ex] 
        & $\nicefrac{\|\cdot\|_\infty}{\|\cdot\|_2}$ 
        & $\nicefrac{\|\cdot\|_\infty}{\|\cdot\|_2}$
        & 
        & $\|\cdot\|_\infty$ & $\|\cdot\|_2$
        & $\|\cdot\|_\infty$ & $\|\cdot\|_2$
        & $\|\cdot\|_\infty$ & $\|\cdot\|_2$ 
        & $\|\cdot\|_\infty$ & $\|\cdot\|_2$
        & $\|\cdot\|_\infty$ & $\|\cdot\|_2$
        & $\|\cdot\|_\infty$ & $\|\cdot\|_2$
        & $\|\cdot\|_\infty$ & $\|\cdot\|_2$\\
        \midrule
        2 & \multirow{8}{*}{\rotatebox{90}{0.333 / 0.366}}
        & \multirow{8}{*}{\rotatebox{90}{\textbf{1.000} / \textbf{1.000}}}
        & \multirow{8}{*}{\rotatebox{90}{0.333}}
        & 0.466 & 0.433 & 0.400 & 0.466 & 0.333 & 0.366 & 0.400 & \textbf{0.500} & 0.200 & 0.333 & 0.133 & 0.333 & 0.133 & 0.333 \\
        4 & & &  
        & 0.500 & 0.333 & 0.733 & 0.733 & 0.600 & 0.733 & 0.833 & 0.633 & \textbf{0.900} & 0.866 & 0.766 & 0.833 & 0.700 & 0.800 \\
        8 & & &
        & 0.400 & 0.366 & 0.666 & \textbf{0.933} & 0.666 & \textbf{0.933} & 0.733 & 0.733 & \textbf{0.933} & 0.800 & \textbf{0.933} & 0.866 & 0.666 & 0.733 \\
        16 & & &
        & 0.500 & 0.366 & \textbf{1.000} & \textbf{1.000} & \textbf{1.000} & \textbf{1.000} & 0.966 & \textbf{1.000} & \textbf{1.000} & \textbf{1.000} & \textbf{1.000} & \textbf{1.000} & 0.933 & 0.933 \\
        32 & & & 
        & 0.433 & 0.366 & \textbf{1.000} & \textbf{1.000} & \textbf{1.000} & \textbf{1.000} & \textbf{1.000} & \textbf{1.000} & \textbf{1.000} & \textbf{1.000} & \textbf{1.000} & \textbf{1.000} & \textbf{1.000} & \textbf{1.000} \\
        64 & & &
        & 0.433 & 0.366 & \textbf{1.000} & \textbf{1.000} & \textbf{1.000} & \textbf{1.000} & \textbf{1.000} & \textbf{1.000} & \textbf{1.000} & \textbf{1.000} & \textbf{1.000} & \textbf{1.000} & \textbf{1.000} & \textbf{1.000} \\
        128 & & &
        & 0.400 & 0.366 & \textbf{1.000} & \textbf{1.000} & \textbf{1.000} & \textbf{1.000} & \textbf{1.000} & \textbf{1.000} & \textbf{1.000} & \textbf{1.000} & \textbf{1.000} & \textbf{1.000} & \textbf{1.000} & \textbf{1.000} \\
        256 & & & 
        & 0.400 & 0.366 & \textbf{1.000} & \textbf{1.000} & \textbf{1.000} & \textbf{1.000} & \textbf{1.000} & \textbf{1.000} & \textbf{1.000} & \textbf{1.000} & \textbf{1.000} & \textbf{1.000} & \textbf{1.000} & \textbf{1.000} \\
        \bottomrule
    \end{tabular}}
    \caption{%
    NT classification accuracies
    for academic dataset
    with $10$ samples per class
    and different numbers of angles.}
    \label{tab:nearest_temp_academic}
\end{table*}

The MB feature extractor is based on the central moments 
\begin{equation*}
    \mu_{pq} \coloneqq \int_{\R^2} (x_1 - \bar x_1)^p (x_2 - \bar x_2)^q \d \mu(\bfx),
    \quad p,q \in \N,
\end{equation*}
where $(\bar x_1, \bar x_2)^\top \coloneqq \int_{\R^2} \bfx \d \mu(\bfx)$
denotes the mean.
As proposed in \cite{Flusser1993},
we use the four features from \cite[Eqs.~(23),~(26),~(28),~(31)]{Flusser1993}
that are invariant under affine transformations
and well-defined for $\mu \in \P_\c^*(\R^2)$.

For an absolutely continuous
reference $\sigma \in \P_2(\R^2)$,
LOT maps $\mu \in \P_2(\R^2)$ to
the optimal transport map $T_\sigma^\mu$
realizing the Wasserstein-2 distance
\begin{equation}
    \label{eq:LOT}
    W_2^2(\sigma,\mu)
    \coloneqq 
    \min_{T_{\#}\sigma = \mu}
    \int_{\R^2} \|T(\bfx) - \bfx\|^2 \d \sigma(\bfx),
\end{equation}
where we optimize over all measurable $T \colon \R^2 \to \R^2$.
For numerical realization,
instead of solving \eqref{eq:LOT},
we compute the optimal transport plan $\pi_\sigma^\mu$
from the Kantorovich problem 
\begin{equation}
    \label{eq:OT-plan}
    \pi_\sigma^\mu \in \argmin_{\pi \in \Pi(\sigma,\mu)}
    \int_{\R^2\times\R^2}\|\bfx - \bfy\|^2 \d \pi(\bfx,\bfy),
\end{equation}
where
$\Pi(\sigma,\mu)$ is the set of
measures in $\P(\R^2 \times \R^2)$
with marginals $\sigma$ and $\mu$.
Afterwards,
we approximate $T_\sigma^\mu$
by the barycentric projection 
\begin{equation*}
    T_{\pi_{\sigma}^\mu}(\bfx) 
    \coloneqq \int_{\R^2}\bfy \d \pi^\mu_{\bfx}(\bfy),
    \quad \bfx \in \R^2,
\end{equation*}
with $\pi_{\sigma}^\mu(A\times B) = \int_A \pi^\mu_{\bfx}(B) \d\sigma(\bfx)$
for any Borel sets $A, B \in \B(\R^2)$.

For illustration, 
the \hNRCDT{}s of the entire dataset
are depicted in Figure~\ref{fig:hNRCDT_academic},
where the thick lines correspond to the templates 
and the thin lines to the transformed images.
As predicted by our theory,
the lines per class are nearly perfectly aligned.
This also holds for the \tvNRCDT{}, which in theory
requires more regularity of the measures.
An inspection of Figure~\ref{fig:academic-NRCDT},
however,
reveals that the corresponding NR-CDTs of the templates
are rather smooth.

Our classification results are shown in
Table~\ref{tab:nearest_temp_academic}
for varying numbers of equispaced angles
for the underlying Radon transform.
We observe that the \hNRCDT{}
feature representations clearly outperform
the R-CDT, LOT, and Euclidean baseline
and, remarkably,
the classification is already perfect 
for a small number of chosen angles.
This is also the case for MB.
However, since only four features are employed,
we will see that
MB performs worse for more complex datasets.
As the different \hNRCDT{} representations perform
nearly on par, we henceforth consider only \mNRCDT{},
\hNRCDT{d} and \tvNRCDT{}.
Moreover, in most cases, $\|\cdot\|_2$ outperforms $\|\cdot\|_\infty$ so that from now on
we restrict ourselves to the Euclidean norm $\|\cdot\|_2$.

The computation times 
for this first academic experiment 
are reported in Table~\ref{tab:runtime_academic_dataset}.
While the four-dimensional MB feature extractor
can be computed in less than one second,
LOT is not tractable and we have to downscale
the images to $64\times64$ pixels.
However,
even in this case,
the computation of the optimal transport plans 
in~\eqref{eq:OT-plan}
takes around 2000 seconds.
For our proposed NR-CDT approach,
computing the R-CDT
is the dominating component.
Thereafter,
NT classification based on NR-CDT features
is up to 100-times faster than for R-CDT.
The computational cost of the
normalization procedure
is negligible.

\begin{table}[t]
    \centering%
    \resizebox{\linewidth}{!}{%
    \begin{tabular}{l @{\qquad} l @{\qquad} l @{\qquad} l @{\qquad} l }
    \toprule
    \multirow{2}{*}{angles}
    & \multirow{2}{*}{\parbox{1.8cm}{computation \\ of R-CDT}} & \multirow{2}{*}{\parbox{1.8cm}{ subsequent \\ normalization}} & \multicolumn{2}{@{}l}{NT classification} \\
    \cmidrule(r){4-5} 
    & & & R-CDT & \hNRCDT{} \\
    \midrule
    2 & 2.0687 & 0.0015 & 0.0012 & 0.0019 \\
    4 & 4.5088 & 0.0014 & 0.0041 & 0.0016 \\
    8 & 11.9253 & 0.0016 & 0.0025 & 0.0012 \\
    16 & 22.0266 & 0.0021 & 0.0065 & 0.0013 \\
    32 & 41.9814 & 0.0031 & 0.0109 & 0.0021 \\
    64 & 84.8437 & 0.0048 & 0.0261 & 0.0023 \\
    128 & 172.2234 & 0.0065 & 0.0458 & 0.0021 \\
    256 & 322.4234 & 0.0133 & 0.1372 & 0.0019 \\
    \bottomrule
    \end{tabular}}
    \caption{%
    Runtime study (in sec.)
    for academic dataset.}
    \label{tab:runtime_academic_dataset}
\end{table}

To study the robustness of the NT classification
we consider the polygon dataset with $10$ samples per class,
which includes small non-affine perturbations.
Hence, a perfect classification according to
Theorems~\ref{thm:sep-max-nrcdt} and~\ref{thm:sep-h-nrcdt}
cannot be expected.
The accuracy results 
are reported in Table~\ref{tab:nearest_temp_polygons}.
While the Euclidean baseline and R-CDT
are as bad as random guessing,
\mNRCDT{} and \hNRCDT{d} perform significantly better
and, for sufficiently many Radon angles,
\tvNRCDT{} even reaches perfect results.
This indicates its potential 
in real-world applications
and may be attributed to the fact that
\tvNRCDT{} uses the whole information
over the entire range of angles in $[0, 2\pi)$,
while \mNRCDT{} and \hNRCDT{d} only consider
the largest and smallest angle information.
The performance of MB deteriorates
compared to the academic dataset
due to the slight perturbations in the templates
and lags behind \tvNRCDT.

\begin{table}[t]
        \centering%
        \footnotesize%
        \begin{tabular}{l @{\quad} l @{\quad} l @{\quad} l @{\hspace{5pt}} l @{\hspace{5pt}} l @{\hspace{5pt}} l }
        \toprule
        \rotatebox{90}{angles} 
        & \rotatebox{90}{Eucl.}
        & \rotatebox{90}{MB}
        & R-CDT
        & \mNRCDT
        & \miNRCDT
        & \tvNRCDT \\
        \midrule
        2 
        & \multirow{8}{*}{\rotatebox{90}{0.144}}
        & \multirow{8}{*}{\rotatebox{90}{0.933}}
        & 0.122 & 0.089 & \textbf{0.122} & \textbf{0.122} \\
        4 
        & & & 0.133 & 0.278 & 0.289 & \textbf{0.311} \\
        8
        & & & 0.133 & \textbf{0.289} & 0.233 & 0.244 \\
        16
        & & & 0.144 & \textbf{0.567} & 0.389 & 0.389 \\
        32
        & & & 0.144 & 0.478 & 0.422 & \textbf{0.611} \\
        64
        & & & 0.144 & 0.700 & 0.711 & \textbf{0.944} \\
        128
        & & & 0.144 & 0.678 & 0.678 & \textbf{1.000} \\
        256
        & & & 0.144 & 0.678 & 0.667 & \textbf{1.000} \\
        \bottomrule
    \end{tabular}
    \caption{%
    NT classification accuracies
    for polygon dataset
    with $10$ samples per class
    and varying numbers of angles.}
    \label{tab:nearest_temp_polygons}
\end{table}

Towards a real-world application, 
we consider the historic watermark dataset.
Different from the academic and the polygon dataset,
the samples in each class of this dataset
are visually similar but do not originate
from one template measure 
under affine transformations.
Therefore, the assumptions 
of Theorems~\ref{thm:sep-max-nrcdt} and~\ref{thm:sep-h-nrcdt}
are violated and perfect separability is not expected.
For classification,
we select one sample from each class
as the template measure
and perform NT classification, as before.
The accuracy results are reported in Table~\ref{tab:watermark_NT}.
We observe that all our NR-CDT variants
clearly outperform
the Euclidean baseline, R-CDT and MB,
with \tvNRCDT{} reaching 
an accuracy of $92\%$
when using at least 512 Radon angles.

\begin{table}[t]
    \resizebox{\linewidth}{!}{
        \footnotesize
        \begin{tabular}{@{} l @{\quad} l @{\quad} l @{\quad} l @{\enspace} l @{\enspace} l @{\enspace} l @{}}
        \toprule
        \rotatebox{90}{angles}
        & \rotatebox{90}{Eucl.}
        & \rotatebox{90}{MB}
        & R-CDT
        & \mNRCDT 
        & \miNRCDT
        & \tvNRCDT \\
        \midrule
        2
        & \multirow{10}{*}{\rotatebox{90}{$0.078$}} 
        & \multirow{10}{*}{\rotatebox{90}{$0.406$}} 
        & $0.140$ & $0.516$ & $0.203$ & $0.200$ \\
        4
        & & & $0.219$ & $\bf{0.781}$ & $0.703$ & $0.719$ \\
        8
        & & & $0.188$ & $\bf{0.828}$ & $0.782$ & $0.766$ \\
        16
        & & & $0.203$ & $\bf{0.859}$ & $0.781$ & $0.766$ \\
        32
        & & & $0.203$ & $\bf{0.859}$ & $0.800$ & $0.822$ \\
        64  
        & & & $0.203$ & $0.859$ & $0.813$ & $\bf{0.891}$ \\
        128  
        & & & $0.203$ & $0.859$ & $0.813$ & $\bf{0.891}$ \\
        256 
        & & & $0.203$ & $0.859$ & $0.813$ & $\bf{0.891}$ \\
        512 
        & & & $0.203$ & $0.859$ & $0.813$ & $\bf{0.922}$ \\
        1024 
        & & & $0.203$ & $0.859$ & $0.796$ & $\bf{0.938}$ \\
        \bottomrule
    \end{tabular}}
    \caption{NT classification accuracies 
    for the historic watermark dataset.}
    \label{tab:watermark_NT}
\end{table}

\subsubsection{Nearest Neighbour Classification}
\label{sssec:NN}

\begin{table}[t]
    \resizebox{\linewidth}{!}{
        \footnotesize
        \begin{tabular}{@{} l @{\enspace} l @{\enspace} l @{\enspace} l @{\enspace} l @{\enspace} l @{\enspace} l @{}}
        \toprule
        \rotatebox{90}{train}
        & \rotatebox{90}{angle}
        & \rotatebox{90}{Eucl.}
        & R-CDT
        & \mNRCDT 
        & \miNRCDT
        & \tvNRCDT \\
        \midrule
        1 
        & 2
        & \multirow{8}{*}{\rotatebox{90}{$0.344{\pm}0.024$}} 
        & $0.331{\pm}0.016$ & $\mathbf{0.477{\pm}0.083}$ & $0.430{\pm}0.084$ & $0.429{\pm}0.099$ \\
        & 4
        & & $0.336{\pm}0.028$ & $\mathbf{0.589{\pm}0.089}$ & $\mathbf{0.589{\pm}0.108}$ & $0.571{\pm}0.135$ \\
        & 8
        & & $0.340{\pm}0.027$ & $0.763{\pm}0.080$ & $0.718{\pm}0.126$ & $\mathbf{0.769{\pm}0.130}$ \\
        & 16
        & & $0.340{\pm}0.027$ & $0.796{\pm}0.122$ & $0.768{\pm}0.114$ & $\mathbf{0.828{\pm}0.122}$ \\
        & 32
        & & $0.340{\pm}0.027$ & $0.818{\pm}0.093$ & $0.768{\pm}0.120$ & $\mathbf{0.830{\pm}0.119}$ \\
        & 64  
        & & $0.340{\pm}0.027$ & $0.820{\pm}0.090$ & $0.760{\pm}0.127$ & $\mathbf{0.824{\pm}0.119}$ \\
        & 128  
        & & $0.340{\pm}0.027$ & $0.820{\pm}0.091$ & $0.752{\pm}0.127$ & $\mathbf{0.823{\pm}0.119}$ \\
        & 256 
        & & $0.340{\pm}0.027$ & $0.820{\pm}0.091$ & $0.752{\pm}0.127$ & $\mathbf{0.822{\pm}0.119}$ \\
        \midrule
        5 
        & 2
        & \multirow{8}{*}{\rotatebox{90}{$0.340{\pm}0.038$}} 
        & $0.335{\pm}0.022$ & $0.541{\pm}0.047$ & $\mathbf{0.548{\pm}0.058}$ & $0.530{\pm}0.047$ \\
        & 4
        & & $0.343{\pm}0.025$ & $\mathbf{0.717{\pm}0.051}$ & $0.706{\pm}0.049$ & $0.696{\pm}0.071$ \\
        & 8
        & & $0.343{\pm}0.032$ & $0.832{\pm}0.037$ & $\mathbf{0.837{\pm}0.046}$ & $0.829{\pm}0.041$ \\
        & 16
        & & $0.343{\pm}0.031$ & $0.863{\pm}0.023$ & $0.849{\pm}0.033$ & $\mathbf{0.879{\pm}0.017}$ \\
        & 32
        & & $0.344{\pm}0.031$ & $0.885{\pm}0.019$ & $0.865{\pm}0.026$ & $\mathbf{0.891{\pm}0.019}$ \\
        & 64 
        & & $0.344{\pm}0.031$ & $0.884{\pm}0.021$ & $0.864{\pm}0.025$ & $\mathbf{0.889{\pm}0.021}$ \\
        & 128
        & & $0.344{\pm}0.031$ & $0.886{\pm}0.019$ & $0.864{\pm}0.024$ & $\mathbf{0.890{\pm}0.022}$ \\
        & 256
        & & $0.344{\pm}0.031$ & $0.886{\pm}0.019$ & $0.864{\pm}0.024$ & $\mathbf{0.890{\pm}0.023}$ \\
        \midrule
        10 
        & 2
        & \multirow{8}{*}{\rotatebox{90}{$0.348{\pm}0.022$}} 
        & $0.335{\pm}0.021$ & $\mathbf{0.586{\pm}0.021}$ & $0.523{\pm}0.061$ & $0.541{\pm}0.035$ \\
        & 4
        & & $0.350{\pm}0.028$ & $\mathbf{0.734{\pm}0.048}$ & $0.713{\pm}0.038$ & $0.729{\pm}0.027$ \\
        & 8
        & & $0.351{\pm}0.032$ & $0.846{\pm}0.025$ & $0.845{\pm}0.028$ & $\mathbf{0.856{\pm}0.017}$ \\
        & 16
        & & $0.352{\pm}0.032$ & $0.870{\pm}0.019$ & $0.866{\pm}0.026$ & $\mathbf{0.891{\pm}0.016}$ \\
        & 32
        & & $0.352{\pm}0.033$ & $\mathbf{0.899{\pm}0.021}$ & $0.878{\pm}0.022$ & $0.896{\pm}0.016$ \\
        & 64
        & & $0.352{\pm}0.033$ & $0.899{\pm}0.020$ & $0.878{\pm}0.018$ & $\mathbf{0.901{\pm}0.015}$ \\
        & 128
        & & $0.352{\pm}0.033$ & $\mathbf{0.902{\pm}0.020}$ & $0.878{\pm}0.018$ & $0.901{\pm}0.013$ \\
        & 256
        & & $0.352{\pm}0.033$ & $\mathbf{0.901{\pm}0.020}$ & $0.880{\pm}0.018$ & $\mathbf{0.901{\pm}0.013}$ \\
        \bottomrule
    \end{tabular}}
    \caption{%
    $1$-NN classification accuracies (mean$\pm$std)
    for LinMNIST dataset with $100$ samples of classes $\{1,5,7\}$
    and different numbers of training samples
    and angles.}
    \label{tab:nearest_neighbor_LinMNIST}
\end{table}

For a more realistic scenario,
we consider the LinMNIST dataset
with $100$ samples per class.
As in this case
no templates exist,
we replace NT classification
by the nearest neighbour ($1$-NN) method,
which assigns the label of the nearest member
of a given training set
with respect to the Euclidean distance in feature space.
To this end,
a subset of varying size is randomly selected
to serve as training data
and the remaining samples of the dataset
are used for testing.
This process is repeated $20$ times
and the mean and standard deviation (std)
of the resulting classification accuracies
are reported in Table~\ref{tab:nearest_neighbor_LinMNIST},
where we use (up to) $256$ angles, $500$ radii
and $256$ interpolation points.

While the Euclidean baseline and R-CDT
perform at the level of random guessing,
we observe that all our \hNRCDT{} variants
yield remarkable results in this limited data setting,
that improve with increasing numbers of angles
and selected training samples.
In particular,
when using only {\em one} training datum per class,
\mNRCDT{} and \tvNRCDT{}
already reach an accuracy of $82\%$,
which improves to $90\%$
for $10$ training samples.

\subsubsection{Support Vector Machines}
\label{sssec:LinSVM}

\begin{table}[t]
    \resizebox{\linewidth}{!}{
        \footnotesize
        \begin{tabular}{@{} l @{\enspace} l @{\enspace} l @{\enspace} l @{\enspace} l @{\enspace} l @{\enspace} l @{}}
        \toprule
        \rotatebox{90}{train}
        & \rotatebox{90}{angle}
        & \rotatebox{90}{Eucl.}
        & R-CDT 
        & \mNRCDT
        & \miNRCDT
        & \tvNRCDT \\
        \midrule
        1
        & 2 
        & \multirow{6}{*}{\rotatebox{90}{$0.500{\pm}0.015$}} 
        & $0.499{\pm}0.033$ & $0.617{\pm}0.126$ & $0.663{\pm}0.119$ & $\mathbf{0.676{\pm}0.131}$ \\
        & 4 
        & & $0.499{\pm}0.023$ & $0.777{\pm}0.084$ & $0.725{\pm}0.112$ & $\mathbf{0.783{\pm}0.102}$ \\
        & 8 
        & & $0.520{\pm}0.018$ & $\mathbf{0.976{\pm}0.059}$ & $0.913{\pm}0.094$ & $0.763{\pm}0.139$ \\
        & 16 
        & & $0.514{\pm}0.023$ & $\mathbf{1.000{\pm}0.000}$ & $0.988{\pm}0.023$ & $0.937{\pm}0.055$ \\
        & 32 
        & & $0.578{\pm}0.020$ & $\mathbf{1.000{\pm}0.000}$ & $\mathbf{1.000{\pm}0.000}$ & $\mathbf{1.000{\pm}0.000}$ \\
        & 64
        & & $0.505{\pm}0.025$ & $\mathbf{1.000{\pm}0.000}$ & $\mathbf{1.000{\pm}0.000}$ & $\mathbf{1.000{\pm}0.000}$ \\
        \midrule
        5
        & 2 
        & \multirow{6}{*}{\rotatebox{90}{$0.508{\pm}0.019$}} 
        & $0.515{\pm}0.039$ & $\mathbf{0.863{\pm}0.052}$ & $0.744{\pm}0.051$ & $0.760{\pm}0.084$ \\
        & 4 
        & & $0.522{\pm}0.029$ & $\mathbf{0.971{\pm}0.040}$ & $0.947{\pm}0.041$ & $0.964{\pm}0.031$ \\
        & 8 
        & & $0.528{\pm}0.045$ & $\mathbf{1.000{\pm}0.000}$ & $\mathbf{1.000{\pm}0.000}$ & $0.984{\pm}0.017$ \\
        & 16 
        & & $0.556{\pm}0.058$ & $\mathbf{1.000{\pm}0.000}$ & $\mathbf{1.000{\pm}0.000}$ & $0.986{\pm}0.011$ \\
        & 32 
        & & $0.596{\pm}0.076$ & $\mathbf{1.000{\pm}0.000}$ & $\mathbf{1.000{\pm}0.000}$ & $\mathbf{1.000{\pm}0.000}$ \\
        & 64
        & & $0.599{\pm}0.086$ & $\mathbf{1.000{\pm}0.000}$ & $\mathbf{1.000{\pm}0.000}$ & $\mathbf{1.000{\pm}0.000}$ \\
        \midrule
        10
        & 2 
        & \multirow{6}{*}{\rotatebox{90}{$0.518{\pm}0.023$}} 
        & $0.549{\pm}0.031$ & $\mathbf{0.936{\pm}0.049}$ & $0.797{\pm}0.047$ & $0.847{\pm}0.049$ \\
        & 4 
        & & $0.572{\pm}0.048$ & $\mathbf{0.989{\pm}0.022}$ & $0.978{\pm}0.009$ & $0.972{\pm}0.045$ \\
        & 8 
        & & $0.630{\pm}0.050$ & $\mathbf{1.000{\pm}0.000}$ & $\mathbf{1.000{\pm}0.000}$ & $0.991{\pm}0.013$ \\
        & 16 
        & & $0.703{\pm}0.062$ & $\mathbf{1.000{\pm}0.000}$ & $\mathbf{1.000{\pm}0.000}$ & $0.996{\pm}0.006$ \\
        & 32 
        & & $0.767{\pm}0.092$ & $\mathbf{1.000{\pm}0.000}$ & $\mathbf{1.000{\pm}0.000}$ & $\mathbf{1.000{\pm}0.000}$ \\
        & 64
        & & $0.768{\pm}0.093$ & $\mathbf{1.000{\pm}0.000}$ & $\mathbf{1.000{\pm}0.000}$ & $\mathbf{1.000{\pm}0.000}$ \\
        \midrule
        25
        & 2 
        & \multirow{6}{*}{\rotatebox{90}{$0.527{\pm}0.014$}} 
        & $0.593{\pm}0.085$ & $\mathbf{0.975{\pm}0.025}$ & $0.881{\pm}0.041$ & $0.940{\pm}0.022$ \\
        & 4 
        & & $0.730{\pm}0.068$ & $\mathbf{1.000{\pm}0.000}$ & $0.986{\pm}0.007$ & $0.990{\pm}0.007$ \\
        & 8 
        & & $0.868{\pm}0.064$ & $\mathbf{1.000{\pm}0.000}$ & $\mathbf{1.000{\pm}0.000}$ & $0.998{\pm}0.005$ \\
        & 16 
        & & $0.975{\pm}0.037$ & $\mathbf{1.000{\pm}0.000}$ & $\mathbf{1.000{\pm}0.000}$ & $\mathbf{1.000{\pm}0.000}$ \\
        & 32 
        & & $0.991{\pm}0.014$ & $\mathbf{1.000{\pm}0.000}$ & $\mathbf{1.000{\pm}0.000}$ & $\mathbf{1.000{\pm}0.000}$ \\
        & 64
        & & $0.985{\pm}0.022$ & $\mathbf{1.000{\pm}0.000}$ & $\mathbf{1.000{\pm}0.000}$ & $\mathbf{1.000{\pm}0.000}$ \\
        \midrule
        50
        & 2 
        & \multirow{6}{*}{\rotatebox{90}{$0.531{\pm}0.017$}} 
        & $0.632{\pm}0.031$ & $0.983{\pm}0.010$ & $0.933{\pm}0.022$ & $\mathbf{0.979{\pm}0.016}$ \\
        & 4 
        & & $0.863{\pm}0.035$ & $\mathbf{1.000{\pm}0.000}$ & $0.989{\pm}0.004$ & $0.993{\pm}0.007$ \\
        & 8 
        & & $0.971{\pm}0.023$ & $\mathbf{1.000{\pm}0.000}$ & $\mathbf{1.000{\pm}0.000}$ & $\mathbf{1.000{\pm}0.000}$ \\
        & 16 
        & & $\mathbf{1.000{\pm}0.000}$ & $\mathbf{1.000{\pm}0.000}$ & $\mathbf{1.000{\pm}0.000}$ & $\mathbf{1.000{\pm}0.000}$ \\
        & 32 
        & & $\mathbf{1.000{\pm}0.000}$ & $\mathbf{1.000{\pm}0.000}$ & $\mathbf{1.000{\pm}0.000}$ & $\mathbf{1.000{\pm}0.000}$ \\
        & 64
        & & $0.991{\pm}0.018$ & $\mathbf{1.000{\pm}0.000}$ & $\mathbf{1.000{\pm}0.000}$ & $\mathbf{1.000{\pm}0.000}$ \\
        \bottomrule
    \end{tabular}}
    \caption{%
    SVM classification accuracies (mean$\pm$std)
    for academic dataset with 500 samples of classes $\{1,2\}$
    and different numbers of training samples
    and Radon angles.}
    \label{tab:svm-academic}
\end{table}

\begin{table}[t]
    \resizebox{\linewidth}{!}{
        \footnotesize
        \begin{tabular}{@{} l @{\enspace} l @{\enspace} l @{\enspace} l @{\enspace} l @{\enspace} l @{\enspace} l @{}}
        \toprule
        \rotatebox{90}{train}
        & \rotatebox{90}{angle}
        & \rotatebox{90}{Eucl.}
        & R-CDT 
        & \mNRCDT
        & \miNRCDT
        & \tvNRCDT \\
        \midrule
        1
        & 2 
        & \multirow{6}{*}{\rotatebox{90}{$0.504{\pm}0.023$}}  
        & $0.505{\pm}0.030$ & $\mathbf{0.678{\pm}0.102}$ & $0.520{\pm}0.024$ & $0.518{\pm}0.046$ \\
        & 4 
        & & $0.496{\pm}0.027$ & $\mathbf{0.783{\pm}0.083}$ & $0.584{\pm}0.109$ & $0.657{\pm}0.099$ \\
        & 8 
        & & $0.505{\pm}0.023$ & $\mathbf{0.877{\pm}0.120}$ & $0.805{\pm}0.091$ & $0.784{\pm}0.145$ \\
        & 16 
        & & $0.502{\pm}0.025$ & $\mathbf{0.931{\pm}0.036}$ & $0.827{\pm}0.181$ & $0.837{\pm}0.099$ \\
        & 32 
        & & $0.510{\pm}0.019$ & $\mathbf{0.957{\pm}0.010}$ & $0.805{\pm}0.129$ & $0.836{\pm}0.185$ \\
        & 64
        & & $0.519{\pm}0.025$ & $\mathbf{0.952{\pm}0.011}$ & $0.872{\pm}0.120$ & $0.903{\pm}0.058$ \\
        \midrule
        5
        & 2 
        & \multirow{6}{*}{\rotatebox{90}{$0.504{\pm}0.018$}}  
        & $0.515{\pm}0.028$ & $\mathbf{0.717{\pm}0.082}$ & $0.528{\pm}0.047$ & $0.535{\pm}0.031$ \\
        & 4 
        & & $0.518{\pm}0.021$ & $\mathbf{0.821{\pm}0.064}$ & $0.709{\pm}0.084$ & $0.744{\pm}0.036$ \\
        & 8 
        & & $0.517{\pm}0.042$ & $\mathbf{0.937{\pm}0.031}$ & $0.873{\pm}0.042$ & $0.869{\pm}0.042$ \\
        & 16 
        & & $0.515{\pm}0.021$ & $\mathbf{0.949{\pm}0.021}$ & $0.911{\pm}0.045$ & $0.933{\pm}0.020$ \\
        & 32 
        & & $0.514{\pm}0.044$ & $\mathbf{0.954{\pm}0.011}$ & $0.944{\pm}0.018$ & $0.931{\pm}0.033$ \\
        & 64
        & & $0.507{\pm}0.028$ & $\mathbf{0.961{\pm}0.007}$ & $0.947{\pm}0.016$ & $0.926{\pm}0.085$ \\
        \midrule
        10
        & 2 
        & \multirow{6}{*}{\rotatebox{90}{$0.507{\pm}0.023$}}  
        & $0.511{\pm}0.026$ & $\mathbf{0.784{\pm}0.032}$ & $0.537{\pm}0.037$ & $0.562{\pm}0.034$ \\
        & 4 
        & & $0.514{\pm}0.026$ & $\mathbf{0.847{\pm}0.035}$ & $0.753{\pm}0.022$ & $0.745{\pm}0.045$ \\
        & 8 
        & & $0.510{\pm}0.031$ & $\mathbf{0.944{\pm}0.013}$ & $0.907{\pm}0.018$ & $0.871{\pm}0.083$ \\
        & 16 
        & & $0.528{\pm}0.032$ & $\mathbf{0.953{\pm}0.010}$ & $0.935{\pm}0.016$ & $0.937{\pm}0.029$ \\
        & 32 
        & & $0.534{\pm}0.030$ & $\mathbf{0.960{\pm}0.010}$ & $0.946{\pm}0.027$ & $0.948{\pm}0.027$ \\
        & 64
        & & $0.541{\pm}0.041$ & $\mathbf{0.956{\pm}0.019}$ & $0.955{\pm}0.020$ & $0.949{\pm}0.033$ \\
        \midrule
        25
        & 2 
        & \multirow{6}{*}{\rotatebox{90}{$0.517{\pm}0.025$}}  
        & $0.521{\pm}0.024$ & $\mathbf{0.799{\pm}0.017}$ & $0.569{\pm}0.034$ & $0.581{\pm}0.027$ \\
        & 4 
        & & $0.513{\pm}0.023$ & $\mathbf{0.860{\pm}0.028}$ & $0.768{\pm}0.013$ & $0.782{\pm}0.018$ \\
        & 8 
        & & $0.533{\pm}0.027$ & $\mathbf{0.946{\pm}0.015}$ & $0.909{\pm}0.010$ & $0.923{\pm}0.030$ \\
        & 16 
        & & $0.541{\pm}0.024$ & $\mathbf{0.959{\pm}0.011}$ & $0.950{\pm}0.009$ & $0.946{\pm}0.024$ \\
        & 32 
        & & $0.550{\pm}0.037$ & $\mathbf{0.962{\pm}0.010}$ & $0.960{\pm}0.012$ & $0.955{\pm}0.022$ \\
        & 64
        & & $0.556{\pm}0.027$ & $\mathbf{0.962{\pm}0.013}$ & $0.962{\pm}0.012$ & $0.961{\pm}0.021$ \\
        \midrule
        50
        & 2 
        & \multirow{6}{*}{\rotatebox{90}{$0.518{\pm}0.015$}}  
        & $0.535{\pm}0.025$ & $\mathbf{0.812{\pm}0.021}$ & $0.554{\pm}0.033$ & $0.591{\pm}0.028$ \\
        & 4 
        & & $0.528{\pm}0.022$ & $\mathbf{0.881{\pm}0.016}$ & $0.780{\pm}0.008$ & $0.793{\pm}0.011$ \\
        & 8 
        & & $0.552{\pm}0.027$ & $\mathbf{0.957{\pm}0.008}$ & $0.923{\pm}0.006$ & $0.937{\pm}0.016$ \\
        & 16 
        & & $0.567{\pm}0.032$ & $\mathbf{0.962{\pm}0.009}$ & $0.855{\pm}0.005$ & $0.956{\pm}0.016$ \\
        & 32 
        & & $0.571{\pm}0.029$ & $\mathbf{0.968{\pm}0.007}$ & $0.967{\pm}0.007$ & $0.967{\pm}0.007$ \\
        & 64
        & & $0.583{\pm}0.032$ & $\mathbf{0.968{\pm}0.009}$ & $0.967{\pm}0.009$ & $\mathbf{0.968{\pm}0.008}$ \\
        \bottomrule
    \end{tabular}}
    \caption{%
    SVM classification accuracies (mean$\pm$std)
    for LinMNIST dataset with 500 samples of classes $\{1,7\}$
    and different numbers of training samples
    and Radon angles.}
    \label{tab:svm-LinMNIST}
\end{table}

In this last set of numerical experiments
regarding image classification,
we investigate the performance of
our feature representations combined with
linear support vector machines (SVMs)~\cite{Fan2008}.
To this end,
we utilize the academic dataset
with $500$ samples of classes $\{1,2\}$
and the LinMNIST dataset
with $500$ samples of classes $\{1,7\}$.
We again randomly select
a subset of varying size
to train the SVMs
and use the remaining samples
of the dataset for testing.

The SVM classification accuracies
(mean $\pm$ std based on $20$ repetitions)
for the academic dataset
are reported in Table~\ref{tab:svm-academic},
where we use (up to) $256$ angles, $850$ radii
and $256$ interpolation points.
We observe that
the Euclidean embedding performs on the level
of random guessing regardless of the increasing
number of training samples.
R-CDT reaches perfect results,
but needs a lot of training data.
In contrast to this,
all our \hNRCDT{} representations
yield perfect classification results
already for {\em one} training sample per class,
provided the number of Radon angles
is sufficiently large.

The classification results
for the LinMNIST dataset
are reported in Table~\ref{tab:svm-LinMNIST},
where we use (up to) $256$ angles, $500$ radii
and $256$ interpolation points.
Now, both the Euclidean baseline and R-CDT
yield accuracies comparable to random guessing.
As opposed to this,
our new \hNRCDT{} representations
yield near perfect results,
even in the very challenging case of only
{\em one} training sample per class,
with \mNRCDT{} performing best.

\subsection{Image Clustering}
\label{sec:image_clustering}

While our experiments in the previous section
are all based on a supervised learning setting,
either by making use of known templates or
choosing some part of the dataset as training data,
we now transition to an unsupervised setting.
We consider the first three datasets from before,
now with $100$ samples per class,
and study unsupervised classification
in the sense that we first perform a cluster analysis
on a subset of the data
and, thereon, classify the remaining part
based on the corresponding cluster centres
in feature space.

As our theory predicts
linear separability
in \hNRCDT{} space,
we apply the classical
$k$-means algorithm
to cluster the first $50$ samples
of each class
in the respective feature space,
where $k$ is the expected number of classes.
Thereon, we use the corresponding cluster centres
to assign the remaining data to the closest cluster
with respect to the Euclidean norm in feature space.
The first step is referred to as training phase,
the second as test phase.
We evaluate the quality of
the assignments by the
so-called Rand index (RI)~\cite{Hubert1985}
and the variational information (VI)~\cite{Meila2003},
which we now introduce in more details.

\begin{figure*}[t]
    \resizebox{\linewidth}{!}{
    \begin{tabular}{c c c}
        \multicolumn{3}{c}{%
        class:\;
        \Circle~1 \;
        \Star~2 \;
        \Triangle~3 
        \qquad
        centre:\;
        \Boxblue~1 \;
        \Boxred~2 \;
        \Boxgreen~3 \;
        \qquad
        train/test:\;
        \UnBoxblue/\UnsBoxblue~1 \;
        \UnBoxred/\UnsBoxred~2 \;
        \UnBoxgreen/\UnsBoxgreen~3
        } \\[1ex]
        Eucl. & R-CDT & \mNRCDT \\
        \multirow[t]{4}{*}{%
        \raisebox{-85pt}[0pt][0pt]{%
        \includegraphics[width=0.5\linewidth, clip=true, trim=10pt 0pt 7pt 0pt]{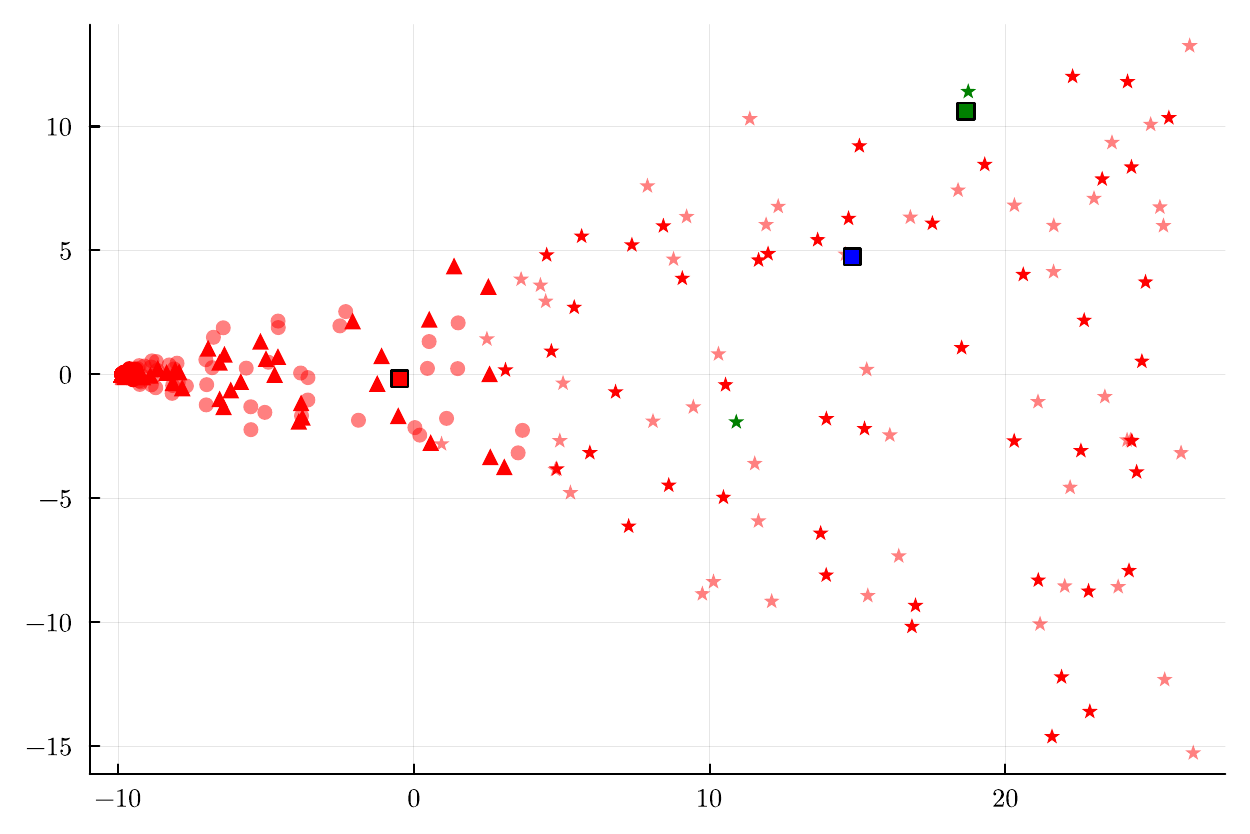}}} 
        & 
        \multirow[t]{4}{*}{%
        \raisebox{-85pt}[0pt][0pt]{
        \includegraphics[width=0.5\linewidth, clip=true, trim=10pt 0pt 7pt 0pt]{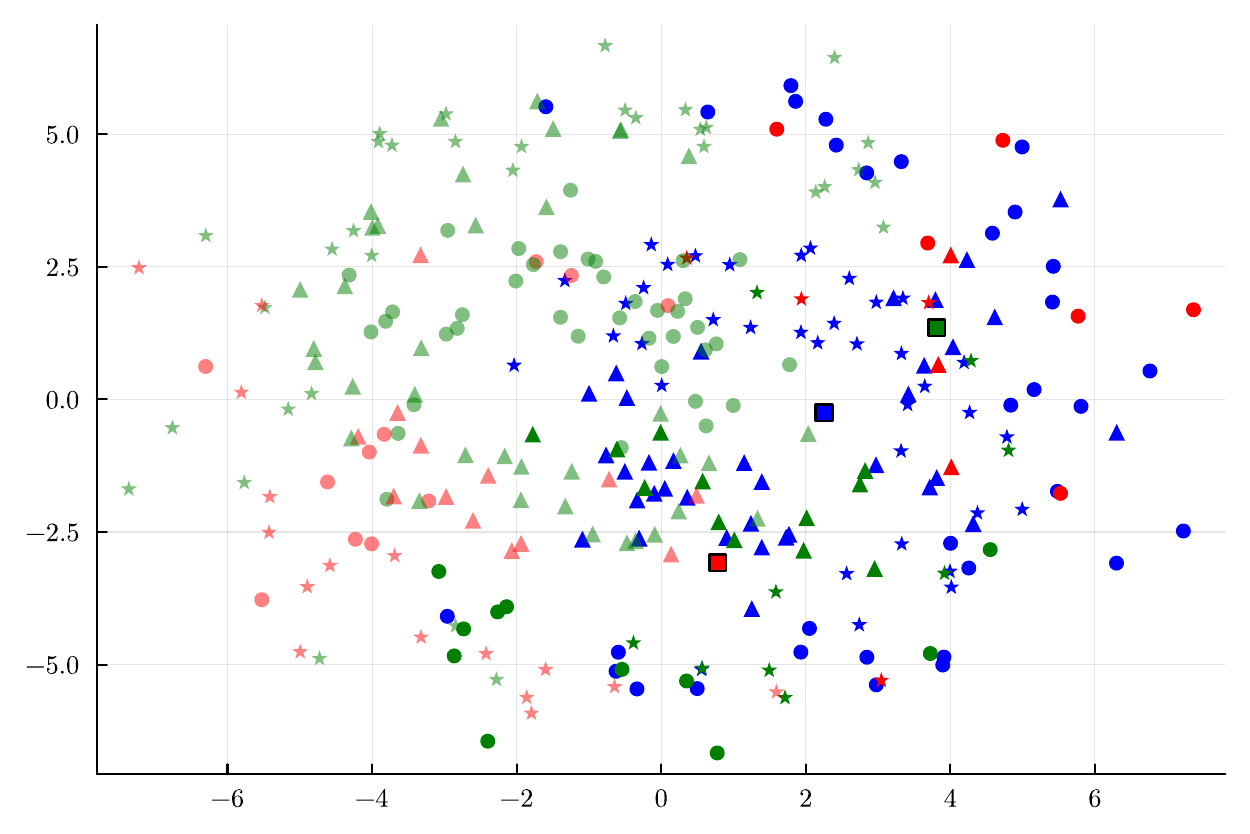}}}
        & \includegraphics[width=0.22\linewidth, clip=true, trim=17pt 0pt 7pt 0pt]{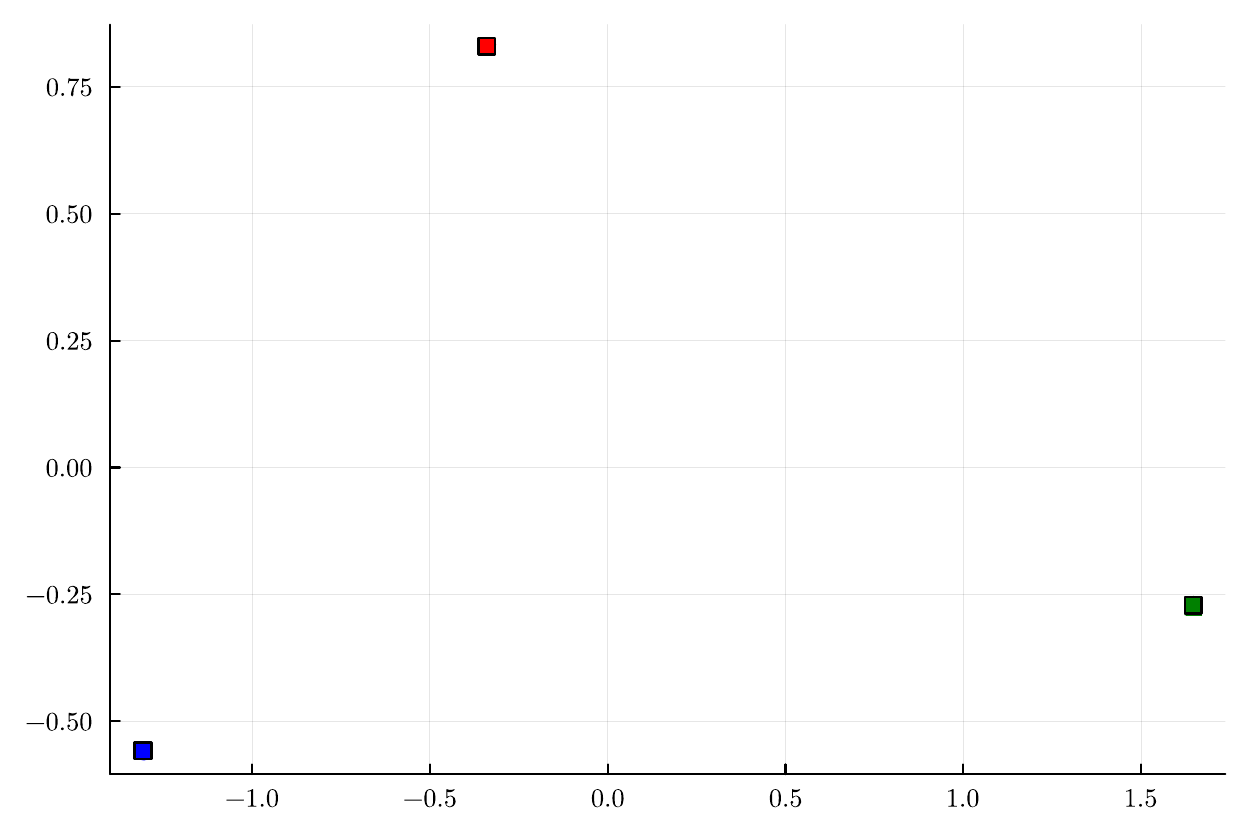} \\
        & & \tvNRCDT \\
        & & \includegraphics[width=0.22\linewidth, clip=true, trim=-15pt 0pt 10pt 0pt]{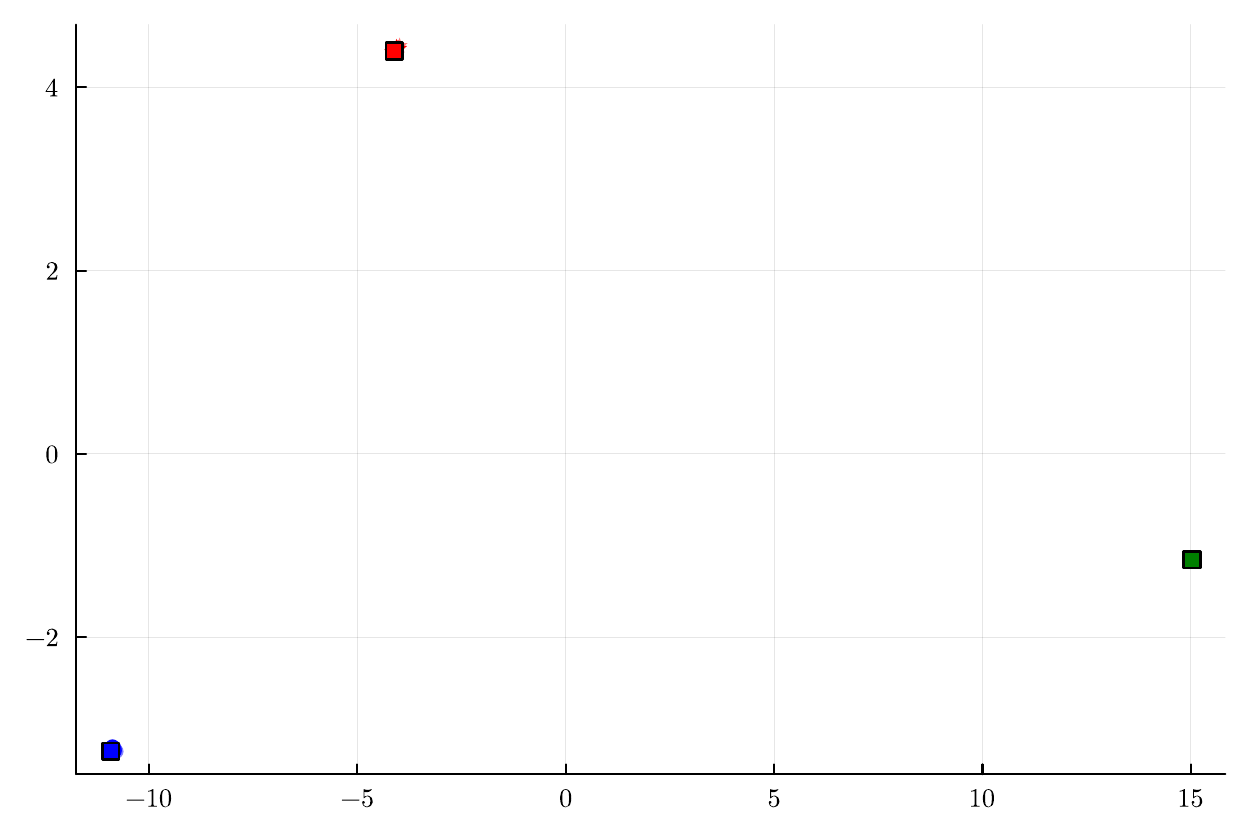} \\
    \end{tabular}
    }
    \caption{$3$-means cluster visualization 
    for the academic dataset
    using a 2d PCA in
    the respective feature space.}
    \label{fig:kmeansAcademic}
\end{figure*}

Let $[n] \coloneqq \{1,2,...,n\}$ and consider
two partitions $\U = \{U_1,...,U_k\}$
and $\V = \{V_1, ...., V_k\}$ of $[n]$.
Then, the Rand index is defined as
\begin{equation*}
    \RI(\U, \V)
    \coloneqq \frac{a+b}{\binom{n}{2}}
\end{equation*}
with the number of accordance
\begin{equation*}
    a = |\{(x,y) \in [n]^2 \mid \exists \, s,t \in [k] \colon x,y \in U_s \cap V_t\}|
\end{equation*}
and the number of distinction
\begin{equation*}
    b = |\{(x,y) \in [n]^2 \mid \nexists \, s,t \in [k] \colon x,y \in U_s \cup V_t\}|.
\end{equation*}
Intuitively,
RI measures the similarity between two clusterings
and can be seen as prediction accuracy.
To introduce the variational information,
let
\begin{equation*}
    p_{st} \coloneqq \frac{|U_s \cap V_t|}{n},
    \quad p_{s\cdot} \coloneqq \frac{|U_s|}{n},
    \quad p_{\cdot t} \coloneqq \frac{|V_t|}{n}
\end{equation*}
for $(s,t) \in [k]^2$ and
consider the entropies
\begin{equation*}
    \begin{aligned}
    H(\U)
    &\coloneqq - \sum_{s = 1}^k p_{s\cdot} \log(p_{s\cdot}),
    \\
    H(\V)
    &\coloneqq - \sum_{t = 1}^k p_{\cdot t} \log(p_{\cdot t})
    \end{aligned}
\end{equation*}
as well as the mutual information
\begin{equation*}
    I(\mathcal U, \mathcal V)
    \coloneqq \sum_{s,t = 1}^k p_{st} \log\bigl(\tfrac{p_{st}}{p_{s\cdot}p_{\cdot t}}\bigr).
\end{equation*}
Then, the variational information is defined as
\begin{equation*}
    \VI(\U, \V)
    \coloneqq H(\U) + H(\V) - 2 I(\U, \V)
\end{equation*}
and quantifies the amount of information
that is lost or gained
when changing from $\U$ to $\V$.
In our experiments,
we compare the clustering induced by the true labels
with the $k$-means clustering results.
For visualization,
we make use of 2d or 3d principal component analysis (PCA),
where the true classes are visualized by different symbols 
(e.g. \Circle, \Star, \Triangle, ...)
and the $k$-means cluster centres
by coloured boxes  
(e.g. \Boxblue, \Boxred, \Boxgreen, ...).
The cluster members
are coloured opaquely
(e.g. \UnBoxblue, \UnBoxred, \UnBoxgreen, ...)
and the test data transparently
(e.g. \UnsBoxblue, \UnsBoxred, \UnsBoxgreen, ...)
according to the nearest cluster centre.

\begin{table}[t]
    \centering%
    \footnotesize%
    \begin{tabular}{l @{\enspace} l @{\quad} c @{\quad} c @{\quad} c @{\quad} c}
        \toprule 
        & & Eucl.
        & R-CDT
        & \mNRCDT 
        & \tvNRCDT  \\
        \midrule
        RI$_{\textrm{train}}$ & ($\uparrow$) & $0.3378$ & $0.5486$ & $\mathbf{1.0000}$ & $\mathbf{1.0000}$\\
        VI$_{\textrm{train}}$ & ($\downarrow$) & $1.1492$ & $2.1676$ & $\mathbf{0.0000}$ & $\mathbf{0.0000}$\\
        RI$_{\textrm{test}}$ & ($\uparrow$) & $0.3288$ & $0.5537$ & $\mathbf{1.0000}$ & $\mathbf{1.0000}$\\
        VI$_{\textrm{test}}$ & ($\downarrow$) & $1.0986$ & $2.1207$ & $\mathbf{0.0000}$ & $\mathbf{0.0000}$\\
        \bottomrule
    \end{tabular}
    \caption{Quality measures
    for $3$-means clustering 
    of the academic dataset with $100$ images per class,
    where $50$ images are used for training 
    and the rest for testing.}
    \label{tab:kmeansAcademic}
\end{table}

In all our numerical experiments,
we use $850$ radii and $256$ angles
for the Radon transform
and $256$ interpolation points for the CDT.
The results for the academic dataset
are reported in Table~\ref{tab:kmeansAcademic}.
We observe that our \hNRCDT{} feature representations
clearly outperform the Euclidean baseline and
R-CDT embedding,
yielding perfect RI and VI values
on both the train and test set.
Moreover,
the cluster visualization in Figure~\ref{fig:kmeansAcademic}
indicates that the classes are indeed
mapped to single points in \hNRCDT{} space,
as predicted by our theory.

\begin{table}[t]
    \centering%
    \footnotesize%
    \begin{tabular}{l @{\enspace} l @{\quad} c @{\quad} c @{\quad} c @{\quad} c}
        \toprule 
        & & Eucl.
        & R-CDT
        & \mNRCDT 
        & \tvNRCDT  \\
        \midrule
        RI$_{\textrm{train}}$ & $(\uparrow)$ & $0.1367$ & $0.7406$ & $0.8602$ & $\mathbf{0.9752}$\\
        VI$_{\textrm{train}}$ & $(\downarrow)$ & $2.2449$ & $3.8193$ & $1.6014$ & $\mathbf{0.2853}$\\
        RI$_{\textrm{test}}$ & $(\uparrow)$ & $0.1091$ & $0.7130$ & $0.8579$ & $\mathbf{0.9752}$\\
        VI$_{\textrm{test}}$ & $(\downarrow)$ & $2.1972$ & $3.6659$ & $1.6275$ & $\mathbf{0.2706}$\\
        \bottomrule
    \end{tabular}
    \caption{Quality measures
    for $9$-means clustering 
    of the polygon dataset with $100$ images per class,
    where $50$ images are used for training 
    and the rest for testing.}
    \label{tab:kmeansPolygons}
\end{table}

In case of the polygon dataset
we observe that \tvNRCDT{} outperforms
even the \mNRCDT{} with a
near perfect Rand index of $97\%$,
see Table~\ref{tab:kmeansPolygons}.
In contrast to this,
\mNRCDT{} yields a Rand index of $86\%$,
which is still significantly better than
the Euclidean baseline and R-CDT.
An inspection of the cluster visualization
in Figure~\ref{fig:kmeansPolygons} reveals that
now the classes are not mapped to
singletons in \hNRCDT{} space.
This, however,
is to be expected
as the polygon dataset includes
small non-affine perturbations.
Nevertheless,
the classes are mapped to
well-separated sets,
which is more pronounced for
\tvNRCDT{} than \mNRCDT{}.
This again indicates the potential
of including the whole angular information
instead of only considering the maximum.

\begin{table}[t]
    \centering%
    \footnotesize%
    \begin{tabular}{l @{\enspace} l @{\quad} c @{\quad} c @{\quad} c @{\quad} c}
        \toprule 
        & & Eucl.
        & R-CDT
        & \mNRCDT 
        & \tvNRCDT  \\
        \midrule
        RI$_{\textrm{train}}$ & $(\uparrow)$ & $0.4832$ & $0.5517$ & $0.8385$ & $\mathbf{0.8526}$\\
        VI$_{\textrm{train}}$ & $(\downarrow)$ & $1.2223$ & $2.1840$ & $0.8303$ & $\mathbf{0.7978}$\\
        RI$_{\textrm{test}}$ & $(\uparrow)$ & $0.4826$ & $0.5489$ & $0.8488$ & $\mathbf{0.8798}$\\
        VI$_{\textrm{test}}$ & $(\downarrow)$ & $1.2255$ & $2.1577$ & $0.8293$ & $\mathbf{0.6713}$\\
        \bottomrule
    \end{tabular}
    \caption{Quality measures
    for $3$-means clustering 
    of the LinMNIST dataset 
    with $100$ images per class,
    where $50$ images are used for training
    and the rest for testing.}
    \label{tab:kmeansLinMNIST}
\end{table}

Finally, for the LinMNIST dataset
\tvNRCDT{} and \mNRCDT{} yield comparable results
and perform significantly better than
the Euclidean and R-CDT embedding,
which are nearly on the same level,
cf.\ Table~\ref{tab:kmeansLinMNIST}.
The cluster visualizations in Figure~\ref{fig:kmeansLinMNIST}
show that \mNRCDT{} and \tvNRCDT{}
yield nicely separated clusters
with larger distances in \tvNRCDT{} space,
which might explain the slightly better results
for \tvNRCDT{}.

\begin{figure*}
    \centering%
    \footnotesize%
    \begin{tabular}{c c}
        \multicolumn{2}{c}{%
        class:\;
        \Circle~1 \;
        \Star~2 \;
        \Triangle~3 
        \qquad
        centre:\;
        \Boxblue~1 \;
        \Boxred~2 \;
        \Boxgreen~3 \;
        \qquad
        train/test:\;
        \UnBoxblue/\UnsBoxblue~1 \;
        \UnBoxred/\UnsBoxred~2 \;
        \UnBoxgreen/\UnsBoxgreen~3
        } \\[0.5ex]
        \multicolumn{2}{c}{%
        \textcolor{white}{class:}\;
        \Ubox~4 \;
        \Utriangle~5 \;
        \Pentagon~6 
        \qquad
        \textcolor{white}{centre:}\;
        \Boxyellow~4 \;
        \Boxpurple~5 \;
        \Boxbrown~6 \;
        \qquad
        \textcolor{white}{train/test:}\;
        \UnBoxyellow/\UnsBoxyellow~4 \;
        \UnBoxpurple/\UnsBoxpurple~5 \;
        \UnBoxbrown/\UnsBoxbrown~6
        } \\[0.5ex]
        \multicolumn{2}{c}{%
        \textcolor{white}{class:}\;
        \hspace{2pt}\Ltriangle\hspace{1pt}~7 \;
        \hspace{2pt}\Rtriangle~8 \;
        \hspace{-1pt}\Hexagon~9
        \qquad
        \textcolor{white}{centre:}\;
        \Boxorange~7 \;
        \Boxpink~8 \;
        \Boxcyan~9 \;
        \qquad
        \textcolor{white}{train/test:}\;
        \UnBoxorange/\UnsBoxorange~7 \;
        \UnBoxpink/\UnsBoxpink~8 \;
        \UnBoxcyan/\UnsBoxcyan~9
        } \\[1em]
        Eucl. & R-CDT \\
        \includegraphics[width=0.45\linewidth, clip=true, trim=130pt 10pt 120pt 10pt]{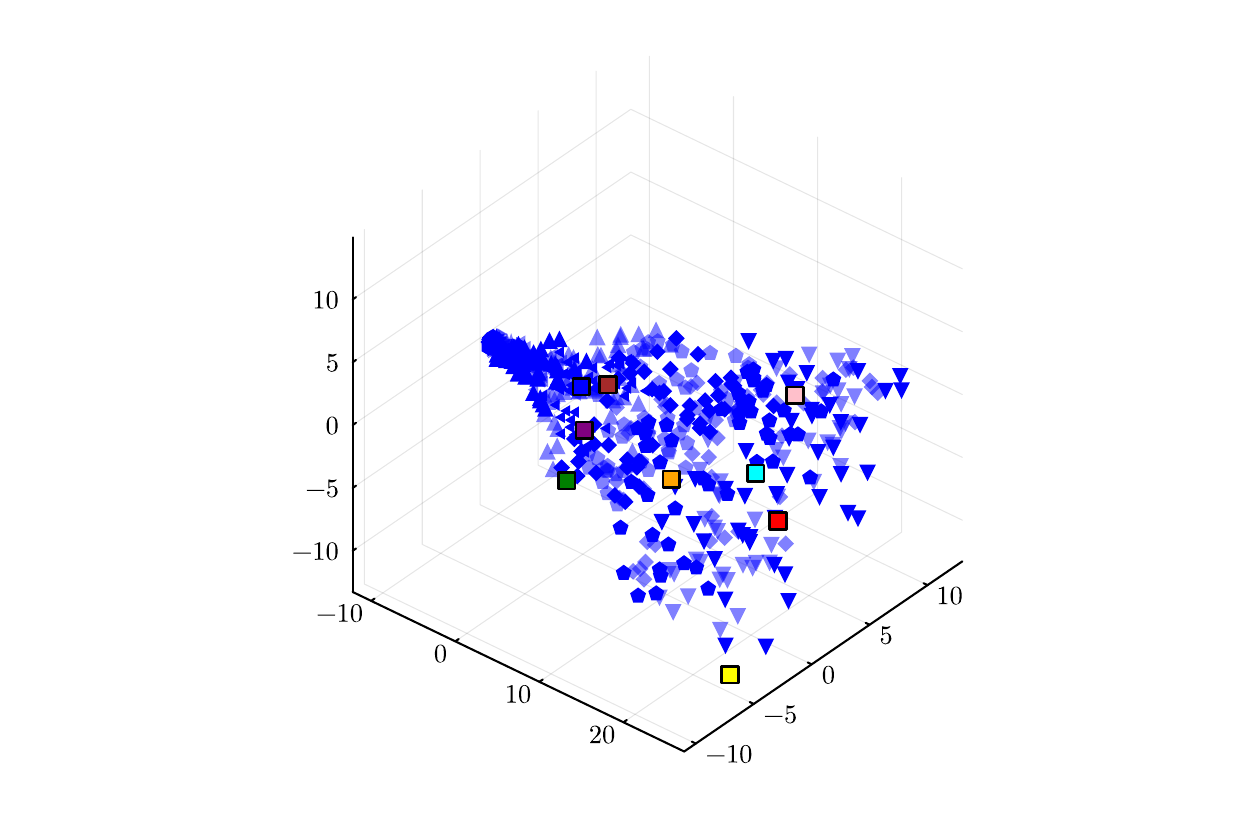} 
        & \includegraphics[width=0.45\linewidth, clip=true, trim=130pt 10pt 120pt 10pt]{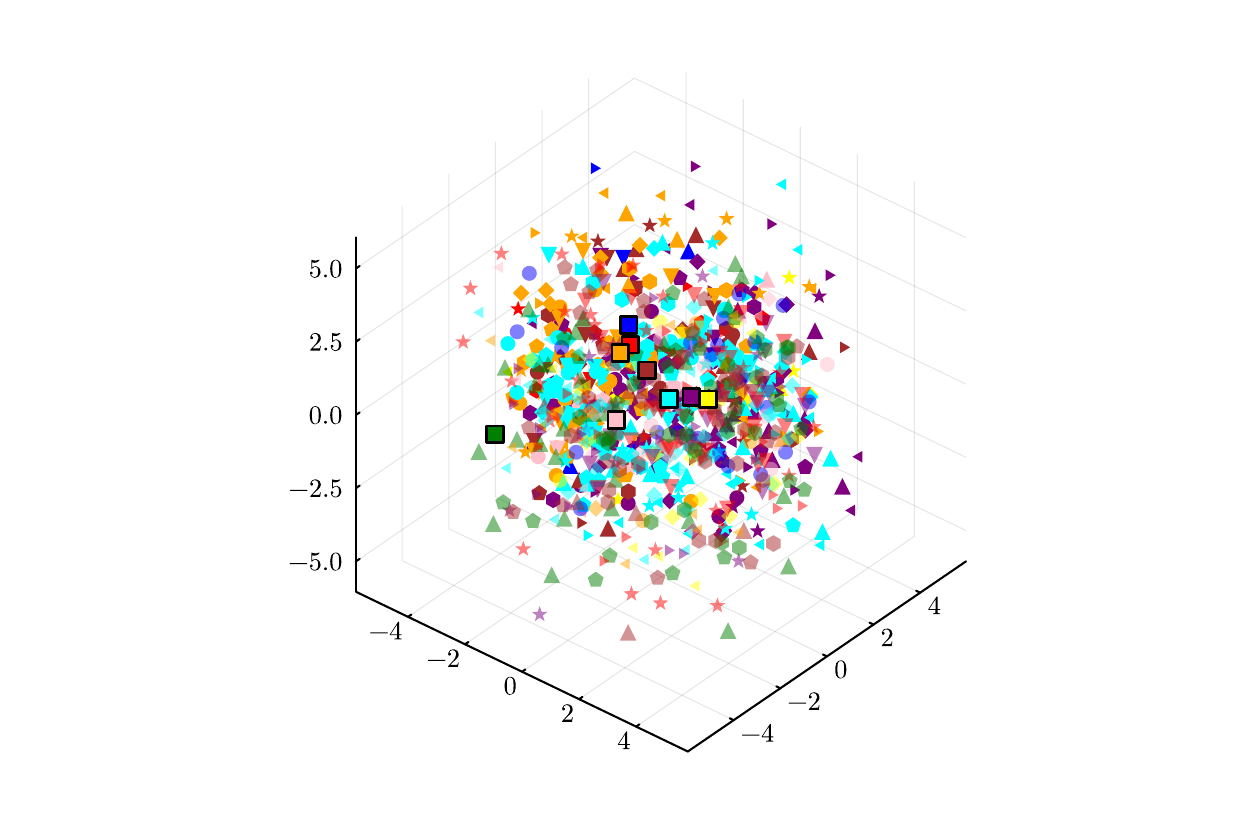}\\
        \mNRCDT  & \tvNRCDT \\
        \includegraphics[width=0.45\linewidth, clip=true, trim=130pt 10pt 120pt 10pt]{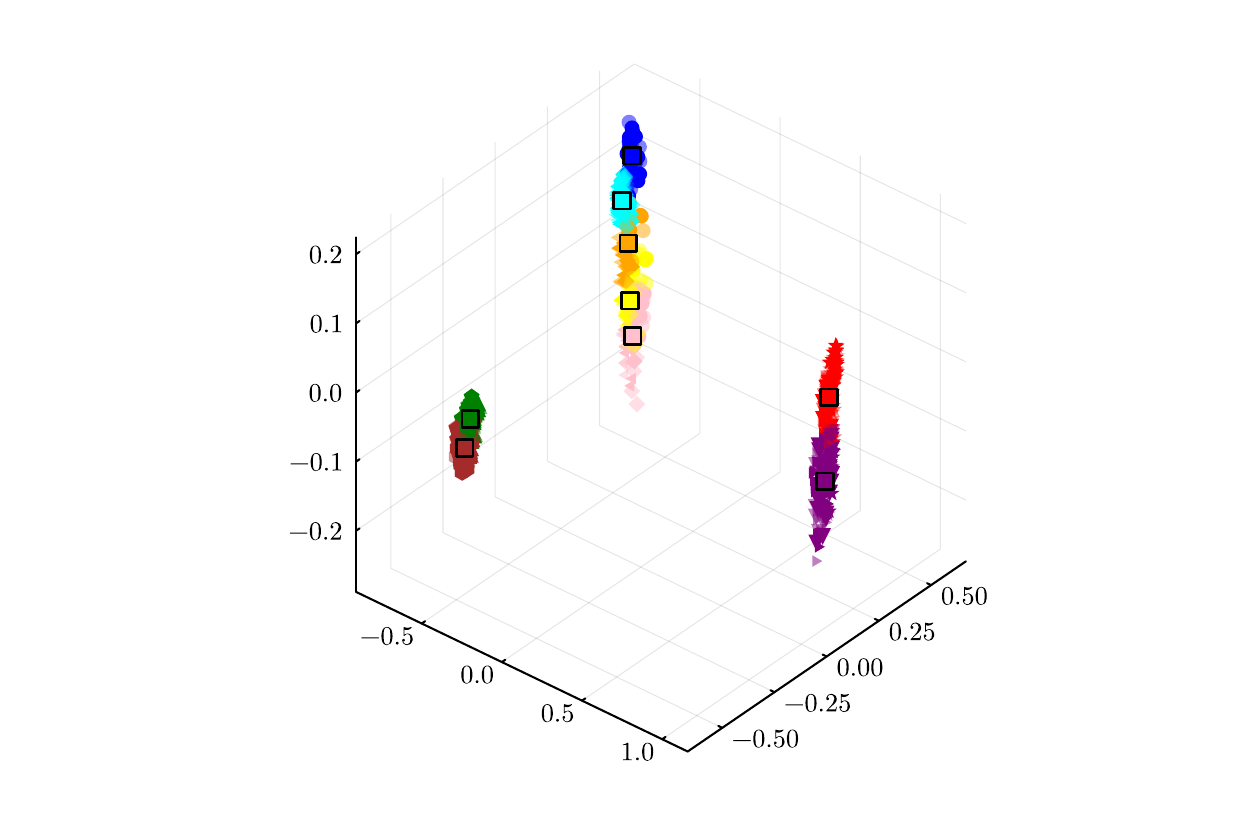}  
        & \includegraphics[width=0.45\linewidth, clip=true, trim=130pt 10pt 120pt 10pt]{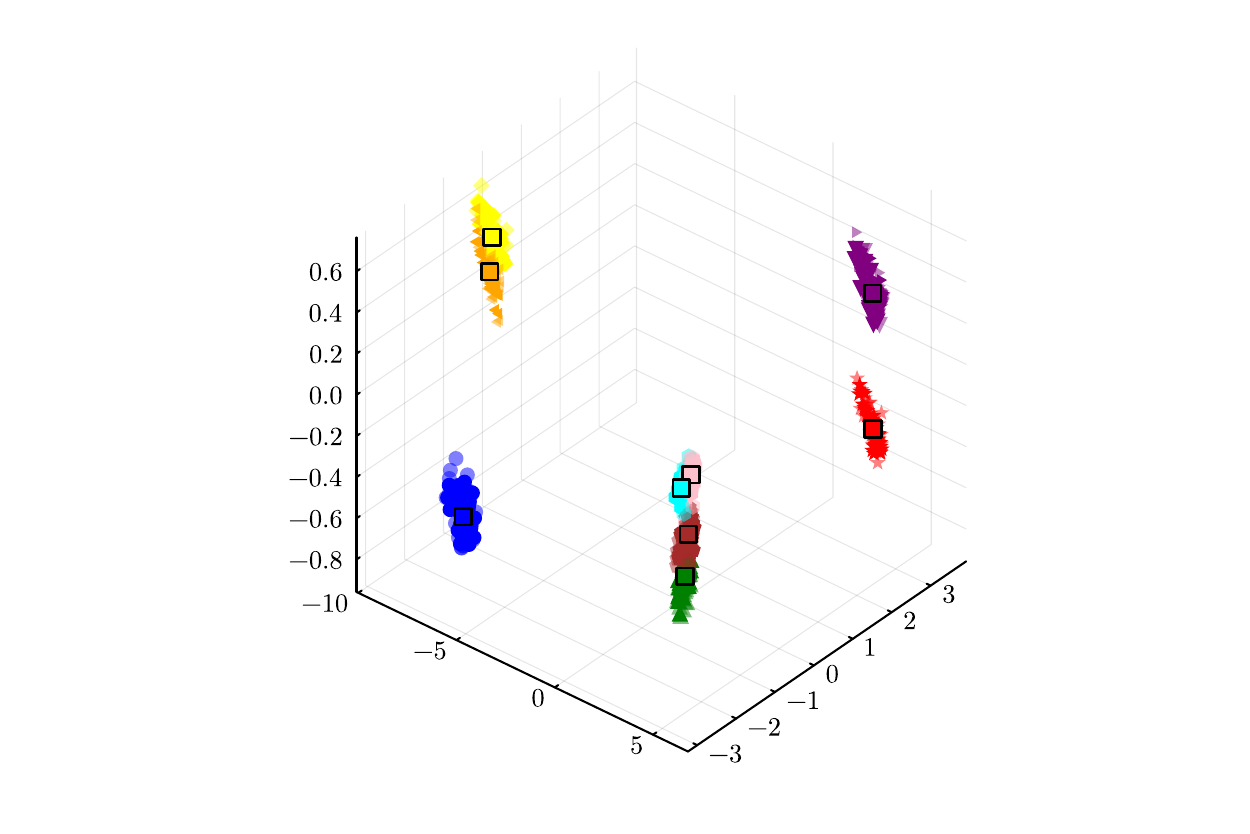} 
    \end{tabular}
    \caption{$9$-means cluster visualization 
    for the polygon dataset
    using a 3d PCA in
    the respective feature space.}
    \label{fig:kmeansPolygons}
    \vspace*{3cm}
\end{figure*}

\begin{figure*}
    \centering%
    \footnotesize%
    \begin{tabular}{c c}
        \multicolumn{2}{c}{%
        class:\;
        \Circle~1 \;
        \Star~5 \;
        \Triangle~7 
        \qquad
        centre:\;
        \Boxblue~1 \;
        \Boxred~5 \;
        \Boxgreen~7 \;
        \qquad
        train/test:\;
        \UnBoxblue/\UnsBoxblue~1 \;
        \UnBoxred/\UnsBoxred~5 \;
        \UnBoxgreen/\UnsBoxgreen~7
        } \\[1ex]
        Eucl. & R-CDT \\
        \includegraphics[width=0.45\linewidth, clip=true, trim=7pt 0pt 7pt 0pt]{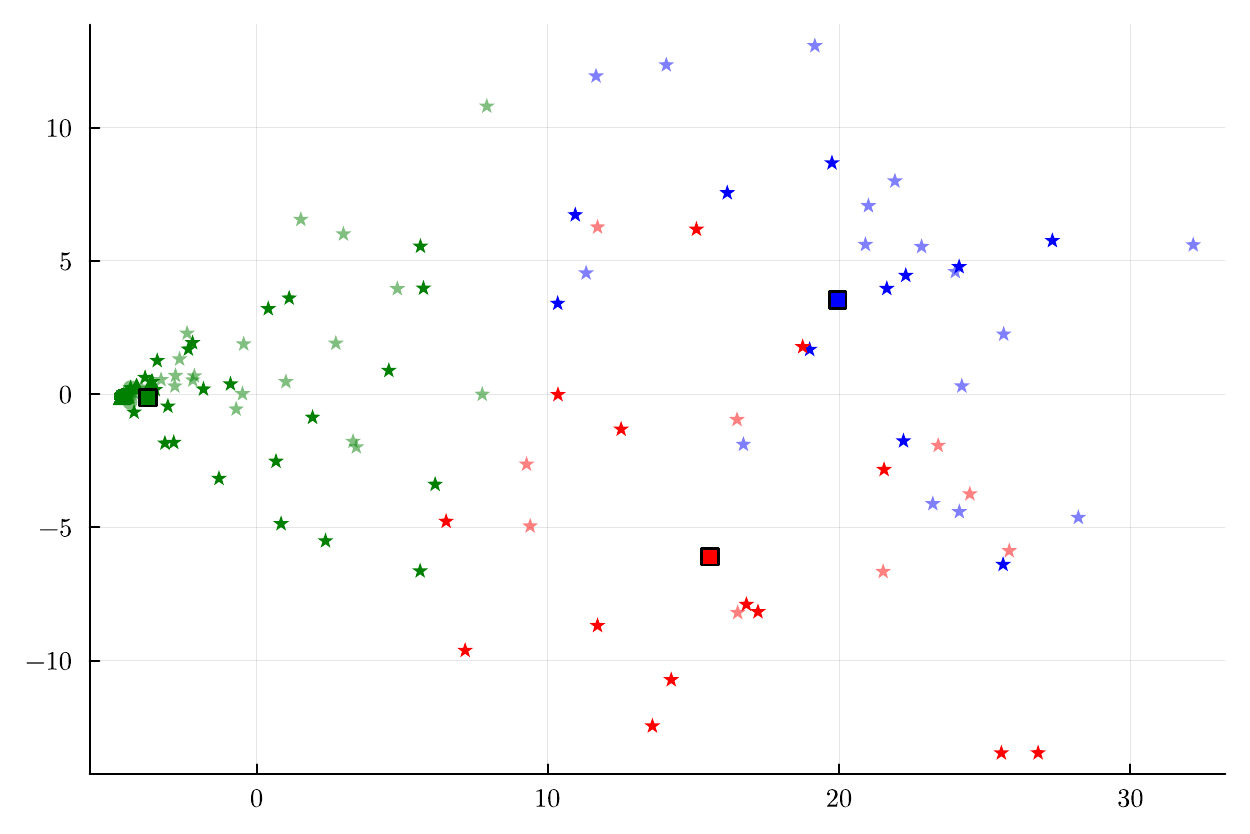} 
        & \includegraphics[width=0.45\linewidth, clip=true, trim=10pt 0pt 7pt 0pt]{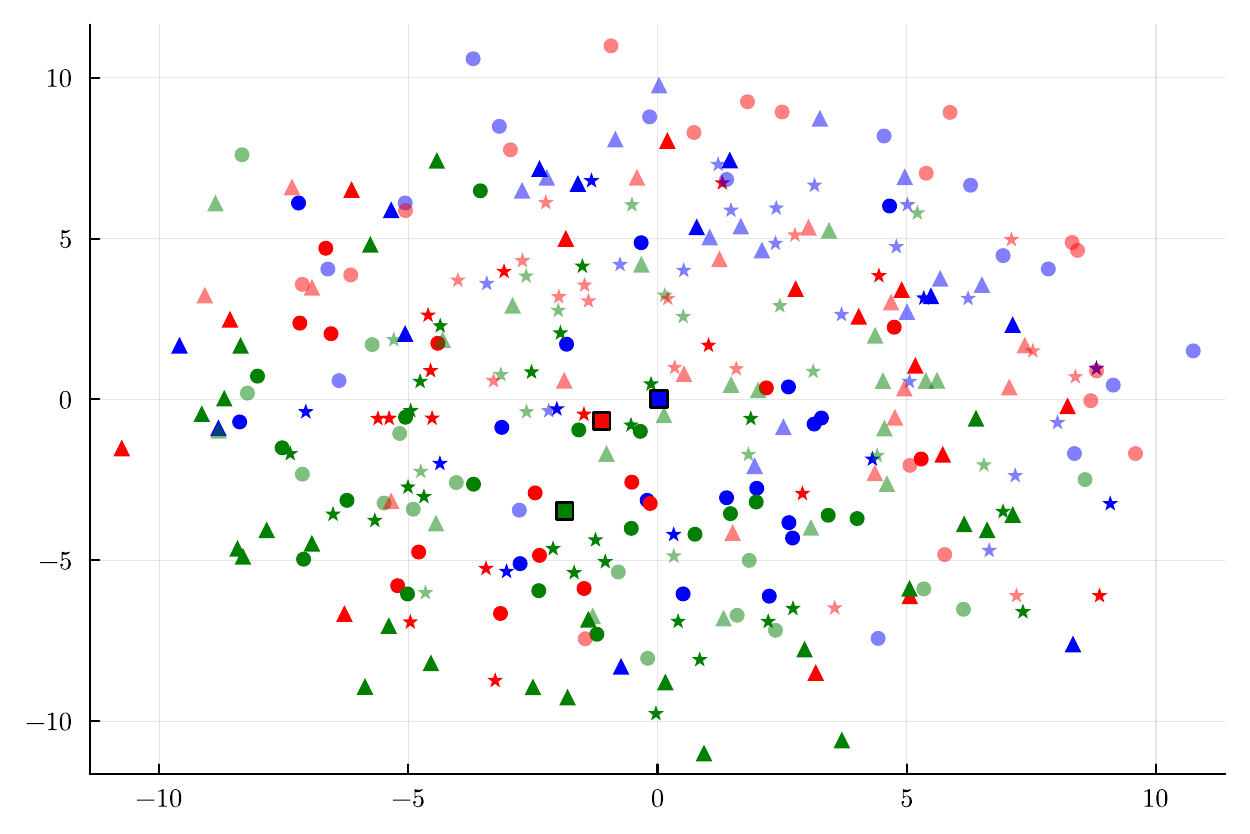} \\
        \mNRCDT & \tvNRCDT \\
        \includegraphics[width=0.45\linewidth, clip=true, trim=10pt 0pt 7pt 0pt]{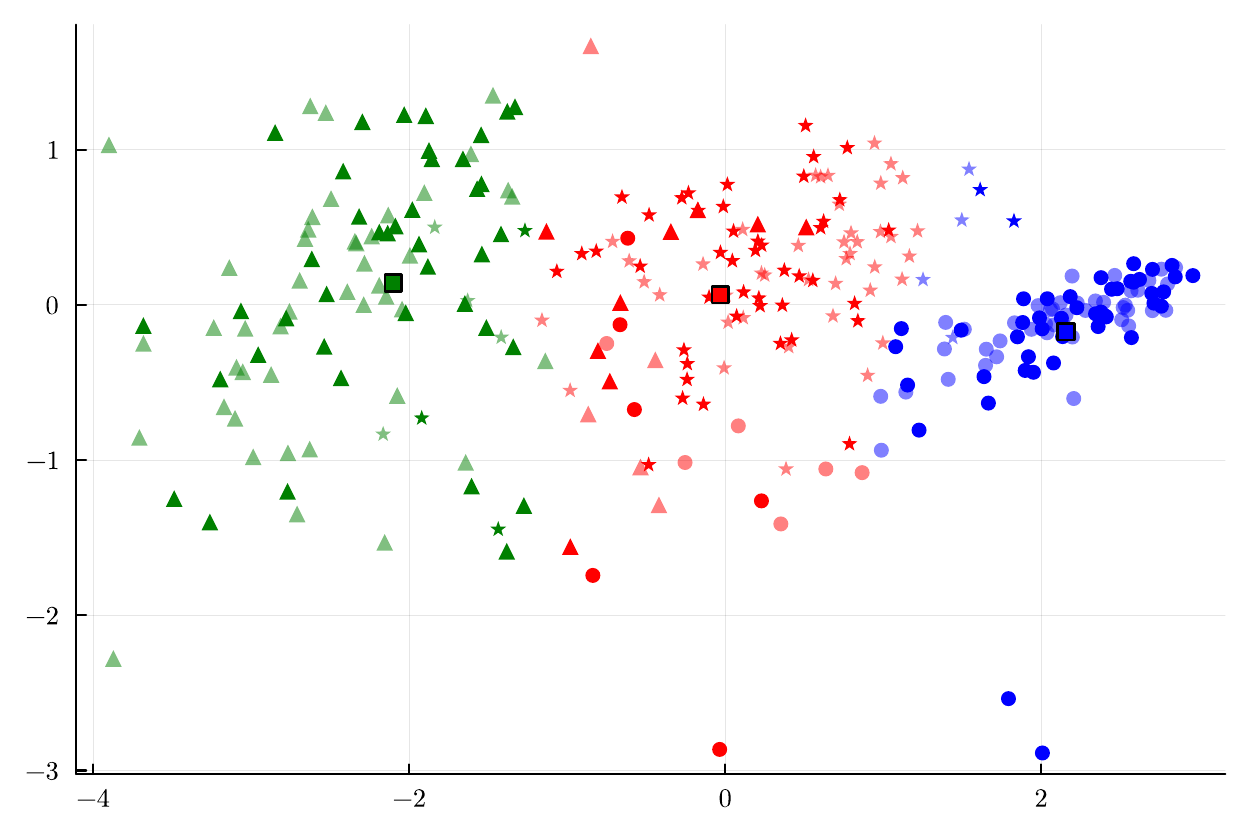}
        & \includegraphics[width=0.45\linewidth, clip=true, trim=10pt 0pt 7pt 0pt]{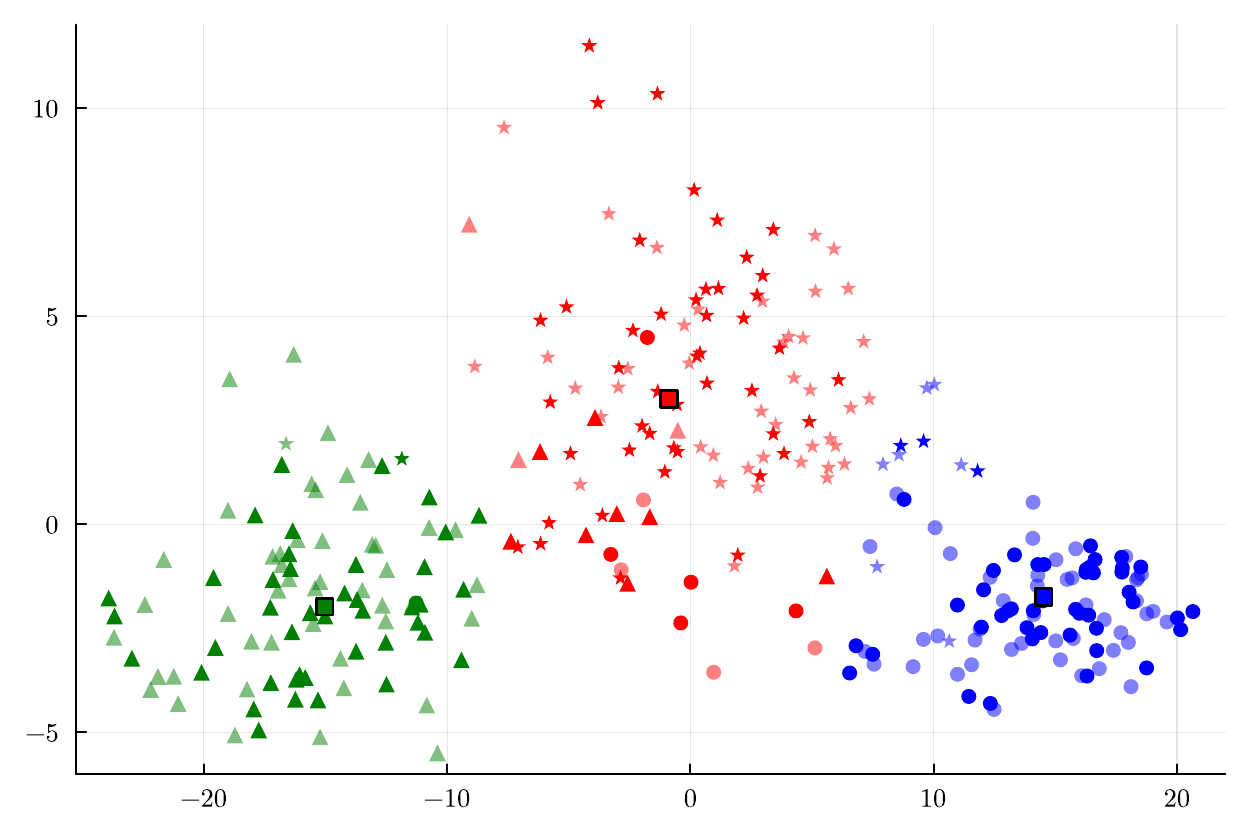}
    \end{tabular}
    \caption{$3$-means cluster visualization 
    for the LinMNIST dataset
    using a 2d PCA in
    the respective feature space.}
    \label{fig:kmeansLinMNIST}
\end{figure*}

\subsection{Multi-dimensional Classification}
\label{sec:gen_hNRCDTs}

In the final set of numerical experiments,
we leave the 2d Euclidean setting
and explore the aptitude of our generalized \hNRCDT{}s
regarding classification tasks 
on $\XX = \R^3$ and $\XX = \SO(3)$.
For this,
we consider classes of empirical measures 
\begin{equation*}
    \mu \coloneqq \frac{1}{K} \sum_{k=1}^K \delta_{\bfx_k},
    \quad
    \bfx_k \in \XX,
\end{equation*}
whose generalized restricted Radon transforms are analytically given by
\begin{equation*}
    \Radon_{\phi,\bftheta}[\mu]
    =
    \frac{1}{K} \sum_{k=1}^K \delta_{\phi(\bfx_k, \bftheta)}.
\end{equation*}
Then, the corresponding quantile functions are piecewise constant
and these are sampled on an equispaced grid
to calculate the final \hNRCDT{}s.

\subsubsection{3D Object and Pose Recognition}
\label{sssec:3d_object_recognition}

The first generalized \hNRCDT{}
we study numerically
is the multi-dimensional extension 
in §~\ref{sec:multi-NRCDT},
which can be applied to the recognition of 3d objects.
For first proof-of-concept experiments,
we consider the following three datasets:
\begin{enumerate}[label=\alph*)]
    \item \textbf{Animal dataset}.
    We use the 3d triangular mesh information of six animals%
    ---lion, cat, camel, horse, flamingo, elephant---%
    provided in \cite{SumnerWeb,Sumner2004},
    where each mesh consists of 5000 vertices.
    More precisely,
    we equip these vertices 
    with a uniform probability distribution
    to obtain empirical measures on $\R^3$.
    On the basis of these six templates,
    the dataset itself is generated by
    applying 10 random affine transformations
    per template,
    consisting of anisotropic scaling factors in $[0.5, 1.0]$
    and shearing in $[-15^\circ, 15 ^\circ]$ 
    for each coordinate direction
    as well as
    random 3d rotations
    and shifts in $[-25,25]^3$.
    The animal dataset is illustrated 
    in  Figure~\ref{fig:animal_dataset_3d}.       
    \begin{figure}[t]
        \footnotesize%
        \centering%
        \begin{tabular}{c @{\hspace{5pt}} c @{\hspace{5pt}} c}
        lion & cat & camel 
        \\
        {\includegraphics[width=0.3\linewidth, clip=true, trim=280pt 150pt 300pt 150pt]{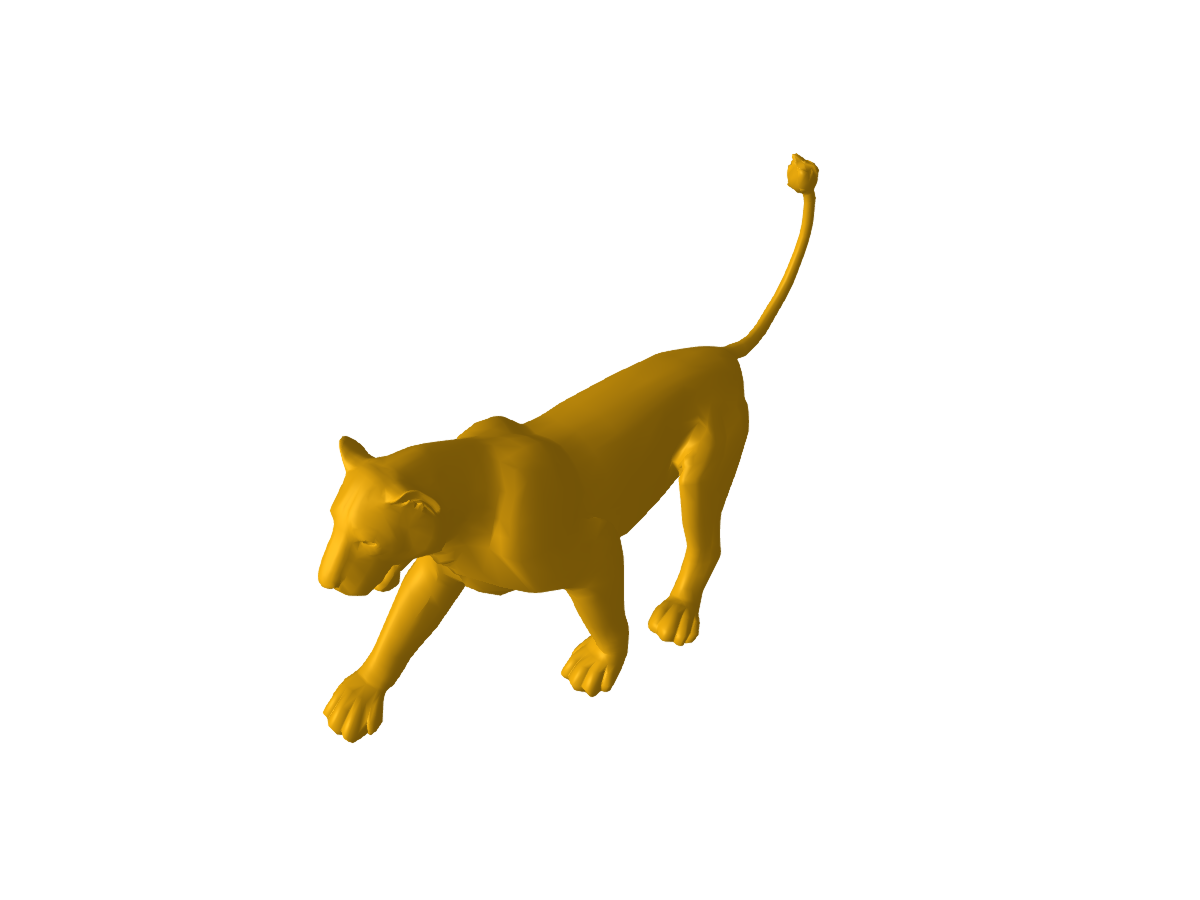}}%
        & {\includegraphics[width=0.3\linewidth, clip=true, trim=280pt 150pt 300pt 150pt]{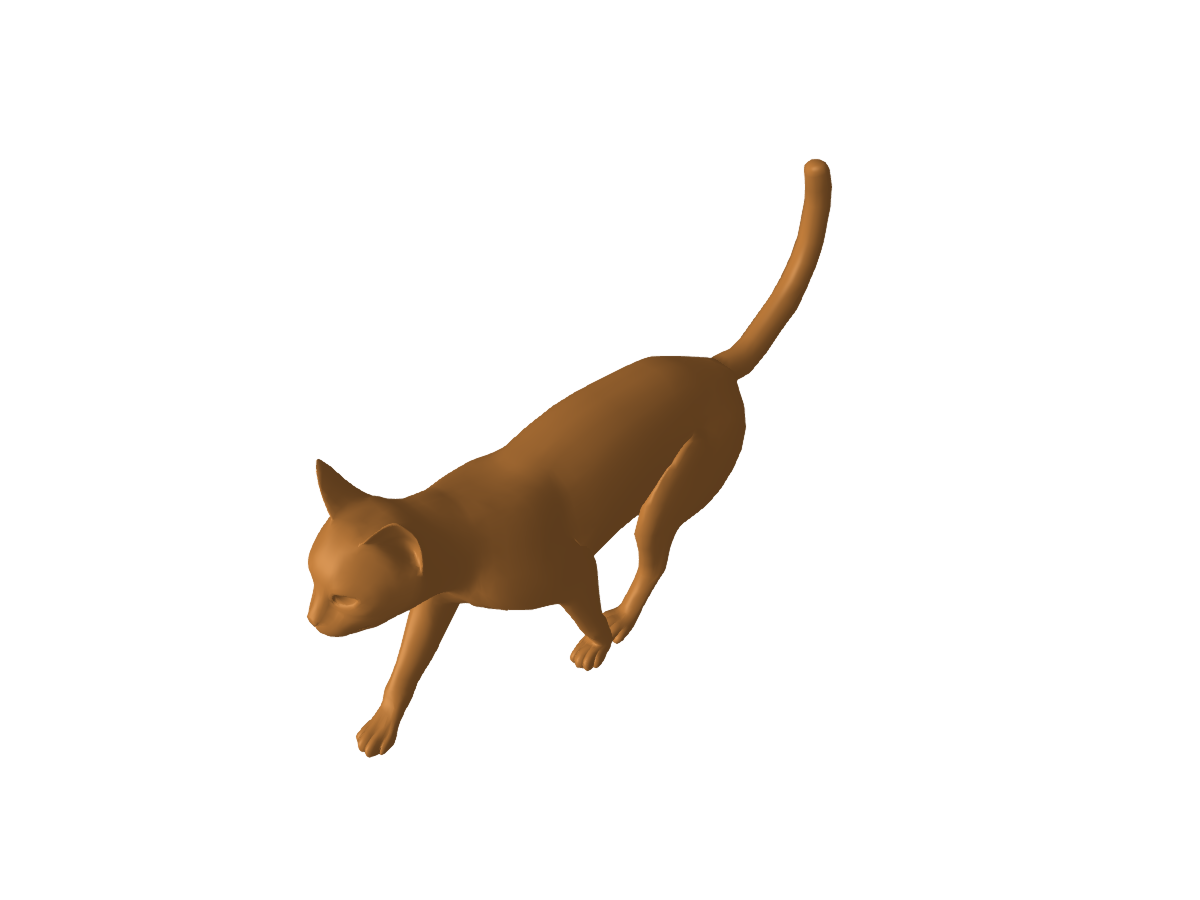}}%
        & {\includegraphics[width=0.3\linewidth, clip=true, trim=280pt 150pt 300pt 150pt]{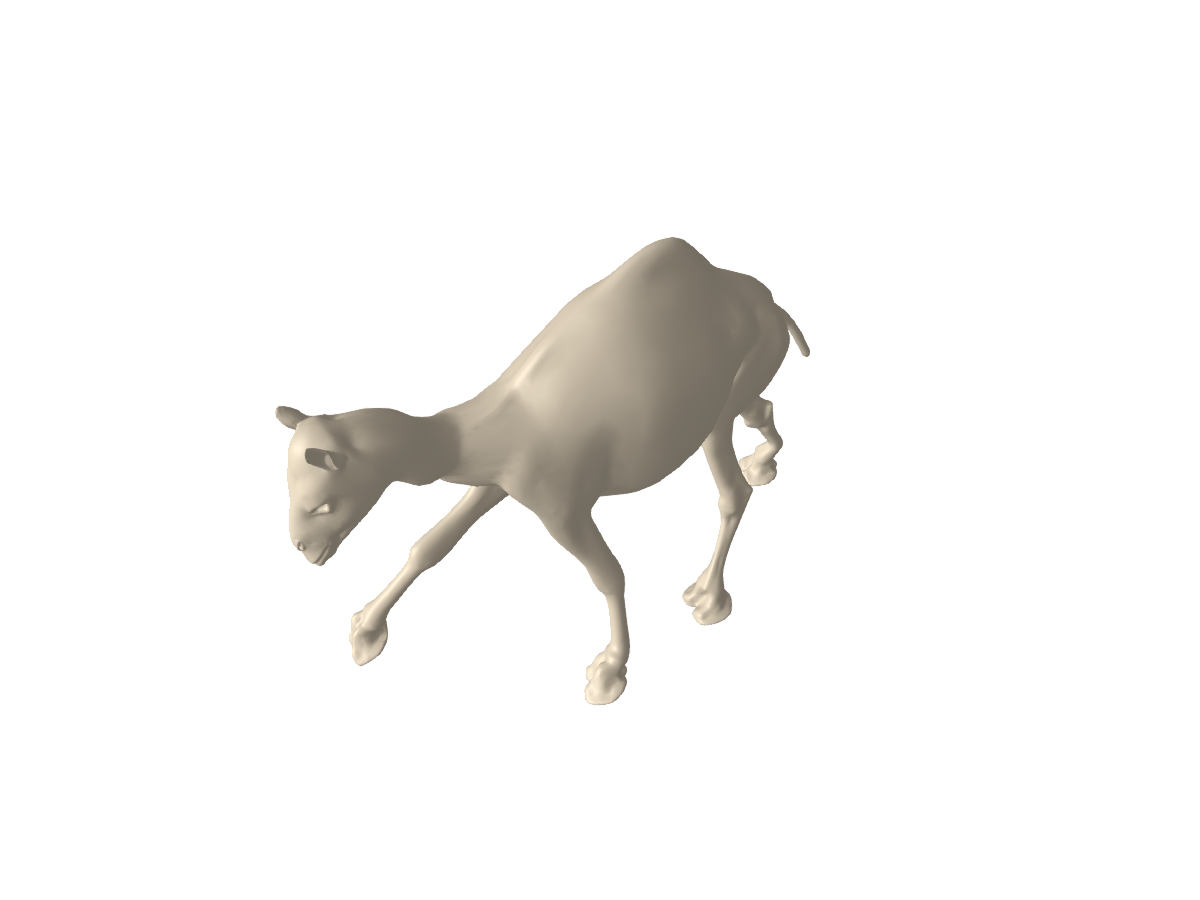}}%
        \\
        {\includegraphics[width=0.15\linewidth, clip=true, trim=280pt 150pt 280pt 150pt]{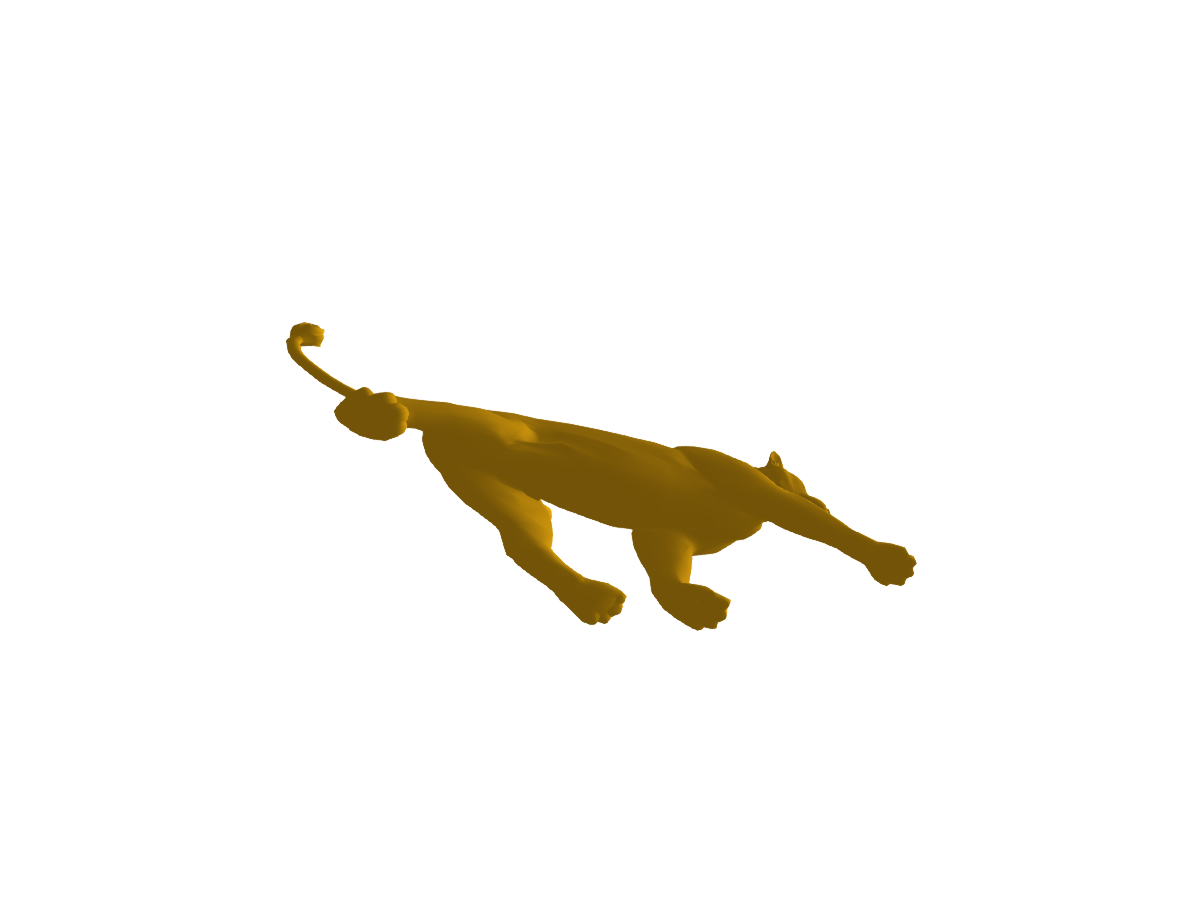}}%
        \hfill%
        {\includegraphics[width=0.15\linewidth, clip=true, trim=280pt 200pt 240pt 200pt]{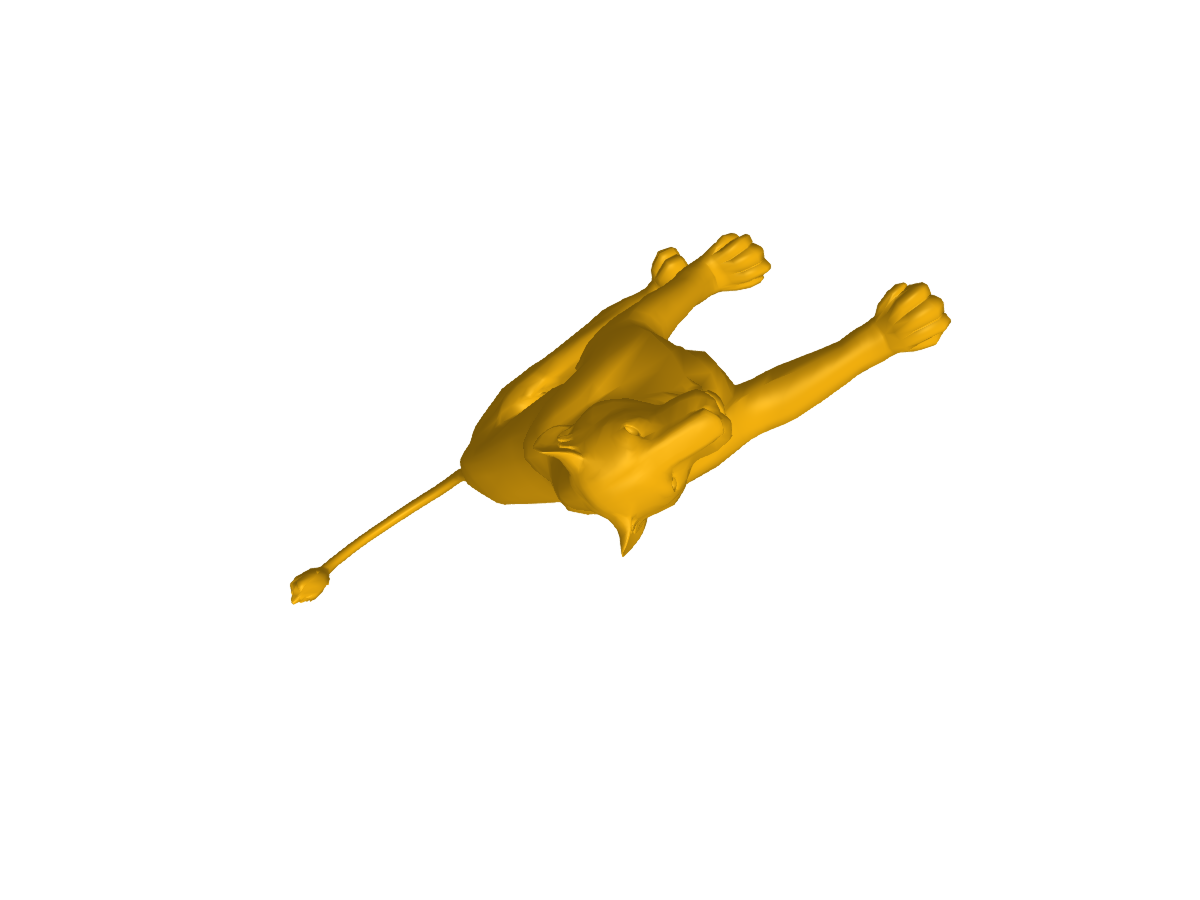}}%
        & {\includegraphics[width=0.15\linewidth, clip=true, trim=280pt 80pt 280pt 50pt]{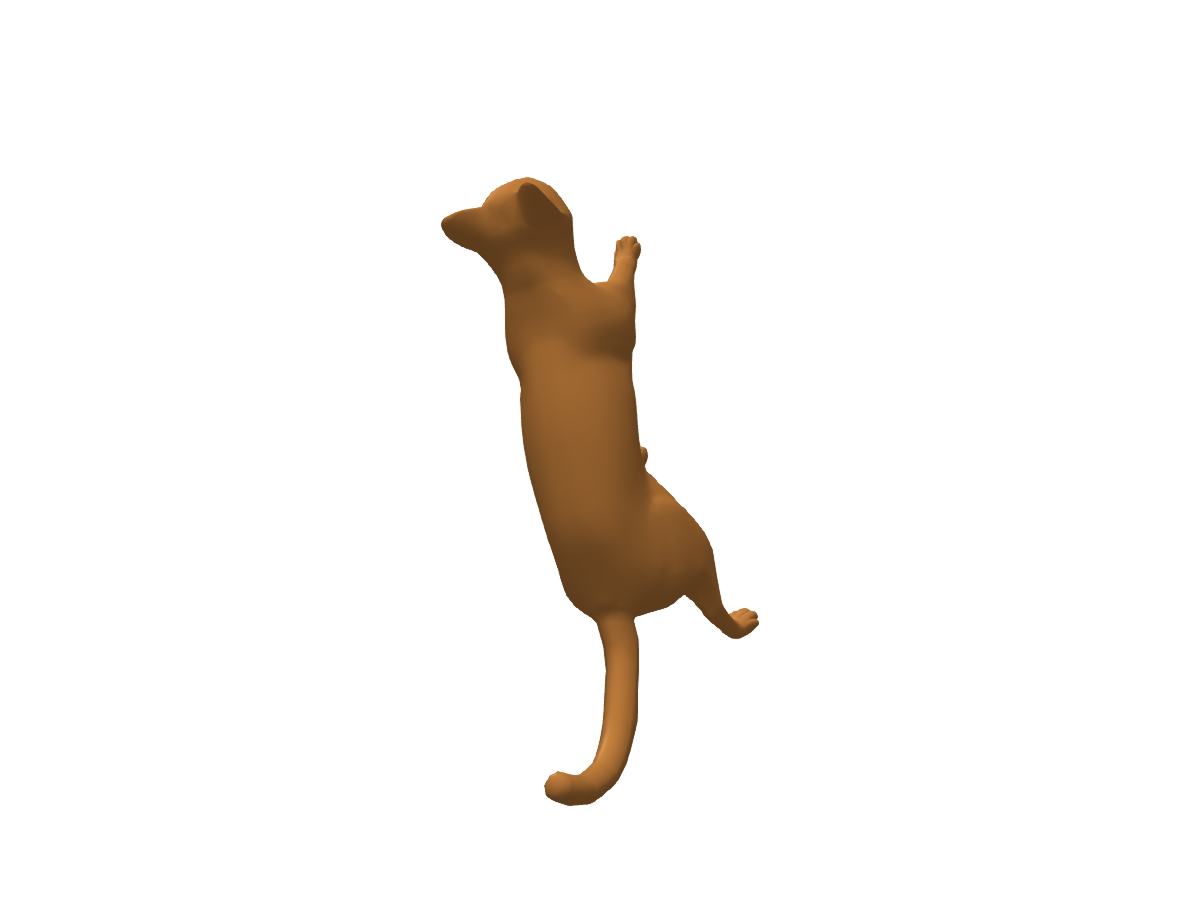}}%
        \hfill%
        {\includegraphics[width=0.15\linewidth, clip=true, trim=280pt 50pt 280pt 50pt]{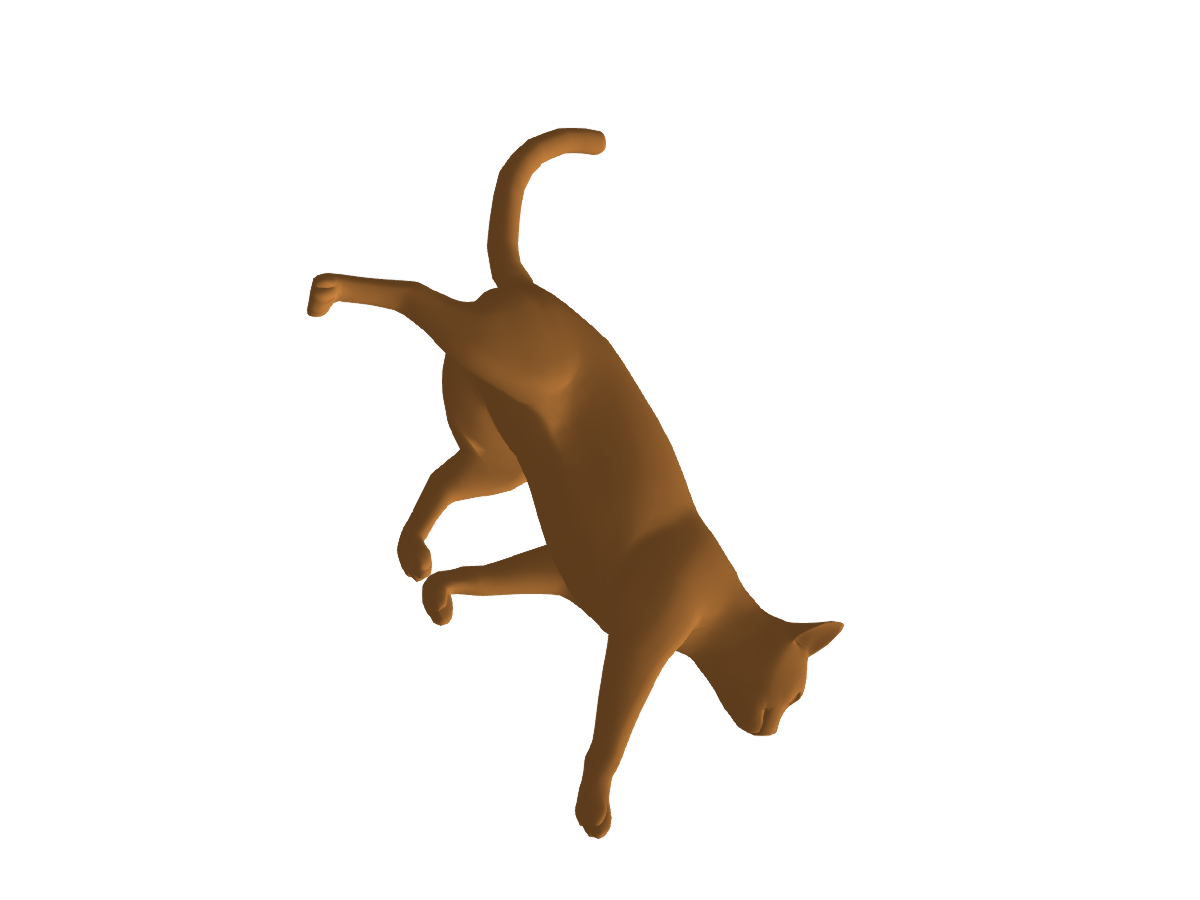}}%
        & {\includegraphics[width=0.15\linewidth, clip=true, trim=250pt 0pt 250pt 0pt]{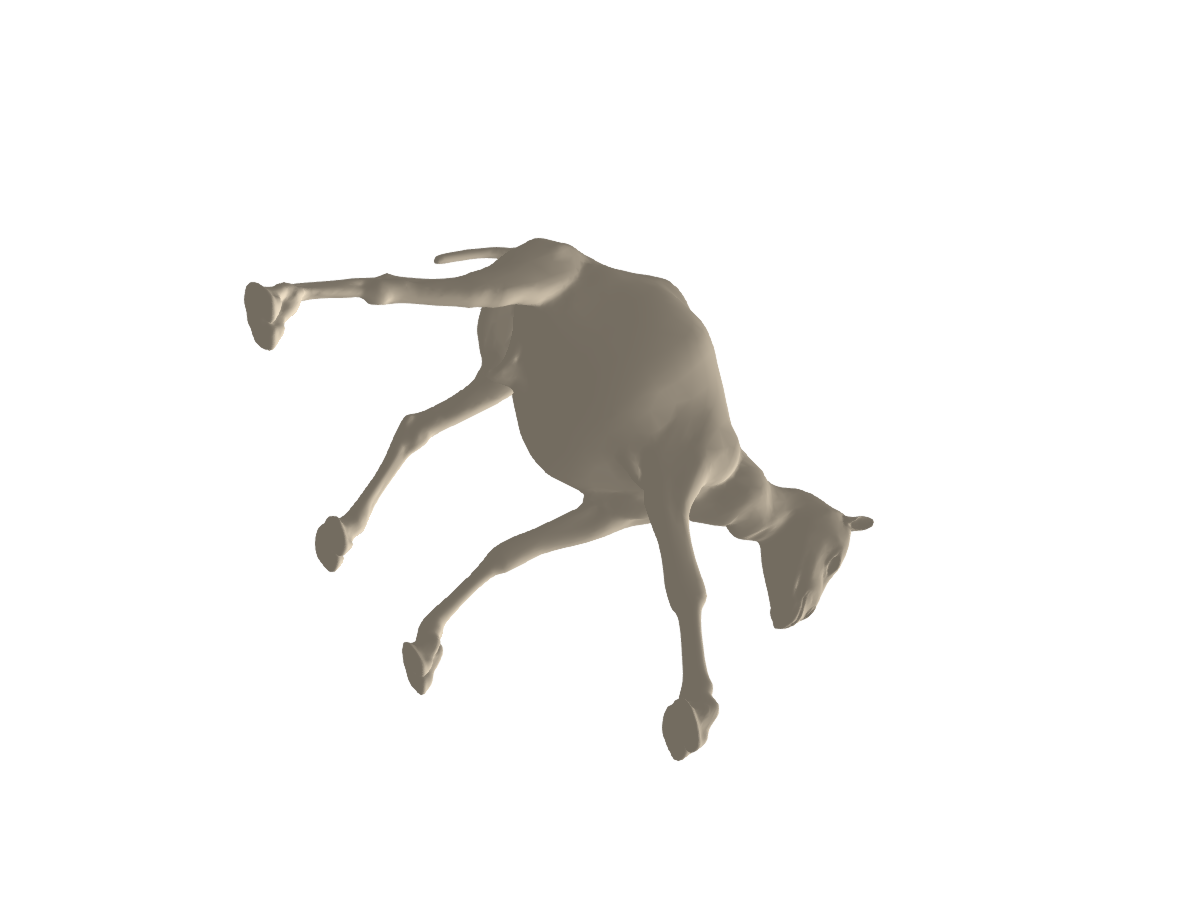}}%
        \hfill%
        {\includegraphics[width=0.15\linewidth, clip=true, trim=350pt 150pt 300pt 50pt]{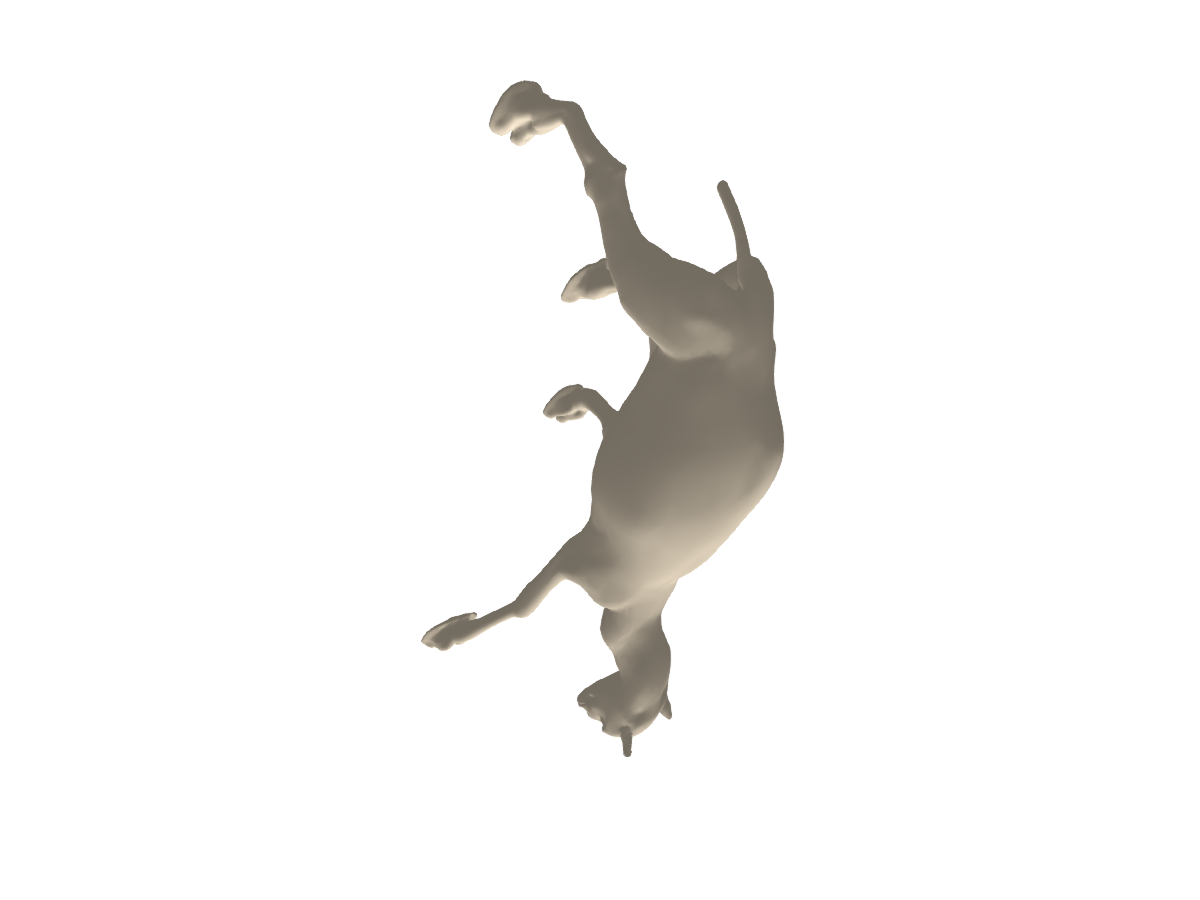}}%
        \\[10pt]
        horse & flamingo & elephant\\
        {\includegraphics[width=0.3\linewidth, clip=true, trim=220pt 150pt 360pt 150pt]{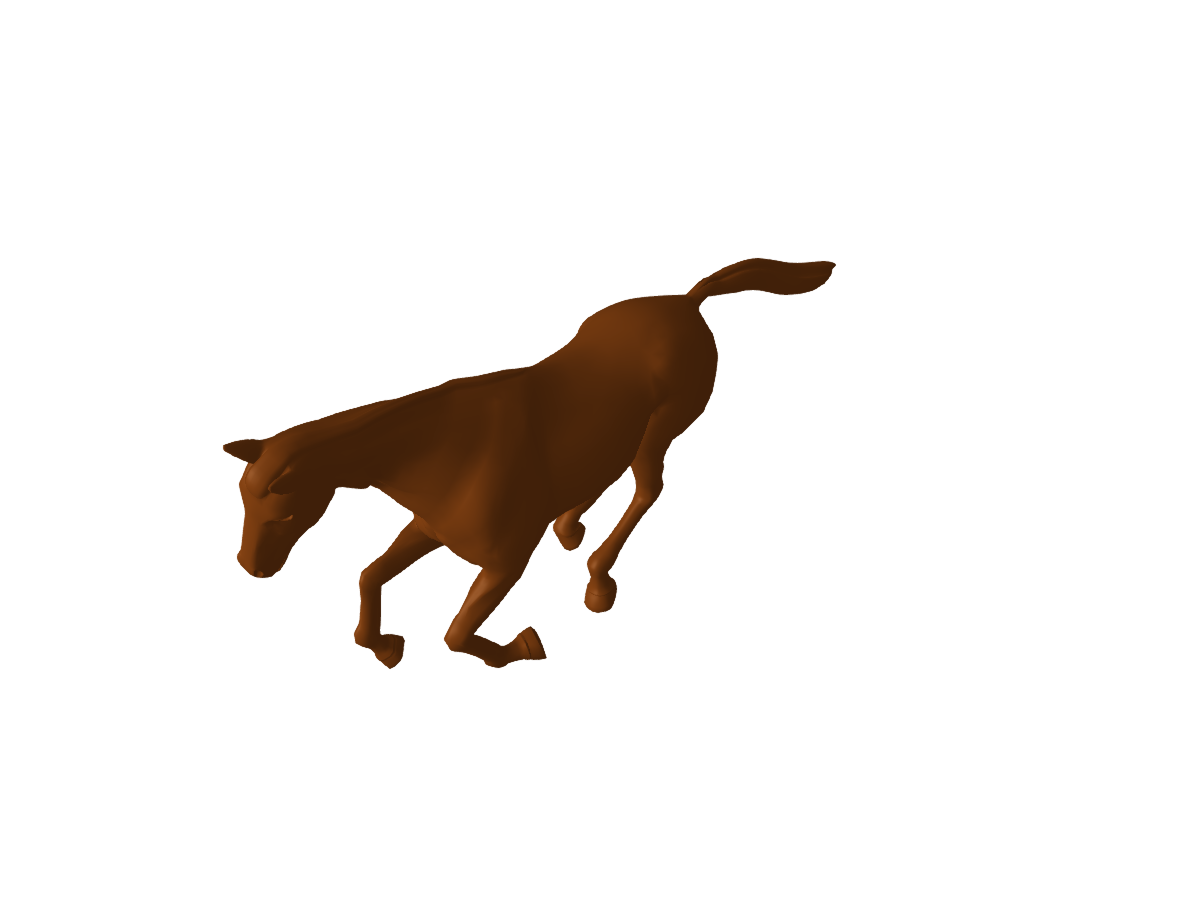}}%
        & {\includegraphics[width=0.3\linewidth, clip=true, trim=280pt 150pt 300pt 150pt]{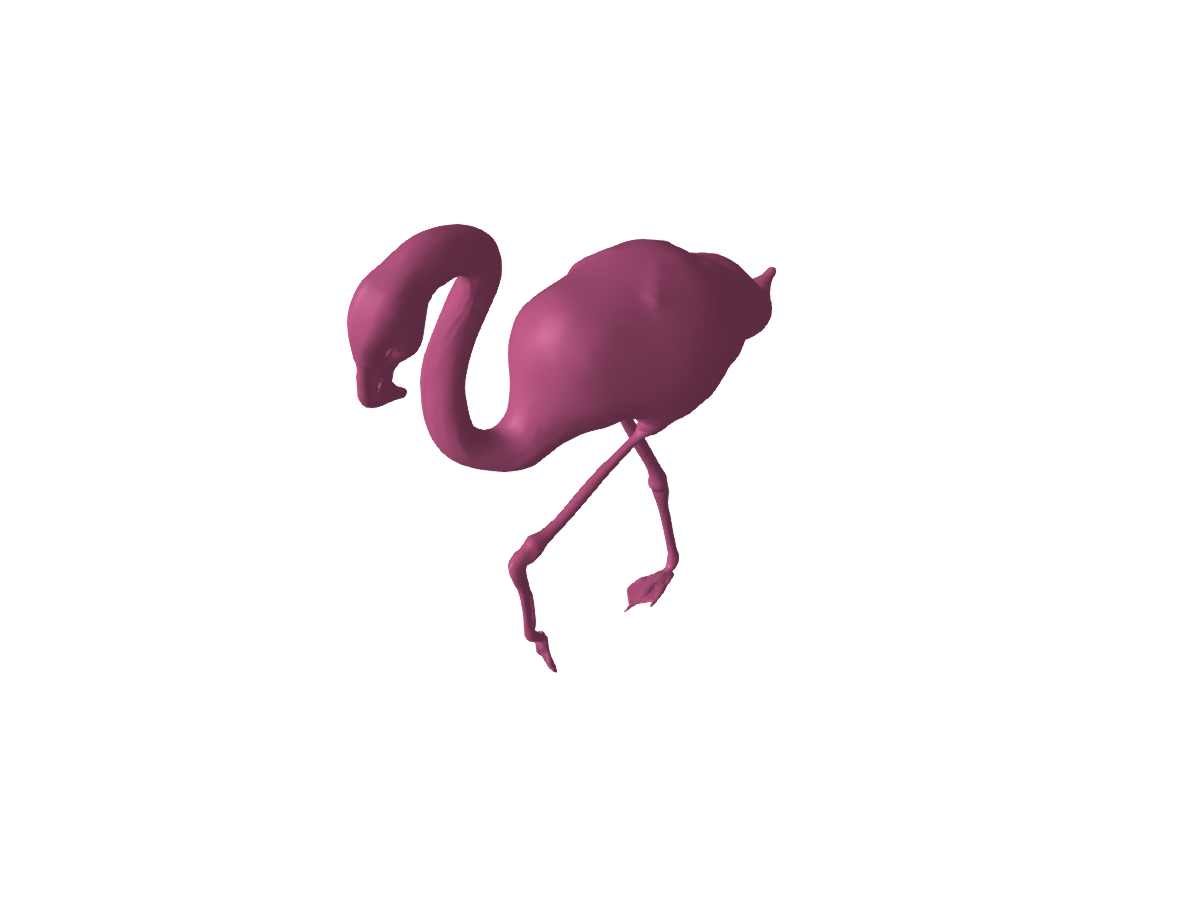}}%
        & {\includegraphics[width=0.3\linewidth, clip=true, trim=280pt 150pt 300pt 150pt]{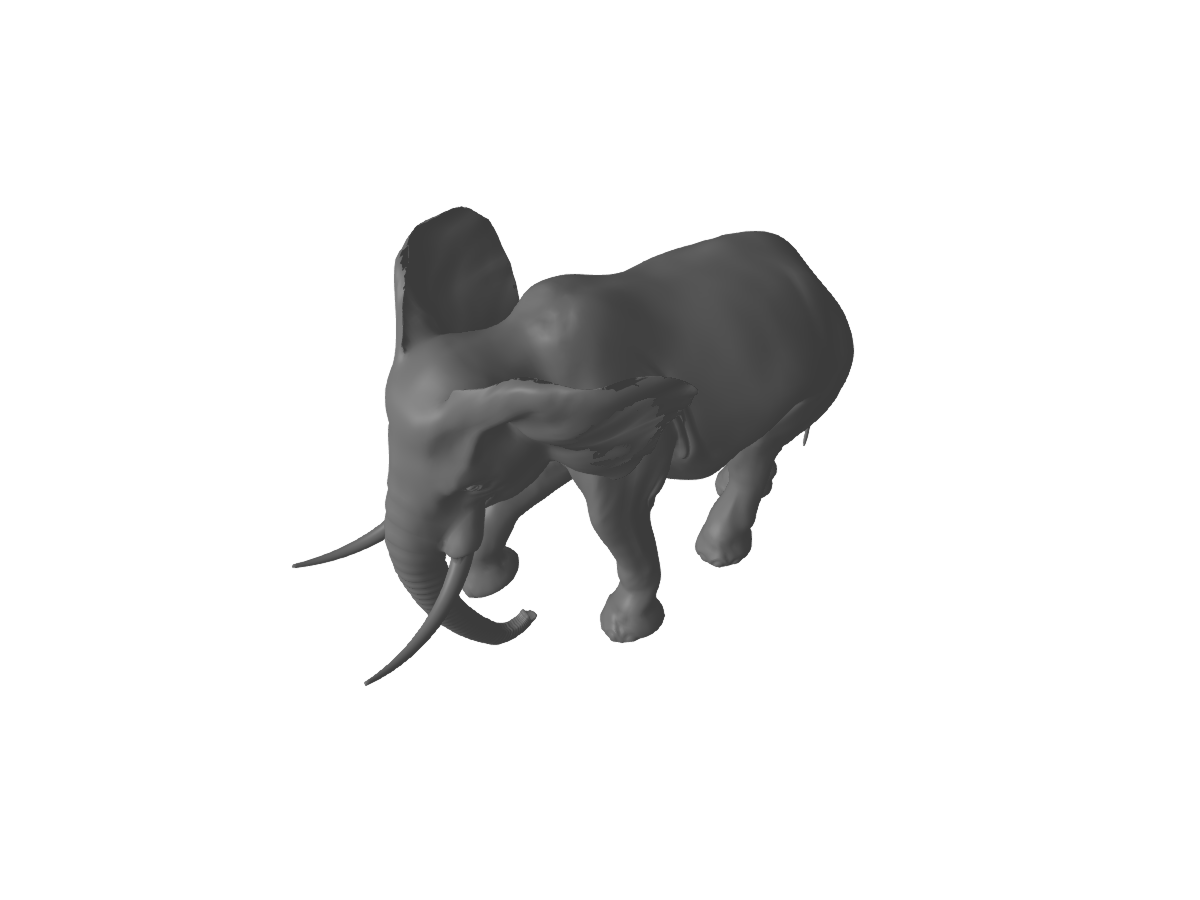}}%
        \\
        {\includegraphics[width=0.15\linewidth, clip=true, trim=250pt 200pt 250pt 200pt]{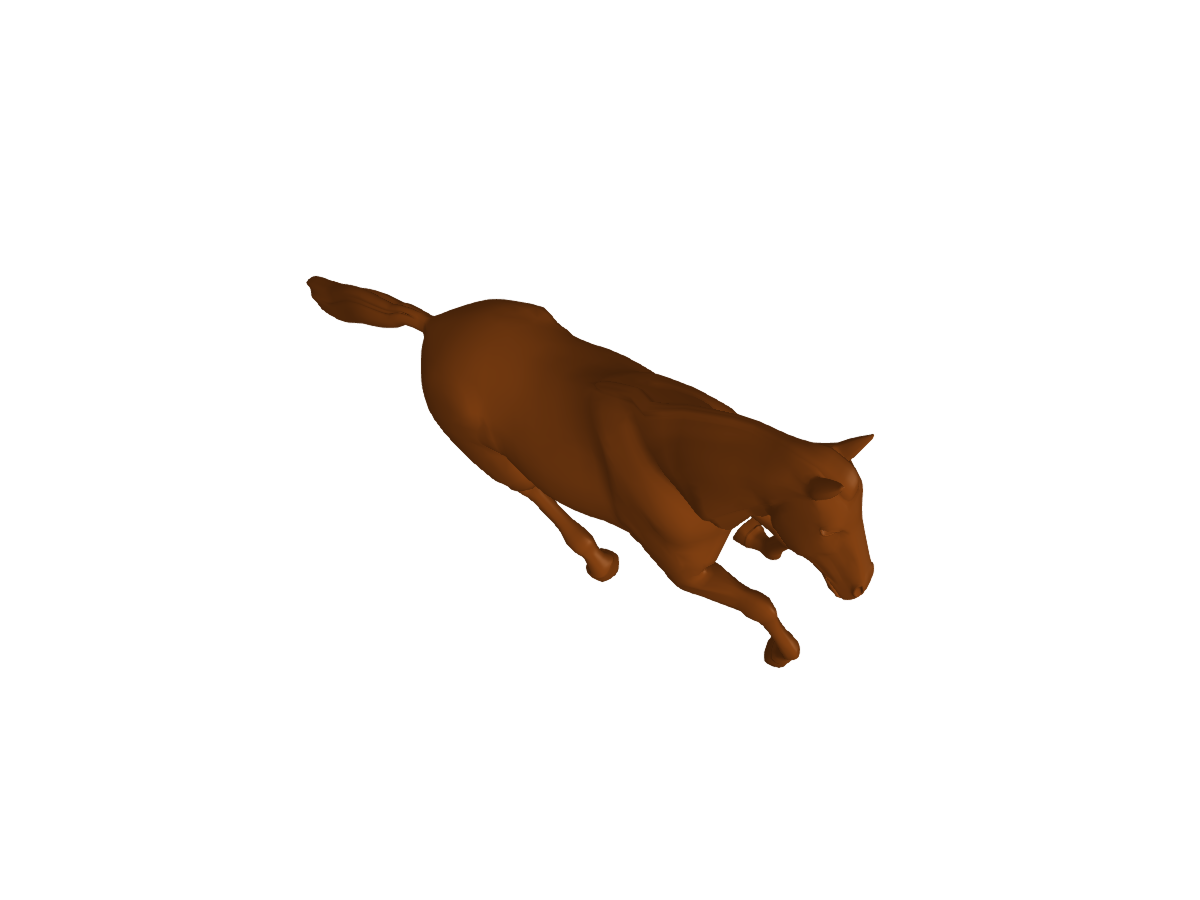}}%
        \hfill%
        {\includegraphics[width=0.15\linewidth, clip=true, trim=300pt 150pt 300pt 100pt]{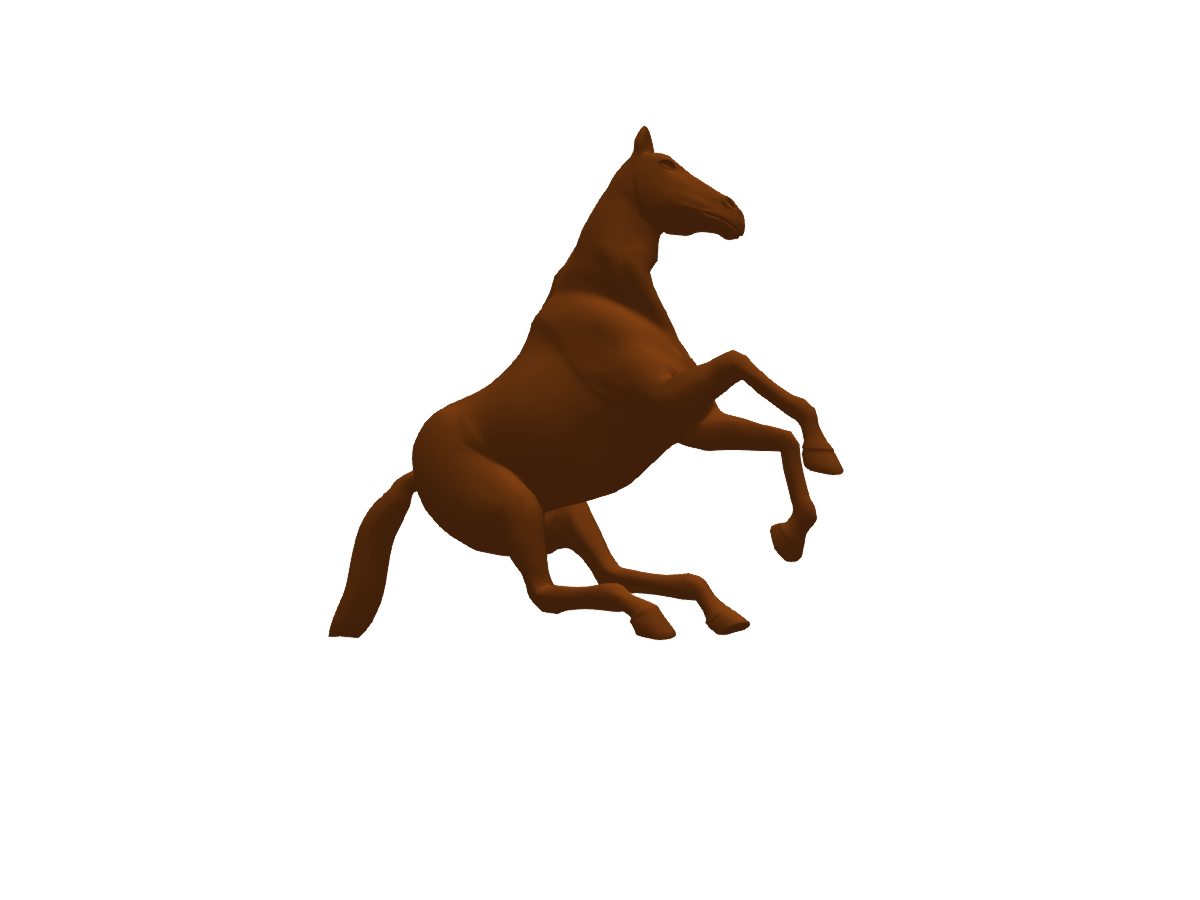}}%
        & {\includegraphics[width=0.15\linewidth, clip=true, trim=300pt 100pt 250pt 100pt]{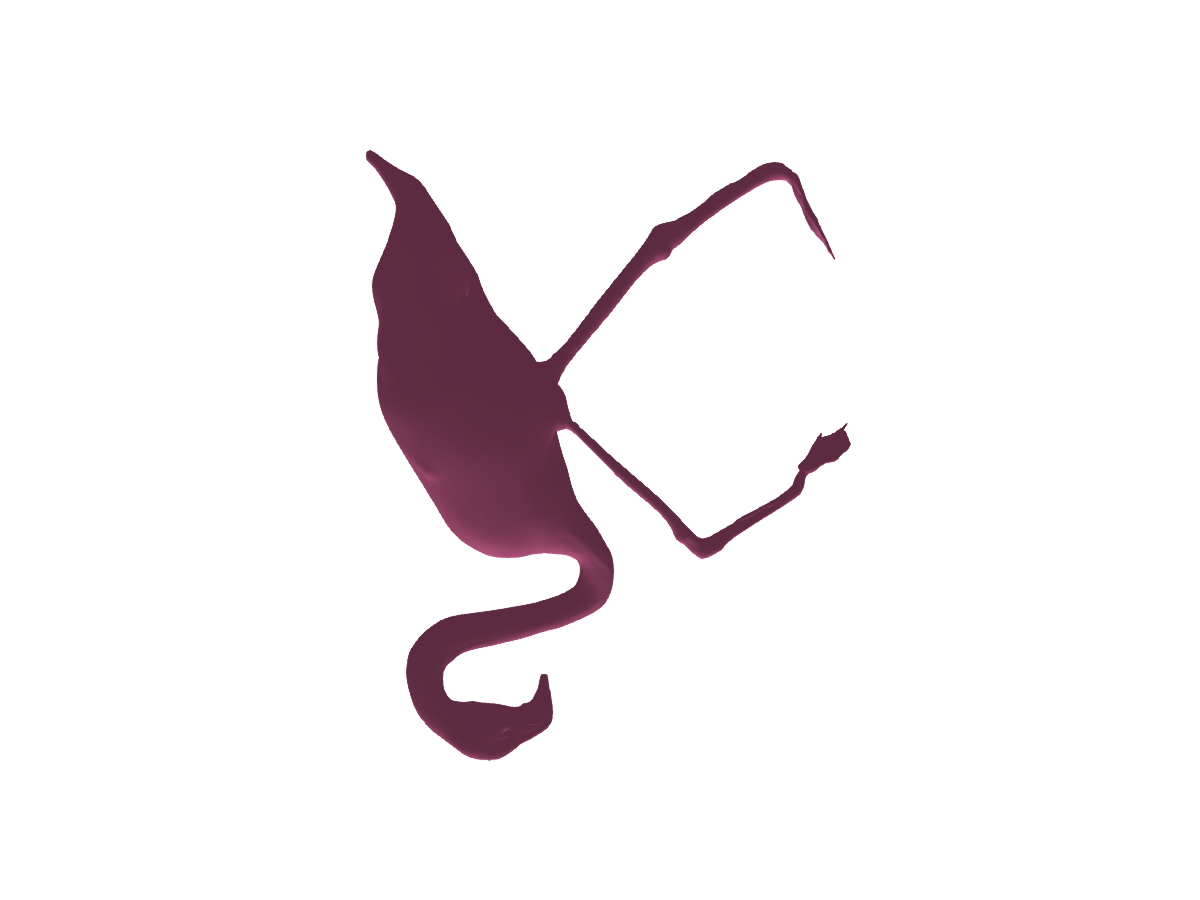}}%
        \hfill%
        {\includegraphics[width=0.15\linewidth, clip=true, trim=300pt 200pt 280pt 200pt]{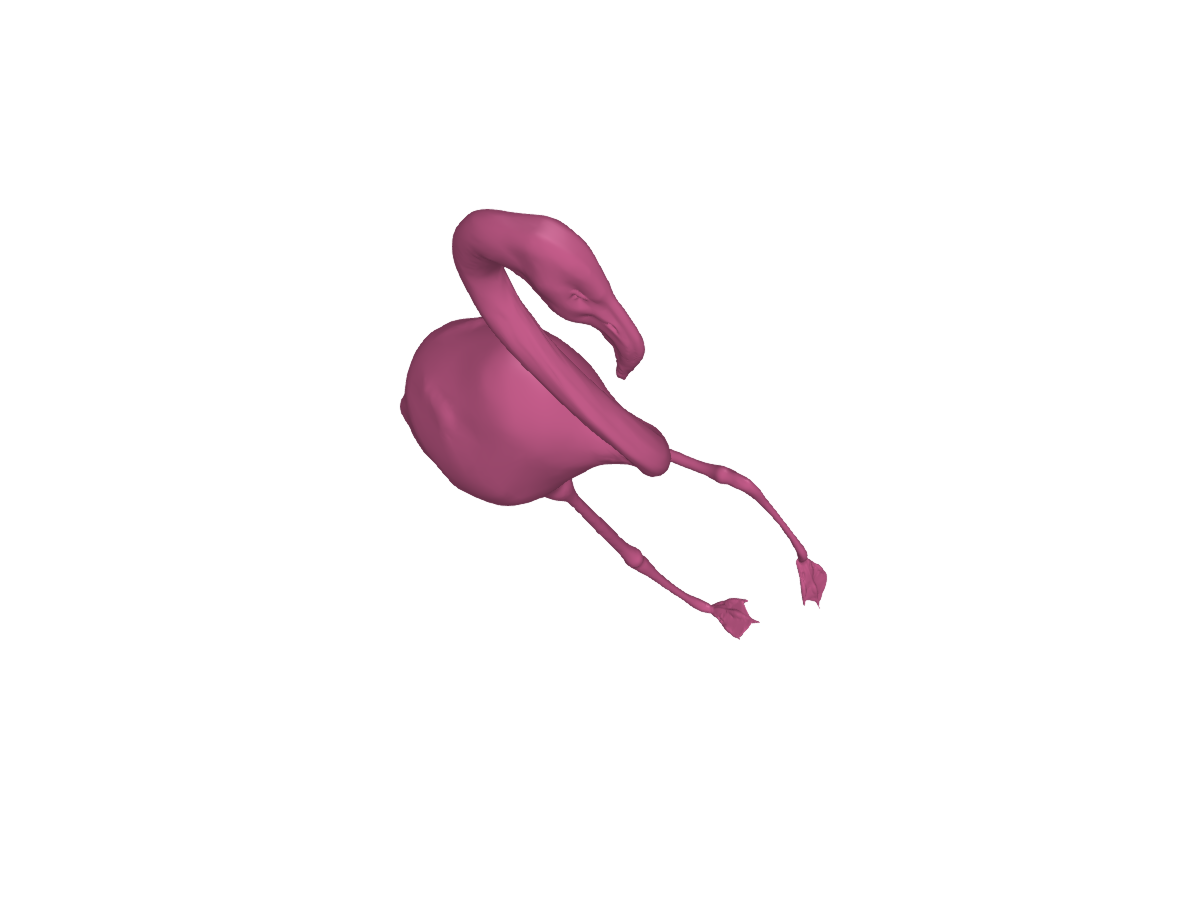}}%
        & {\includegraphics[width=0.15\linewidth, clip=true, trim=250pt 100pt 300pt 100pt]{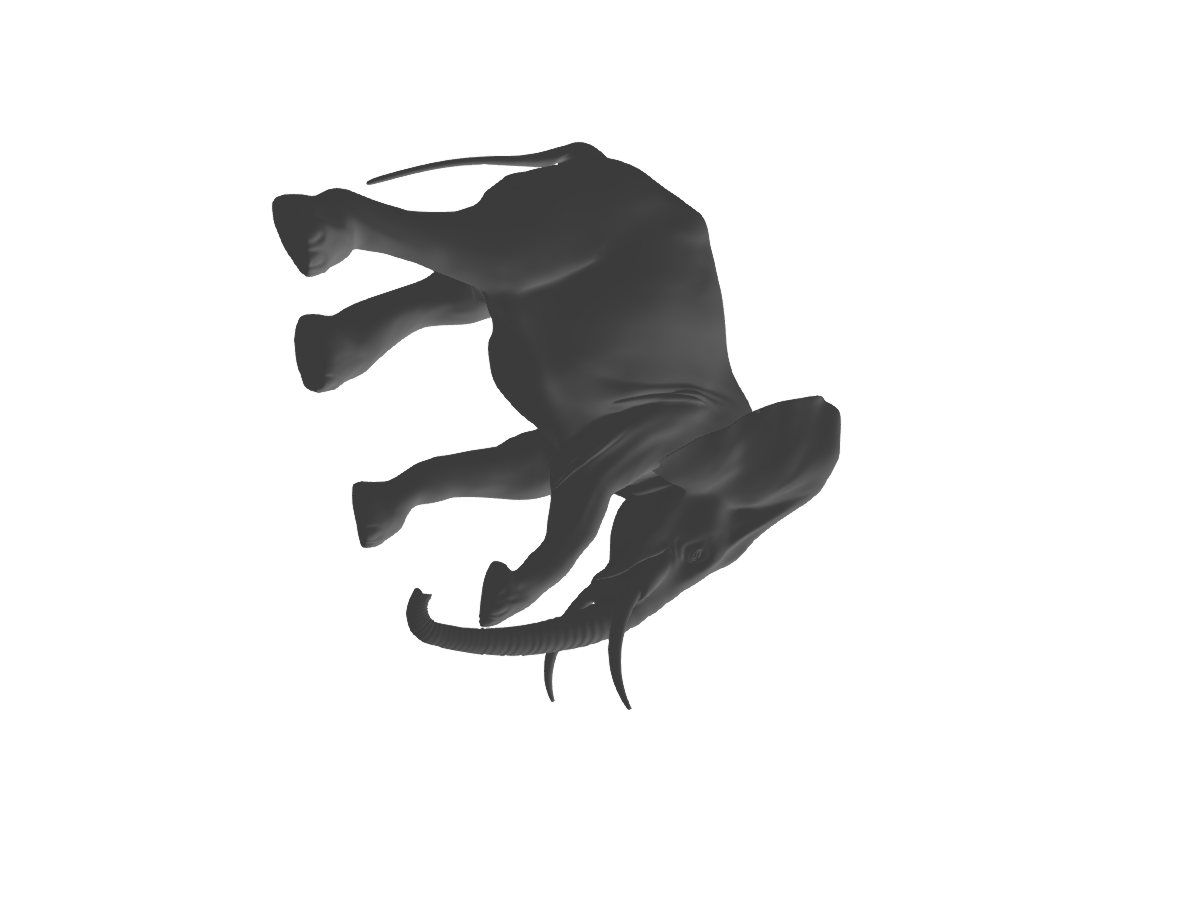}}%
        \hfill%
        {\includegraphics[width=0.15\linewidth, clip=true, trim=350pt 180pt 350pt 200pt]{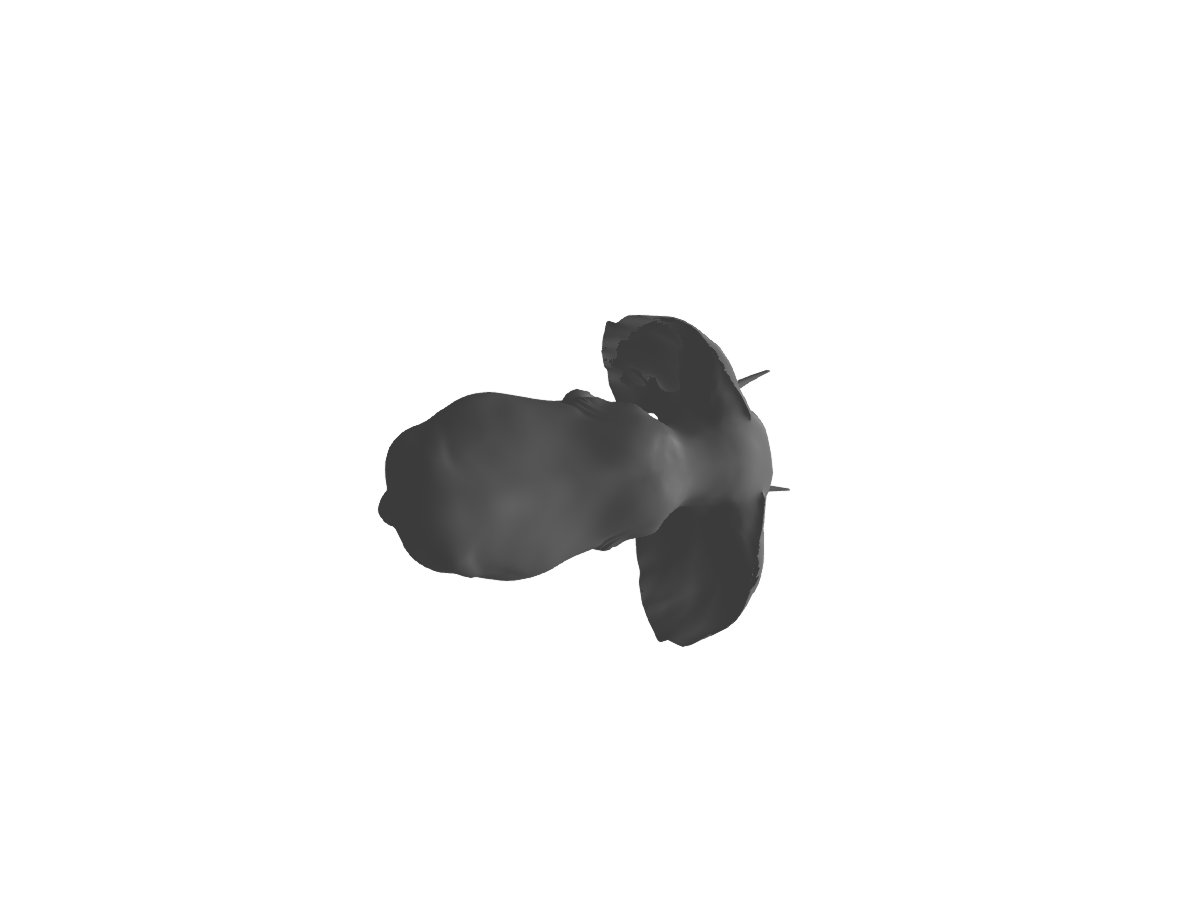}}
        \end{tabular}
        \caption{Visualization of the animal dataset:
        template meshes (top) and
        affinely transformed samples (bottom).}
        \label{fig:animal_dataset_3d}
    \end{figure}
    
    \item \textbf{ABC dataset}.
    The original `A Big CAD Model Dataset' \cite{Koch2019}
    consists of over 1 million geometrical models.
    To build our affine dataset,
    we use a subset of 80 shapes
    that are visualized in Figure~\ref{fig:abc_dataset_3d}.
    Each object mesh consists of 2028 vertices,
    which we equip with the uniform probability distribution
    to obtain an empirical measure in $\R^3$.
    Similar to the animal dataset,
    each template is used to construct a class of
    10 random affinely transformed versions.
    For the construction,
    we again employ 
    anisotropic scaling factors in $[0.5, 1.0]$
    and shearing in $[-15^\circ, 15 ^\circ]$
    as well as
    random 3d rotations
    and shifts in $[-25,25]^3$.
        
    \begin{figure}[t]
        \resizebox{\linewidth}{!}{%
        \begin{tabular}{c @{\hspace{0pt}} c @{\hspace{0pt}} c @{\hspace{0pt}} c @{\hspace{0pt}} c @{\hspace{0pt}} c @{\hspace{0pt}} c @{\hspace{0pt}} c @{\hspace{0pt}} c @{\hspace{0pt}} c }
        \includegraphics[width=0.1\linewidth, clip=true, trim=140pt 50pt 140pt 50pt]{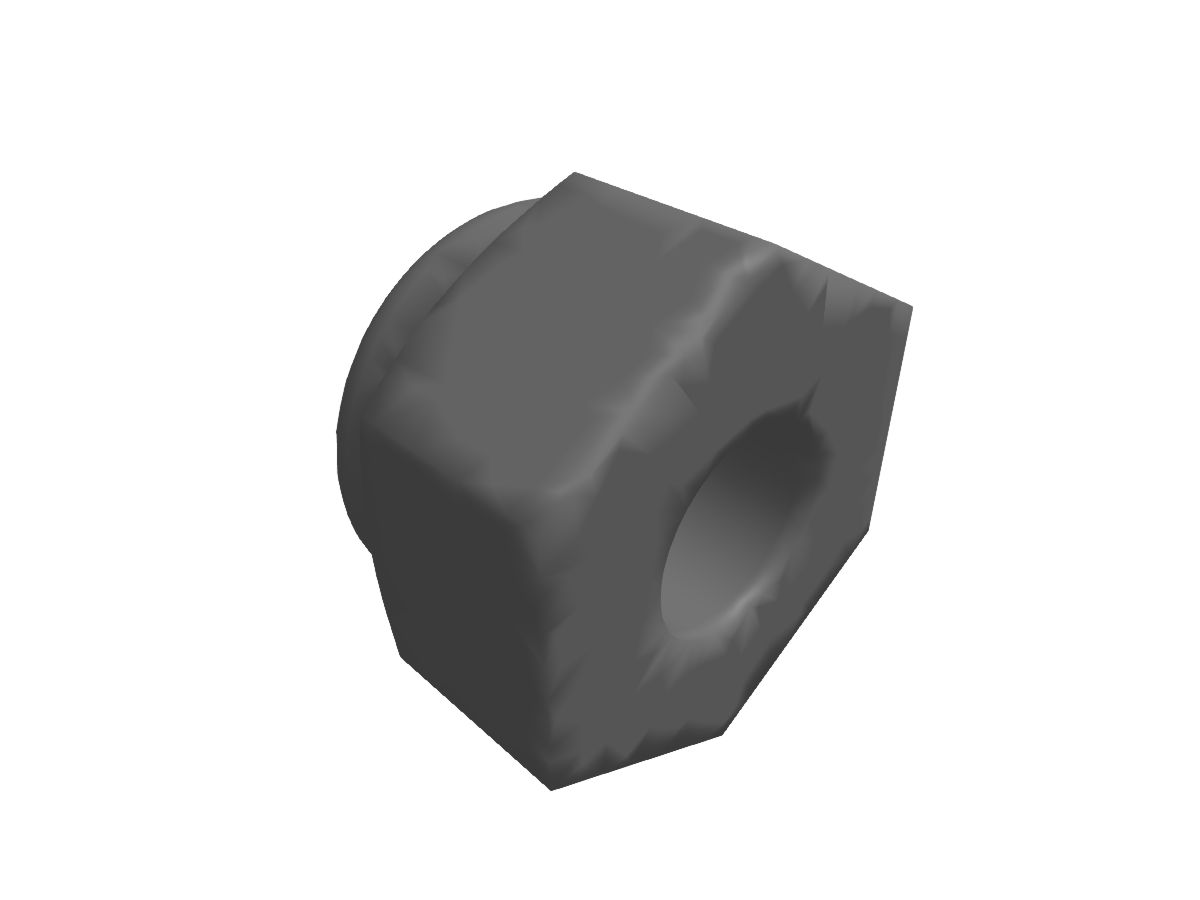}
        &\includegraphics[width=0.1\linewidth, clip=true, trim=140pt 50pt 140pt 50pt]{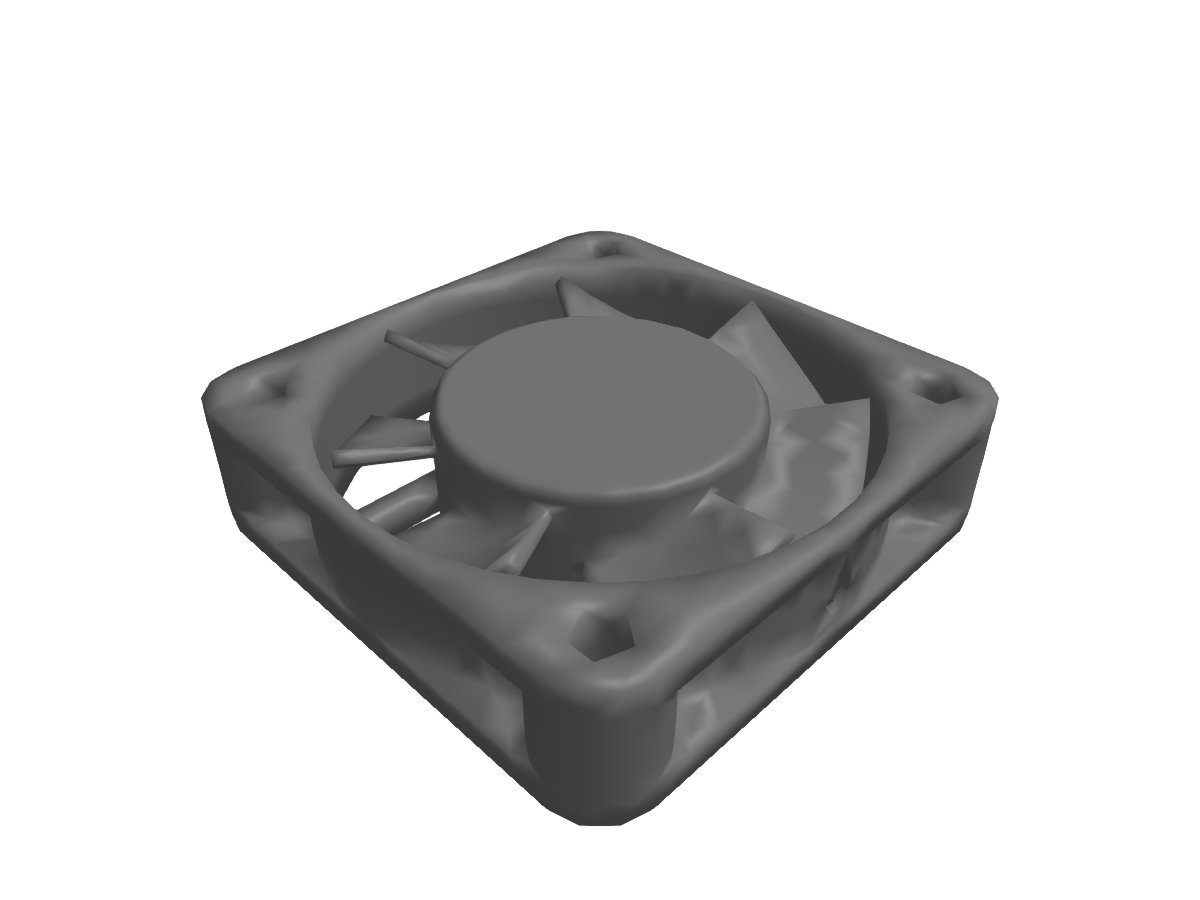}
        &\includegraphics[width=0.1\linewidth, clip=true, trim=140pt 50pt 140pt 50pt]{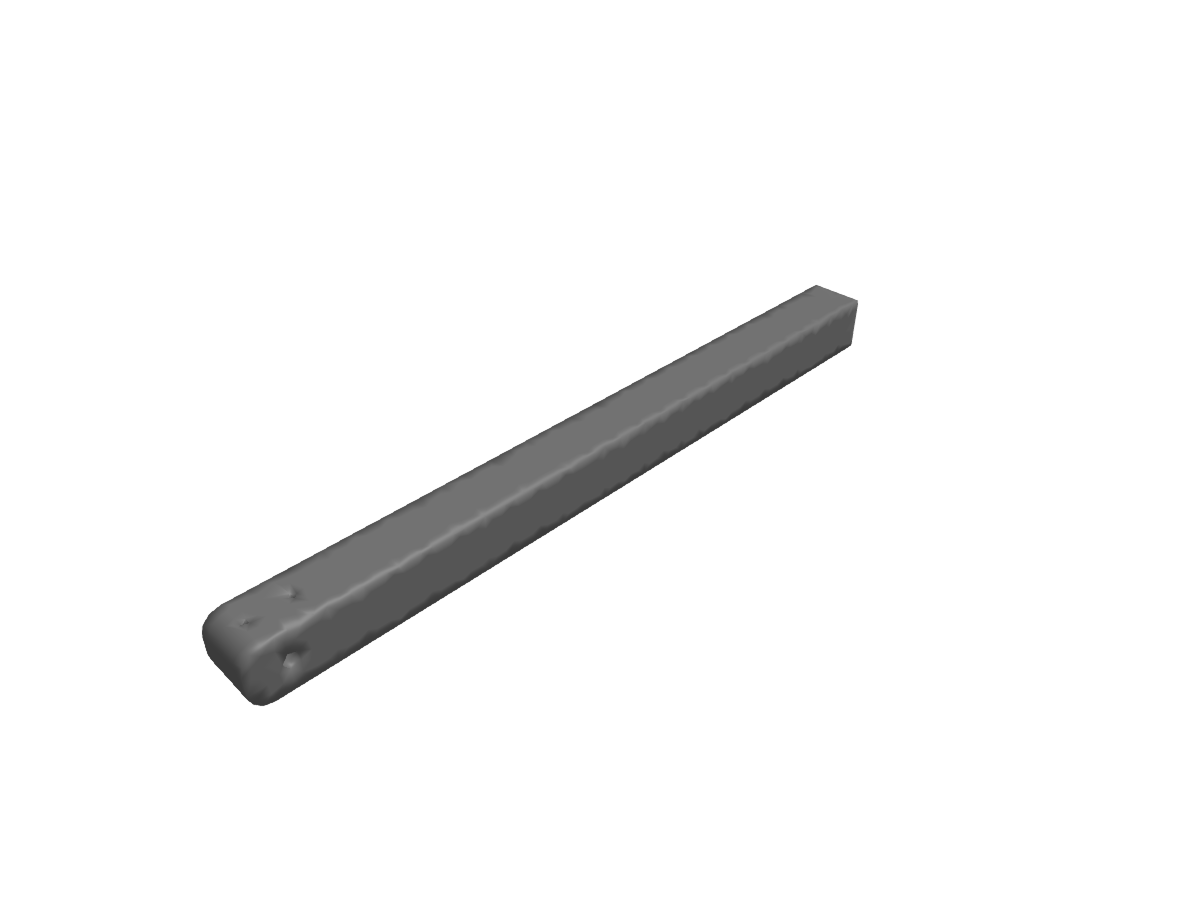}
        &\includegraphics[width=0.1\linewidth, clip=true, trim=140pt 50pt 140pt 50pt]{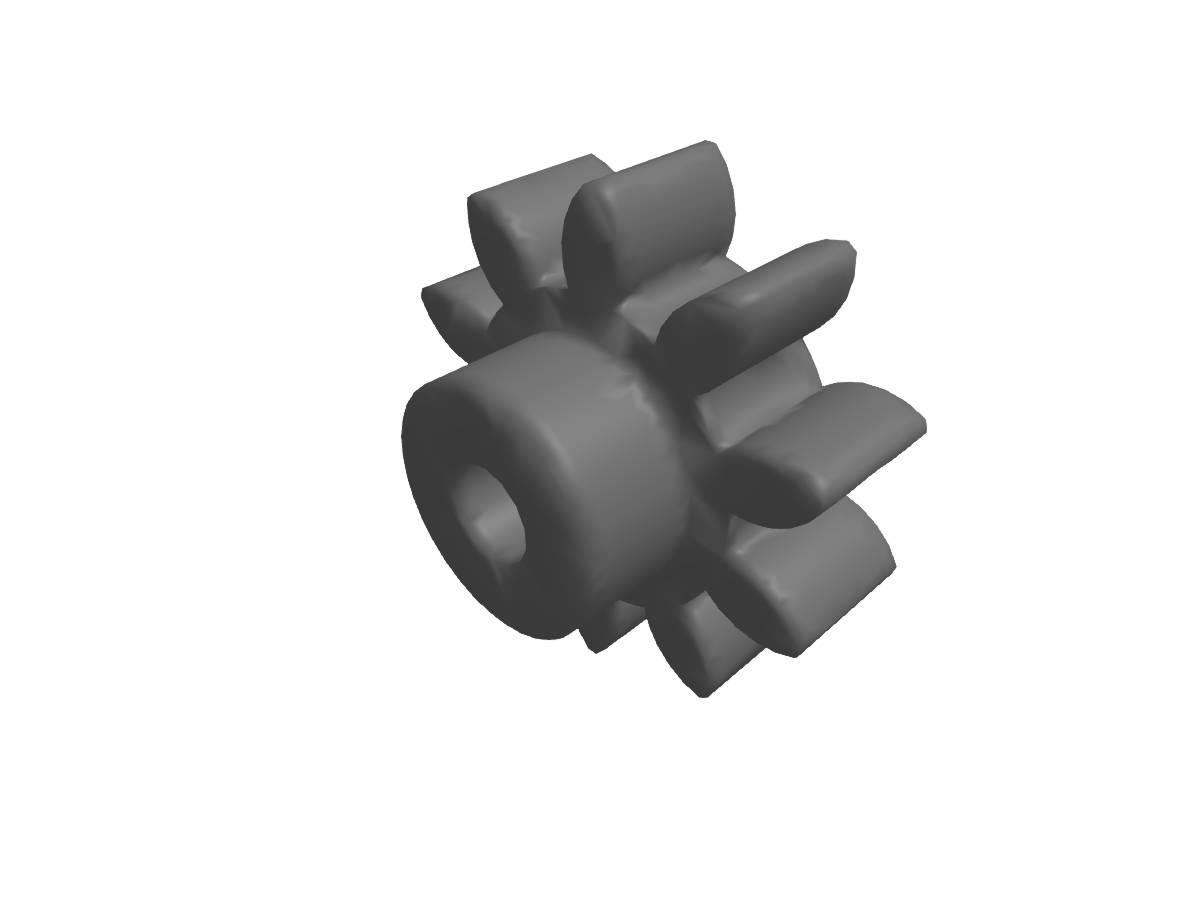}
        &\includegraphics[width=0.1\linewidth, clip=true, trim=140pt 50pt 140pt 50pt]{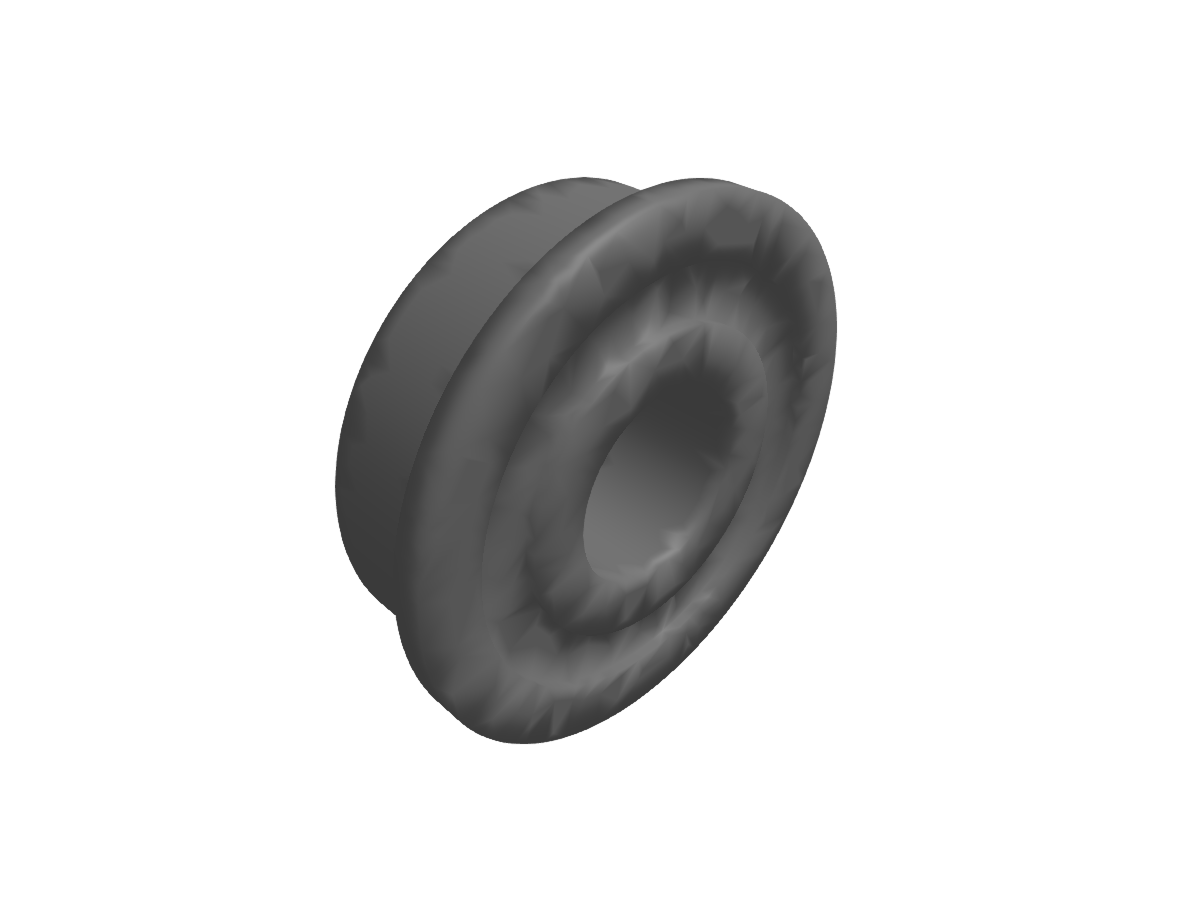}
        &\includegraphics[width=0.1\linewidth, clip=true, trim=140pt 50pt 140pt 50pt]{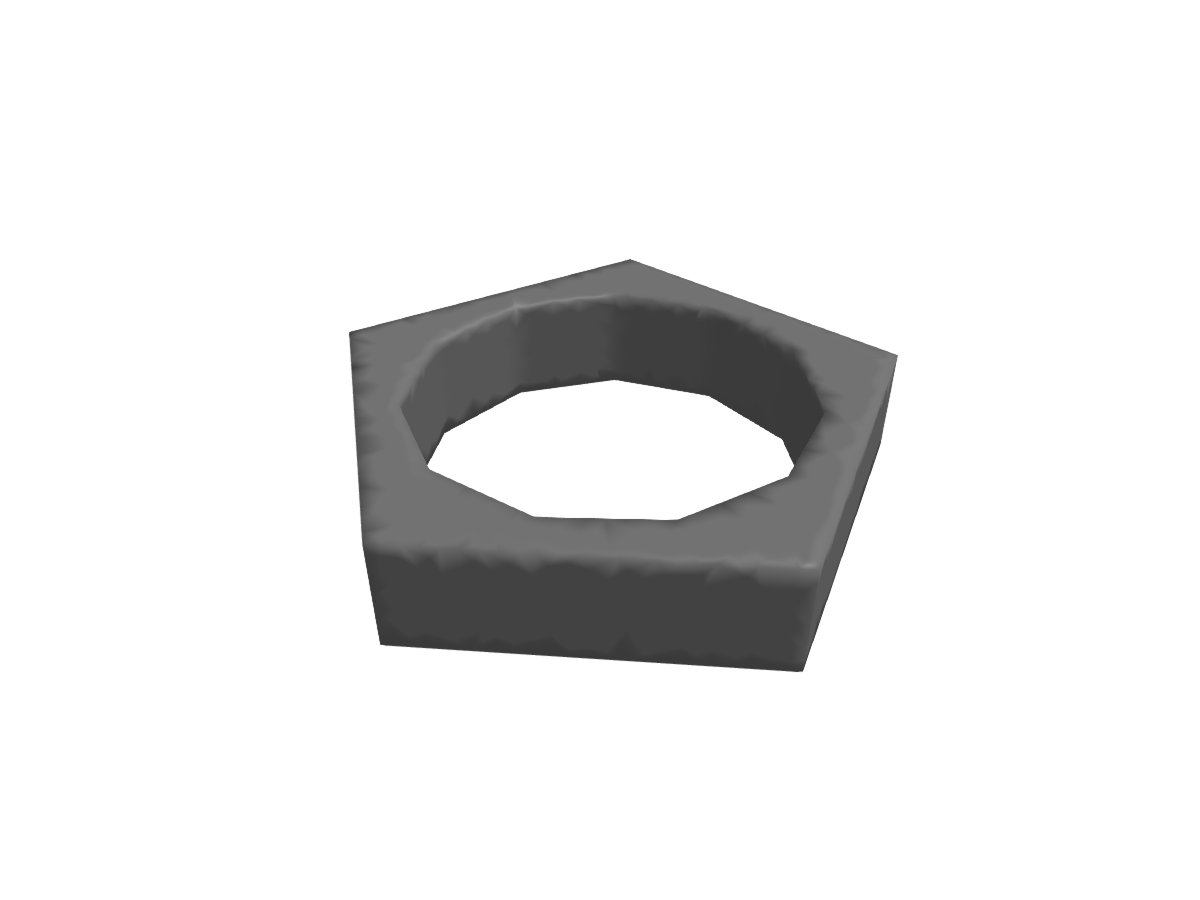}
        &\includegraphics[width=0.1\linewidth, clip=true, trim=140pt 50pt 140pt 50pt]{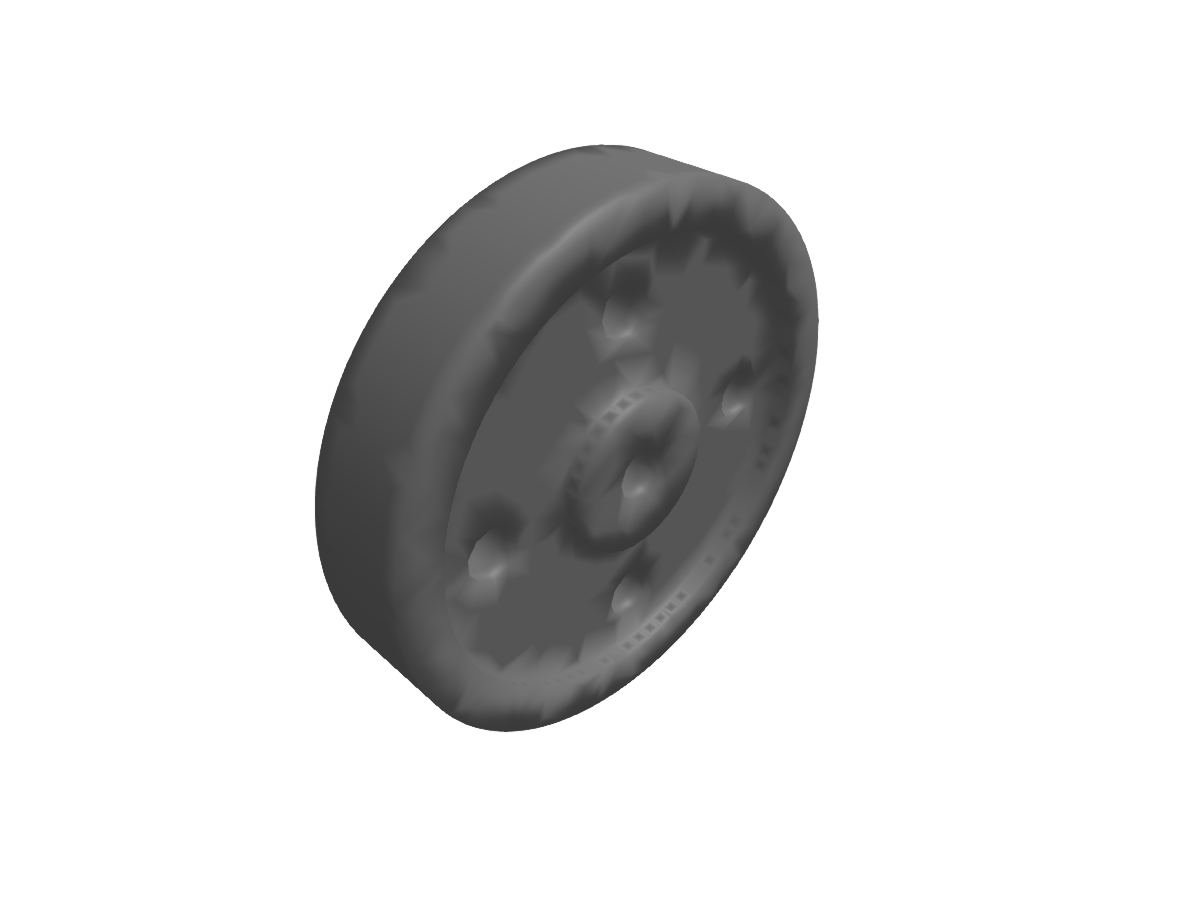}
        &\includegraphics[width=0.1\linewidth, clip=true, trim=140pt 50pt 140pt 50pt]{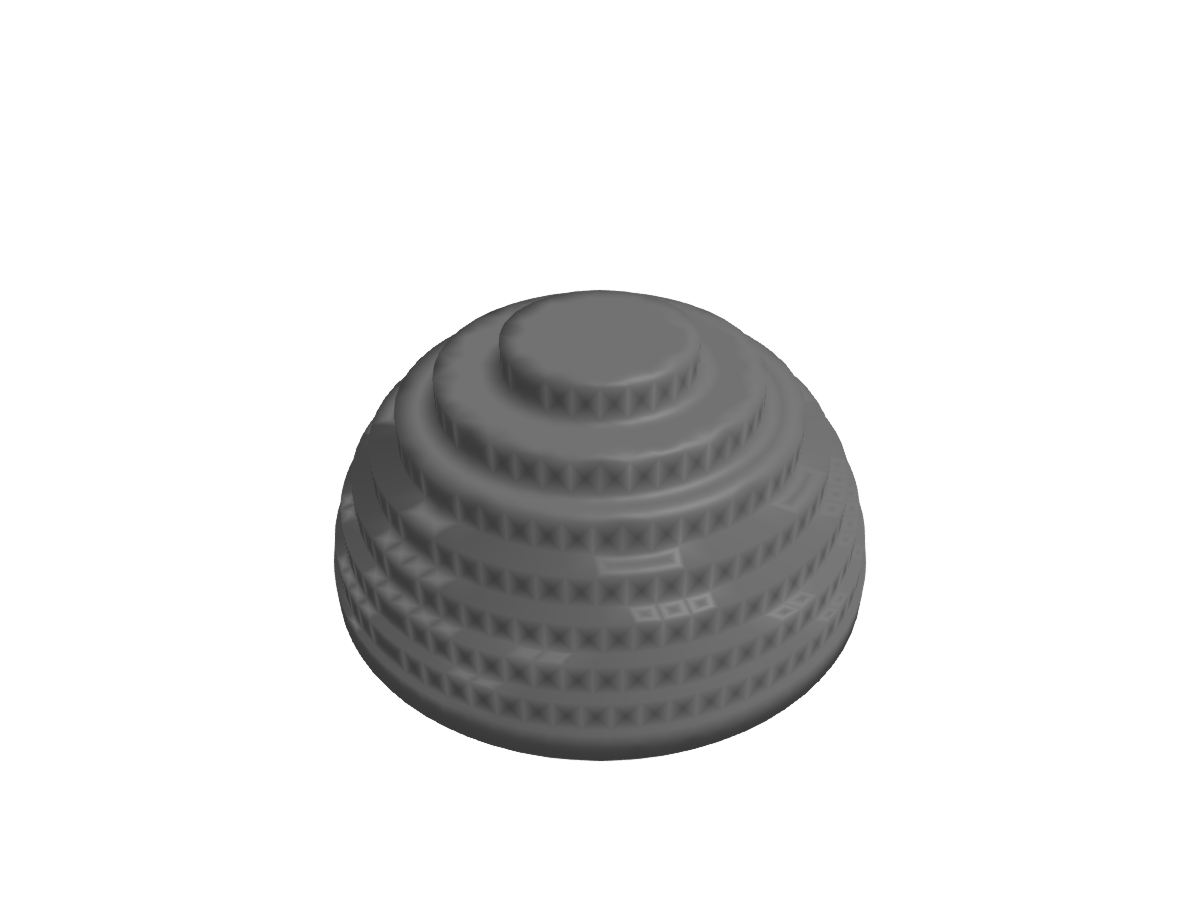}
        &\includegraphics[width=0.1\linewidth, clip=true, trim=140pt 50pt 140pt 50pt]{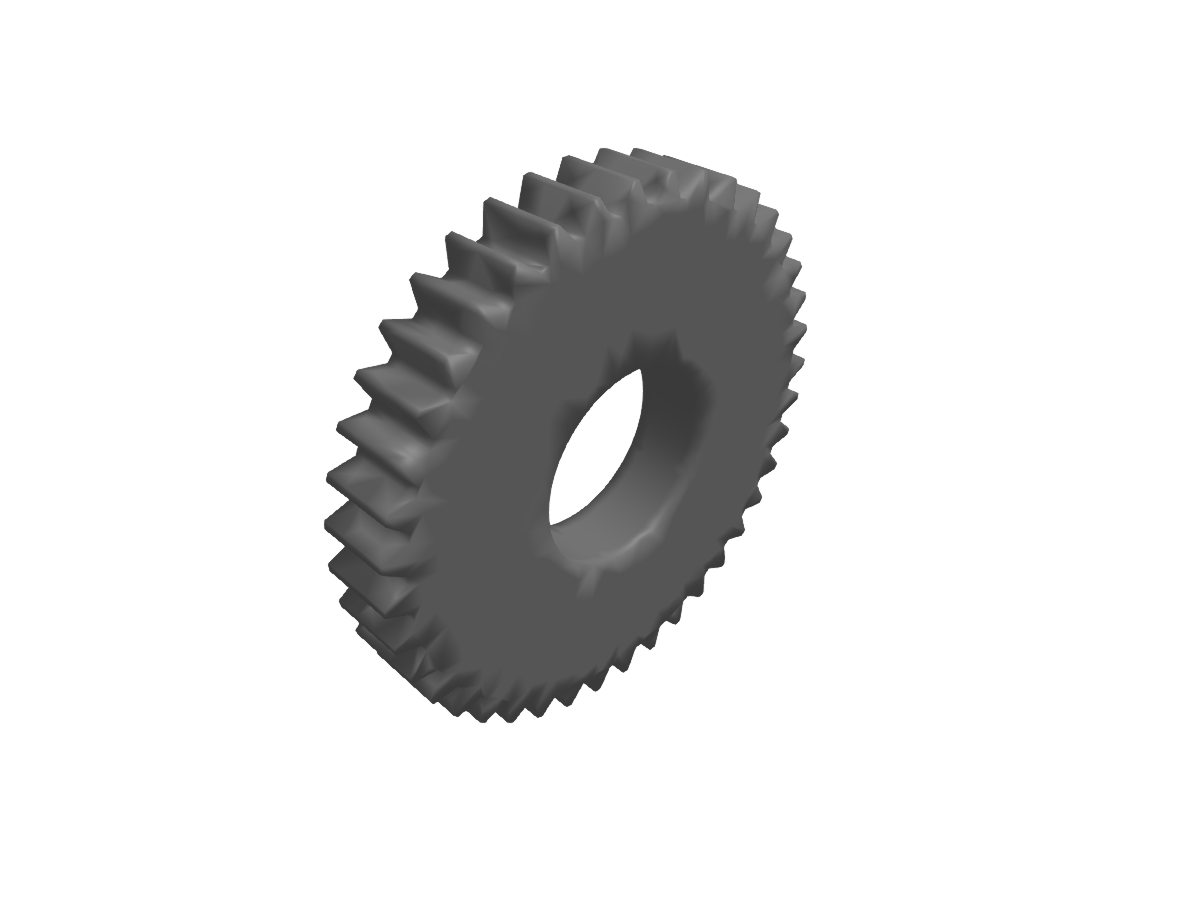}
        &\includegraphics[width=0.1\linewidth, clip=true, trim=140pt 50pt 140pt 50pt]{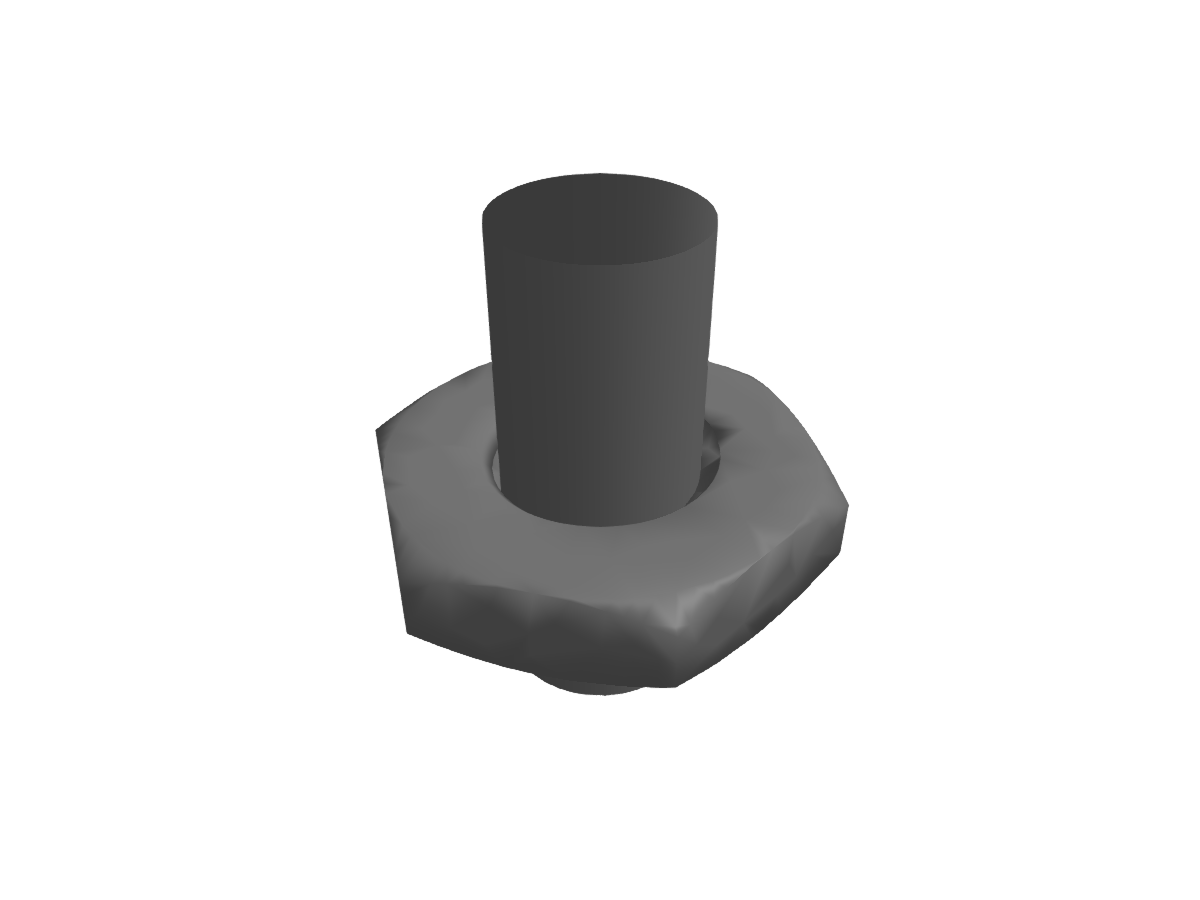}
        \\
        \includegraphics[width=0.1\linewidth, clip=true, trim=140pt 50pt 140pt 50pt]{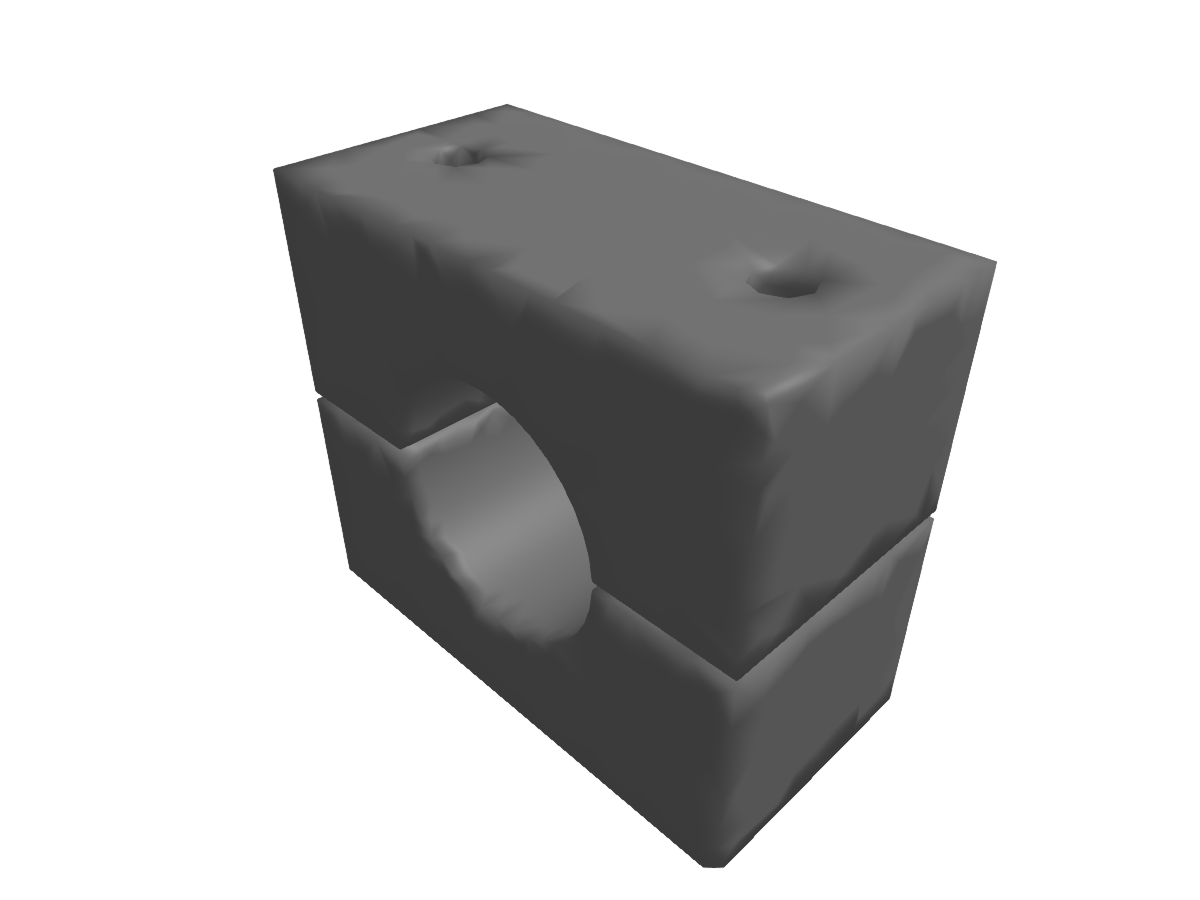}
        &\includegraphics[width=0.1\linewidth, clip=true, trim=140pt 50pt 140pt 50pt]{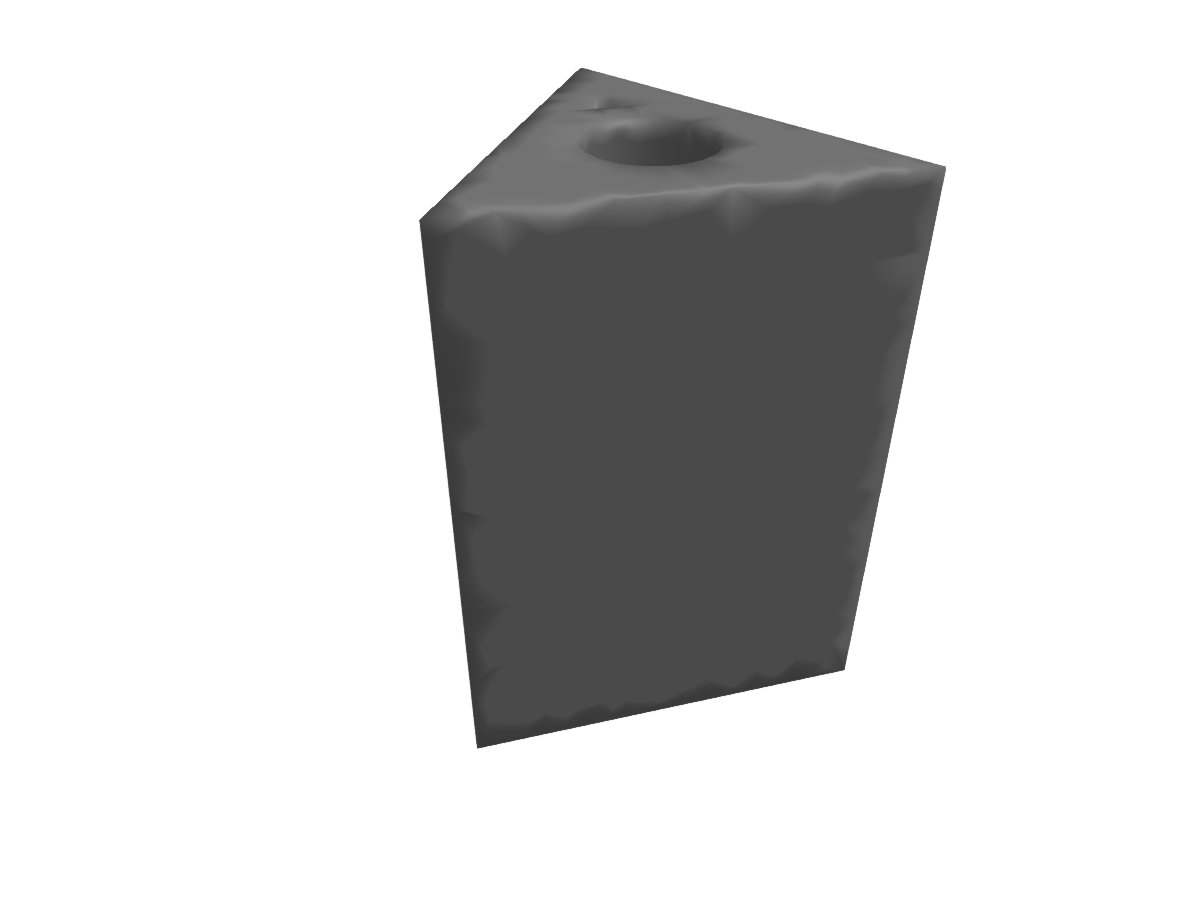} 
        &\includegraphics[width=0.1\linewidth, clip=true, trim=140pt 50pt 140pt 50pt]{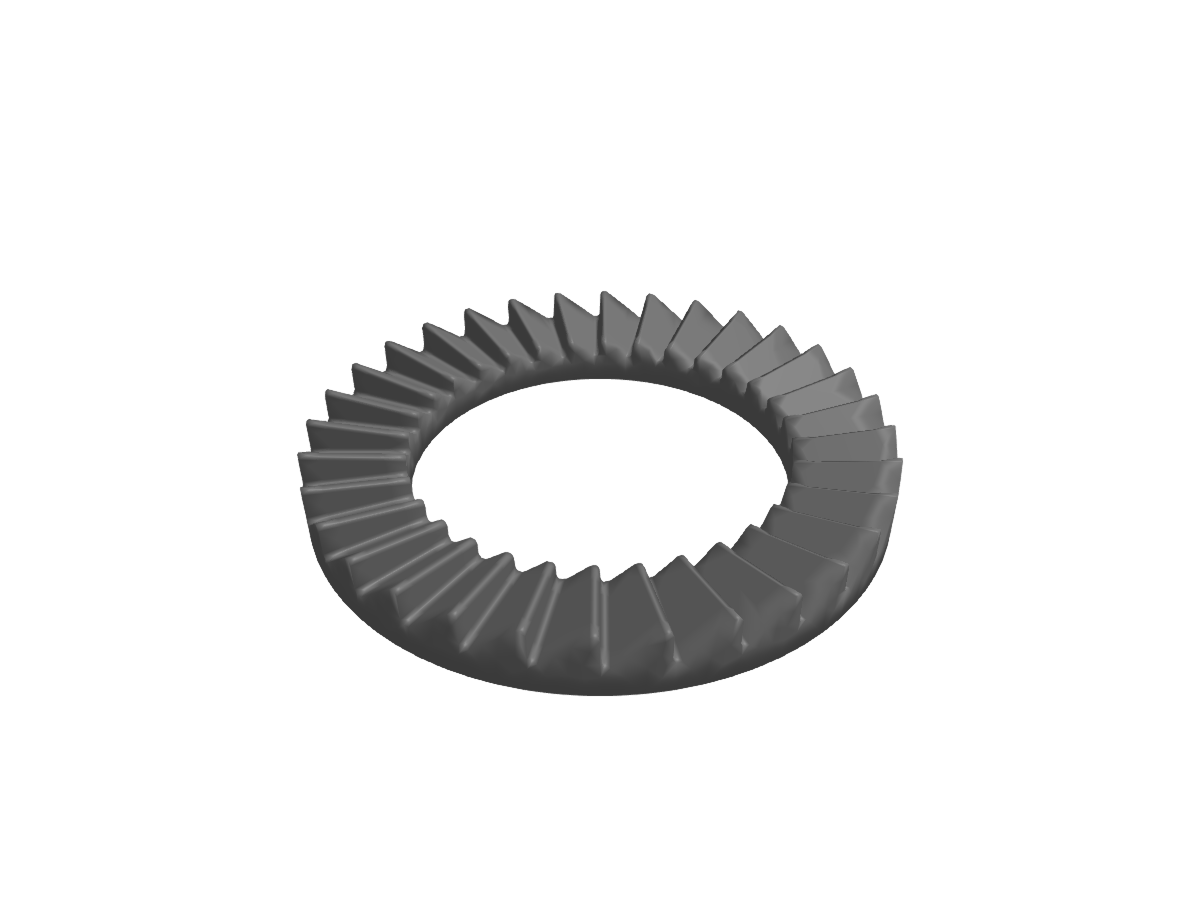}
        &\includegraphics[width=0.1\linewidth, clip=true, trim=140pt 50pt 140pt 50pt]{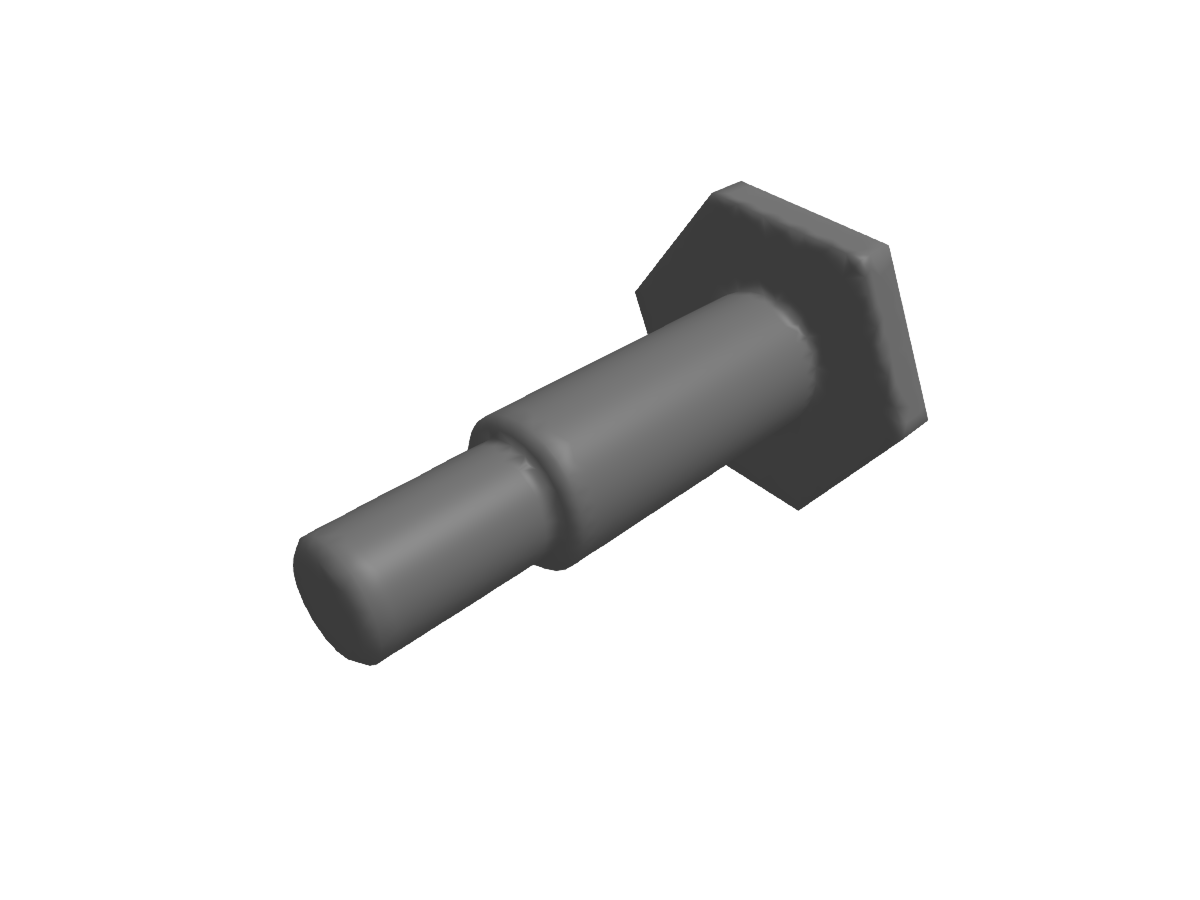}
        &\includegraphics[width=0.1\linewidth, clip=true, trim=140pt 50pt 140pt 50pt]{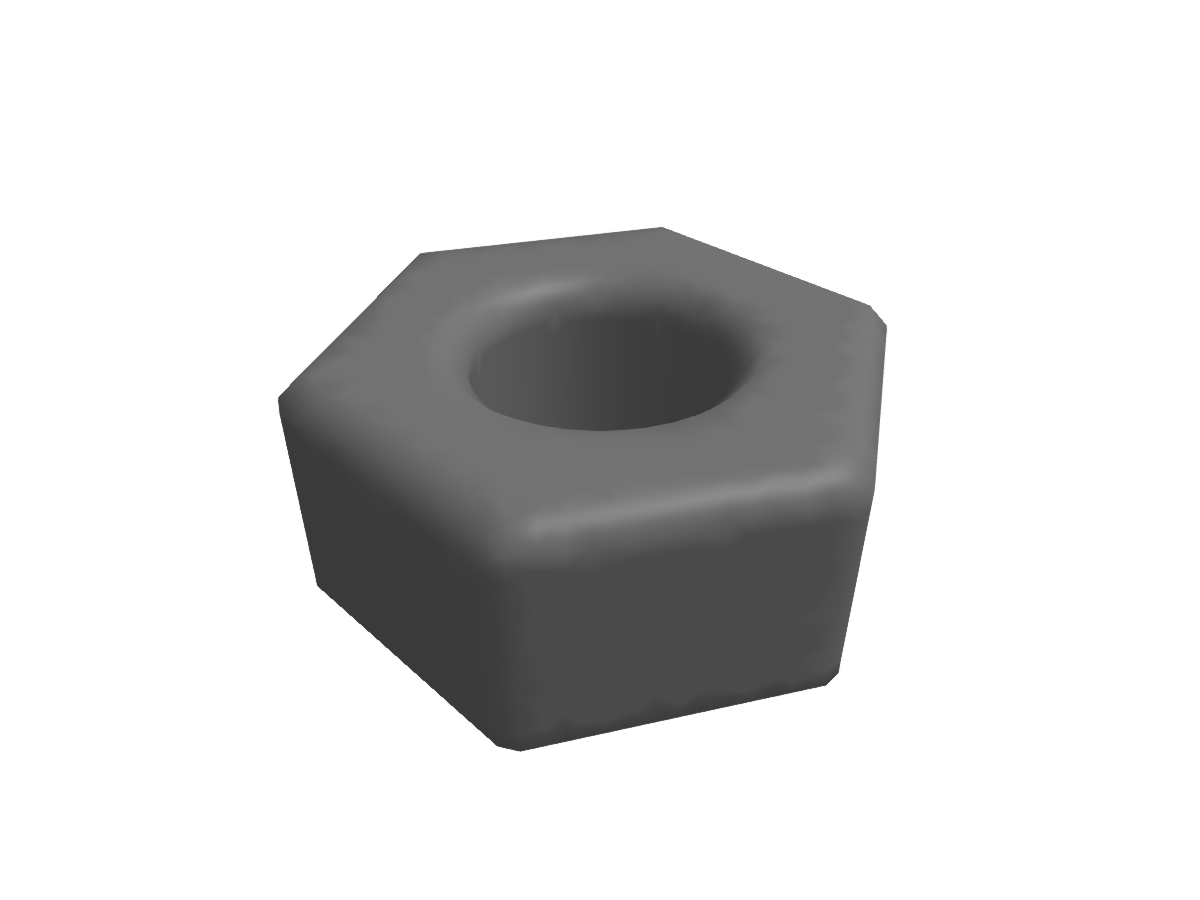}
        &\includegraphics[width=0.1\linewidth, clip=true, trim=140pt 50pt 140pt 50pt]{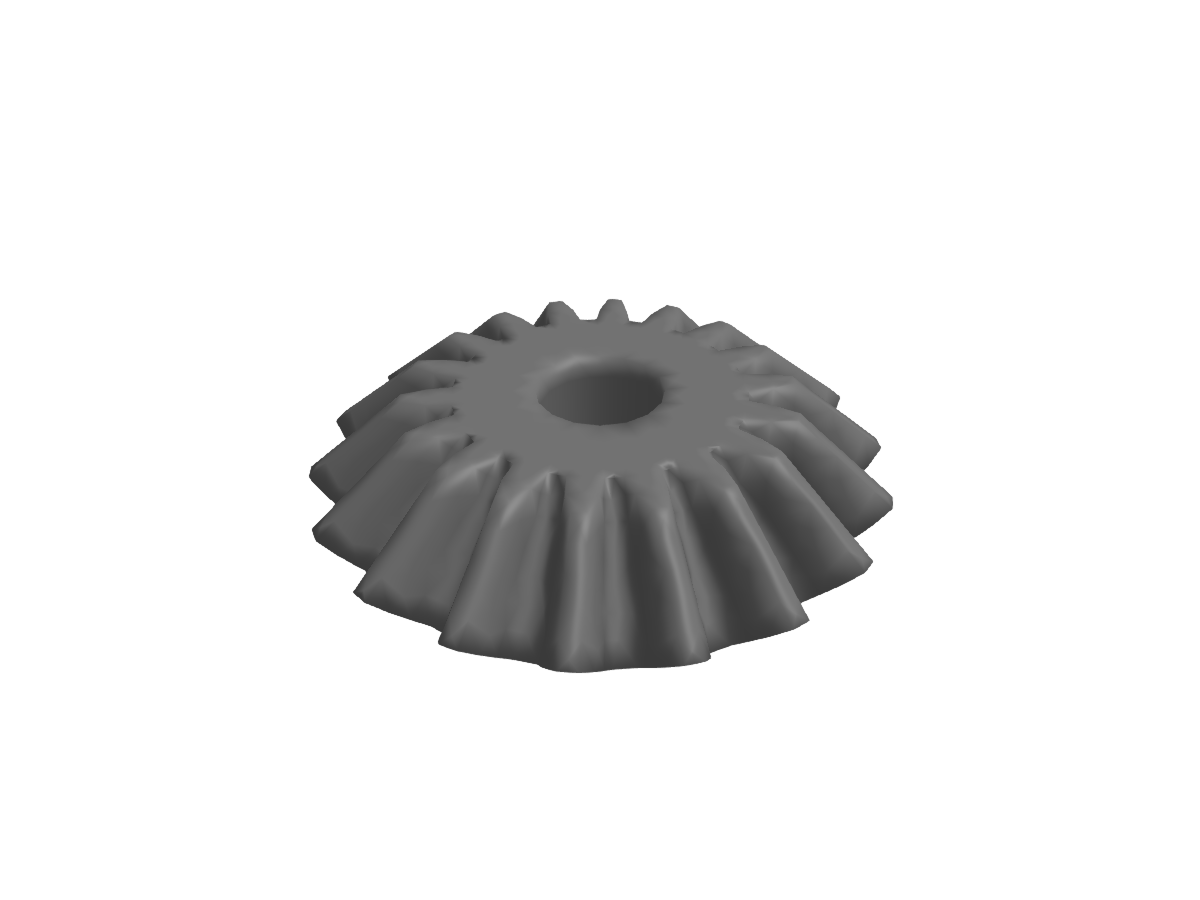}
        &\includegraphics[width=0.1\linewidth, clip=true, trim=140pt 50pt 140pt 50pt]{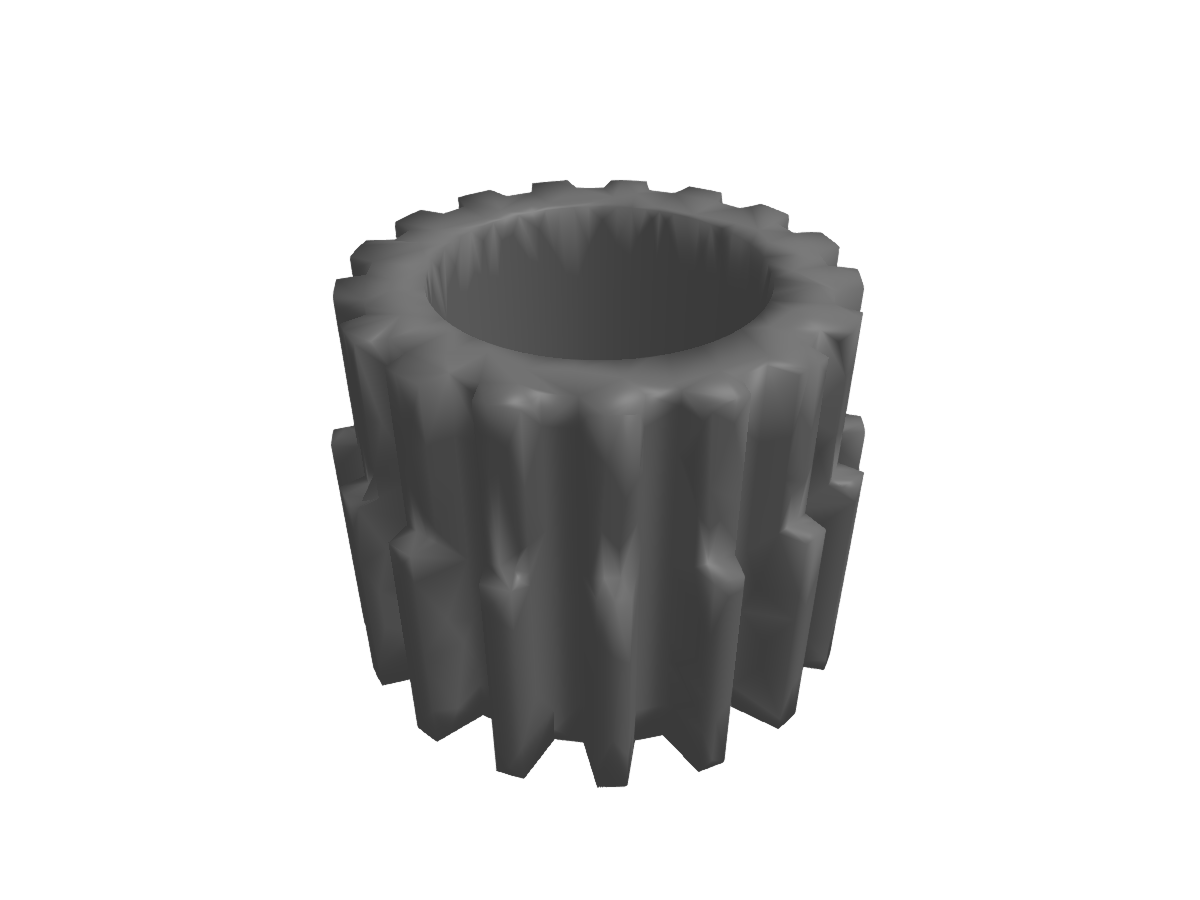}
        &\includegraphics[width=0.1\linewidth, clip=true, trim=140pt 50pt 140pt 50pt]{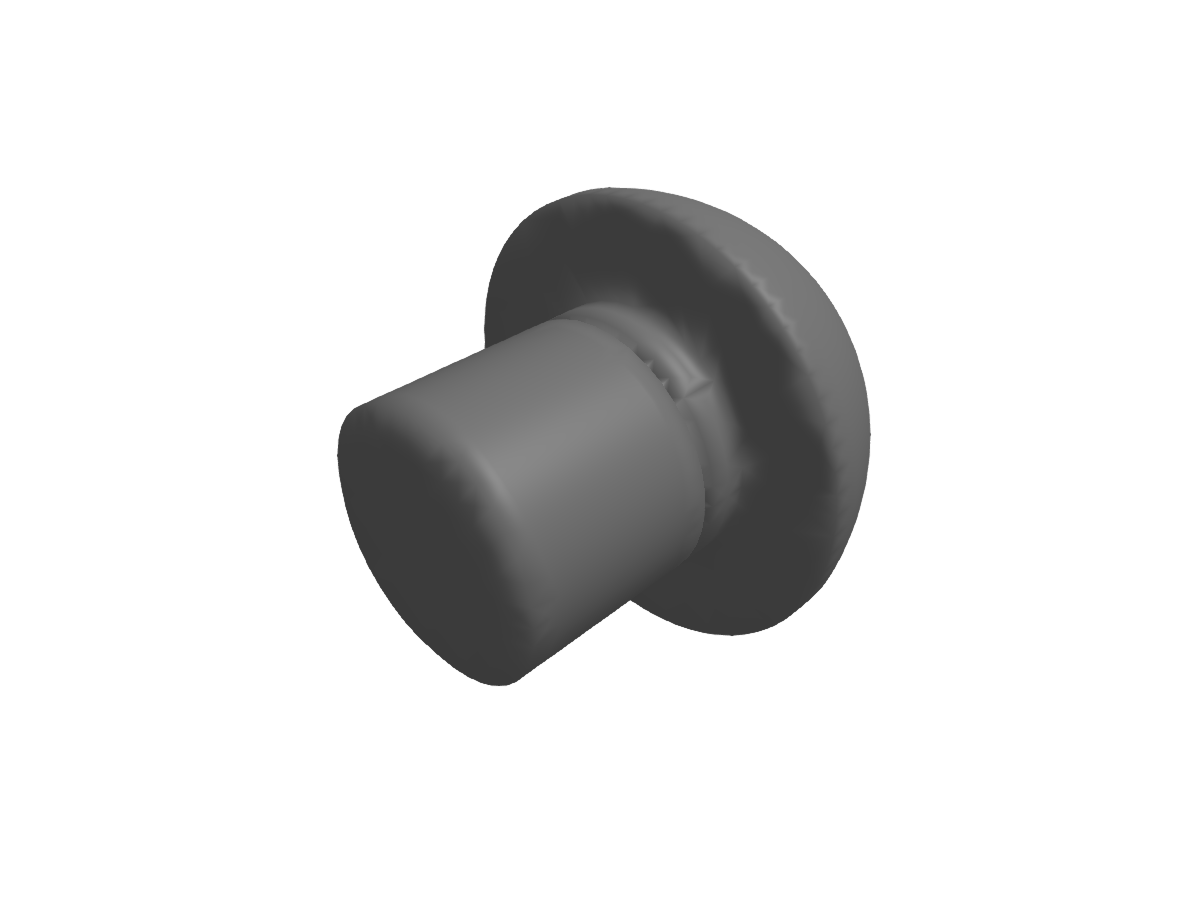}
        &\includegraphics[width=0.1\linewidth, clip=true, trim=140pt 50pt 140pt 50pt]{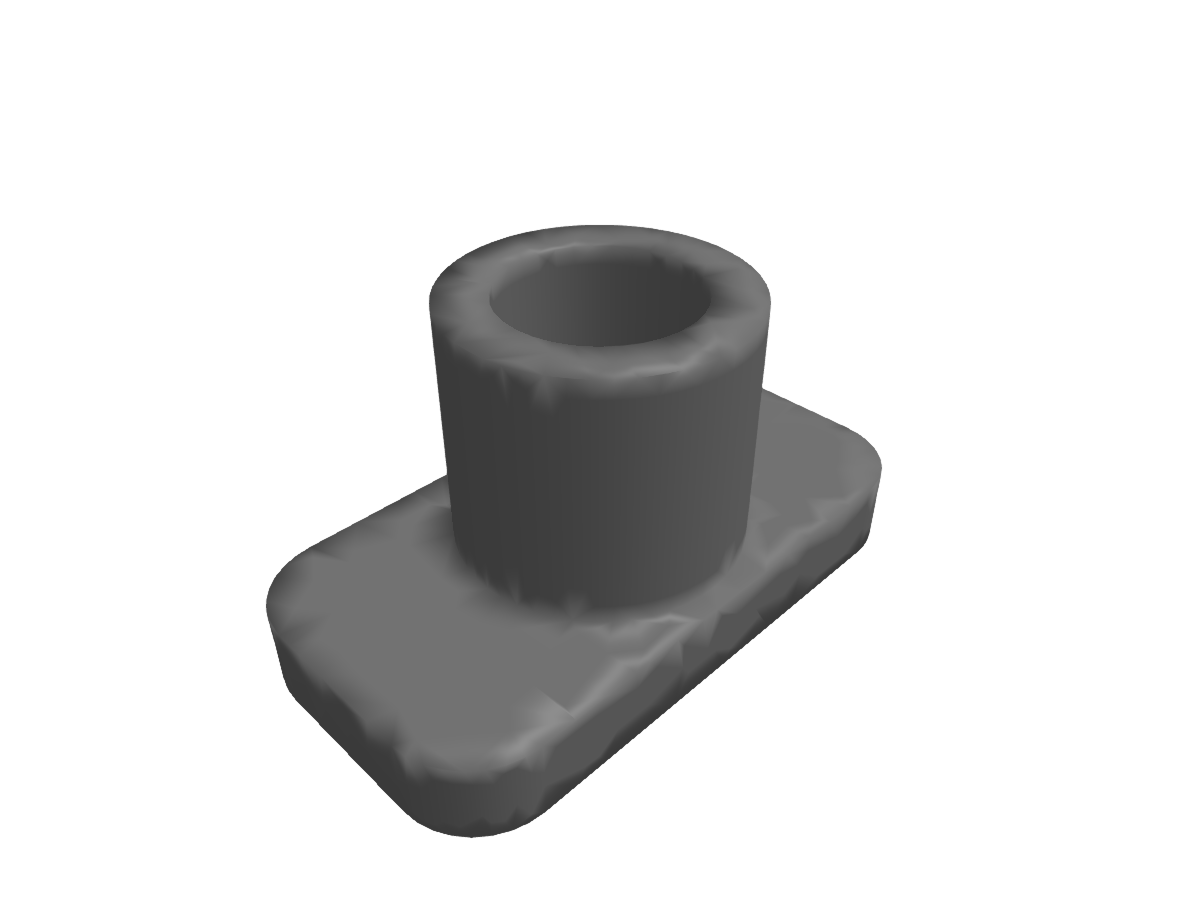}
        &\includegraphics[width=0.1\linewidth, clip=true, trim=140pt 50pt 140pt 50pt]{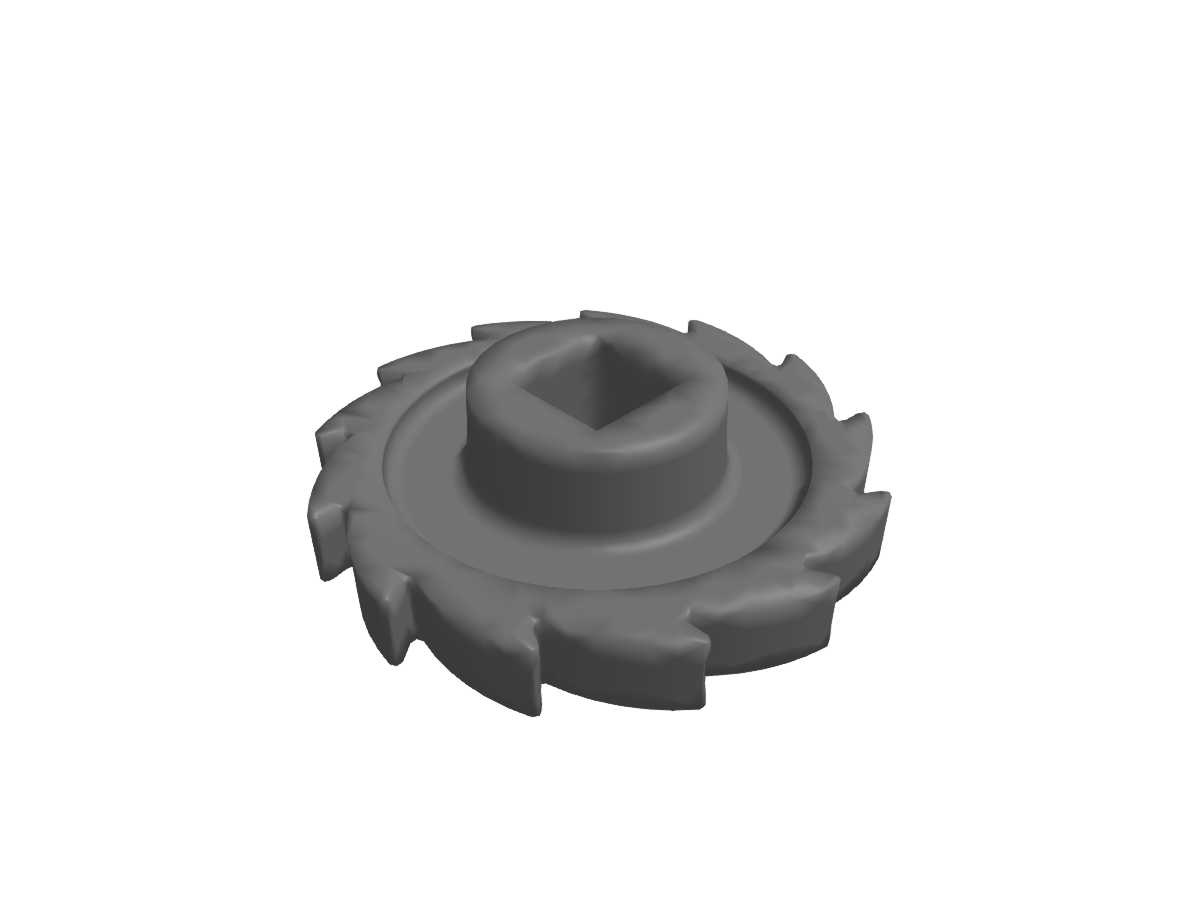}
        \\
        \includegraphics[width=0.1\linewidth, clip=true, trim=140pt 50pt 140pt 50pt]{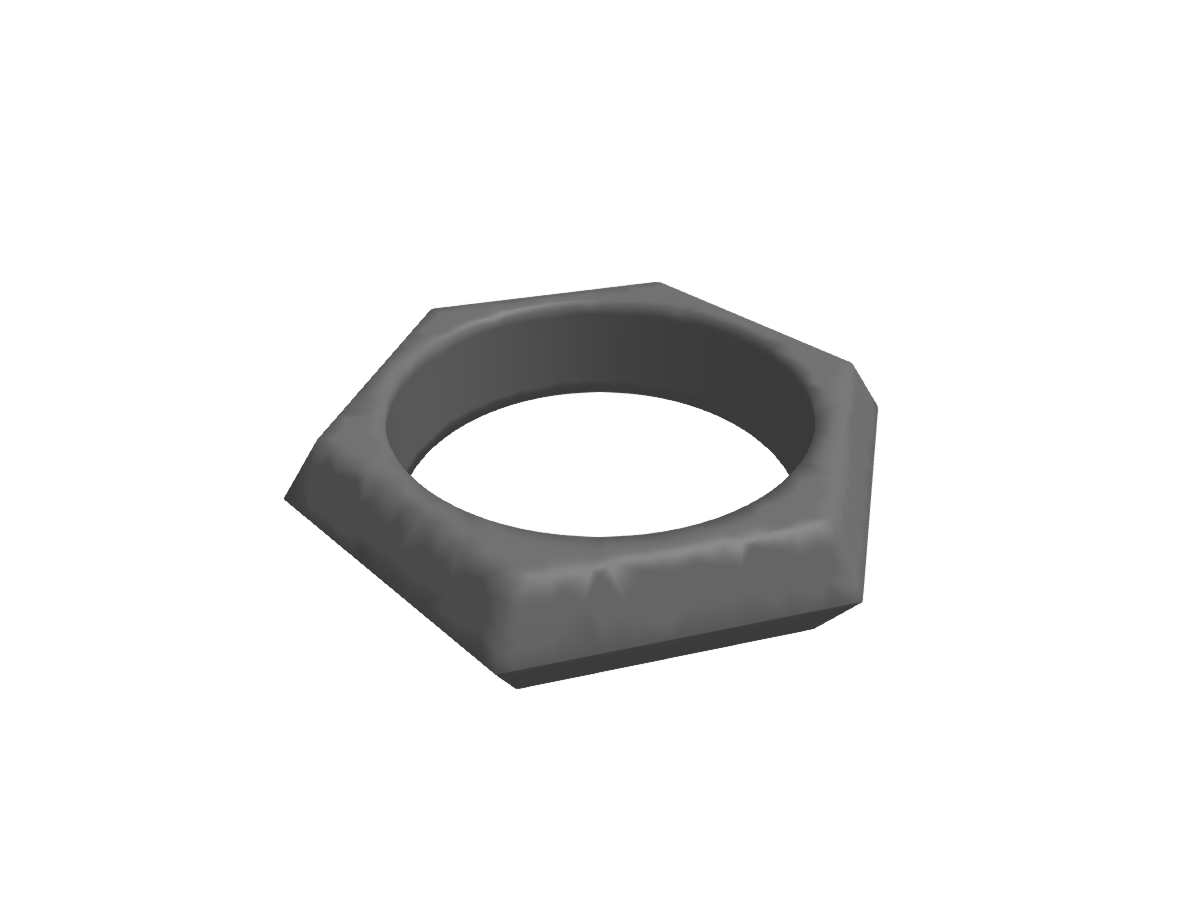}
        &\includegraphics[width=0.1\linewidth, clip=true, trim=140pt 50pt 140pt 50pt]{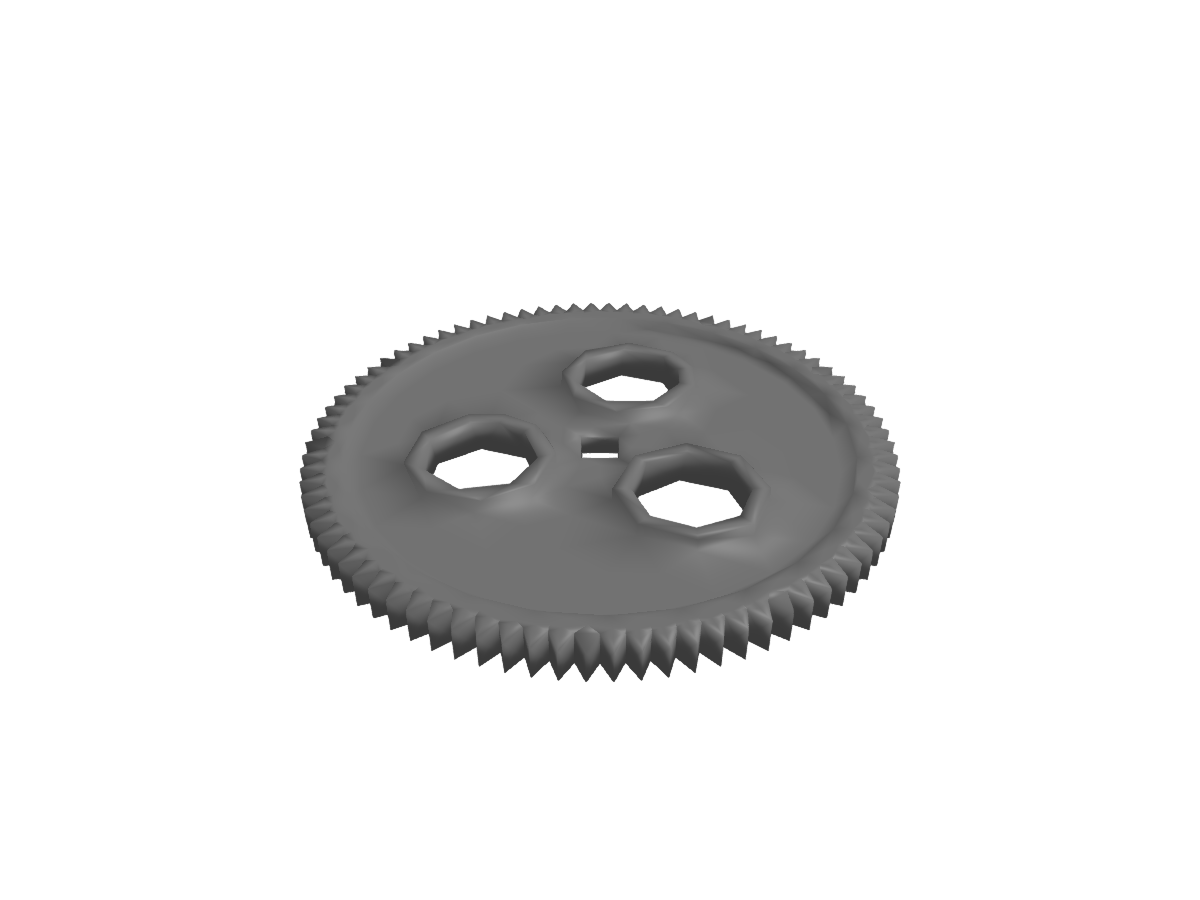}
        &\includegraphics[width=0.1\linewidth, clip=true, trim=140pt 50pt 140pt 50pt]{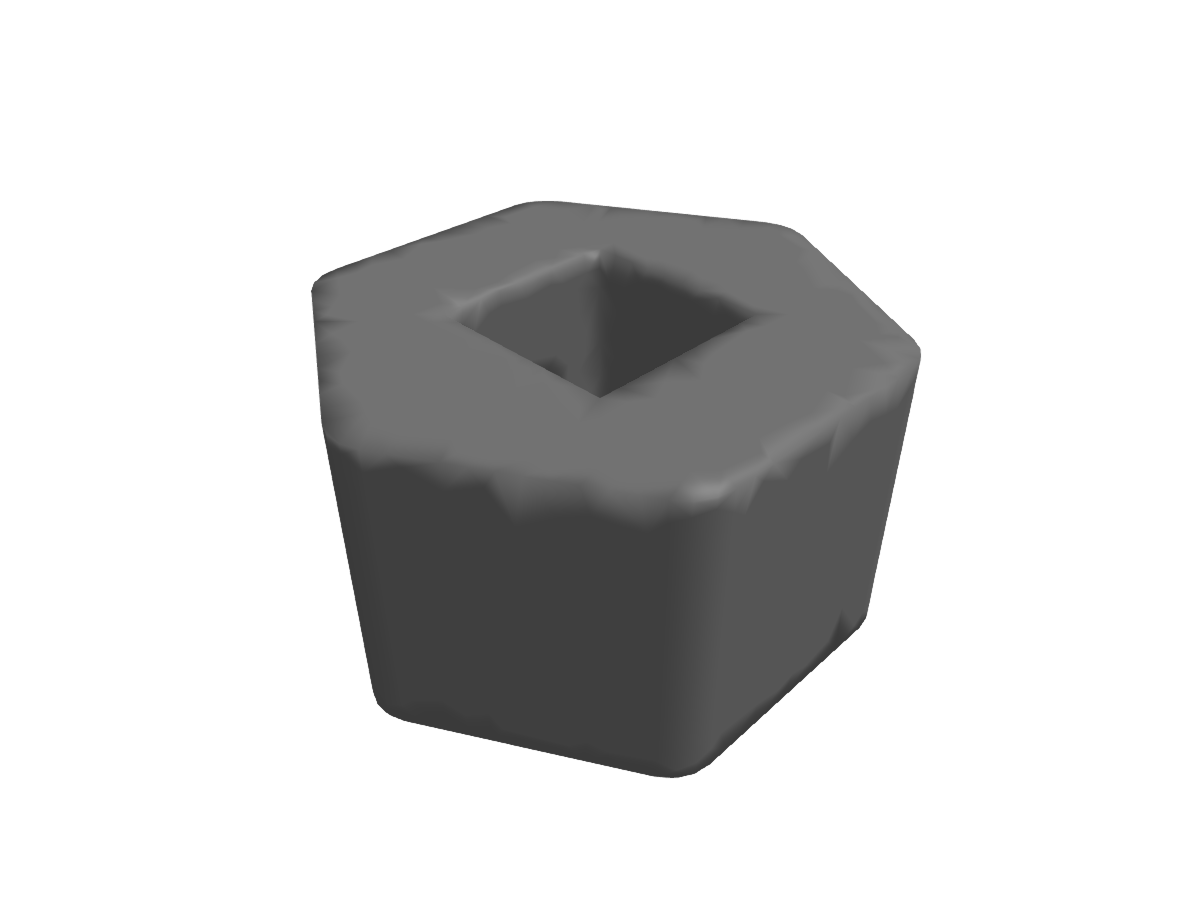}
        &\includegraphics[width=0.1\linewidth, clip=true, trim=140pt 50pt 140pt 50pt]{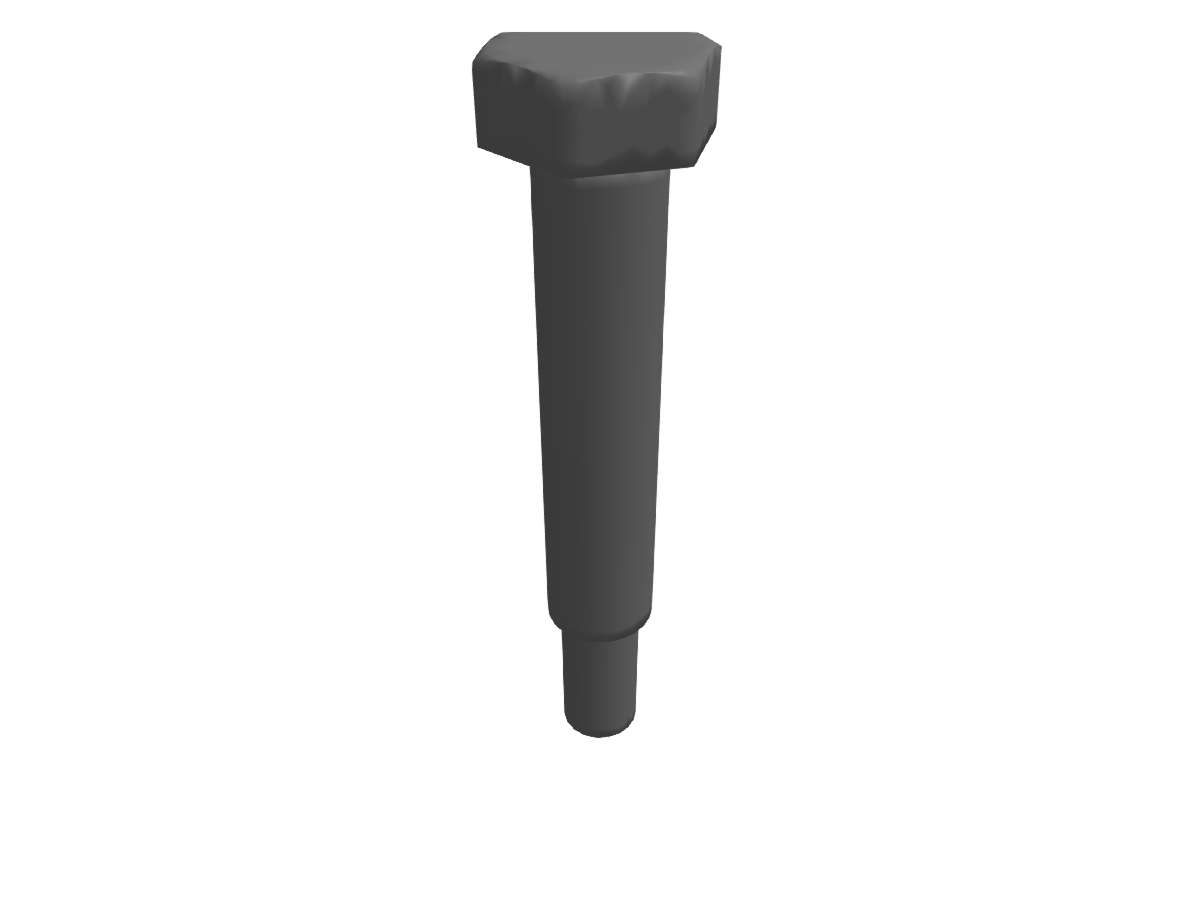}
        &\includegraphics[width=0.1\linewidth, clip=true, trim=140pt 50pt 140pt 50pt]{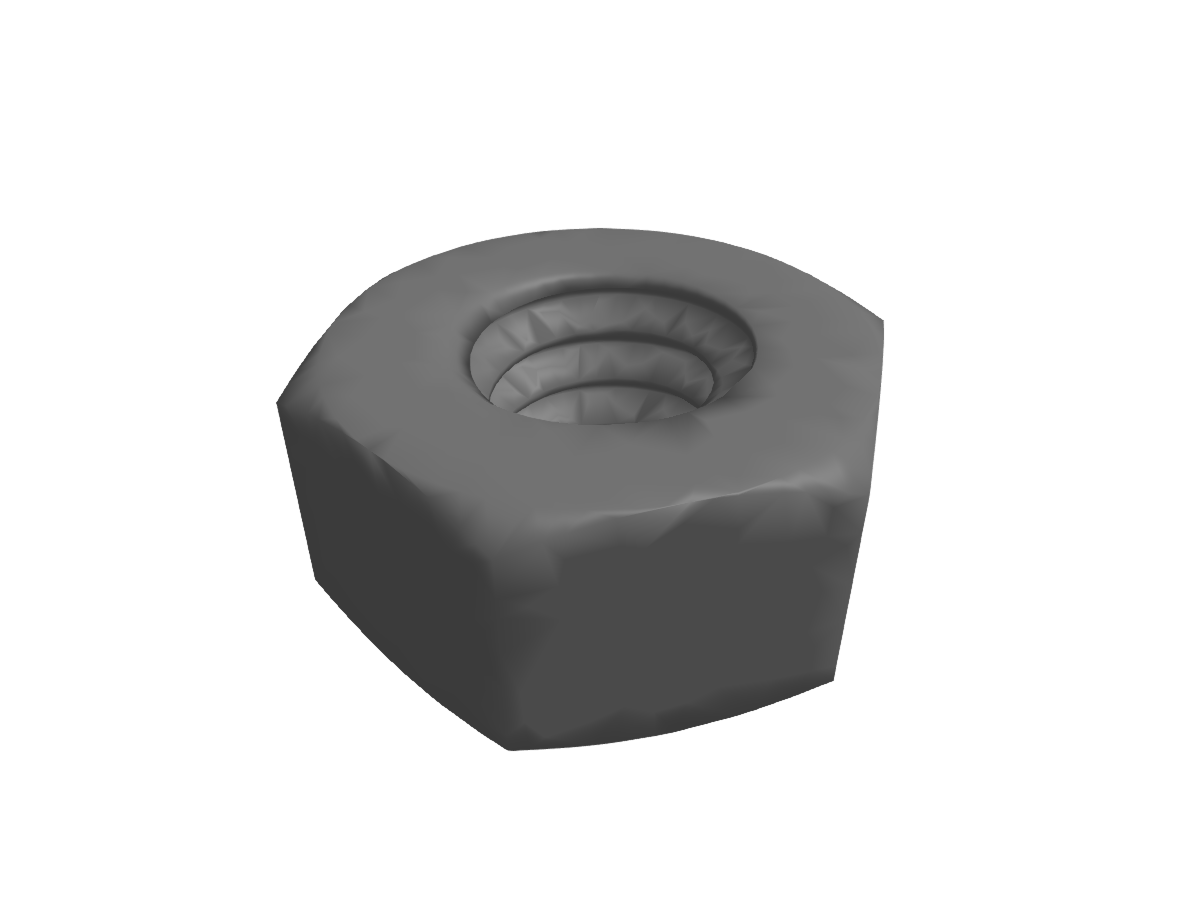}
        &\includegraphics[width=0.1\linewidth, clip=true, trim=140pt 50pt 140pt 50pt]{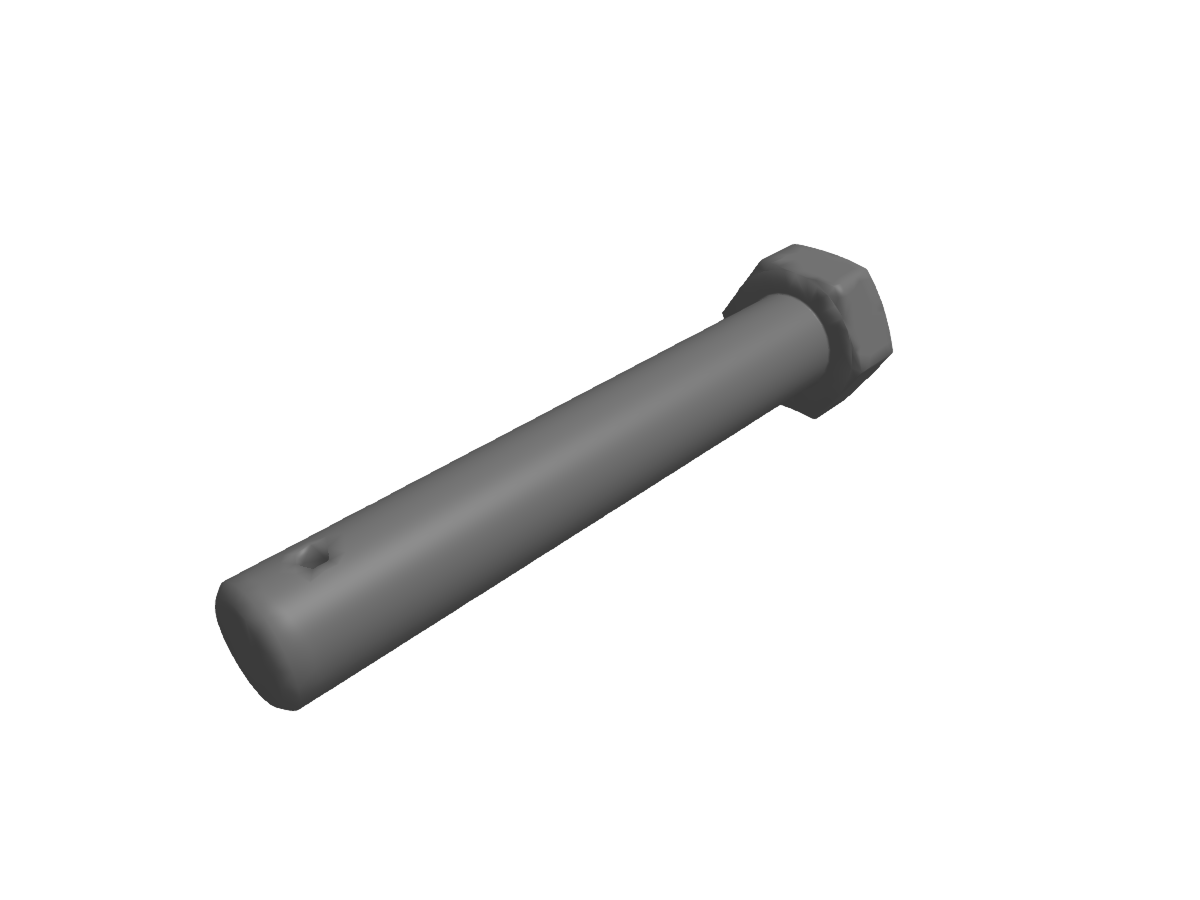}
        &\includegraphics[width=0.1\linewidth, clip=true, trim=140pt 50pt 140pt 50pt]{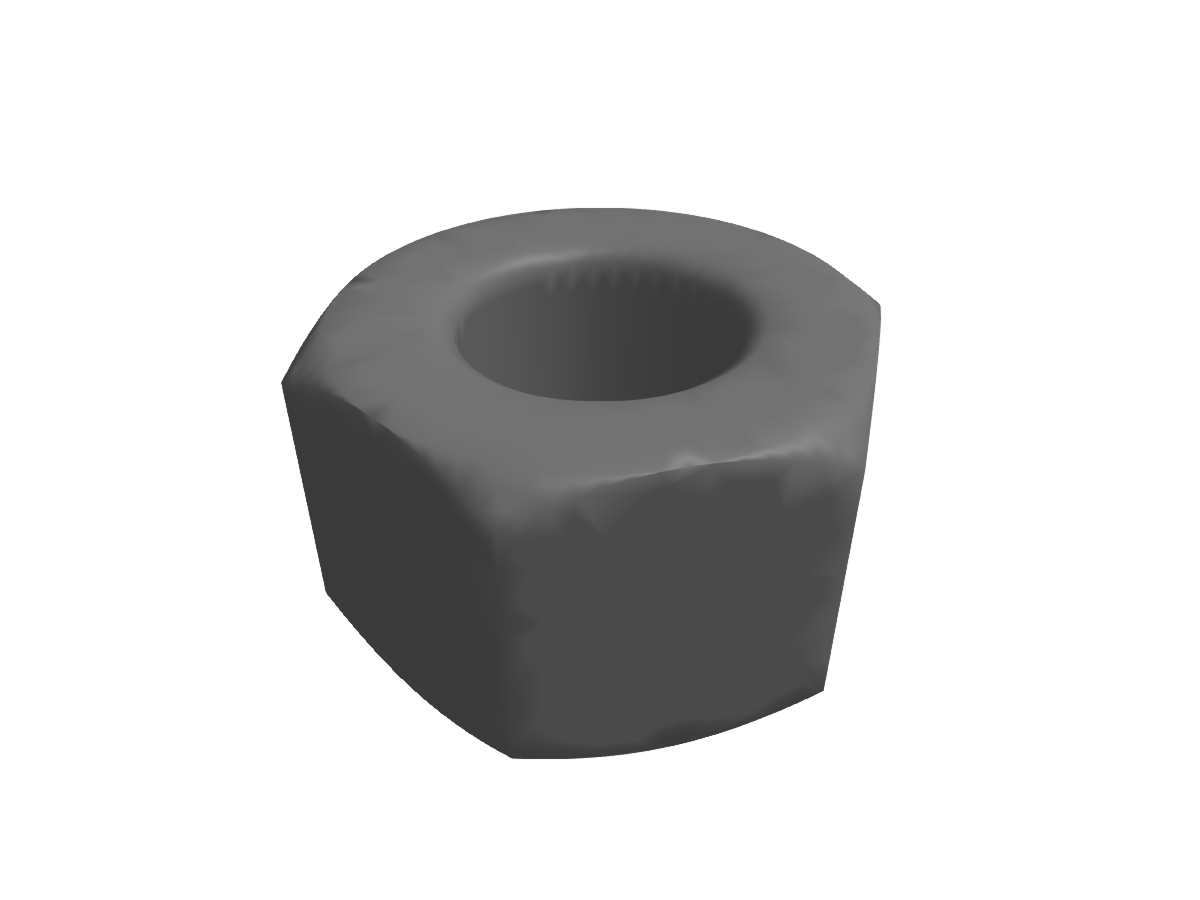}
        &\includegraphics[width=0.1\linewidth, clip=true, trim=140pt 50pt 140pt 50pt]{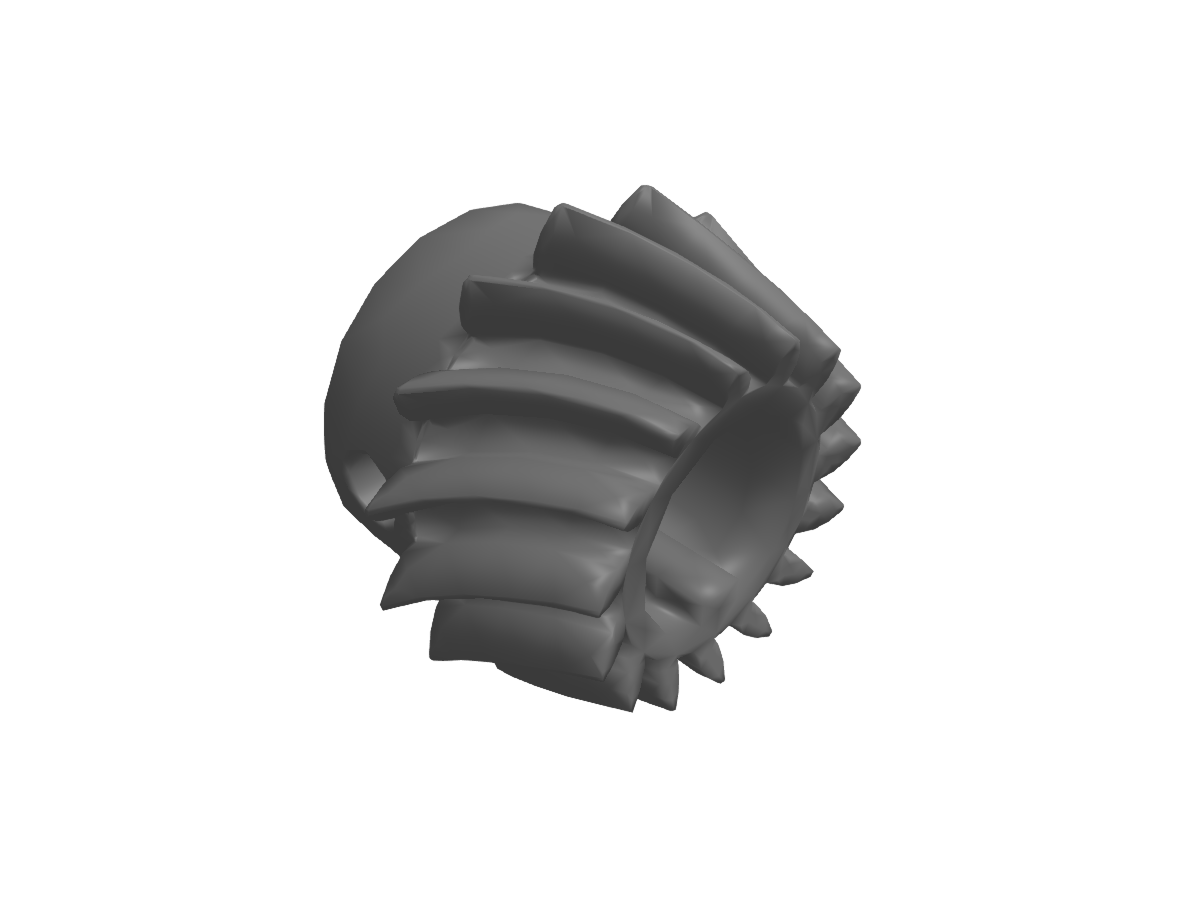}
        &\includegraphics[width=0.1\linewidth, clip=true, trim=140pt 50pt 140pt 50pt]{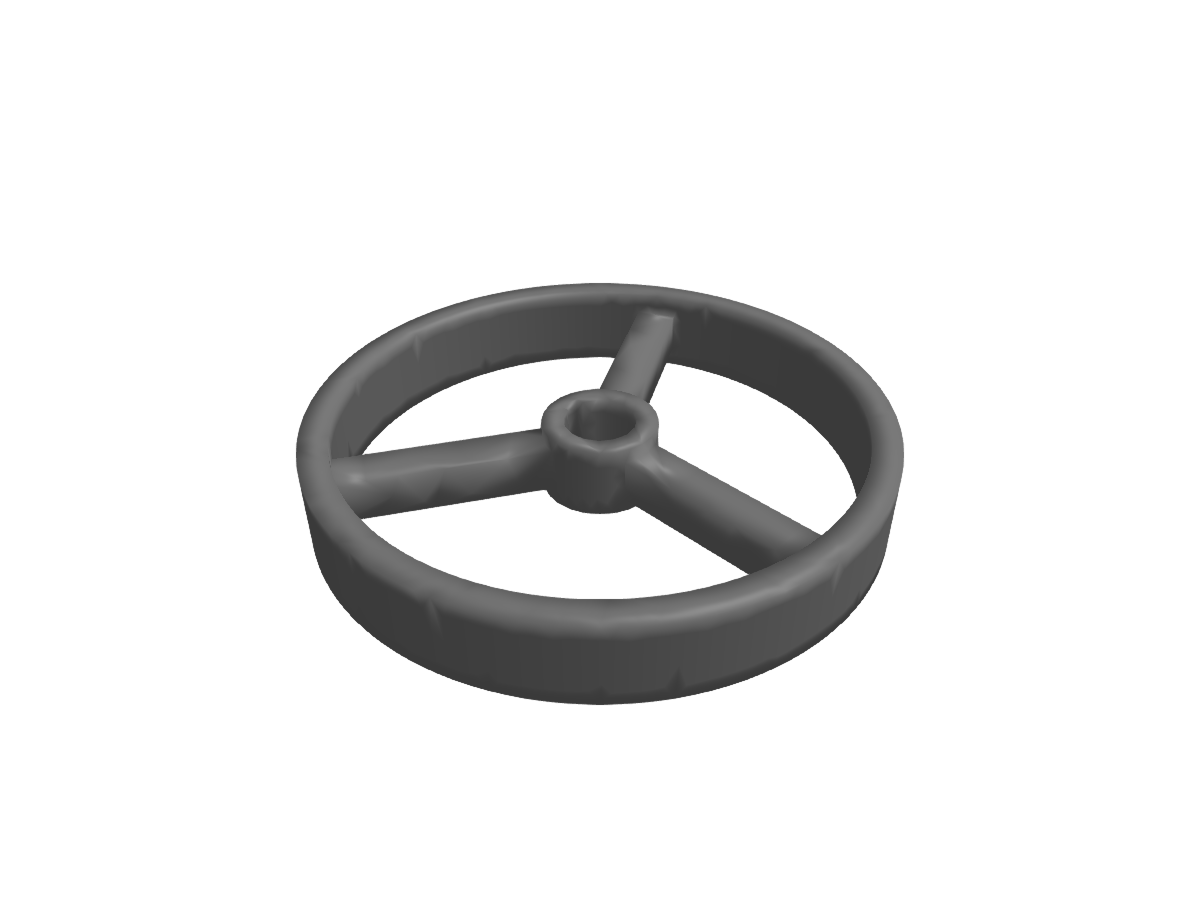}
        &\includegraphics[width=0.1\linewidth, clip=true, trim=140pt 50pt 140pt 50pt]{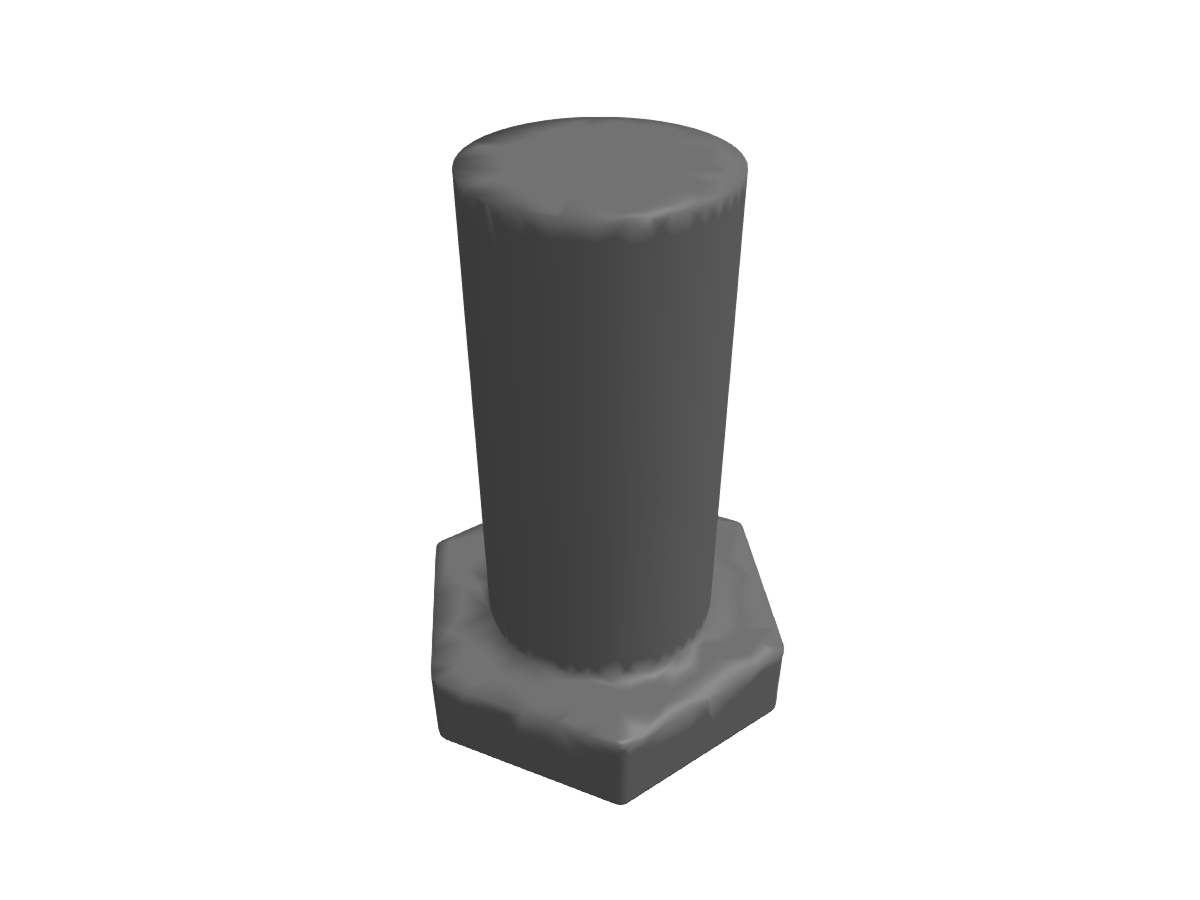}
        \\
        \includegraphics[width=0.1\linewidth, clip=true, trim=140pt 50pt 140pt 50pt]{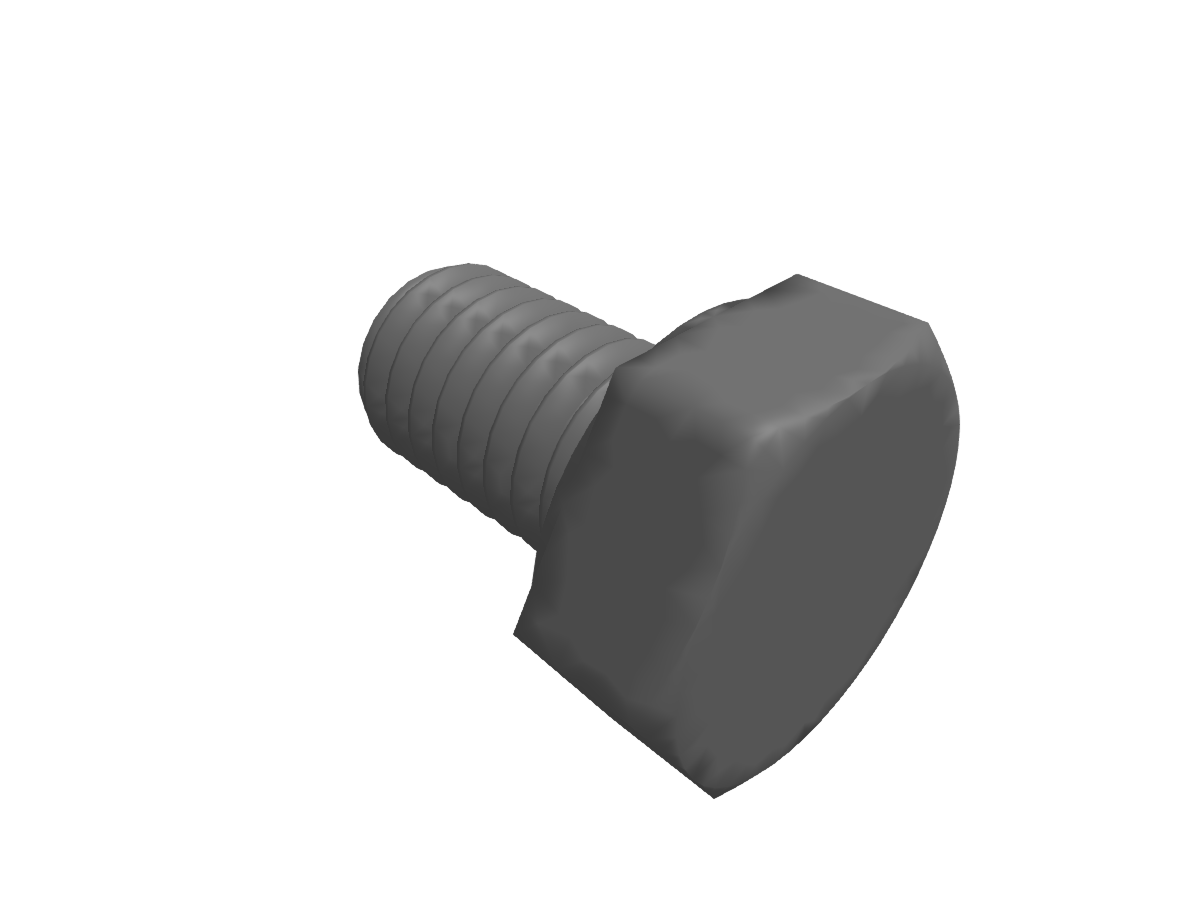}
        &\includegraphics[width=0.1\linewidth, clip=true, trim=140pt 50pt 140pt 50pt]{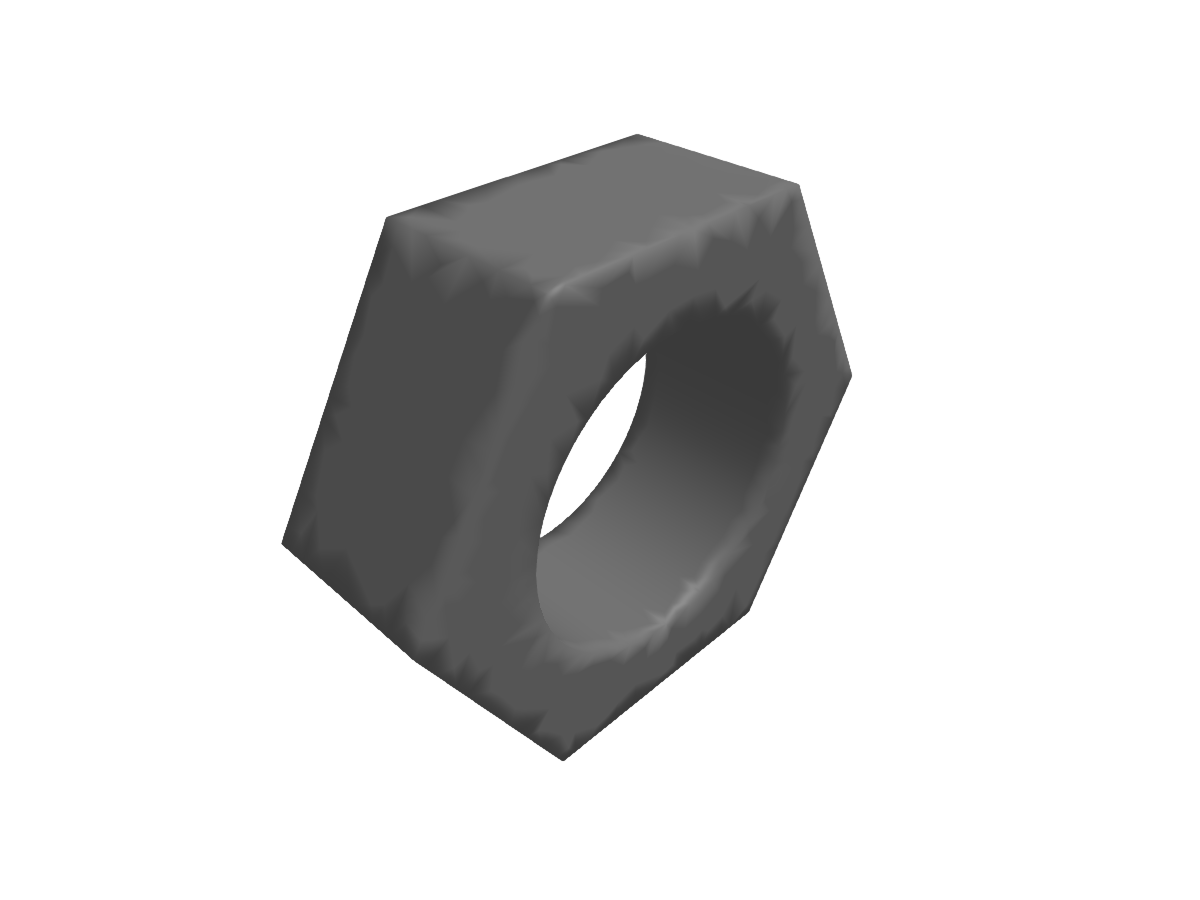}
        &\includegraphics[width=0.1\linewidth, clip=true, trim=140pt 50pt 140pt 50pt]{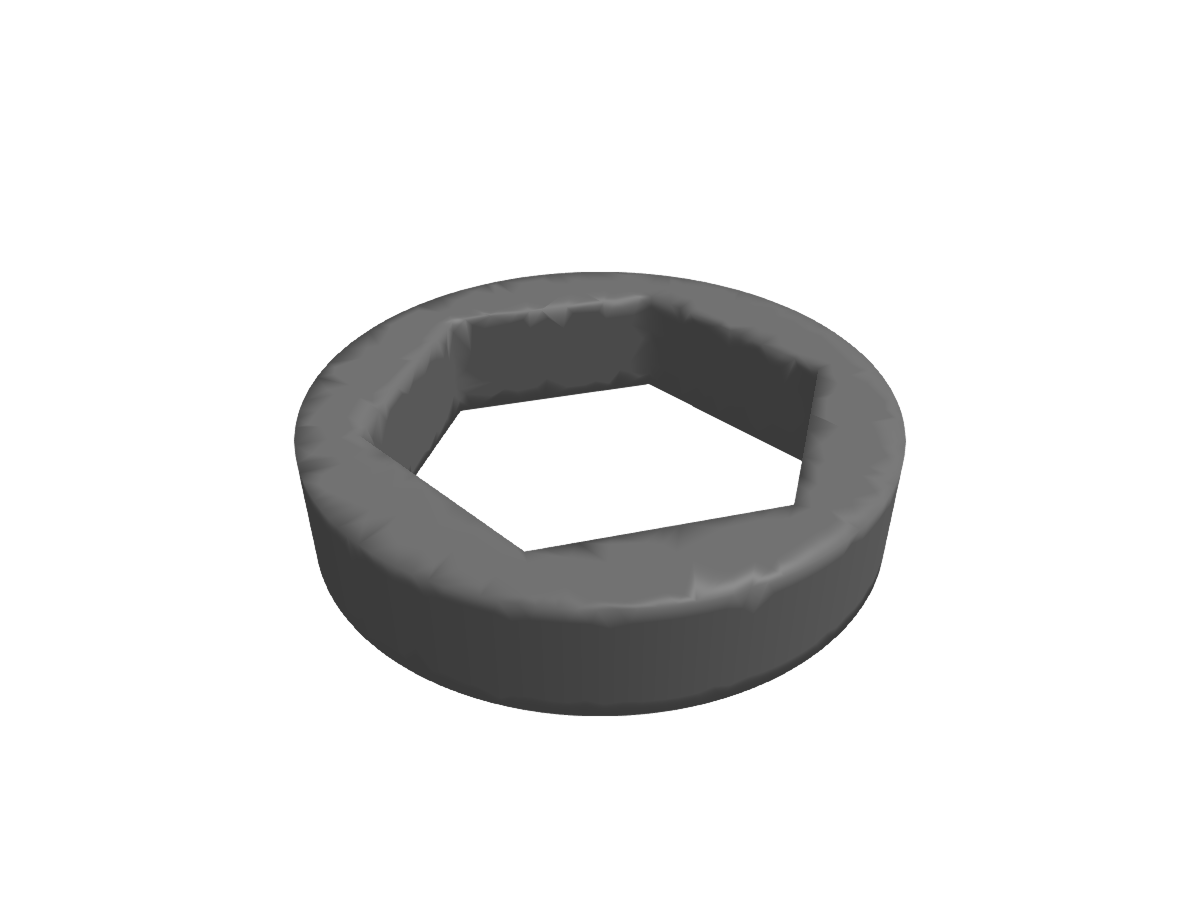}
        &\includegraphics[width=0.1\linewidth, clip=true, trim=140pt 50pt 140pt 50pt]{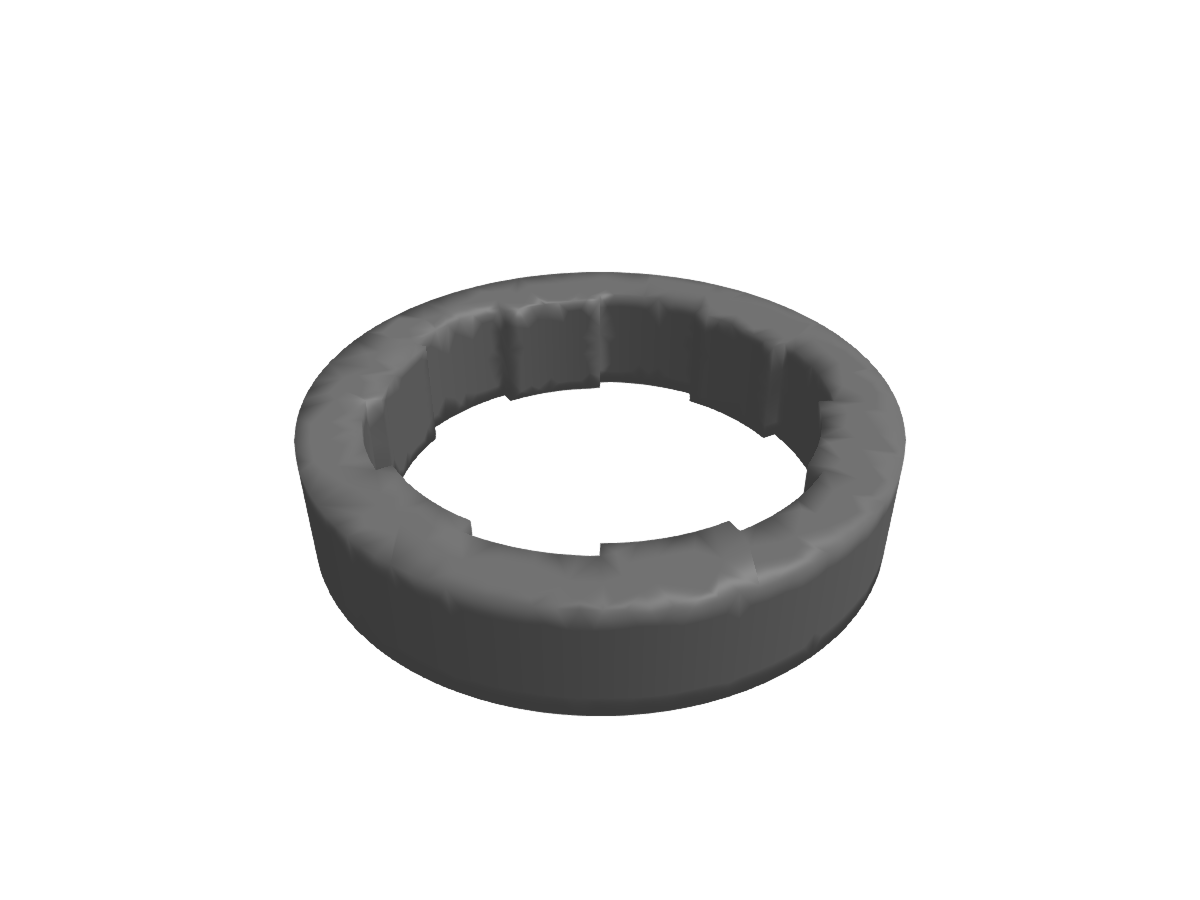}
        &\includegraphics[width=0.1\linewidth, clip=true, trim=140pt 50pt 140pt 50pt]{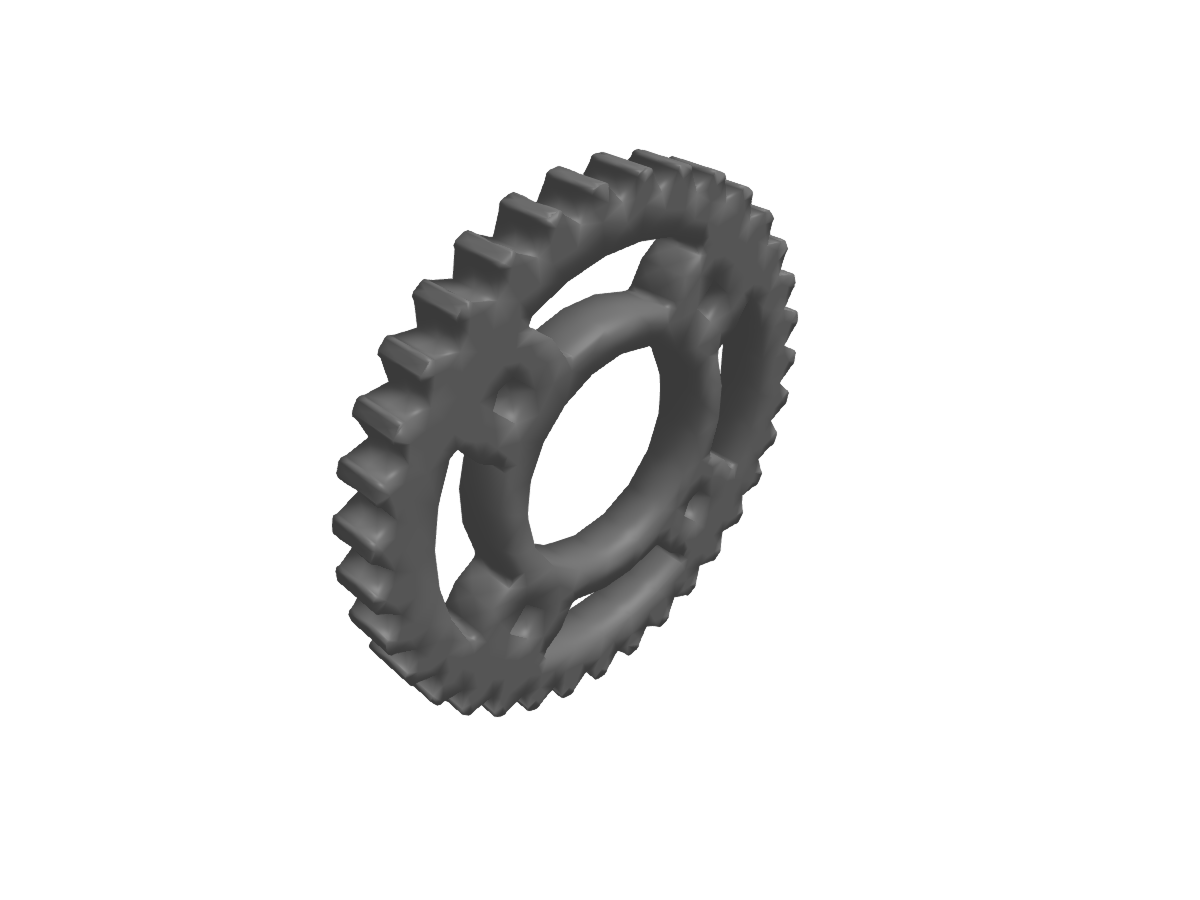}
        &\includegraphics[width=0.1\linewidth, clip=true, trim=140pt 50pt 140pt 50pt]{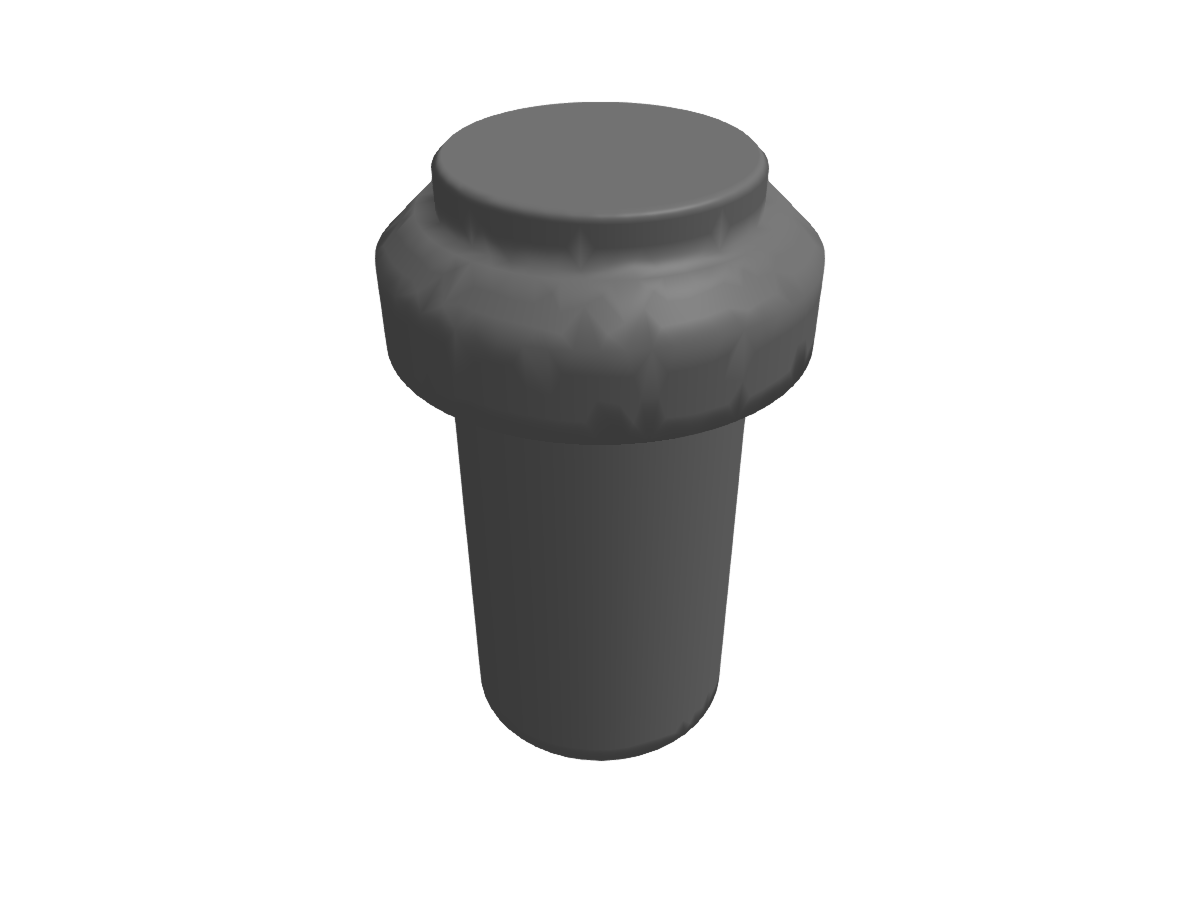}
        &\includegraphics[width=0.1\linewidth, clip=true, trim=140pt 50pt 140pt 50pt]{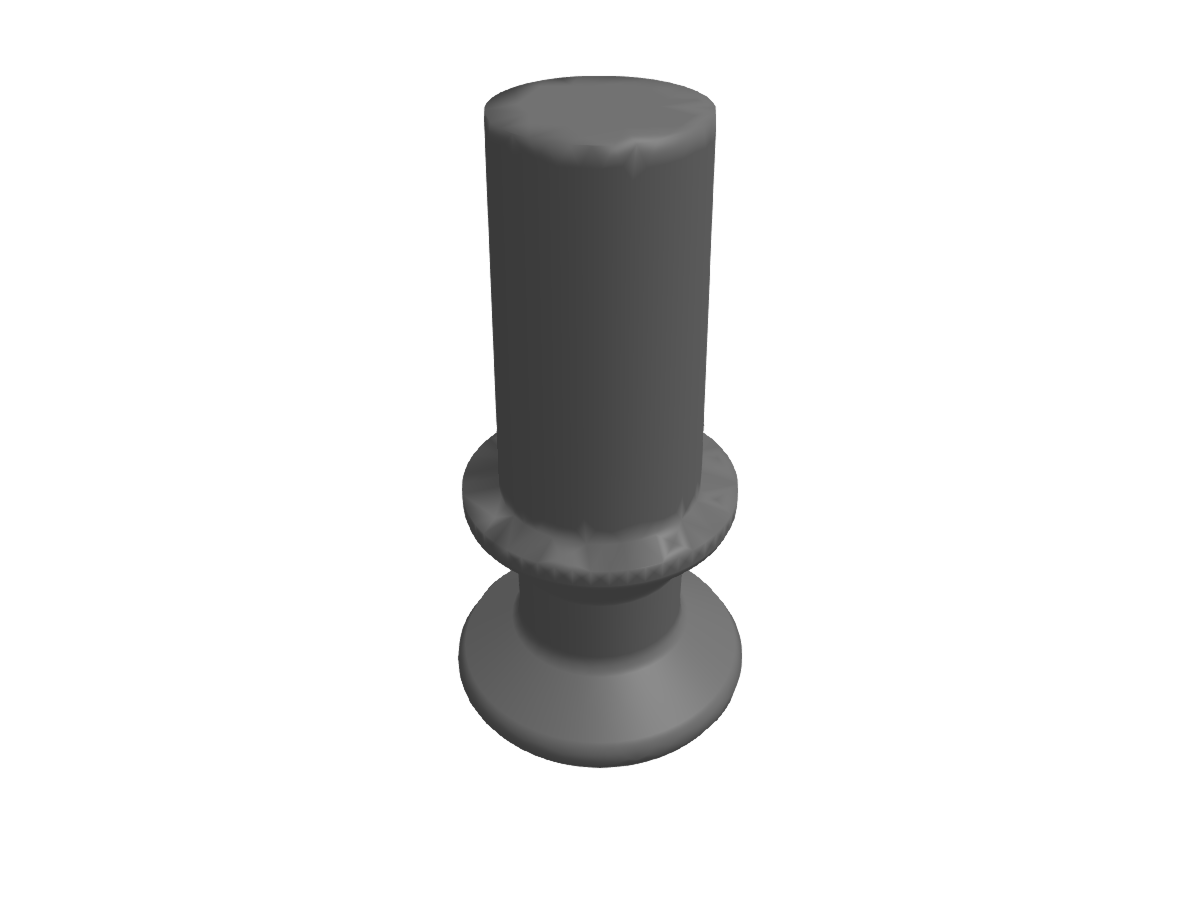}
        &\includegraphics[width=0.1\linewidth, clip=true, trim=140pt 50pt 140pt 50pt]{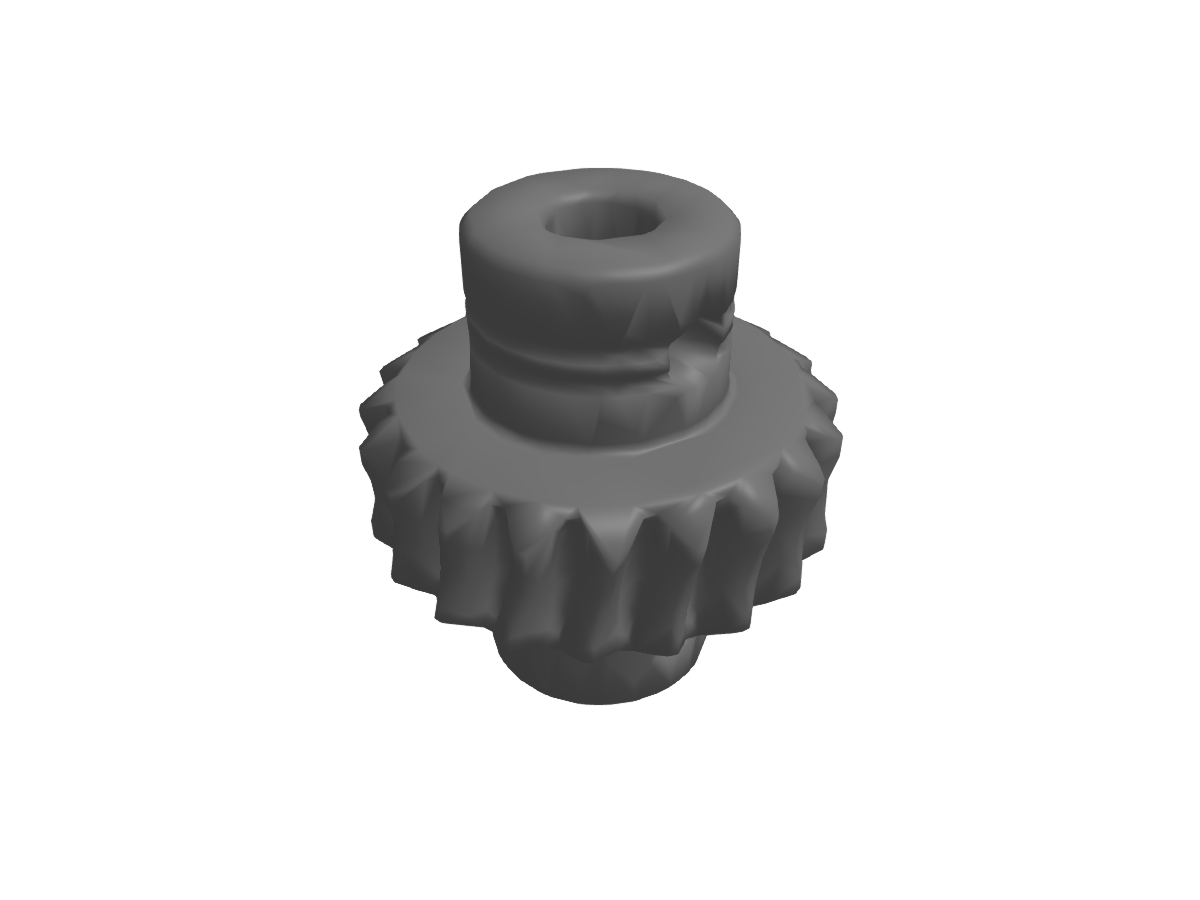}
        &\includegraphics[width=0.1\linewidth, clip=true, trim=140pt 50pt 140pt 50pt]{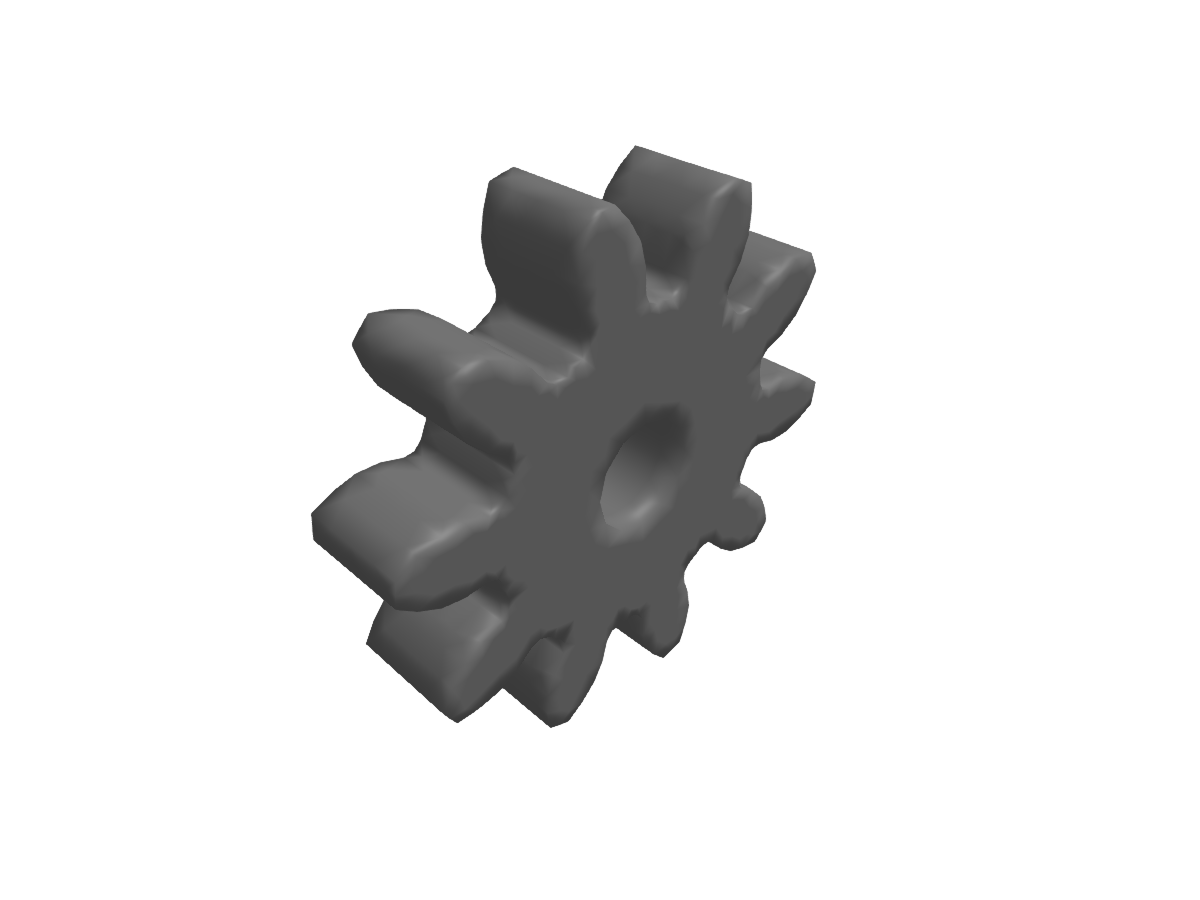}
        &\includegraphics[width=0.1\linewidth, clip=true, trim=140pt 50pt 140pt 50pt]{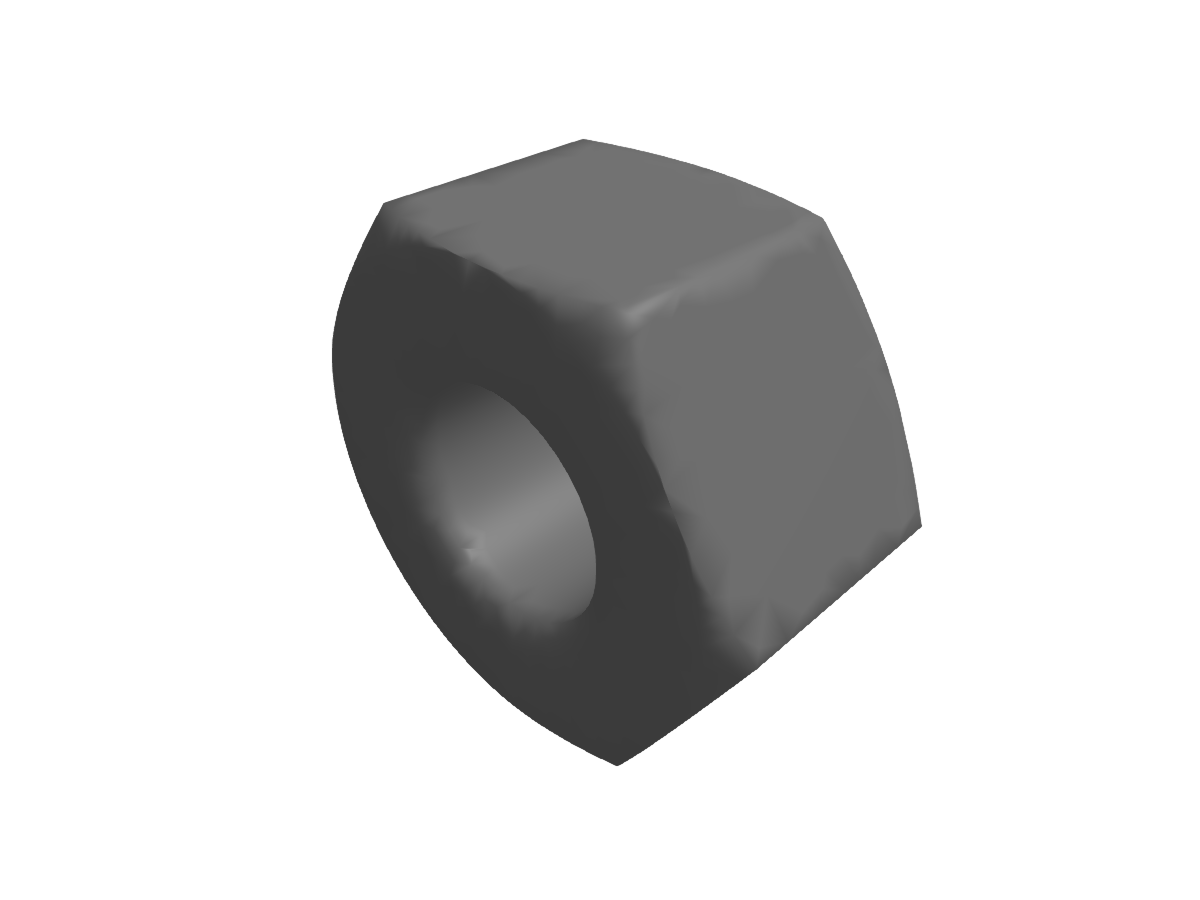}
        \\
        \includegraphics[width=0.1\linewidth, clip=true, trim=140pt 50pt 140pt 50pt]{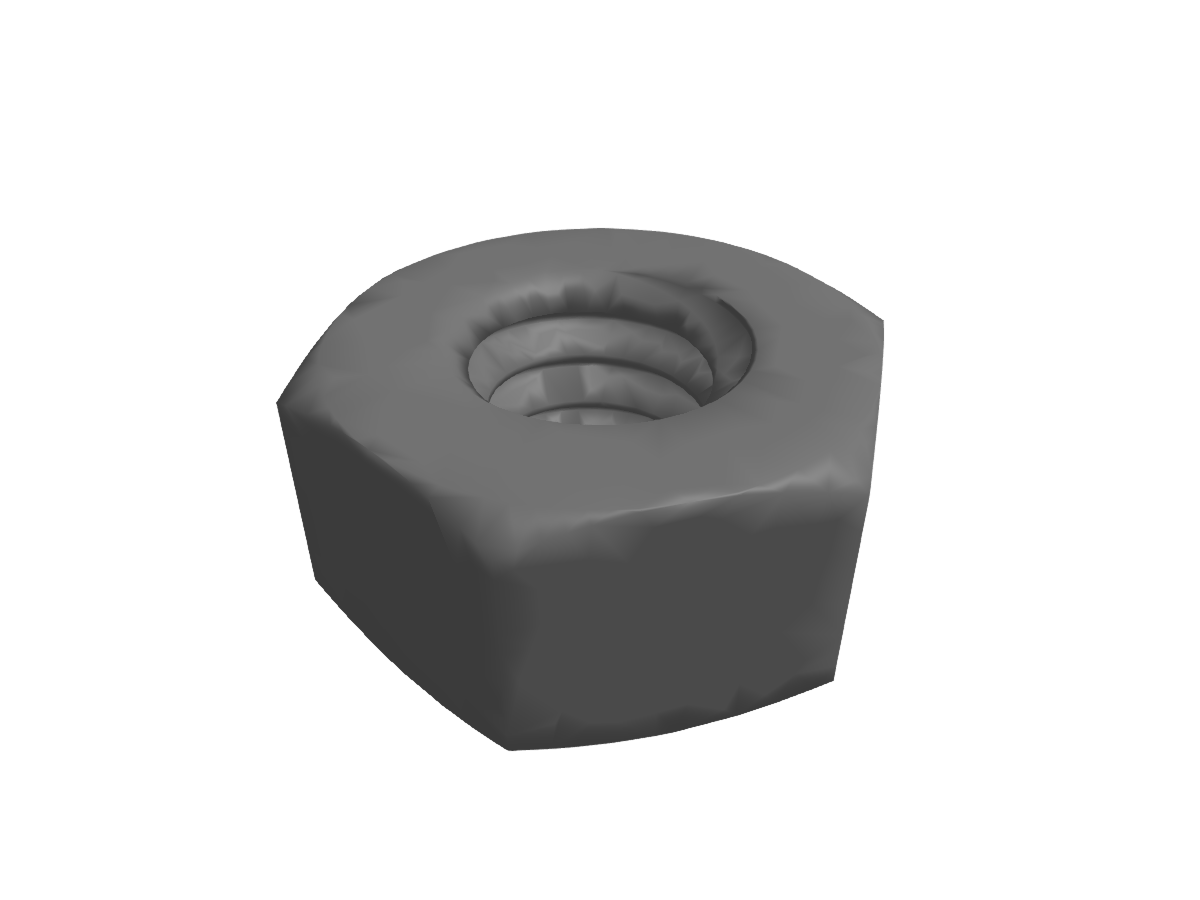}
        &\includegraphics[width=0.1\linewidth, clip=true, trim=140pt 50pt 140pt 50pt]{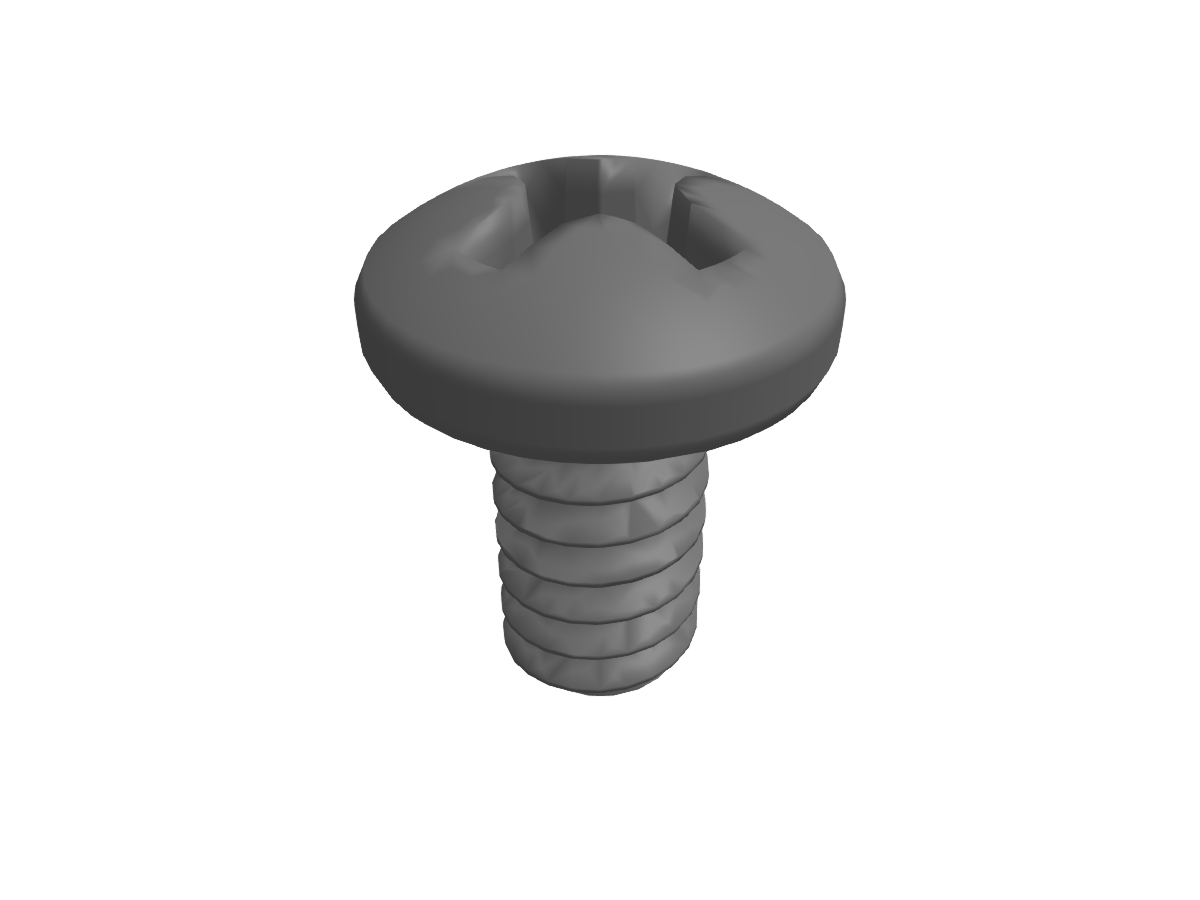}
        &\includegraphics[width=0.1\linewidth, clip=true, trim=140pt 50pt 140pt 50pt]{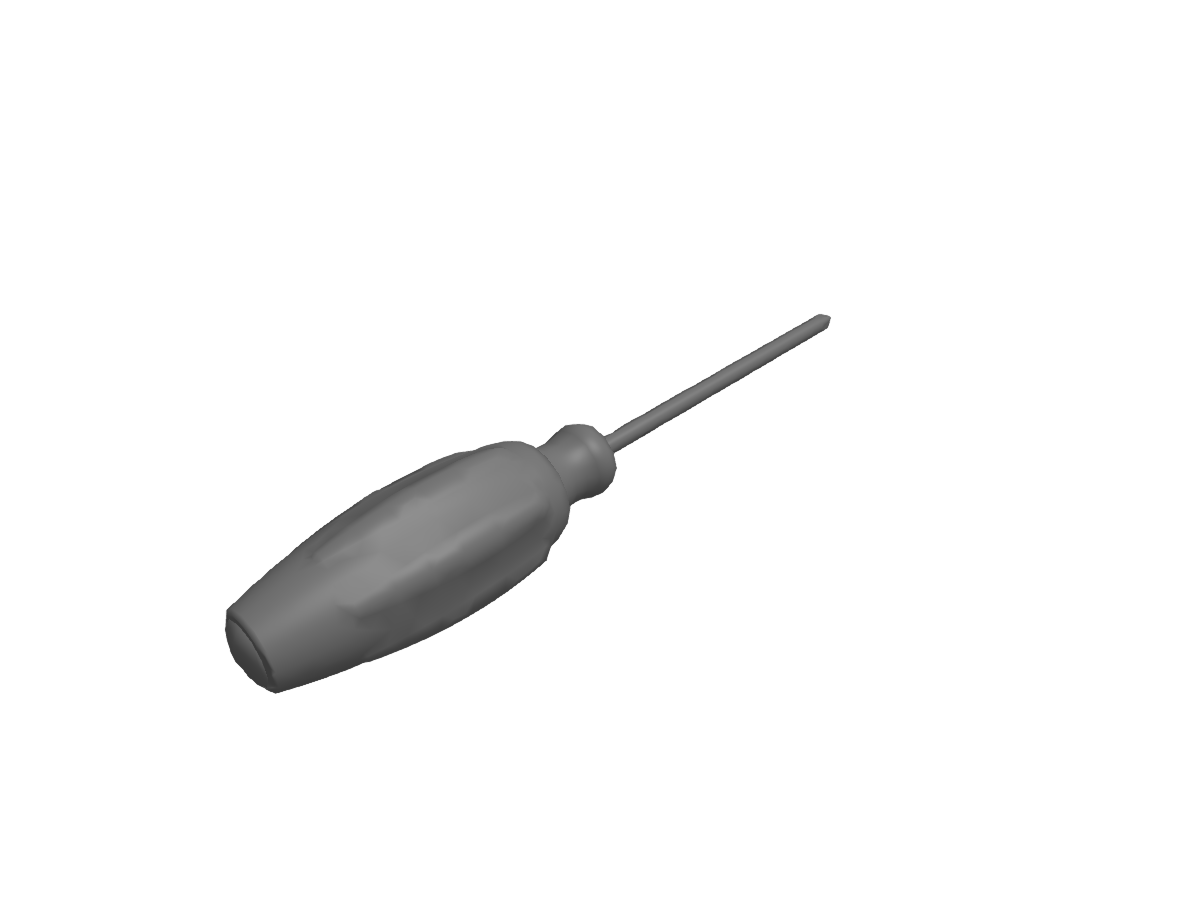}
        &\includegraphics[width=0.1\linewidth, clip=true, trim=140pt 50pt 140pt 50pt]{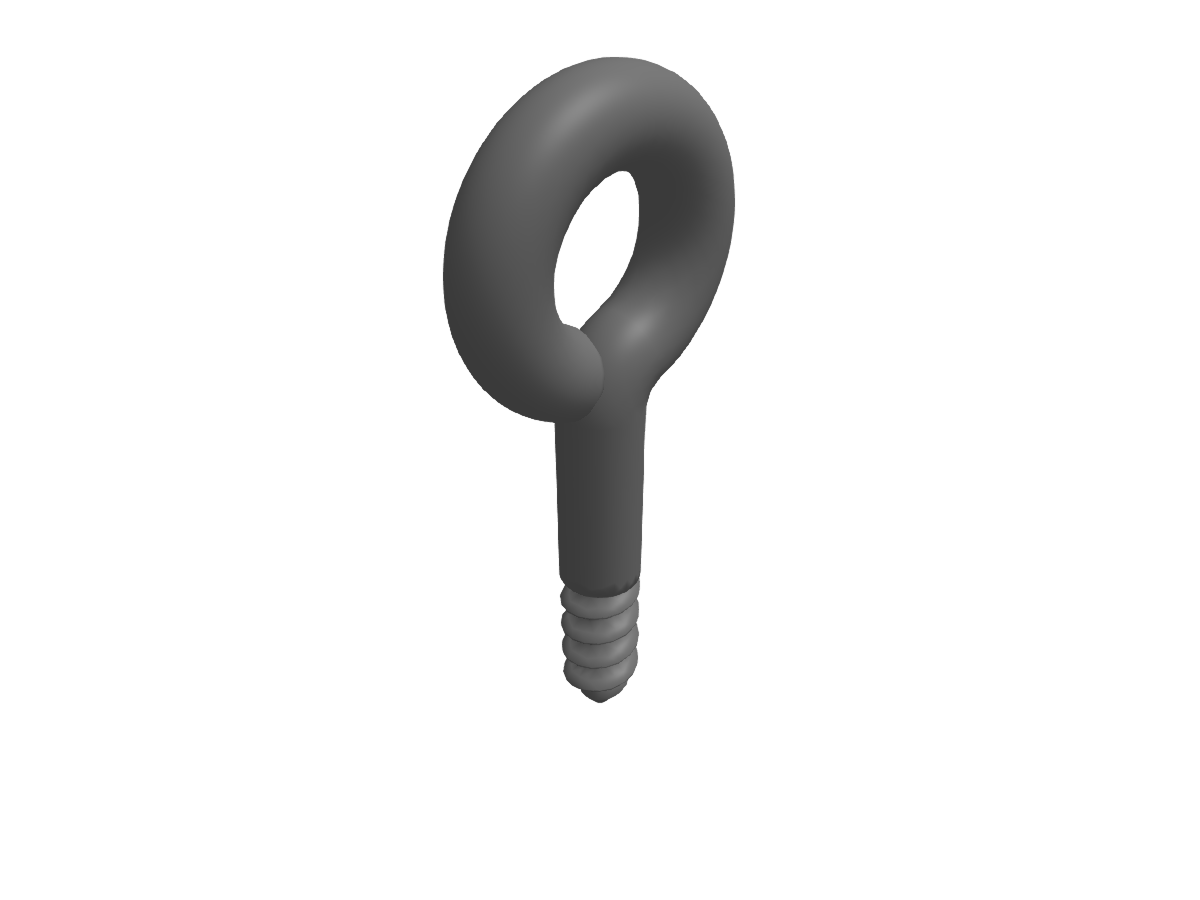}
        &\includegraphics[width=0.1\linewidth, clip=true, trim=140pt 50pt 140pt 50pt]{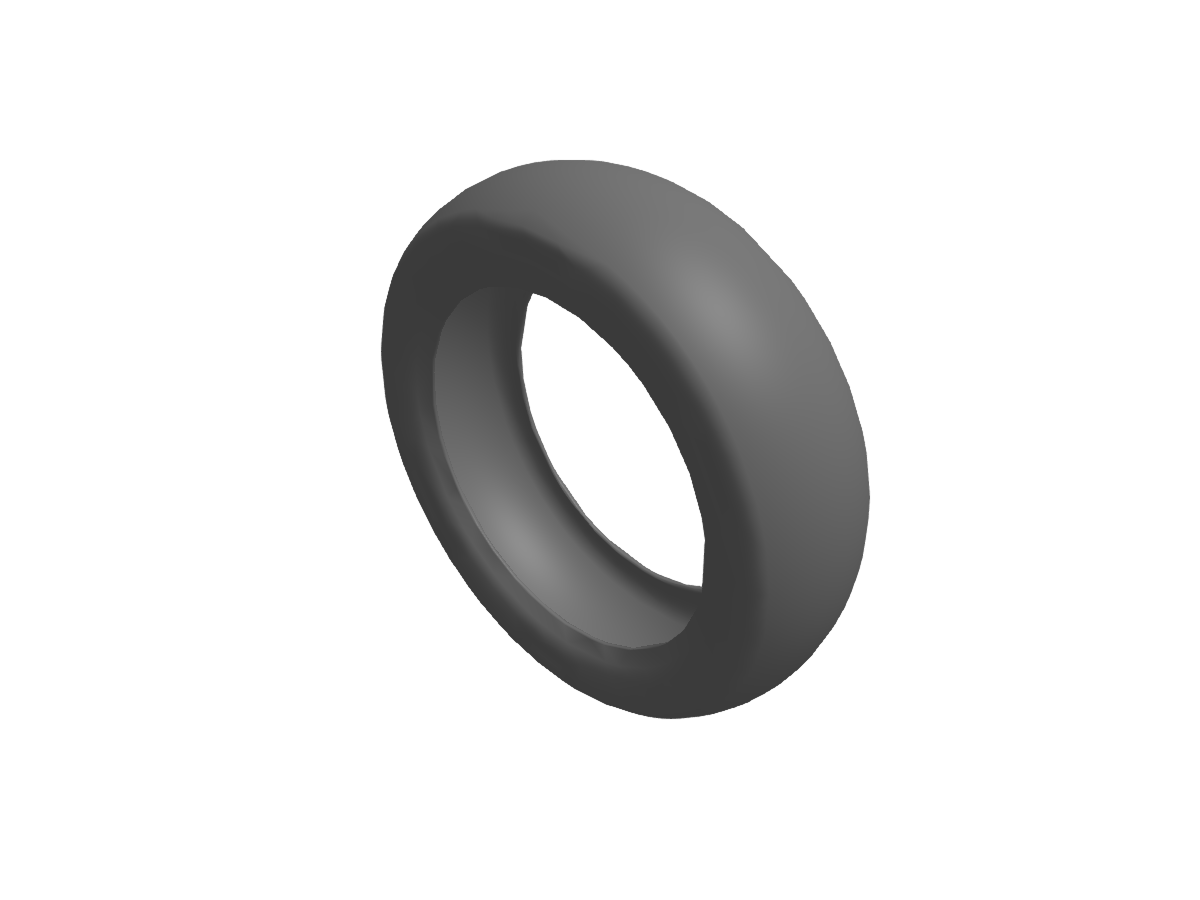}
        &\includegraphics[width=0.1\linewidth, clip=true, trim=140pt 50pt 140pt 50pt]{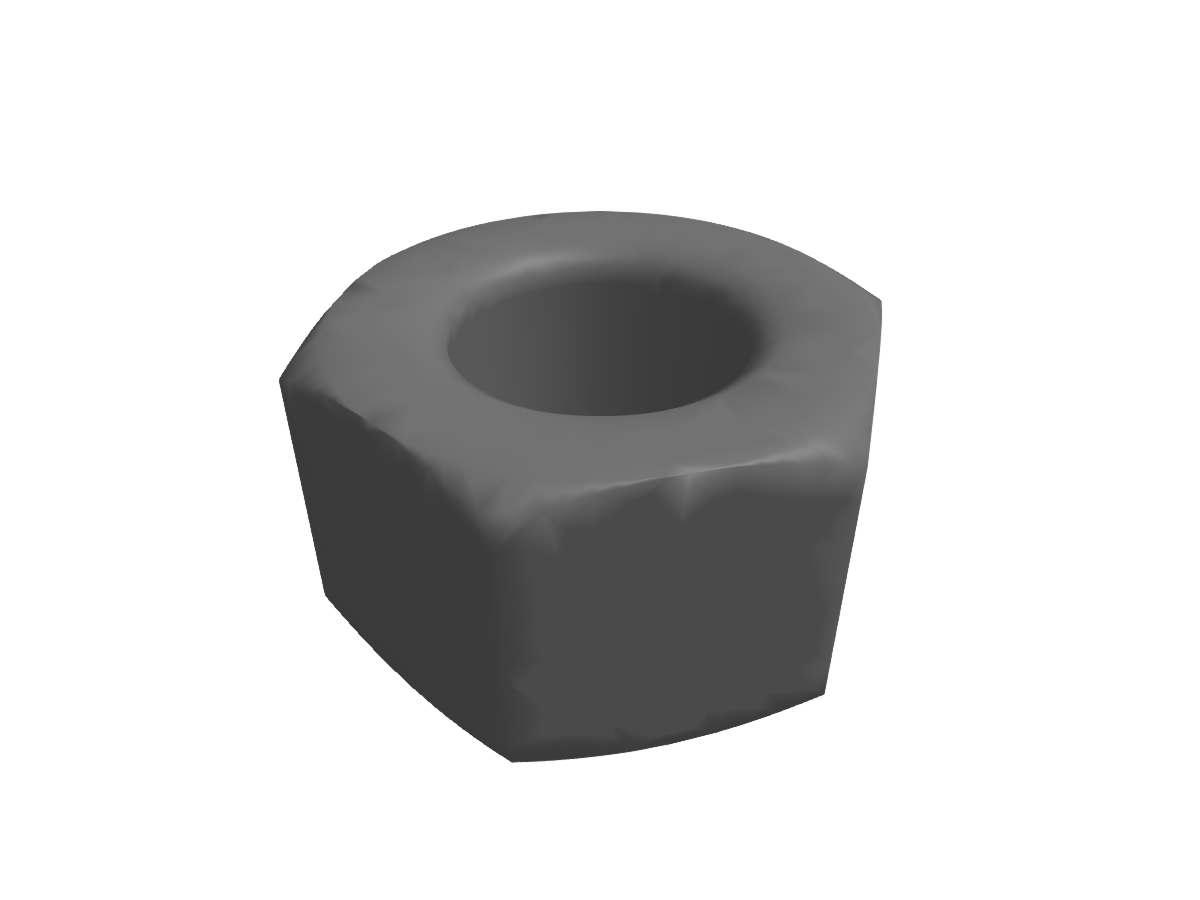}
        &\includegraphics[width=0.1\linewidth, clip=true, trim=140pt 50pt 140pt 50pt]{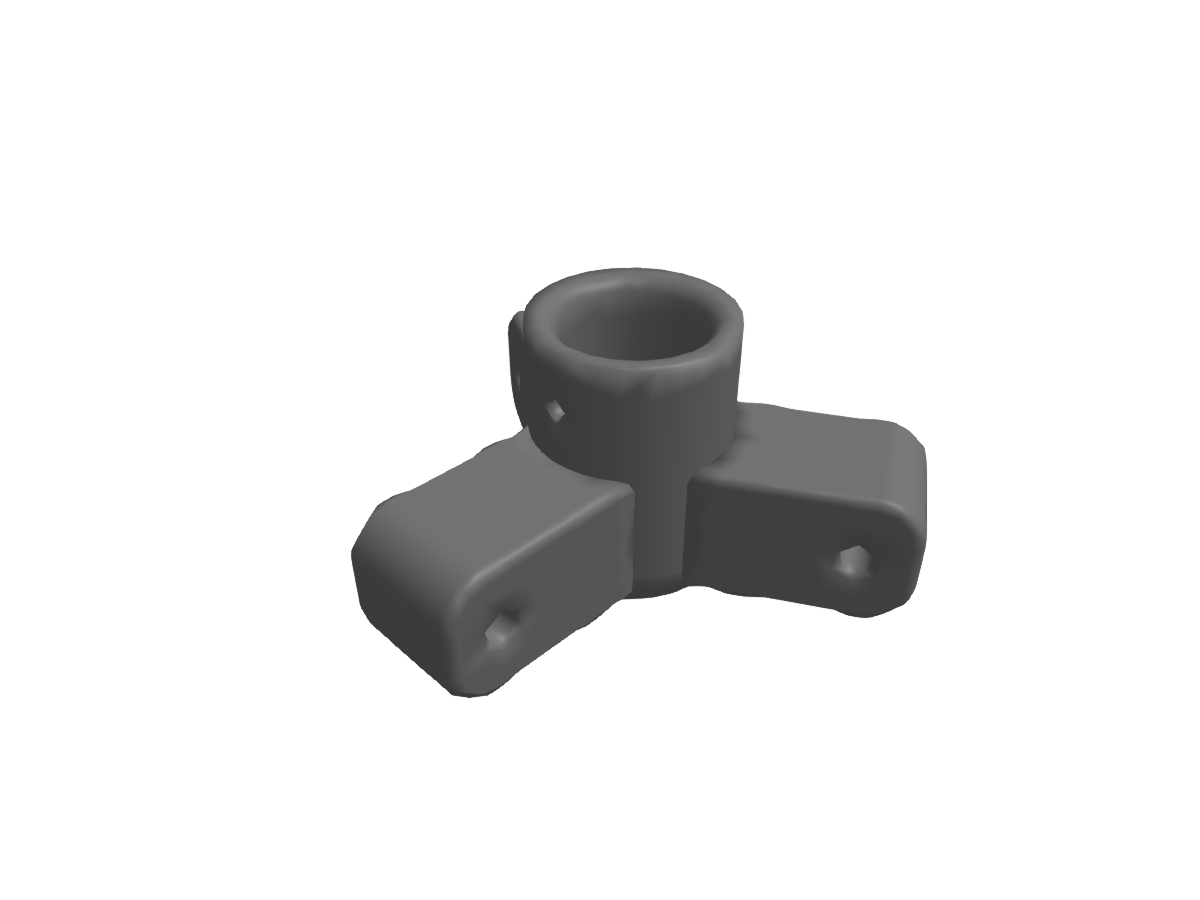}
        &\includegraphics[width=0.1\linewidth, clip=true, trim=140pt 50pt 140pt 50pt]{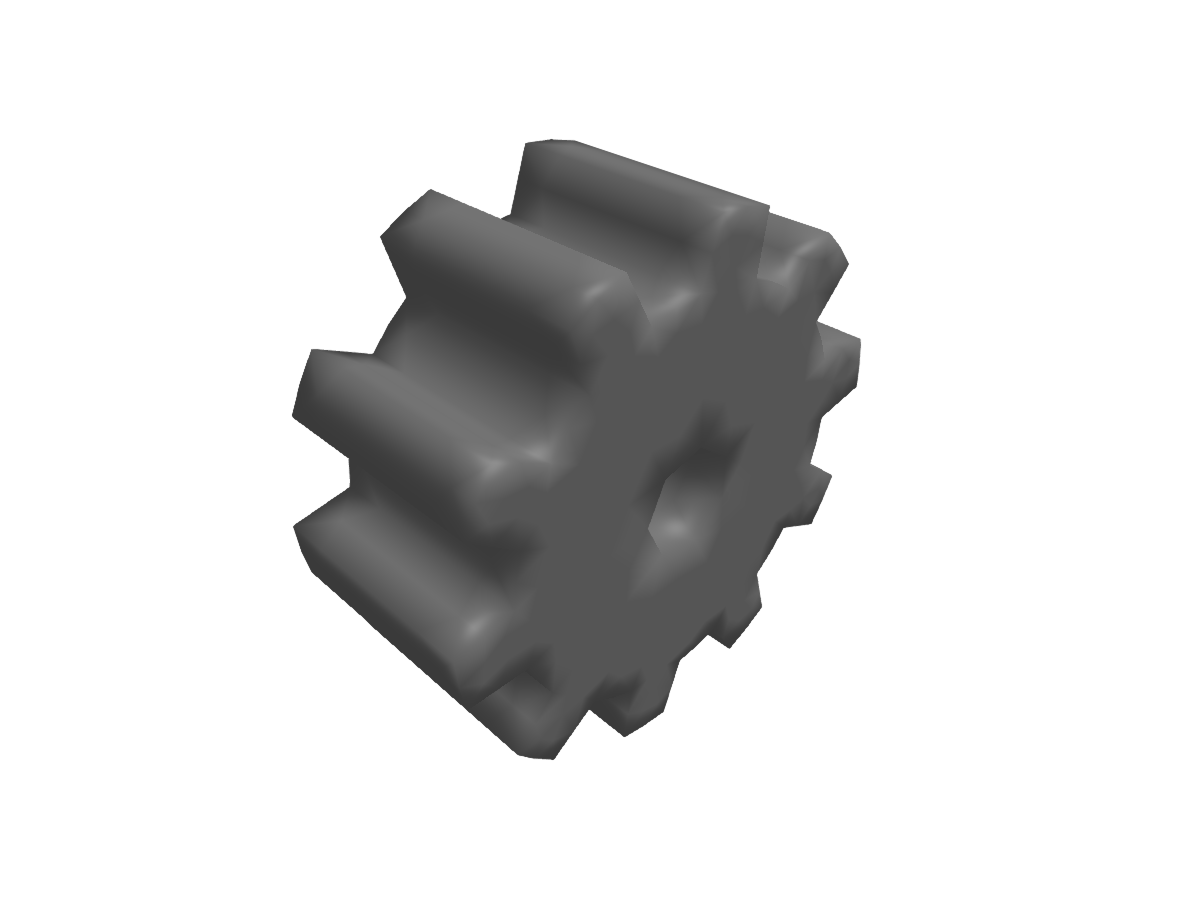}
        &\includegraphics[width=0.1\linewidth, clip=true, trim=140pt 50pt 140pt 50pt]{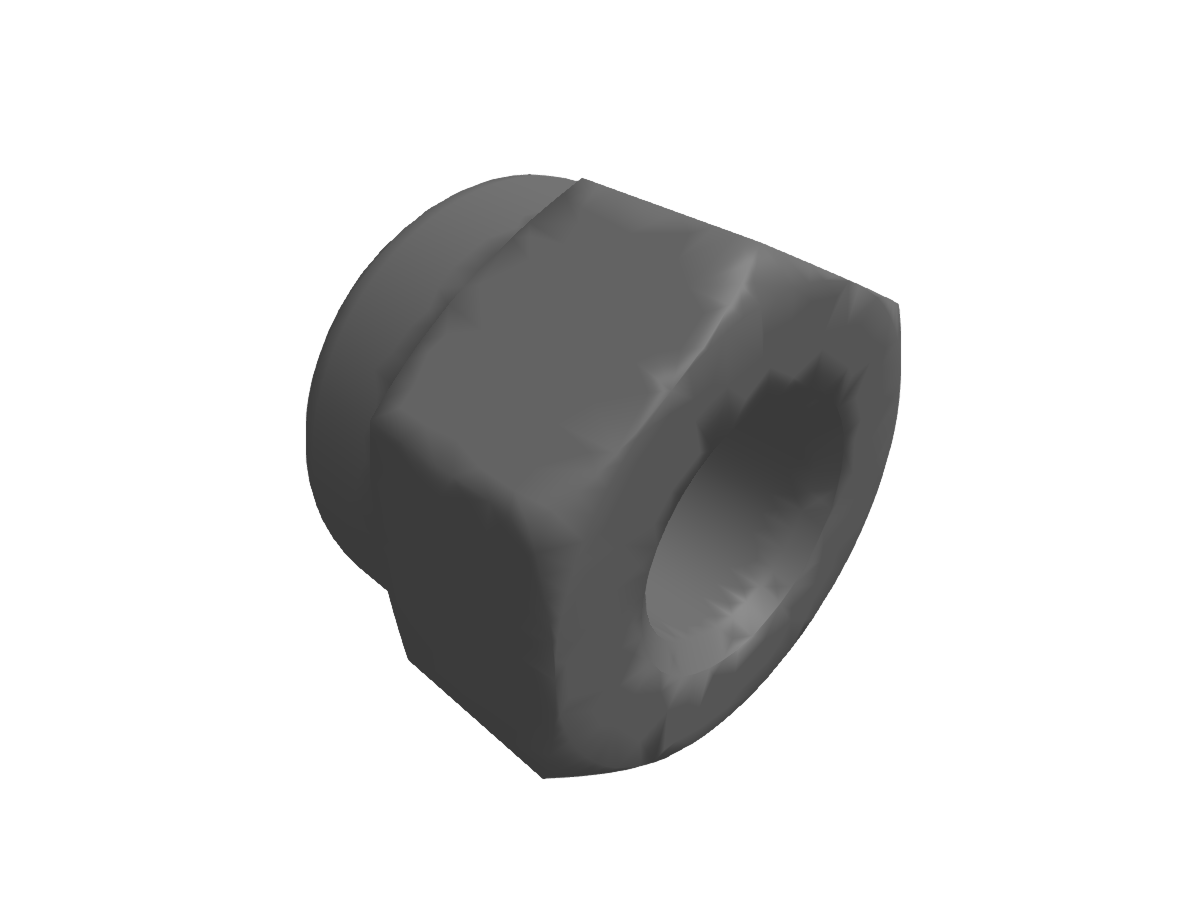}
        &\includegraphics[width=0.1\linewidth, clip=true, trim=140pt 50pt 140pt 50pt]{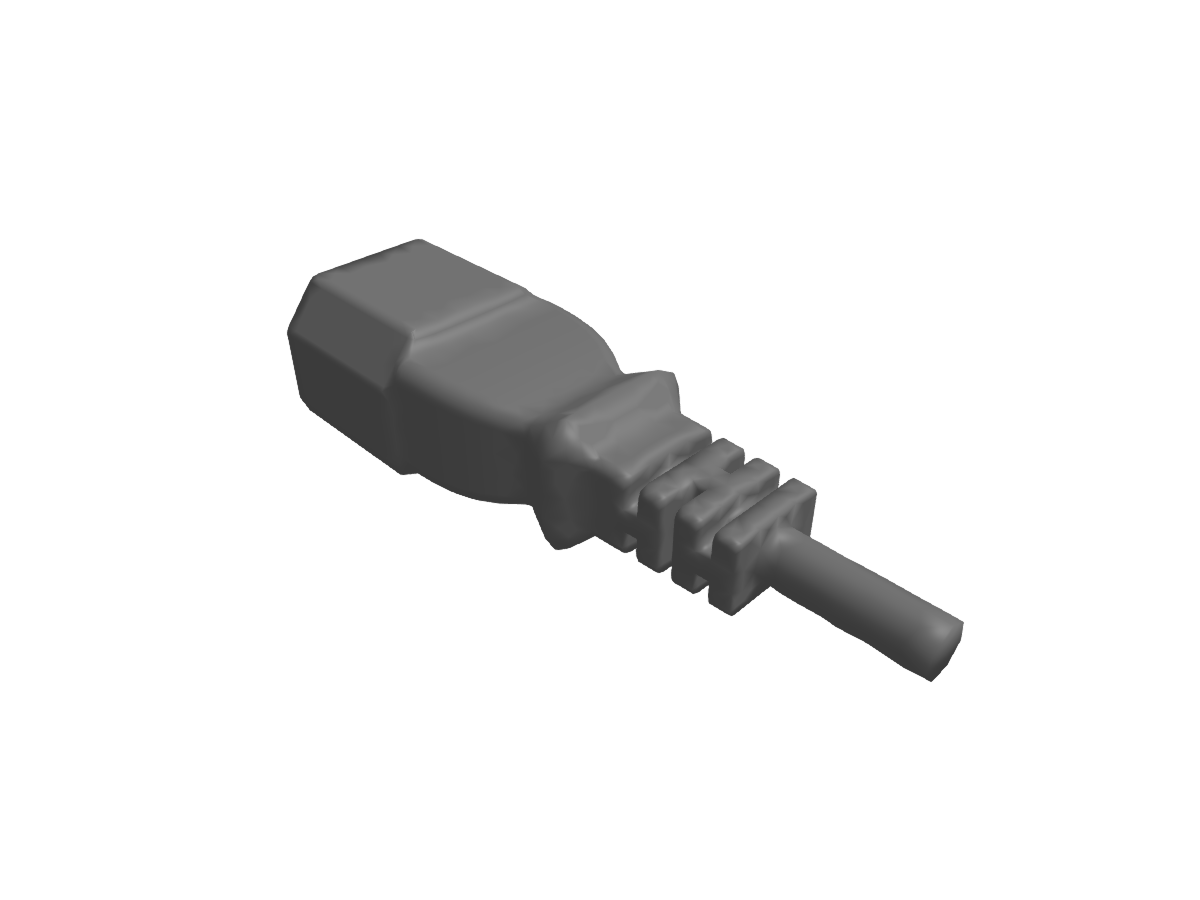}
        \\
        \includegraphics[width=0.1\linewidth, clip=true, trim=140pt 50pt 140pt 50pt]{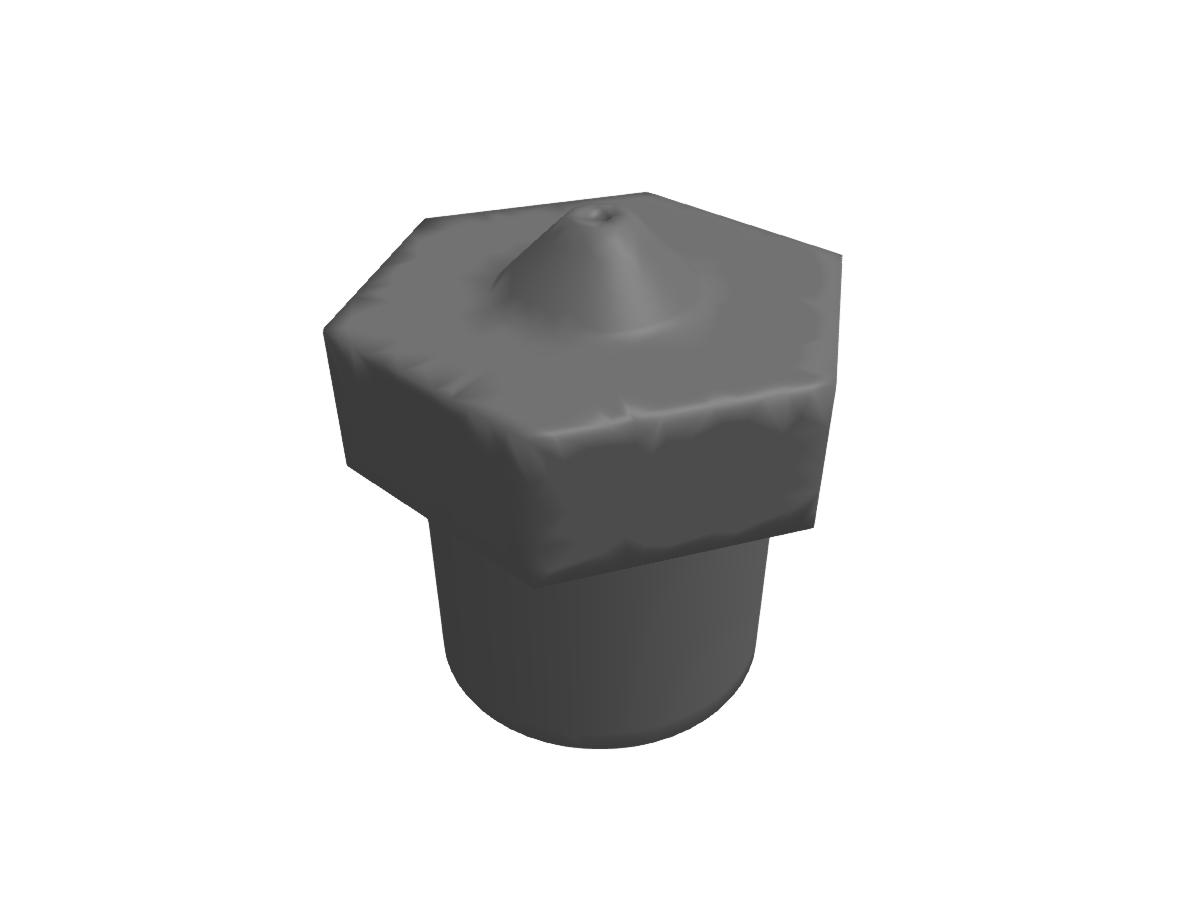}
        &\includegraphics[width=0.1\linewidth, clip=true, trim=140pt 50pt 140pt 50pt]{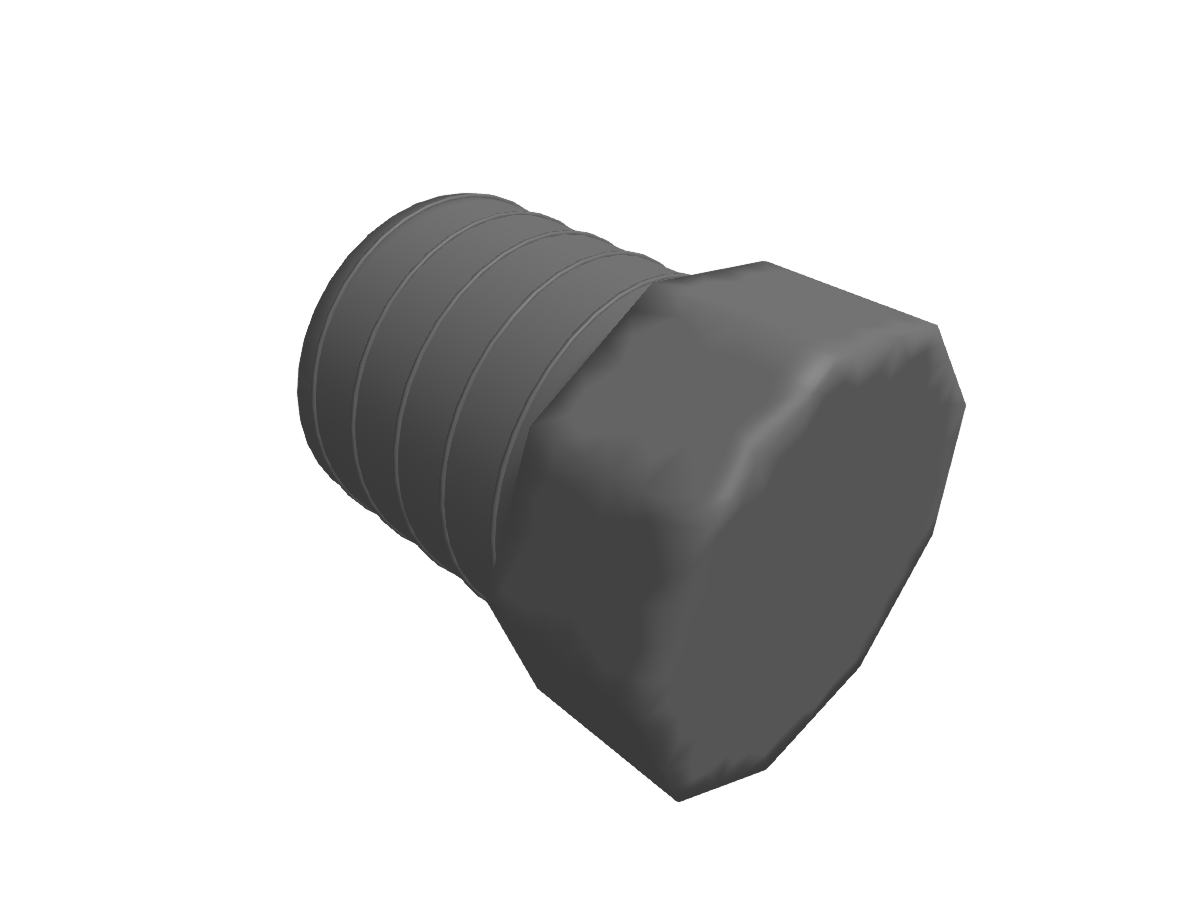}
        &\includegraphics[width=0.1\linewidth, clip=true, trim=140pt 50pt 140pt 50pt]{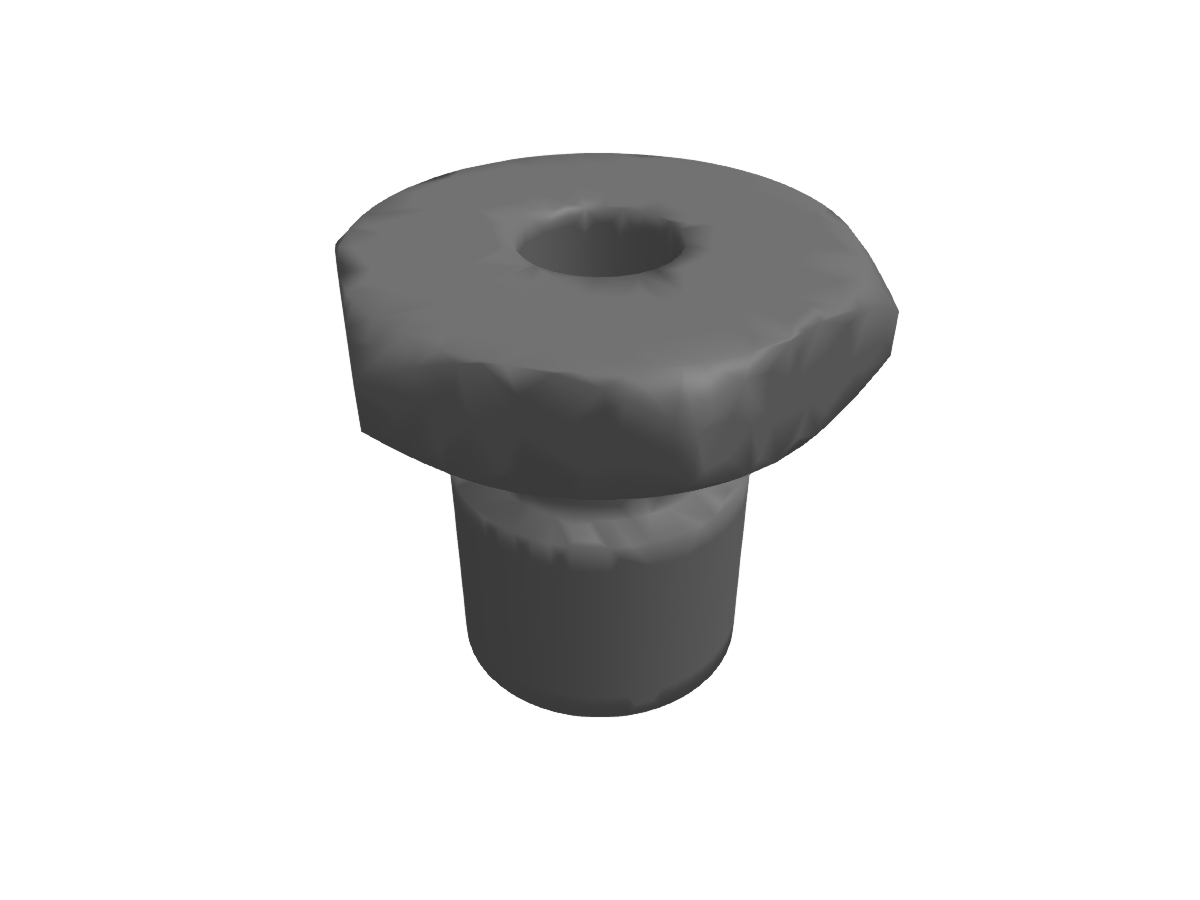}
        &\includegraphics[width=0.1\linewidth, clip=true, trim=140pt 50pt 140pt 50pt]{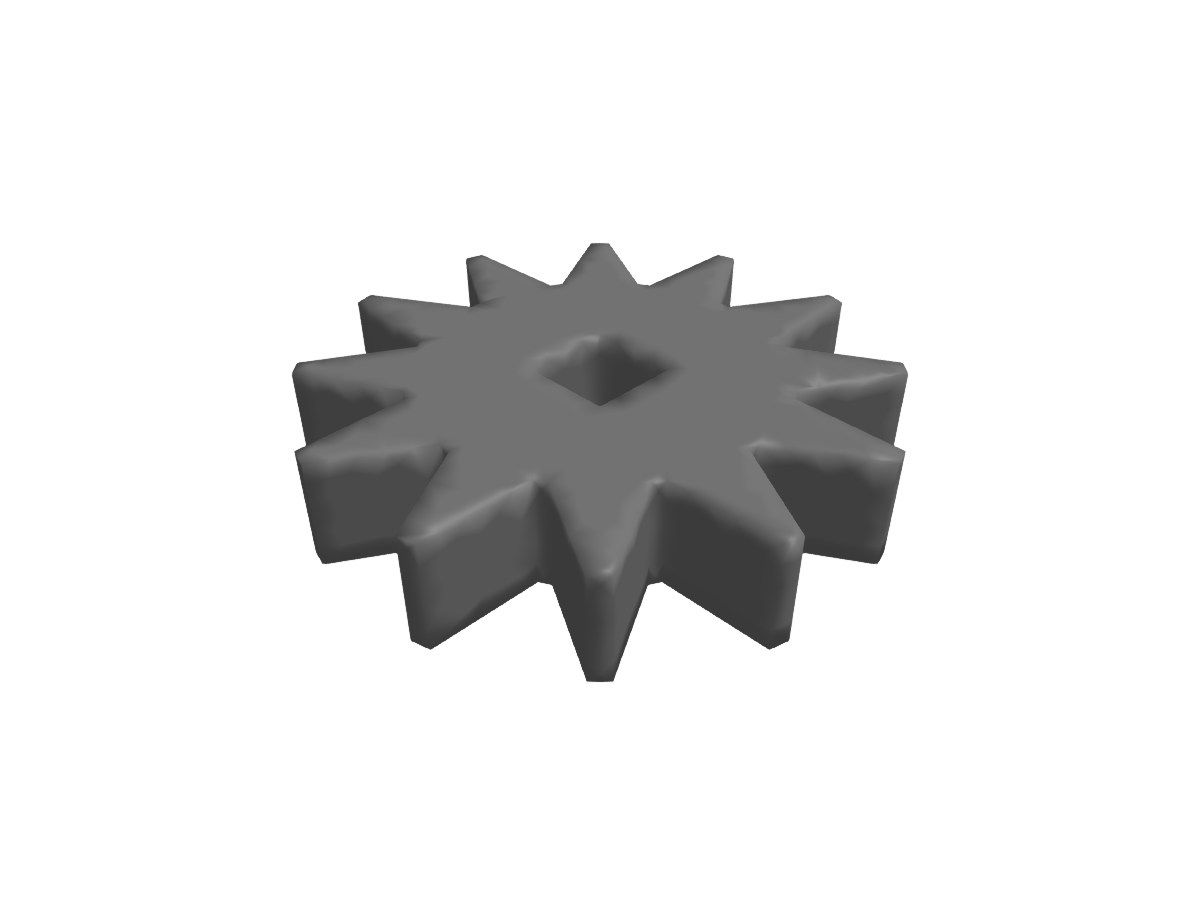}
        &\includegraphics[width=0.1\linewidth, clip=true, trim=140pt 50pt 140pt 50pt]{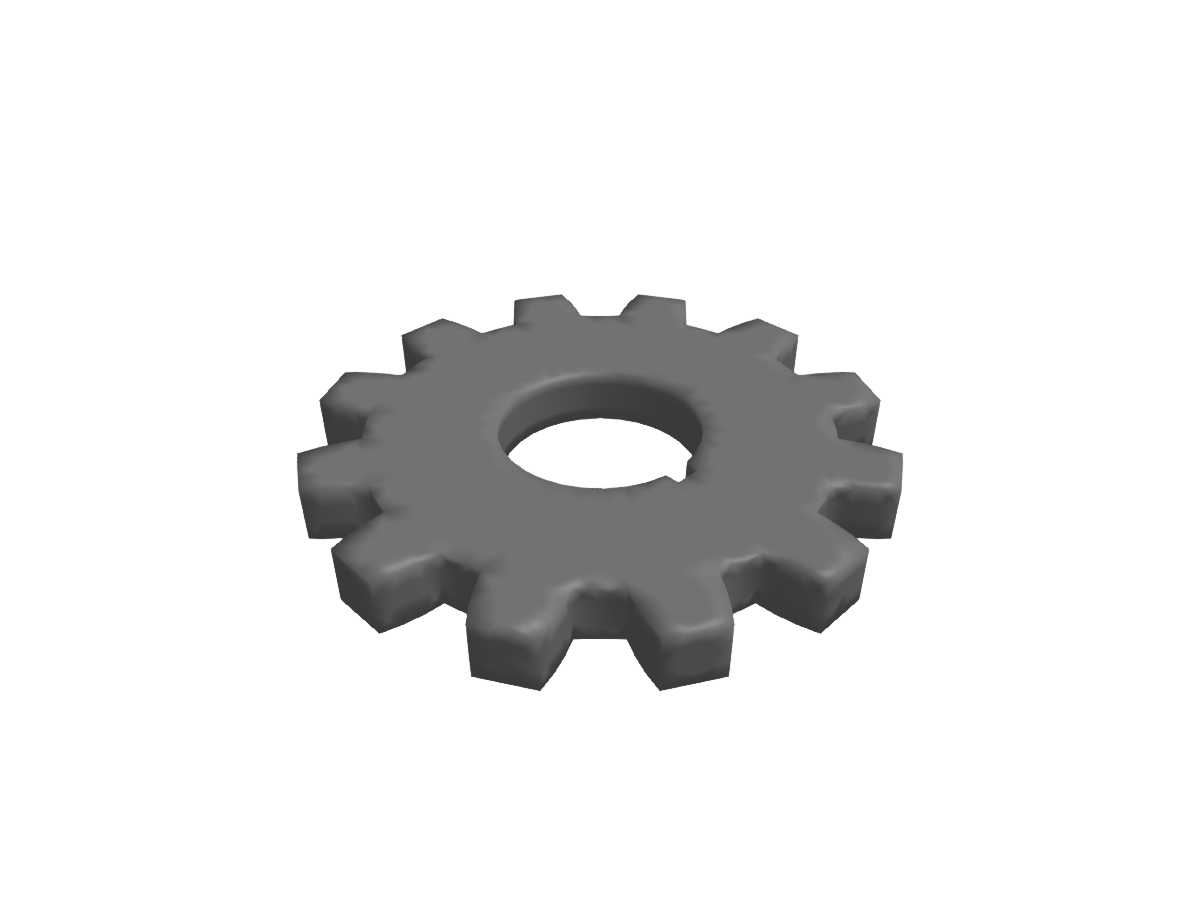}
        &\includegraphics[width=0.1\linewidth, clip=true, trim=140pt 50pt 140pt 50pt]{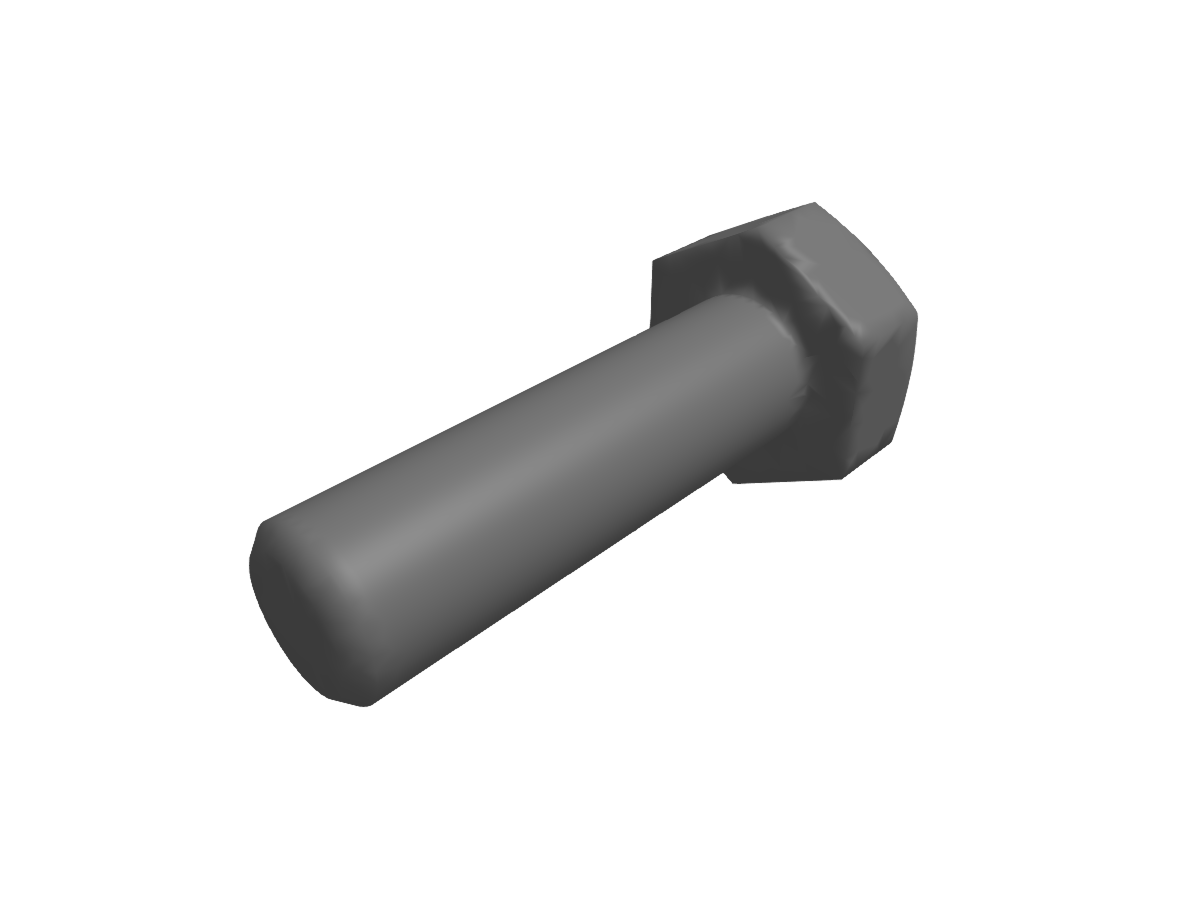}
        &\includegraphics[width=0.1\linewidth, clip=true, trim=140pt 50pt 140pt 50pt]{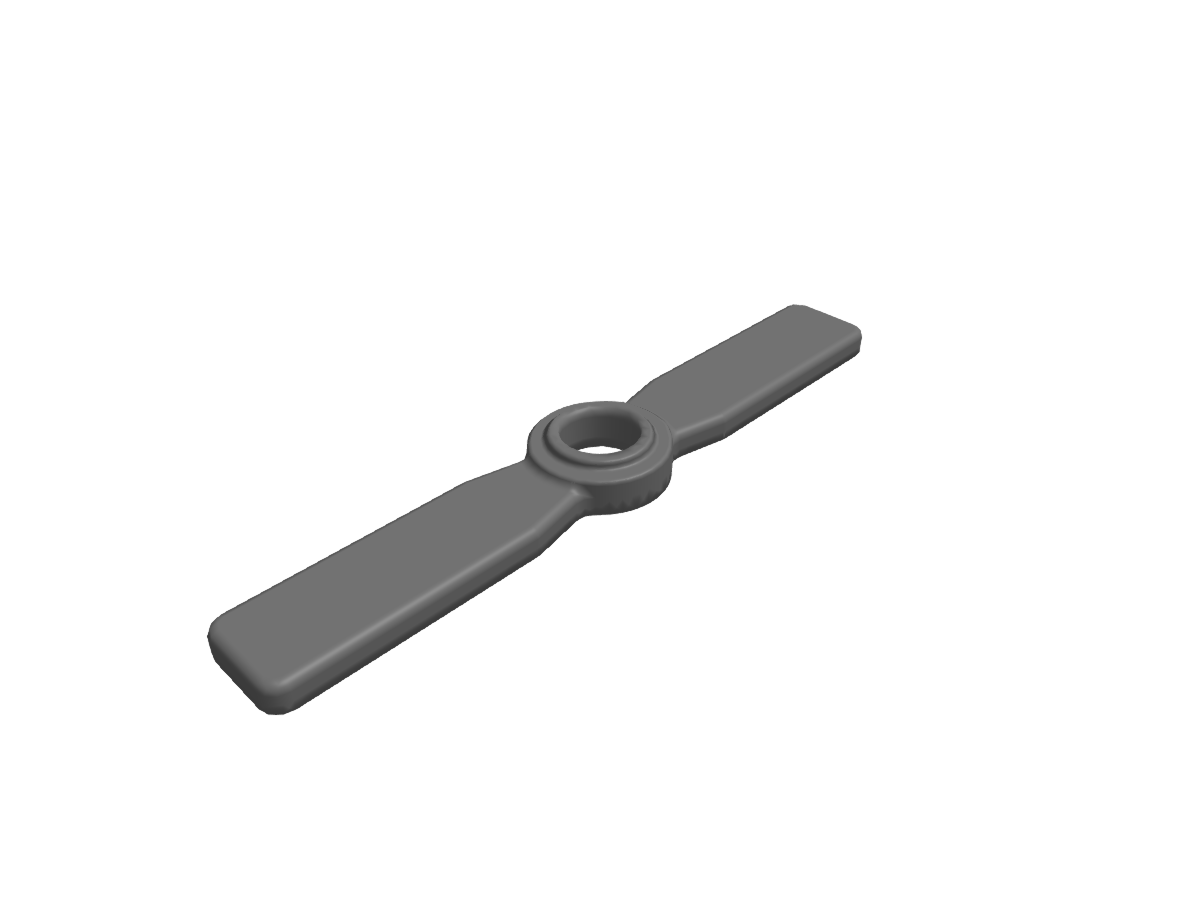}
        &\includegraphics[width=0.1\linewidth, clip=true, trim=140pt 50pt 140pt 50pt]{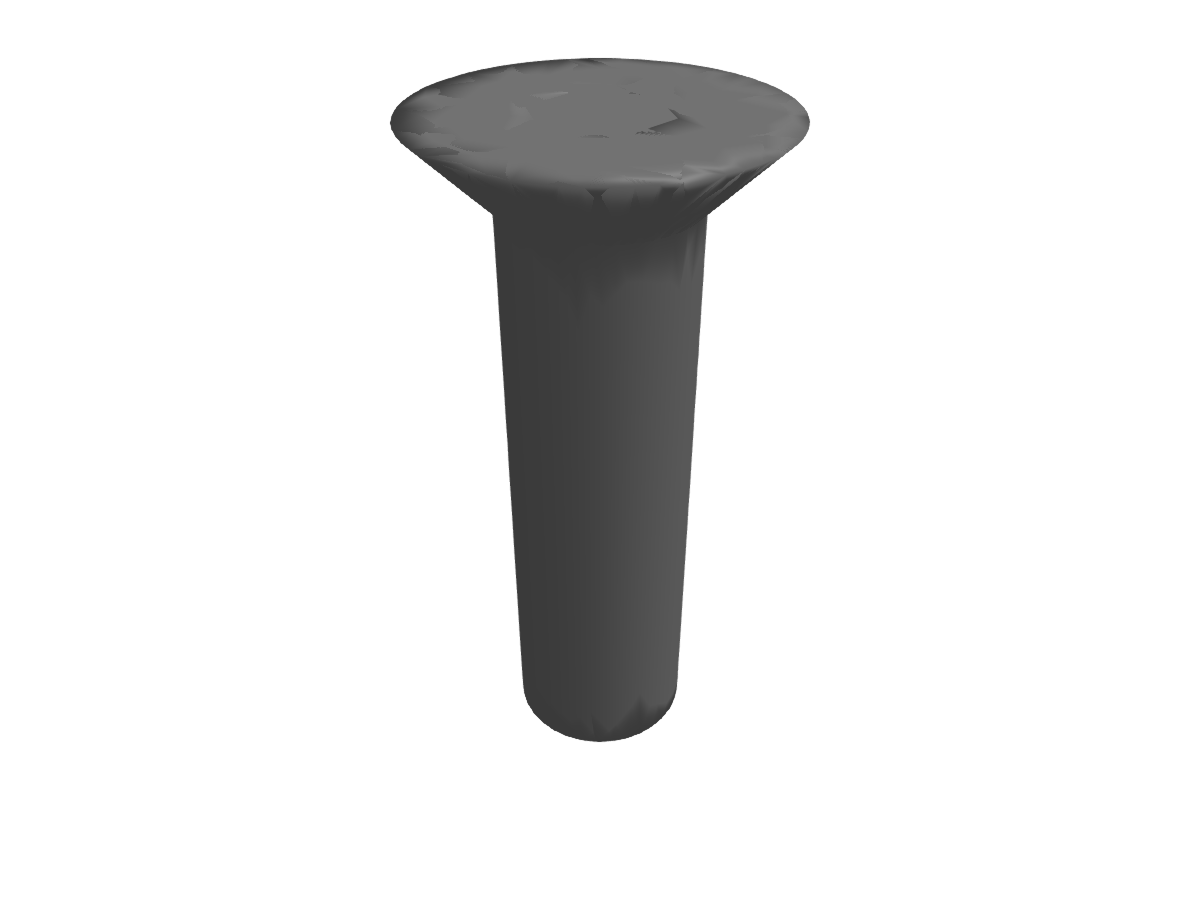}
        &\includegraphics[width=0.1\linewidth, clip=true, trim=140pt 50pt 140pt 50pt]{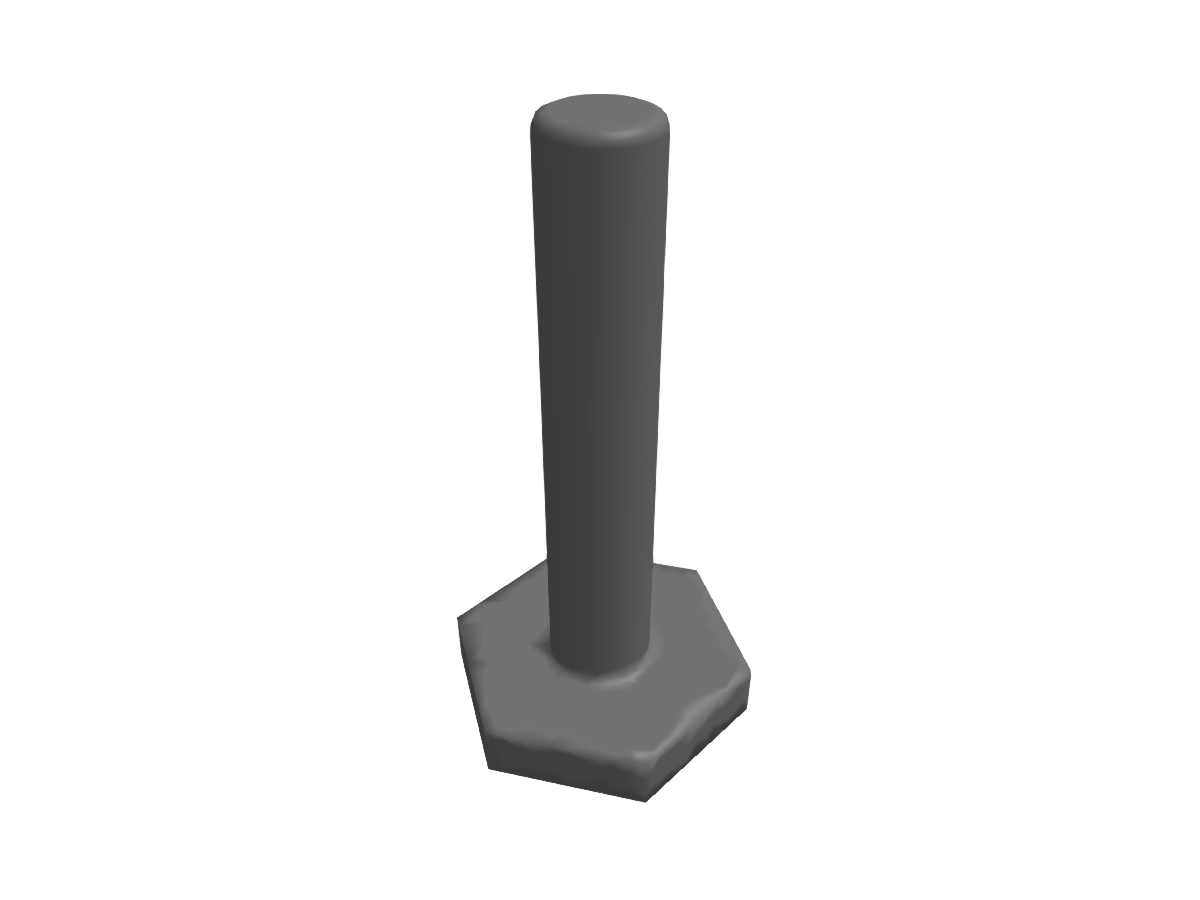}
        &\includegraphics[width=0.1\linewidth, clip=true, trim=140pt 50pt 140pt 50pt]{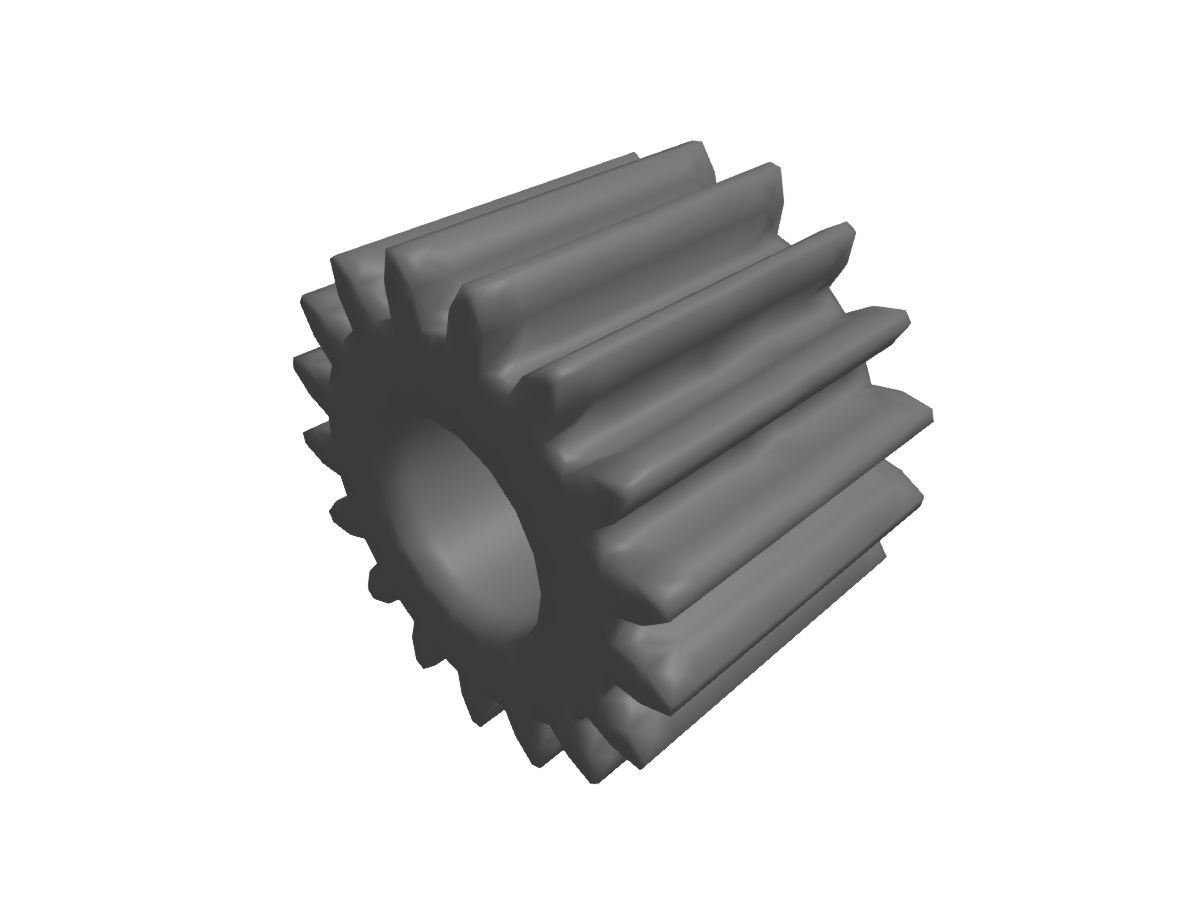}
        \\
        \includegraphics[width=0.1\linewidth, clip=true, trim=140pt 50pt 140pt 50pt]{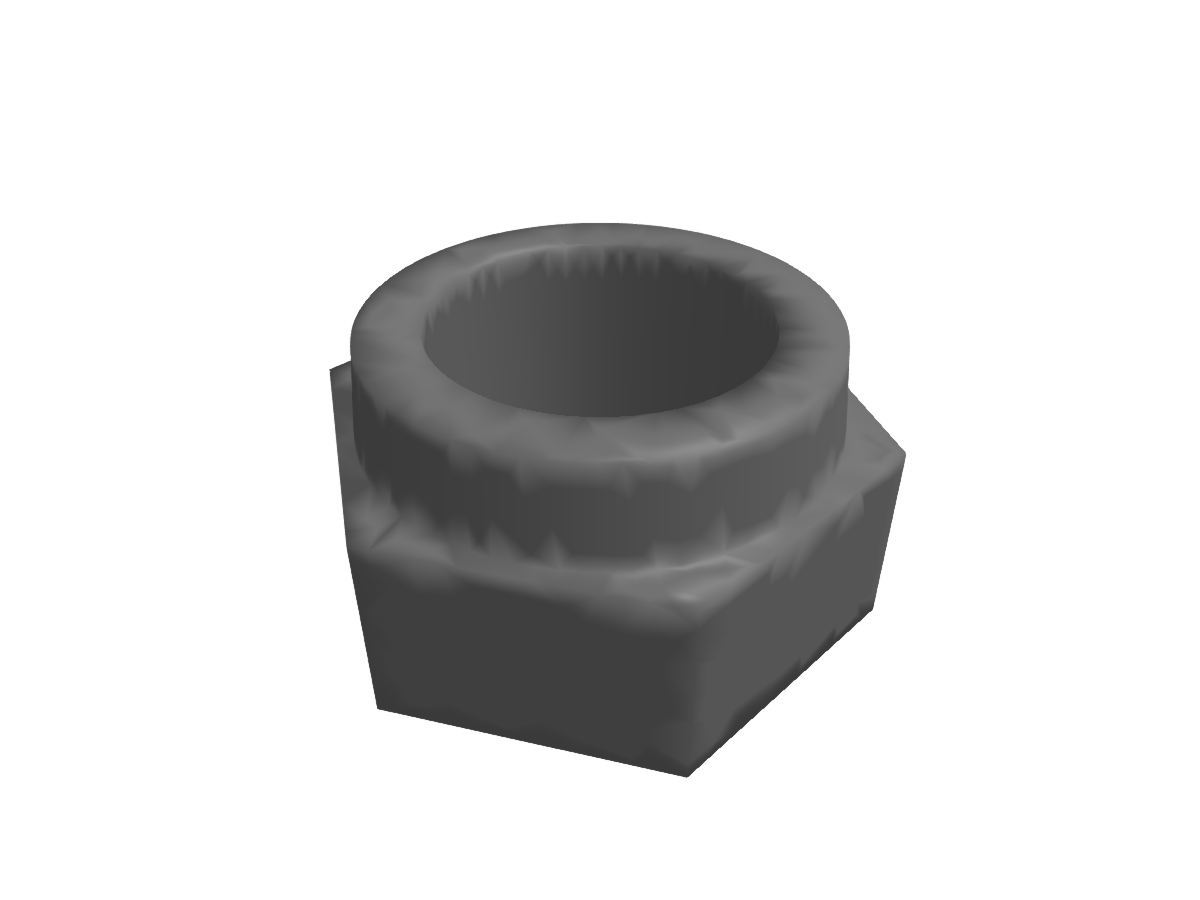}
        &\includegraphics[width=0.1\linewidth, clip=true, trim=140pt 50pt 140pt 50pt]{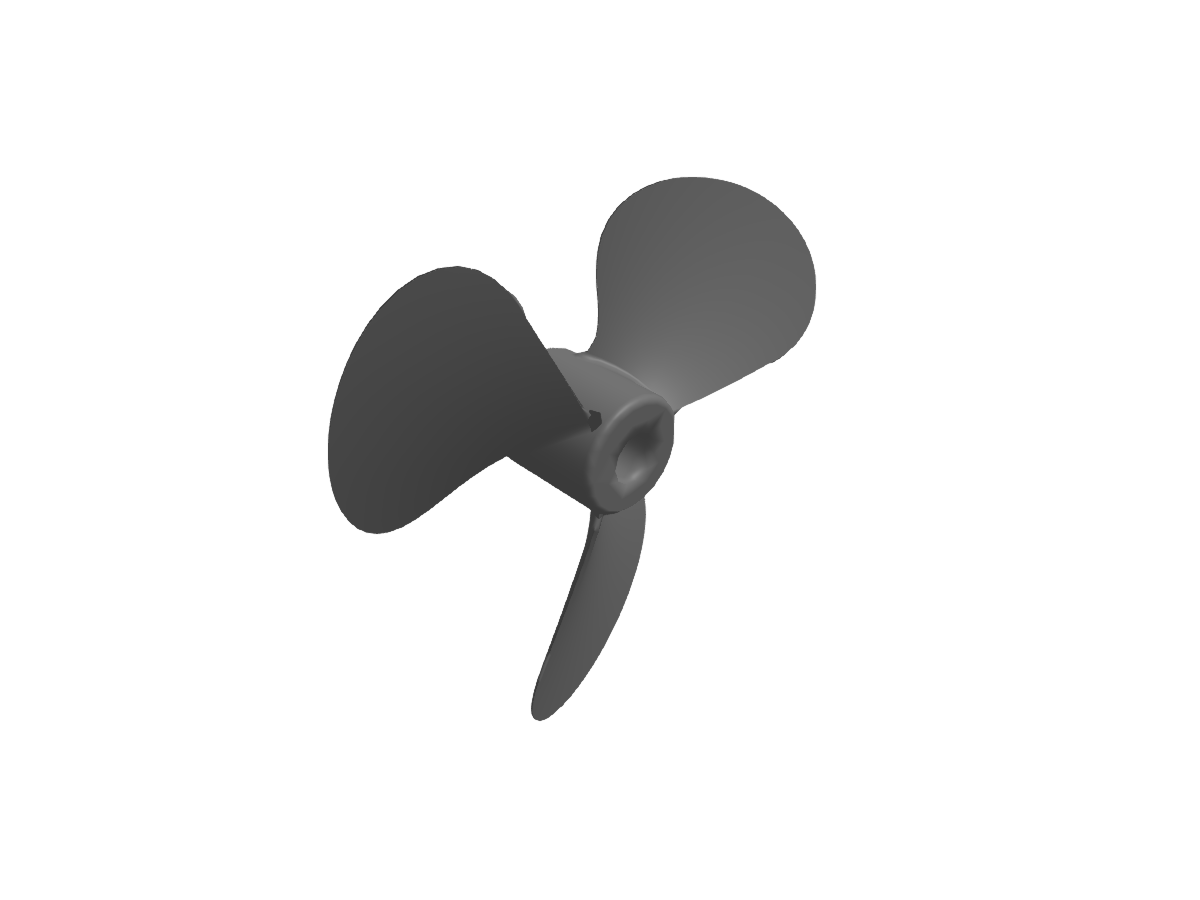}
        &\includegraphics[width=0.1\linewidth, clip=true, trim=140pt 50pt 140pt 50pt]{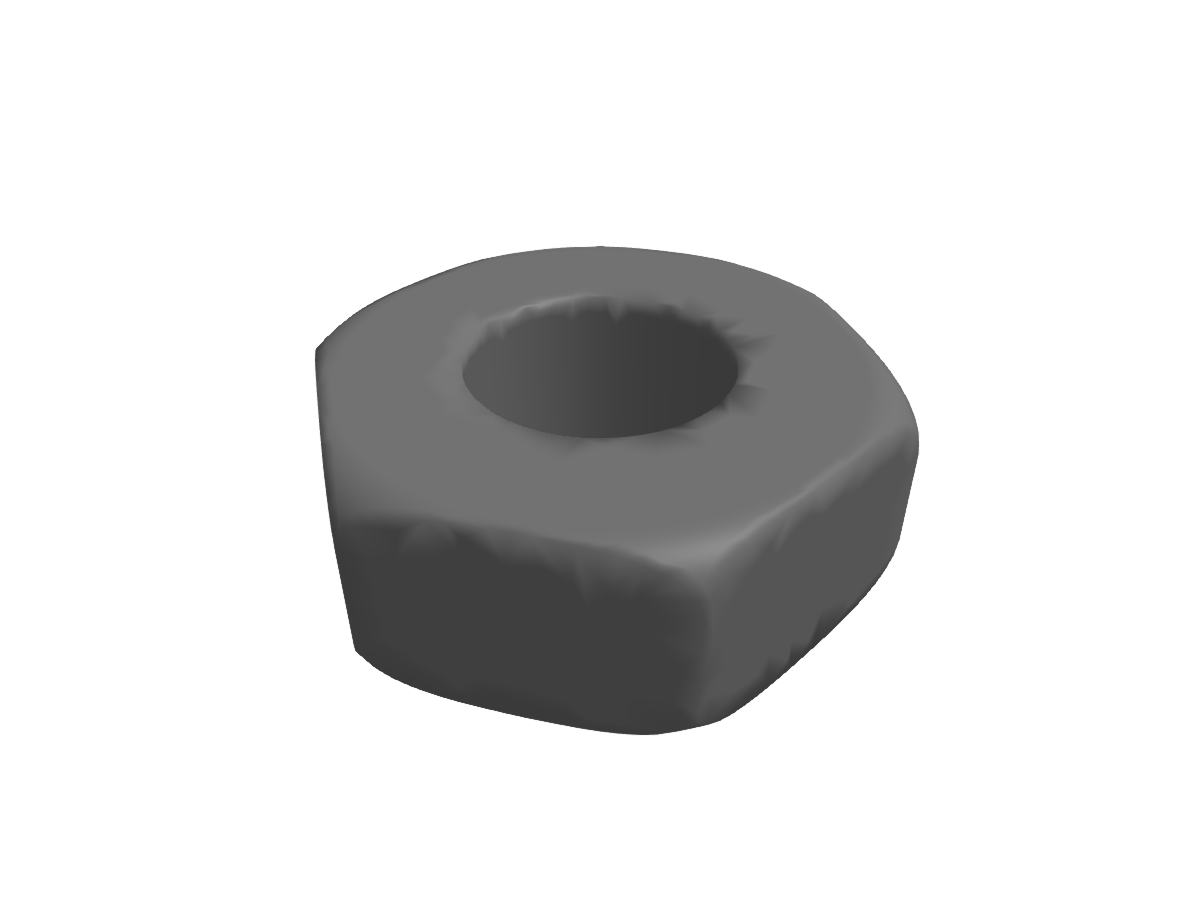}
        &\includegraphics[width=0.1\linewidth, clip=true, trim=140pt 50pt 140pt 50pt]{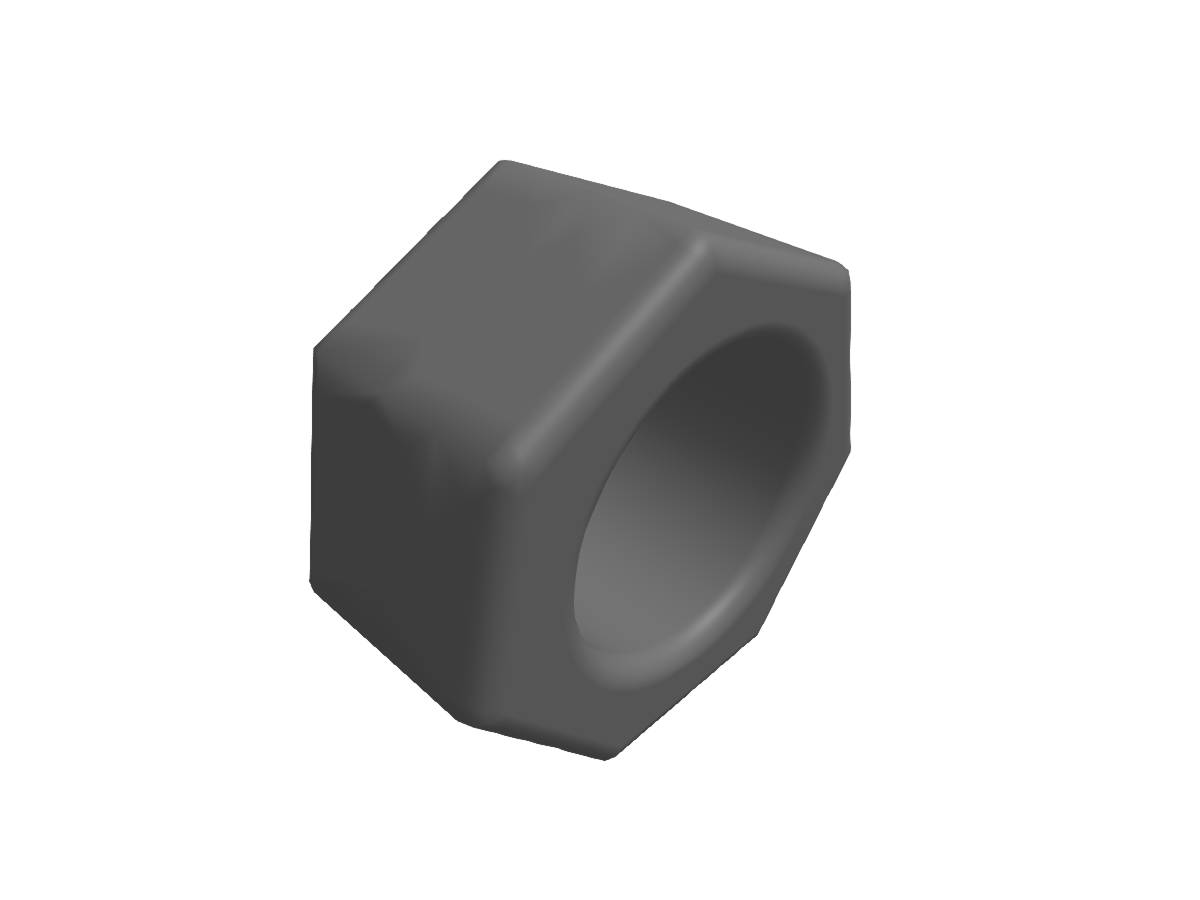}
        &\includegraphics[width=0.1\linewidth, clip=true, trim=140pt 50pt 140pt 50pt]{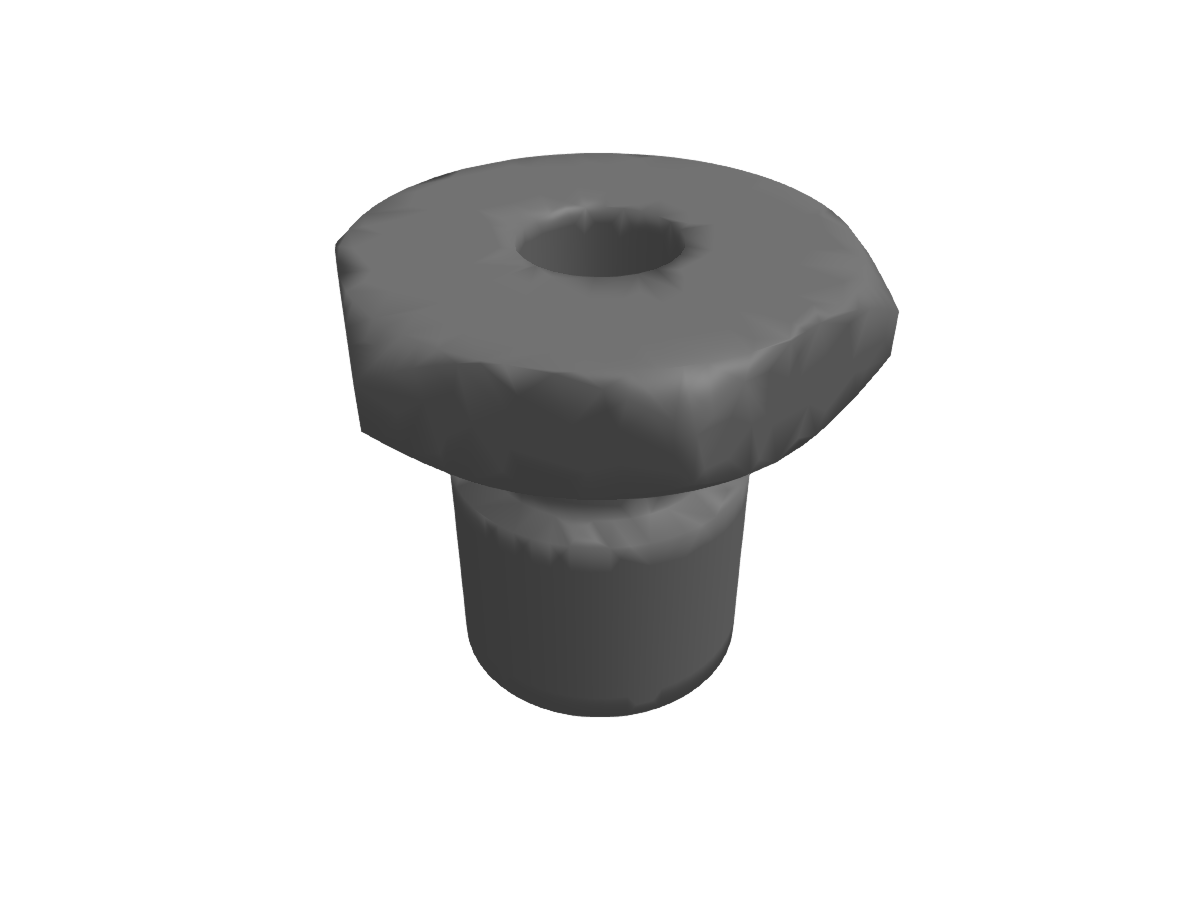}
        &\includegraphics[width=0.1\linewidth, clip=true, trim=140pt 50pt 140pt 50pt]{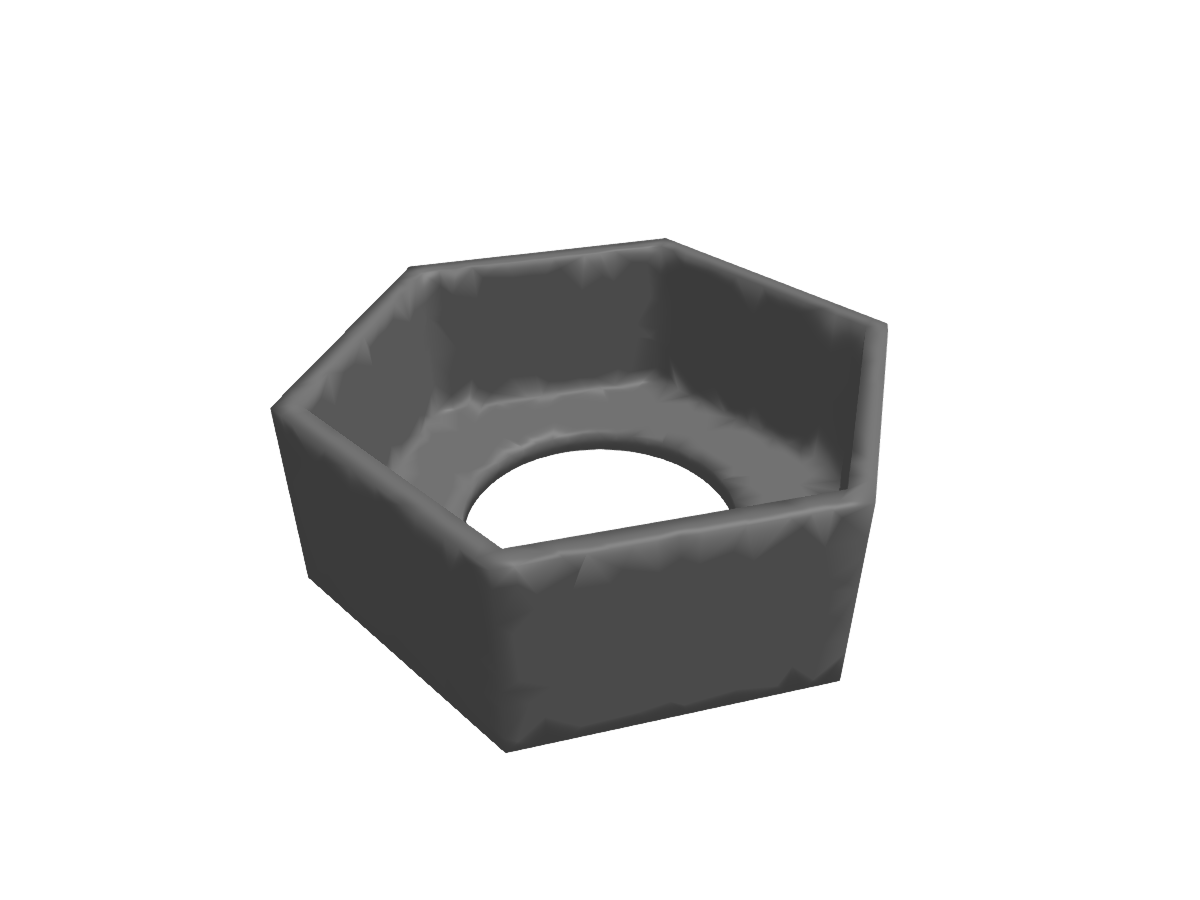}
        &\includegraphics[width=0.1\linewidth, clip=true, trim=140pt 50pt 140pt 50pt]{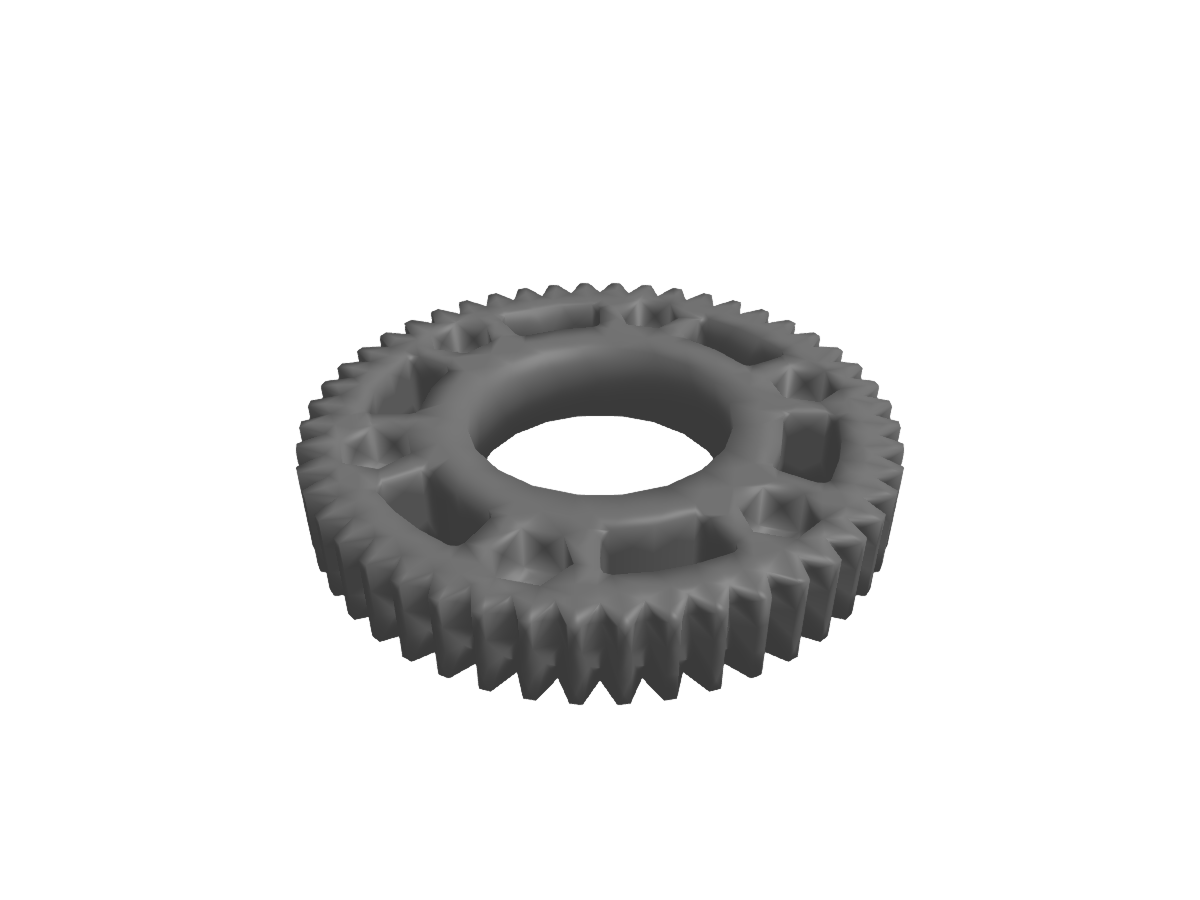}
        &\includegraphics[width=0.1\linewidth, clip=true, trim=140pt 50pt 140pt 50pt]{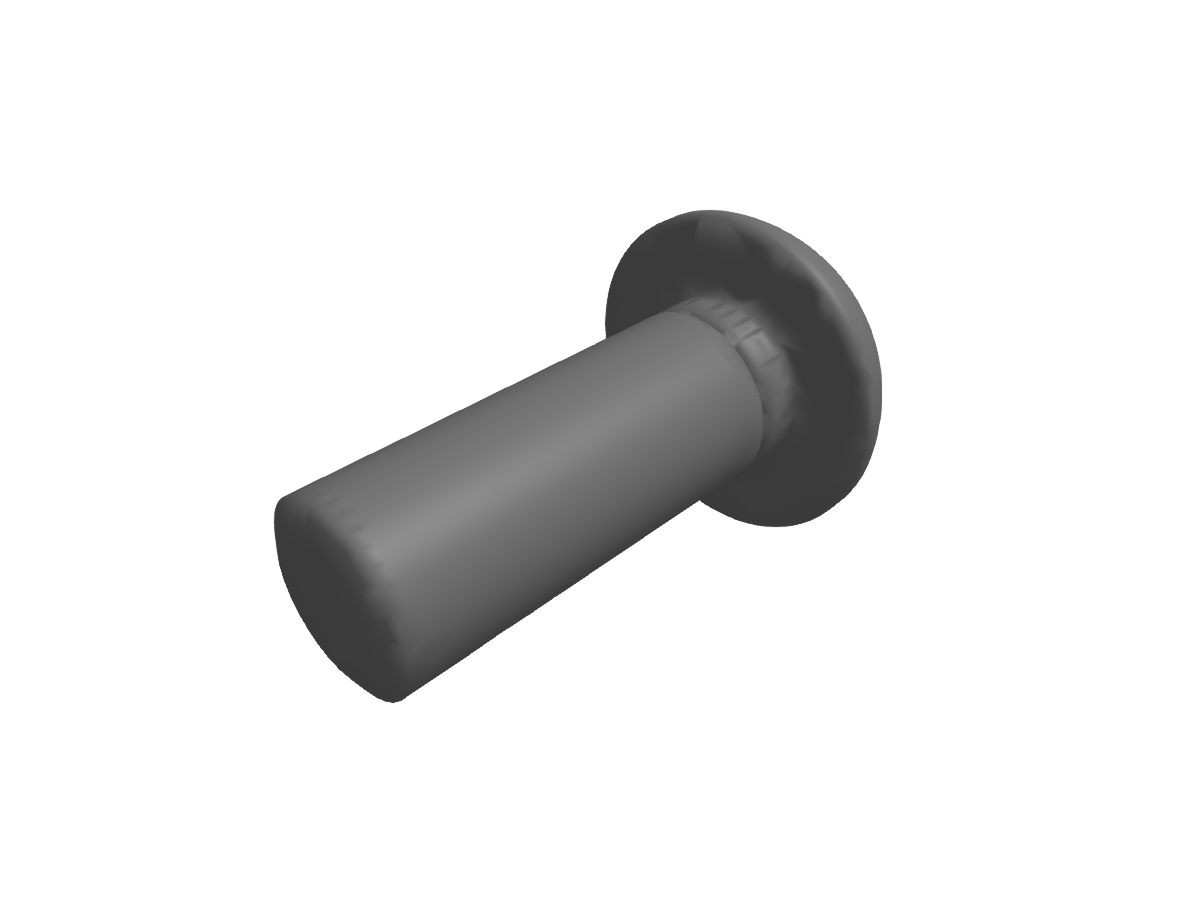}
        &\includegraphics[width=0.1\linewidth, clip=true, trim=140pt 50pt 140pt 50pt]{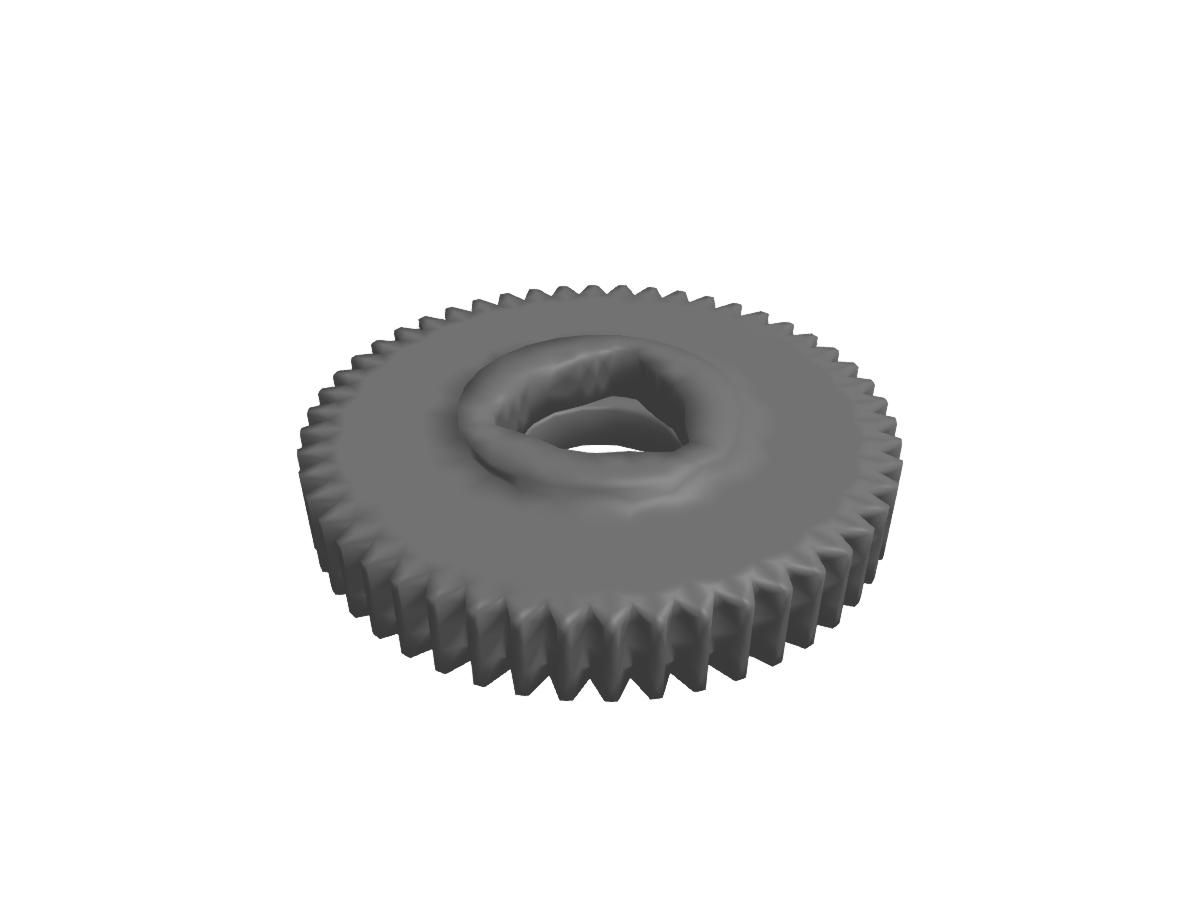}
        &\includegraphics[width=0.1\linewidth, clip=true, trim=140pt 50pt 140pt 50pt]{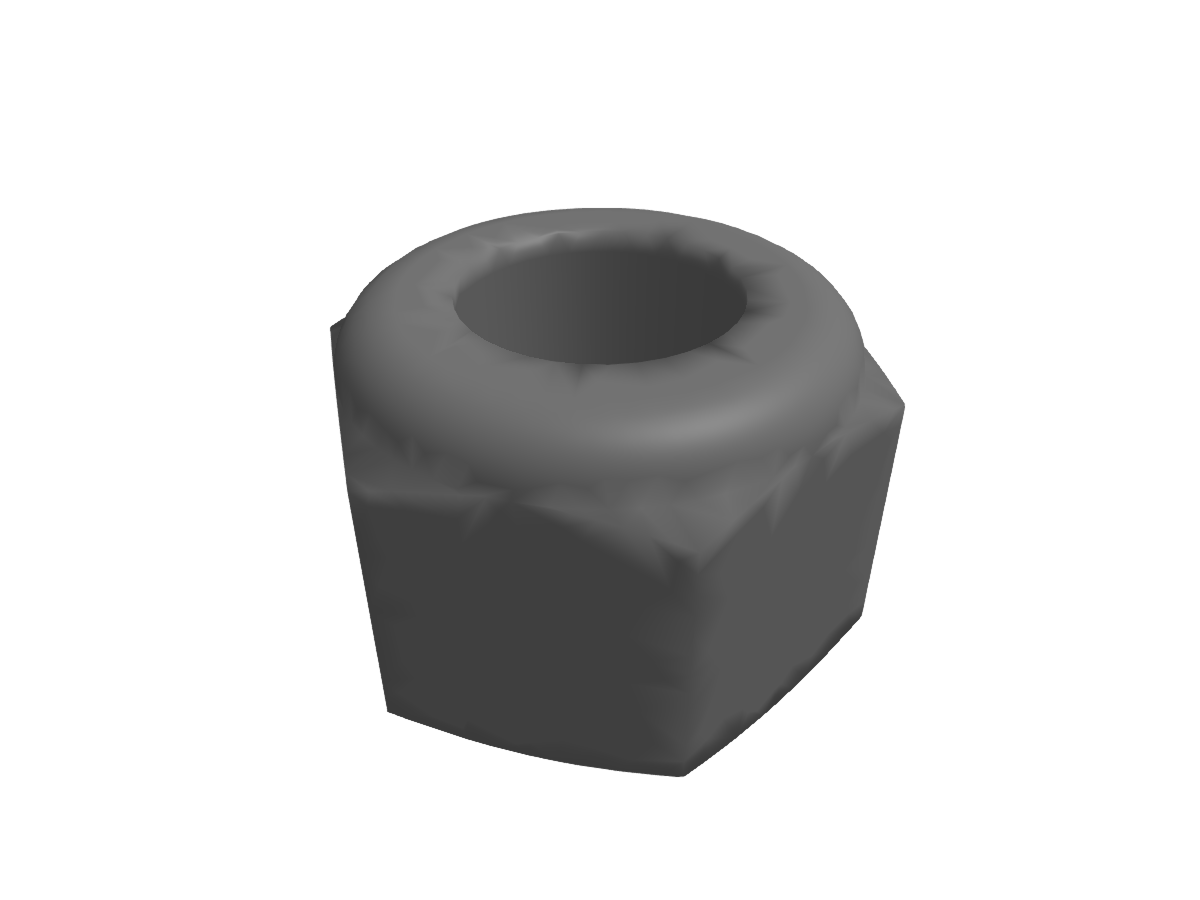}
        \\
        \includegraphics[width=0.1\linewidth, clip=true, trim=140pt 50pt 140pt 50pt]{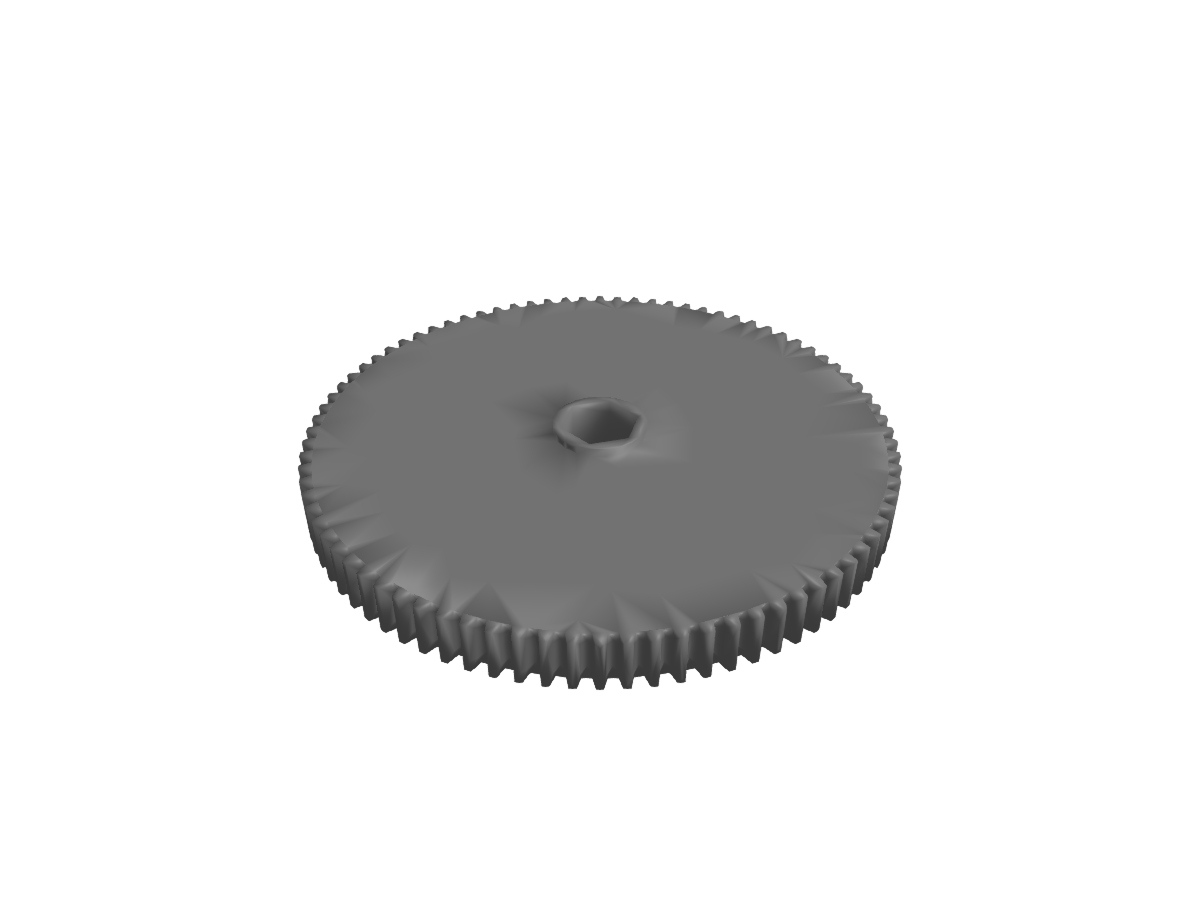}
        &\includegraphics[width=0.1\linewidth, clip=true, trim=140pt 50pt 140pt 50pt]{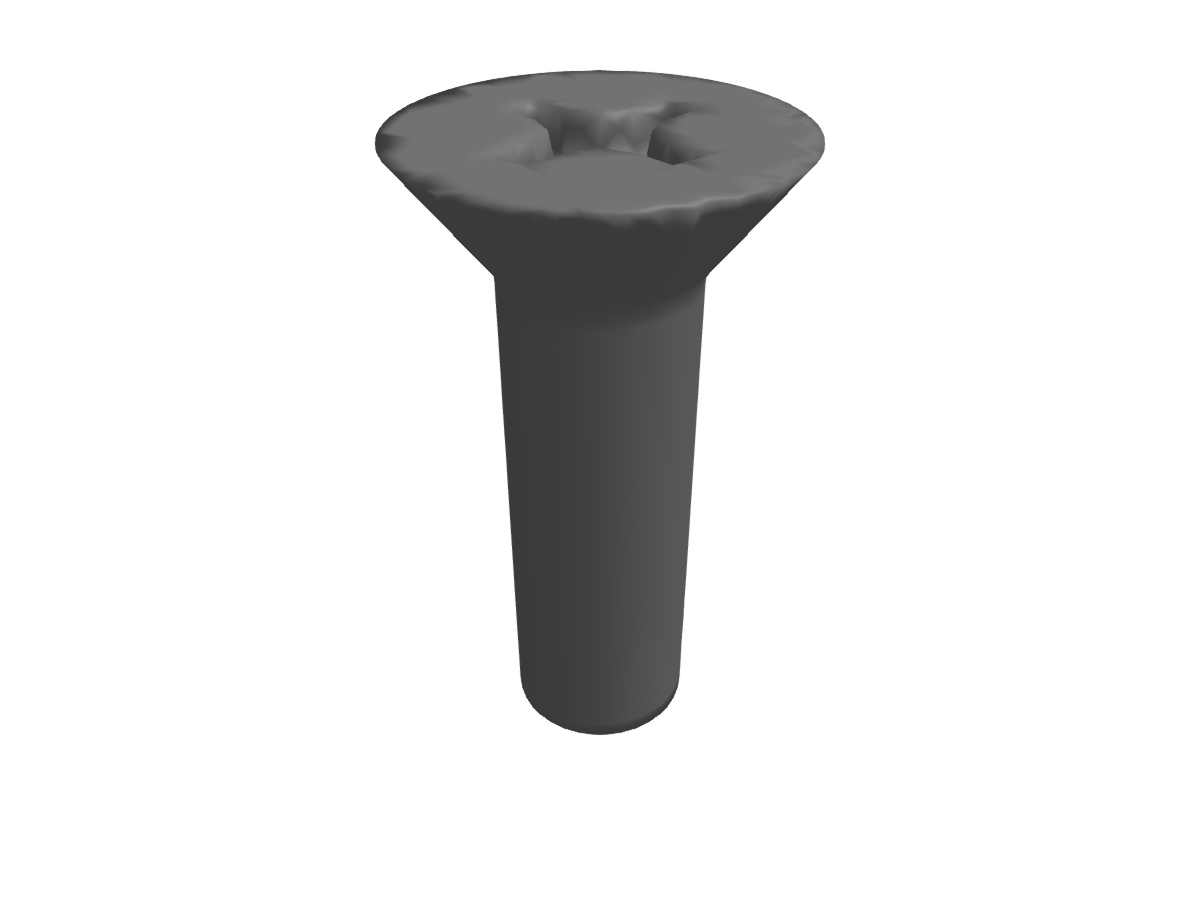}
        &\includegraphics[width=0.1\linewidth, clip=true, trim=140pt 50pt 140pt 50pt]{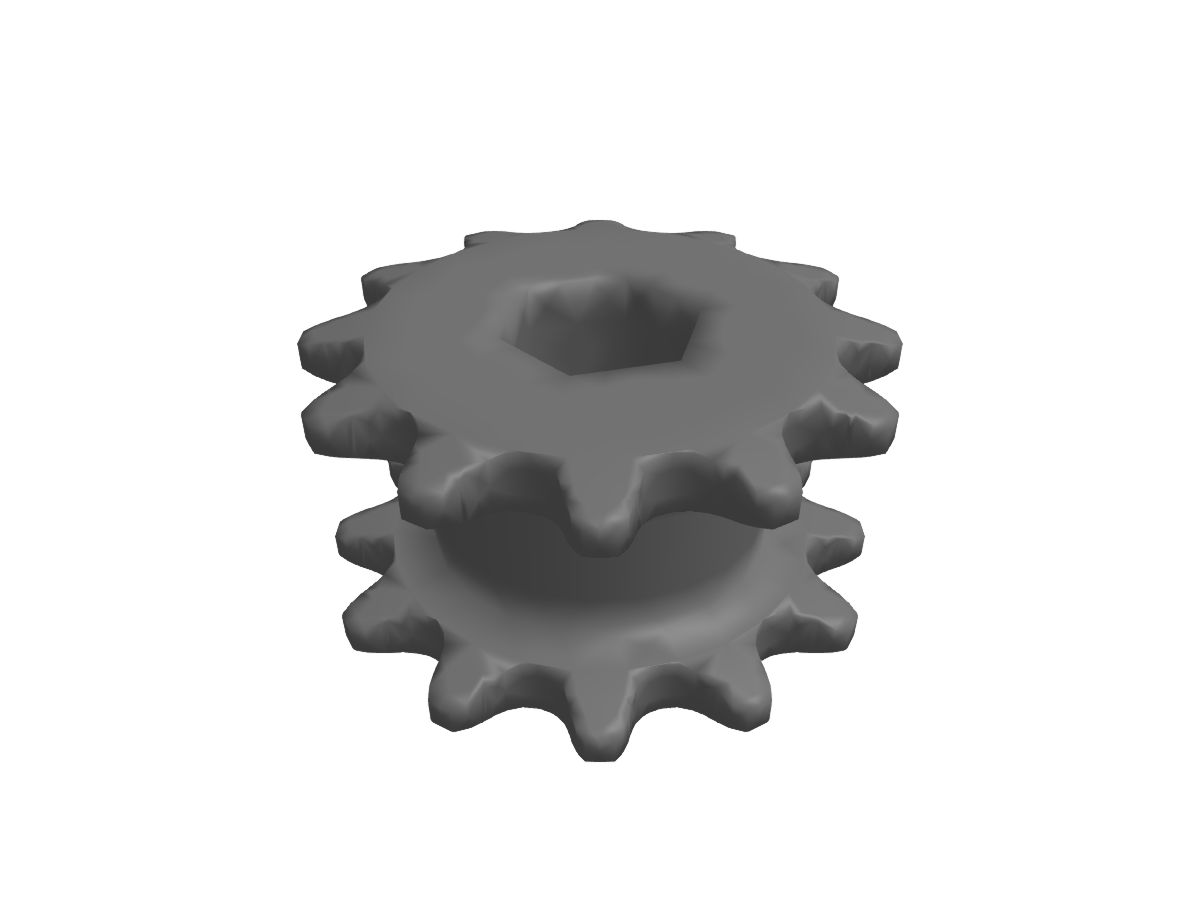}
        &\includegraphics[width=0.1\linewidth, clip=true, trim=140pt 50pt 140pt 50pt]{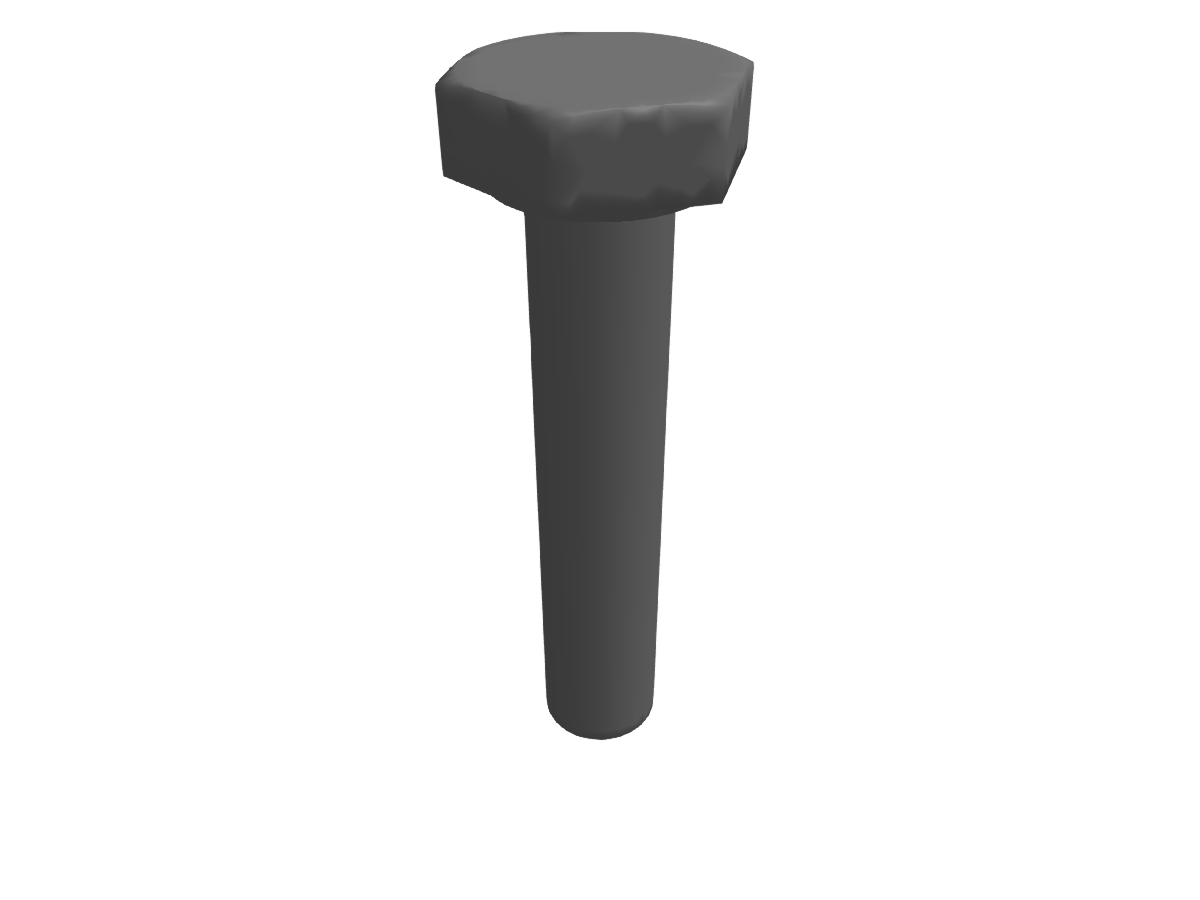}
        &\includegraphics[width=0.1\linewidth, clip=true, trim=140pt 50pt 140pt 50pt]{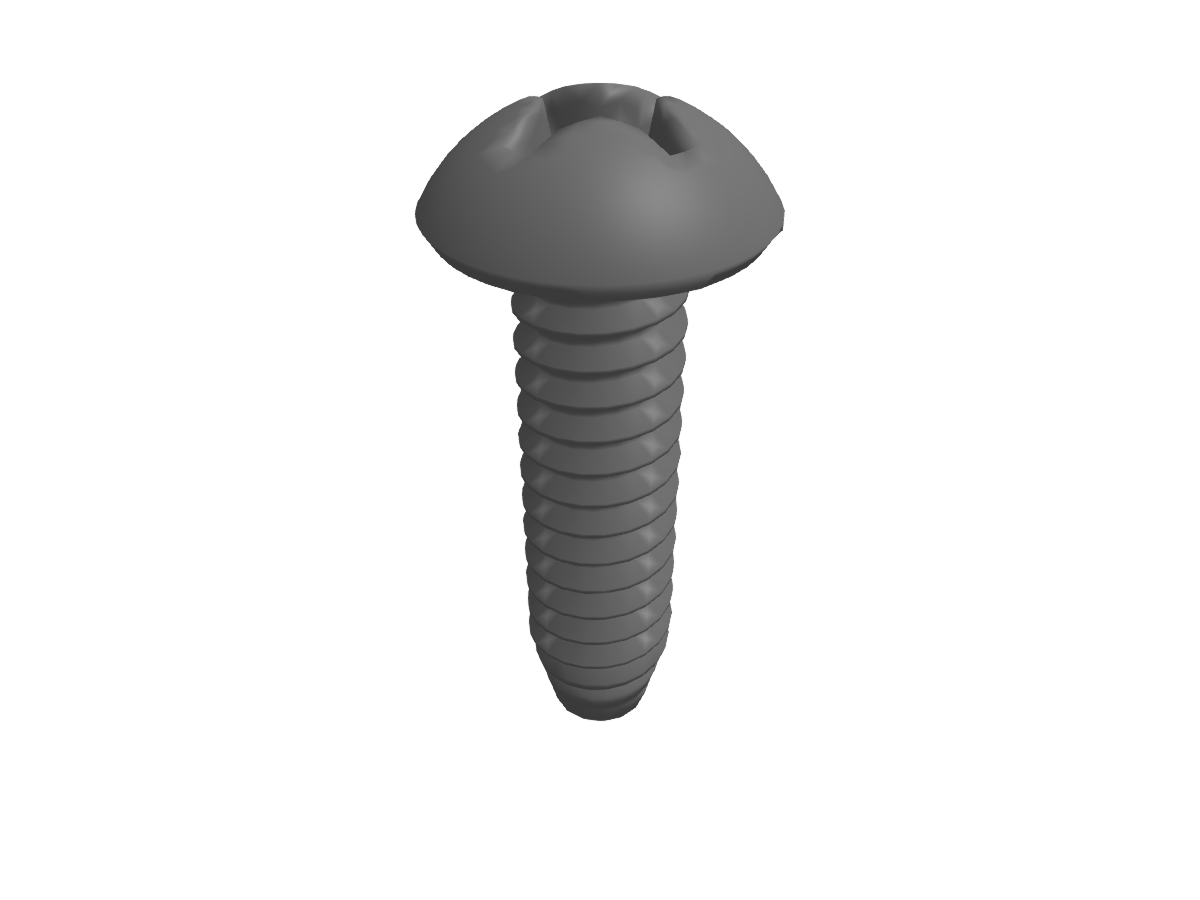}
        &\includegraphics[width=0.1\linewidth, clip=true, trim=140pt 50pt 140pt 50pt]{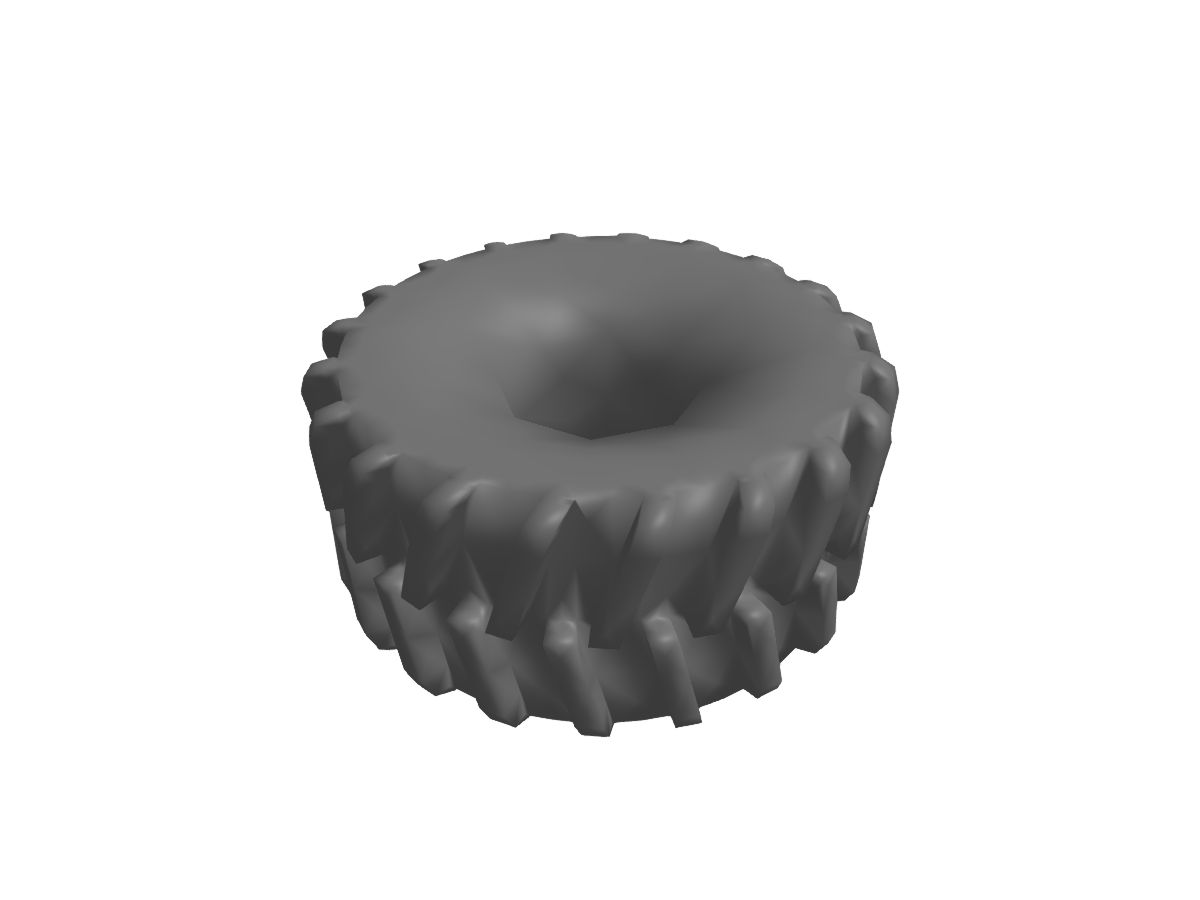}
        &\includegraphics[width=0.1\linewidth, clip=true, trim=140pt 50pt 140pt 50pt]{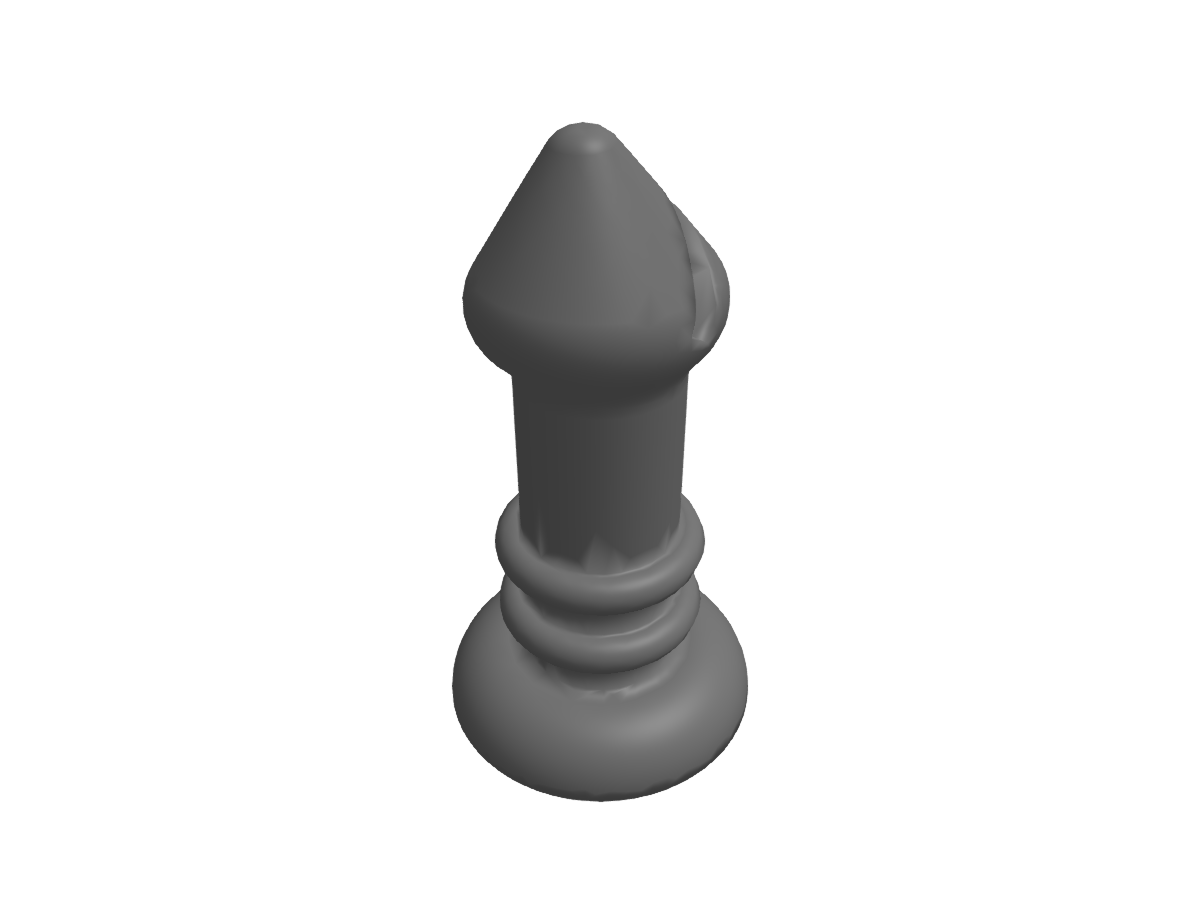}
        &\includegraphics[width=0.1\linewidth, clip=true, trim=140pt 50pt 140pt 50pt]{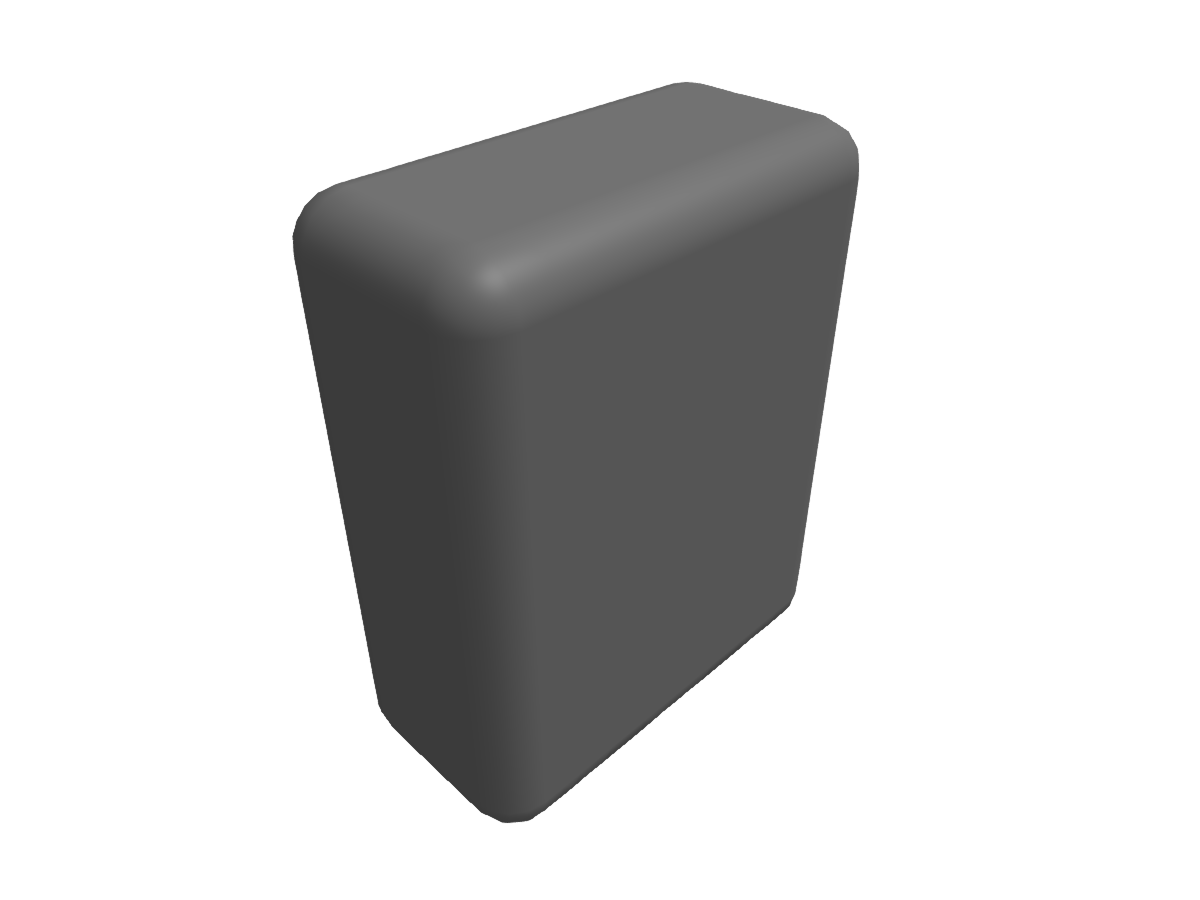}
        &\includegraphics[width=0.1\linewidth, clip=true, trim=140pt 50pt 140pt 50pt]{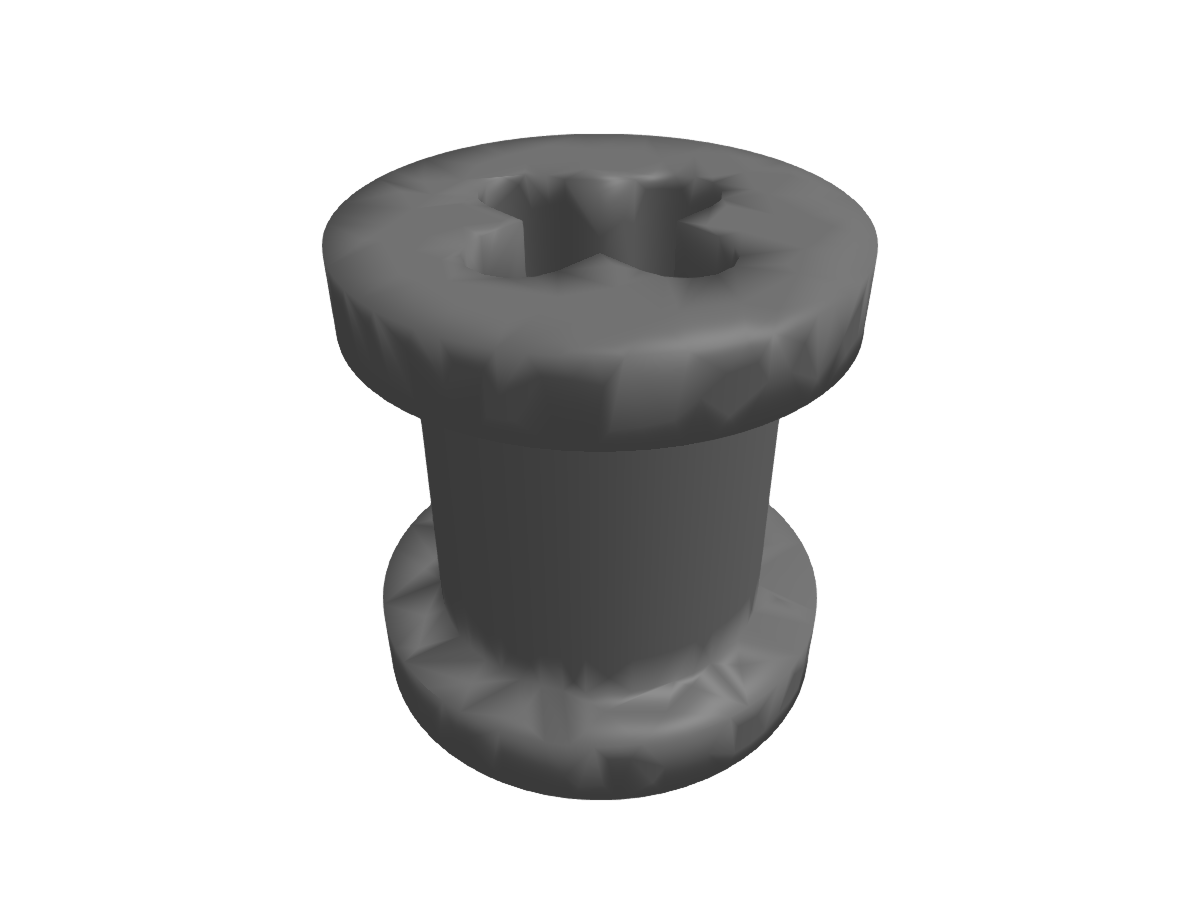}
        &\includegraphics[width=0.1\linewidth, clip=true, trim=140pt 50pt 140pt 50pt]{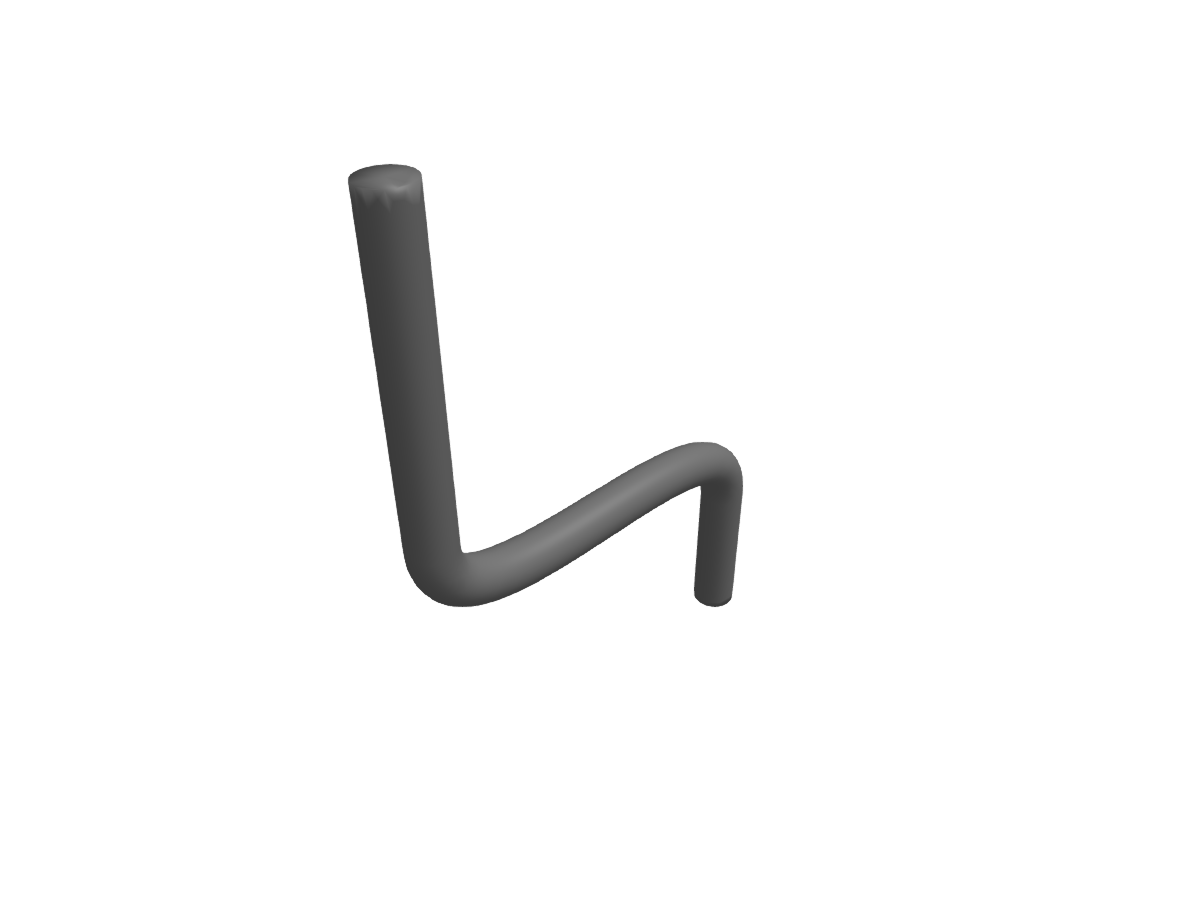}
        \end{tabular}}
        \caption{Visualization of the templates
        of ABC dataset.}
        \label{fig:abc_dataset_3d}
    \end{figure}

    \item \textbf{FAUST dataset}.
    The original `MPI FAUST Dataset' \cite{Bogo2014}
    comprises real-world human scans
    of 10 persons in 30 poses.
    We use a subset of 10 poses
    from the registration data.
    Each sample is represented by
    a mesh with 6890 vertices,
    which we equip with the uniform distribution 
    to obtain an empirical measure on $\R^3$.
    For illustration,
    four humans 
    in four poses 
    are visualized in Figure~\ref{fig:faust}.

    \begin{figure}
    \centering%
    \resizebox{\linewidth}{!}{%
    \begin{tabular}{c @{\hspace{10pt}} c @{\hspace{0pt}} c @{\hspace{0pt}} c @{\hspace{0pt}} c}
    & human~1 & human~3 & human~5 & human~7 \\[10pt]
    \rotatebox{90}{\hspace{50pt}pose~1}
    &{\includegraphics[width=0.31\linewidth,trim=0pt 0pt 0pt 0pt]{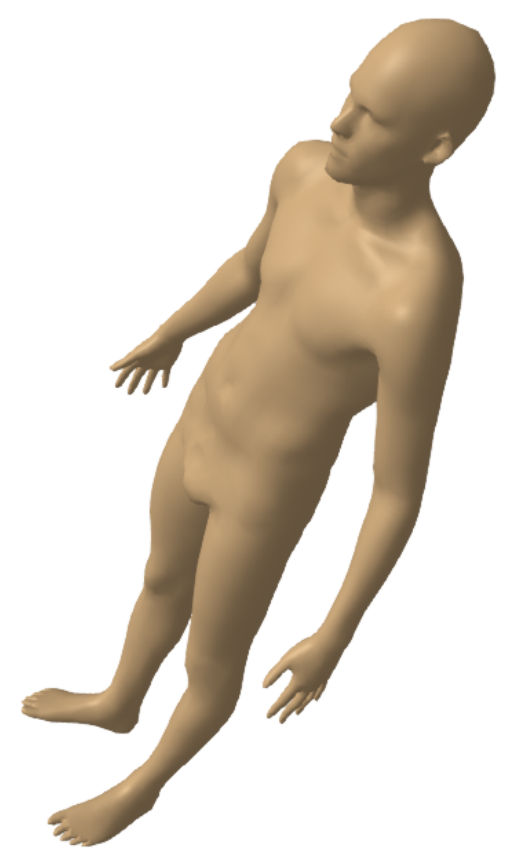}}%
    &{\includegraphics[width=0.31\linewidth,trim=0pt 0pt 0pt 0pt]{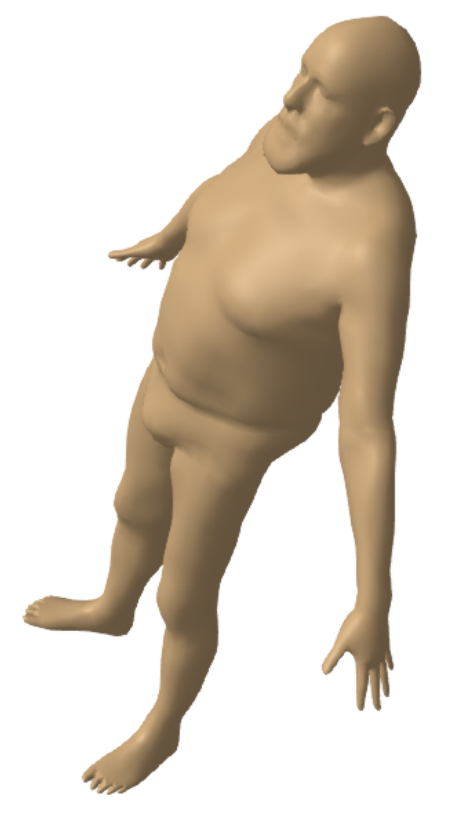}}%
    &{\includegraphics[width=0.31\linewidth,trim=0pt 0pt 0pt 0pt]{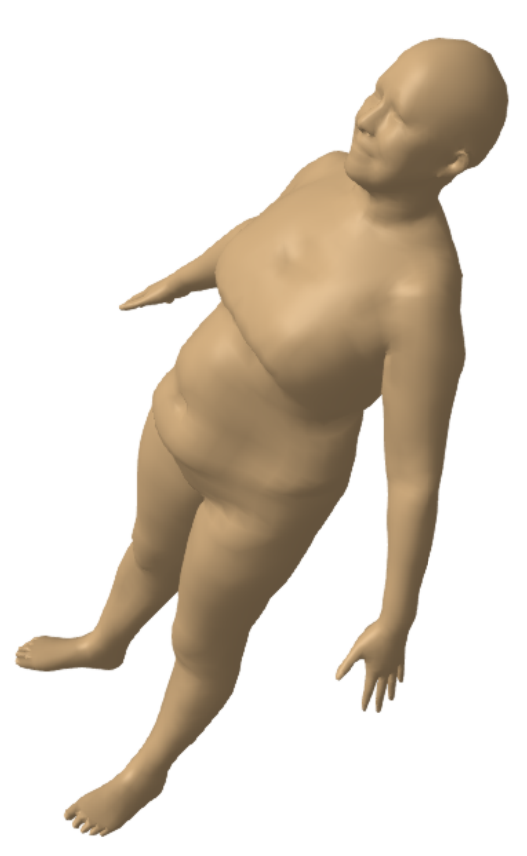}}%
    &{\includegraphics[width=0.31\linewidth,trim=0pt 0pt 0pt 0pt]{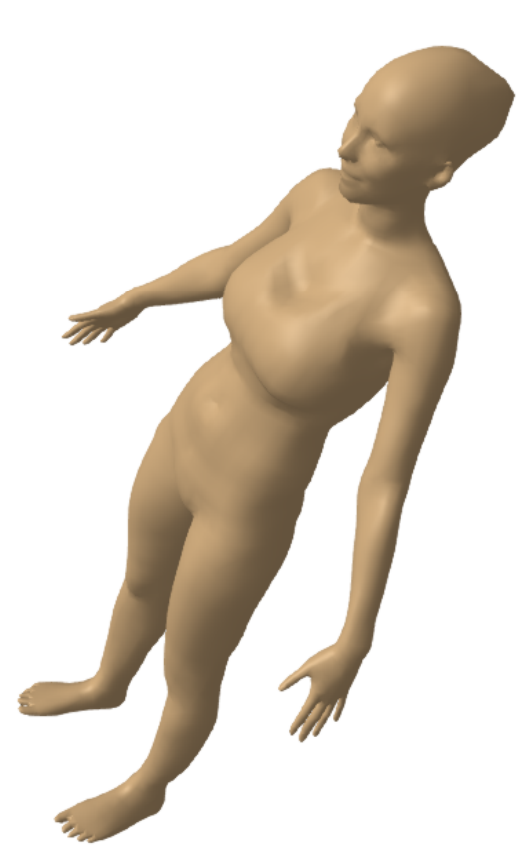}}%
    \\
    \rotatebox{90}{\hspace{50pt}pose~2}
    &{\includegraphics[width=0.31\linewidth,trim=0pt 0pt 0pt 0pt]{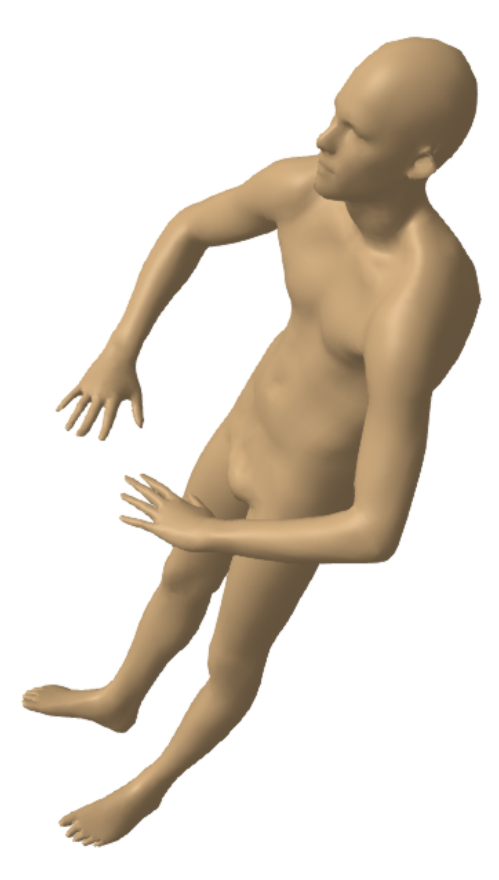}}%
    &{\includegraphics[width=0.31\linewidth,trim=0pt 0pt 0pt 0pt]{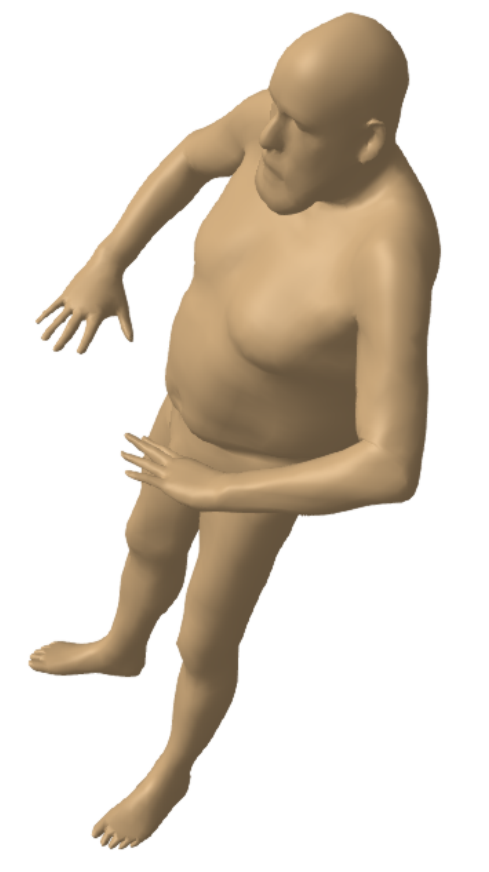}}%
    &{\includegraphics[width=0.31\linewidth,trim=-10pt -10pt -10pt -10pt]{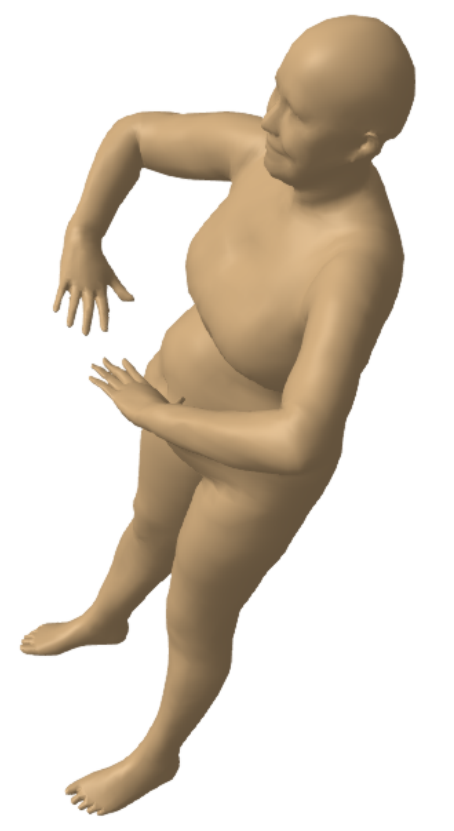}}%
    &{\includegraphics[width=0.31\linewidth,trim=0pt 0pt 0pt 0pt]{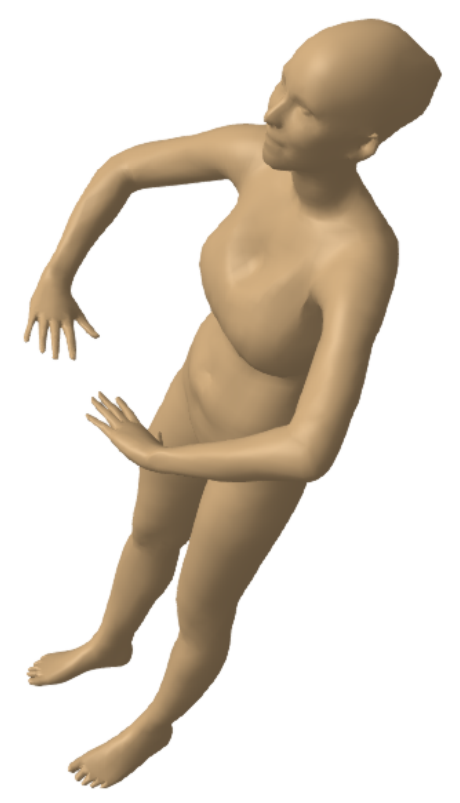}}%
    \\
    \rotatebox{90}{\hspace{50pt}pose~3}
    &{\includegraphics[width=0.31\linewidth,trim=0pt 0pt 0pt 0pt]{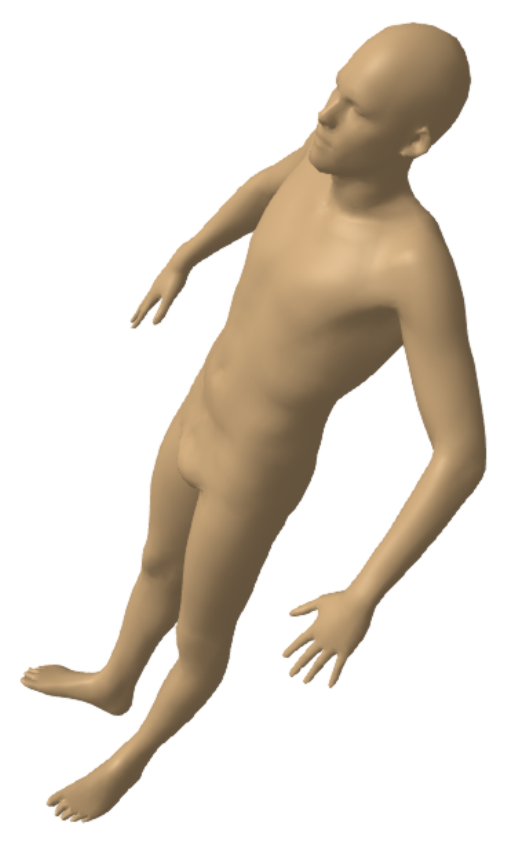}}%
    &{\includegraphics[width=0.31\linewidth,trim=-10pt -10pt -10pt -10pt]{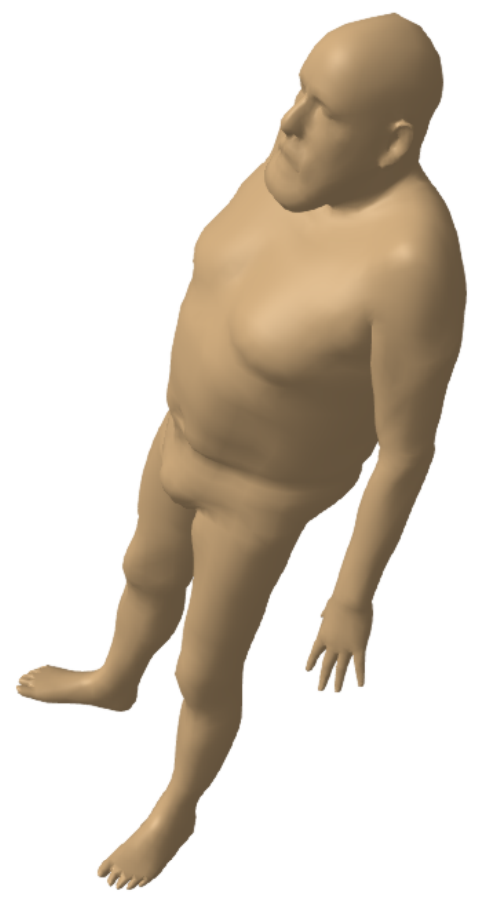}}%
    &{\includegraphics[width=0.31\linewidth,trim=-10pt -10pt -10pt -10pt]{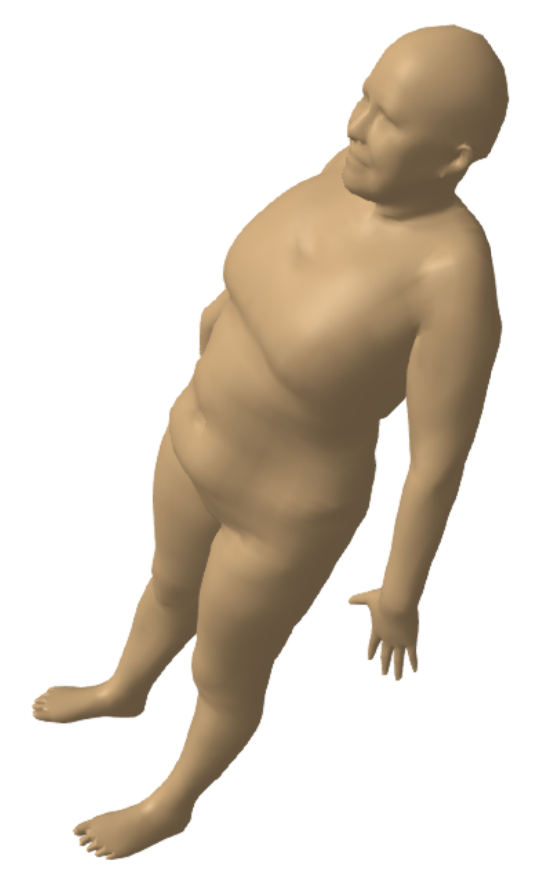}}%
    &{\includegraphics[width=0.31\linewidth,trim=0pt 0pt 0pt 0pt]{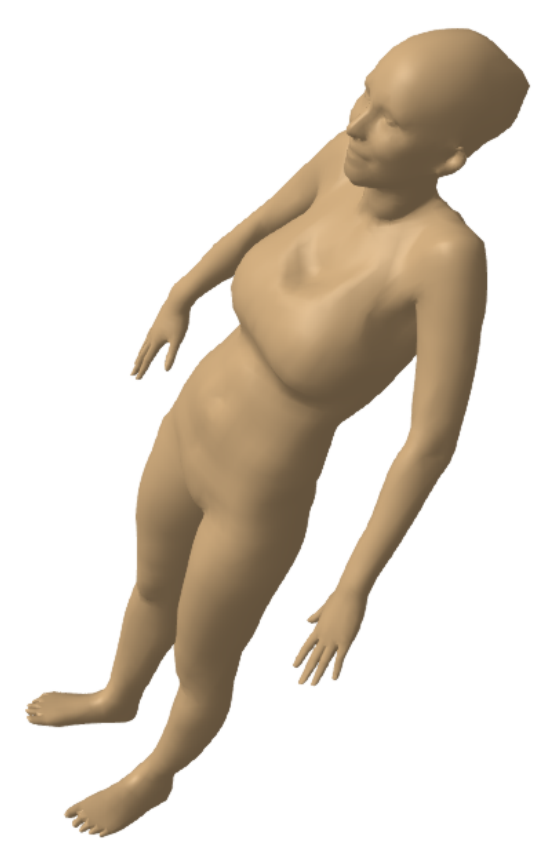}}%
    \\
    \rotatebox{90}{\hspace{50pt}pose~4}
    &{\includegraphics[width=0.31\linewidth,trim=0pt 0pt 0pt 0pt]{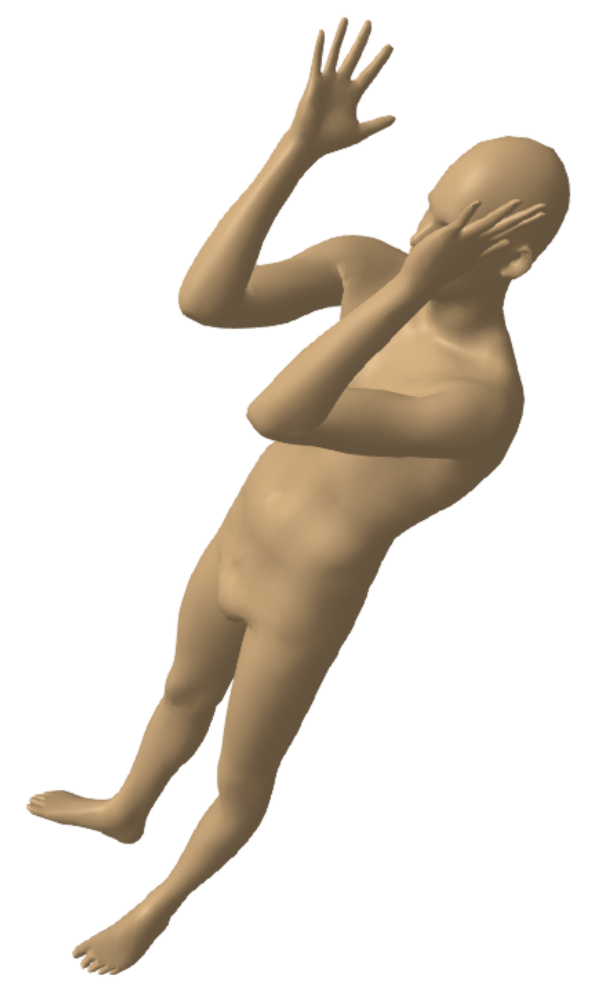}}%
    &{\includegraphics[width=0.31\linewidth,trim=-5pt -5pt -5pt -5pt]{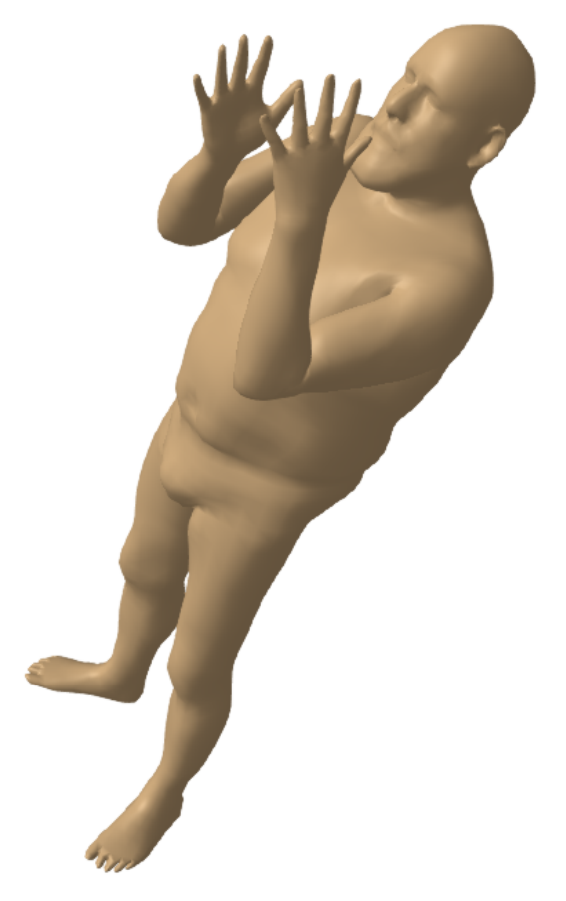}}%
    &{\includegraphics[width=0.31\linewidth,trim=-5pt -5pt -5pt -5pt]{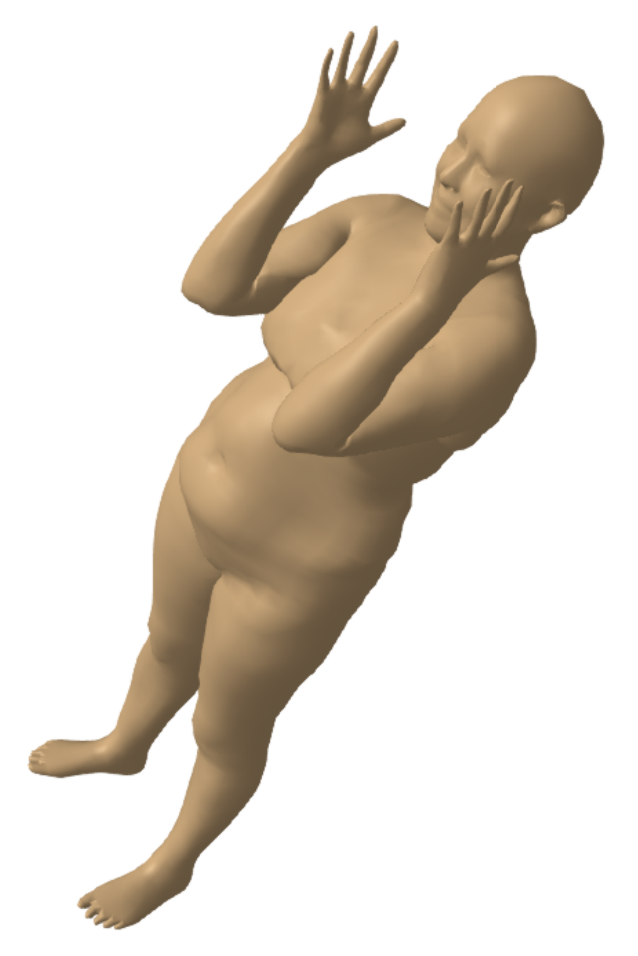}}%
    &{\includegraphics[width=0.31\linewidth,trim=0pt 0pt 0pt 0pt]{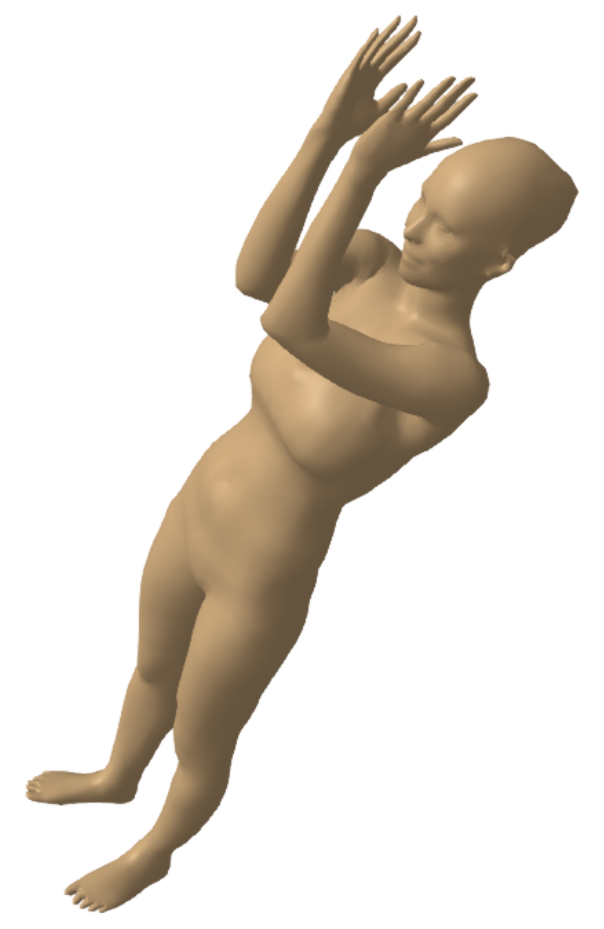}}%
    \end{tabular}}
    \caption{Visualization of a subset of the FAUST dataset:
    four humans in four poses.}
    \label{fig:faust}
\end{figure}
\end{enumerate}
In all cases, the data represents
surfaces of 3d objects with positive volume
so that the induced empirical measures
are not supported on planes
and, hence,
are contained in $\P_\c^*(\R^3)$.

\begin{figure*}[t]
    \resizebox{\linewidth}{!}{%
    \begin{tabular}{c c c}
    \multicolumn{3}{c}{%
    \ALineClassblue
    \; lions \;
    \ALineClassred
    \; cats \;
    \ALineClassgreen
    \; camels \;
    \ALineClassyellow
    \; horses \;
    \ALineClasspurple
    \; flamingos \;
    \ALineClassbrown
    \; elephants} \\[1ex]
    \mNRCDT & \maNRCDT & \miNRCDT \\
    \includegraphics[width=0.4\linewidth, clip=true, trim=12pt 10pt 12pt 10pt]{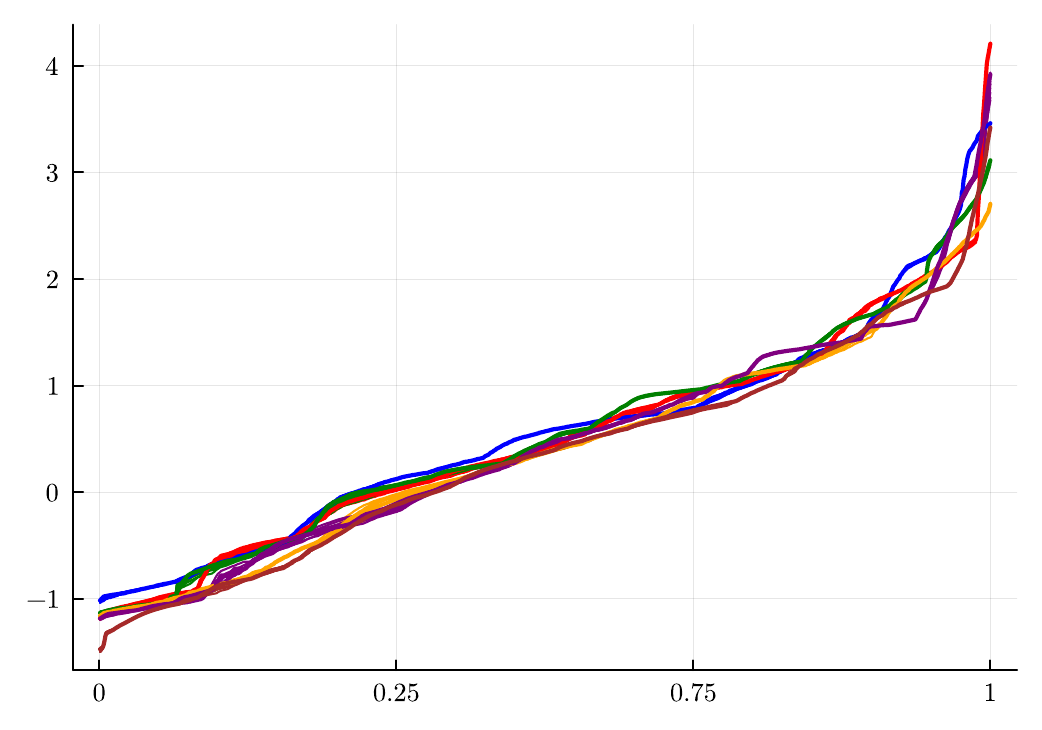}
    &\includegraphics[width=0.4\linewidth, clip=true, trim=12pt 10pt 12pt 10pt]{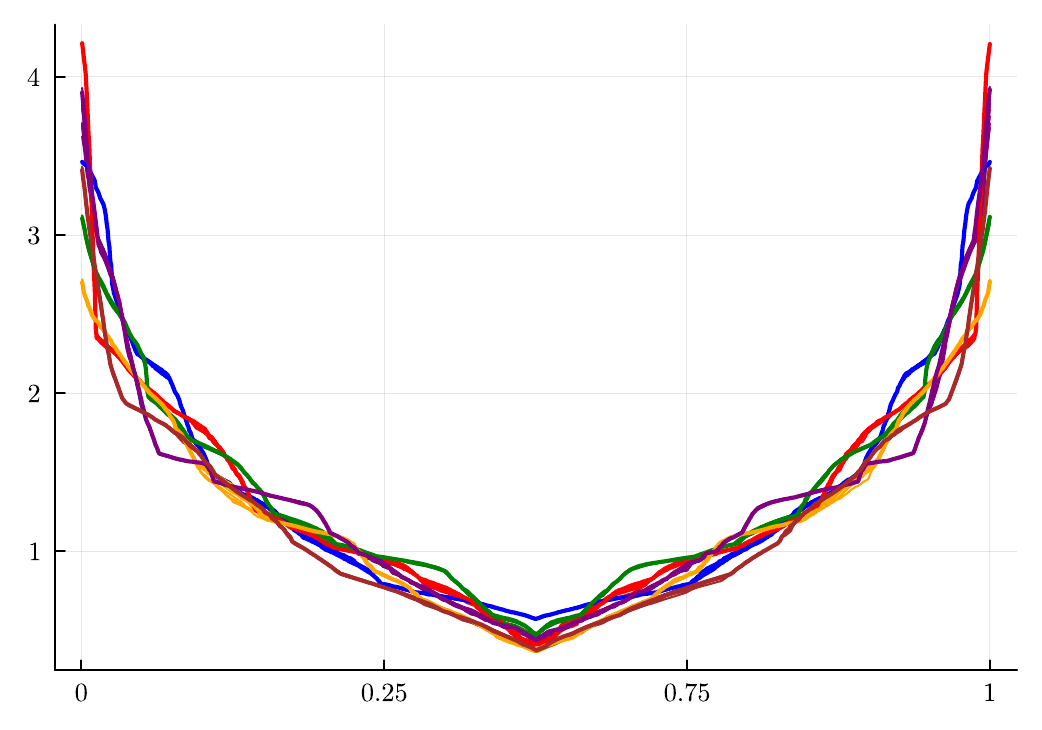}
    &\includegraphics[width=0.4\linewidth, clip=true, trim=12pt 10pt 12pt 10pt]{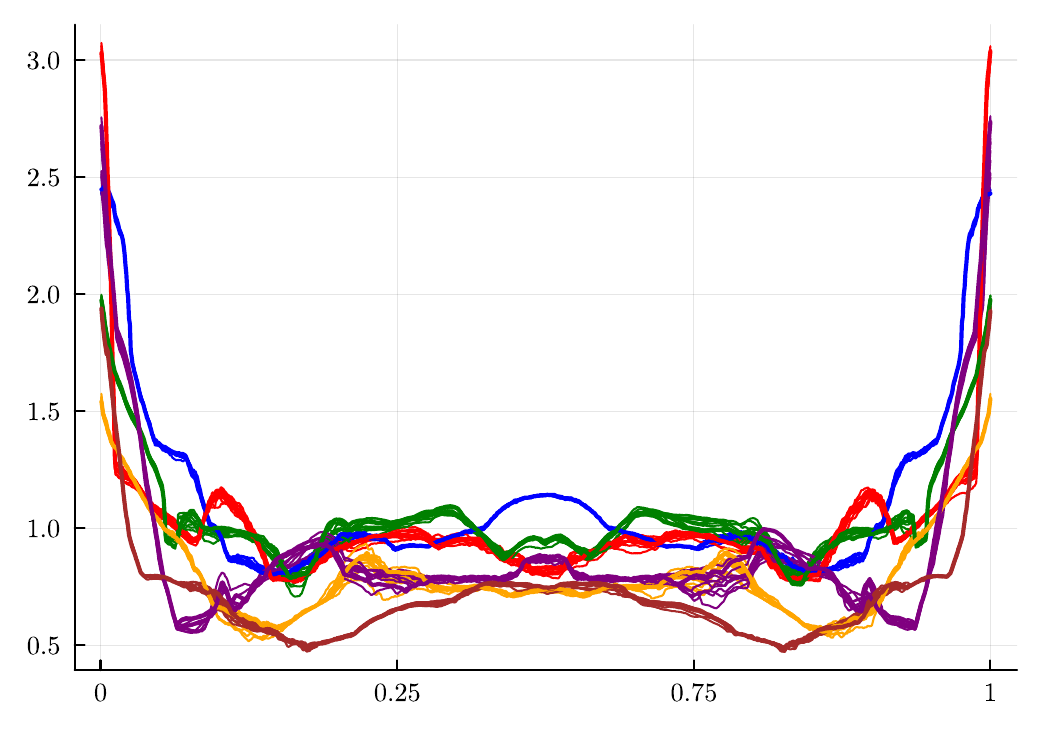}
    \end{tabular}}
    \caption{Visualization of various \hNRCDT{}s of
    the animal dataset with $10$ samples per class
    and $2048$ Fibonacci-like points.}
    \label{fig:hNRCDT_animals_3d}
\end{figure*}

\begin{table}[t]
    \centering%
    \footnotesize%
    \begin{tabular}{l @{\enspace} c @{\enspace} c @{\enspace} c @{\enspace} c}
        \toprule
        directions
        & R-CDT
        & \mNRCDT
        & \maNRCDT
        & \miNRCDT\\
        \midrule
        2 
        & 0.233 & 0.450 & \textbf{0.450} & 0.267 \\
        4 
        & 0.233 & 0.283 & \textbf{0.417} & 0.367 \\
        8
        & 0.233 & 0.733 & \textbf{0.767} & 0.650 \\
        16
        & 0.250 & 0.883 & 0.950 & \textbf{0.967} \\
        32
        & 0.250 & 0.933 & \textbf{1.000} & 0.967 \\
        64
        & 0.250 & 0.983 & \textbf{1.000} & \textbf{1.000} \\
        128
        & 0.250 & \textbf{1.000} & \textbf{1.000} & \textbf{1.000} \\
        256
        & 0.250 & \textbf{1.000} & \textbf{1.000} & \textbf{1.000} \\
        \bottomrule
    \end{tabular}
    \caption{NT classification accuracies
    for the animal dataset with $10$ samples per class
    and varying numbers of directions
    (in form of Fibonacci-like points on $\Sphere_2$).}
    \label{tab:nearest_temp_animal_3d}
\end{table}

Note that the main crucial point 
in the multi-dimensional setting
compared to the 2d setting
is that
$h$ 
in the definition of the generalized \hNRCDT{} 
operates on a larger set of directions,
here $\Theta = \Sphere_2$.
Figuratively,
this means that
more information is condensed,
potentially lowering the feature extraction
capabilities of the \hNRCDT{}.
Considering the \hNRCDT{} of the six animal classes
in Figure~\ref{fig:hNRCDT_animals_3d},
where the underlying CDTs are sampled on
an equispaced grid of $1000$ points in $(0,1)$,
we notice that
this apprehension does not materialize.
Again,
as suggested by the theory,
each animal class shrinks to a single point in \hNRCDT{} space,
and the different \hNRCDT{}s are well distinguishable.
For the numerical computations,
we rely on the Fibonacci-like points
to discretize the direction set $\Sphere_2$
in style of \cite{Gonzalez2010},
given by
\begin{equation*}
    \bftheta_k
    \coloneqq
    \biggl[\begin{smallmatrix}
        \sqrt{1 - z_k}\cos(k \varphi) \\
        \sqrt{1 - z_k}\sin(k \varphi) \\
        z_k
    \end{smallmatrix}\biggr],
    \quad k \in \{0,...,n-1\},
\end{equation*}
with $z_k = 1 - \frac{2k+1}{n}$ and golden angle
$\varphi = \pi(3 - \sqrt{5})$.
Note that 
we do not consider the \tvNRCDT{}
in this setting,
since there is no straightforward generalization
to functions on the sphere $\Sphere_2$.

\begin{table}[t]
    \centering%
    \footnotesize%
    \begin{tabular}{l @{\enspace} c @{\enspace} c @{\enspace} c @{\enspace} c}
        \toprule
        directions
        & R-CDT
        & \mNRCDT
        & \maNRCDT
        & \miNRCDT \\
        \midrule
        2 
        & 0.030 & 0.228 & \textbf{0.239} & 0.089 \\
        4 
        & 0.028 & 0.346 & \textbf{0.351} & 0.218 \\
        8
        & 0.023 & 0.505 & \textbf{0.538} & 0.389 \\
        16
        & 0.025 & 0.710 & \textbf{0.745} & 0.604 \\
        32
        & 0.026 & 0.746 & \textbf{0.754} & 0.676 \\
        64
        & 0.026 & 0.829 & \textbf{0.829} & 0.775 \\
        128
        & 0.026 & 0.850 & \textbf{0.859} & 0.785 \\
        256
        & 0.026 & \textbf{0.975} & 0.894 & 0.846 \\
        512
        & 0.026 & 0.913 & \textbf{0.925} & 0.888 \\
        1024
        & 0.026 & \textbf{0.919} & 0.915 & 0.909 \\
        2048
        & 0.026 & \textbf{0.945} & 0.936 & 0.928 \\
        \bottomrule
    \end{tabular}
    \caption{NT classification accuracies
    for the ABC dataset with $10$ samples per class
    and varying numbers of directions 
    (in form of Fibonacci-like points on $\Sphere_2$).}
    \label{tab:nearest_temp_abc_3d}
\end{table}

\begin{table}[t]
    \centering%
    \resizebox{\linewidth}{!}{%
    \begin{tabular}{l @{\quad} c @{\quad} c @{\quad} c @{\quad}
    c}
        \toprule
        train
        & Eucl.
        & R-CDT
        & \mNRCDT
        & \miaNRCDT \\
        \midrule
        1 
        & $0.658\pm0.054$ & $0.475\pm0.060$ & $\bf{0.706\pm0.044}$ 
        & $0.672\pm0.030$ \\
        3 
        & $0.727\pm0.035$ & $0.575\pm0.041$ & $\bf{0.801\pm0.050}$ 
        & $0.784\pm0.042$ \\
        5
        & $0.757\pm0.037$ & $0.591\pm0.040$ & $\bf{0.838\pm0.044}$ 
        & $0.828\pm0.038$ \\
        \bottomrule
    \end{tabular}}
    \caption{1-NN classification accuracies
    for the FAUST dataset with $10$ samples per class
    and using 256 directions 
    (in form of Fibonacci-like points on $\Sphere_2$).}
    \label{tab:nearest_neigbor_faust_3d}
\end{table}

We adapt the NT classification experiments 
from §~\ref{sssec:NT},
i.e.,
we label the affinely transformed data
based on the nearest template 
in \hNRCDT{} space
with respect to the Euclidean norm.
The achieved classification accuracies 
for the animal dataset
are reported in Table~\ref{tab:nearest_temp_animal_3d}.
As expected, 
all employed \hNRCDT{} variants yield perfect classifications,
whereas the multi-dimensional R-CDT cannot classify the different animals.
Worth mentioning,
we obtain already convincing results
for small numbers of Fibonacci points.

The NT classification accuracies
for the ABC dataset are
shown in Table~\ref{tab:nearest_temp_abc_3d}.
Here, the main challenge is the
large number of 80 different classes,
which are partly based on very similar templates.
Nevertheless,
for a sufficient fine discretization of $\Sphere_2$,
most samples are correctly classified
when using our \hNRCDT{} feature representations.
Note again that
R-CDT is not designed to distinguish
affine classes based on anisotropic scaling and shearing
used in this experiment.

\begin{figure*}
    \centering%
    \resizebox{\linewidth}{!}{%
    \begin{tabular}{c c c c}
        \multicolumn{4}{c}{%
        class:\;
        \Circle~1 \;
        \Star~2 \;
        \Triangle~3 \;
        \Ubox~4 \;
        \Utriangle~5 
        \quad
        centre:\;
        \Boxblue~1 \;
        \Boxred~2 \;
        \Boxgreen~3 \;
        \Boxyellow~4 \;
        \Boxpurple~5 
        \quad
        train/test:\;
        \UnBoxblue/\UnsBoxblue~1 \;
        \UnBoxred/\UnsBoxred~2 \;
        \UnBoxgreen/\UnsBoxgreen~3 \;
        \UnBoxyellow/\UnsBoxyellow~4 \;
        \UnBoxpurple/\UnsBoxpurple~5 
        } \\[0.5ex]%
        \multicolumn{4}{c}{%
        \textcolor{white}{class:}\hspace{8pt}
        \Pentagon~6 \;
        \Ltriangle~7 \;\;
        \Rtriangle~8 \;
        \Hexagon~9 \;
        \Heptagon~10
        \hspace{4pt}
        \textcolor{white}{centre:}\;
        \Boxbrown~6 \;
        \Boxorange~7 \;
        \Boxpink~8 \;
        \Boxcyan~9 \;
        \Boxolive~10 
        \hspace{2pt}
        \textcolor{white}{train/test:}\;
        \UnBoxbrown/\UnsBoxbrown~6 \;
        \UnBoxorange/\UnsBoxorange~7 \;
        \UnBoxpink/\UnsBoxpink~8 \;
        \UnBoxcyan/\UnsBoxcyan~9 \;
        \UnBoxolive/\UnsBoxolive~10
        } \\[1em]
        Eucl. & R-CDT & \mNRCDT{} & \miaNRCDT \\
        \includegraphics[width=0.32\linewidth, clip=true, trim=150pt 30pt 130pt 30pt]{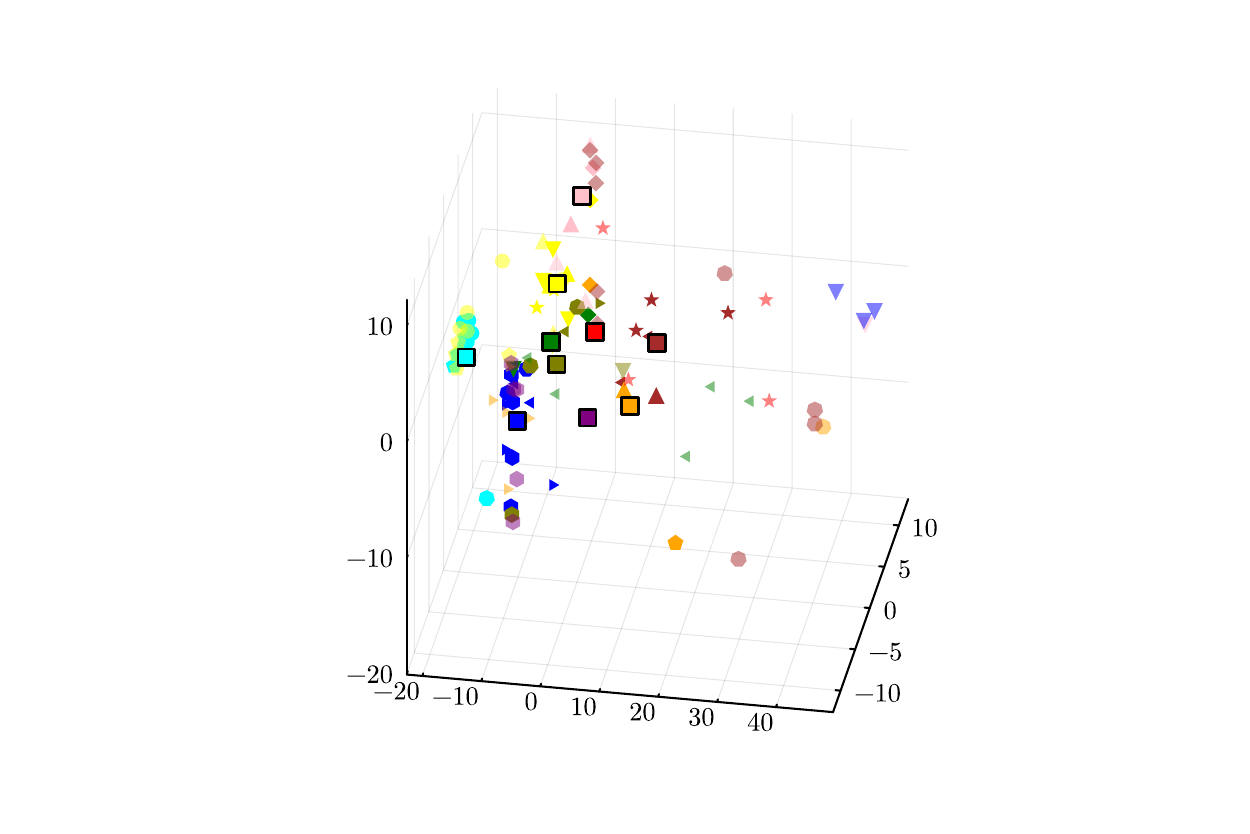}
        & \includegraphics[width=0.32\linewidth, clip=true, trim=150pt 30pt 130pt 30pt]{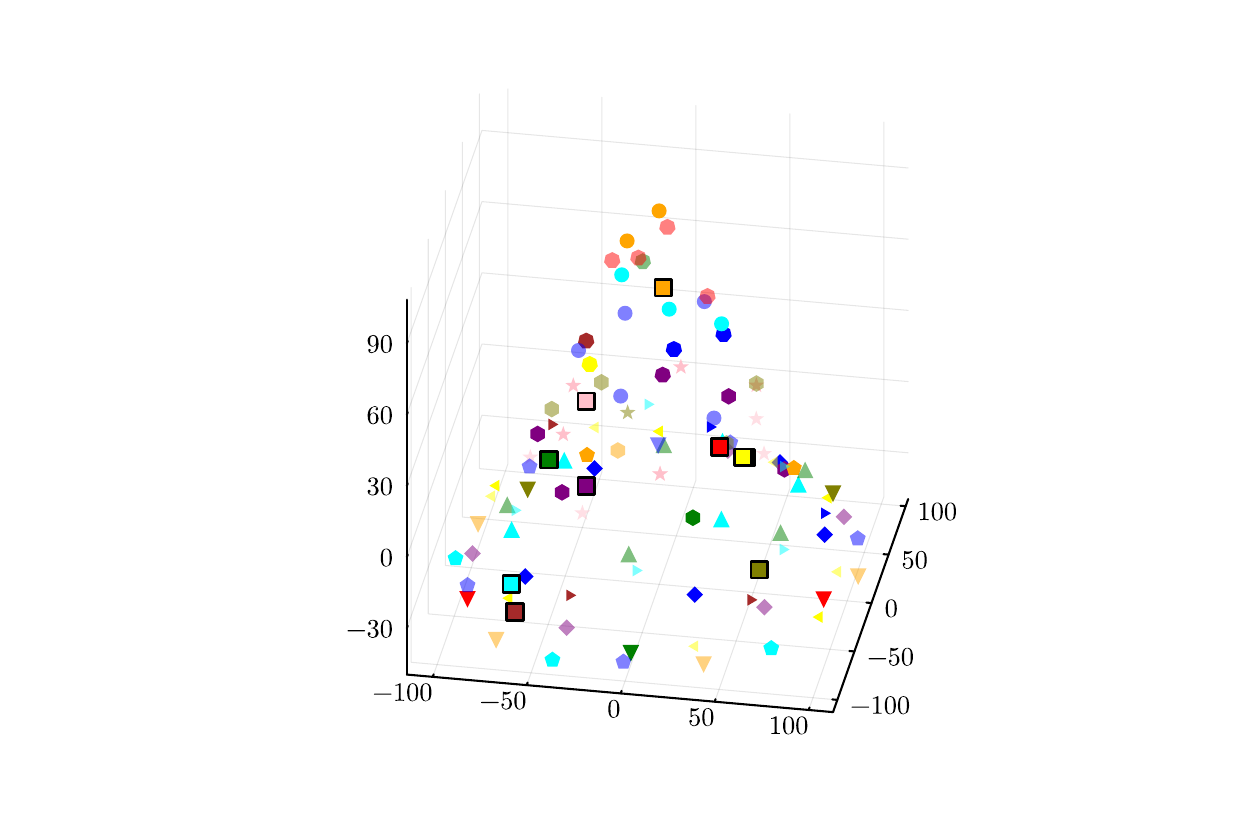}
        & \includegraphics[width=0.32\linewidth, clip=true, trim=150pt 30pt 130pt 30pt]{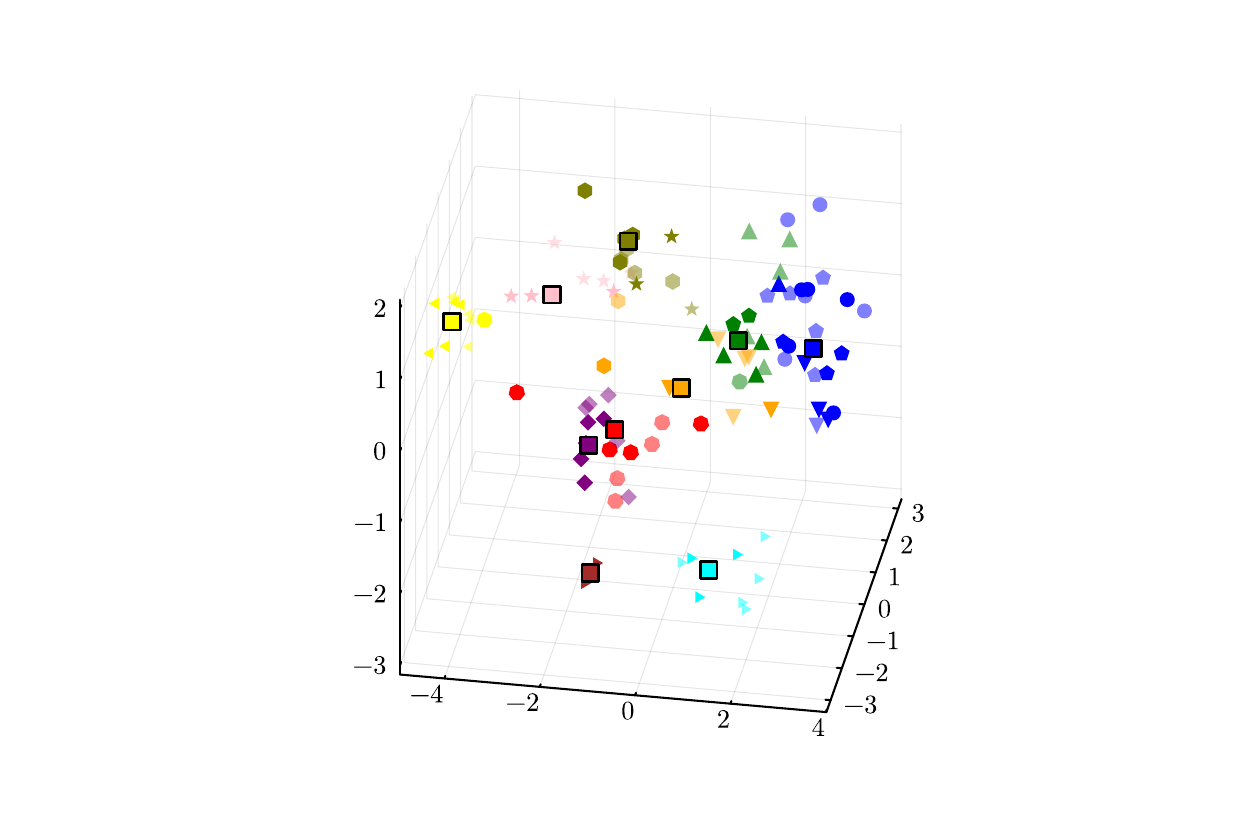}
        & \includegraphics[width=0.32\linewidth, clip=true, trim=150pt 30pt 130pt 30pt]{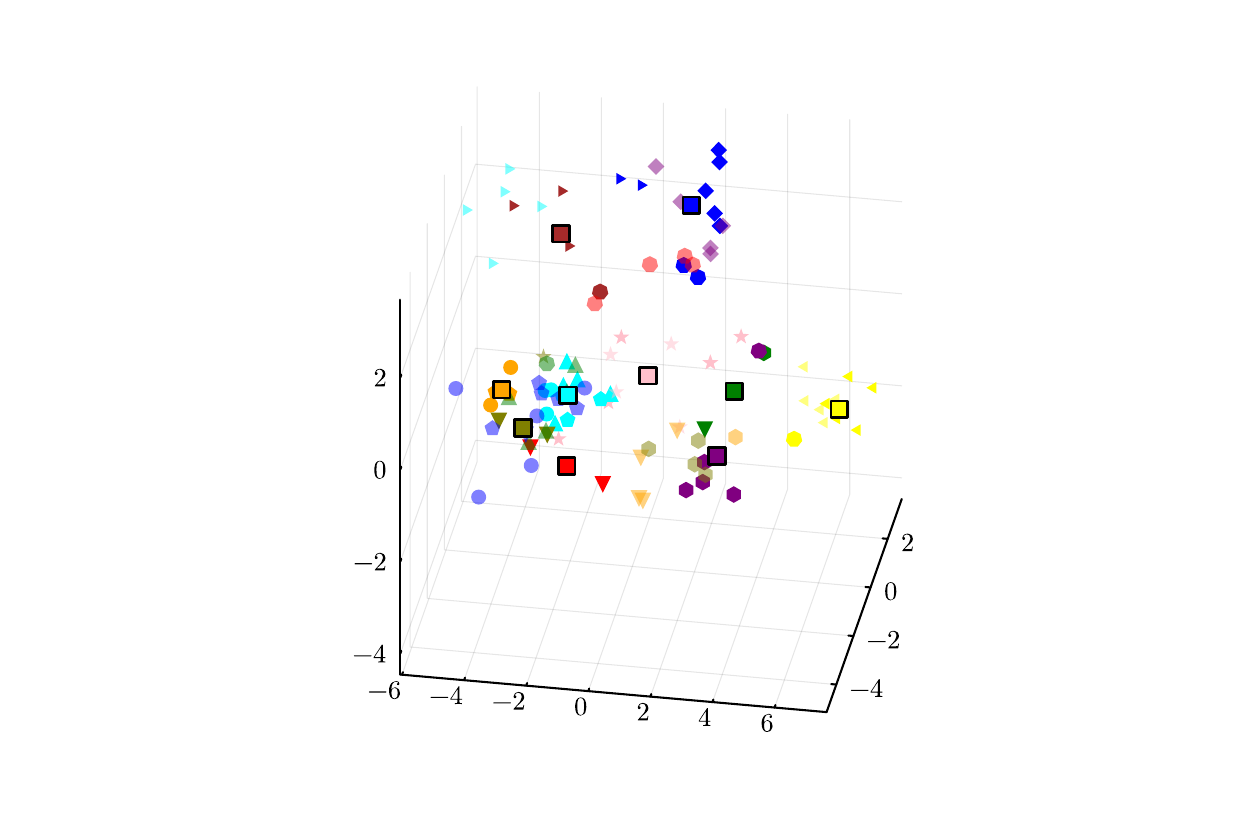}
        \end{tabular}}
    \caption{$10$-means cluster visualization for the FAUST dataset
    using a 3d PCA in the respective feature space.}
    \label{fig:kmeansFaust}
\end{figure*}

Finally,
we consider the FAUST dataset
as benchmark for
pose estimation.
Since in this case no templates exist,
we perform 1-NN classification,
similar to the LinMNIST example
in~§~\ref{sec:image_classification}.
The resulting accuracies 
are shown in Table~\ref{tab:nearest_neigbor_faust_3d},
where we use 256 directions on $\Sphere^2$.
We observe that \mNRCDT{} performs best,
followed by \hNRCDT{c}.
Additionally,
we perform a cluster analysis
using $k$-means
as in §~\ref{sec:image_clustering}.
To this end, each class is split into a train and test subset,
each consisting of five samples.
The results are reported in
Table~\ref{tab:kmeansFaust}
and the clusters are visualized
in Figure~\ref{fig:kmeansFaust}.
While \hNRCDT{c} shows
the best Rand index 
on both the train and test sets,
the clusters using \mNRCDT{}
visually appear more separated.
We wish to stress that
some poses are very similar
so that a perfect classification/clustering
cannot be expected.

In conclusion,
these first proof-of-concept simulations
indicate the usefulness of our multi-dimensional
\hNRCDT{} feature representations
in 3d point cloud recognition tasks.
Further experiments
on corresponding real-world applications 
and sensitivity studies
are left for future research.

\begin{table}[t]
    \centering%
    \footnotesize%
    \begin{tabular}{l @{\enspace} l @{\quad} c @{\qquad} c @{\quad} c @{\quad} c}
        \toprule 
        & & Eucl.
        & R-CDT
        & \mNRCDT 
        & \miaNRCDT  \\
        \midrule
        RI$_{\textrm{train}}$ & $(\uparrow)$ & $0.8261$ & $0.8424$ & $0.9208$ & $\mathbf{0.9208}$\\
        VI$_{\textrm{train}}$ & $(\downarrow)$ & $1.7299$ & $2.7545$ & $\mathbf{0.8316}$ & $0.9300$\\
        RI$_{\textrm{test}}$ & $(\uparrow)$ & $0.7910$ & $0.8424$ & $0.9346$ & $\mathbf{0.9444}$\\
        VI$_{\textrm{test}}$ & $(\downarrow)$ & $1.6587$ & $2.7964$ & $\mathbf{0.6975}$ & $0.7074$\\
        \bottomrule
    \end{tabular}
    \caption{Quality measures
    for $10$-means clustering 
    of the FAUST dataset 
    with $10$ samples per class,
    where $5$ samples are used for training
    and the rest for testing.}
    \label{tab:kmeansFaust}
\end{table}

\subsubsection{Classification on the 3D Rotation Group}
\label{sssec:class_hNRCDT_so3}

In our last set of numerical experiments
we leave the Euclidean setting
and study a classification task on
the 3d rotation group $\SO(3)$
as in §~\ref{sec:SO3-NRCDT}.
To this end,
we consider a synthetic dataset
based on three empirical template measures,
each supported on $1000$ rotation matrices.
More precisely,
the matrices for the first template
are generated by the
Matrix-Fisher distribution \cite{Downs1972}
around $I_3$ with concentration $\kappa = 10$,
the matrices for the second template
are uniformly distributed on
the subset of rotations around
the $x$-$y$-equator,
and the matrices for the third template
are constructed by computing the
QR decomposition
of random Gaussian matrices in $\R^{3 \times 3}$.
Thereon,
these template measures are transformed
by applying random rotations
to form the dataset,
we refer to as {\bf rotation dataset},
see Figure~\ref{fig:academic_dataset_SO3}
for visualization.

\begin{figure*}[t]
    \resizebox{\linewidth}{!}{%
    \begin{tabular}{l l l}
    \raisebox{7pt}[]{Template~1} 
    & \raisebox{7pt}[]{\scalebox{1.2}{\{}}
    \includegraphics[width=1.0\linewidth, clip=true, trim=55pt 10pt 12pt 10pt]{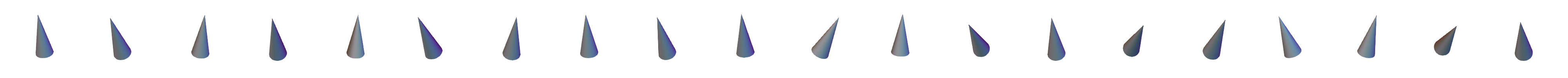} 
    \raisebox{7pt}[]{\scalebox{1.2}{... \}}} \\
    \raisebox{7pt}[]{Class~1} 
    & \raisebox{7pt}[]{\scalebox{1.2}{\{}}
    \includegraphics[width=1.0\linewidth, clip=true, trim=55pt 10pt 12pt 10pt]{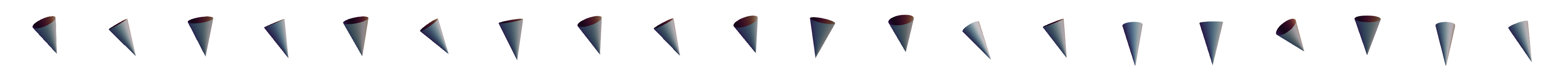} 
    \raisebox{7pt}[]{\scalebox{1.2}{... \}}} \\
    & \raisebox{7pt}[]{\scalebox{1.2}{\{}}
    \includegraphics[width=1.0\linewidth, clip=true, trim=55pt 10pt 12pt 10pt]{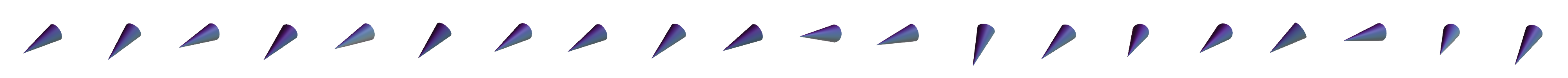} 
    \raisebox{7pt}[]{\scalebox{1.2}{... \}}} \\
    & \raisebox{7pt}[]{\scalebox{1.2}{\{}}
    \includegraphics[width=1.0\linewidth, clip=true, trim=55pt 10pt 12pt 10pt]{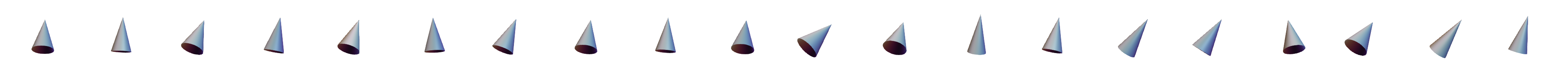} 
    \raisebox{7pt}[]{\scalebox{1.2}{... \}}} \\
    \raisebox{7pt}[]{Template~2} 
    & \raisebox{7pt}[]{\scalebox{1.2}{\{}}
    \includegraphics[width=1.0\linewidth, clip=true, trim=55pt 10pt 12pt 10pt]{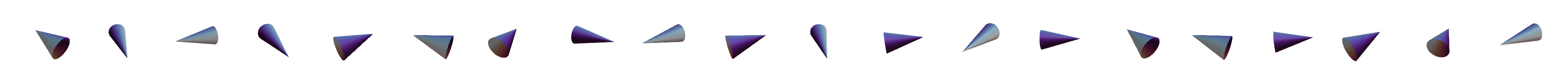} 
    \raisebox{7pt}[]{\scalebox{1.2}{... \}}} \\
    \raisebox{7pt}[]{Class~2} 
    & \raisebox{7pt}[]{\scalebox{1.2}{\{}}
    \includegraphics[width=1.0\linewidth, clip=true, trim=55pt 10pt 12pt 10pt]{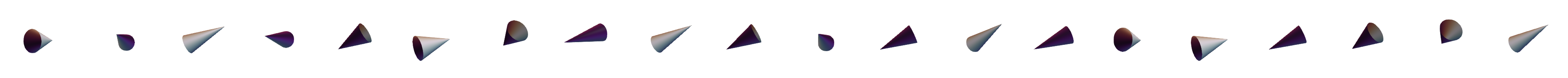} 
    \raisebox{7pt}[]{\scalebox{1.2}{... \}}} \\
    & \raisebox{7pt}[]{\scalebox{1.2}{\{}}
    \includegraphics[width=1.0\linewidth, clip=true, trim=55pt 10pt 12pt 10pt]{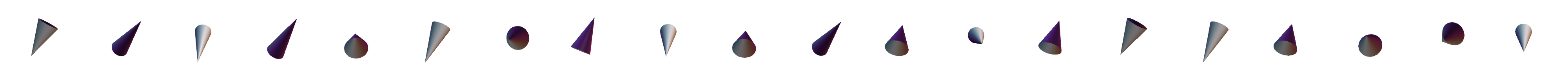} 
    \raisebox{7pt}[]{\scalebox{1.2}{... \}}} \\
    & \raisebox{7pt}[]{\scalebox{1.2}{\{}}
    \includegraphics[width=1.0\linewidth, clip=true, trim=55pt 10pt 12pt 10pt]{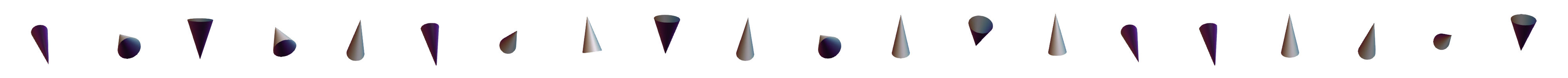} 
    \raisebox{7pt}[]{\scalebox{1.2}{... \}}} \\
    \raisebox{7pt}[]{Template~3} 
    & \raisebox{7pt}[]{\scalebox{1.2}{\{}}
    \includegraphics[width=1.0\linewidth, clip=true, trim=55pt 10pt 12pt 10pt]{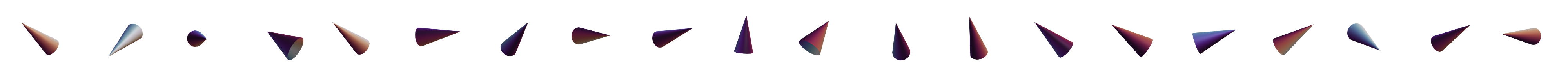}
    \raisebox{7pt}[]{\scalebox{1.2}{... \}}} \\
    \raisebox{7pt}[]{Class~3} 
    & \raisebox{7pt}[]{\scalebox{1.2}{\{}}
    \includegraphics[width=1.0\linewidth, clip=true, trim=55pt 10pt 12pt 10pt]{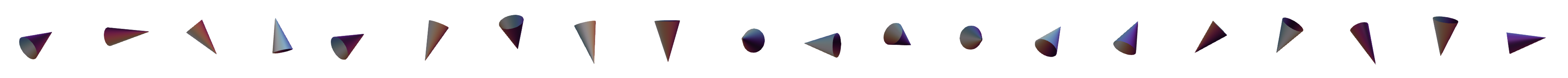}
    \raisebox{7pt}[]{\scalebox{1.2}{... \}}} \\
    & \raisebox{7pt}[]{\scalebox{1.2}{\{}}
    \includegraphics[width=1.0\linewidth, clip=true, trim=55pt 10pt 12pt 10pt]{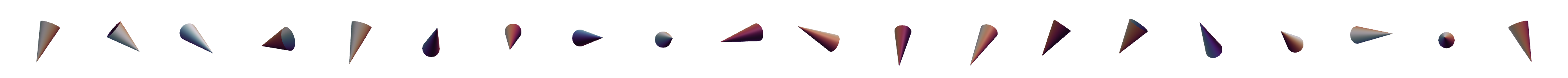}
    \raisebox{7pt}[]{\scalebox{1.2}{... \}}} \\
    & \raisebox{7pt}[]{\scalebox{1.2}{\{}}
    \includegraphics[width=1.0\linewidth, clip=true, trim=55pt 10pt 12pt 10pt]{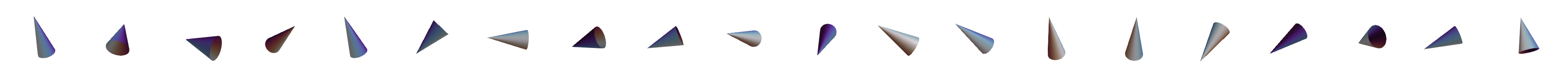}
    \raisebox{7pt}[]{\scalebox{1.2}{... \}}}
    \end{tabular}}
    \caption{%
    Illustration of the rotation dataset
    by visualizing the action of the underlying rotation matrices
    on a cone pointing towards the north pole,
    whose surface is coloured 
    by a periodic colour bar
    to depict rotations
    around its z-axis.
    For readability,
    we only show the first 20
    out of 1000 components
    for each empirical measure.%
    }
    \label{fig:academic_dataset_SO3}
\end{figure*}

\begin{figure*}[t]
	\centering%
    \footnotesize%
    \begin{tabular}{c c c}
    \multicolumn{3}{c}{%
    \LineClassOne
    \; class~1 \;
    \LineClassTwo
    \; class~2 \;
    \LineClassThree
    \; class~3} \\[1ex]
    \mNRCDT & \maNRCDT & \miNRCDT \\
    \includegraphics[width=0.3\linewidth, clip=true, trim=12pt 10pt 12pt 10pt]{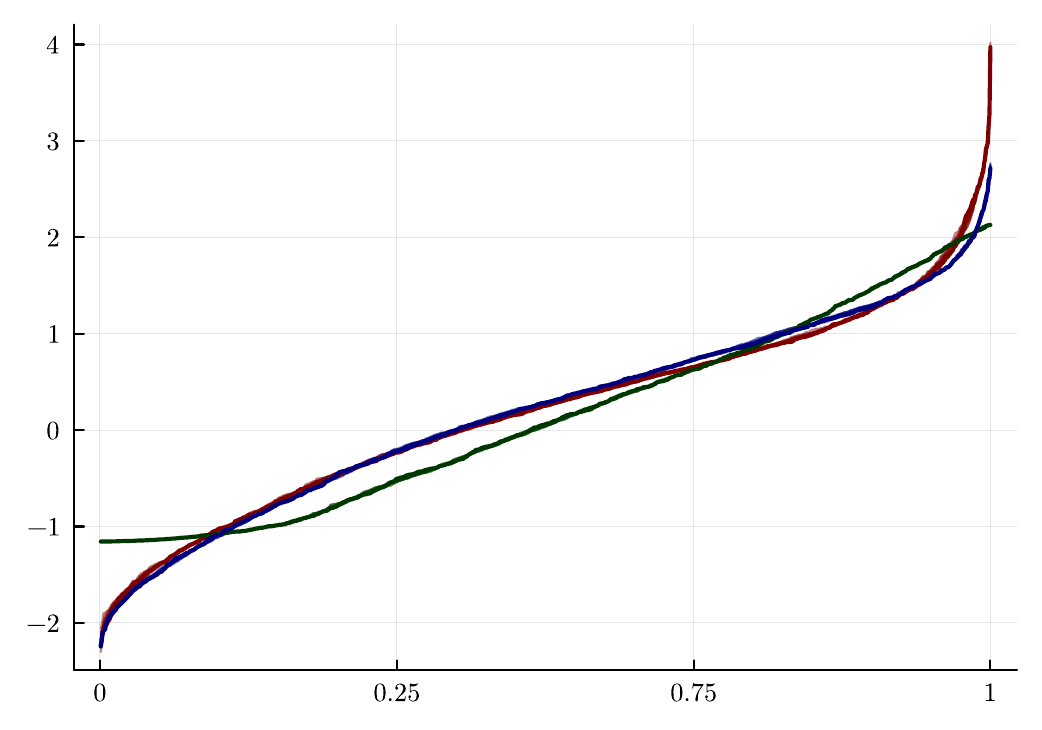}
    &\includegraphics[width=0.3\linewidth, clip=true, trim=12pt 10pt 12pt 10pt]{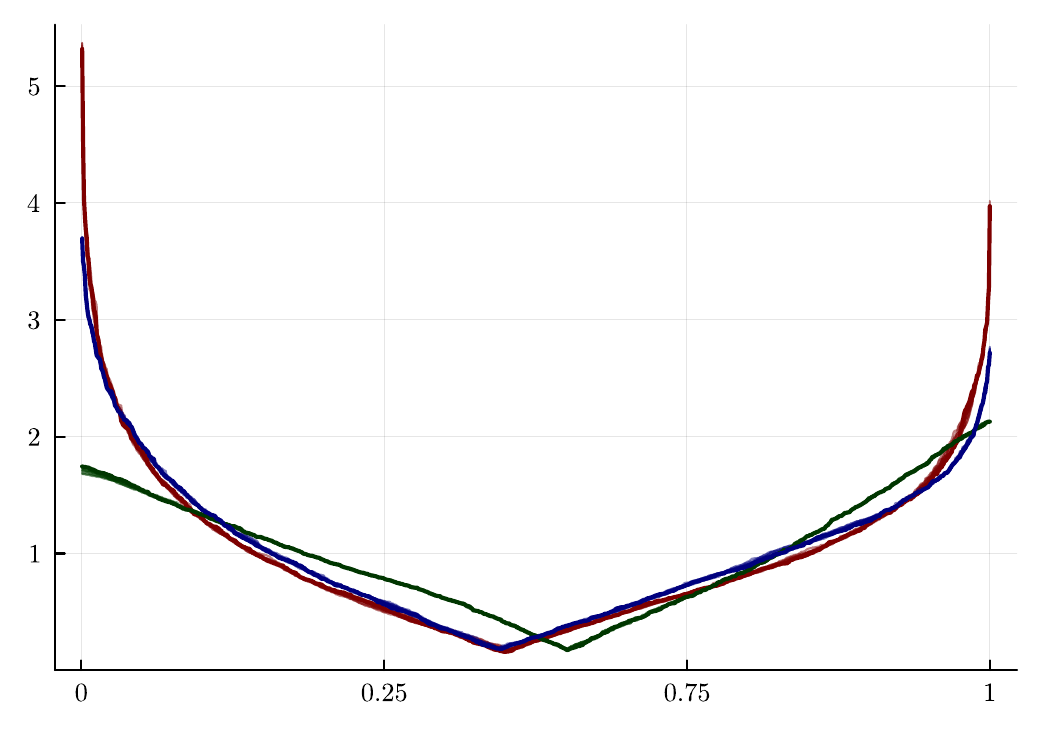}
    &\includegraphics[width=0.3\linewidth, clip=true, trim=12pt 10pt 12pt 10pt]{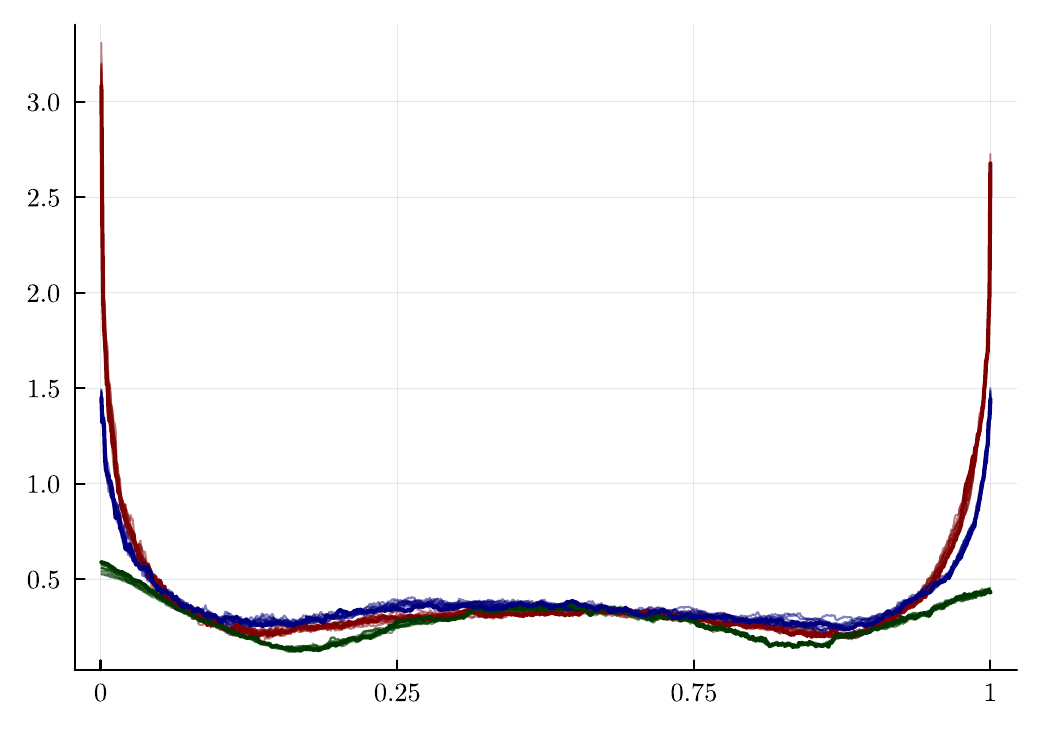}
    \end{tabular}
    \caption{Visualization of
    \hNRCDT{}s of the rotation dataset
    with $10$ samples per class
    and $2048$ Super-Fibonacci points.}
    \label{fig:hNRCDT_rot_so3}
\end{figure*}

For the numerical computations,
we need to discretize the direction set,
which is now given by $\Theta = \SO(3)$.
For this, we make use of the so-called
Super-Fibonacci points~\cite{Alexa2022},
defined by the Euler parameters
\begin{align*}
\begin{smallmatrix}
a_k = \sqrt{\tfrac{2k+1}{2n}}\sin\bigl(\tfrac{\pi(2k+1)}{\varphi_1}\bigr),
& c_k = \sqrt{1 - \tfrac{2k+1}{2n}}\cos\bigl(\tfrac{\pi(2k+1)}{\varphi_2}\bigr), \\
b_k = \sqrt{\tfrac{2k+1}{2n}}\cos\bigl(\tfrac{\pi(2k+1)}{\varphi_1}\bigr),
& d_k = \sqrt{1 - \tfrac{2k+1}{2n}}\sin\bigl(\tfrac{\pi(2k+1)}{\varphi_2}\bigr),
\end{smallmatrix}
\end{align*}
for $k \in \{0,\ldots,n-1\}$,
where $\varphi_1 = \sqrt{2}$
and $\varphi_2$ is the positive real solution to
$\varphi_2^4 = \varphi_2 + 4$.
One can show that $\varphi_2 = \tfrac{1}{2} (\sqrt{m} + \sqrt{\nicefrac{2}{\sqrt{m}} - m})$
with
$m = \sqrt[3]{m_+} + \sqrt[3]{m_-}$
and
$m_{\pm} = \tfrac{1}{18}(9 \pm \sqrt{49233})$.
The rotation matrices $\bftheta_k \in \SO(3)$
are then given by the Euler–Rodrigues formula
\begin{equation*}
    \bftheta_k
    \coloneqq 
    \biggl[\begin{smallmatrix}
        1 - 2(c_k^2 + d_k^2)
        & 2(b_k c_k - a_k d_k)
        & 2(b_k d_k + a_k c_k) \\
        2(b_k c_k + a_k d_k)
        & 1 - 2(b_k^2 + d_k^2)
        & 2(c_k d_k - a_k b_k) \\
        2(b_k d_k - a_k c_k)
        & 2(c_k d_k + a_k b_k)
        & 1 - 2(b_k^2 + c_k^2)
    \end{smallmatrix}\biggr].
\end{equation*}

We again adapt the NT classification experiments 
from §~\ref{sssec:NT} and
label the transformed data
based on the nearest template 
in \hNRCDT{} space
with respect to the Euclidean norm.
The \hNRCDT{}s of the entire dataset
are depicted in Figure~\ref{fig:hNRCDT_rot_so3},
where the underlying CDTs are sampled on an equispaced grid 
consisting of $1000$ points in $(0,1)$,
and show an almost perfect alignment within each class.
The achieved classification accuracies 
are reported in Table~\ref{tab:nearest_temp_rot_so3}.
As expected, 
all employed \hNRCDT{} variants yield perfect results,
whereas R-CDT cannot distinguish the different classes.

\begin{table}[t]
    \centering%
    \footnotesize%
    \begin{tabular}{l @{\enspace} c @{\enspace} c @{\enspace} c @{\enspace} c}
        \toprule
        rotations 
        & R-CDT
        & \mNRCDT
        & \maNRCDT
        & \miNRCDT \\
        \midrule
        2 
        & 0.633 & 0.933 & 0.933 & 0.433 \\
        4 
        & 0.400 & \textbf{1.000} & \textbf{1.000} & 0.733 \\
        8
        & 0.333 & \textbf{1.000} & \textbf{1.000} & 0.900 \\
        16
        & 0.367 & \textbf{1.000} & \textbf{1.000} & 0.967 \\
        32
        & 0.367 & \textbf{1.000} & \textbf{1.000} & \textbf{1.000} \\
        64
        & 0.400 & \textbf{1.000} & \textbf{1.000} & \textbf{1.000} \\
        \bottomrule
    \end{tabular}
    \caption{NT classification accuracies
    for rotation dataset
    with $10$ samples per class
    and varying numbers of rotations
    (Super-Fibonacci points on $\SO(3)$).}
    \label{tab:nearest_temp_rot_so3}
\end{table}

\section{Conclusion}
In this paper, 
we introduced the novel
generalized \hNRCDT{} 
for feature representation 
and proved linear separability of classes 
generated by affine transforms of given templates.
This was validated by numerical experiments
in two- and three-dimensional
as well as non-Euclidean settings
showing a significant increase in classification accuracy
over the original R-CDT.

Potential future directions include
the investigation of \hNRCDT{} performance
in combination with more advanced
classification and clustering methods,
like in \cite{Shifat-E-Rabbi2021,Auffenberg2025,Xing2002},
as well as
a more in-depth numerical study
in various real-world applications.
Moreover, the robustness of \hNRCDT{}
to noise and outliers needs to be analyzed.
For \mNRCDT{}, this has already been done
in our previous work~\cite{Beckmann2025}
and we wish to adapt our techniques
to the generalizations
introduced here.
Our theoretical results require
the distinguishability of
the embedded measure classes
in feature space.
While no collisions arose
in our numerical experiments,
a more descriptive characterization
of the non-injectivity
would still greatly enhance the
application of our theory
in practice.

At the current stage,
our \hNRCDT{} is limited to probability measures.
It may be extended to signed measures
by adapting the signed CDT \cite{Aldroubi2022}
and handling the positive and negative parts separately.
Moreover, in our experiments,
we always choose the uniform reference measure $\rho = u_{[0,1]}$.
While our theory is independent of its choice,
we expect that an application-dependent choice can even boost the \hNRCDT{} performance
by, e.g., introducing a denser sampling of certain regions.

%
%

\end{document}